\newcommand{\N}{\mathbb{N}}
\newcommand{\Z}{\mathbb{Z}}
\newcommand{\C}{\mathbb{C}}
\newcommand{\R}{\mathbb{R}}
\newcommand{\Q}{\mathbb{Q}}
\newcommand{\SL}{\mathfrak{sl}_{4}\mathbb{C}}
\newcommand{\g}{\mathfrak{g}}
\newcommand{\h}{\mathfrak{h}}
\newcommand{\B}{\mathfrak{b}}
\newcommand{\p}{\mathfrak{p}}
\newcommand{\tr}{\mathrm{tr}}
\def\Ddots{\mathinner{\mkern1mu\raise\p@
\vbox{\kern7\p@\hbox{.}}\mkern2mu
\raise4\p@\hbox{.}\mkern2mu\raise7\p@\hbox{.}\mkern1mu}}
\newtheorem{theorem}{Theorem}
\newtheorem{lemma}{Lemma}
\newtheorem{proposition}{Proposition}
\newtheorem{corollary}{Corollary}
\newtheorem{definition}{Definition}
\newcommand{\on}[2]{\setbox0=\hbox{$#1$}\setbox1=\hbox{$#2$}%
            \dimen0=\wd0\advance\dimen0 by \wd1\divide\dimen0 by 2%
             \ifdim\wd0>\wd1$#1$\hskip-\dimen0$#2$\advance\dimen0 by -\wd1%
              \else$#2$\hskip-\dimen0$#1$\advance\dimen0 by -\wd0%
             \fi%
            \hskip\dimen0}
\newcommand{\onn}[2]{\makebox{\on{#1}{#2}}}
\newcommand{\cross}{\smash\times\vphantom\bullet}
\newcommand{\leftblob}[1]{\hspace*{-1ex}\onn{\raisebox{-.5ex}
     {$\genfrac{}{}{0pt}{}{\hfill\hrulefill}{\hphantom{\hspace*{2ex}#1}}$}}
                {\stackrel{#1}{\bullet}}}
\newcommand{\rightblob}[1]{\onn{\raisebox{-.5ex}
     {$\genfrac{}{}{0pt}{}{\hrulefill\hfill}{\hphantom{\hspace*{2ex}#1}}$}}
                 {\stackrel{#1}{\bullet}}\hspace*{-1ex}}
\newcommand{\middleblob}[1]{\onn{\raisebox{-.5ex}
     {$\genfrac{}{}{0pt}{}{\hrulefill}{\hphantom{\hspace*{2ex}#1}}$}}
                 {\stackrel{#1}{\bullet}}}
\newcommand{\leftcross}[1]{\hspace*{-1ex}\onn{\raisebox{-.5ex}
     {$\genfrac{}{}{0pt}{}{\hfill\hrulefill}{\hphantom{\hspace*{2ex}#1}}$}}
                {\stackrel{#1}{\cross}}}
\newcommand{\rightcross}[1]{\onn{\raisebox{-.5ex}
     {$\genfrac{}{}{0pt}{}{\hrulefill\hfill}{\hphantom{\hspace*{2ex}#1}}$}}
                 {\stackrel{#1}{\cross}}\hspace*{-1ex}}
\newcommand{\middlecross}[1]{\onn{\raisebox{-.5ex}
     {$\genfrac{}{}{0pt}{}{\hrulefill}{\hphantom{\hspace*{2ex}#1}}$}}
                 {\stackrel{#1}{\cross}}}
\newcommand{\oo}[2]{\leftblob{#1}\hspace*{-1ex}\rightblob{#2}}
\newcommand{\Dd}[7]{\begin{picture}(5,2)
\put(0,0.2){\makebox(0,0)[r]{$\xoo{#1}{#2}{#3}$}}
\put(0.5,0.2){...}
\put(3,0.2){\makebox(0,0)[r]{$\oo{#4}{#5}$}}
\put(2.7,0.1){\line(1,1){1}}
\put(2.7,0.1){\line(1,-1){1}}
\put(3.7,1.1){\makebox(0,0){$\bullet$}}
\put(3.7,-0.9){\makebox(0,0){$\bullet$}}
\put(4,1.1){\makebox(0,0)[l]{\scriptsize $#6$}}
\put(4,-0.9){\makebox(0,0)[l]{\scriptsize $#7$}}
\end{picture}\vspace{1cm}}
\newcommand{\Ddd}[7]{\begin{picture}(5,2)
\put(0,0.2){\makebox(0,0)[r]{$\xoo{#1}{#2}{#3}$}}
\put(0.5,0.2){...}
\put(3,0.2){\makebox(0,0)[r]{$\oo{#4}{#5}$}}
\put(2.7,0.1){\line(1,1){1}}
\put(2.7,0.1){\line(1,-1){1}}
\put(3.7,1.1){\makebox(0,0){$\bullet$}}
\put(3.7,-0.9){\makebox(0,0){$\bullet$}}
\put(4,1.1){\makebox(0,0)[l]{\scriptsize $#6$}}
\put(4,-0.9){\makebox(0,0)[l]{\scriptsize $#7$}}
\end{picture}}
\newcommand{\Ddo}[7]{\begin{picture}(5,2)
\put(0,0.2){\makebox(0,0)[r]{$\ooo{#1}{#2}{#3}$}}
\put(0.5,0.2){...}
\put(3,0.2){\makebox(0,0)[r]{$\oo{#4}{#5}$}}
\put(2.7,0.1){\line(1,1){1}}
\put(2.7,0.1){\line(1,-1){1}}
\put(3.7,1.1){\makebox(0,0){$\bullet$}}
\put(3.7,-0.9){\makebox(0,0){$\bullet$}}
\put(4,1.1){\makebox(0,0)[l]{\scriptsize $#6$}}
\put(4,-0.9){\makebox(0,0)[l]{\scriptsize $#7$}}
\end{picture}
\vspace{1cm}}
\newcommand{\x}[1]{\stackrel{#1}{\cross}}
\newcommand{\xo}[2]{\leftcross{#1}\hspace*{-1ex}
               \rightblob{#2}}      
\newcommand{\ox}[2]{\leftblob{#1}\hspace*{-1ex}
               \rightcross{#2}}                     
\newcommand{\ooo}[3]{\leftblob{#1}\hspace*{-1ex}
               \middleblob{#2}\hspace*{-1ex}\rightblob{#3}}
\newcommand{\xoo}[3]{\leftcross{#1}\hspace*{-1ex}
               \middleblob{#2}\hspace*{-1ex}\rightblob{#3}}
\newcommand{\oox}[3]{\leftblob{#1}\hspace*{-1ex}
               \middleblob{#2}\hspace*{-1ex}\rightcross{#3}}
\newcommand{\xox}[3]{\leftcross{#1}\hspace*{-1ex}
               \middleblob{#2}\hspace*{-1ex}\rightcross{#3}}
\newcommand{\oxo}[3]{\leftblob{#1}\hspace*{-1ex}
               \middlecross{#2}\hspace*{-1ex}\rightblob{#3}}               
\newcommand{\xxx}[3]{\leftcross{#1}\hspace*{-1ex}
               \middlecross{#2}\hspace*{-1ex}\rightcross{#3}}
\newcommand{\oooo}[4]{\leftblob{#1}\hspace*{-1ex}\middleblob{#2}\hspace*{-1ex}
               \middleblob{#3}\hspace*{-1 ex}\rightblob{#4}}
\newcommand{\onnn}[3]{\onn{\onn{#1}{#2}}{#3}}
\newcommand{\rightdblob}[1]{\onnn{\raisebox{-.7ex}
     {$\genfrac{}{}{0pt}{}{\hrulefill\hfill}{\hphantom{\hspace*{2ex}#1}}$}}
                                  {\raisebox{-.3ex}
     {$\genfrac{}{}{0pt}{}{\hrulefill\hfill}{\hphantom{\hspace*{2ex}#1}}$}}
                 {\stackrel{#1}{\bullet}}\hspace*{-1ex}}
\newcommand{\Btwoo}[2]{\leftdblob{#1}\hspace*{-.5ex}
     \makebox[0pt]{\small $\rangle$}\hspace*{-.5ex}\rightdblob{#2}}
\newcommand{\leftdblob}[1]{\hspace*{-1ex}\onnn{\raisebox{-.7ex}
     {$\genfrac{}{}{0pt}{}{\hfill\hrulefill}{\hphantom{\hspace*{2ex}#1}}$}}
                                  {\raisebox{-.3ex}
     {$\genfrac{}{}{0pt}{}{\hfill\hrulefill}{\hphantom{\hspace*{2ex}#1}}$}}
                 {\stackrel{#1}{\bullet}}}
\newcommand{\btwo}[2]{\leftdblob{#1}\hspace*{-.5ex}
     \makebox[0pt]{\small $\rangle$}\hspace*{-.5ex}\rightdblob{#2}}
\newcommand{\arrowhead}{
                \begin{picture}(0,0)\put(3,.605){\vector(1,0){0}}\end{picture}}
\newcommand{\xooo}[4]{\leftcross{#1}\hspace*{-1ex}
               \middleblob{#2}\hspace*{-1 ex}\middleblob{#3}\hspace*{-1ex}\rightblob{#4}}
\newcommand{\Bfive}[5]{\leftcross{#1}\hspace*{-1ex}
               \middleblob{#2}\hspace*{-1ex}\rightblob{#3}\;...\;\leftdblob{#4}\hspace*{-.5ex}
     \makebox[0pt]{\small $\rangle$}\hspace*{-.5ex}\rightdblob{#5}}
\newcommand{\semidownbracefill}{$\m@th\braceld\leaders\vrule\hfill\braceru
  \bracelu\leaders\vrule\hfill\arrowhead$}
\begin{document}
\title{Invariant Bilinear Differential Pairings on Parabolic Geometries}
\author{\textbf{Jens Kroeske}\\ \\ \\ \\
Supervisor: Prof.~Michael Eastwood\\ \\
School of Pure Mathematics\\
University of Adelaide\\ \\
June 2008}

\date{}
\maketitle

\setcounter{tocdepth}{2}
\tableofcontents

\newpage
\section{Abstract}
This thesis is concerned with the theory of invariant bilinear differential pairings on parabolic geometries. It introduces the concept formally with the help of  the jet bundle formalism and provides a detailed analysis. More precisely, after introducing the most important notations and definitions, we first of all give an algebraic description for pairings on homogeneous spaces and obtain a first existence theorem. Next, a classification of first order invariant bilinear differential pairings is given under exclusion of certain degenerate cases that are related to the existence of invariant linear differential operators. Furthermore, a concrete formula for a large class of invariant bilinear differential pairings of arbitrary order is given and many examples are computed. The general theory of higher order invariant bilinear differential pairings turns out to be much more intricate and a general construction is only possible under exclusion of finitely many degenerate cases whose significance in general remains elusive (although a result for projective geometry is included). The construction relies on so-called splitting operators examples of which are described for projective geometry, conformal geometry and CR geometry in the last chapter.

\newpage
\section{Thesis declaration}
This work contains no material which has been accepted for the award of any other degree or diploma in any university or other tertiary institution and, to the best of my knowledge and belief, contains no material previously published or written by any other person, except where due reference has been made in the text.
\par
\vspace{1cm}
I give consent to this copy of my thesis, when deposited in the University library, being made available for loan and photocopying, subject to the provisions of the Copyright Act 1968.

\newpage
\section{Acknowledgment}
It is my pleasure to express my deepest gratitude towards my supervisor Prof.~Michael Eastwood who has skillfully guided me through this thesis and without whom none of this would have been possible. He has not only provided me with invaluable help but also kindled my love for this particular field of mathematics. My time in Adelaide has been thoroughly enjoyable.
\par
\vspace{10cm}
{\em This thesis is dedicated to my beautiful daughter Alexis.}

\newpage
Before you can fly, you have to learn how to walk\footnote{Nolan Wallach, Brisbane winter school in mathematics.}

\section{Constants and notation used throughout}
\begin{enumerate}
\item
$\mathcal{M}$: a manifold of dimension $n$ (for CR geometry $\mathcal{M}$ will have real dimension $2n+1$).
\item
$\g$: a semisimple Lie algebra of rank $l$.
\item
$[.,.]$: the bracket in $\g$.
\item
$G$: the simply connected Lie group with Lie algebra $\g$.
\item
$\h$: a fixed Cartan subalgebra of $\g$.
\item
$\p$: a parabolic subalgebra of $\g$.
\item
$k_{0}$: length of the grading of $\g$.
\item
$l_{0}$: number of simple roots in $\mathcal{S}\backslash\mathcal{S}_{\p}$.
\item
$\alpha_{i}$, $i=1,...,l$: simple roots of $\g$.
\item
$\omega_{i}$, $i=1,...,l$: fundamental weights corresponding to the simple roots.
\item
$I\subset\{1,...,l\}$: indices that correspond to simple roots in $\mathcal{S}\backslash\mathcal{S}_{\p}$, i.e.~to crossed through nodes.
\item
$J=\{1,...,l\}\backslash I$: indices that correspond to simple roots in $\mathcal{S}_{\p}$.
\item
$\mathcal{W}$: the Weyl group of $\g$.
\item
$\mathcal{W}^{\p}$: the Hasse diagram of $G/P$.
\item
$\rho=\sum_{i=1}^{l}\omega_{i}$: integral weight in the dominant Weyl chamber which lies closest to the origin.
\item
$B(.,.)$: the Killing form of $\g$.
\item
$T(\mathfrak{a})$: the tensor algebra of a Lie algebra $\mathfrak{a}$.
\item
$\mathfrak{U}(\mathfrak{a})$: the universal enveloping algebra of a Lie algebra $\mathfrak{a}$.
\item
$\mathcal{Z}(\mathfrak{a})$: the center of the universal enveloping algebra $\mathfrak{U}(\mathfrak{a})$.
\item
$\mathfrak{gl}_{l}\C$: the endomorphisms of $\C^{l}$. These can be identified with $\C^{l\times l}$.
\item
$\otimes$: the tensor product symbol.
\item
$\odot$: the symbol for the symmetric tensor product.
\item
$\Lambda$: the symbol for the skew symmetric tensor product.
\item
$\circledcirc$: the Cartan product of representations.
\item
$\mathcal{G}$: the total space of the principal bundle defining a parabolic geometry.
\item
$\omega$: the Cartan connection of $\mathcal{G}$ (the Maurer Cartan form of $G$ is denoted by $\omega_{MC}$). Note that $\omega$ is also used as a symbol for the geometric weight of a representation, but the context should make the meaning clear.
\item
$\mathcal{A}$: the adjoint tractor bundle.
\item
$\mathbb{V}_{\lambda}$: the irreducible finite dimensional representation (of $\g$ or $\p$) that is dual to the irreducible finite dimensional representation of highest weight $\lambda\in\h^{*}$.
\item
$J^{k}V$: $k$-th order jet bundle of a vector bundle $V$.
\item
$\mathcal{J}^{k}V$: $k$-th order weighted jet bundle.
\item
$\bar{J}^{k}V$: $k$-th order semi-holonomic jet bundle.
\item
$\bar{\mathcal{J}}^{k}V$: $k$-th order restricted semi-holonomic jet bundle.
\item
$\mathcal{E}$  or $\mathcal{O}$: the bundle of smooth (or holomorphic) functions of $\mathcal{M}$.
\item
$\mathcal{E}^{a}$: the bundle of tangent vectors.
\item
$\mathcal{E}_{a}$: the bundle of one-forms.
\item
$V^{*}$: the dual of the vector space (representation, bundle, etc.) $V$.
\item
$M_{\p}(\mathbb{V})$: the generalized Verma module associated to a representation $\mathbb{V}$.
\item
$\nabla$: a connection.
\end{enumerate}

\newpage

\section{Introduction}
\subsection{What this is all about}

It is generally known (see~\cite{pr},  p.~202), that on an arbitrary manifold $\mathcal{M}$ one can write down the Lie derivative $\mathcal{L}_{X}\omega_{b}$ of a one-form $\omega_{b}\in\Gamma(T^{*}\mathcal{M})=\Omega^{1}(\mathcal{M})$ with respect 
to a vector field $X^{a}\in \Gamma(T\mathcal{M})=\mathrm{Vect}(\mathcal{M})$ in terms of an arbitrary torsion-free connection $\nabla_{a}$ as
$$\mathcal{L}_{X}\omega_{b}=X^{a}\nabla_{a}\omega_{b}+\omega_{a}\nabla_{b}X^{a},$$
where the indices used are abstract in the sense of~\cite{pr}. This {\bf pairing} is obviously linear in $X^{a}$ and in $\omega_{b}$, i.e.~{\bf bilinear}, {\bf first order} and
{\bf invariant} in the sense that it does not depend upon a specific choice of connection. In~\cite{kms}, 30.1, it is shown that all such bilinear invariant differential pairings 
$$\mathrm{Vect}(\mathcal{M})\times\Omega^{1}(\mathcal{M})\rightarrow\Omega^{1}(\mathcal{M})$$
are given by a two parameter family spanned by $\mathcal{L}_{X}\omega_{b}$ and $X^{a}(d\omega)_{ab}$, where $d$ denotes the exterior derivative. Demanding that a pairing is invariant in the sense that it does not depend upon a specific choice of connection within the class of all torsion-free connections (in~\cite{kms} those pairings are called natural) turns out to be rather restrictive. Instead, one can specify an equivalence class of connections and ask for invariance under change of connection 
within this equivalence class. This is a standard procedure in many different geometries. In conformal geometry, for example, one deals with an equivalence class of connections that consists of the Levi-Civita connections that correspond to metrics in the 
conformal class. In projective geometry, one is given a projective equivalence class of connections that consists of all those torsion-free connections, which induce the same (unparameterised) geodesics.
This is equivalent (see~\cite{e}, p.~2, Proposition~1) to saying that $\nabla_{a}$ and $\hat{\nabla}_{a}$ are in the same equivalence class
if and only if there is a one form $\Upsilon_{a}$, such that
$$\hat{\nabla}_{a}\omega_{b}=\nabla_{a}\omega_{b}-2\Upsilon_{(a}\omega_{b)},$$
where round brackets around indices denote symmetrization, i.e.
$$\Upsilon_{(a}\omega_{b)}=\frac{1}{2}(\Upsilon_{a}\omega_{b}+\Upsilon_{b}\omega_{a}).$$
As a consequence, the difference between the two
connections when acting on sections of any weighted tensor bundle can be deduced (see~\cite{e}, p.~2) and the invariance of any expression can be checked by hand. For vector fields, for example,
 we have
$$\hat{\nabla}_{b}X^{a}=\nabla_{b}X^{a}+\Upsilon_{b}X^{a}+\Upsilon_{c}X^{c}\delta_{b}{}^{a},$$
so the invariance of the Lie derivative can be checked directly.
It is also clear that
$$X^{a}(d\omega)_{ab}=X^{a}\nabla_{[a}\omega_{b]}$$
is a first order bilinear invariant differential pairing in that sense, where square brackets around indices denote skewing, i.e.~$\nabla_{[a}\omega_{b]}=\frac{1}{2}(\nabla_{a}\omega_{b}-\nabla_{b}\omega_{a}$). 
\par
To obtain a more interesting example of a first order bilinear invariant differential pairing in projective geometry, consider pairings
$$\Gamma(\odot^{2}T\mathcal{M})\times\Omega^{1}(\mathcal{M})\rightarrow \mathcal{O},$$
where $\odot^{2}$ denotes the second symmetric product and $\mathcal{O}$ is the sheaf of holomorphic (or smooth) functions. The transformation rule for $V^{ab}\in\odot^{2}T\mathcal{M}$ under change of
connection is given by $\hat{\nabla}_{c}V^{ab}=\nabla_{c}V^{ab}+2\Upsilon_{c}V^{ab}+2\Upsilon_{d}V^{d(a}\delta_{c}{}^{b)}$. This implies
\begin{eqnarray*}
V^{ab}\hat{\nabla}_{(a}\omega_{b)}&=&V^{ab}\nabla_{(a}\omega_{b)}-2V^{ab}\Upsilon_{a}\omega_{b}\quad\text{and}\\
\omega_{b}\hat{\nabla}_{a}V^{ab}&=&\omega_{b}\nabla_{a}V^{ab}+(n+3)\omega_{b}\Upsilon_{a}V^{ab},
\end{eqnarray*}
where~$n=\mathrm{dim}(\mathcal{M})$. Therefore the pairing
$$(n+3)V^{ab}\nabla_{(a}\omega_{b)}+2\omega_{b}\nabla_{a}V^{ab}$$
does not depend upon a choice of connection within the projective class.
\par
\vspace{0.2cm}
It is natural to ask the question of whether these are all first order bilinear invariant differential pairings between those bundles and whether one can classify pairings between arbitrary bundles in general.
\par
In the following we will study exactly this question for a large class of geometries called {\bf parabolic geometries}. These are special cases of Cartan's \lq{\em \'{e}space g\'{e}n\'{e}ralis\'{e}}\rq~which are
geometric structures that have homogeneous spaces $G/P$, where $G$ is a Lie group and $P$ a subgroup, as their models. They are defined by a generalization of the principal $P$ bundle
$$\begin{array}{ccc}
P&\rightarrow&G\\
&&\downarrow\\
&&G/P
\end{array}$$
together with a Cartan connection that generalizes the Maurer Cartan form 
$$\omega:TG\rightarrow Lie(G)=\g.$$
Riemannian geometry, for example, can be defined as a (torsion-free) Cartan geometry modeled on Euclidean space $G/P$ with $G=\mathrm{Euc}_{n}\R$, the group of Euclidean motions, and $P=\mathrm{SO}_{n}(\R)$, see~\cite{sh}. 
In this case there exists a canonical connection, the Levi-Civita connection, on the principal bundle. If, however,
$P$ is a parabolic subgroup of a semisimple Lie group $G$, then the name parabolic geometry is commonly used. For each parabolic geometry there exists an equivalence class of connections (see~\cite{cs2}) and one can study invariant operators and invariant pairings as indicated above.

\subsection{Invariant differential pairings on the Riemann sphere}\label{riemannsphere}
In this section we will classify all bilinear invariant differential pairings on the Riemann sphere $\mathbb{CP}_{1}$. This little warm-up exercise does not only give the reader an idea of how one might
attempt to classify invariant bilinear differential pairings, but also produces a final formula that will turn out to hold in great generality. More specifically, the linear equations that we have
to solve on the Riemann sphere are exactly those that we will have to solve twice again in this thesis in a vastly more general setting. One can see the reason for this phenomena by studying~\cite{g2},
where it is explained how invariant linear differential operators on $\mathbb{CP}_{1}$ give rise to all standard operators on a general four dimensional conformal manifold.
\par
\subsubsection{The setup}
The basic objects that we can pair on $\mathbb{CP}_{1}$ are sections of line bundles, traditionally denoted by $\mathcal{O}(q)$, for $q\in\Z$, where $\mathcal{O}(1)$ is the hyperplane section bundle. For reasons to become clear
later, we will write $\mathcal{O}(\x{q})$ for $\mathcal{O}(q)$. Invariance on $\mathbb{CP}_{1}$ means invariance under Moebius transformations that act not only on $\mathbb{CP}_{1}$, but also on sections of
$\mathcal{O}(\x{q})$ in a way to be described below.
\par
The sections of $\mathcal{O}(\x{q})$ can be described by pairs of functions $\{f_{i}\}_{i=1,2}$ that depend on one variable $z\in\C$ and and are related by $f_{1}(z)=\zeta^{-q}f_{2}(\zeta)$ for 
$\zeta=-\frac{1}{z}$. More precisely, we are really concerned with local sections of these bundles. In that case the $f_{i}$ are defined in some (connected) open subset of $\C$ and are related on the overlap as indicated above. We can identify those 
sections with a function $s_{f}:\mathrm{SL}_{2}\C\rightarrow\C$, such that 
$$s_{f}\left(\left(\begin{array}{cc}
a&b\\
c&d
\end{array}\right)\left(\begin{array}{cc}
x&y\\
0&x^{-1}
\end{array}\right)\right)=x^{q}s_{f}\left(\begin{array}{cc}
a&b\\
c&d
\end{array}\right).$$
Then we have
$$f_{1}(z)=s_{f}\left(\begin{array}{cc}
1&0\\
z&1
\end{array}\right)\quad\text{and}\quad f_{2}(\zeta)=s_{f}\left(\begin{array}{cc}
\zeta&1\\
-1&0
\end{array}\right),$$
or equivalently
$$s_{f}\left(\begin{array}{cc}
a&b\\
c&d
\end{array}\right)=a^{q}f_{1}\left(\frac{c}{a}\right)=(-c)^{q}f_{2}\left(-\frac{a}{c}\right).$$
Again, $f_{1},f_{2}$ and $s_{f}$ need not be defined globally, but may just be given on some appropriate (open and connected) neighborhood of a point. In the sequel we will neglect this technicality for this expository example.\par
$\mathrm{SL}_{2}\C$ acts on the space of sections by
$$(\phi s_{f})(h)=s_{f}(\phi^{-1}h)$$
and hence by
$$(\phi f_{1})(z)=(d-bz)^{q}f_{1}\left(\frac{az-c}{d-bz}\right)$$
and
$$(\phi f_{2})(\zeta)=(c\zeta+a)^{q}f_{2}\left(\frac{d\zeta+b}{c\zeta+a}\right),$$
for
$$\phi=\left(\begin{array}{cc}
a&b\\
c&d
\end{array}\right).$$
It is easy to check that
$$(\phi f_{1})(z)=\zeta^{-q}(\phi f_{2})(\zeta),$$
so $\{\phi f_{1},\phi f_{2}\}$ is again a section of $\mathcal{O}(\x{q})$. The elements in $\mathrm{SL}_{2}\C$ are generated by the following three transformations
\begin{enumerate}
\item
$(\phi f_{1})(z)=f_{1}(z+\mu)$ for $\phi=\left(\begin{array}{cc}1&0\\-\mu&1\end{array}\right)$, 
\item
$(\phi f_{1})(z)=\lambda^{-q}f_{1}(\lambda^{2}z)$ for $\phi=\left(\begin{array}{cc}\lambda&0\\0&\lambda^{-1}\end{array}\right)$ and
\item
$(\phi f_{2})(\zeta)=f_{2}(\zeta+\kappa)$ for $\phi=\left(\begin{array}{cc}1&\kappa\\0&1\end{array}\right)$.
\end{enumerate}
A general differential pairing in the first coordinate chart $z$ has the form:
\begin{eqnarray*}
P:\mathcal{O}(\x{q})\times\mathcal{O}(\x{q'})&\rightarrow&\mathcal{O}(\x{p})\\
P(f,g)_{1}(z)&=&\sum_{i,j}a_{ij}(z)\left(\left(\frac{d}{dz}\right)^{i}f_{1}(z)\right)\left(\left(\frac{d}{dz}\right)^{j}g_{2}(z)\right),
\end{eqnarray*}
for some functions $a_{ij}(z)$. $P(f,g)_{2}(\zeta)$ is defined analogously with $\zeta$ instead of $z$.
In order for this to be an invariant differential pairing, the invariance equation
$$P(\phi f,\phi g)(z)=(\phi P(f,g))(z),$$
for $\phi\in\mathrm{SL}_{2}\C$, has to be satisfied.  Moreover, we must have
$$P(f,g)_{1}(z)=\zeta^{-p}P(f,g)_{2}(\zeta),$$
for $\zeta=-z^{-1}$, in order to obtain a section of $\mathcal{O}(\x{p})$.
The first transformation rule immediately implies that 
\begin{eqnarray*}
&&
\sum_{i,j}a_{ij}(z)\left(\left(\frac{d^{i}f}{dz^{i}}\right)(z+\mu)\right)\left(\left(\frac{d^{j}g}{dz^{j}}\right)(z+\mu)\right)\\
&=&\sum_{i,j}a_{ij}(z+\mu)\left(\left(\frac{d^{i}f}{dz^{i}}\right)(z+\mu)\right)\left(\left(\frac{d^{j}g}{dz^{j}}\right)(z+\mu)\right)
\end{eqnarray*}
and hence that the functions $a_{i,j}(z)$ are all constant. The last transformation rule analogously implies that the $a_{ij}(\zeta)$ are constant.
The second transformation rule implies
\begin{eqnarray*}
&&\sum_{i,j}\lambda^{2(i+j)-(q+q')}a_{ij}\left(\left(\frac{d^{i}f}{dz^{i}}\right)(\lambda^{2}z)\right)\left(\left(\frac{d^{j}g}{dz^{j}}\right)(\lambda^{2}z)\right)\\
&=&\lambda^{-p}\sum_{i,j}a_{ij}\left(\left(\frac{d^{i}f}{dz^{i}}\right)(\lambda^{2}z)\right)\left(\left(\frac{d^{j}g}{dz^{j}}\right)(\lambda^{2}z)\right)
\end{eqnarray*}
and hence $p=q+q'-2M$ for some $M\in\N$ and every term with $i+j\not=M$ vanishes. Thus the general $M$-th order invariant differential pairing looks like this:
\begin{eqnarray*}
P:\mathcal{O}(\x{q})\times\mathcal{O}(\x{q'})&\rightarrow&\mathcal{O}(\x{q+q'-2M})\\
P(f,g)_{1}(z)&=&\sum_{j=0}^{M}\gamma_{M,j}\left(\left(\frac{d}{dz}\right)^{j}f_{1}(z)\right)\left(\left(\frac{d}{dz}\right)^{M-j}g_{2}(z)\right),
\end{eqnarray*}
for some constants $\gamma_{M,j}\in\C$. The equation
$$P(f,g)_{1}(z)=\zeta^{-(q+q'-2M)}P(f,g)_{2}(\zeta),$$
for $\zeta=-z^{-1}$, will determine the constants $\gamma_{M,j}$ uniquely. This is shown by the following two lemmata:

\begin{lemma}
The transformation law 
$$z^{-k-2}\left(z^{2}\frac{d}{dz}\right)^{k+1}z^{-k}\psi(z)=\frac{d^{k+1}}{dz^{k+1}}\psi(z)$$
holds for an arbitrary function $\psi(z)$, $z\not=0$ and $k\geq -1$.
\end{lemma}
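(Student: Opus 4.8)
The plan is to prove the equivalent statement obtained by multiplying through by $z^{k+2}$, namely
$$\left(z^{2}\frac{d}{dz}\right)^{k+1}z^{-k}\psi(z)=z^{k+2}\frac{d^{k+1}}{dz^{k+1}}\psi(z),$$
and to proceed by induction on $k\geq-1$. The base case $k=-1$ reads $z\psi=z\psi$ and is immediate; it is reassuring to check $k=0$ as well, where the claim is just $z^{2}\psi'=z^{2}\psi'$.

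For the inductive step I would assume the reformulated identity at level $k-1$ and apply it not to $\psi$ but to the auxiliary function $\phi:=z^{-1}\psi$. Since $z^{-(k-1)}\phi=z^{-k}\psi$, the hypothesis yields
$$\left(z^{2}\frac{d}{dz}\right)^{k}z^{-k}\psi=z^{k+1}\frac{d^{k}}{dz^{k}}\left(z^{-1}\psi\right).$$
Applying one further factor of $z^{2}\frac{d}{dz}$ and expanding by the ordinary product rule then gives
$$\left(z^{2}\frac{d}{dz}\right)^{k+1}z^{-k}\psi=(k+1)z^{k+2}\frac{d^{k}}{dz^{k}}(z^{-1}\psi)+z^{k+3}\frac{d^{k+1}}{dz^{k+1}}(z^{-1}\psi).$$

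It then remains to identify the right-hand side with $z^{k+2}\frac{d^{k+1}}{dz^{k+1}}\psi$. After dividing by $z^{k+2}$ this is precisely the Leibniz expansion of $\frac{d^{k+1}}{dz^{k+1}}\bigl(z\cdot(z^{-1}\psi)\bigr)$: because $z$ has only one nonvanishing derivative, just two terms survive,
$$\frac{d^{k+1}}{dz^{k+1}}\psi=z\frac{d^{k+1}}{dz^{k+1}}(z^{-1}\psi)+(k+1)\frac{d^{k}}{dz^{k}}(z^{-1}\psi),$$
which closes the induction.

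I do not anticipate a genuine obstacle here; the only point requiring care is the bookkeeping of the powers of $z$, specifically that the inductive hypothesis is phrased for the exponent $-(k-1)$ and must be fed the shifted function $z^{-1}\psi$ rather than $\psi$ itself. Conceptually the identity is transparent once one observes that the substitution $\zeta=-z^{-1}$ turns the operator $z^{2}\frac{d}{dz}$ into $\frac{d}{d\zeta}$, so the lemma merely records how iterated differentiation transforms between the two coordinate charts --- exactly the compatibility needed in the gluing computation that follows.
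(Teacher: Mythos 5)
Your induction is correct: the base case $k=-1$ is trivial, and the step from $k-1$ to $k$ — applying the hypothesis to the shifted function $z^{-1}\psi$, hitting the result with one more factor of $z^{2}\frac{d}{dz}$, and closing with the two-term Leibniz expansion of $\frac{d^{k+1}}{dz^{k+1}}\bigl(z\cdot(z^{-1}\psi)\bigr)$ — is exactly right. This is the same route the paper takes; it simply states that the lemma "is easily proved by induction" (citing its reference) without writing out the details you have supplied.
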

\begin{proof}
This lemma is stated in~\cite{g2} and easily proved by induction.
\end{proof}

\begin{lemma}
If $q\not\in\{0,1,...,M-1\}$ or $q'\not\in\{0,1,...,M-1\}$, then the equation
\begin{eqnarray*}
&&\sum_{j=0}^{M}\gamma_{M,j}\left(\frac{d^{j}}{dz^{j}}f_{1}(z)\right)\left(\frac{d^{M-j}}{dz^{M-j}}g_{1}(z)\right)\\
&=&\zeta^{2M-(q+q')}\sum_{j=0}^{M}\gamma_{M,j}\left(\frac{d^{j}}{d\zeta^{j}}f_{2}(\zeta)\right)\left(\frac{d^{M-j}}{d\zeta^{M-j}}g_{2}(\zeta)\right)
\end{eqnarray*}
uniquely determines the constants $\gamma_{M,j}$ as
$$\gamma_{M,j}=(-1)^{j}\binom{M}{j}\prod_{i=j}^{M-1}(q-i)\prod_{i=M-j}^{M-1}(q'-i).$$
\end{lemma}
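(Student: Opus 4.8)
The plan is to turn the two-chart compatibility equation into a finite system of linear relations among the $\gamma_{M,j}$ by pushing everything into the single coordinate $z$. First I would record the consequence of Lemma~1 written in the $\zeta$-variable: since $\zeta=-1/z$ gives $\zeta^{2}\frac{d}{d\zeta}=\frac{d}{dz}$ and $\zeta^{-k}=(-z)^{k}$, the transformation law becomes the transmutation formula
$$\frac{d^{j}}{d\zeta^{j}}\psi=z^{j+1}\left(\frac{d}{dz}\right)^{j}\bigl(z^{j-1}\psi\bigr)$$
for every $\psi$ and every $j\geq 0$. Substituting $f_{2}=(-1)^{q}z^{-q}f_{1}$, $g_{2}=(-1)^{q'}z^{-q'}g_{1}$ and $\zeta^{2M-(q+q')}=(-1)^{q+q'}z^{q+q'-2M}$, the right-hand side of the compatibility equation turns (all signs cancelling) into the single-chart expression
$$z^{\,q+q'-M+2}\sum_{j=0}^{M}\gamma_{M,j}\Bigl[\bigl(\tfrac{d}{dz}\bigr)^{j}z^{\,j-1-q}f_{1}\Bigr]\Bigl[\bigl(\tfrac{d}{dz}\bigr)^{M-j}z^{\,M-j-1-q'}g_{1}\Bigr].$$

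Next I would expand the two inner factors with the Leibniz rule, writing $(\tfrac{d}{dz})^{a}z^{b}$ as a falling factorial times a power of $z$, and collect the coefficient of each monomial $z^{\,m+n-M}f_{1}^{(m)}g_{1}^{(n)}$; since $f_{1},g_{1}$ are arbitrary this monomial basis is free. A short bookkeeping shows that the total order obeys $m+n\leq M$, that the terms with $m+n=M$ survive only for $j=m$ and reproduce the left-hand coefficient $\gamma_{M,m}$ automatically, and that every remaining order $m+n<M$ must vanish. The equation is therefore equivalent to
$$C_{m,n}=\sum_{j=m}^{M-n}\gamma_{M,j}\binom{j}{m}\binom{M-j}{n}(j-1-q)^{\underline{j-m}}(M-j-1-q')^{\underline{M-j-n}}=0,\qquad m+n<M,$$
where $x^{\underline{k}}=x(x-1)\cdots(x-k+1)$ is the falling factorial.

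The decisive relations are those of order $m+n=M-1$: there the sum over $j$ collapses to $j\in\{m,m+1\}$ and $C_{m,M-1-m}=0$ reads
$$(M-m)(M-1-m-q')\,\gamma_{M,m}+(m+1)(m-q)\,\gamma_{M,m+1}=0,\qquad m=0,\dots,M-1.$$
Under the hypothesis $q\notin\{0,\dots,M-1\}$ every factor $m-q$ is nonzero, so these $M$ relations are independent and cut the $(M+1)$-dimensional space of candidate coefficients down to a single line; solving forward from $\gamma_{M,0}$ and comparing consecutive ratios yields exactly $\gamma_{M,j}=(-1)^{j}\binom{M}{j}\prod_{i=j}^{M-1}(q-i)\prod_{i=M-j}^{M-1}(q'-i)$. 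If instead $q'\notin\{0,\dots,M-1\}$ the factors $M-1-m-q'$ are nonzero and one runs the same chain backwards from $\gamma_{M,M}$. Excluding the doubly degenerate case $q,q'\in\{0,\dots,M-1\}$ is precisely what keeps the chain from breaking, and a broken link is exactly where the extra invariant linear operators live.

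The main obstacle is existence: the recursion only exploits the order-$(M-1)$ relations, so it remains to check that the coefficients it produces also annihilate $C_{m,n}$ for all $m+n\leq M-2$, since otherwise the full system would force $\gamma_{M,j}\equiv 0$ and the statement would be vacuous. Rather than grind through these lower relations one by one, I would verify that the product formula satisfies the entire family $C_{m,n}=0$ by a generating-function argument — packaging $\sum_{j}\gamma_{M,j}x^{j}$ as a Gauss hypergeometric polynomial and recognising each $C_{m,n}$ as a Vandermonde-type contiguity identity — or, equivalently, by induction on $M$. Conceptually the cleanest route is to note that the formula is the classical $M$-th transvectant, whose $\mathrm{SL}_{2}\C$-invariance (hence chart-independence, hence the cross-chart equation) is known, with uniqueness from the recursion pinning it down up to the normalisation fixed by $\gamma_{M,0}=\prod_{i=0}^{M-1}(q-i)$. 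I expect the hypergeometric bookkeeping in this existence step, and not the derivation of the recursion, to be the only genuinely delicate part.
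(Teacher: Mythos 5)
Your proposal follows the paper's own route almost exactly: push the two-chart equation into a single chart via Lemma~1, expand by the Leibniz rule, collect the linear relations among the $\gamma_{M,j}$ indexed by the surviving monomials, and observe that the relations of total order $M-1$ form the two-term recursion
$$(M-s)(M-1-s-q')\,\gamma_{M,s}+(s+1)(s-q)\,\gamma_{M,s+1}=0,\qquad s=0,\dots,M-1,$$
which is precisely the paper's boxed equation~(1) and which, under the hypothesis on $q$ or $q'$, pins the coefficients down up to scale; working in the $z$ chart rather than the $\zeta$ chart is immaterial. The one place your argument is not a complete proof is the step you yourself flag: verifying that the product formula also annihilates every lower-order coefficient $C_{m,n}$ with $m+n\le M-2$. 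You defer this to hypergeometric contiguity identities or induction on $M$ and predict it will be the genuinely delicate part, but in the paper it is a short direct computation rather than a delicate one: substituting $\gamma_{M,j}=(-1)^{j}\binom{M}{j}\prod_{i=j}^{M-1}(q-i)\prod_{i=M-j}^{M-1}(q'-i)$ into the general obstruction coefficient, the falling factorials recombine so that all $q$- and $q'$-dependence pulls out of the sum, leaving
$$\prod_{i=s}^{M-1}(q-i)\prod_{i=t}^{M-1}(q'-i)\,(-1)^{M-s-t}\,\frac{M!}{s!\,t!}\sum_{j=s}^{M-t}\frac{(-1)^{j}}{(j-s)!\,(M-j-t)!},$$
and the remaining sum equals $\pm\frac{1}{(M-s-t)!}\sum_{i=0}^{M-s-t}(-1)^{i}\binom{M-s-t}{i}$, which vanishes whenever $s+t<M$ by the elementary identity $\sum_{i=0}^{n}(-1)^{i}\binom{n}{i}=0$ for $n\ge 1$. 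So no generating-function or hypergeometric machinery is needed, and you should carry out this substitution rather than leave it as a sketch. Your fallback appeal to the known invariance of the classical transvectant would also close the gap, but it outsources the content of the lemma to the literature: the paper cites Olver's Theorem~3.46 only as a remark after the theorem, precisely because this lemma is its self-contained derivation of that formula.
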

\begin{proof}
First of all, we use Lemma 1 to compute
\begin{eqnarray*}
&&\sum_{j=0}^{M}\gamma_{M,j}\left(\frac{d^{j}}{dz^{j}}f_{1}(z)\right)\left(\frac{d^{M-j}}{dz^{M-j}}g_{1}(z)\right)\\
&=&\sum_{j=0}^{M}\gamma_{M,j}z^{-M-2}\left(\left(z^{2}\frac{d}{dz}\right)^{j}z^{-j+1}f_{1}(z)\right)\left(\left(z^{2}\frac{d}{dz}\right)^{M-j}z^{-M+j+1}g_{1}(z)\right).
\end{eqnarray*}
Now we change coordinates $\zeta=-\frac{1}{z}$ and therefore $z^{2}\frac{d}{dz}=\frac{d}{d\zeta}$.
Then we have
\begin{eqnarray*}
&&\sum_{j=0}^{M}\gamma_{M,j}z^{-M-2}\left(\left(z^{2}\frac{d}{dz}\right)^{j}z^{-j+1}f_{1}(z)\right)\left(\left(z^{2}\frac{d}{dz}\right)^{M-j}z^{-M+j+1}g_{1}(z)\right)\\
&=&\sum_{j=0}^{M}\gamma_{M,j}\zeta^{M+2}\left(\frac{d^{j}}{d\zeta^{j}}\zeta^{j-1-q}f_{2}(\zeta)\right)\left(\frac{d^{M-j}}{d\zeta^{M-j}}\zeta^{M-j-1-q'}g_{2}(\zeta)\right)\\
&=&\sum_{j=0}^{M}\gamma_{M,j}\zeta^{M+2}\left(\sum_{i=0}^{j}\binom{j}{i}\prod_{l=1}^{i}(j-q-l)\zeta^{j-1-q-i}\frac{d^{j-i}}{d\zeta^{j-i}}f_{2}(\zeta)\right)\\
&&\times\left(\sum_{i'=0}^{M-j}\binom{M-j}{i'}\prod_{k=1}^{i'}(M-j-q'-k)\zeta^{M-j-1-q'-i'}\frac{d^{M-j-i'}}{d\zeta^{M-j-i'}}g_{2}(\zeta)\right)\\
&=&\zeta^{2M-q-q'}\sum_{j=0}^{M}\gamma_{M,j}\left(\frac{d^{j}}{d\zeta^{j}}f_{2}(\zeta)\right)\left(\frac{d^{M-j}}{d\zeta^{M-j}}g_{2}(\zeta)\right)\\
&&+\mathrm{Obstructions}.
\end{eqnarray*}
Let us look at a general obstruction term 
$$\zeta^{2M-q-q'-(M-s-t)}\left(\frac{d^{s}}{d\zeta^{s}}f_{2}(\zeta)\right)\left(\frac{d^{t}}{d\zeta^{t}}g_{2}(\zeta)\right),$$
for arbitrary $s,t\leq M$. The constant in front of this term is given by
$$\sum_{j=s}^{M-t}\gamma_{M,j}\binom{j}{j-s}\prod_{l=1}^{j-s}(j-q-l)\binom{M-j}{M-j-t}\prod_{k=1}^{M-j-t}(M-j-q'-k).$$
For $s+t=M-1$, we obtain the following $M$ equations:
\begin{equation}\label{mainequation}
\fbox{$\displaystyle \gamma_{M,s}(M-s)(M-s-q'-1)+\gamma_{M,s+1}(s+1)(s-q)=0,$}
\end{equation}
for $s=0,...,M-1$. If $q\not\in\{0,1,...,M-1\}$ or $q'\not\in\{0,1,...,M-1\}$, this determines uniquely (up to scale): 
\begin{eqnarray*}
\gamma_{M,j}&=&(-1)^{j}\binom{M}{j}\prod_{i=j}^{M-1}(q-i)\prod_{i=1}^{j}(q'-M+i)\\
&=&(-1)^{j}\binom{M}{j}\prod_{i=j}^{M-1}(q-i)\prod_{i=M-j}^{M-1}(q'-i).
\end{eqnarray*}
Having defined $\gamma_{M,j}$ like this, the constants in front of the other obstruction terms are given by
$$\prod_{i=s}^{M-1}(q-i)\prod_{i=t}^{M-1}(q'-i)(-1)^{M-s-t}\frac{M!}{s!t!}\sum_{j=s}^{M-t}(-1)^{j}\frac{1}{(j-s)!(M-j-t)!}.$$
If $t=M-s$, then this constant is exactly $\gamma_{M,s}$. If $s+t\not=M$, then this vanishes due to the fact that
$$\sum_{i=0}^{n}(-1)^{i}\binom{n}{i}=0,$$
for all $n\geq 1$.
So the only terms we are left with are
\begin{eqnarray*}
&&\zeta^{2M-q-q'}\sum_{j=0}^{M}\gamma_{M,j}\left(\frac{d^{j}}{d\zeta^{j}}f_{2}(\zeta)\right)\left(\frac{d^{M-j}}{d\zeta^{M-j}}g_{2}(\zeta)\right).
\end{eqnarray*}
\end{proof}

Therefore, we have proved:

\begin{theorem}
If $q\not\in\{0,1,...,M-1\}$ or $q'\not\in\{0,1,...,M-1\}$, there is exactly one $M$-th order bilinear differential pairing between section of $\mathcal{O}(\x{q})$ and $\mathcal{O}(\x{q'})$ on the Riemann sphere that is invariant with respect to Moebius transformations. This pairing
is given by:
\begin{eqnarray*}
P:\mathcal{O}(\x{q})\times\mathcal{O}(\x{q'})&\rightarrow&\mathcal{O}(\x{q+q'-2M})\\
P(f,g)(z)&=&\sum_{j=0}^{M}\gamma_{M,j}\left(\left(\frac{d}{dz}\right)^{j}f(z)\right)\left(\left(\frac{d}{dz}\right)^{M-j}g(z)\right),
\end{eqnarray*}
with
$$\gamma_{M,j}=(-1)^{j}\binom{M}{j}\prod_{i=j}^{M-1}(q-i)\prod_{i=M-j}^{M-1}(q'-i).$$
\end{theorem}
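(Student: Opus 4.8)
The plan is to treat the theorem as the synthesis of the invariance analysis already carried out in the setup together with Lemma 2, so that almost all of the computational work is in place before the proof begins. First I would recall that, writing the pairing in the $z$-chart as $P(f,g)_1(z)=\sum_{i,j}a_{ij}(z)(\tfrac{d}{dz})^i f_1(z)\,(\tfrac{d}{dz})^j g_2(z)$, the three transformations that generate $\mathrm{SL}_2\C$ already dictate its structure: translation in $z$ forces the $a_{ij}(z)$ to be constant, translation in $\zeta$ forces the $a_{ij}(\zeta)$ to be constant, and the diagonal scaling transformation forces the weight relation $p=q+q'-2M$ while annihilating every term with $i+j\neq M$. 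Hence any $M$-th order invariant pairing is necessarily of the claimed shape $\sum_{j=0}^M\gamma_{M,j}(\tfrac{d}{dz})^j f_1(z)\,(\tfrac{d}{dz})^{M-j}g_2(z)$, with constants $\gamma_{M,j}$ still to be pinned down.

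The one remaining requirement is that $P(f,g)$ actually be a section of $\mathcal{O}(\x{q+q'-2M})$, i.e.\ that $P(f,g)_1(z)=\zeta^{-(q+q'-2M)}P(f,g)_2(\zeta)$ hold on the overlap $\zeta=-z^{-1}$. This is precisely the equation appearing in Lemma 2. Under the hypothesis $q\notin\{0,\dots,M-1\}$ or $q'\notin\{0,\dots,M-1\}$, Lemma 2 shows that this equation has a solution unique up to overall scale and produces the explicit constants $\gamma_{M,j}=(-1)^j\binom{M}{j}\prod_{i=j}^{M-1}(q-i)\prod_{i=M-j}^{M-1}(q'-i)$. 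Uniqueness of the pairing is then immediate: any invariant $M$-th order pairing has the reduced form above, and its coefficients must satisfy the Lemma 2 equation, hence agree with these $\gamma_{M,j}$ up to scale.

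For existence I would simply verify that the pairing built from these specific $\gamma_{M,j}$ meets every condition. It has constant coefficients and is supported on $i+j=M$, so it is invariant under the three generators and therefore, since those transformations generate the whole group, under all of $\mathrm{SL}_2\C$; and Lemma 2 guarantees that it satisfies the gluing condition and so genuinely lands in $\mathcal{O}(\x{q+q'-2M})$. No further conditions arise, so the explicitly described pairing both exists and is the only one.

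The step I would watch most carefully is the bookkeeping that separates the two logically distinct demands---invariance under the group action, and being a well-defined section---and confirms that the former reduces the pairing to its $i+j=M$ form while the latter is exactly the content of Lemma 2. The genuinely hard computation, namely the vanishing of all obstruction terms except the diagonal one and the resulting recurrence~(\ref{mainequation}), already lives inside Lemma 2; once that is granted, the theorem is essentially a matter of assembling existence and uniqueness from the pieces established above.
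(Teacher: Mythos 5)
Your proposal is correct and follows essentially the same route as the paper: the paper's own proof is precisely the assembly you describe, namely the preceding invariance analysis (translations in $z$ and $\zeta$ force constant coefficients, the scaling transformation forces $p=q+q'-2M$ and kills all terms with $i+j\neq M$) combined with Lemma 2, which pins down the $\gamma_{M,j}$ up to scale via the gluing condition. Your explicit separation of the two requirements (group invariance versus being a well-defined section) and the existence check are exactly what the paper leaves implicit in its one-line "Therefore, we have proved" conclusion.
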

The formula in the theorem above also shows a general dichotomy. There is a critical set of weights $K=\{0,1,...,M-1\}$. If $q$ or $q'$ do not lie in $K$, then there exists exactly one pairing as described
above. However, if $q,q'\in K$, then several peculiar things can happen. Firstly, for $q+q'>M-2$, $P(f,g)=0$. However, (\ref{mainequation}) can still be solved and there exists an invariant bilinear differential pairing. Secondly, in~\cite{g2} it is shown that, for $q\in K$, $\mathcal{D}^{q+1}(f)=\frac{d^{q+1}}{dz^{k+1}}f$ is a linear invariant differential operator.
So, for example for $q=q'=M-1$, there are two independent invariant bilinear differential pairings given by $f\mathcal{D}^{M}g$ and $g\mathcal{D}^{M}f$. This problem will accompany us throughout this thesis, where we will always have to exclude certain numbers (weights, representations) in order to obtain a classification of invariant differential pairings.
One main aim will be to associate a certain meaning to these excluded numbers as we have done here: an excluded number corresponds to the existence of a linear invariant differential  operator.

\subsubsection{Remark}
The formula above also appears in~\cite{ol}, Theorem 3.46, as the $M$-th transvectant of two polynomials $f$ and $g$.

\subsection{Outline of this thesis}
In the first chapter we give the basic background to representation theory and parabolic geometry that is needed in order to understand this thesis. The material (especially about Lie algebras and 
representation theory) is fairly standard and may be skipped by anyone who is familiar with the subject. However, many notations are introduced that will be used freely throughout this thesis.
\par
In the second chapter the basic notion of an invariant bilinear differential pairing is introduced. These pairings are the central objects of our study. Firstly, we describe pairings analytically in terms of
(weighted-) jet bundles and define (weighted) {\bf bi-jet} bundles. Then we give an algebraic definition and study pairings algebraically on homogeneous spaces which are the model spaces for the various
types of parabolic geometries. The notion of a {\bf generalized bi-Verma module} is introduced and we explain how for invariant bilinear differential pairings these modules play the same role as generalized Verma modules
play for invariant linear differential operators. More precisely, invariant differential pairings can be described by singular vectors in generalized bi-Verma modules. A first large class of invariant differential pairings is constructed
with this method. Finally, we discuss in detail the notion of invariance for a general curved parabolic geometry and end by describing the geometric structures that underlie parabolic geometries. Apart from 
the material in the last section (which is taken directly from various sources in the literature) the concepts introduced are new. The computations, however, are modelled on those that are used for
describing invariant differential operators.
\par
The third chapter gives a classification of all first order bilinear invariant differential pairings for all non-degenerate cases. It is explained how a degenerate case corresponds to the existence of an invariant linear
differential operator. Several examples are given. The methods used are modeled on the methods of~\cite{css}. A (weaker version) of this chapter has been accepted for publication~\cite{k}.
\par
The fourth chapter deals with higher order invariant differential pairings for which we can write down an explicit formula including curvature correction terms. The remarkable result is that those
pairings only depend on the order and neither on the specific parabolic geometry nor on the vector bundles involved. These pairings are obtained by using Ricci-corrected derivatives as introduced in~\cite{cds}.
\par
In order to construct higher order invariant differential pairings, the fifth chapter firstly reviews some deep results about Lie algebra cohomology.
Essentially using the ideas from~\cite{bceg}, we then define certain tractor bundles (the {\bf $M$-bundles}) that encode the information about the (weighted) $M$-jets of sections of vector bundles.
An easy argument shows that the tensor products of those tractor bundles decompose into irreducible components that are exactly the possible targets for our pairings. The final step is to define
{\bf splitting operators} that include the bundles in question into these $M$-bundles. At this stage, the results cease to be 100\% satisfactory. To be more precise, as in the classification of first order invariant differential pairings, certain representations (or rather certain
geometric weights) have to be excluded. However, we can only conjecture that every excluded weight corresponds to the existence of an invariant linear differential operator.
We will come back to this point in the last section about open problems. However, modulo the problem of excluding too many representation than absolutely necessary, all higher order pairings
with exactly the right multiplicity that we expect to exist and that are curved versions of pairings on the homogeneous model spaces can be constructed this way.
The splitting operators defined in this section are a generalization of the curved Casimir operator defined in~\cite{cs3}. Finally, we examine closely higher order pairings for projective geometry. In this case, all the excluded representations correspond to the existence of invariant linear differential operators. This result has also been accepted for publication in~\cite{k}.
\par
The objective of the last chapter is twofold. Firstly, we take a closer look at three examples of parabolic geometry that we refer to throughout the thesis: projective geometry, conformal 
geometry and CR geometry. Then we describe the tractor calculus for those geometries which enables one to carry out explicit computations with tractors in order to get a better understanding of the abstract theory which is used in Chapter five. 
Secondly, we explicitly construct some special splittings and show how this leads to explicit formulae for higher order pairings. In particular, one can in theory write down all the higher order pairings between sections of those bundles for which we have written down the splitting, even those pairings with multiplicity. In practice this can be extremely tedious due to complicated expressions for some tractors. The splittings that we construct for projective 
geometry were first written down in~\cite{f1} and those for conformal geometry (and conformal weight 0) in~\cite{e2}, but we tried to construct these splittings in a unified manner which is more adapted to
the general flavour of this thesis. 
\par
The appendix describes the BGG sequences of projective, conformal and CR geometry. These sequences are not directly used in any part of the thesis, but they are mentioned every now and then when we
talk about standard operators. For the convenience of the reader, we have included them as an appendix.

\subsubsection{The achievements of this thesis}
The most important new results and concepts that we have introduced and proved in this paper are:
\begin{enumerate}
\item
The conceptual description of bilinear differential pairings in terms of bi-jet bundles, which are constructed out of jet bundles.
\item 
The algebraic criterion for the existence of invariant bilinear differential pairings on homogeneous spaces via singular vectors in generalized bi-Verma modules.
\item
The classification of non-degenerate first order invariant bilinear differential pairings and the characterization of degeneracy in terms of the existence of invariant linear differential operators.
\item
A general formula including curvature correction terms for a certain class of higher order invariant bilinear differential pairings.
\item
The construction of general higher order invariant bilinear differential pairings under the exclusion of certain representations.
\item
A precise analysis of higher order invariant bilinear differential pairings on manifolds with a projective structure including the interpretation of each excluded geometric weight in terms of the existence of an invariant linear differential operator.
\end{enumerate}

\subsubsection{A note on the length of this thesis}
The author is aware that this thesis is probably a bit longer than it necessarily needs to be. Several calculations are taken from various sources in the literature and have just been rewritten with our conventions and notations. Wherever this is the case, an explicit reference to the original source will be given. The reason for including all those explicit calculations is simple: this text is supposed to be as self contained as possible and accessible to someone with relatively little background knowledge.  The decisions as to which details to include and which to leave out are obviously based on the background knowledge of the author. For that I apologize.

\chapter{Background}
In this first chapter we will give the necessary background needed to understand the theory of pairings as presented in this thesis. Most definitions are absolutely standard, but they have to be stated in order to introduce notations that will be used freely throughout this thesis. The first part of this chapter deals with Lie algebras and representation theory, whereas the second part is devoted to the introduction of parabolic geometries.

\section{Lie algebras}

\subsection{Root spaces}
Let $\mathfrak{g}$ be a complex semisimple Lie algebra.
We fix a {\bf Cartan subalgebra} $\h$ of $\g$ and define
$$\g_{\alpha }=\{X\in \g:\;[H,X]=\alpha (H)X\;\forall \;H\in \h\}$$
for all $\alpha \in \h^{*}$. Then we set
$$\Delta=\Delta  (\g,\h)=\{\alpha \in \h^{*}:\;\g_{\alpha }\not= \emptyset\;,\;\alpha \not=0\}$$
and call the elements in $\Delta  (\g,\h)$ {\bf roots} of $\g$ relative to $\h$. According to a standard result in linear algebra we get a root space decomposition
$$\g=\h\oplus \bigoplus_{\alpha \in \Delta  (\g,\h)}\g_{\alpha }.$$

\subsubsection{Example}
\begin{enumerate}
\item
Look at 
$$A_{l}=\mathfrak{sl}_{l+1}\C=\{X\in\mathfrak{gl}_{l+1}\C\;:\; \tr(X)=0\}$$
with Cartan subalgebra 
$$\mathfrak{h}=\left\{\sum_{i=1}^{n+1}a_{i}H_{i}:\;\sum_{i=1}^{l+1}a_{i}=0\right\},$$
where $H_{i}$ is the diagonal matrix which has a one in the $i$-th entry and zeroes elsewhere. Therefore we find
$$\h^{*}=\mathrm{span}\{\epsilon_{i}\}_{i=1,...,l+1}/\left(\sum_{i=1}^{l+1}\epsilon_{i}=0\right),$$
where $\epsilon_{i}$ is defined by $\epsilon_{i}(H_{j})=\delta _{ij}$. We will denote the equivalence class of $\epsilon_{i}$ in $\h^{*}$ also by $\epsilon_{i}$. If we
denote the matrix which has a $1$ in the $i$-th row and $j$-th
column and zeroes elsewhere by $E_{i,j}$, we can calculate
$$ [H,E_{i,j}]=(a_{i}-a_{j})E_{ij},$$
for $H=\sum_{i=1}^{l+1}a_{i}H_{i}$. Therefore $E_{i,j}\in \g_{\epsilon_{i}-\epsilon_{j}}$ and the set of roots in $\mathfrak{sl}_{l+1}\C$ is just $\{\epsilon_{i}-\epsilon_{j}\}_{i,j=1,...,l+1}$.
\item
The next example is 
$$D_{l}=\mathfrak{so}_{2l}\C=\left\{\left(\begin{array}{cc}
A&B\\
C&D
\end{array}\right)\in \mathfrak{gl}_{2l}\C\;:\;D=-A^{T}\;\text{and}\;B,C\;\text{are skew symmetric}\right\},$$
with Cartan subalgebra 
$$\h=\mathrm{span}\{H_{i}=E_{i,i}-E_{l+i,l+i},\;i=1,...,l\}.$$
This yields
$$\h^{*}=\mathrm{span}\{\epsilon_{i}\;:\;\epsilon_{i}(H_{j})=\delta_{i,j}\}$$
and one can check that the roots are given by $\Delta(\g,\h)=\{\pm \epsilon_{i}\pm\epsilon_{j}\}_{i\not=j}$.
\item
The final example is 
$$B_{l}=\mathfrak{so}_{2l+1}\C=\left\{\left(\begin{array}{cc}
A &v\\
-v^{T}&0
\end{array}\right)\;:\;A\in\mathfrak{so}_{2l}\C,v \in\C^{2l}\right\},$$
where we can take $\h$ to be the Cartan subalgebra of $\mathfrak{so}_{2l}\C$ included in $\mathfrak{so}_{2l+1}\C$ in the obvious manner.
Hence  
$$\h^{*}=\mathrm{span}\{\epsilon_{i}\;:\;\epsilon_{i}(H_{j})=\delta_{i,j}\}$$
and 
$$\Delta(\g,\h)=\{\pm\epsilon_{i}\pm\epsilon_{j}\}_{i\not=j}\cup\{\pm\epsilon_{i}\}_{i=1,...,l}.$$
\end{enumerate}

\subsection{Dynkin diagrams}
As is proved in~\cite{h}, p.~48, every Cartan subalgebra $\h$ has a
{\bf basis} 
$$\mathcal{S}=\{\alpha_{1},...,\alpha_{l}\}\subseteq \Delta  (\g,\h),$$
so that every
root may be written as a linear combination of elements in
$\mathcal{S}$ with all non-negative or all non-positive
coefficients. $\mathcal{S}$ is then called a system of
{\bf simple roots} of $\g$. We will fix such a basis
$\mathcal{S}$, which induces a partial ordering
$$\lambda \succeq \mu \Leftrightarrow \lambda -\mu =\sum_{i}a_{i}\alpha _{i}\; \text{with}\;\alpha _{i}\in \mathcal{S}\;\text{and}\;a_{i}\geq 0,$$
so that we can set $\Delta ^{+}(\g,\h)=\{\alpha \in \Delta
(\g,\h):\;\alpha \succ 0\}$ for the set of {\bf positive
roots}.
\par
For every $X\in\g$, we have a canonical endomorphism $\mathrm{ad}(X):\g\rightarrow\g$ with 
$$\mathrm{ad}(X)(Y)=[X,Y].$$
This allows us to define a symmetric bilinear form, the so-called {\bf Killing form}, by
$$B(X,Y)= \tr (\mathrm{ad}(X)\circ \mathrm{ad}(Y))$$
on $\g$ which is non-degenerate if and only if $\g$ is semisimple
(see~\cite{h}, p.~22). We restrict this symmetric bilinear form to a non-degenerate form on $\h$ (\cite{h}, p.~37) and therefore on $\h^{*}$. The
form on $\h^{*}$ can be defined by
$$B(\alpha ,\beta)=B(h_{\alpha },h_{\beta })\;\forall \; \alpha ,\beta \in \h^{*},$$
where $h_{\alpha }$ is the unique element in $\h$ such that $\alpha (H)=B(h_{\alpha },H)$ for all $H\in \h$. For any root $\alpha $ we define the
co-root $\alpha ^{\vee }=2\alpha /B(\alpha ,\alpha)$ and obtain the {\bf Cartan integers}
$$c_{ij}=B(\alpha _{i},\alpha _{j}^{\vee}),$$
for all pairs of simple roots $\alpha _{i},\alpha _{j}\in
\mathcal{S}$. If $\alpha ,\beta $ are roots, then $B(\alpha,\beta ^{\vee})\in \Z$ (\cite{h}, p.~40). It can easily be seen
that $\mathcal{S}$ spans $\Delta  (\g,\h)$ over the rational
numbers (\cite{h}, p.~37 and p.~39) so that the $\Q$ subspace $E_{\Q}$ of
$\h^{*}$ spanned by all the roots has $\Q$-dimension
$l=\dim_{\C}\h^{*}$. If we also allow real coefficients, we get a
real vector space $E=\R\otimes _{\Q}E_{\Q}$, i.e.~$\h^{*}$ is the
complexification of $E$. In this space $E$ we can look at the
angle $\Theta $ of two roots $\alpha ,\beta $ and obtain $B(\alpha ,\beta ^{\vee})B(\beta ,\alpha^{\vee})=4\cos^{2}\Theta $. Therefore our Cartan integers
can only take the values $0,\pm 1,\pm 2,\pm 3$ and we can draw a
{\bf Dynkin diagram}, which is a graph such that nodes correspond
to simple roots, and edges determine $c_{ij}$ according to
\begin{enumerate}
\item $B(\alpha _{i},\alpha _{i}^{\vee})=2$,
\item $\alpha _{i}\not=\alpha _{j}$ are connected if and only if $B(\alpha _{i},\alpha _{j}^{\vee})\not=0$,
\item 
\begin{enumerate}
\item
$\oo{\alpha}{\beta}
\quad\Leftrightarrow \quad B(\alpha ,\beta^{\vee})=-1,\; B(\beta,\alpha^{\vee})=-1$,
\item
$\Btwoo{\alpha}{\beta}
\quad\Leftrightarrow \quad B(\alpha ,\beta^{\vee})=-2,\; B(\beta,\alpha^{\vee})=-1$ and
\item
$
\begin{picture}(1,1)
\put(0,0){$\bullet$}
\put(1,0){$\bullet$}
\put(0.2,0.3){\line(1,0){1}}
\put(0.2,0.2){\line(1,0){1}}
\put(0.2,0.1){\line(1,0){1}}
\put(0,0.5){\scriptsize $\alpha$}
\put(1,0.5){\scriptsize $\beta$}
\put(0.5,0){$\rangle$}
\end{picture}\hspace{0.6cm}\Leftrightarrow \quad B(\alpha ,\beta^{\vee})=-3,\; B(\beta,\alpha^{\vee})=-1$. 
\end{enumerate}
\end{enumerate}
It is the central theorem of semisimple complex Lie algebras that
the structure of $E$ and the knowledge of the Dynkin diagram
determines and is determined by $\g$ (see~\cite{h}, p.~57,65).

\subsubsection{Example}
\begin{enumerate}
\item
In the case of $A_{l}$ we can take
$$\mathcal{S}=\{\alpha _{i}=\epsilon_{i}-\epsilon_{i+1},\;i=1,...,l\}$$
and calculate the Killing form as
$$B\left(\sum_{i}a_{i}\epsilon_{i},\sum_{j}b_{j}\epsilon_{j}\right)=\frac{1}{2(l+1)}\left(\sum_{i}a_{i}b_{i}-\frac{1}{l+1}\sum_{i,j}a_{i}b_{j}\right).$$
Therefore we obtain the Dynkin diagram of $\mathfrak{sl}_{l+1}\C$:
$$\begin{picture}(5,2)
\put(0,1){\makebox(0,0){$\bullet$}}
\put(0,1){\line(1,0){1}}
\put(1,1){\makebox(0,0){$\bullet$}}
\put(1,1){\line(1,0){1}}
\put(2,1){\makebox(0,0){$\bullet$}}
\put(3.5,1){\makebox(0,0){$\bullet$}}
\put(2.5,1){...}
\put(4.5,1){\makebox(0,0){$\bullet$}}
\put(3.5,1){\line(1,0){1}}
\put(5,1){,}
\end{picture}$$
where there are $l$ nodes.
\item
For $D_{l}$ we can take 
$$\mathcal{S}=\{\alpha_{i}=\epsilon_{i}-\epsilon_{i+1},\;i=1,...,l-1,\;\alpha_{l}=\epsilon_{l-1}+\epsilon_{l}\}$$
and calculate the Killing form as
$$B\left(\sum_{i}a_{i}\epsilon_{i},\sum_{j}b_{j}\epsilon_{j}\right)=\frac{1}{4l-4}\left(\sum_{i}a_{i}b_{i}\right).$$
The Dynkin diagram has the form
$$\begin{picture}(5,2)
\put(0,1){\makebox(0,0){$\bullet$}}
\put(1,1){\makebox(0,0){$\bullet$}}
\put(3.5,1){\makebox(0,0){$\bullet$}}
\put(1.5,1){...}
\put(2.5,1){\makebox(0,0){$\bullet$}}
\put(4.5,2){\makebox(0,0){$\bullet$}}
\put(4.5,0){\makebox(0,0){$\bullet$}}
\put(0,1){\line(1,0){1}}
\put(2.5,1){\line(1,0){1}}
\put(3.5,1){\line(1,1){1}}
\put(3.5,1){\line(1,-1){1}}
\end{picture},$$
where there are $l$-nodes.
\item
For $B_{l}$ we can take
$$\mathcal{S}=\{\alpha_{i}=\epsilon_{i}-\epsilon_{i+1},\;i=1,...,l-1,\;\alpha_{l}=\epsilon_{l}\}$$
and calculate the Killing form as
$$B\left(\sum_{i}a_{i}\epsilon_{i},\sum_{j}b_{j}\epsilon_{j}\right)=\frac{1}{4l-2}\left(\sum_{i}a_{i}b_{i}\right).$$
The Dynkin diagram has the form
$$\ooo{\;\;}{\;\;}{\;\;}\;...\;\btwo{\;\;}{\;\;},$$
where there are $l$-nodes.
\end{enumerate}

\subsection{Parabolic subalgebras}
Let $G$ be a complex semisimple and simply connected Lie group with Lie algebra $\g$.
{\bf Borel subalgebras} of $\g$ are the maximal solvable
subalgebras of $\g$ and every Borel subalgebra is $G$-conjugate to
the standard Borel subalgebra (\cite{h}, p.~84)
$$\B=\h\oplus \mathfrak{n}$$
with
$$\mathfrak{n}=\bigoplus_{\alpha \in \Delta ^{+}(\g,\h)}\g_{\alpha }.$$
A {\bf parabolic subalgebra} $\p$ of $\g$ is a subalgebra which contains a Borel subalgebra. According to the standard form for $\B$ we get a standard form for $\p$:
Let $\mathcal{S}_{\p}\subseteq \mathcal{S}$ be any subset. Then we set
$$\Delta(\g_{0},\h)=\mathrm{span}\mathcal{S}_{\p}\cap \Delta (\g,\h),\quad \Delta (\p_{+},\h)=\Delta^{+}(\g,\h)\backslash\Delta(\g_{0},\h)$$
and accordingly
$$\mathfrak{g}_{0}=\h\oplus \bigoplus_{\alpha \in \Delta (\mathfrak{g}_{0},\h)}\g_{\alpha },\quad \mathfrak{p}_{+}=\bigoplus_{\alpha \in \Delta (\mathfrak{p}_{+},\h)}\g_{\alpha }.$$
$\g_{0}$ is reductive and can hence be written as $\g_{0}=\g_{0}^{S}+\mathfrak{z}(\g_{0})$, where $\mathfrak{z}(\g_{0})$ is the center of $\mathfrak{g}_{0}$ and $\mathfrak{g}_{0}^{S}=[\g_{0},\g_{0}]$
is semisimple of rank $\mid \mathcal{S}_{\p}\mid $. So finally we get the {\bf Levi-decomposition}
$$\p=\mathfrak{g}_{0}\oplus \mathfrak{p}_{+}.$$
To represent $\p$ we take the Dynkin diagram for $\g$ and cross through all nodes which lie in $\mathcal{S}\backslash \mathcal{S}_{\p}$.
\par
If we do not have any crossed through nodes, then $\mathcal{S}_{\p}=\mathcal{S}$ and
therefore $\p=\g$. If there are crosses through every node, then $\mathcal{S}_{\p}=\emptyset$ and we get $\p=\B$.
\par
\vspace{0.2cm}

\subsubsection{Example}
For $A_{l}$, $\mathfrak{n}$ consists of strictly upper triangular matrices, because the positive roots are the differences $\epsilon_{i}-\epsilon_{j}$ where $i<j$ and therefore
$\B$ consists of all the upper triangular matrices in $A_{l}$. We will demonstrate one particular example of a parabolic subalgebra, the general case being similar.
\par
\vspace{0.2cm}
Let $l=3$ and $\mathcal{S}_{\p}=\{\alpha_{2},\alpha _{3}\}$, then $\p$ is given by
$$\p=\left\{\left(\begin{array}{cccc} *&*&*&*\\
                                   0&*&*&*\\
                                   0&*&*&*\\
                                   0&*&*&*
\end{array}\right)\in \SL\right\}.$$
In this example we have $E_{3,2}\in \g_{-\alpha _{2}}$, $E_{4,3}\in \g_{-\alpha _{3}}$ and $E_{4,2}\in \g_{-(\alpha _{2}+\alpha _{3})}$ lying in $\p$ in addition to the upper triangular matrices. $\p$ is represented via the Dynkin diagram as follows:
$$\xoo{\;\;}{\;\;}{\;\;}.$$

There is indeed more structure to the parabolic subalgebra $\p$ given by a so-called {\bf grading}.

\subsection{$|k_{0}|$-graded Lie algebras}\label{grading}
Let $\mathcal{S}=\{\alpha_{i}\}_{i=1,...,l} $ be the simple roots of $\g$ with corresponding {\bf fundamental weights} $\{\omega_{j}\}_{j=1,...,l}$, i.e.~$B(\omega_{j},\alpha_{i}^{\vee})=\delta_{i,j}$. The parabolic subalgebra $\p$ is specified by a subset $\mathcal{S}_{\p}$ of the simple roots, so that 
$$\mathcal{S}\backslash\mathcal{S}_{\p}=\{\alpha_{i}\}_{i\in I},$$
where $I=\{i_{1},...,i_{l_{0}}\}$. We set $J=\{j_{1},...,j_{l-l_{0}}\}=\{1,...,l\}\backslash\{i_{1},...,i_{l_{0}}\}$. Then we can define
$$\g_{j}=\bigoplus_{\alpha\in\Delta_{j}}\g_{\alpha},\;\text{where}\;\Delta_{j}=\{\alpha=\sum_{i=1}^{l}n_{i}\alpha_{i}\in\Delta(\g,\h)\;:\;\sum_{i\in I}n_{i}=j\}.$$
This yields a $|k_{0}|$-grading:
$$\g=\g_{-k_{0}}\oplus...\oplus\g_{-1}\oplus\g_{0}\oplus\g_{1}\oplus...\oplus\g_{k_{0}},$$
where $[\g_{i},\g_{j}]\subset \g_{i+j}$ and $\g_{-}=\g_{-k_{0}}\oplus...\oplus\g_{-1}$ is generated by $\g_{-1}$. One can check that the non-negative part of this grading $\g_{0}\oplus...\oplus\g_{k_{0}}=\p$ and $\p_{+}=\g_{1}\oplus...\oplus\g_{k_{0}}$. The integer $k_{0}$ is easily computed: write the highest root $\gamma=\sum_{i=1}^{l}n_{i}\alpha_{i}$ as a sum of simple roots. Then $k_{0}=\sum_{i\in I}n_{i}$.
The following facts are known about this grading (see~\cite{cs}, Proposition 2.2):
\begin{enumerate}
\item
There exists a grading element $E\in\mathfrak{z}(\g_{0})$ such that the decomposition of
$\g$ corresponds to a decomposition under the adjoint action of $E$ into eigenspaces, i.e.~$[E,X]=jX$ if and only if $X\in\g_{j}$. 
\item
$B(\g_{i},\g_{j})=0$ for all $j\not=-i$ and the Killing form $B(.,.)$ can be used to define isomorphisms $\g_{-i}\cong\g_{i}^{*}$ for $i=1,...,k_{0}$.
\end{enumerate}

\subsubsection{Remark}
In the complex setting, a grading is equivalent to having a parabolic subalgebra $\p$, see~\cite{cs}. In the real setting, this is not the case. However, we will start of with a grading of a real Lie algebra $\g_{\R}$ that induces a grading of the complexification $\g$ and hence a parabolic subalgebra $\p\subset\g$. The representation theory will then be applied to the (complex) pair $(\g,\p)$.

\subsubsection{Example}\label{gradingexample}
\begin{description}
\item{(a)}
The grading corresponding to the parabolic subalgebra
$$\begin{picture}(5,2)
\put(0,1){\makebox(0,0){$\cross$}}
\put(1,1){\makebox(0,0){$\bullet$}}
\put(2,1){\makebox(0,0){$\bullet$}}
\put(1,1){\line(1,0){1}}
\put(3.5,1){\makebox(0,0){$\bullet$}}
\put(2.5,1){...}
\put(4.5,1){\makebox(0,0){$\bullet$}}
\put(0,1){\line(1,0){1}}
\put(3.5,1){\line(1,0){1}}
\end{picture}$$
of $A_{l}$ is given by $\g=\g_{-1}\oplus\g_{0}\oplus\g_{1}$, with
$$\g_{-1}=\left\{\left(\begin{array}{cc}
0 & 0\\
v & 0
\end{array}\right)\in\g\right\},\;
\g_{0}=\left\{\left(\begin{array}{cc}
x & 0\\
0 & A
\end{array}\right)\in\g\right\}\;\text{and}\;
\g_{1}=\left\{\left(\begin{array}{cc}
0 & \tilde{v}^{T}\\
0 & 0
\end{array}\right)\in\g\right\}.$$
Here we have $v,\tilde{v}\in\C^{l}$, $x\in\C$, $A\in\mathfrak{gl}_{l}\C$ with $\tr(A)+x=0$.
\item{(b)}
The grading corresponding to the parabolic subalgebra
$$\begin{picture}(5,4)
\put(0,1){\makebox(0,0){$\cross$}}
\put(1,1){\makebox(0,0){$\bullet$}}
\put(3.5,1){\makebox(0,0){$\bullet$}}
\put(1.5,1){...}
\put(2.5,1){\makebox(0,0){$\bullet$}}
\put(4.5,2){\makebox(0,0){$\bullet$}}
\put(4.5,0){\makebox(0,0){$\bullet$}}
\put(0,1){\line(1,0){1}}
\put(2.5,1){\line(1,0){1}}
\put(3.5,1){\line(1,1){1}}
\put(3.5,1){\line(1,-1){1}}
\end{picture}$$
of $D_{l}$ is given by $\g=\g_{-1}\oplus\g_{0}\oplus\g_{1}$, with
$$\g_{-1}=\left\{\left(\begin{array}{cccc}
0 &0&0&0\\
v&0&0&0\\
0&w^{T} & 0&-v^{T}\\
-w&0&0&0
\end{array}\right)\in\g\right\},\;
\g_{0}=\left\{\left(\begin{array}{cccc}
x&0&0&0\\
0&A&0&B\\
0&0 & -x&0\\
0&C&0&-A^{T}
\end{array}\right)\in\g\right\}$$
and
$$\g_{1}=\left\{\left(\begin{array}{cccc}
0 &\tilde{v}^{T}&0&\tilde{w}^{T}\\
0&0&-\tilde{w}&0\\
0&0 & 0&0\\
0&0&-\tilde{v}&0
\end{array}\right)\in\g\right\}.$$
Here we have $v,\tilde{v},w,\tilde{w}\in\C^{l-1}$, $A,B,C\in\mathfrak{gl}_{l-1}\C$, $x\in\C$ and $B,C$ have to be skew symmetric.
\par
The grading corresponding to
$$\xoo{\;\;}{\;\;}{\;\;}\;...\;\btwo{\;\;}{\;\;}$$
can be described similarly with
$$\Delta_{-1}=\{-(\epsilon_{1}\pm\epsilon_{i})\}_{i=2,...,l}\cup\{-\epsilon_{1}\},\;\;\Delta_{1}=-\Delta_{-1}\;\text{and}\;\Delta_{0}=\Delta\backslash\{\Delta_{1}\cup\Delta_{-1}\}.$$
\item{(c)}
The grading corresponding to the parabolic subalgebra 
$$\begin{picture}(5,2)
\put(0,1){\makebox(0,0){$\cross$}}
\put(1,1){\makebox(0,0){$\bullet$}}
\put(2,1){\makebox(0,0){$\bullet$}}
\put(1,1){\line(1,0){1}}
\put(3.5,1){\makebox(0,0){$\bullet$}}
\put(2.5,1){...}
\put(4.5,1){\makebox(0,0){$\cross$}}
\put(0,1){\line(1,0){1}}
\put(3.5,1){\line(1,0){1}}
\end{picture}$$
of $A_{l+1}$ is given by $\g=\g_{-2}\oplus\g_{-1}\oplus\g_{0}\oplus\g_{1}\oplus\g_{2}$, with
$$\g_{-2}=\left\{\left(\begin{array}{ccc}
0 & 0&0\\
0 & 0&0\\
z & 0&0
\end{array}\right)\in\g\right\},\;\g_{-1}=\left\{\left(\begin{array}{ccc}
0 & 0&0\\
v & 0&0\\
0& w^{T}&0
\end{array}\right)\in\g\right\}$$
$$\g_{0}=\left\{\left(\begin{array}{ccc}
x & 0&0\\
0 & A&0\\
0& 0&y
\end{array}\right)\in\g\right\}$$
and 
$$\g_{1}=\left\{\left(\begin{array}{ccc}
0 & \tilde{v}^{T}&0\\
0 & 0&\tilde{w}\\
0& 0&0
\end{array}\right)\in\g\right\},\;\g_{2}=\left\{\left(\begin{array}{ccc}
0 & 0&\tilde{z}\\
0 & 0&0\\
0& 0&0
\end{array}\right)\in\g\right\}.$$
\end{description}
Here we have $v,\tilde{v},w,\tilde{w}\in\C^{l}$, $x,y,z,\tilde{z}\in\C$, $A\in\mathfrak{gl}_{l}\C$ and $x+y+\tr(A)=0$.

\section{Representation theory}

\subsection{Representations of (complex) semisimple Lie algebras}
Let $\mathbb{V}$ be a finite dimensional representation of $\g$. We denote
the action of $\g$ on $\mathbb{V}$ by $x.v$ for all $x\in \g$ and $v\in \mathbb{V}$.
An element $v\in \mathbb{V}\backslash\{0\}$ is called weight vector of
{\bf weight} $\lambda \in \h^{*}$ if
$$H.v=\lambda (H)v\;\forall\; H\in \h.$$
We also denote by $\Delta (\mathbb{V})$ the set of all weights that arise via this construction. As in the last section we write
$$\mathfrak{n}=\bigoplus_{\alpha\in \Delta ^{+}(\g,\h)}\g_{\alpha }\quad\text{and}\quad \mathfrak{n}_{-}=\bigoplus_{\alpha\in \Delta ^{+}(\g,\h)}\g_{-\alpha }$$
for the raising and lowering subalgebras of $\g$. A vector $v\in \mathbb{V}$ is called maximal (resp.~minimal) if $v$ is killed by $\mathfrak{n}$
(resp.~$\mathfrak{n}_{-}$). A maximal (resp. minimal) vector of weight $\lambda $ is called {\bf highest weight vector} (resp.~{\bf lowest weight vector}) if
$\lambda \succeq \lambda '$ (resp.~$\lambda \preceq  \lambda '$ ) for all $\lambda '\in \Delta (\mathbb{V})$. A weight $\lambda $ that satisfies
$$B(\lambda ,\alpha^{\vee})\geq 0\;\forall \;\alpha \in \mathcal{S}$$
is called {\bf dominant} for $\g$. Moreover $\lambda $ is said to be {\bf integral} for $\g$ if and only if
$$B(\lambda ,\alpha^{\vee})\in \Z\;\forall \;\alpha\in \mathcal{S}.$$
We will represent a weight $\lambda $ for $\g$ by writing the coefficients $B(\lambda ,\alpha _{i}^{\vee})$ over the $i$-th node that represents the simple root $\alpha _{i}\in \mathcal{S}$.

\begin{theorem}[Theorem of the highest weight]
There is a one-to-one correspondence between finite dimensional irreducible $\g$-modules and dominant integral weights, i.e.~every finite dimensional irreducible representation of $\g$ has
a unique (up to scale) highest weight vector of weight $\lambda $, which is dominant integral for $\g$ and for every dominant integral weight there exists a finite dimensional irreducible representation
of $\g$ which has a highest weight vector of that weight.
\end{theorem}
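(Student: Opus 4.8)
The plan is to split the asserted bijection into two directions: first, that every finite dimensional irreducible $\g$-module $\mathbb{V}$ determines a dominant integral weight which serves as a complete invariant; and second, that every dominant integral weight is realized by such a module. For the first direction, since $\mathbb{V}$ is finite dimensional the set $\Delta(\mathbb{V})$ of weights is finite, so the partial ordering $\succeq$ admits a maximal element $\lambda$. Any nonzero weight vector $v^{+}$ of weight $\lambda$ is then annihilated by each $\g_{\alpha}$ with $\alpha\in\Delta^{+}(\g,\h)$, hence by $\mathfrak{n}$, so $v^{+}$ is a highest weight vector. To see that $\lambda$ is dominant integral, I would fix for each simple root $\alpha_{i}$ an $\mathfrak{sl}_{2}$-triple $(e_{i},h_{i},f_{i})$ with $e_{i}\in\g_{\alpha_{i}}$ and $f_{i}\in\g_{-\alpha_{i}}$, and apply the finite dimensional representation theory of $\mathfrak{sl}_{2}$: the $h_{i}$-eigenvalue $B(\lambda,\alpha_{i}^{\vee})$ of $v^{+}$ must be a non-negative integer, which is exactly the condition defining dominant and integral weights.

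For injectivity of the correspondence I would first note that $\mathfrak{U}(\mathfrak{n}_{-})\cdot v^{+}$ is a nonzero submodule, hence all of $\mathbb{V}$ by irreducibility, and that the Poincar\'{e}--Birkhoff--Witt theorem expresses every weight of this module in the form $\lambda-\sum_{i}n_{i}\alpha_{i}$ with $n_{i}\geq 0$. Consequently $\lambda$ is the unique maximal weight and its weight space is one-dimensional, so $v^{+}$ is unique up to scale. Given two irreducible modules $\mathbb{V}$ and $\mathbb{V}'$ sharing the highest weight $\lambda$, their direct sum contains the highest weight vector $(v^{+},{v'}^{+})$; the submodule it generates surjects onto each factor and, having a one-dimensional top weight space, furnishes a $\g$-isomorphism $\mathbb{V}\cong\mathbb{V}'$. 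This shows $\lambda$ determines $\mathbb{V}$ up to isomorphism.

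For the existence direction I would pass to the Verma module $M(\lambda)=\mathfrak{U}(\g)\otimes_{\mathfrak{U}(\B)}\C_{\lambda}$, where $\C_{\lambda}$ is the one-dimensional $\B$-module on which $\h$ acts by $\lambda$ and $\mathfrak{n}$ acts by zero. This universal highest weight module has a unique maximal proper submodule, namely the sum of all submodules meeting the highest weight line trivially, and the quotient $L(\lambda)$ is irreducible with highest weight $\lambda$. The remaining content of the theorem is that $L(\lambda)$ is finite dimensional exactly when $\lambda$ is dominant integral.

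The hard part is precisely this finite dimensionality. Writing $m_{i}=B(\lambda,\alpha_{i}^{\vee})\in\N$, I would use the $\mathfrak{sl}_{2}$-relations to verify that $f_{i}^{\,m_{i}+1}v^{+}$ is again a maximal vector, of weight $\lambda-(m_{i}+1)\alpha_{i}$, so it lies in the maximal submodule and vanishes in $L(\lambda)$. From the resulting local nilpotency of each $f_{i}$ on $L(\lambda)$ one deduces that the set of weights of $L(\lambda)$ is stable under every simple reflection, hence under the full Weyl group $\mathcal{W}$. A $\mathcal{W}$-invariant set of weights that is bounded above by $\lambda$ lies in the convex hull of the finite orbit $\mathcal{W}\cdot\lambda$, so there are only finitely many weights, each of finite multiplicity; therefore $L(\lambda)=\mathbb{V}_{\lambda}^{*}$ is finite dimensional. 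I expect the $\mathcal{W}$-invariance of the weight set, together with the integrability argument (local nilpotency of the $f_{i}$) that underpins it, to be the main technical obstacle.
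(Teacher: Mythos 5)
Your proposal is correct and follows exactly the standard route that the paper itself defers to: the paper's proof is simply a citation to Humphreys, explicitly noting that the construction goes via Verma modules and that the difficult point is finite dimensionality of the irreducible quotient for dominant integral $\lambda$ — precisely the structure and the key obstacle you identify and sketch. Your outline (maximal weight and $\mathfrak{sl}_{2}$-triples for dominant integrality, PBW and the unique irreducible quotient of a highest weight module for uniqueness, and local nilpotency plus Weyl-invariance of the weight set for finite dimensionality) is the argument of the cited reference.
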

\begin{proof}
This is the classification theorem of irreducible finite
dimensional representations of semisimple Lie algebras and the
proof may be found in~\cite{h}, p.~113. The difficult point is to show
that the irreducible representation which we obtain for every
$\lambda \in \h^{*}$ by means of Verma modules (which will be
explained in another section) is finite dimensional if $\lambda $
is dominant integral.
\end{proof}

\subsection{Representations of parabolic subalgebras}
\begin{enumerate}
\item
To denote a representation $(\mathbb{V},\rho)$ of a simple Lie algebra $\g$ or a parabolic subalgebra $\p\subset\g$ we write down the coefficient $B(\lambda,\alpha_{j}^{\vee})$ over the $j$-th node in the Dynkin diagram for $\g$, with $\lambda$ being the highest weight of the dual representation $(\mathbb{V}^{*},\rho^{*})$. The details for this construction and the reason for this slightly odd notation is explained in~\cite{be}.
\item 
It is easy to show that the nilpotent part $\p_{+}$ of $\p$ has to act trivially on any irreducible $\p$-module. $\g_{0}\cong\p/\p_{+}$ is reductive and hence the sum of an abelian algebra $\mathfrak{z}(\g_{0})$, which has to act via a character on any irreducible representation, and a semisimple part $\g_{0}^{S}$. 
Since $\g_{0}^{S}$ is semisimple we can apply the above theorem to $\g_{0}^{S}$ and get: the finite dimensional irreducible representations of $\p$ are in one-to-one
correspondence with $\lambda \in \h^{*}$ which are dominant and integral for $\g_{0}^{S}$. In our Dynkin diagram notation that corresponds to having non-negative integers over the uncrossed nodes. More precisely, a finite dimensional $\g_{0}$-module is completely reducible if and only if $\mathfrak{z}(\g_{0})$ acts diagonalizably. Unless stated otherwise, we will always implicitly assume this. 
\end{enumerate}

\subsubsection{Example}
The $\g_{0}$-module $\g_{1}$ decomposes into irreducible $\g_{0}$-modules
$$\g_{1}=\bigoplus_{i\in I}\mathbb{E}_{-\alpha_{i}},$$
where $\mathbb{E}_{\alpha}$ has lowest weight $-\alpha\in\h^{*}$ (see~\cite{be}, p.~129). We will abbreviate $\mathbb{E}_{-\alpha_{i}}$ by $\g_{1}^{i}$ in the future. Analogously
$\g_{-1}$ decomposes into irreducible factors $\g_{-1}^{i}$, for $i\in I$, where the highest weight of $\g_{-1}^{i}$ is $-\alpha_{i}$.
\par
It is true in general that the dual of an irreducible finite dimensional module of lowest weight  $\lambda$ has highest weight $-\lambda$.

\subsubsection{Example}\label{sltwo}
It can be easily shown (\cite{h}, p.~37) that for every $\alpha \in \Delta (\g,\h)$ and $X_{\alpha }\in \g_{\alpha }\backslash\{0\}$ there exists $X_{-\alpha }\in \g_{-\alpha }$, such that
$X_{\alpha },X_{-\alpha }$ and $H_{\alpha }= [X_{\alpha} ,X_{-\alpha }]$ span a three dimensional simple subalgebra of $\g$ isomorphic to $\mathfrak{sl}_{2}\C$ and that we can
calculate $B(\lambda ,\alpha _{i}^{\vee})=\lambda (H_{\alpha _{i}})$ for every $\alpha _{i}\in \mathcal{S}$ and $\lambda \in \h^{*}$. 
For $A_{l}$, $X_{\alpha _{i}}$ and $X_{-\alpha_{i}}$ correspond to $E_{i,i+1}$ and $E_{i+1,i}$ respectively. We therefore have $H_{\alpha _{i}}=H_{i}-H_{i+1}$, for $i=1,...,l$. So for a weight $\lambda =\sum_{i=1}^{l+1}a_{i}\epsilon_{i}$, we have
$$B(\lambda ,\alpha _{i}^{\vee})=a_{i}-a_{i+1},\; \text{for}\; i=1,...,l.$$


\subsection{Composition series and induced modules}
\begin{definition}
{\rm
We will write {\bf composition series} with the help of $+$ signs as explained in~\cite{beg}, p.~1193 and~\cite{es}, p.~11, so a short exact sequence 
$$0\rightarrow \mathbb{A}_{1}\rightarrow \mathbb{A}\rightarrow \mathbb{A}_{0}\rightarrow 0$$
of modules is equivalent to writing a composition series
$$\mathbb{A}=\mathbb{A}_{0}+\mathbb{A}_{1}.$$
If we have two composition series
$$\mathbb{A}=\mathbb{A}_{0}+\mathbb{A}_{1}\quad\text{and}\quad \mathbb{B}=\mathbb{B}_{0}+\mathbb{B}_{1},$$
then we can tensor them together to obtain
$$\mathbb{A}\otimes \mathbb{B}=\mathbb{A}_{0}\otimes \mathbb{B}_{0}+\begin{array}{c}\mathbb{A}_{0}\otimes\mathbb{B}_{1}\\
\oplus\\
\mathbb{A}_{1}\otimes\mathbb{B}_{0}
\end{array}
+\mathbb{A}_{1}\otimes \mathbb{B}_{1}.$$
This can be easily extended to composition series with more composition factors.
\par
In general, a composition series
$$\mathbb{B}=\mathbb{A}_{0}+\mathbb{A}_{1}+...+\mathbb{A}_{N},$$
denotes a filtration
$$\mathbb{A}_{N}=\mathbb{A}^{N}\subseteq \mathbb{A}^{N-1}\subseteq...\subseteq \mathbb{A}^{0}=\mathbb{B},$$
with $\mathbb{A}_{i}=\mathbb{A}^{i}/\mathbb{A}^{i+1}$. We will use the same notation for the composition series of vector bundles.}
\end{definition}

\subsubsection{Remark}
\begin{enumerate}
\item
It can be noted that every composition series $\mathbb{B}=\mathbb{A}_{0}+\mathbb{A}_{1}+...+\mathbb{A}_{N}$ has a projection $\mathbb{B}\rightarrow \mathbb{A}_{0}$ and  injections $\mathbb{A}_{j}+..+\mathbb{A}_{N}\rightarrow \mathbb{B}$ for $j=0,...,N$.
\item
The dual of a composition series is obtained by writing down the dual of the composition factors in opposite order, i.e.
$$\mathbb{B}^{*}=\mathbb{A}_{N}^{*}+\mathbb{A}_{N-1}^{*}+...+\mathbb{A}_{0}^{*}.$$
Composition series will occur in our considerations as so-called {\bf branching rules} that describe how a finite dimensional irreducible representation of $\g$ composes as a representation of $\p$. In particular, the grading can be looked at as a composition series of the adjoint representation when restricted to the parabolic $\p$.
\end{enumerate}

\subsubsection{Example}
The $|k_{0}|$-grading of the Lie algebra $\g$ corresponding to the parabolic subalgebra $\p$ can be understood if we look at $\g$ as a $\p$-module. There exits a $\p$-module filtration
$$\g=\g^{-k_{0}}\supset\g^{-k_{0}+1}\supset\cdots\supset\g^{k_{0}}=\g_{k_{0}},$$
with $\g^{i}=\g_{i}\oplus\g_{i+1}\oplus\cdots\oplus\g_{k_{0}}$. The corresponding composition series is exactly the grading
$$\g=\g_{-k_{0}}+\dots+\g_{k_{0}}.$$

\begin{definition}
{\rm
For any Lie algebra $\mathfrak{a}$, let $\mathfrak{U}(\mathfrak{a})$ denote the {\bf universal enveloping algebra}, which is defined in the following manner:
we denote the tensor algebra of $\mathfrak{a}$ by $T(\mathfrak{a})=\bigoplus_{i=0}^{\infty}\otimes^{i}\mathfrak{a}$, then the universal enveloping algebra is defined to be 
$$\mathfrak{U}(\mathfrak{a})=T(\mathfrak{a})/I,$$
where $I$ is the two-sided ideal spanned by $X\otimes Y- Y\otimes X-[X,Y]$ for all $X,Y\in\mathfrak{a}$.
Every representation of $\mathfrak{a}$ can in a unique manner be extended to an action of $\mathfrak{U}(\mathfrak{a})$.
\par
Let $\mathbb{V}$ be a finite dimensional $\p$-module with dual $\mathbb{V}^{*}$, then
$$M_{\p}(\mathbb{V})=\mathfrak{U}(\g)\otimes_{\mathfrak{U}(\p)}\mathbb{V}^{*}$$
is called an {\bf induced module}. Here $\mathfrak{U}(\g)$ is a right $\mathfrak{U}(\p)$-module, $\mathbb{V}^{*}$ is a left $\mathfrak{U}(\p)$-module and $\otimes_{\mathfrak{U}(\p)}$ is the usual tensor
product for modules over an algebra. More precisely 
$$M_{\p}(\mathbb{V})=(\mathfrak{U}(\g)\otimes_{\C}\mathbb{V}^{*})/J,$$
where $J$ is the subspace generated by all $uv\otimes w-v\otimes v.w$ for $u\in\mathfrak{U}(\g)$, $v\in\mathfrak{U}(\p)$ and $w\in\mathbb{V}^{*}$ and where the dot denotes the given action of $\mathfrak{U}(\p)$ on $\mathbb{V}^{*}$. $\mathfrak{U}(\g)$ acts on $M_{\p}(\mathbb{V})$ by multiplication on the left factor.
\par
If $\mathbb{V}$ is irreducible, then $M_{\p}(\mathbb{V})$ is called {\bf generalized Verma module}. More information about generalized Verma modules can be found in~\cite{l}, p.~500 and its correlation to homogeneous vector bundles on $G/P$ is explained in~\cite{be}, p.~164.}
\end{definition}

\subsubsection{Remark}
Homomorphisms between generalized Verma modules play a decisive role in the classification of invariant differential operators on homogeneous spaces, see~\cite{er}. 
Bernstein-Gelfand-Gelfand resolutions (\cite{bgg}) and generalizations thereof (\cite{l}) have led to the so-called {\bf BGG} machinery that produces resolutions of all finite dimensional irreducible representations of $G$ by invariant differential operators between sections of homogeneous vector bundles.

\subsection{The real case}
So far we have stated everything in the complex category. However, the theory of parabolic geometries applies to the complex and the smooth real category. More specifically, we want our theory to apply to real manifolds with a given structure, e.g.~real conformal manifolds. The theory is set up in a way that applies to the real and complex category at the same time. Although we will, for example, use the usual notation $\mathcal{O}$ for holomorphic objects, we could have also written everything with the symbol $\mathcal{C}^{\infty}$ denoting smooth objects. In fact, this is the usual way the theory is presented. For every real algebra $\g_{\R}$, we will always consider its complexification $\g$ that can be described as in the first two sections. In particular, any representation of a real Lie algebra $\g_{\R}$ on a complex
vector space $\mathbb{V}$ extends to a representation of the complexification $\g$ and can be dealt with as described above. In fact, there is a bijective correspondence between complex representations of real Lie algebras and complex representations of their complexifications making the complex representation theories of $\g_{\R}$ and $\g$ completely equivalent (see~\cite{cs1}). This allows us to deal with (almost all) the representation theory in a complex setting.
\par
If we start with a real representation $\mathbb{V}_{\R}$, we can always form the complexification $\mathbb{V}$ and consider the extension of the representation to $\g$.
There are, however, cases when $\mathbb{V}_{\R}$ is irreducible and $\mathbb{V}$ is not. A very good example of this phenomena will be encountered in Chapter~\ref{tractorchapter}, when we consider the canonical subbundle of the tangent bundle in CR geometry. Apart from this example, we will encounter real representations only in the form of (projective, conformal,..) weights that arise via one dimensional representations on $\R$. These representations give rise to line bundles that we tensor other vector bundles with. This procedure does not affect the representation theory.
\par
The complexification of a $|k_{0}|$-graded real Lie algebra is $|k_{0}|$-graded again and the complexification of the non-negative part of this grading is a parabolic subalgebra as described in~\ref{grading}.
When referring to specific real manifolds, especially in Chapter~\ref{tractorchapter}, we will state everything in the smooth category.

\section{Parabolic geometries}

\begin{definition}
{\rm
We denote the adjoint representation of $G$ on $\g$ by $\mathrm{Ad}$. Then we can define several subgroups of $G$:
\begin{enumerate}
\item
$P=\{g\in G\;:\;\mathrm{Ad}(g)(\g_{i})\subset\g^{i}\;\forall\;i=-k_{0},...,k_{0}\}$ and
\item
$G_{0}=\{g\in G\;:\;\mathrm{Ad}(g)(\g_{i})\subset\g_{i}\;\forall\;i=-k_{0},...,k_{0}\}$.
\end{enumerate}}
\end{definition}
It can be shown (\cite{cs}, Proposition 2.9) that the Lie algebras of $P$ and $G_{0}$ are $\p$ and $\g_{0}$ respectively.

\subsubsection{Remark}
Instead of choosing $G$ to be the simply connected and connected Lie group with Lie algebra $\g$, one can alternatively choose $G=\mathrm{Aut}(\g)$ or $G=\mathrm{Aut}_{0}(\g)=\mathrm{Int}(\g)$ corresponding to a slightly different geometric structure (orientation, spin, etc.), but our analysis of pairings is local, so none of these choices affect it.

\subsubsection{Remark}
In order for an finite dimensional irreducible (complex) representation of $\p$ to lift to one for $P$, the weight $\lambda $ has to be integral for $\g$ and not just for $\p$, see~\cite{be}, Remark 3.1.6 and~\cite{cs1}, 3.2.10. 
Therefore, in our Dynkin diagram notation, the coefficient over every node has to be an integer and the coefficients over uncrossed nodes have to be non-negative to yield a representation for $P$. In the specific examples given in Chapter~\ref{tractorchapter}, we will define real representations that will allow us to have arbitrary real projective or conformal  weights. In other (real) examples, there are different restrictions on the numbers over the crossed through nodes. We will not discuss this in detail, but give the precise statements in every case considered in Chapter~\ref{tractorchapter}.

\subsection{Cartan connection}\label{Cartanconnection}
A {\bf Parabolic geometry} $(\mathcal{M},\mathcal{G},\g,\omega)$ of  type $(G,P)$ consists of
\begin{description}
\item{(a)}
a manifold $\mathcal{M}$,
\item{(b)}
a principal right $P$ bundle $\mathcal{G}$ over $\mathcal{M}$ and
\item{(c)}
a $\mathfrak{g}$-valued $1$-form $\omega$ on $\mathcal{G}$ (the {\bf Cartan connection}) satisfying the following conditions:
\begin{description}
\item{(i)}
the map $\omega _{g}:T_{g}\mathcal{G}\longrightarrow \mathfrak{g}$ is a linear isomorphism for every $g\in \mathcal{G}$,
\item{(ii)}
$r_{p}^{*}\omega =\mathrm{Ad}(p^{-1})\circ\omega $ for all $p\in P$, where $r_{p}$ denotes the natural right action of an element $p\in P$ in the structure group,  and
\item{(iii)}
$\omega (\zeta_{X})=X$ for all $X\in \mathfrak{p}$, where $\zeta_{X}$ is the (vertical) {\bf fundamental} vector field on $\mathcal{G}$ associated to $X\in\p$.
\end{description}
\end{description}

\subsubsection{Remark}
The dimension $n=\mathrm{dim}\mathcal{M}$ of a parabolic geometry of type $(G,P)$ is easily computed to be
$$n=\mathrm{dim}\;\g_{-}=\frac{\mathrm{dim}\;\g-\mathrm{dim}\;\g_{0}^{S}-l_{0}}{2}.$$
Formulae for the dimension of the semisimple Lie algebras $\g$ and $\g_{0}^{S}$ are readily available (see~\cite{h}).

\begin{definition}
{\rm We will call the case where the parabolic subalgebra $\p$ induces a $|1|$-grading, i.e.~where $k_{0}=1$, the {\bf AHS} case and parabolic geometries for $|1|$-gradings are called almost hermitian symmetric spaces. It immediately follows (in the complex setting) that a parabolic subalgebra $\p$ which induces a $|1|$-grading must arise from a subset $\mathcal{S}_{\p}\subset\mathcal{S}$, so that $\mathcal{S}\backslash \mathcal{S}_{\p}=\{\alpha_{i_{0}}\}$. In the Dynkin diagram notation this corresponds to having one cross through the node associated to $\alpha_{i_{0}}$. The reverse statement is obviously not correct.}
\end{definition}

\begin{definition}
{\rm 
The {\bf curvature} of a parabolic geometry is defined to be the curvature function
\begin{eqnarray*}
\kappa:\mathcal{G}&\rightarrow &\Lambda^{2}\g_{-}^{*}\otimes\g\\
g&\mapsto &\kappa_{g}\;\text{with}\;\kappa_{g}(X,Y)=d\omega_{g}(\omega_{g}^{-1}(X),\omega_{g}^{-1}(Y))+[X,Y],\;X,Y\in\g_{-}.
\end{eqnarray*}
One can decompose the curvature in terms of homogeneity degree, i.e.
$$\kappa^{i}(g):\g_{r}\otimes \g_{s}\rightarrow \g_{r+s+i},\; i=-k_{0}+2,...,3k_{0}$$
or split $\kappa$ in terms of the grading, i.e.~$\kappa_{i}(g):\Lambda^{2}\g_{-}\rightarrow\g_{i}$, $i=-k_{0},...,k_{0}$. Moreover one can compose $\kappa$ with a map $\partial^{*}:\Lambda^{2}\g_{-}^{*}\otimes\g\rightarrow\g_{-}^{*}\otimes\g$, which is the adjoint to the Lie algebra differential $\partial$ to be defined in Section~\ref{cohomology}. Following~\cite{cs2}, we call a parabolic geometry 
\begin{enumerate}
\item
{\bf normal}, if $\partial^{*}\circ \kappa=0$,
\item
{\bf regular}, if it is normal and $\kappa^{i}=0$ for all $i\leq 0$,
\item
{\bf torsion-free}, if $\kappa_{i}=0$ for all $i<0$ and
\item
{\bf flat}, if $\kappa=0$.
\end{enumerate} 

}\end{definition}

\subsubsection{Remark}\label{cartanconnection}
The curvature is horizontal, i.e.~$\kappa_{g}(X,Y)=0$ for all $X\in\p$,  $Y\in\g$ and $g\in\mathcal{G}$.
\begin{proof}
By definition,
$$\kappa(X,Y)=-\omega([\omega^{-1}(X),\omega^{-1}(Y)])+[X,Y].$$
Now we use the fact that $[\omega^{-1}(X),\omega^{-1}(Y)]=\omega^{-1}([X,Y])$ for $X\in\p$ which follows from~\ref{Cartanconnection} (c), (ii) and (iii).
\end{proof}

\subsubsection{Example}\label{cartanexample}
\begin{description}
\item{(a)}
Projective geometry in $n$ dimensions can be described as a parabolic geometry with $\g_{\R}=\mathfrak{sl}(n+1,\R)$ and a grading whose complexification corresponds to the grading in Example~\ref{gradingexample} (a).
The notation via Dynkin diagrams is
$$\begin{picture}(5,2)
\put(0,1){\makebox(0,0){$\cross$}}
\put(1,1){\makebox(0,0){$\bullet$}}
\put(2,1){\makebox(0,0){$\bullet$}}
\put(1,1){\line(1,0){1}}
\put(3.5,1){\makebox(0,0){$\bullet$}}
\put(2.5,1){...}
\put(4.5,1){\makebox(0,0){$\bullet$}}
\put(0,1){\line(1,0){1}}
\put(3.5,1){\line(1,0){1}}
\put(5,1){.}
\end{picture}$$
The first construction of a canonical Cartan connection on projective manifolds is due to Cartan, see~\cite{ca2}. For a more modern treatment the reader is advised to refer to~\cite{css4,kob,sh}.
\item{(b)}
A conformal manifold $(\mathcal{M},[g])$ of signature $(p,q)$ is equivalent to a normal parabolic geometry with $\g_{\R}=\mathfrak{so}(p+1,q+1)$ and $|1|$-grading as in~\cite{css}, the complexification of which is the $|1|$-grading given in Example~\ref{gradingexample} (b). The Dynkin diagram notation is
$$\begin{picture}(5,4)
\put(0,1){\makebox(0,0){$\cross$}}
\put(1,1){\makebox(0,0){$\bullet$}}
\put(1.5,1){...}
\put(2.5,1){\makebox(0,0){$\bullet$}}
\put(3.5,1){\makebox(0,0){$\bullet$}}
\put(4.5,2){\makebox(0,0){$\bullet$}}
\put(4.5,0){\makebox(0,0){$\bullet$}}
\put(0,1){\line(1,0){1}}
\put(2.5,1){\line(1,0){1}}
\put(3.5,1){\line(1,1){1}}
\put(3.5,1){\line(1,-1){1}}
\end{picture}$$
for $p+q$ even and
$$\xoo{\;\;}{\;\;}{\;\;}\;...\;\btwo{\;\;}{\;\;}$$
for $p+q$ odd.
It turns out that this parabolic geometry is automatically regular and torsion-free (see~\cite{cs}). The first construction of a canonical Cartan connection on manifolds with a conformal structure is due to Cartan~\cite{ca1}. See~\cite{css1} and~\cite{css4} for a more modern treatment.
\item{(c)}
A manifold with partially integrable almost CR-structure with non-degenerate Levi-form of signature $(p,q)$ is equivalent to a regular parabolic geometry with $\g_{\R}=\mathfrak{su}(p+1,q+1)$ endowed with a $|2|$-grading as in~\cite{cs}, the complexification of which is the grading given in Example~\ref{gradingexample} (c) with Dynkin diagram
%
$$\begin{picture}(5,2)
\put(0,1){\makebox(0,0){$\cross$}}
\put(1,1){\makebox(0,0){$\bullet$}}
\put(2,1){\makebox(0,0){$\bullet$}}
\put(1,1){\line(1,0){1}}
\put(3.5,1){\makebox(0,0){$\bullet$}}
\put(2.5,1){...}
\put(4.5,1){\makebox(0,0){$\cross$}}
\put(0,1){\line(1,0){1}}
\put(3.5,1){\line(1,0){1}}
\put(5,1){.}
\end{picture}$$
The torsion-free parabolic geometries are exactly the CR-structures (see~\cite{cs}). The original construction of a canonical Cartan connection for CR-manifolds in three dimensions is due to Cartan (\cite{ca}). The general case is due to Tanaka (\cite{t}) and Chern and Moser (\cite{cm}). 
\end{description}

\subsection{The homogeneous model}
The {\bf homogeneous model} for a parabolic geometry of type $(G,P)$ is the principal right $P$-bundle
$$\begin{array}{ccc}
P&\rightarrow&G\\
&&\downarrow\\
&&G/P
\end{array}$$
over the {\bf generalized flag manifold} $G/P$ together with the Maurer-Cartan form
$$\omega_{MC}(X)=(l_{g^{-1}})_{*}X,\;\text{for}\;X\in T_{g}G,$$
where $l_{g}$ denotes left translation and $(l_{g})_{*}$ is the corresponding tangent mapping. The properties of $\omega_{MC}$ are easily computed:
\begin{enumerate}
\item
$(l_{g^{-1}})_{*}:T_{g}G\rightarrow T_{e}G=\g$ is a linear isomorphism with inverse map $(l_{g})_{*}$.
\item
Let $X\in T_{g}G$, then we compute
\begin{eqnarray*}
(r_{p}^{*}\omega_{MC})X&=&\omega_{MC}((r_{p})_{*}X)\\
&=&(l_{(gp)^{-1}})_{*}(r_{p})_{*}X\\
&=&(l_{p^{-1}}\circ l_{g^{-1}}\circ r_{p})_{*}X\\
&=&(\mathrm{Ad}(p^{-1})\circ l_{g^{-1}})_{*}X\\
&=&\mathrm{Ad}(p^{-1})\circ\omega_{MC}(X).
\end{eqnarray*}
\item
We have $(l_{g})_{*}X=(\zeta_{X})_{g}\in T_{g}G$, so obviously $\omega_{MC}(\zeta_{X})=X$ for all $X\in\p$.
\end{enumerate}
The curvature function vanishes identically because of the structural equation 
$$d\omega_{MC}(X,Y)+[\omega_{MC}(X),\omega_{MC}(Y)]=0,$$
see~\cite{sh}, p.~108.
In fact, it is known that the curvature $\kappa$ is a complete obstruction against local flatness, see e.g.~\cite{cd,css2}.  

\subsubsection{Examples}\label{homogeneousexample}
\begin{description}
\item{(a)}
The homogeneous model for {\bf projective geometry}, which we will discuss in some detail in Chapter~\ref{tractorchapter}, is $\mathbb{RP}_{n}=G/P$ with $G=\mathrm{PSL}_{n+1}\R$ and
$$P=\left\{g\in G\;:\;g=\left(\begin{array}{ccccc}
*&*&\cdots&*\\
0&*&\cdots&*\\
\vdots&\vdots&\ddots&\vdots\\
0&*&\cdots&*
\end{array}\right)\right\}.$$
Note that $P$ is the isotropy subgroup of the point $e=[1:0:\cdots:0]\in\mathbb{RP}_{n}$.

\item{(b)}
The homogeneous model for {\bf conformal geometry}, which we will discuss in some detail in Chapter~\ref{tractorchapter}, is the sphere $\mathbb{S}^{n}=G/P$ with $G=SO_{0}(p+1,q+1)$ being the identity connected component of $SO(p+1,q+1)$ corresponding to a non-degenerate bilinear form $\tilde{g}$ of signature $(p+1,q+1)$ on $\R^{n+2}$, for example given by
$$\tilde{g}(x_{0},x,x_{\infty})=2x_{0}x_{\infty}+\langle x,x\rangle,$$
where $\langle x,x\rangle$ is the standard inner product of signature $(p,q)$ in $\R^{n}$.
$P$ is the isotropy subgroup of the point 
$$e=[1:0:\cdots:0]\in\mathbb{S}^{n}=\{[x_{0}:x:x_{\infty}]\in\mathbb{RP}_{n+1}\;:\;\tilde{g}(x_{0},x,x_{\infty})=0\}.$$

\item{(c)}
The homogeneous model for {\bf CR geometry}, which we will discuss in some detail in Chapter~\ref{tractorchapter}, is a real hyperquadric $\mathcal{H}=G/P$ in $\mathbb{CP}_{n+1}$, for example given by
$$\tilde{h}(z_{0},z,z_{\infty})=z_{0}\bar{z}_{\infty}+\bar{z}_{0}z_{\infty}+\langle z,z\rangle,$$
where $\langle z,z\rangle$ is the standard hermitian inner product of signature $(p,q)$ in $\C^{n}$. So $G=PSU(p+1,q+1)$ and $P$ is the isotropy subgroup of the point 
$$e=[1:0:\cdots:0]\in\mathcal{H}=\{[z_{0}:z:z_{\infty}]\in\mathbb{CP}_{n+1}\;:\;\tilde{h}(z_{0},z,z_{\infty})=0\}.$$
\end{description}

\subsection{Associated vector bundles}
For every representation $\rho:P\rightarrow \mathrm{End}(\mathbb{V})$ we can define a corresponding {\bf associated vector bundle}
$$V=(V,\rho)=\mathcal{G}\times_{P}\mathbb{V}=(\mathcal{G}\times\mathbb{V})/\sim,$$
where
$$(gp,v)\sim(g,\rho(p)v)\;\forall\;g\in \mathcal{G},p\in P\;\text{and}\;v\in\mathbb{V}.$$
Sections of this bundle can be identified with maps
$$s:\mathcal{G}\rightarrow\mathbb{V}\,\text{s.t.}\;s(gp)=\rho(p^{-1})s(g)$$
for all $g\in\mathcal{G}$ and $p\in P$. Equivalently, we can differentiate this requirement to obtain
$$(\zeta_{X}s)(g)=-\rho(X)s(g)\;\forall\;X\in\p,g\in\mathcal{G}.$$
We will write $\Gamma(V)=\mathcal{O}(\mathcal{G},\mathbb{V})^{P}$ for the space of sections of $V$. The tangent bundle $T\mathcal{M}$ and cotangent bundle $T^{*}\mathcal{M}$,
for example, arise via the adjoint representation of $P$ on $\g/\p\cong\g_{-}=\g_{-k}\oplus...\oplus\g_{-1}$ and its dual $(\g/\p)^{*}\cong\p_{+}$.
We will denote the bundles and sections of these bundles by the Dynkin diagram notation for the representation that induces them.

\subsubsection{Example}
\begin{enumerate}
\item
On a projective manifold $\mathcal{M}$ we write
$$\mathrm{Vect}(\mathcal{M})\;=\;\xoo{1}{0}{0}\;...\;\oo{0}{1}=\;\text{and}\;\Omega^{1}(\mathcal{M})\;=\;\xoo{-2}{1}{0}\;...\;\oo{0}{0}.$$
\item
On a conformal manifold $\mathcal{M}$ we write
$$\mathrm{Vect}(\mathcal{M})=\hspace{1.5cm}\Dd{0}{1}{0}{0}{0}{0}{0}\;\text{or}\;\mathrm{Vect}(\mathcal{M})\;=\;\xoo{0}{1}{0}\;...\;\btwo{0}{0}$$
and
$$\Omega^{1}(\mathcal{M})=\hspace{1.5cm}\Dd{-2}{1}{0}{0}{0}{0}{0}\;\text{or}\;\Omega^{1}(\mathcal{M})\;=\;\xoo{-2}{1}{0}\;...\;\btwo{0}{0}.$$

\item
For CR geometry, the tangent and cotangent bundle are no longer irreducible. Instead they (or more precisely their complexification) have a filtration given by
$$\mathrm{Vect}(\mathcal{M})=\;\xoo{1}{0}{0}\;...\;\oox{0}{0}{1}+
\begin{array}{c}
\xoo{1}{0}{0}\;...\;\oox{0}{1}{-1}\\
\oplus\\
\xoo{-1}{1}{0}\;...\;\oox{0}{0}{1}
\end{array}
$$
and
$$\Omega^{1}(\mathcal{M})=\begin{array}{c}
\xoo{-2}{1}{0}\;...\;\oox{0}{0}{0}\\
\oplus\\
\xoo{0}{0}{0}\;...\;\oox{0}{1}{-2}
\end{array}+\xoo{-1}{0}{0}\;...\;\oox{0}{0}{-1}.$$

\end{enumerate}


\subsection{Invariant differential}\label{invariantdifferential}
The Cartan connection does not yield a connection on these associated vector bundles, but we can still define the {\bf invariant differential}
$$\nabla^{\omega}:\mathcal{O}(\mathcal{G},\mathbb{V})\rightarrow \mathcal{O}(\mathcal{G},\g_{-}^{*}\otimes\mathbb{V})$$
with
$$\nabla^{\omega}s(g)(X)=\nabla^{\omega}_{X}s(g)=[\omega^{-1}(X)s](g),\;\forall\;X\in \g_{-},g\in\mathcal{G}\;\text{and}\;s\in\mathcal{O}(\mathcal{G},\mathbb{V}).$$
It has to be noted that it does not take $P$-equivariant sections to $P$-equivariant sections. In order to ensure that, we can define the  {\bf fundamental derivative},
\begin{eqnarray*}
D:\mathcal{O}(\mathcal{G},\mathbb{V})^{P}&\rightarrow &\mathcal{O}(\mathcal{G},\g^{*}\otimes \mathbb{V})^{P}\\
s&\mapsto&X\mapsto \nabla^{\omega}_{X}s=\omega^{-1}(X)s.
\end{eqnarray*}
The distinction between the fundamental derivative and the invariant differential is a bit artificial and solely done for notational convenience. 
The bundle $\mathcal{G}\times_{P}\g=\mathcal{A}$ is called the 
{\bf adjoint tractor bundle}. The invariant derivative and the fundamental derivative will be of vital importance in later chapters. Important properties of the fundamental derivative can be found in~\cite{cg}, Proposition 3.1. We will just prove that $D$ takes $P$-equivariant functions to $P$-equivariant functions.
\begin{proof}
Let $f\in\mathcal{O}(\mathcal{G},\mathbb{V})^{P}$ (where the bundle $V$ is an associated bundle for a representation $\rho:P\rightarrow\mathrm{Aut}(\mathbb{V})$), $s\in\mathcal{O}(\mathcal{G},\g)^{P}$, $g\in\mathcal{G}$ and $p\in P$, then we compute
\begin{eqnarray*}
D_{s}f(gp)&=&\omega^{-1}_{gp}(s(gp))f\;\text{by definition}\\
&=&\omega^{-1}_{gp}(\mathrm{Ad}(p^{-1})s(g))f\;\text{by the equivariance of}\; s\\
&=&(r_{p})_{*}\omega^{-1}_{g}(s(g))f\;\text{by the property (c) of the Cartan connection}\\
&=&d(f\circ r_{p})\omega^{-1}_{g}(s(g))\\
&=&\rho(p^{-1})df\omega^{-1}_{g}(s(g))\;\text{by the equivariance of}\; f\\
&=&\rho(p^{-1})D_{s}f(g),
\end{eqnarray*}
so $D_{s}f\in\mathcal{O}(\mathcal{G},\mathbb{V})^{P}$.
\end{proof}

\subsubsection{Remark}
We have
$$\nabla^{\omega}_{X}\nabla^{\omega}_{Y}-\nabla^{\omega}_{Y}\nabla^{\omega}_{X}=\nabla^{\omega}_{[X,Y]}-\nabla^{\omega}_{\kappa(X,Y)}$$
for all $X,Y\in\g$.
\begin{proof}
By definition
\begin{eqnarray*}
\kappa(X,Y)&=&d\omega(\omega^{-1}(X),\omega^{-1}(Y))+[X,Y]\\
&=&-\omega([\omega^{-1}(X),\omega^{-1}(Y)])+[X,Y]
\end{eqnarray*}
and therefore
$$\nabla^{\omega}_{[X,Y]}-\nabla^{\omega}_{\kappa(X,Y)}=[\omega^{-1}(X),\omega^{-1}(Y)]=\nabla^{\omega}_{X}\nabla^{\omega}_{Y}-\nabla^{\omega}_{Y}\nabla^{\omega}_{X}.$$
\end{proof}

\subsubsection{Example}\label{homogeneousdiff}
In the flat homogeneous model spaces $G/P$ we have for all $X\in\g$, $f\in\mathcal{O}(G,\mathbb{V})^{P}$ and $g\in G$:
\begin{eqnarray*}
(Df(g))(X)&=&\omega_{MC}^{-1}(X)_{g}f\\
&=&((l_{g})_{*}X)f\\
&=&\frac{d}{dt}f(g\exp(tX))|_{t=0}\\
&=&\frac{d}{dt}f(\exp(t\mathrm{Ad}(g)X)g)|_{t=0}\\
&=&((r_{g})_{*}(\mathrm{Ad}(g)X))f.
\end{eqnarray*}
Furthermore, the map $G\rightarrow \g$, $g\mapsto \mathrm{Ad}(g^{-1})X$, can be looked at as a section $s_{X}\in\Gamma(\mathcal{A})$, so we can write
$$D_{s_{X}}f=-R_{X}f,$$
where $(R_{X})_{g}=-(r_{g})_{*}X$ is the (right invariant) vector field on $G$ that is derived from the action of $G$ on $\mathcal{O}(G,\mathbb{V})^{P}$ via $(g.s)(h)=s(g^{-1}h)$ for all $g,h\in G$ and $s\in\mathcal{O}(G,\mathbb{V})^{P}$. $R_{X}$ takes $P$ equivariant functions to $P$ equivariant functions.

\begin{definition}
{\rm The filtration 
$$\g=\g^{-k_{0}}\supset \g^{-k_{0}+1}\supset\cdots\supset\g^{k_{0}}\supset\{0\}$$
induces a filtration
$$\mathcal{A}=\mathcal{A}^{-k_{0}}\supset\mathcal{A}^{-k_{0}+1}\supset\cdots\supset \mathcal{A}^{k_{0}}$$
on the adjoint tractor bundle $\mathcal{A}=\mathcal{G}\times_{P}\g$. For $s\in\mathcal{A}^{0}=\mathcal{G}\times_{P}\p$, we have
$$(D_{s}f)(g)=\omega^{-1}_{g}(s(g))f=(\zeta_{s(g)}f)(g)=-\lambda(s(g))f(g)$$
for all $f\in\mathcal{O}(\mathcal{G},\mathbb{V})^{P}$, where $\lambda:\p\rightarrow\mathfrak{gl}(\mathbb{V})$ is the derived action of $\p$. Following~\cite{cg}, we will denote this action by $D_{s}f=-s\bullet f$.
}\end{definition}

\subsubsection{Remark}\label{tractorconnection}
If $\mathbb{V}$ is a representation of $\g$, then we can define $\nabla_{s}^{V}f=D_{s}f+s\bullet f$ for $s\in\Gamma(\mathcal{A})$ and $f\in\mathcal{O}(\mathcal{G},\mathbb{V})^{P}$. Since $\nabla^{V}_{s}f=0$ for $s\in\Gamma(\mathcal{A}^{0})$, this descends to a mapping
$$\nabla^{V}:\mathcal{O}(\mathcal{G},\mathbb{V})^{P}\rightarrow \mathcal{O}(\mathcal{G},\g_{-}^{*}\otimes\mathbb{V})^{P},$$
which is a linear connection on $V$, see~\cite{cg}, Proposition 3.2. This connection is called {\bf tractor connection} and we will make use of it in Chapter~\ref{tractorchapter}.

\chapter{Pairings}
In this second chapter we will define the notion of an invariant bilinear differential pairing of a certain weighted order. This will firstly be done analytically with the help of (weighted) jets. Then we give an algebraic description of pairings on homogeneous spaces and derive the first important theorem.

\section{Analytic description}
This section is a generalization of the description of invariant differential pairings in~\cite{k} to the case of arbitrary parabolic geometries. In particular, a $|k_{0}|$-grading with $k_{0}>1$ gives rise to the notion of {\bf weighted differential operators} as detailed in~\cite{m}.

\subsection{Filtered manifolds}

\subsubsection{Weighted modules}\label{filtered}
Let us examine the universal enveloping algebras of $\p_{+}$ and $\g_{-}$ more closely. We will discuss only the case for $\p_{+}$ since $\g_{-}$ can be dealt with by duality. We denote the tensor algebra of $\p_{+}$ by $T(\p_{+})=\bigoplus_{i=0}^{\infty}\otimes^{i}\p_{+}$ and the universal enveloping algebra by $\mathfrak{U}(\p_{+})=T(\p_{+})/J$, where $J$ is the two-sided ideal spanned by $X\otimes Y- Y\otimes X-[X,Y]$ for all $X,Y\in\p_{+}$. We can define a {\bf weighted grading} on those algebras via the following construction.
Let $i\in\N$ and 
$$T_{i}(\p_{+})=\{u=\sum_{j}X_{1,j}\otimes...\otimes X_{s_{j},j}\in T(\p_{+})\;:\;X_{l,j}\in\g_{t_{l,j}}\;\text{and}\;\sum_{m=1}^{s_{j}}t_{m,j}=i\;\forall\;j\}.$$
Since $J$ behaves well with respect to the grading, we can define
$$\mathfrak{U}_{i}(\p_{+})=\{u=\sum_{j}X_{1,j}...X_{s_{j},j}\in \mathfrak{U}(\p_{+})\;:\;X_{l,j}\in\g_{t_{l,j}}\;\text{and}\;\sum_{m=1}^{s_{j}}t_{m,j}=i\;\forall\;j\}.$$

\begin{definition}
{\rm
Let $\mathcal{M}$ be a (smooth) complex manifold and 
$$T\mathcal{M}=\mathfrak{f}^{-k_{0}}\supset \mathfrak{f}^{-k_{0}+1}\supset...\supset \mathfrak{f}^{-1}\supset\mathfrak{f}^{0}=0$$
a filtration of the tangent bundle by  subbundles. If $[s^{p},s^{q}]$ is a section of $\mathfrak{f}^{p+q}$ for all sections $s^{p}\in\Gamma(\mathfrak{f}^{p})$, $s^{q}\in\Gamma(\mathfrak{f}^{q})$ and all integers $p,q$, then the filtration is called {\bf tangential filtration} and $\mathcal{M}$ is called {\bf filtered manifold}.
}
\end{definition}

\subsubsection{Example}\label{tangentialfiltration}
The grading of $\g$ induces a filtration of the tangent bundle on the underlying manifold $\mathcal{M}$ of a parabolic geometry $(\mathcal{M},\mathcal{G},\g,\omega)$:
$$T\mathcal{M}=\mathfrak{f}^{-k_{0}}\supset \mathfrak{f}^{-k_{0}+1}\supset...\supset \mathfrak{f}^{-1}\supset\mathfrak{f}^{0}=0$$
with
$$\mathfrak{f}^{-i}=\mathcal{G}\times_{P}(\g_{-i}+...+\g_{-1}).$$
If the parabolic geometry is regular, then this is a tangential filtration (see~\cite{slov}, Proposition 2.2) and the concept of weighted order is well defined.

\subsection{Weighted jet bundles}\label{weightedjetbundles}
Let $\mathcal{M}$ be a filtered manifold with tangential filtration
$$T\mathcal{M}=\mathfrak{f}^{-k_{0}}\supset \mathfrak{f}^{-k_{0}+1}\supset...\supset \mathfrak{f}^{-1}\supset\mathfrak{f}^{0}=0.$$
Following~\cite{m}, we say that a local vector field $X$ on $\mathcal{M}$ has {\bf weighted order} $\leq i$ if $X$ is a section of $\mathfrak{f}^{-i}$. The minimum of such $i$ is called the {\bf weighted order} of $X$ and is denoted by $\mathrm{w-ord} X$.
A differential operator $P$ is said to be of weighted order $\leq\mu$ if $P$ can locally be written as $P=\sum_{j} X^{j}_{1}...X^{j}_{r_{j}}$ for local vector fields $X^{j}_{1},...,X^{j}_{r_{j}}$ and if $\mathrm{max}_{j}\left\{\sum_{i=1}^{r_{j}} \mathrm{w-ord}X^{j}_{i}\right\}=\mu$. 
\par
Let $E$ be a vector bundle (with trivial filtration) over $\mathcal{M}$ and denote by $\underline{E}$ the sheaf of sections of $E$. For every $x\in \mathcal{M}$, let $\mathfrak{f}_{x}^{k}E$ denote the subspace of 
$\underline{E}_{x}$ consisting of those germs of sections $s\in\underline{E}_{x}$, such that
$$ (P\langle \alpha,s\rangle)(x)=0,$$
for all $\alpha\in \Gamma(E^{*})$ and all differential operators $P$ of weighted order $<k$.
The {\bf weighted jet bundle} is then defined to be
$$\mathcal{J}^{k}E=\bigcup_{x\in \mathcal{M}}\mathcal{J}^{k}_{x}E,\quad \mathcal{J}^{k}_{x}E=\underline{E}_{x}/\mathfrak{f}_{x}^{k+1}E.$$
We can identify $\mathcal{J}^{0}E=E$ and obtain exact sequences
$$0\rightarrow \mathfrak{f}^{k}\mathcal{J}^{k}E\rightarrow\mathcal{J}^{k}E\rightarrow \mathcal{J}^{k-1}E\rightarrow 0,$$
for $k\geq 1$. A linear differential operator $d:\Gamma(E)\rightarrow\Gamma(F)$ of {\bf weighted order $k$} is a map
$$D:\mathcal{J}^{k}E\rightarrow F$$
with {\bf symbol} given by the composition map $\mathfrak{f}^{k}\mathcal{J}^{k}E\rightarrow F$. Moreover, we can identify
\begin{equation}\label{symbol}
\mathfrak{f}^{k}\mathcal{J}^{k}E=(\mathfrak{U}_{-k}(grT\mathcal{M}))^{*}\otimes E,
\end{equation}
where $\mathfrak{U}(grT_{x}\mathcal{M})$ is the universal enveloping algebra of the graded algebra 
$$grT_{x}\mathcal{M}=\bigoplus_{p=-k_{0}}^{-1}\mathfrak{f}_{x}^{p}/\mathfrak{f}_{x}^{p+1}$$ 
and $\mathfrak{U}_{-k}(grT_{x}\mathcal{M})$ denotes the subset of all homogeneous elements of degree $-k$ (see~\cite{m}, p.~237) as defined in~\ref{filtered}. We will give an inductive definition of this space in~\ref{inductivefiltered}.
\par
Note that there is a canonical projection $J^{1}E\rightarrow\mathcal{J}^{1}E$ from the first order jet bundle $J^{1}E$ (as defined, for example, in~\cite{s}) of $E$ to the weighted first order jet bundle $\mathcal{J}^{1}E$.

\subsubsection{Remark}
We can write down more explicitly
\begin{eqnarray*}
\mathcal{J}^{k}_{x}E=\underline{E}_{x}/\mathfrak{f}^{k+1}_{x}E&\cong&\bigoplus_{l\leq k}\mathrm{Hom}(\mathfrak{U}_{-l}(grT_{x}\mathcal{M}),E_{x})\\
\underline{E}_{x}/\mathfrak{f}^{k+1}_{x}E\ni f&\mapsto&F_{f}\;\text{such that}\;F_{f}(u)=u^{0}f,
\end{eqnarray*}
where ${}^{0}$ is defined by $(Z_{1}...Z_{l})^{0}=(-1)^{l}Z_{l}...Z_{1}$ for all $Z_{i}\in grT_{x}\mathcal{M}$. Under this isomorphism~(\ref{symbol}) becomes apparent.

\subsubsection{Example}
Let $(\mathcal{M},\mathcal{G},\g,\omega)$ be a parabolic geometry of type $(G,P)$ with the tangential filtration as in Example~\ref{tangentialfiltration}. Then we have
$$grT_{x}\mathcal{M}\cong gr\g_{-}=\g_{-k_{0}}\oplus\cdots\oplus\g_{-1}.$$
The first order jet bundle $J^{1}E$ of every associated bundle $E$ is an associated bundle again (see~\cite{sl}, p.~196), which induces the structure of an associated bundle on $\mathcal{J}^{1}E$. The corresponding $\p$-module 
$$\mathcal{J}^{1}\mathbb{E}=\mathbb{E}+\g_{1}\otimes\mathbb{E}$$
has a $\p$-module structure that is induced by the $\p$-module structure of $J^{1}\mathbb{E}$.

\subsection{Weighted bi-jet bundles and pairings}\label{pairings}
For every complex (or smooth) filtered manifold $\mathcal{M}$ and holomorphic vector bundle $V$ over $\mathcal{M}$, we denote by $\mathcal{J}^{k}V$ the weighted jet bundle over $\mathcal{M}$ as described in the last section.
A {\bf bilinear} {\bf differential} {\bf pairing} between sections of the bundle $V$ and sections of the bundle $W$ to sections of a bundle $U$ is a homomorphism
$$d:\mathcal{J}^{k}V\otimes \mathcal{J}^{l}W\rightarrow U.$$
This pairing is of {\bf weighted order $M$} if and only if 
\begin{description}
\item{(a)}
$k=l=M$,
\item{(b)}
there is a subbundle $B$ of $\mathcal{J}^{M}V\otimes \mathcal{J}^{M}W$, so that there is a commutative diagram
$$\begin{array}{ccc}
\mathcal{J}^{M}V\otimes \mathcal{J}^{M}W &\\
\downarrow &\searrow d\\
 (\mathcal{J}^{M}V\otimes \mathcal{J}^{M}W)/B& \stackrel{\phi}{\rightarrow}& U
 \end{array}$$
and
\item{(c)}
the map $\phi$ induces a formula that consist of terms in which derivatives of sections of $V$ are combined with derivatives of sections of $W$ in such a way that the total weighted order is $M$ (i.e.~a term may consist of a differential operator of weighted order $k$ applied a section of $V$ combined with a differential operator of weighted order $(M-k)$ applied to a section of $W$, for~$k=0,...,M$).
\end{description}
In fact, $B$ is characterized by (c) as detailed in the next subsection. We will therefore write $\mathcal{J}^{M}(V,W)=(\mathcal{J}^{M}V\otimes \mathcal{J}^{M}W)/B$ for this canonical choice of $B$.
This is not to be confused with the set of all $M$-jets of $V$ into $W$ as defined in~\cite{kms}, p.~117, Definition~12.2.
\par
If $\mathcal{M}=G/P$ is a homogeneous space, then a pairing is called {\bf invariant} (some authors use the term equivariant) if it commutes with the action of $G$ on local sections of the involved homogeneous vector bundles, which is given by $(g.s)(h)=s(g^{-1}h)$ for all $g,h\in G$ and $s\in\Gamma(F)$ (see also Example~\ref{homogeneousdiff} and the Introduction~\ref{riemannsphere}).
\par
In general, there is no commonly accepted notion of invariance for manifolds with a parabolic structure (see~\cite{sl}, p.~193, Section 2). We will deal with this issue by taking a pragmatic point of view: first of all, every manifold with a parabolic geometry is equipped with a distinguished class of connections (Weyl connections), as detailed in~\cite{css1}, p.~42 and~\cite{css}, p.~54. A pairing is then called {\bf invariant}, if $\phi$ induces a formula that consists of terms involving an arbitrary connection from the distinguished equivalence class, but that as a whole does not depend on its choice. We will discuss this slightly subtle point further in Section~\ref{invariance}.

\subsection{Description of $\mathcal{J}^{M}(V,W)$}\label{bijet}
\begin{definition}
{\rm Let $\mathcal{M}$ be a (smooth) complex filtered manifold and let $V,W$ be holomorphic vector bundles (with trivial filtrations) over $\mathcal{M}$.  For every holomorphic vector bundle $U$ (with trivial filtration) over $\mathcal{M}$ and every integer $k\in\N$, there exists the weighted jet bundle $\mathcal{J}^{k}U$ and for every $0\leq l\leq k$ there is a projection $\pi^{k}_{l}:\mathcal{J}^{k}U\rightarrow \mathcal{J}^{l}U$ (see~\cite{m}, p.~237). Let $\mathfrak{U}_{-k}(grT\mathcal{M})$ denote the bundle over $\mathcal{M}$ whose fibre at $x\in \mathcal{M}$ is $\mathfrak{U}_{-k}(gr T_{x}\mathcal{M})$ as defined in~\ref{weightedjetbundles}.
The projections can be put into an exact sequence
$$0\rightarrow (\mathfrak{U}_{-k}(grT\mathcal{M}))^{*}\otimes U\rightarrow \mathcal{J}^{k}U\rightarrow \mathcal{J}^{k-1}U\rightarrow 0$$
as described in~\ref{weightedjetbundles}. This exact sequence induces a filtration
\begin{eqnarray*}
\mathcal{J}^{k}U&=&\sum_{l=0}^{k}\mathfrak{U}_{l}(grT^{*}\mathcal{M})\otimes U\\
&=&U+\mathfrak{U}_{1}(grT^{*}\mathcal{M})\otimes U+\mathfrak{U}_{2}(grT^{*}\mathcal{M})\otimes U+...+\mathfrak{U}_{k}(grT^{*}\mathcal{M})\otimes U
\end{eqnarray*}
on the jet bundle. The mapping
$$\varphi_{M}=\oplus_{k+l=M}\pi_{k}^{M}\otimes\pi^{M}_{l}:\mathcal{J}^{M}V\otimes \mathcal{J}^{M}W\rightarrow\bigoplus_{k+l=M}\mathcal{J}^{k}V\otimes \mathcal{J}^{l}W$$
defines a canonical subbundle $B=\mathrm{ker}\; \varphi_{M}$ in $\mathcal{J}^{M}V\otimes \mathcal{J}^{M}W$, so that
$$\mathcal{J}^{M}(V,W)=(\mathcal{J}^{M}V\otimes \mathcal{J}^{M}W)/\mathrm{ker}\; \varphi_{M}.$$
We will call the bundles $\mathcal{J}^{M}(V,W)$ {\bf weighted bi-jet bundles}.}
\end{definition}

\subsubsection{Remark}
It is easy to see that the vector bundle $\mathcal{J}^{M}(V,W)$ has a filtration
$$\mathcal{J}^{M}(V,W)=\sum_{k=0}^{M}\bigoplus_{l=0}^{k}\mathfrak{U}_{l}(grT^{*}\mathcal{M}) \otimes V\otimes\mathfrak{U}_{k-l}(grT^{*}\mathcal{M})\otimes W,$$
which is equivalent to a series of exact sequences
$$0\rightarrow \bigoplus_{l=0}^{k}\mathfrak{U}_{l}(grT^{*}\mathcal{M})\otimes V\otimes\mathfrak{U}_{k-l}(grT^{*}\mathcal{M})\otimes W\stackrel{\iota}{\rightarrow} \mathcal{J}^{k}(V,W)\rightarrow \mathcal{J}^{k-1}(V,W)\rightarrow 0,$$
for $0\leq k\leq M$. The exact sequence
$$0\rightarrow \bigoplus_{l=0}^{M}\mathfrak{U}_{l}(grT^{*}\mathcal{M})\otimes V\otimes\mathfrak{U}_{M-l}(grT^{*}\mathcal{M})\otimes W\stackrel{\iota}{\rightarrow} \mathcal{J}^{M}(V,W)\rightarrow \mathcal{J}^{M-1}(V,W)\rightarrow 0$$
gives rise to a {\bf symbol} $\sigma=\phi\circ\iota$ for every homomorphism $\phi:\mathcal{J}^{M}(V,W)\rightarrow E$, i.e.~for every weighted $M$-th order bilinear differential pairing. As for ordinary jet bundles, we can define the {\bf formal (weighted) bi-jet bundle} $\mathcal{J}^{\infty}(V,W)$ as the projective limit 
$$\mathcal{J}^{\infty}(V,W)\rightarrow...\rightarrow\mathcal{J}^{M}(V,W)\rightarrow \mathcal{J}^{M-1}(V,W)\rightarrow...\rightarrow \mathcal{J}^{1}(V,W)\rightarrow V\otimes W\rightarrow 0.$$

\par
In the homogeneous case $\mathcal{J}^{M}(V,W)$ is a homogeneous bundle with a $\p$-module structure on the standard fibre $\mathcal{J}^{M}(\mathbb{V},\mathbb{W})$ that is induced by the $\p$-module structures of $\mathcal{J}^{M}\mathbb{V}$ and $\mathcal{J}^{M}\mathbb{W}$, the standard fibres of $\mathcal{J}^{M}V$ and $\mathcal{J}^{M}W$.

\section{Algebraic description}\label{algebraicdescription}
This section provides an algebraic description for invariant bilinear differential pairings on homogeneous spaces. Similar to the situation for invariant linear differential operators, invariant differential pairings can be described by homomorphism between certain algebraic objects. There is, however, one major difference. An invariant differential operator between irreducible homogeneous vector bundles is the same as a homomorphism between generalized Verma modules and the existence of such a homomorphism restricts the possible candidates quite severely. In fact, both generalized Verma modules have to have the same central character and the theorem of Harish-Chandra implies that their highest weights have to be related by the affine action of the Weyl group. For pairings there is no such restriction. In fact, Theorem~\ref{mainresultone} shows that there are infinitely many pairings between arbitrary irreducible bundles.
\par
The procedure in this section follows the classification of invariant linear differential operators on homogeneous spaces as presented by L.~Barchini and R.~Zierau in the ICE-EM Australian Graduate School in Mathematics in Brisbane, July 2-20, 2007 and the author would like to express his gratitude for the the inspiration that these lectures provided.
\par
For this section define $\mathbb{K}=\R$ or $\mathbb{K}=\C$.

\subsection{Pairings on homogeneous spaces}
Let $\mathbb{V},\mathbb{W}$ and $\mathbb{E}$ be $\mathbb{K}$-vector spaces and denote by $\mathcal{C}^{\infty}(\mathbb{K}^{n},\mathbb{F})$ the space of smooth ($\mathbb{K}=\R$) or holomorphic ($\mathbb{K}=\C$) $\mathbb{F}$-valued functions on $\mathbb{K}^{n}$ for any vector space $\mathbb{F}$. A bilinear map
$$P:\mathcal{C}^{\infty}(\mathbb{K}^{n},\mathbb{V})\times\mathcal{C}^{\infty}(\mathbb{K}^{n},\mathbb{W})\rightarrow\mathcal{C}^{\infty}(\mathbb{K}^{n},\mathbb{E})$$
is called a {\bf bilinear differential pairing} if it is of the form
\begin{equation}\label{canonicalform}
P(\phi,\psi)=\sum_{\alpha,\beta}a_{\alpha,\beta}\left(\frac{\partial^{|\alpha|}}{\partial x^{\alpha}}\phi\otimes\frac{\partial^{|\beta|}}{\partial x^{\beta}}\psi\right),\;\forall\;(\phi,\psi)\in\mathcal{C}^{\infty}(\mathbb{K}^{n},\mathbb{V})\times\mathcal{C}^{\infty}(\mathbb{K}^{n},\mathbb{W}),
\end{equation}
where $a_{\alpha,\beta}\in\mathcal{C}^{\infty}(\mathbb{K}^{n},\mathrm{Hom}(\mathbb{V}\otimes\mathbb{W},\mathbb{E}))$ and $x$ is a coordinate system. The indices $\alpha,\beta$ are multiindices in the usual sense, i.e.~$x^{\alpha}=x_{1}^{\alpha_{1}}...x_{n}^{\alpha_{n}}$ for $\alpha=(\alpha_{1},...,\alpha_{n})\in\N^{n}$. 
\par
If $\mathcal{M}$ is a manifold and $V,W,E$ are vector bundles over $\mathcal{M}$, then a mapping 
$$P:\Gamma(V)\times\Gamma(W)\rightarrow\Gamma(E)$$
is a differential pairing if, in each local trivialization, $P$ is of the form~(\ref{canonicalform}). The space of all such pairings is denoted by $\mathbb{P}(V\times W,E)$. 
\par
Let us now assume that $\mathcal{M}=G/P$ is a homogeneous space and that $V$, $W$ and $E$ are associated bundles that arise from representations $\lambda$, $\nu$ and $\mu$ of $P$ on the vector spaces $\mathbb{V},\mathbb{W}$ and $\mathbb{E}$ respectively. We will denote the derived representations of $\p$ by the same symbols.
A pairing is called {\bf $G$-invariant}, if 
$$P(l_{g}\phi,l_{g}\psi)=l_{g}P(\phi,\psi)$$
for all $g\in G$, $\phi\in\Gamma(V)$ and $\psi\in\Gamma(W)$ and where $l_{g}$ denotes left translation, i.e.
$$(l_{g}\phi)(h)=(\phi\circ l_{g^{-1}})(h)=\phi(g^{-1}h)\;\forall\;g,h\in G.$$
The space of all $G$ invariant bilinear differential pairings is denoted by 
$$\mathbb{P}_{G}(V\times W,E).$$
We will denote right translation by 
$$(r(X)f)(x)=\frac{d}{dt}f(x\exp(tX))|_{t=0}$$
for $f\in\mathcal{C}^{\infty}(G,\mathbb{F})$ and $X\in\g$, where $\mathbb{F}$ denotes an arbitrary vector space and $\g$ is the Lie algebra of $G$.
It is easy to see that $r(X)_{g}=(l_{g})_{*}X\in T_{g}G$ is a left invariant vector field on $G$. We will write the Lie algebra of $G$ as $\g=\g_{-}+\p$, where $\p$ is the Lie algebra of $P$ and $\g_{-}$ is any vector space complement. Let $\{X_{j}\}_{j=1,...,n}$ be a basis of $\g_{-}$, then we can introduce local coordinates on $G/P$ around $gP$, $g\in G$:
\begin{eqnarray*}
\varphi:U &\rightarrow & G/P\\
\varphi(x_{1},...,x_{n})&=&g\exp(x_{1}X_{1}+...+x_{n}X_{n})\;\mathrm{mod}\;P,
\end{eqnarray*}
where $U\subset\mathbb{K}^{n}$ is an appropriate neighborhood of $0$. With respect to these coordinates, we see that
$$(r(X_{j})f)(g)=\frac{\partial f}{\partial x_{j}}|_{x=0}.$$
We can canonically extend the  action of $\g$ via $r$ to the universal enveloping algebra $\mathfrak{U}(\g)$.

\begin{proposition}\label{canonic}
In each local trivialization, an invariant differential pairing can be written as
\begin{equation*}
P(\phi,\psi)=\sum_{\alpha,\beta}T_{\alpha,\beta}(r(X^{\alpha})\phi\otimes r(X^{\beta})\psi),
\end{equation*}
where $T_{\alpha,\beta}\in\mathrm{Hom}(\mathbb{V}\otimes\mathbb{W},\mathbb{E}))$, $r(X^{\alpha})=r(X_{1})^{\alpha_{1}}\cdots r(X_{n})^{\alpha_{n}}$ and $\{X_{i}\}_{i=1,...,n}$ is a basis of $\g_{-}$.  
\end{proposition}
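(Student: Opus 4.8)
The plan is to exploit the fact that the operators $r(X)$ are assembled from \emph{left-invariant} vector fields on $G$, which by construction commute with the left-translation action through which invariance is defined. The argument proceeds in two stages: first I rewrite an arbitrary differential pairing in the asserted shape, but with coefficients $T_{\alpha,\beta}$ that are a priori functions on $G$; then invariance is used to force these coefficients to be constant, which is the content of the statement.

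For the first stage I would start from the local expression (\ref{canonicalform}) in the exponential coordinates $\varphi(x)=g\exp(\sum_i x_iX_i)$ and compare the coordinate frame $\{\partial/\partial x_j\}$ with the left-invariant frame $\{r(X)\}_{X\in\g}$. Since $\{r(X)\}_{X\in\g}$ spans $T_hG$ at every $h$, and since at $x=0$ one has $r(X_j)=\partial/\partial x_j$ (as recorded just before the statement), the change of frame between the $\partial/\partial x_j$ and the $r(X_k)$ with $X_k\in\g_{-}$ is governed by a matrix of functions that reduces to the identity at the origin and is therefore invertible near it; the remaining directions $r(Y)$, $Y\in\p$, act on equivariant functions purely algebraically, since differentiating $\phi(hp)=\lambda(p^{-1})\phi(h)$ gives $r(Y)\phi=-\lambda(Y)\phi$. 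Using $[r(X),r(Y)]=r([X,Y])$ to reorder, together with the Poincar\'e--Birkhoff--Witt decomposition $\mathfrak{U}(\g)=\mathfrak{U}(\g_{-})\otimes\mathfrak{U}(\p)$ of the extended action of $\g$ on $\mathfrak{U}(\g)$, every composition of coordinate derivatives can be expressed through the ordered monomials $r(X^{\alpha})=r(X_1)^{\alpha_1}\cdots r(X_n)^{\alpha_n}$ (with $X_j\in\g_{-}$) composed with algebraic operations coming from $\mathfrak{U}(\p)$. Absorbing the latter into $\mathrm{Hom}(\mathbb{V}\otimes\mathbb{W},\mathbb{E})$-valued coefficients yields $P(\phi,\psi)=\sum_{\alpha,\beta}T_{\alpha,\beta}(\cdot)\,(r(X^{\alpha})\phi\otimes r(X^{\beta})\psi)$ with $T_{\alpha,\beta}$ now functions on $G$; because the top-order change of frame is invertible, this rewriting loses no information and preserves the order.

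For the second stage I would insert this expression into the invariance identity $P(l_g\phi,l_g\psi)=l_gP(\phi,\psi)$. The crucial point is that left-invariant vector fields commute with left translation, $r(X)(l_gf)=l_g(r(X)f)$, so that $r(X^{\alpha})(l_g\phi)=l_g(r(X^{\alpha})\phi)$, while on equivariant functions $l_g$ is simply precomposition by $l_{g^{-1}}$ with no twisting of the $\mathbb{E}$-values. Evaluating both sides at a point $h\in G$ and writing $p=g^{-1}h$ gives
\begin{equation*}
\sum_{\alpha,\beta}\big(T_{\alpha,\beta}(h)-T_{\alpha,\beta}(p)\big)\big((r(X^{\alpha})\phi)(p)\otimes(r(X^{\beta})\psi)(p)\big)=0.
\end{equation*}
Since $\g_{-}$ is a complement to $\p$, the jet data $\{(r(X^{\alpha})\phi)(p)\}_{\alpha}$ and $\{(r(X^{\beta})\psi)(p)\}_{\beta}$ can be prescribed independently by a suitable choice of equivariant $\phi$ and $\psi$ (the leading symbol being the pure coordinate derivatives along the slice transverse to $P$). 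Hence each coefficient must satisfy $T_{\alpha,\beta}(h)=T_{\alpha,\beta}(p)$; as $p=g^{-1}h$ ranges over all of $G$ for fixed $h$, this says $T_{\alpha,\beta}$ is constant, which is the assertion.

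I expect the main obstacle to be the first stage: carefully justifying that the passage from the coordinate frame to the left-invariant frame is an invertible, smooth (respectively holomorphic) operation at top order, and that the $\p$-directional derivatives genuinely collapse to algebraic operations on equivariant functions, so that the reordering produces honest $\mathrm{Hom}$-valued coefficients without altering the order of the operator. Once the pairing is expressed through the left-invariant operators $r(X^{\alpha})$, the invariance step is essentially forced, precisely because these operators are tailored to commute with the $G$-action.
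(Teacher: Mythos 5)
Your proposal is correct and takes essentially the same approach as the paper: write the pairing in the left-invariant frame $r(X^{\alpha})$ with $\mathrm{Hom}(\mathbb{V}\otimes\mathbb{W},\mathbb{E})$-valued function coefficients (your PBW/reordering discussion spells out what the paper leaves implicit when it cites the local trivialization), then use $r(X)\circ l_{g}=l_{g}\circ r(X)$ to remove the dependence of the coefficients on the base point. The only divergence is the final step: where you derive the functional equation $T_{\alpha,\beta}(h)=T_{\alpha,\beta}(g^{-1}h)$ and must invoke independent prescribability of the jet data $\{(r(X^{\alpha})\phi)(p)\}_{\alpha}$, the paper instead evaluates $P(\phi,\psi)(g)=(l_{g^{-1}}P(\phi,\psi))(e)=P(l_{g^{-1}}\phi,l_{g^{-1}}\psi)(e)$, expands with the coefficients frozen at the identity, and commutes $r$ past $l_{g^{-1}}$, landing on the constant-coefficient formula with $T_{\alpha,\beta}=a_{\alpha,\beta}(e)$ directly, so no linear-independence lemma is needed at this stage (the paper invokes independence of the $r(X^{\alpha})$ only later, for the injectivity step in Theorem~\ref{three}).
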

\begin{proof}
Using the definition of $\mathbb{P}(V\times W,E)$ and the local trivialization from above, we can write each bilinear pairing as
$$P(\phi,\psi)=\sum_{\alpha,\beta}a_{\alpha,\beta}(r(X^{\alpha})\phi\otimes r(X^{\beta})\psi),$$
for $a_{\alpha,\beta}\in\mathcal{C}^{\infty}(G,\mathrm{Hom}(\mathbb{V}\otimes\mathbb{W},\mathbb{E}))$. If the pairing is to be $G$-invariant, we can compute:
\begin{eqnarray*}
P(\phi,\psi)(g)&=&(l_{g^{-1}}P(\phi,\psi))(e)\\
&=&P(l_{g^{-1}}\phi,l_{g^{-1}}\psi)(e)\\
&=&\sum_{\alpha,\beta}a_{\alpha,\beta}(e)(r(X^{\alpha})l_{g^{-1}}\phi\otimes r(X^{\beta})l_{g^{-1}}\psi)(e)\\
&=&\sum_{\alpha,\beta}a_{\alpha,\beta}(e)(l_{g^{-1}}r(X^{\alpha})\phi \otimes l_{g^{-1}}r(X^{\beta})\psi)(e)\\
&=&\sum_{\alpha,\beta}a_{\alpha,\beta}(e)(r(X^{\alpha})\phi \otimes r(X^{\beta})\psi)(g)
\end{eqnarray*}
and take $T_{\alpha,\beta}=a_{\alpha,\beta}(e)$.
\end{proof}

\subsection{Generalized bi-Verma modules}

\begin{definition}\label{biverma}
{\rm
Let $Y=\sum_{i}Y_{i,1}\otimes Y_{i,2}$ be an arbitrary element in $\mathfrak{U}(\p)\otimes\mathfrak{U}(\p)$. Then we define a left (resp.~right) $\mathfrak{U}(\p)\otimes\mathfrak{U}(\p)$-module structure on $\mathfrak{U}(\g)\otimes\mathfrak{U}(\g)$ and $\mathrm{Hom}(\mathbb{V}\otimes\mathbb{W},\mathbb{E})$ by
$$(u_{1}\otimes u_{2}).Y=\sum_{i}u_{1}Y_{i,1}\otimes u_{2}Y_{i,2}$$
and
$$(Y.T)(v\otimes w)=T\left(\sum_{i}\left(\lambda(Y_{i,1}^{0})v\otimes \nu(Y_{i,2}^{0})w\right)\right)$$
respectively. The antiautomorphism ${}^{0}$ of $\mathfrak{U}(\p)$ is defined by $(Z_{1}...Z_{l})^{0}=(-1)^{l}Z_{l}...Z_{1}$ for all $Z_{i}\in\p$.
Let us denote by 
$$\left(\mathfrak{U}(\g)\otimes\mathfrak{U}(\g)\right)\otimes_{\mathfrak{U}(\p)\otimes\mathfrak{U}(\p)}\mathrm{Hom}(\mathbb{V}\otimes\mathbb{W},\mathbb{E})$$
the tensor product of the two $\mathfrak{U}(\p)\otimes\mathfrak{U}(\p)$-modules and abbreviate $\otimes_{\mathfrak{U}(\p)\otimes\mathfrak{U}(\p)}$ by $\hat{\otimes}$.
Furthermore we define an action of $P$ on $\left(\mathfrak{U}(\g)\otimes\mathfrak{U}(\g)\right)\otimes_{\mathfrak{U}(\p)\otimes\mathfrak{U}(\p)}\mathrm{Hom}(\mathbb{V}\otimes\mathbb{W},\mathbb{E})$ by
$$p.(u_{1}\otimes u_{2}\hat{\otimes}T)=\mathrm{Ad}(p)u_{1}\otimes \mathrm{Ad}(p)u_{2}\hat{\otimes}\mu(p)\circ T\circ \lambda(p)^{-1}\otimes\nu(p)^{-1}$$
and denote by
$$\left(\left(\mathfrak{U}(\g)\otimes\mathfrak{U}(\g)\right)\otimes_{\mathfrak{U}(\p)\otimes\mathfrak{U}(\p)}\mathrm{Hom}(\mathbb{V}\otimes\mathbb{W},\mathbb{E})\right)^{P}$$
the space of elements which are fixed under this action.
}
\end{definition}

\begin{theorem}\label{three}
There is an isomorphism
$$\mathbb{P}_{G}(V\times W,E)\cong \left(\left(\mathfrak{U}(\g)\otimes\mathfrak{U}(\g)\right)\otimes_{\mathfrak{U}(\p)\otimes\mathfrak{U}(\p)}\mathrm{Hom}(\mathbb{V}\otimes\mathbb{W},\mathbb{E})\right)^{P}.$$
\end{theorem}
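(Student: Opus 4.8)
The plan is to exhibit mutually inverse maps between the two spaces, using Proposition~\ref{canonic} to pass from an invariant pairing to its constant coefficients and the Poincaré--Birkhoff--Witt theorem to identify the target module with an explicit space of such coefficients; the real content is matching the equivariance of the pairing with the $P$-invariance of the associated tensor. \emph{Forward map.} Given $P\in\mathbb{P}_{G}(V\times W,E)$, Proposition~\ref{canonic} writes it in a fixed trivialization as $P(\phi,\psi)=\sum_{\alpha,\beta}T_{\alpha,\beta}(r(X^{\alpha})\phi\otimes r(X^{\beta})\psi)$, where $X^{\alpha},X^{\beta}$ range over the PBW monomials in the basis $\{X_{i}\}$ of $\g_{-}$ and $T_{\alpha,\beta}\in\mathrm{Hom}(\mathbb{V}\otimes\mathbb{W},\mathbb{E})$, and I set $\Phi(P)=\sum_{\alpha,\beta}X^{\alpha}\otimes X^{\beta}\,\hat{\otimes}\,T_{\alpha,\beta}$. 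Applying PBW to the decomposition $\g=\g_{-}\oplus\p$ gives $\mathfrak{U}(\g)\cong\mathfrak{U}(\g_{-})\otimes\mathfrak{U}(\p)$ as a right $\mathfrak{U}(\p)$-module, whence $(\mathfrak{U}(\g)\otimes\mathfrak{U}(\g))\,\hat{\otimes}\,\mathrm{Hom}(\mathbb{V}\otimes\mathbb{W},\mathbb{E})\cong\mathfrak{U}(\g_{-})\otimes\mathfrak{U}(\g_{-})\otimes\mathrm{Hom}(\mathbb{V}\otimes\mathbb{W},\mathbb{E})$ as vector spaces. This shows that the monomials $X^{\alpha}\otimes X^{\beta}$ form a basis over the coefficients, so $\Phi(P)$ is well defined and the $T_{\alpha,\beta}$ are unambiguous.

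\emph{Inverse map and the tensor relation.} Given a class $t=\sum u_{\alpha}\otimes u_{\beta}\,\hat{\otimes}\,T_{\alpha,\beta}$, I extend $r$ to $\mathfrak{U}(\g)$ as in the paragraph preceding Proposition~\ref{canonic} and set $P_{t}(\phi,\psi)(g)=\sum T_{\alpha,\beta}(r(u_{\alpha})\phi(g)\otimes r(u_{\beta})\psi(g))$. To see that this depends only on the class modulo $\mathfrak{U}(\p)\otimes\mathfrak{U}(\p)$, I use that $r$ is an algebra homomorphism and that for $Y\in\mathfrak{U}(\p)$ and an equivariant $\phi$ one has $r(Y)\phi=\lambda(Y^{0})\phi$ pointwise; since $\lambda(Y^{0})$ is a constant endomorphism it commutes past the remaining derivatives, yielding exactly $P_{(u_{1}\otimes u_{2}).Y\,\hat{\otimes}\,T}=P_{u_{1}\otimes u_{2}\,\hat{\otimes}\,Y.T}$. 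This is precisely the point at which the antiautomorphism ${}^{0}$ of Definition~\ref{biverma} is forced. That $P_{t}$ is $G$-invariant follows because the $r(u)$ are built from left-invariant vector fields and hence commute with left translation, exactly as in the proof of Proposition~\ref{canonic}.

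\emph{Matching invariance.} The crux is the identity $r(u)\phi(gp)=\lambda(p^{-1})\,r(\mathrm{Ad}(p)u)\phi(g)$, obtained from $gp\exp(tX)=g\exp(t\,\mathrm{Ad}(p)X)p$ together with the equivariance $\phi(hp)=\lambda(p^{-1})\phi(h)$, and extended to all $u\in\mathfrak{U}(\g)$ using that $\mathrm{Ad}(p)$ is an algebra automorphism. Substituting this into $P_{t}(\phi,\psi)(gp)$ and demanding $P_{t}(\phi,\psi)(gp)=\mu(p^{-1})P_{t}(\phi,\psi)(g)$, i.e. that $P_{t}$ takes values in $\Gamma(E)$, one finds this is equivalent to $P_{p.t}=P_{t}$ with $p.t$ the action of Definition~\ref{biverma}: the $\mathrm{Ad}(p)$ on the $u$'s comes from the coordinate change, and the twist $\mu(p)\circ(-)\circ(\lambda(p)^{-1}\otimes\nu(p)^{-1})$ from the three equivariances of $\phi$, $\psi$ and $P(\phi,\psi)$. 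Since the coefficient-to-pairing assignment is injective by the PBW identification above, $P_{p.t}=P_{t}$ for all $p$ is equivalent to $p.t=t$; hence $P_{t}\in\mathbb{P}_{G}(V\times W,E)$ if and only if $t$ is $P$-fixed. Read in the other direction, the same computation shows $\Phi(P)$ is $P$-fixed, and $\Phi$ and $t\mapsto P_{t}$ are manifestly mutually inverse.

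I expect the main obstacle to be exactly this last matching: tracking the $\mathrm{Ad}(p)$-conjugation through the iterated invariant derivatives and through the three equivariance twists, while keeping the conventions for the antiautomorphism ${}^{0}$, for left- versus right-invariance, and for the ordering in $r(uv)=r(u)r(v)$ mutually consistent, so that the geometric condition ``$P_{t}$ lands in sections'' reproduces precisely the algebraic $P$-action of Definition~\ref{biverma}. Everything else (the PBW identification and the descent to the quotient by $\mathfrak{U}(\p)\otimes\mathfrak{U}(\p)$) is bookkeeping once these conventions are pinned down.
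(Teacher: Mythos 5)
Your proposal is correct and follows essentially the same route as the paper's proof: the same descent identity $r(uY)f=\lambda(Y^{0})r(u)f$ for well-definedness over $\hat{\otimes}$, the same equivariance computation $r(u)\phi(gp)=\lambda(p^{-1})r(\mathrm{Ad}(p)u)\phi(g)$ matching the geometric section condition with the algebraic $P$-action, PBW plus linear independence of the operators $r(X^{\alpha})$ for injectivity, and Proposition~\ref{canonic} for surjectivity. The only difference is organizational (you build both directions explicitly rather than one map with separate injectivity/surjectivity steps), which changes nothing of substance.
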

\begin{proof}
Let us define a mapping
\begin{eqnarray*}
\left(\mathfrak{U}(\g)\otimes\mathfrak{U}(\g)\right)\otimes_{\mathfrak{U}(\p)\otimes\mathfrak{U}(\p)}\mathrm{Hom}(\mathbb{V}\otimes\mathbb{W},\mathbb{E})&\rightarrow&\mathbb{P}_{G}(V\times W,E)\\
\sum_{j}\left(\sum_{i}u_{ij,1}\otimes u_{ij,2}\right)\hat{\otimes} T_{j}&\mapsto&  \left(\sum_{j}\left(\sum_{i}u_{ij,1}\otimes u_{ij,2}\right)\hat{\otimes} T_{j}\right)^{\vee},
\end{eqnarray*}
where
$$\left(\sum_{j}\left(\sum_{i}u_{ij,1}\otimes u_{ij,2}\right)\hat{\otimes} T_{j}\right)^{\vee}(\phi,\psi)=\sum_{j}T_{j}\left(\sum_{i}\left(r(u_{ij,1})\phi\otimes r(u_{ij,2})\psi\right)\right).$$
We will proceed in several steps:
\begin{enumerate}
\item
We will first show that this is well defined. Using the fact that
$$r(uY)f=\sigma(Y^{0})r(u)f$$
for all $u\in\mathfrak{U}(\g)$, $Y\in\mathfrak{U}(\p)$ and $f\in\mathcal{C}^{\infty}(G,\mathbb{F})^{P}$ (and we have denoted the action of $\p$ on $\mathbb{F}$ by $\sigma$), we can prove that the mapping is well defined for the tensor product $\hat{\otimes}$, i.e.
$$\left((u_{1}\otimes u_{2}).Y\otimes T\right)^{\vee}=\left(u_{1}\otimes u_{2}\otimes Y.T\right)^{\vee}.$$
To keep notation simple, let $Y=Y_{1}\otimes Y_{2}\in\mathfrak{U}(\p)\otimes\mathfrak{U}(\p)$. Then we compute
\begin{eqnarray*}
(((u_{1}\otimes u_{2}).Y)\otimes T)^{\vee}(\phi,\psi)&=&(u_{1}Y_{1}\otimes u_{2}Y_{2}\otimes T)^{\vee}(\phi,\psi)\\
&=&T(r(u_{1}Y_{1})\phi\otimes r(u_{2}Y_{2})\psi)\\
&=&T(\lambda(Y_{1}^{0})r(u_{1})\phi\otimes \nu(Y_{2}^{0})r(u_{2})\psi)\\
&=&Y.T(r(u_{1})\phi\otimes  r(u_{2})\psi)\\
&=&(u_{1}\otimes u_{2}\otimes Y.T)^{\vee}(\phi,\psi).
\end{eqnarray*}
\item
Let us first compute for $X\in\g$, $\phi\in\mathcal{C}^{\infty}(G,\mathbb{V})^{P}$, $g\in G$ and $p\in P$: 
\begin{eqnarray*}
(r(X)\phi)(gp)&=&\frac{d}{dt}\phi(gp\exp(tX))|_{t=0}\\
&=&\frac{d}{dt}\phi(g\exp(t\mathrm{Ad}(p)X)p)|_{t=0}\\
&=&\lambda(p^{-1})(r(\mathrm{Ad}(p)X)\phi)(g).
\end{eqnarray*}
The same is obviously also true for $\psi\in\mathcal{C}^{\infty}(G,\mathbb{W})^{P}$ and we see that
\begin{eqnarray*}
(u_{1}\otimes u_{2}\hat{\otimes}T)^{\vee}(\phi,\psi)(gp)&=&T((r(u_{1})\phi)(gp)\otimes (r(u_{2})\psi)(gp))\\
&=&T(\lambda(p)^{-1}(r(\mathrm{Ad}(p)u_{1})\phi)(g)\otimes \nu(p)^{-1}(r(\mathrm{Ad}(p)u_{2})\psi)(g))\\
&=&\mu(p)^{-1}(p.(u_{1}\otimes u_{2}\hat{\otimes}T))^{\vee}(\phi,\psi)(g).
\end{eqnarray*}
So $(u_{1}\otimes u_{2}\hat{\otimes}T)^{\vee}(\phi,\psi)\in\mathcal{C}^{\infty}(G,\mathbb{E})^{P}$ if and only if $(u_{1}\otimes u_{2}\hat{\otimes}T)$ is fixed by the action of $P$ as defined above.
\item
It is clear that this defines a $G$-invariant pairing since $r$ and $l$ mutually commute, see Proposition~\ref{canonic}.
\item
Injectivity follows from the fact that, as vector spaces,
$$\left(\mathfrak{U}(\g)\otimes\mathfrak{U}(\g)\right)\otimes_{\mathfrak{U}(\p)\otimes\mathfrak{U}(\p)}\mathrm{Hom}(\mathbb{V}\otimes\mathbb{W},\mathbb{E})\cong
\left(\mathfrak{U}(\g_{-})\otimes\mathfrak{U}(\g_{-})\right)\otimes_{\C}\mathrm{Hom}(\mathbb{V}\otimes\mathbb{W},\mathbb{E})$$
and the differential operators $r(X^{\alpha})$ are linearly independent for $X^{\alpha}\in\mathfrak{U}(\g_{-})$.
\item
The mapping is surjective by Proposition~\ref{canonic}.
\end{enumerate}
\end{proof}

\begin{corollary}\label{one}
Let us now specialize to the case that $G$ is semisimple and $P$ is a parabolic subgroup as described in the last chapter, i.e.~$G/P$ is a generalized flag manifold. Moreover, we assume that the representations of $P$ come from representations of the parabolic subalgebra $\p\subset \g$. Then we have an isomorphism
$$\mathbb{P}_{G}(V\times W,E)\cong \mathrm{Hom}_{\mathfrak{U}(\g)}\left(M_{\p}(\mathbb{E}),\left(\mathfrak{U}(\g)\otimes\mathfrak{U}(\g)\right)\otimes_{\mathfrak{U}(\p)\otimes\mathfrak{U}(\p)}\mathbb{V}^{*}\otimes\mathbb{W}^{*}\right).$$
\end{corollary}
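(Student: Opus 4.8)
The plan is to deduce the statement from Theorem~\ref{three} by means of the standard tensor--hom (Frobenius) adjunction for the induction functor $\mathfrak{U}(\g)\otimes_{\mathfrak{U}(\p)}(-)$, after first pulling the factor $\mathbb{E}$ out of the coefficient module. Write $N=\left(\mathfrak{U}(\g)\otimes\mathfrak{U}(\g)\right)\otimes_{\mathfrak{U}(\p)\otimes\mathfrak{U}(\p)}(\mathbb{V}^{*}\otimes\mathbb{W}^{*})$, regarded as a $\mathfrak{U}(\g)$-module via the diagonal embedding $\g\hookrightarrow\g\oplus\g$, $X\mapsto X\otimes 1+1\otimes X$, which turns $\mathfrak{U}(\g)\otimes\mathfrak{U}(\g)=\mathfrak{U}(\g\oplus\g)$ into a left $\mathfrak{U}(\g)$-module by left multiplication. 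By Theorem~\ref{three} it suffices to produce a natural isomorphism between $\left(\left(\mathfrak{U}(\g)\otimes\mathfrak{U}(\g)\right)\hat{\otimes}\,\mathrm{Hom}(\mathbb{V}\otimes\mathbb{W},\mathbb{E})\right)^{P}$ and $\mathrm{Hom}_{\mathfrak{U}(\g)}(M_{\p}(\mathbb{E}),N)$.

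First I would rewrite the coefficients. As a vector space $\mathrm{Hom}(\mathbb{V}\otimes\mathbb{W},\mathbb{E})=\mathbb{E}\otimes\mathbb{V}^{*}\otimes\mathbb{W}^{*}$, and the right $\mathfrak{U}(\p)\otimes\mathfrak{U}(\p)$-action of Definition~\ref{biverma} only precomposes $T$ with $\lambda$ and $\nu$, hence acts solely on the factor $\mathbb{V}^{*}\otimes\mathbb{W}^{*}$ and leaves $\mathbb{E}$ untouched. Consequently $\mathbb{E}$ carries the trivial module structure and may be extracted from the balanced tensor product, giving an isomorphism of the bracketed module in Theorem~\ref{three} with $\mathbb{E}\otimes N$. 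Since $G$ is simply connected and $P$ is parabolic, $P$ is connected, so the $P$-invariants coincide with the $\p$-invariants for the differentiated action; thus the left-hand side becomes $(\mathbb{E}\otimes N)^{\p}$.

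The heart of the argument is to check that the differentiated $\p$-action is exactly the tensor product of the $\mu$-action on $\mathbb{E}$ with the restriction to $\p$ of the diagonal $\g$-action on $N$. Differentiating the $P$-action of Definition~\ref{biverma} at the identity, the $\mathrm{Ad}\otimes\mathrm{Ad}$ part produces $\mathrm{ad}(Y)$ on each $\mathfrak{U}(\g)$ slot, i.e.~$Yu_{i}-u_{i}Y$, the twist by $\lambda(p)^{-1}\otimes\nu(p)^{-1}$ produces $-T\circ(\lambda(Y)\otimes 1+1\otimes\nu(Y))$, and the twist by $\mu(p)$ produces $\mu(Y)\circ T$. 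Using the defining relation of $\hat{\otimes}$ one transfers the terms $u_{i}Y$ across the tensor product, where---after accounting for the antiautomorphism ${}^{0}$ and the passage to dual representations on $\mathbb{V}^{*}\otimes\mathbb{W}^{*}$---they cancel precisely against the $\lambda,\nu$ twists, leaving the diagonal action $Y\otimes 1+1\otimes Y$ on $N$ together with $\mu(Y)$ on $\mathbb{E}$. This sign-and-bookkeeping computation is the step I expect to be the main obstacle.

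Once the $\p$-module identification is in place, the remaining steps are formal: a $\p$-invariant element of $\mathbb{E}\otimes N$ is the same as a $\p$-module homomorphism $\mathbb{E}^{*}\to N$ (using $\mathbb{E}=(\mathbb{E}^{*})^{*}$ and the contragredient action), so $(\mathbb{E}\otimes N)^{\p}\cong\mathrm{Hom}_{\mathfrak{U}(\p)}(\mathbb{E}^{*},N)$. Frobenius reciprocity for induction then gives $\mathrm{Hom}_{\mathfrak{U}(\p)}(\mathbb{E}^{*},N)\cong\mathrm{Hom}_{\mathfrak{U}(\g)}(\mathfrak{U}(\g)\otimes_{\mathfrak{U}(\p)}\mathbb{E}^{*},N)$, and by the definition $M_{\p}(\mathbb{E})=\mathfrak{U}(\g)\otimes_{\mathfrak{U}(\p)}\mathbb{E}^{*}$ the right-hand side is $\mathrm{Hom}_{\mathfrak{U}(\g)}(M_{\p}(\mathbb{E}),N)$. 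Chaining these isomorphisms with Theorem~\ref{three} yields the corollary.
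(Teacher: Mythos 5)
Your proposal is correct and follows essentially the same route as the paper's proof: pull the factor $\mathbb{E}$ out of the balanced tensor product (since the right $\mathfrak{U}(\p)\otimes\mathfrak{U}(\p)$-action only touches $\mathbb{V}^{*}\otimes\mathbb{W}^{*}$), identify the $P$-invariants with $\mathrm{Hom}_{\mathfrak{U}(\p)}(\mathbb{E}^{*},N)$, and conclude by Frobenius reciprocity. The only difference is one of presentation: the verification that the differentiated $P$-action on $N$ is the restriction to $\p$ of the diagonal $\g$-action, which you carry out inside the proof, is in the paper deferred to the Proposition immediately following the Corollary.
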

\begin{proof}
We have the following isomorphisms:
\begin{eqnarray*}
\mathfrak{U}(\g)\otimes\mathfrak{U}(\g)\otimes_{\mathfrak{U}(\p)\otimes\mathfrak{U}(\p)}\mathrm{Hom}(\mathbb{V}\otimes\mathbb{W},\mathbb{E})&\cong&
\mathfrak{U}(\g)\otimes\mathfrak{U}(\g)\otimes_{\mathfrak{U}(\p)\otimes\mathfrak{U}(\p)}(\mathbb{V}\otimes\mathbb{W})^{*}\otimes\mathbb{E}\\
&\cong&\left(\mathfrak{U}(\g)\otimes\mathfrak{U}(\g)\otimes_{\mathfrak{U}(\p)\otimes\mathfrak{U}(\p)}(\mathbb{V}\otimes\mathbb{W})^{*}\right)\otimes\mathbb{E},
\end{eqnarray*}
since the $\mathfrak{U}(\p)\otimes\mathfrak{U}(\p)$ action on $(\mathbb{V}\otimes\mathbb{W})^{*}\otimes\mathbb{E}$ as defined in Definition~\ref{biverma} is given by the derived action of $\p$ on $(\mathbb{V}\otimes\mathbb{W})^{*}$ and the trivial action on $\mathbb{E}$. This isomorphism is an isomorphism of $P$ modules with the action given in Definition~\ref{biverma}.
Therefore
\begin{eqnarray*}
&&\left(\mathfrak{U}(\g)\otimes\mathfrak{U}(\g)\otimes_{\mathfrak{U}(\p)\otimes\mathfrak{U}(\p)}\mathrm{Hom}(\mathbb{V}\otimes\mathbb{W},\mathbb{E})\right)^{P}\\
&=&\mathrm{Hom}_{P}(\mathbb{E}^{*},\mathfrak{U}(\g)\otimes\mathfrak{U}(\g)\otimes_{\mathfrak{U}(\p)\otimes\mathfrak{U}(\p)}(\mathbb{V}\otimes\mathbb{W})^{*})\\
&=&\mathrm{Hom}_{\mathfrak{U}(\p)}(\mathbb{E}^{*},\mathfrak{U}(\g)\otimes\mathfrak{U}(\g)\otimes_{\mathfrak{U}(\p)\otimes\mathfrak{U}(\p)}(\mathbb{V}\otimes\mathbb{W})^{*})\\
&=&\mathrm{Hom}_{\mathfrak{U}(\g)}(\mathfrak{U}(\g)\otimes_{\mathfrak{U}(\p)}\mathbb{E}^{*},\mathfrak{U}(\g)\otimes\mathfrak{U}(\g)\otimes_{\mathfrak{U}(\p)\otimes\mathfrak{U}(\p)}(\mathbb{V}\otimes\mathbb{W})^{*})\\
&=&\mathrm{Hom}_{\mathfrak{U}(\g)}(M_{\p}(\mathbb{E}),\mathfrak{U}(\g)\otimes\mathfrak{U}(\g)\otimes_{\mathfrak{U}(\p)\otimes\mathfrak{U}(\p)}(\mathbb{V}\otimes\mathbb{W})^{*})).
\end{eqnarray*}
The second but last equality follows from the algebraic Frobenius reciprocity, see~\cite{v}.

\end{proof}

\begin{definition}
{\rm Let us define
$$M_{\p}(\mathbb{V},\mathbb{W})=\left(\mathfrak{U}(\g)\otimes\mathfrak{U}(\g)\right)\otimes_{\mathfrak{U}(\p)\otimes\mathfrak{U}(\p)}\mathbb{V}^{*}\otimes\mathbb{W}^{*}$$
and call this module, following~\cite{d}, a {\bf generalized bi-Verma module}.
}\end{definition}

\begin{proposition}
$M_{\p}(\mathbb{V},\mathbb{W})$ is a $\mathfrak{U}(\g)$-module with action given by
$$X.(u_{1}\otimes u_{2}\hat{\otimes} v\otimes w)=(Xu_{1}\otimes u_{2}+u_{1}\otimes Xu_{2})\hat{\otimes}v\otimes w.$$
Note that the restriction of this action to $\p$ is exactly the derived action of $P$ from above.
\end{proposition}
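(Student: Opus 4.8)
The plan is to recognise the displayed formula as the standard left $\mathfrak{U}(\g)$-module structure on a balanced tensor product, arising from the fact that $\g$ sits \emph{primitively} inside the Hopf algebra $\mathfrak{U}(\g)$. Concretely, $X\mapsto X\otimes 1+1\otimes X$ is the restriction to $\g$ of the coproduct $\Delta:\mathfrak{U}(\g)\to\mathfrak{U}(\g)\otimes\mathfrak{U}(\g)$, and the action on $\mathfrak{U}(\g)\otimes\mathfrak{U}(\g)$ is $z.(u_{1}\otimes u_{2})=\Delta(z)(u_{1}\otimes u_{2})$ by componentwise left multiplication. First I would check, directly on the free object $(\mathfrak{U}(\g)\otimes\mathfrak{U}(\g))\otimes_{\C}(\mathbb{V}^{*}\otimes\mathbb{W}^{*})$, that the given formula defines a Lie algebra representation, i.e.~$X.(Y.m)-Y.(X.m)=[X,Y].m$. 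Applying $X\otimes 1+1\otimes X$ twice and antisymmetrising, the cross terms $Yu_{1}\otimes Xu_{2}$ and $Xu_{1}\otimes Yu_{2}$ cancel, leaving $(XY-YX)u_{1}\otimes u_{2}+u_{1}\otimes(XY-YX)u_{2}=[X,Y].(u_{1}\otimes u_{2})$. Hence the $\g$-action extends uniquely to $\mathfrak{U}(\g)$ (equivalently, $\Delta$ is an algebra homomorphism).

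Second, to see that this descends to the quotient $M_{\p}(\mathbb{V},\mathbb{W})$, I would verify that the left $\mathfrak{U}(\g)$-action commutes with the right $\mathfrak{U}(\p)\otimes\mathfrak{U}(\p)$-action of Definition~\ref{biverma}. For $X\in\g$ and $Y=Y_{1}\otimes Y_{2}\in\mathfrak{U}(\p)\otimes\mathfrak{U}(\p)$ this is immediate, because left and right multiplication in $\mathfrak{U}(\g)$ commute: both $X.((u_{1}\otimes u_{2}).Y)$ and $(X.(u_{1}\otimes u_{2})).Y$ equal $Xu_{1}Y_{1}\otimes u_{2}Y_{2}+u_{1}Y_{1}\otimes Xu_{2}Y_{2}$. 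Thus $\mathfrak{U}(\g)\otimes\mathfrak{U}(\g)$ is a $(\mathfrak{U}(\g),\mathfrak{U}(\p)\otimes\mathfrak{U}(\p))$-bimodule, so its left action passes to the balanced tensor product, acting on the left factor and fixing $v\otimes w$. This is precisely the stated formula.

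Third, for the final sentence I would differentiate the $P$-action of Definition~\ref{biverma} at the identity and compare. Since $\frac{d}{dt}\mathrm{Ad}(\exp tX)u_{i}|_{t=0}=\mathrm{ad}(X)u_{i}=Xu_{i}-u_{i}X$, the derived $\p$-action is $(Xu_{1}-u_{1}X)\otimes u_{2}\,\hat{\otimes}\,\xi+u_{1}\otimes(Xu_{2}-u_{2}X)\,\hat{\otimes}\,\xi+u_{1}\otimes u_{2}\,\hat{\otimes}\,(X.\xi)$, where $X.\xi$ is the derived dual action on $\mathbb{V}^{*}\otimes\mathbb{W}^{*}$. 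Using the balancing relations, the terms $-u_{1}X\otimes u_{2}\,\hat{\otimes}\,\xi$ and $-u_{1}\otimes u_{2}X\,\hat{\otimes}\,\xi$ move across $\hat{\otimes}$ to $-u_{1}\otimes u_{2}\,\hat{\otimes}\,((X\otimes 1).\xi+(1\otimes X).\xi)$. By the definition of the module structure through the antiautomorphism ${}^{0}$, which sends a single $X\in\p$ to $X^{0}=-X$, one finds $(X\otimes 1).\xi+(1\otimes X).\xi=X.\xi$, so these cancel the last term and what survives is exactly $Xu_{1}\otimes u_{2}\,\hat{\otimes}\,\xi+u_{1}\otimes Xu_{2}\,\hat{\otimes}\,\xi$, the restriction of the new action to $\p$.

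The main obstacle is this third step: one must track the antiautomorphism ${}^{0}$ (the sign $X^{0}=-X$) and the balancing relations simultaneously, since it is precisely the $-u_{i}X$ contributions produced by the $\mathrm{Ad}$-derivative that have to be absorbed into the dual action of $\p$ on $\mathbb{V}^{*}\otimes\mathbb{W}^{*}$ through the $\hat{\otimes}$. The first two steps are formal consequences of the bimodule structure and of left and right multiplications commuting, so they require only the short verifications indicated above.
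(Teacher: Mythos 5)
Your proof is correct, and its decisive computation --- absorbing the right-multiplication tails $-u_{1}X\otimes u_{2}$ and $-u_{1}\otimes u_{2}X$ produced by differentiating $\mathrm{Ad}$ into the dual action on $\mathbb{V}^{*}\otimes\mathbb{W}^{*}$ via the balancing relations and the sign $X^{0}=-X$ --- is exactly the computation the paper performs. The logical organization, however, runs in the opposite direction. The paper starts from the derived action of $P$, which is automatically well defined on the quotient because the $P$-action was defined there from the outset in Definition~\ref{biverma}; it computes this action on simple elements $X\otimes Y\hat{\otimes}v^{*}\otimes w^{*}$, observes via the balancing relations that it collapses to $\xi X\otimes Y\hat{\otimes}v^{*}\otimes w^{*}+X\otimes \xi Y\hat{\otimes}v^{*}\otimes w^{*}$, and then merely asserts that this action ``can be extended to $\g$'' as in Corollary~\ref{one}. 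You instead construct the $\mathfrak{U}(\g)$-module structure first --- primitivity of $\g$ in the Hopf algebra $\mathfrak{U}(\g)$, the commutator check on the free tensor product, and the observation that left multiplication commutes with the right $\mathfrak{U}(\p)\otimes\mathfrak{U}(\p)$-action so that the action descends to the balanced product --- and only afterwards match its restriction to $\p$ against the derived $P$-action. What your route buys is a complete justification of the two points the paper leaves implicit: that the formula really is a Lie algebra representation extending to all of $\mathfrak{U}(\g)$, and that it is well defined modulo the balancing relations. What the paper's route buys is brevity, since starting from the $P$-action makes descent to the quotient automatic and leaves only the identification of the formula to check. Your treatment is also marginally more general in that you differentiate $\mathrm{Ad}$ on arbitrary $u_{i}\in\mathfrak{U}(\g)$, where the paper restricts to $X,Y\in\g$ ``for simplicity.''
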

\begin{proof}
The action of $P$ as defined on 
$$\left(\mathfrak{U}(\g)\otimes\mathfrak{U}(\g)\right)\hat{\otimes}\mathrm{Hom}(\mathbb{V}\otimes\mathbb{W},\mathbb{E})
\cong\left(\mathfrak{U}(\g)\otimes\mathfrak{U}(\g)\right)\hat{\otimes}\mathbb{V}^{*}\otimes\mathbb{W}^{*}\otimes\mathbb{E}$$
is just the tensorial one. This implies that the action of $P$ on 
$$\left(\mathfrak{U}(\g)\otimes\mathfrak{U}(\g)\right)\hat{\otimes}\mathbb{V}^{*}\otimes\mathbb{W}^{*}$$
as used in Corollary~\ref{one} is also just the tensorial one. For simplicity, let $X,Y\in\g$ and $v^{*}\in\mathbb{V}^{*}$, $w^{*}\in\mathbb{W}^{*}$ and look at the derived action of $\xi\in\p$:
\begin{eqnarray*}
\xi.(X\otimes Y\hat{\otimes}v^{*}\otimes w^{*})&=& [\xi,X]\otimes Y\hat{\otimes}v^{*}\otimes w^{*}+X\otimes [\xi, Y]\hat{\otimes}v^{*}\otimes w^{*}\\
&&+X\otimes Y\hat{\otimes}\xi.v^{*}\otimes w^{*}+X\otimes Y\hat{\otimes}v^{*}\otimes \xi.w^{*}\\
&=&\xi X\otimes Y\hat{\otimes}v^{*}\otimes w^{*}+X\otimes \xi Y\hat{\otimes}v^{*}\otimes w^{*}.
\end{eqnarray*}
This action can be extended to $\g$ (as in Corollary~\ref{one}) and it is exactly as described above.
\end{proof}

\subsubsection{Remark}
The module $M_{\p}(\mathbb{V},\mathbb{W})$ can be defined as a direct limit as follows. As a $\g_{0}$-module, we have the isomorphism
$$M_{\p}(\mathbb{V},\mathbb{W})\cong\mathfrak{U}(\g_{-})\otimes_{\C}\mathfrak{U}(\g_{-})\otimes_{\C}\mathbb{V}^{*}\otimes_{\C}\mathbb{W}^{*}$$
that allows us to define 
$$M_{\p}(\mathbb{V},\mathbb{W})_{k}=\{u_{1}\otimes u_{2}\otimes v^{*}\otimes w^{*}\in \mathfrak{U}(\g_{-})\otimes\mathfrak{U}(\g_{-})\otimes\mathbb{V}^{*}\otimes\mathbb{W}^{*}\;:\;u_{i}\in\mathfrak{U}_{-l_{i}}(\g_{-})\;\text{and}\;l_{1}+l_{2}\leq k\}.$$
This induces a filtration 
$$0\subset \mathbb{V}^{*}\otimes\mathbb{W}^{*}\subset M_{\p}(\mathbb{V},\mathbb{W})_{1}\subset\cdots\subset M_{\p}(\mathbb{V},\mathbb{W})_{k}\subset M_{\p}(\mathbb{V},\mathbb{W})_{k+1}\subset\cdots\subset M_{\p}(\mathbb{V},\mathbb{W}).$$

\begin{proposition}\label{propthree}
The notion of invariant bilinear differential pairings on generalized flag manifolds $G/P$ as given in Section~\ref{pairings} is equivalent to the definition given in this section.
\end{proposition}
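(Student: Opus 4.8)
The plan is to show that both definitions assign, to the same geometric data, the same space of invariant pairings, by reducing each to a space of $P$-module homomorphisms on standard fibres and then matching these via the algebraic classification of Theorem~\ref{three}. Since the homogeneous model $G/P$ is flat and hence regular, the tangential filtration of Example~\ref{tangentialfiltration} gives a well-defined weighted order and all constructions of Section~\ref{weightedjetbundles} and Section~\ref{bijet} apply. By definition an invariant bilinear differential pairing of weighted order $M$ in the sense of Section~\ref{pairings} is a bundle homomorphism $\phi:\mathcal{J}^{M}(V,W)\to U$ whose associated operator commutes with the $G$-action. Because all bundles are homogeneous and $\mathcal{J}^{M}(V,W)$ is homogeneous with standard fibre $\mathcal{J}^{M}(\mathbb{V},\mathbb{W})$, a $G$-invariant bundle map is the same as a $P$-module homomorphism $\mathcal{J}^{M}(\mathbb{V},\mathbb{W})\to\mathbb{E}$ on standard fibres. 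Thus the whole task reduces to identifying $\mathrm{Hom}_{P}(\mathcal{J}^{M}(\mathbb{V},\mathbb{W}),\mathbb{E})$ with the space produced by Theorem~\ref{three} and Corollary~\ref{one}.

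Next I would realise the jet prolongation explicitly through the right-regular action. Using the identification in the Remark of Section~\ref{weightedjetbundles}, $\mathcal{J}^{M}_{e}V\cong\bigoplus_{l\leq M}\mathrm{Hom}(\mathfrak{U}_{-l}(\g_{-}),\mathbb{V})$, an equivariant function $\phi\in\mathcal{C}^{\infty}(G,\mathbb{V})^{P}$ is sent to the collection $u\mapsto(r(u)\phi)(e)$ of its weighted derivatives, the antiautomorphism ${}^{0}$ matching the convention $F_{f}(u)=u^{0}f$. Under this identification the evaluation of a homomorphism $\phi:\mathcal{J}^{M}(V,W)\to U$ on prolonged sections is exactly a finite sum $\sum_{\alpha,\beta}T_{\alpha,\beta}(r(X^{\alpha})\phi\otimes r(X^{\beta})\psi)$, where the $T_{\alpha,\beta}\in\mathrm{Hom}(\mathbb{V}\otimes\mathbb{W},\mathbb{E})$ are the components of $\phi$ and $X^{\alpha}\in\mathfrak{U}(\g_{-})$. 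This is precisely the canonical form of Proposition~\ref{canonic}, so the analytic pairings of weighted order $\leq M$ are literally the differential pairings written in the left-invariant frame, and $G$-invariance of one expression is $G$-invariance of the other.

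The crucial remaining point, which I expect to be the main obstacle, is to match the bi-jet quotient $B=\ker\varphi_{M}$ imposed by conditions~(a)--(c) of Section~\ref{pairings} with the module structure underlying Theorem~\ref{three}. Here I would argue that passing from $\mathcal{J}^{M}V\otimes\mathcal{J}^{M}W$ to $\mathcal{J}^{M}(V,W)=(\mathcal{J}^{M}V\otimes\mathcal{J}^{M}W)/\ker\varphi_{M}$ imposes exactly the requirement that in each term the weighted orders of the two factors add to at most $M$ (condition~(c)). Dually, this is precisely the constraint $l_{1}+l_{2}\leq M$ defining the filtration piece $M_{\p}(\mathbb{V},\mathbb{W})_{M}$ of the generalized bi-Verma module (the Remark following Definition~\ref{biverma}). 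Computing fibrewise, $\mathcal{J}^{M}(\mathbb{V},\mathbb{W})\cong\bigoplus_{l_{1}+l_{2}\leq M}\mathfrak{U}_{-l_{1}}(\g_{-})^{*}\otimes\mathbb{V}\otimes\mathfrak{U}_{-l_{2}}(\g_{-})^{*}\otimes\mathbb{W}$, so its $P$-module dual is exactly $M_{\p}(\mathbb{V},\mathbb{W})_{M}$. Hence $\mathrm{Hom}_{P}(\mathcal{J}^{M}(\mathbb{V},\mathbb{W}),\mathbb{E})\cong\mathrm{Hom}_{P}(\mathbb{E}^{*},M_{\p}(\mathbb{V},\mathbb{W})_{M})$.

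Finally I would take the union over $M$, using that every differential pairing has finite weighted order, so that the truncated spaces exhaust $\mathrm{Hom}_{P}(\mathbb{E}^{*},M_{\p}(\mathbb{V},\mathbb{W}))$; by the algebraic Frobenius reciprocity already invoked in Corollary~\ref{one} this is $\mathrm{Hom}_{\mathfrak{U}(\g)}(M_{\p}(\mathbb{E}),M_{\p}(\mathbb{V},\mathbb{W}))$, i.e.\ exactly the object classifying $\mathbb{P}_{G}(V\times W,E)$. The two notions therefore coincide order by order, and in the limit. The only genuine care required is bookkeeping the antiautomorphism ${}^{0}$ together with the left/right module conventions of Definition~\ref{biverma}, so that the natural pairing between weighted jets and the bi-Verma module is $P$-equivariant; once these conventions are aligned the identification is forced and no further computation is needed.
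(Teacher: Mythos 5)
Your proposal is correct and follows essentially the same route as the paper: reduce invariance to a $\p$-module homomorphism $\mathcal{J}^{M}(\mathbb{V},\mathbb{W})\rightarrow\mathbb{E}$ on the standard fibre at the identity coset, identify $\mathcal{J}^{M}(\mathbb{V},\mathbb{W})^{*}$ with the filtration piece $M_{\p}(\mathbb{V},\mathbb{W})_{M}$ of the generalized bi-Verma module, and conclude via Frobenius reciprocity and Corollary~\ref{one}. The only cosmetic difference is that the paper establishes the duality $\mathcal{J}^{k}(\mathbb{V},\mathbb{W})^{*}\cong M_{\p}(\mathbb{V},\mathbb{W})_{k}$ by induction on $k$ using the exact sequences of the bi-jet filtration (which keeps the $\p$-module structure honest, since the jet module is filtered rather than a direct sum of $\p$-modules), whereas you assert it by a fibrewise computation; the substance is the same.
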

\begin{proof} 
Let
$$P:\Gamma(V)\times\Gamma(W)\rightarrow \Gamma(E)$$
be an invariant bilinear differential pairing. According to the last section, this is equivalent to having a homomorphism
$$\mathcal{J}^{M}(V,W)\rightarrow E,$$
for some $M$.
Since the pairing is invariant, it is determined by  its action at the identity coset $eP$ where it determines a $\p$-module homomorphism 
$$\mathcal{J}^{M}(\mathbb{V},\mathbb{W})\rightarrow \mathbb{E}.$$
We assert that $\mathcal{J}^{k}(\mathbb{V},\mathbb{W})^{*}\cong M_{\p}(\mathbb{V},\mathbb{W})_{k}$ for every $k\geq 0$ and prove the assertion by induction.
For $k=0$ the assertion is trivial. Let us assume that the assertion is true for $k-1$ and use the exact sequence
$$0\rightarrow \bigoplus_{l=0}^{k}\mathfrak{U}_{l}(\p_{+})\otimes \mathbb{V}\otimes\mathfrak{U}_{k-l}(\p_{+})\otimes \mathbb{W}\stackrel{\iota}{\rightarrow} \mathcal{J}^{k}(\mathbb{V},\mathbb{W})\rightarrow \mathcal{J}^{k-1}(\mathbb{V},\mathbb{W})\rightarrow 0.$$
Taking duals of this sequence together with the isomorphism $\p_{+}\cong\g_{-}^{*}$  proves the assertion. Now we can use the dual map 
$$\mathbb{E}^{*}\rightarrow \mathcal{J}^{M}(\mathbb{V},\mathbb{W})^{*}\cong M_{\p}(\mathbb{V},\mathbb{W})_{M}\subset M_{\p}(\mathbb{V},\mathbb{W})$$
together with the Frobenius reciprocity 
$$\mathrm{Hom}_{\p}(\mathbb{E}^{*},M_{\p}(\mathbb{V},\mathbb{W}))\cong\mathrm{Hom}_{\mathfrak{U}(\g)}(M_{\p}(\mathbb{E}),M_{\p}(\mathbb{V},\mathbb{W}))$$
and Corollary~\ref{one} to prove the claim.
\end{proof}

\subsection{Singular vectors in $M_{\p}(\mathbb{V},\mathbb{W})$}

\begin{definition}
{\rm A vector $\Theta\in M_{\p}(\mathbb{V},\mathbb{W})$ is called {\bf singular vector} of weight $\mu\in\h^{*}$ iff
\begin{enumerate}
\item
$X.\Theta=0$ for all $X\in\mathfrak{n}$ and
\item
$H.\Theta=\mu(H)\Theta$ for all $H\in\h$.
\end{enumerate}}
\end{definition}

\subsubsection{Remark}
Note that we allow singular vectors to be of the form $1\otimes 1\hat{\otimes}f_{0}^{*}$, where $f_{0}^{*}$ is a highest weight vector of an irreducible component of $\mathbb{V}^{*}\otimes\mathbb{W}^{*}$. These weight vectors correspond to zero order invariant bilinear differential pairings. So, for example, contraction of a $k$-form with a vector field is included.

\begin{proposition}
Singular vectors in $M_{\p}(\mathbb{V},\mathbb{W})$ are in 1-1 correspondence with invariant bilinear differential pairings, where we assume all modules to be irreducible.
\end{proposition}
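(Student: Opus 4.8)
The plan is to chain together the two identifications already established and then strip off one generating fibre by Frobenius reciprocity. By Corollary~\ref{one} together with Proposition~\ref{propthree}, an invariant bilinear differential pairing into an irreducible bundle $E$ is exactly the datum of a $\mathfrak{U}(\g)$-module homomorphism $M_{\p}(\mathbb{E})\to M_{\p}(\mathbb{V},\mathbb{W})$. Since $M_{\p}(\mathbb{E})=\mathfrak{U}(\g)\otimes_{\mathfrak{U}(\p)}\mathbb{E}^{*}$ is induced, algebraic Frobenius reciprocity (as in~\cite{v}) gives
$$\mathrm{Hom}_{\mathfrak{U}(\g)}(M_{\p}(\mathbb{E}),M_{\p}(\mathbb{V},\mathbb{W}))\cong\mathrm{Hom}_{\mathfrak{U}(\p)}(\mathbb{E}^{*},M_{\p}(\mathbb{V},\mathbb{W})).$$
Thus it suffices to produce a bijection between $\p$-homomorphisms out of the irreducible $\p$-module $\mathbb{E}^{*}$ and singular vectors in $M_{\p}(\mathbb{V},\mathbb{W})$ of weight equal to the highest weight $\mu$ of $\mathbb{E}^{*}$.

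For the forward direction I would send a homomorphism $\phi\colon\mathbb{E}^{*}\to M_{\p}(\mathbb{V},\mathbb{W})$ to $\Theta=\phi(f_{0}^{*})$, where $f_{0}^{*}$ is a highest weight vector of $\mathbb{E}^{*}$. Because $\B\subset\p$ we have $\mathfrak{n}\subset\p$, so for every $X\in\mathfrak{n}$ the $\p$-equivariance yields $X.\Theta=\phi(X.f_{0}^{*})=0$, and likewise $H.\Theta=\mu(H)\Theta$ for $H\in\h$; hence $\Theta$ is a singular vector of weight $\mu$. Conversely, given a singular vector $\Theta$ of weight $\mu$, I would take $\mathbb{E}^{*}$ to be the irreducible $\p$-module of highest weight $\mu$ and attempt to define $\phi$ by $\phi(u.f_{0}^{*})=u.\Theta$ for $u\in\mathfrak{U}(\g_{0})$, with $\p_{+}$ declared to act trivially in accordance with its action on $\mathbb{E}^{*}$.

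The crux, and the step I expect to require the most care, is the well-definedness of this backward assignment: I must check that the annihilator of $f_{0}^{*}$ in $\mathfrak{U}(\g_{0})$ also annihilates $\Theta$, and that letting $\p_{+}$ act trivially is consistent with $\p_{+}.\Theta=0$ (the latter being part of $\mathfrak{n}.\Theta=0$). Here I would use that, since $\Theta$ is an $\h$-weight vector, it lies in a single graded component of $M_{\p}(\mathbb{V},\mathbb{W})\cong\mathfrak{U}(\g_{-})\otimes\mathfrak{U}(\g_{-})\otimes\mathbb{V}^{*}\otimes\mathbb{W}^{*}$, which is finite dimensional as a $\g_{0}$-module. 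Under the standing assumption that $\mathfrak{z}(\g_{0})$ acts diagonalizably, this $\g_{0}$-module is completely reducible, so the submodule generated by the $\g_{0}$-highest weight vector $\Theta$ is the irreducible module of highest weight $\mu$, i.e.\ a copy of $\mathbb{E}^{*}$; this supplies the required $\g_{0}$-isomorphism and hence the $\p$-homomorphism once $\p_{+}$ is set to act trivially on both sides. Finally, the two assignments are visibly mutually inverse, and running over all admissible $\mu$ (equivalently over all irreducible targets $\mathbb{E}$) establishes the claimed $1$--$1$ correspondence.
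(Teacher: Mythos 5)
Your proposal is correct and follows essentially the same route as the paper: invariant pairings are identified with $\mathfrak{U}(\g)$-module homomorphisms $M_{\p}(\mathbb{E})\rightarrow M_{\p}(\mathbb{V},\mathbb{W})$, the forward direction evaluates such a homomorphism at a highest weight vector, the converse generates a homomorphism from the singular vector $\Theta$, and both arguments rest on the same key lemma that $M_{\p}(\mathbb{V},\mathbb{W})$ is a direct sum of finite dimensional irreducible $\g_{0}$-modules (which guarantees $\mu$ is dominant integral for $\p$). The only difference is one of explicitness: you route the converse through Frobenius reciprocity and verify well-definedness (including the triviality of the $\p_{+}$-action on the $\g_{0}$-submodule generated by $\Theta$), steps the paper compresses into ``this is easily done'' together with the lemma.
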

\begin{proof}
Using Theorem~\ref{three} it is clear that each invariant  bilinear differential pairing $\Gamma(V)\times\Gamma(W)\rightarrow \Gamma(E)$ induces a singular vector by looking at the image of a highest weight vector of $M_{\p}(\mathbb{E})$ in $M_{\p}(\mathbb{V},\mathbb{W})$.
\par
Conversely, for every singular vector $\Theta\in M_{\p}(\mathbb{V},\mathbb{W})$ of weight $\mu$, it is enough to define a $\mathfrak{U}(\g)$-module homomorphism
$$M_{\p}(\mathbb{E})\rightarrow M_{\p}(\mathbb{V},\mathbb{W}).$$
This is easily done by
$$Y.(1\hat{\otimes}e_{0}^{*})\mapsto Y.\Theta,$$
for $Y\in\mathfrak{U}(\g)$ and $e_{0}^{*}$ a highest weight vector in $\mathbb{E}^{*}$ of weight $\mu$. It only remains to show that there is a finite dimensional irreducible $\p$-module of highest weight $\mu$, i.e.~that $\mu$ is dominant integral for $\p$. The following lemma ensures this.
\end{proof}

\begin{lemma}
$M_{\p}(\mathbb{V},\mathbb{W})$ is the direct sum of finite dimensional irreducible $\g_{0}$ modules.
\end{lemma}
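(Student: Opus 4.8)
The plan is to exploit the $\g_{0}$-module isomorphism
$$M_{\p}(\mathbb{V},\mathbb{W})\cong\mathfrak{U}(\g_{-})\otimes_{\C}\mathfrak{U}(\g_{-})\otimes_{\C}\mathbb{V}^{*}\otimes_{\C}\mathbb{W}^{*}$$
recorded in the preceding Remark, which itself follows from the Poincar\'e--Birkhoff--Witt decomposition $\mathfrak{U}(\g)\cong\mathfrak{U}(\g_{-})\otimes\mathfrak{U}(\p)$ applied to each of the two tensor factors of $\mathfrak{U}(\g)\otimes\mathfrak{U}(\g)$. It therefore suffices to prove that this tensor product, equipped with its natural $\g_{0}$-action (the adjoint action on each copy of $\mathfrak{U}(\g_{-})$ together with the derived representation on $\mathbb{V}^{*}\otimes\mathbb{W}^{*}$), is a direct sum of finite-dimensional irreducible $\g_{0}$-modules.

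First I would cut this (infinite-dimensional) module into finite-dimensional pieces using the grading element $E\in\mathfrak{z}(\g_{0})$ from Section~\ref{grading}. Since $E$ is central in $\g_{0}$, each eigenspace of the operator by which $E$ acts is automatically a $\g_{0}$-submodule. Now $\mathfrak{U}(\g_{-})$ is graded by the (strictly negative) weighted degree, each homogeneous component $\mathfrak{U}_{-k}(\g_{-})$ being finite-dimensional because $\g_{-}$ is finite-dimensional, while $\mathbb{V}^{*}$ and $\mathbb{W}^{*}$ are finite-dimensional and so carry only finitely many $E$-eigenvalues. For any fixed total $E$-eigenvalue only finitely many combinations of homogeneous factors can contribute, so each $E$-eigenspace of the tensor product is finite-dimensional, and $M_{\p}(\mathbb{V},\mathbb{W})$ is the direct sum of these finite-dimensional $\g_{0}$-submodules.

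It then remains to show that each such finite-dimensional $\g_{0}$-module is completely reducible. Recall from Section~\ref{grading} that $\g_{0}=\g_{0}^{S}\oplus\mathfrak{z}(\g_{0})$ is reductive; by Weyl's theorem its restriction to the semisimple part $\g_{0}^{S}$ is completely reducible, so by the criterion quoted earlier — a finite-dimensional $\g_{0}$-module is completely reducible precisely when $\mathfrak{z}(\g_{0})$ acts diagonalizably — the one point I must verify is that the centre $\mathfrak{z}(\g_{0})$ acts semisimply. This is the main (and essentially the only) subtlety, and it is settled by the weight-space structure: $\mathfrak{z}(\g_{0})\subseteq\h$, and the entire module is visibly a direct sum of $\h$-weight spaces, since $\mathfrak{U}(\g_{-})$ is spanned by products of root vectors (each an $\h$-weight vector) and $\mathbb{V}^{*},\mathbb{W}^{*}$ decompose into $\h$-weight spaces. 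Hence every element of $\h$, and in particular every element of $\mathfrak{z}(\g_{0})$, acts diagonalizably. Once this diagonalizability is in hand, complete reducibility of each finite-dimensional piece follows, and the stated decomposition of $M_{\p}(\mathbb{V},\mathbb{W})$ as a direct sum of finite-dimensional irreducible $\g_{0}$-modules is immediate.
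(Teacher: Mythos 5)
Your proof is correct, but it takes a genuinely different route from the paper's. Both arguments start from the same $\g_{0}$-module isomorphism $M_{\p}(\mathbb{V},\mathbb{W})\cong\mathfrak{U}(\g_{-})\otimes\mathfrak{U}(\g_{-})\otimes\mathbb{V}^{*}\otimes\mathbb{W}^{*}$, but the paper immediately regroups the right-hand side as $M_{\p}(\mathbb{V})\otimes M_{\p}(\mathbb{W})$ and then simply quotes the known fact (\cite{l}, p.~500) that each generalized Verma module $M_{\p}(\mathbb{F})$ is a direct sum of finite-dimensional irreducible $\g_{0}$-modules, whence so is the tensor product. You instead prove the statement from scratch: you slice the module into finite-dimensional $\g_{0}$-submodules using the eigenspaces of the grading element $E\in\mathfrak{z}(\g_{0})$, and then establish complete reducibility of each slice from the reductivity of $\g_{0}$ together with the diagonalizability of $\mathfrak{z}(\g_{0})\subseteq\h$, which you read off from the $\h$-weight-space structure of $\mathfrak{U}(\g_{-})$ and of the irreducible $\p$-modules $\mathbb{V}^{*}$, $\mathbb{W}^{*}$. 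The paper's argument buys brevity at the cost of an external citation (and it leaves implicit the small step that a tensor product of two direct sums of finite-dimensional irreducibles is again completely reducible, which once more uses that the centre acts by scalars on each factor); your argument buys self-containedness, makes the finiteness and diagonalizability points explicit, and in fact reproves the cited special case, since applying your slicing to $\mathfrak{U}(\g_{-})\otimes\mathbb{F}^{*}$ recovers the decomposition of $M_{\p}(\mathbb{F})$ itself.
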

\begin{proof}
As a $\g_{0}$-module 
\begin{eqnarray*}
M_{\p}(\mathbb{V},\mathbb{W})&\cong&\mathfrak{U}(\g_{-})\otimes\mathfrak{U}(\g_{-})\otimes\mathbb{V}^{*}\otimes\mathbb{W}^{*}\\
&\cong& M_{\p}(\mathbb{V})\otimes M_{\p}(\mathbb{W}).
\end{eqnarray*}
Now we can use the fact that $M_{\p}(\mathbb{F})$ is the direct sum of finite dimensional irreducible $\g_{0}$-modules (see~\cite{l}, p.~500) for every finite dimensional irreducible representation $\mathbb{F}$ of $\p$.
\end{proof}
Note that the above homomorphism is not a homomorphism of $\g$-modules. In particular, the sum of the central character of $M_{\p}(\mathbb{V})$ and $M_{\p}(\mathbb{W})$
cannot be used as the central character of $M_{\p}(\mathbb{V},\mathbb{W})$.
\par
\vspace{0.5cm}

\begin{theorem}[Main result 1]\label{mainresultone}
Let $G/P$ be a generalized flag manifold with a filtration of the Lie algebra $\g$ as in~\ref{grading}. Fix two irreducible finite dimensional representations $\mathbb{V}$ and $\mathbb{W}$ and let $v_{0}^{*}$ (resp.~$w_{0}^{*}$) be a highest weight vector of $\mathbb{V}^{*}$ (resp.~$\mathbb{W}^{*}$) with highest weight $\lambda$ (resp.~$\nu$). 
Moreover let $\alpha_{i}$, $i\in I$, be a simple root.
Then there exists an invariant bilinear differential pairing
$$\Gamma(V)\times\Gamma(W)\rightarrow\Gamma(E_{M,i})$$
of weighted order $M$ for all $M\in\N$ and all $i\in I$, where $\mathbb{E}_{M,i}$ is the finite dimensional irreducible representation of $\p$ that is dual to the finite dimensional irreducible representation of highest weight $\lambda+\nu-M\alpha_{i}$. 
\end{theorem}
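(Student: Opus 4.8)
The plan is to use the dictionary set up in this section: by the correspondence between singular vectors and invariant pairings (together with Proposition~\ref{propthree}), producing an invariant pairing $\Gamma(V)\times\Gamma(W)\to\Gamma(E_{M,i})$ amounts to exhibiting a singular vector of weight $\lambda+\nu-M\alpha_{i}$ in the generalized bi-Verma module $M_{\p}(\mathbb{V},\mathbb{W})$. Since $i\in I$, the lowering vector $f:=X_{-\alpha_{i}}$ lies in $\g_{-1}$ and, with $e:=X_{\alpha_{i}}\in\g_{1}\subset\p$ and $h:=H_{\alpha_{i}}\in\h$, spans the $\mathfrak{sl}_{2}$-triple attached to $\alpha_{i}$ (Example~\ref{sltwo}). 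I would take as candidate singular vector
$$\Theta=\sum_{j=0}^{M}\gamma_{M,j}\,f^{j}\otimes f^{M-j}\,\hat{\otimes}\,v_{0}^{*}\otimes w_{0}^{*},$$
with scalars $\gamma_{M,j}$ to be determined and with $q:=B(\lambda,\alpha_{i}^{\vee})$, $q':=B(\nu,\alpha_{i}^{\vee})$ the coefficients of $\mathbb{V}^{*},\mathbb{W}^{*}$ over the crossed node $i$.

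The weight is checked first and is independent of the $\gamma_{M,j}$: acting by $H\in\h$ through $X.(u_{1}\otimes u_{2}\,\hat{\otimes}\,\cdot)=(Xu_{1}\otimes u_{2}+u_{1}\otimes Xu_{2})\,\hat{\otimes}\,\cdot$, one commutes $H$ past each power of $f$ (producing $-j\alpha_{i}(H)$ and $-(M-j)\alpha_{i}(H)$) and then slides it across $\hat{\otimes}$; the balancing of the tensor product together with the contragredient action on $\mathbb{V}^{*}\otimes\mathbb{W}^{*}$ returns the eigenvalues $\lambda(H)$ and $\nu(H)$, so $\Theta$ has weight $\lambda+\nu-M\alpha_{i}$. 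For annihilation by $\mathfrak{n}$ it suffices to treat the simple raising operators, as these generate $\mathfrak{n}$. If $k\ne i$ then $[X_{\alpha_{k}},f]=0$ because the difference of two distinct simple roots is never a root; thus $X_{\alpha_{k}}$ commutes past every $f$-power and, lying in $\p$, slides across $\hat{\otimes}$ to kill the highest weight vectors $v_{0}^{*},w_{0}^{*}$, giving $X_{\alpha_{k}}.\Theta=0$ for every $\gamma_{M,j}$. The only real condition is $X_{\alpha_{i}}.\Theta=0$. Expanding it with the $\mathfrak{sl}_{2}$ identity $ef^{j}=f^{j}e+jf^{j-1}(h-j+1)$, the $f^{j}e$-terms vanish on the highest weight vectors while the $h$-terms slide across $\hat{\otimes}$ and contribute the scalars $q,q'$; collecting the coefficient of each $f^{s}\otimes f^{M-1-s}$ reproduces exactly the linear system~(\ref{mainequation}). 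Its unique solution up to scale is the transvectant coefficient found in Section~\ref{riemannsphere},
$$\gamma_{M,j}=(-1)^{j}\binom{M}{j}\prod_{i=j}^{M-1}(q-i)\prod_{i=M-j}^{M-1}(q'-i),$$
and with this choice $\Theta$ is a singular vector of weight $\lambda+\nu-M\alpha_{i}$.

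It remains to observe that this weight genuinely yields a $\p$-dominant integral target: subtracting $M\alpha_{i}$ with $i\in I$ changes the coefficient over an uncrossed node $j\in J$ by $-MB(\alpha_{i},\alpha_{j}^{\vee})\ge 0$, so $\lambda+\nu-M\alpha_{i}$ stays dominant for $\g_{0}$ (alternatively this is forced by the Lemma that $M_{\p}(\mathbb{V},\mathbb{W})$ is a direct sum of finite dimensional irreducible $\g_{0}$-modules), whence $\mathbb{E}_{M,i}$ exists; and because $f\in\g_{-1}$ the leading term $f^{j}\otimes f^{M-j}$ has weighted order $M$, so the resulting pairing has weighted order $M$. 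The genuine subtlety, mirroring the dichotomy noted after the Riemann sphere classification in Section~\ref{riemannsphere}, is non-triviality rather than existence: the extreme coefficients $\gamma_{M,0},\gamma_{M,M}$ vanish exactly when $q$, respectively $q'$, lies in $\{0,\dots,M-1\}$, and $\Theta$ degenerates to zero only in the doubly critical range, so away from it the construction delivers a nonzero pairing. The part I expect to be most delicate is precisely this sign-and-balancing bookkeeping: keeping the contragredient action and the $\hat{\otimes}$-relation straight so that the reduction of $X_{\alpha_{i}}.\Theta=0$ lands on~(\ref{mainequation}) itself and not a sign-twisted variant, which is what makes the already-solved Riemann sphere computation applicable verbatim.
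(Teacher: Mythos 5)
Your construction is the same as the paper's: the candidate singular vector $\Theta=\sum_{j}\gamma_{M,j}X_{-\alpha_{i}}^{j}\otimes X_{-\alpha_{i}}^{M-j}\hat{\otimes}v_{0}^{*}\otimes w_{0}^{*}$, the observation that $X_{\alpha_{k}}$ for $k\neq i$ commutes past the lowering powers and kills the highest weight vectors, the $\mathfrak{sl}_{2}$-commutator reduction of $X_{\alpha_{i}}.\Theta=0$ to the system~(\ref{mainequation}), and the passage from a singular vector to a pairing via the homomorphism $M_{\p}(\mathbb{E}_{M,i})\rightarrow M_{\p}(\mathbb{V},\mathbb{W})$. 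Your dominance check for $\lambda+\nu-M\alpha_{i}$ (via $B(\alpha_{i},\alpha_{j}^{\vee})\leq 0$ for $j\in J$) is a fine alternative to the paper's appeal to the complete reducibility of $M_{\p}(\mathbb{V},\mathbb{W})$ as a $\g_{0}$-module.

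However, your endgame has a genuine gap. You assert that the system has a \emph{unique} solution up to scale, given by the transvectant formula, and then concede that this $\Theta$ ``degenerates to zero only in the doubly critical range, so away from it the construction delivers a nonzero pairing.'' The theorem claims existence for \emph{all} $M\in\N$ and all irreducible $\mathbb{V},\mathbb{W}$, with no restriction on $q=B(\lambda,\alpha_{i}^{\vee})$ and $q'=B(\nu,\alpha_{i}^{\vee})$. When both $q,q'\in\{0,\dots,M-1\}$ and $q+q'\geq M-1$, your explicit formula vanishes identically (every $j$ satisfies $j\leq q$ or $j\geq M-q'$), so your argument produces no pairing there; moreover uniqueness up to scale fails in that range (the paper's remark after the corollary notes the solution space can be two-dimensional). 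The paper avoids this entirely: it never invokes the closed formula in the proof, but simply observes that~(\ref{mainequation}) is a homogeneous linear system of $M$ equations in $M+1$ unknowns, hence always admits a nonzero solution; by PBW the vectors $X_{-\alpha_{i}}^{j}\otimes X_{-\alpha_{i}}^{M-j}\hat{\otimes}v_{0}^{*}\otimes w_{0}^{*}$ are linearly independent, so any nonzero coefficient vector yields a nonzero singular vector. Replacing your uniqueness-plus-formula step by this dimension count closes the gap; the explicit formula and the uniqueness statement belong, as in the paper, to the corollary under the non-degeneracy hypothesis $q\notin\{0,\dots,M-1\}$ or $q'\notin\{0,\dots,M-1\}$.
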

\begin{proof}
For $i\in I$ as above choose elements $X_{\alpha_{i}}\in\g_{\alpha_{i}}$, $X_{-\alpha_{i}}\in\g_{-\alpha_{i}}$ and $H_{\alpha_{i}}\in\h$ such that $[X_{\alpha_{i}},X_{-\alpha_{i}}]=H_{\alpha_{i}}$ as in Example~\ref{sltwo}.
Let us define a vector $\Theta^{\alpha_{i}}_{M}$ for every $M\geq 0$ and $i\in I$ by
\begin{equation}
\Theta^{\alpha_{i}}_{M}=\left(\sum_{j=0}^{M}\gamma_{M,j}X_{-\alpha_{i}}^{j}\otimes X_{-\alpha_{i}}^{M-j}\right)\hat{\otimes}v_{0}^{*}\otimes w_{0}^{*},
\end{equation}
for some constants $\gamma_{M,j}$.
To show that $\Theta_{\alpha_{i}}^{M}$ is annihilated by $\mathfrak{n}$ it is sufficient to show that $X_{\alpha}$ annihilates $\Theta_{\alpha_{i}}^{M}$ for all $\alpha\in\mathcal{S}$. If $\alpha\not=\alpha_{i}$, then $[X_{\alpha},X_{-\alpha_{i}}]\in\g_{\alpha-\alpha_{i}}=0$. Moreover each $X_{\alpha}$ annihilates $v_{0}^{*}$ and $w_{0}^{*}$. This implies that $\Theta_{\alpha_{i}}^{M}$ is annihilated by all $X_{\alpha}$, $\alpha\in\mathcal{S}\backslash\{\alpha_{i}\}$. 
In $\mathfrak{U}(\g)$ we have the equality
$$X_{\alpha_{i}}X_{-\alpha_{i}}^{k}=X_{-\alpha_{i}}^{k}X_{\alpha_{i}}+kX_{-\alpha_{i}}^{k-1}H_{\alpha_{i}}+a_{k}X_{-\alpha_{i}}^{k-1},$$
where 
$$a_{k}=k(1-k).$$
Here we have used that $[H_{\alpha_{i}},X_{-\alpha_{i}}]=-\alpha_{i}(H_{\alpha_{i}})X_{-\alpha_{i}}=-2X_{-\alpha_{i}}$.
Using this, we can compute
\begin{eqnarray*}
X_{\alpha_{i}}.\Theta^{\alpha_{i}}_{M}&=&
(\sum_{j=0}^{M}\gamma_{M,j}(j\lambda(H_{\alpha_{i}})+a_{j})X_{-\alpha_{i}}^{j-1}\otimes X^{n-j}_{-\alpha_{i}}\\
&&+\gamma_{M,j}((M-j)\nu(H_{\alpha_{i}})+a_{M-j})X_{-\alpha_{i}}^{j}\otimes X_{-\alpha_{i}}^{M-j-1})\hat{\otimes}v_{0}^{*}\otimes w_{0}^{*}.
\end{eqnarray*}
So we obtain $M$ equations that have to be satisfied for $\Theta_{M}^{\alpha_{i}}$ to be a singular vector:
\begin{equation}\label{mainequationtwo}
\fbox{$\displaystyle\gamma_{M,j+1}(j+1)\left(q-j\right)+\gamma_{M,j}(M-j)\left(q'-M+j+1\right)=0,$}
\end{equation}
for $j=0,1,...,M-1$ and $q=\lambda(H_{\alpha_{i}})=B(\lambda,\alpha_{i}^{\vee})$ and $q'=\nu(H_{\alpha_{i}})=B(\nu,\alpha_{i}^{\vee})$. These equations are exactly the equations that we had to consider in~(\ref{mainequation}) in the Introduction.
Since there are more unknowns than equations, this system can be solved for $\gamma_{M,j}$ to obtain an (at least) one-paramter family of solutions. The corresponding $\Theta_{M}^{\alpha_{i}}$ has
weight
$$H.\Theta_{M}^{\alpha_{i}}=(\lambda+\nu-M\alpha_{i})(H)\Theta_{M}^{\alpha_{i}},$$
which is exactly the highest weight of $\circledcirc^{M}\g_{-1}^{i}\circledcirc\mathbb{V}^{*}\circledcirc\mathbb{W}^{*}$, where $\circledcirc$ denotes the Cartan product of representations (see~\cite{e1}).
We can now define an inclusion
\begin{eqnarray*}
M_{\p}(\mathbb{E}_{M,i})&\rightarrow& M_{\p}(\mathbb{V},\mathbb{W})\\
Y.(1\hat{\otimes}e^{*})&\mapsto& Y.\Theta_{n}^{\alpha_{0}}\quad\forall\;Y\in\mathfrak{U}(\g),
\end{eqnarray*}
where $\mathbb{E}_{M,i}$ is dual to an irreducible finite dimensional representation of $\p$ with highest weight $\lambda+\nu-M\alpha_{i}$. This defines an invariant differential pairing of weighted order $M$
$$\Gamma(V)\times\Gamma(W)\rightarrow\Gamma(E_{M,i}).$$
\end{proof}

\begin{corollary}
\begin{enumerate}
\item
With the situation as above. If
$$q=B(\lambda,\alpha_{i}^{\vee})\not\in\{0,...,M-1\}$$
or
$$q'=B(\nu,\alpha_{i}^{\vee})\not\in\{0,...,M-1\},$$
then there exists (up to scalars) exactly one singular vector $\Theta$ of the form $\Theta_{M}^{\alpha_{i}}$. 
\item
If $q\geq 0$, then there exists an invariant linear differential operator 
$$\Gamma(V)\rightarrow \Gamma(E_{\lambda-(q+1)\alpha_{i}}).$$
Analogously if $q'\geq 0$, then there exists an invariant linear differential operator $\Gamma(W)\rightarrow \Gamma(E_{\nu-(q'+1)\alpha_{i}})$.
\end{enumerate}
\end{corollary}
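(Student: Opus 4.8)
The plan is to handle the two parts independently: the first is purely a statement about the solution space of the explicit linear recurrence~(\ref{mainequationtwo}), while the second manufactures a genuinely new object (a linear operator) out of an $\mathfrak{sl}_{2}$ singular vector in an ordinary generalized Verma module.

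For part 1, I would read~(\ref{mainequationtwo}) as a bidiagonal linear system in the $M+1$ unknowns $\gamma_{M,0},\dots,\gamma_{M,M}$ consisting of $M$ equations, the $j$-th of which couples only $\gamma_{M,j}$ and $\gamma_{M,j+1}$. Theorem~\ref{mainresultone} already guarantees that the space of solutions is at least one-dimensional, so it suffices to show that it is exactly one-dimensional, i.e.~that fixing a single $\gamma_{M,j}$ determines all the rest. If $q\not\in\{0,\dots,M-1\}$, then every coefficient $(j+1)(q-j)$ of $\gamma_{M,j+1}$ in~(\ref{mainequationtwo}) is nonzero, so the recurrence solves forward: prescribing $\gamma_{M,0}$ determines $\gamma_{M,1},\dots,\gamma_{M,M}$ uniquely. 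Dually, if $q'\not\in\{0,\dots,M-1\}$, then every coefficient $(M-j)(q'-M+j+1)$ of $\gamma_{M,j}$ is nonzero, and the recurrence solves backward from $\gamma_{M,M}$. In either case the solution is unique up to scale, hence there is, up to scalars, exactly one singular vector of the form $\Theta_{M}^{\alpha_{i}}$.

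For part 2, I would work inside the $\mathfrak{sl}_{2}$-triple $X_{\alpha_{i}},X_{-\alpha_{i}},H_{\alpha_{i}}$ and consider the vector $X_{-\alpha_{i}}^{q+1}\otimes v_{0}^{*}$ in the generalized Verma module $M_{\p}(\mathbb{V})$. Since $q=B(\lambda,\alpha_{i}^{\vee})=\lambda(H_{\alpha_{i}})$ is a non-negative integer, the same commutation identity used in the proof of Theorem~\ref{mainresultone} gives $X_{\alpha_{i}}.X_{-\alpha_{i}}^{k}v_{0}^{*}=k(q+1-k)X_{-\alpha_{i}}^{k-1}v_{0}^{*}$, whose coefficient vanishes at $k=q+1$, so $X_{\alpha_{i}}.X_{-\alpha_{i}}^{q+1}v_{0}^{*}=0$; for $\alpha\in\mathcal{S}\setminus\{\alpha_{i}\}$ the bracket $[X_{\alpha},X_{-\alpha_{i}}]$ vanishes and $X_{\alpha}v_{0}^{*}=0$, so $X_{-\alpha_{i}}^{q+1}\otimes v_{0}^{*}$ is a singular vector of weight $\lambda-(q+1)\alpha_{i}$. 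I would then check that this weight is dominant for $\g_{0}^{S}$: for $j\in J$ one has $B(\lambda,\alpha_{j}^{\vee})\geq 0$ while $B(\alpha_{i},\alpha_{j}^{\vee})\leq 0$, whence $B(\lambda-(q+1)\alpha_{i},\alpha_{j}^{\vee})\geq 0$, so that $E_{\lambda-(q+1)\alpha_{i}}$ is a legitimate irreducible bundle. The singular vector then defines a $\mathfrak{U}(\g)$-homomorphism $M_{\p}(\mathbb{E}_{\lambda-(q+1)\alpha_{i}})\to M_{\p}(\mathbb{V})$ and hence, by the linear analogue of the correspondence established above (see~\cite{er}), an invariant linear differential operator $\Gamma(V)\to\Gamma(E_{\lambda-(q+1)\alpha_{i}})$. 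The argument for $q'$ is identical with $\mathbb{W},\nu$ in place of $\mathbb{V},\lambda$.

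The routine ingredients -- the two recurrences and the $\mathfrak{sl}_{2}$ identity -- are already in hand from the proof of the theorem. The one step that genuinely needs care is the dominance check in part 2, since without it $\lambda-(q+1)\alpha_{i}$ would not name an irreducible $\p$-module and the asserted operator would be vacuous; here it is essential that $\alpha_{i}$ is a crossed node ($i\in I$), so that each $\alpha_{j}$ with $j\in J$ is a distinct simple root and the off-diagonal Cartan integer $B(\alpha_{i},\alpha_{j}^{\vee})$ is non-positive. I expect this, together with citing the precise form of the Verma-module/operator dictionary so that the homomorphism is guaranteed to arise from a \emph{differential} operator, to be the main (if modest) obstacle.
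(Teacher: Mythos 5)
Your proposal is correct and follows essentially the same route as the paper: part 1 is the paper's ``rank of the matrix'' argument for the bidiagonal system~(\ref{mainequationtwo}), spelled out as forward solving (when $q\not\in\{0,\dots,M-1\}$) or backward solving (when $q'\not\in\{0,\dots,M-1\}$), and part 2 exhibits exactly the paper's singular vector $X_{-\alpha_{i}}^{q+1}\hat{\otimes}v_{0}^{*}\in M_{\p}(\mathbb{V})$ and invokes the generalized-Verma-module/invariant-operator dictionary. The additional details you supply --- the explicit $\mathfrak{sl}_{2}$ commutation computation and the dominance check for $\lambda-(q+1)\alpha_{i}$ --- are correct refinements of steps the paper leaves implicit.
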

\begin{proof}
The first statement follows immediately by looking at the rank of the matrix that describes the linear equations~(\ref{mainequationtwo})
from the proof of Theorem~\ref{mainresultone}.

\par
Say $q\geq 0$, then 
$$\Psi_{q+1}^{i}=X_{-\alpha_{i}}^{q+1}\hat{\otimes}v_{0}^{*}\in M_{\p}(\mathbb{V})$$
is a singular vector of highest weight $\lambda-(q+1)\alpha_{i}$. This vector can be used to define a $\g$-module homomorphism
$$M_{\p}(\mathbb{E}_{\lambda-(q+1)\alpha_{i}})\rightarrow M_{\p}(\mathbb{V}),$$
which in turn defines an invariant linear differential operator $\Gamma(V)\rightarrow \Gamma(E_{\lambda-(q+1)\alpha_{i}})$ (see~\cite{be}, Theorem 11.2.1).
The situation for $\mathbb{W}$ is exactly analogous.
\end{proof}

\subsubsection{Remark}
A short examination of the linear equations~(\ref{mainequationtwo}) shows that if $q\in\{0,1,...,M-1\}$ and $q'\in\{0,1,...,M-1\}$, then there could be a one parameter family or a two parameter family of singular vectors depending on the relation of $q$ to $q'$.

\subsection{Examples}
\begin{enumerate}
\item
$M=1$: 
We define for $a,b\in\C$:
$$\Theta_{1}^{\alpha_{i}}=(a1\otimes X_{-\alpha_{i}}+bX_{-\alpha_{i}}\otimes 1)\hat{\otimes}v_{0}^{*}\otimes w_{0}^{*},$$
then
$$X_{\alpha_{i}}.\Theta_{1}^{\alpha_{i}}=(a\nu(H_{\alpha_{i}})+b\lambda(H_{\alpha_{i}}))1\otimes 1\hat{\otimes} v_{0}^{*}\otimes w_{0}^{*}.$$
Choosing $a=\lambda(H_{\alpha_{i}})$ and $b=-\nu(H_{\alpha_{i}})$ yields a singular vector of highest weight $\lambda+\nu-\alpha_{i}$ which is exactly the highest weight of $\mathbb{V}^{*}\circledcirc\mathbb{W}\circledcirc \g_{-1}^{i}$. But in case $\lambda(H_{\alpha_{i}})=\nu(H_{\alpha_{i}})=0$, we can choose $a,b$ arbitrarily and obtain two independent singular vectors. Note that 
$$\lambda(H_{\alpha_{i}})=B(\lambda,\alpha_{i}^{\vee}),$$
so these numbers are exactly the numbers that we write over the $i$-th (crossed through) node in the Dynkin diagram notation for $\mathbb{V}$.
\par
Of course, the singular vectors described above correspond to the pairings like 
\begin{eqnarray*}
\xoo{v+1}{0}{0}\;\cdots\;\oo{0}{1}\;\times \xoo{w}{0}{0}\;\cdots\;\oo{0}{0}&\rightarrow &\xoo{v+w-1}{1}{0}\;\cdots\;\oo{0}{1}\\
(X^{a},f)&\mapsto&wf(\nabla_{a}X^{b}-\frac{1}{n}\delta_{a}{}^{b}\nabla_{c}X^{c})\\
&&                          -(v+1)(X^{b}\nabla_{a}f-\frac{1}{n}\delta_{a}{}^{b}X^{c}\nabla_{c}f),
\end{eqnarray*}
\begin{eqnarray*}
\quad\Dd{v}{1}{0}{0}{0}{0}{0}\;\times\hspace{1.5cm}\;\Dd{w}{0}{0}{0}{0}{0}{0}\;&\rightarrow&\hspace{2cm}\;\Dd{v+w-2}{2}{0}{0}{0}{0}{0}\;\\
&&\\
(X^{a},f)&\mapsto&wf(\nabla_{(a}X_{b)}-\frac{1}{n}g_{ab}\nabla_{c}X^{c})\\
&&                         -v(X_{(a}\nabla_{b)}f-\frac{1}{n}g_{ab}X^{c}\nabla_{c}f)
\end{eqnarray*}
or
\begin{eqnarray*}
\xo{w}{0}\;\cdots\;\oox{0}{0}{w'}\;\times\;\xo{v}{0}\;\cdots\;\oox{0}{0}{v'}\;&\rightarrow&\;\xo{w+v}{0}\;\cdots\;\oox{0}{1}{-2+w'+v'}\;\\
(f,g)&\mapsto & w'f\nabla_{\bar{\alpha}}g-v'g\nabla_{\bar{\alpha}}f.
\end{eqnarray*}
For the notational conventions used here, we refer to Chapter~\ref{tractorchapter}.
Note that the pairing $P(f,g)=wf\nabla_{a}g-vg\nabla_{a}f$ is skew in $f,g$ for $v=w$, so the corresponding (non-linear) homogeneous operator of degree two given by $f\mapsto P(f,f)$ vanishes. This is an example of a situation where the setup in~\cite{d} differs from the setup presented here.
\item
We can even go further and define
$$\Theta_{2}^{\alpha_{i}}=(a 1\otimes X_{-\alpha_{i}}^{2}+bX_{-\alpha_{i}}\otimes X_{-\alpha_{i}}+c X^{2}_{-\alpha_{i}}\otimes 1)\hat{\otimes}v_{0}^{*}\otimes w_{0}^{*},$$
for some constants $a,b,c\in\C$. This yields the following equations
\begin{eqnarray*}
2a(\nu(H_{\alpha_{i}})-1)+b\lambda(H_{\alpha_{i}})&=&0\\
2c(\lambda(H_{\alpha_{i}})-1)+b\nu(H_{\alpha_{i}})&=&0.
\end{eqnarray*}
On $\mathbb{RP}_{n}$, for example, this corresponds to pairings like
\begin{eqnarray*}
\xoo{w}{0}{0}\;\cdots\;\oo{0}{0}\;\times\;\xoo{v}{0}{0}\;\cdots\;\oo{0}{0}\;&\rightarrow&\;\xoo{-4+v+w}{2}{0}\;\cdots\;\oo{0}{0}\;\\
(f,g)&\mapsto&\underbrace{(w-1)w}_{a}f\nabla_{a}\nabla_{b}g\\
&&\underbrace{-2(w-1)(v-1)}_{b}\nabla_{(a}f\nabla_{b)}g\\
&&+\underbrace{(v-1)v}_{c}g\nabla_{a}\nabla_{b}f.
\end{eqnarray*}
In this case $\alpha_{i}=\alpha_{1}$ with $\lambda(H_{\alpha})=w$ and $\nu(H_{\alpha})=v$. The general pattern for these pairings will be determined in Chapter~\ref{higherorderone}.
\end{enumerate}

\section{Invariance}\label{invariance}

\subsection{The homogeneous case}\label{thehomogeneouscase}
So far we have only properly defined invariance on the homogenous model spaces $G/P$. Invariance here means that a pairing is invariant with respect to the action of $G$ on the sections of the vector bundles involved as in Section~\ref{pairings}. In Proposition~\ref{propthree} it was proved that there is a bijective correspondence between invariant bilinear differential pairings
$$\Gamma(V)\times\Gamma(W)\rightarrow\Gamma(E)$$
and $P$-module homomorphisms  
$$\mathcal{J}^{M}(\mathbb{V},\mathbb{W})\rightarrow \mathbb{E},$$
for some $M$.

\subsection{Weyl connections}

\begin{definition}
{\rm Let $P_{+}=\exp(\p_{+})$, then it is easy to see (\cite{cs}, Proposition 2.10) that $P/P_{+}\cong G_{0}$ and we can consider the following two principal bundles
$$\begin{array}{ccc}
P&\rightarrow&\mathcal{G}\\
&&\downarrow\\
&&\mathcal{M}
\end{array}\quad\text{and}\quad  \begin{array}{cccc}
P/P_{+}=G_{0}&\rightarrow&\mathcal{G}_{0}&=\mathcal{G}/P_{+}\\
&&\downarrow&\\
&&\mathcal{M}&
\end{array}.$$
In fact, $\mathcal{G}_{0}$ is the frame bundle of $gr T\mathcal{M}$.\par
A {\bf Weyl structure} is given by a $G_{0}$-equivariant section $\sigma: \mathcal{G}_{0}\rightarrow\mathcal{G}$. Writing the $\g_{0}$ component of the Cartan connection on $\mathcal{G}$ as $\omega_{0}$, one can define the pullback
$$\gamma^{\sigma}=\sigma^{*}(\omega_{0}),$$
which is a principal connection, the so-called {\bf Weyl connection}, on $\mathcal{G}_{0}$. This defines, for each choice of Weyl structure, a linear connection on $\mathcal{M}$.}
\end{definition}

\subsubsection{Remark}
Similarly to the remark made in the Introduction, we again have to issue a warning at this stage. Our considerations are of a completely local nature, so we can always restrict ourselves to an appropriate open set in $\mathcal{M}$ and consider {\bf local} Weyl structures. This is especially important in the holomorphic setting where a global Weyl structure might not exist (due to the lack of partitions of unity that exist in the smooth category). We will ignore this problem throughout the thesis and always implicitly restrict our considerations to an appropriate coordinate patch.

\begin{definition}
{\rm
Let $\mathcal{A}=\mathcal{G}\times_{P}\g$ be the {\bf adjoint tractor bundle} induced by the adjoint representation of $P$ on $\g$. The grading of $\g$ induces a grading on $\mathcal{A}$:
$$\mathcal{A}=\mathcal{A}_{-k_{0}}+...+\mathcal{A}_{k_{0}}.$$
}\end{definition}

\begin{lemma}\label{four}
For each parabolic geometry $(\mathcal{M},\mathcal{G},\g,\omega)$ of type $(G,P)$ the space of global (possibly after restriction to an appropriate open coordinate patch) $G_{0}$ equivariant sections $\sigma :\mathcal{G}_{0}\rightarrow\mathcal{G}$ is a non-empty affine space modeled over the space of all one-forms on $\mathcal{M}$. More precisely, let $\sigma$ and $\hat{\sigma}$ be two such sections, then
$$\hat{\sigma}(u)=\sigma(u)\exp(\Upsilon_{1}(u))\cdots\exp(\Upsilon_{k_{0}}(u)),$$
where $\Upsilon=(\Upsilon_{1},...,\Upsilon_{k_{0}})\in\Gamma(\mathcal{A}_{1}\oplus\cdots\oplus\mathcal{A}_{k_{0}})=\Gamma(gr T^{*}\mathcal{M})$.
\end{lemma}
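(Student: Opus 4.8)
The plan is to treat the affine-space claim and the non-emptiness claim separately, since they rest on different ingredients, and to deduce the explicit formula for $\hat\sigma$ as a by-product of the first part. For the affine structure I would start with two $G_{0}$-equivariant sections $\sigma,\hat\sigma:\mathcal{G}_{0}\to\mathcal{G}$ of the projection $q:\mathcal{G}\to\mathcal{G}/P_{+}=\mathcal{G}_{0}$. Since $q\circ\sigma=q\circ\hat\sigma=\mathrm{id}$, for each $u$ the points $\sigma(u)$ and $\hat\sigma(u)$ lie in the same fibre of $q$, which is a single free $P_{+}$-orbit, so there is a unique (smooth, resp.~holomorphic) $\Phi(u)\in P_{+}$ with $\hat\sigma(u)=\sigma(u)\Phi(u)$. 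Because $\p_{+}=\g_{1}\oplus\cdots\oplus\g_{k_{0}}$ is nilpotent and graded, the map $(Z_{1},\dots,Z_{k_{0}})\mapsto\exp(Z_{1})\cdots\exp(Z_{k_{0}})$ is a diffeomorphism $\g_{1}\times\cdots\times\g_{k_{0}}\to P_{+}$, so $\Phi(u)$ can be written uniquely as $\exp(\Upsilon_{1}(u))\cdots\exp(\Upsilon_{k_{0}}(u))$ with $\Upsilon_{i}(u)\in\g_{i}$, which is exactly the asserted form.

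Next I would turn $G_{0}$-equivariance into an equivariance condition on the $\Upsilon_{i}$. Using $\sigma(u\cdot g_{0})=\sigma(u)g_{0}$ and the same identity for $\hat\sigma$, freeness of the right $P$-action gives $g_{0}\Phi(u\cdot g_{0})=\Phi(u)g_{0}$, that is $\Phi(u\cdot g_{0})=g_{0}^{-1}\Phi(u)g_{0}$. Since $\mathrm{Ad}(g_{0})$ preserves each $\g_{i}$, conjugation by $g_{0}$ sends the ordered product to $\exp(\mathrm{Ad}(g_{0}^{-1})\Upsilon_{1}(u))\cdots\exp(\mathrm{Ad}(g_{0}^{-1})\Upsilon_{k_{0}}(u))$, and uniqueness of the decomposition forces $\Upsilon_{i}(u\cdot g_{0})=\mathrm{Ad}(g_{0}^{-1})\Upsilon_{i}(u)$. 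This is precisely the equivariance making each $\Upsilon_{i}$ a section of $\mathcal{A}_{i}=\mathcal{G}_{0}\times_{G_{0}}\g_{i}$, so $\Upsilon\in\Gamma(\mathcal{A}_{1}\oplus\cdots\oplus\mathcal{A}_{k_{0}})=\Gamma(gr\,T^{*}\mathcal{M})$, using the identification $\p_{+}\cong(\g/\p)^{*}$ from the grading facts in~\ref{grading}. Running the same computation in reverse shows that, starting from a fixed $\sigma$, the formula in the statement produces from an arbitrary $\Upsilon$ a genuine $G_{0}$-equivariant section; hence the set of Weyl structures is a torsor over the vector space $\Gamma(gr\,T^{*}\mathcal{M})$.

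It remains to prove non-emptiness, which I expect to be the main obstacle and the only place where the global/local caveat enters. The point is that a $G_{0}$-equivariant section $\sigma$ is the same datum as a reduction of structure group of $\mathcal{G}$ from $P$ to $G_{0}\subset P$: the image $\sigma(\mathcal{G}_{0})$ is a $G_{0}$-subbundle (as $\sigma$ is injective and $G_{0}$-equivariant), and conversely every such reduction is the image of a unique section. These reductions are in bijection with sections of the associated bundle $\mathcal{G}\times_{P}(P/G_{0})\to\mathcal{M}$, whose typical fibre $P/G_{0}$ is diffeomorphic to $P_{+}\cong\p_{+}$ via the splitting $P=G_{0}\ltimes P_{+}$ (\cite{cs}, Proposition 2.10), hence is contractible and in fact affine. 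In the smooth category such a bundle admits global sections by a partition-of-unity argument, so a Weyl structure exists; in the holomorphic category, where partitions of unity are unavailable, one instead restricts to a coordinate patch on which the bundle trivialises and takes a local section. This explains the parenthetical restriction in the statement and, together with the torsor structure established above, completes the proof.
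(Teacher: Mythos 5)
The paper does not actually prove Lemma~\ref{four}: its entire proof is the citation ``\cite{cs2}, Proposition 3.2'', so your proposal should be measured against the argument in that reference, which it essentially reconstructs. Your treatment of the affine (torsor) structure is correct and complete: the fibres of $q:\mathcal{G}\rightarrow\mathcal{G}/P_{+}=\mathcal{G}_{0}$ are free $P_{+}$-orbits; the map $(Z_{1},\dots,Z_{k_{0}})\mapsto\exp(Z_{1})\cdots\exp(Z_{k_{0}})$ is indeed a diffeomorphism from $\g_{1}\times\cdots\times\g_{k_{0}}$ onto $P_{+}$ (the standard normalization for graded nilpotent algebras, and exactly the one used in~\cite{cs2}); and your equivariance computation $\Upsilon_{i}(u\cdot g_{0})=\mathrm{Ad}(g_{0}^{-1})\Upsilon_{i}(u)$, together with the reverse computation, correctly exhibits the set of Weyl structures as a torsor over $\Gamma(\mathcal{A}_{1}\oplus\cdots\oplus\mathcal{A}_{k_{0}})=\Gamma(gr\,T^{*}\mathcal{M})$. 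This part needs no repair.

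The one step that fails as stated is the existence argument. You assert that the fibre of $\mathcal{G}\times_{P}(P/G_{0})\rightarrow\mathcal{M}$ is ``contractible and in fact affine'', and then patch local sections with a partition of unity. For $k_{0}=1$ this is fine: $\p_{+}=\g_{1}$ is abelian, and under $P/G_{0}\cong\g_{1}$ the $P$-action becomes $\exp(Z)g_{0}\cdot W=Z+\mathrm{Ad}(g_{0})W$, which is affine, so convex combinations of local sections make sense. But for $k_{0}\geq 2$ the identification $P/G_{0}\cong\p_{+}$ intertwines the $P$-action with a polynomial action carrying nontrivial Baker--Campbell--Hausdorff correction terms; the transition functions of the bundle are therefore not affine, the bundle carries no natural affine structure, and a fibrewise convex combination of local sections is simply not defined. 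The gap is fixable by either of two standard routes: (i) obstruction theory --- a smooth fibre bundle over a paracompact base whose fibre is contractible admits a global continuous, hence smooth, section; or (ii) the inductive argument of~\cite{cs2}: filter $\mathcal{G}/G_{0}\rightarrow\mathcal{M}$ through the tower of quotients by the normal subgroups $\exp(\g_{i}\oplus\cdots\oplus\g_{k_{0}})$, for which each stage genuinely is an affine bundle modeled on $\mathcal{G}_{0}\times_{G_{0}}\g_{i}$ (the bracket corrections land in higher filtration degree and are quotiented out), and there your partition-of-unity patching applies stage by stage. With either repair, your identification of Weyl structures with reductions to $G_{0}$, via Proposition 2.10 of~\cite{cs}, and your placement of the smooth-versus-holomorphic caveat are correct.
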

\begin{proof}
A proof may be found in~\cite{cs2}, Proposition 3.2.
\end{proof}

Let $\rho:P\rightarrow\mathrm{GL}(\mathbb{V})$ be an irreducible representation, which is therefore determined by $\rho|_{G_{0}}:G_{0}\rightarrow\mathrm{GL}(\mathbb{V})$. Then each Weyl structure induces a connection on the associated bundle $V=\mathcal{G}\times_{P}\mathbb{V}=\mathcal{G}_{0}\times_{G_{0}}\mathbb{V}$. Let $\sigma$ and $\hat{\sigma}$ be two Weyl structures related by $\Upsilon$ as in Lemma~\ref{four}, then we denote the corresponding connections on $V$ by $\nabla$ and $\hat{\nabla}$. An explicit formula for the difference $\nabla-\hat{\nabla}$ is given in~\cite{cs2}, Proposition 3.9:
\begin{equation}\label{diffconnection}
\hat{\nabla}_{\xi}s=\nabla_{\xi}s+\sum_{\Vert \underline{j}\Vert+l=0}\frac{(-1)^{\underline{j}}}{\underline{j}!}(\mathrm{ad}(\Upsilon_{k_{0}})^{j_{k_{0}}}\circ\cdots\circ\mathrm{ad}(\Upsilon_{1})^{j_{1}}(\xi_{l}))\bullet s,
\end{equation}
where $\underline{j}=(j_{1},...,j_{k_{0}})$, $(-1)^{\underline{j}}=(-1)^{j_{1}+\cdots+j_{k_{0}}}$, $\Vert \underline{j}\Vert=j_{1}+2j_{2}+\cdots+k_{0}j_{k_{0}}$, $\underline{j}!=j_{1}! \cdots j_{k_{0}}!$, $\xi=(\xi_{-k_{0}},...,\xi_{-1})\in\Gamma(gr T\mathcal{M})$ and where $\bullet:\mathcal{A}^{0}\times V\rightarrow V$ is induced by the representation $\p\times\mathbb{V}\rightarrow\mathbb{V}$.
We will give an explicit version of this formula for projective, conformal and CR geometry in Chapter~\ref{tractorchapter}.

\subsubsection{Remark}\label{semiinvariant}
Let us choose a Weyl structure with corresponding Weyl connection $\nabla$ on every irreducible associated bundle $V$ as above. Let $E$ and $F$  be associated bundles that are induced by representations of $\p$ which, as $\g_{0}$-modules, are completely reducible. Examples are irreducible $\p$-modules or $\p$-modules that are the restrictions of representations of $\g$, as we shall see later. The Weyl structure induces a trivialization of these bundles into a direct sum of irreducible subbundles (see the last chapter about trivializations of tractor bundles for each choice of Weyl structure and how these trivializations vary when changing from one Weyl structure to another Weyl structure).
A {\bf semi-invariant} differential operator is given by a formula that consists of terms, each of which is induced by a mapping
$$E\stackrel{\nabla^{N}}{\longrightarrow}\otimes^{N}\Lambda^{1}\otimes E\stackrel{\phi}{\longrightarrow} F,$$
where $\phi$ is induced by a homomorphism $\Phi:\otimes^{N}\p_{+}\otimes\mathbb{E}\rightarrow\mathbb{F}$ of $\g_{0}$-modules. A semi-invariant differential pairing is defined analogously. For the classical groups, semi-invariance is usually defined as in~\cite{gs} via local coordinates. However, Weyl's classical invariant theory implies that these two definitions are equivalent for the classical groups, see~\cite{gs}, Section 4. Hence our definition extends the notion of semi-invariant operators (pairings) to arbitrary parabolic geometries.

\begin{definition}\label{definvariance}
{\rm A bilinear differential pairing (linear differential operator) on a parabolic geometry is called {\bf invariant}, if it can be written as a semi-invariant formula as described in Remark~\ref{semiinvariant} for any choice of (local) Weyl connection in such a way that the formula as a whole does not change (i.e.~does not involve any $\Upsilon$-terms) when changing from one Weyl connection $\nabla$ to another $\hat{\nabla}$. In other words, it has to be independent of the choice of Weyl structure.
}\end{definition}

\subsection{Weyl structures and filtrations}\label{filtrations}
Let $\mathbb{V}=\mathbb{V}^{0}\supset\mathbb{V}^{1}\supset\cdots\supset\mathbb{V}^{s}\supset\{0\}$ be a filtered $\p$-module, so that
\begin{enumerate}
\item
$\mathfrak{z}(\g_{0})$ acts diagonalizably,
\item
the associated graded module
$$gr\mathbb{V}=gr_{0}\mathbb{V}\oplus gr_{1}\mathbb{V}\oplus\cdots\oplus gr_{s}\mathbb{V},$$
with $gr_{i}\mathbb{V}=\mathbb{V}^{i}/\mathbb{V}^{i+1}$, has the property that $\g_{i}.gr_{j}\mathbb{V}\subset gr_{i+j}\mathbb{V}$ and
\item
the action of $\p$ exponentiates to an action of $P$.
\end{enumerate}
Then $gr\mathbb{V}$ is a decomposition into completely reducible $\g_{0}$-modules and we can look at the associated bundles 
$$V=\mathcal{G}\times_{P}\mathbb{V}\;\text{and}\; gr V=\mathcal{G}_{0}\times_{G_{0}}gr\mathbb{V}=\mathcal{G}_{0}\times_{G_{0}}\mathbb{V}.$$
\par
Every Weyl structure is given by a $G_{0}$-equivariant section $\sigma:\mathcal{G}_{0}\rightarrow\mathcal{G}$. A choice of such a section gives an identification 
$\sigma_{V}:gr V\rightarrow V$ by
$$\sigma_{V}(u,v)=(\sigma(u),v).$$
Since $\sigma$ is $G_{0}$-equivariant, this is well defined. Let $\hat{\sigma}(u)=\sigma(u)\exp(\Upsilon_{1}(u))\cdots\exp(\Upsilon_{k_{0}}(u))$ be a different Weyl structure. Then
$$\hat{\sigma}_{V}(u,v)=(\sigma(u),(\exp(\Upsilon_{1}(u))\cdots\exp(\Upsilon_{k_{0}}(u))).v)$$ 
and hence
$$\hat{\sigma}_{V}^{-1}\circ\sigma_{V}(u,v)=(u,(\exp(-\Upsilon_{k_{0}}(u))\cdots\exp(-\Upsilon_{1}(u))).v).$$
This implies that we can write $v\in V$ as 
$$\left(\begin{array}{c}
v_{0}\\
v_{1}\\
\vdots\\
v_{s}
\end{array}\right)$$
for any choice of Weyl structure. Under change of Weyl structure this changes to
$$\widehat{\left(\begin{array}{c}
v_{0}\\
v_{1}\\
\vdots\\
v_{s}
\end{array}\right)}=\left(\begin{array}{c}
v_{0}\\
v_{1}-\Upsilon_{1}\bullet v_{0}\\
\vdots\\
\sum_{\Vert \underline{j}\Vert+i=s}\frac{(-1)^{\underline{j}}}{\underline{j}!}(\Upsilon_{k_{0}}^{j_{k_{0}}}\circ\cdots\circ\Upsilon_{1}^{j_{1}})\bullet v_{i}
\end{array}\right).$$

\subsubsection{Example 1}
Let us look at an irreducible representation $\mathbb{V}$ of $\p$. As discussed in the last section $gr \mathcal{J}^{1}\mathbb{V}=\mathbb{V}\oplus\g_{-1}^{*}\otimes\mathbb{V}$. According to the above discussion, for every choice of Weyl structure, we can write 
$$\left(\begin{array}{c}
v\\
\varphi
\end{array}\right)$$
for the elements in $\mathcal{J}^{1}V$. Under change of connection, these elements transform as
$$\widehat{\left(\begin{array}{c}
v\\
\varphi
\end{array}\right)}=\left(\begin{array}{c}
v\\
\varphi-\Upsilon_{1}\bullet v
\end{array}\right).$$
The action of $Z\in\g_{1}$ on $gr_{0}\mathcal{J}^{1}\mathbb{V}=\mathbb{V}$ will be determined in the next chapter and can be written as $Z.v=([Z,.]).v\in gr_{1}\mathcal{J}^{1}\mathbb{V}=\g_{-1}^{*}\otimes\mathbb{V}$. Therefore
$$\widehat{\left(\begin{array}{c}
v\\
\varphi
\end{array}\right)}=\left(\begin{array}{c}
v\\
\varphi-(\mathrm{ad}(\Upsilon_{1})(.))\bullet v
\end{array}\right).$$
For $\xi_{-1}\in gr_{-1}T\mathcal{M}$, the transformation law for the Weyl connection is given by
$$\hat{\nabla}_{\xi_{-1}}s=\nabla_{\xi_{-1}}s-(\mathrm{ad}(\Upsilon_{1})(\xi_{-1}))\bullet s.$$
One has to be careful about the abuse of notation here: the actions of $\p$ on the various vector spaces are all denoted by a dot \lq.\rq,~although we refer to different actions. The corresponding actions of $\mathcal{A}^{0}$ are also all denoted by a bullet \lq$\bullet$\rq.~The above implies that the mapping
$$s\mapsto \left(\begin{array}{c}
s\\
\nabla s
\end{array}\right),$$
which is written with respect to a specific Weyl structure (more precisely, both the identification of $\mathcal{J}^{1}V$ with $gr \mathcal{J}^{1}V$ and the connection $\nabla$ are chosen with respect to the same Weyl structure), is in fact independent of the Weyl structure and hence an invariant differential operator
$$\mathcal{O}(\mathcal{G},\mathbb{V})^{P}\rightarrow\mathcal{O}(\mathcal{G},\mathcal{J}^{1}\mathbb{V})^{P}.$$
We will see in the next chapter that the invariant derivative really takes $P$-equivariant sections to $P$-equivariant sections. 

\subsubsection{Example 2}
Let $\mathbb{V}$ be an irreducible $\p$-module, so that the action of $\p$ exponentiates to an action of $P$. The fundamental derivative $D$ can be written as a pairing
$$D:V\times\mathcal{A}\rightarrow V.$$
The grading of the vector bundle $\mathcal{A}$ can be written as $gr \mathcal{A}=\mathcal{A}_{-}\oplus\mathcal{A}_{0}\oplus\mathcal{A}_{+}$ according to the filtration of $\g=\g_{-}+\g_{0}+\g_{+}$ as a $\p$-module. For every choice of Weyl structure we can therefore write sections of $\mathcal{A}$ as a 3-tuple. Under change of Weyl structure, this 3-tuple changes as
$$\widehat{\left(\begin{array}{c}
\xi\\
X_{0}\\
\mu
\end{array}\right)}=\left(\begin{array}{c}
\xi\\
X_{0}+\sum_{\Vert \underline{j}\Vert+i=0}\frac{(-1)^{\underline{j}}}{\underline{j}!}(\mathrm{ad}(\Upsilon_{k_{0}})^{j_{k_{0}}}\circ\cdots\circ\mathrm{ad}(\Upsilon_{1})^{j_{1}})(\xi_{i})\\
*\\
\end{array}\right).$$
Now $D$ is given by
\begin{eqnarray*}
\Gamma(V)\times \Gamma(gr\mathcal{A})&\mapsto&\Gamma(V)\\ 
(v,(\xi,X_{0},\mu))&\mapsto& \nabla_{\xi}v-X_{0}\bullet v.
\end{eqnarray*} 
Using equation~(\ref{diffconnection}) to determine how $\nabla$ changes and the description of how the splitting changes shows that this definition is independent of the Weyl structure.

\subsubsection{Remark}\label{pmodulehom}
Let $\mathbb{V}$ and $\mathbb{W}$ be two filtered $\p$-modules satisfying the conditions in Section~\ref{filtrations} and let $gr \Phi:gr\mathbb{V}\rightarrow gr\mathbb{W}$ be a $\g_{0}$-module homomorphism. We denote the corresponding mapping between vector bundles by $gr\phi$. Then a choice of Weyl structure determines a mapping
$$\phi_{\sigma}=\sigma_{W}\circ gr \phi\circ \sigma_{V}^{-1}: V\rightarrow W.$$
It immediately follows that $\phi$ is independent of the Weyl structure (i.e.~$\phi_{\hat{\sigma}}=\phi_{\sigma}$) if and only if $gr \Phi$ is a $\p$-module homomorphism.


\begin{lemma}\label{five}
\begin{enumerate}
\item
If there is a $P$-module map $\Phi:\mathbb{V}\rightarrow\mathbb{W}$, then the corresponding map
\begin{eqnarray*}
\mathcal{O}(\mathcal{G},\mathbb{V})^{P}&\rightarrow&\mathcal{O}(\mathcal{G},\mathbb{W})^{P}\\
s&\mapsto &\Phi\circ s
\end{eqnarray*}
is invariant.
\item
The invariant differential
\begin{eqnarray*}
\mathcal{O}(\mathcal{G},\mathbb{V})^{P}&\rightarrow &\mathcal{O}(\mathcal{G},\mathcal{J}^{1}\mathbb{V})^{P}\\
s&\mapsto& (s,\nabla^{\omega} s)
\end{eqnarray*}
and the iterated fundamental derivative
\begin{eqnarray*}
\mathcal{O}(\mathcal{G},\mathbb{V})^{P}&\rightarrow &\mathcal{O}(\mathcal{G},\otimes^{k}\g^{*}\otimes\mathbb{V})^{P}\\
s&\mapsto& D^{k}s
\end{eqnarray*}
are invariant.
\end{enumerate}
\end{lemma}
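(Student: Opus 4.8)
The plan is to prove each invariance statement by exhibiting the relevant map as arising from a $P$-module structure (or a composition of such), since invariance was characterized in Remark~\ref{pmodulehom} precisely as the condition that the underlying graded map be a $\p$-module (equivalently $P$-module) homomorphism. For part (1), the claim is essentially formal: if $\Phi:\mathbb{V}\rightarrow\mathbb{W}$ is a $P$-module map, I first verify that $\Phi\circ s$ lands in $\mathcal{O}(\mathcal{G},\mathbb{W})^{P}$, i.e.~that it is $P$-equivariant. For $s\in\mathcal{O}(\mathcal{G},\mathbb{V})^{P}$ we have $s(gp)=\rho_{\mathbb{V}}(p^{-1})s(g)$, and applying $\Phi$ gives
$$(\Phi\circ s)(gp)=\Phi(\rho_{\mathbb{V}}(p^{-1})s(g))=\rho_{\mathbb{W}}(p^{-1})\Phi(s(g))=\rho_{\mathbb{W}}(p^{-1})(\Phi\circ s)(g),$$
using exactly that $\Phi$ intertwines the two $P$-actions. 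Invariance in the sense of Definition~\ref{definvariance} is then immediate because $\Phi$ involves no connection and hence no $\Upsilon$-terms; it is a zeroth-order algebraic map.

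For part (2), the invariance of the invariant differential $s\mapsto(s,\nabla^{\omega}s)$ is precisely the content worked out in Example~1 of Section~\ref{filtrations}: there it was shown that, although the second slot $\nabla s$ depends on the Weyl structure and transforms by $\varphi\mapsto\varphi-(\mathrm{ad}(\Upsilon_{1})(\cdot))\bullet v$, this transformation exactly matches the change-of-splitting formula for elements of $\mathcal{J}^{1}V$, so the pair $(s,\nabla s)$ is a well-defined section of $\mathcal{J}^{1}V$ independent of the Weyl structure. So for this part I would simply invoke Example~1 and note that the computation there shows $gr$ of the map is a $\p$-module homomorphism, which by Remark~\ref{pmodulehom} is equivalent to invariance.

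For the iterated fundamental derivative $s\mapsto D^{k}s$, I would argue that $D$ itself takes $P$-equivariant functions to $P$-equivariant functions — this was already proved in the computation following the definition of the fundamental derivative in Section~\ref{invariantdifferential}, where it was shown that $D_{s}f\in\mathcal{O}(\mathcal{G},\mathbb{V})^{P}$ whenever $f$ is. Since $D:\mathcal{O}(\mathcal{G},\mathbb{V})^{P}\rightarrow\mathcal{O}(\mathcal{G},\g^{*}\otimes\mathbb{V})^{P}$ is genuinely a map of $P$-equivariant sections defined intrinsically from the Cartan connection $\omega$ with no reference to any Weyl structure, it is manifestly invariant. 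Iterating, $D^{k}=D\circ\cdots\circ D$ is a composition of such intrinsic maps and therefore also sends $P$-equivariant sections into $\mathcal{O}(\mathcal{G},\otimes^{k}\g^{*}\otimes\mathbb{V})^{P}$ and remains Weyl-structure-independent.

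The main obstacle — really the only non-formal point — is being careful that the two notions of invariance in play genuinely coincide here. The fundamental derivative and invariant differential are defined directly from $\omega$ (so their invariance is automatic and lives upstairs on $\mathcal{G}$), whereas Definition~\ref{definvariance} phrases invariance downstairs on $\mathcal{M}$ in terms of independence from the Weyl connection. The work is to confirm that the intrinsic $\omega$-definition descends to a Weyl-structure-independent formula on $\mathcal{M}$; for the first-order piece this is exactly the matching of transformation laws carried out in Example~1, and for $D^{k}$ it follows because $D$ never invokes a Weyl structure in the first place, so no $\Upsilon$-dependence can enter. Thus all three maps satisfy Definition~\ref{definvariance}.
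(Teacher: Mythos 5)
Your proposal is correct and follows essentially the same route as the paper: part (1) is handled as a zeroth-order ($N=0$) semi-invariant map via Remark~\ref{pmodulehom}, and part (2) by invoking the transformation-law matching of Example~1 together with the observation that the fundamental derivative is built intrinsically from the Cartan connection $\omega$ and so admits no $\Upsilon$-dependence. The only cosmetic difference is that the paper also cites Example~2 (the explicit Weyl-connection expression $(v,(\xi,X_{0},\mu))\mapsto\nabla_{\xi}v-X_{0}\bullet v$ for $D$) and records the symbol of $D^{k}$, but your intrinsic-ness argument is exactly the one the paper itself uses for the general case.
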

\begin{proof}
The first statement follows by taking $N=0$ in~\ref{semiinvariant} and making use of Remark~\ref{pmodulehom}. 
The invariance of the invariant derivative and the fundamental derivative was proved in Example 1 and 2 for irreducible $\p$-modules. The general case follows from the fact that they are constructed by using the Cartan connection $\omega$, which is intrinsic to the parabolic geometry, and they do not make any choice of specific Weyl structure. 
In order to obtain explicit formulae like in the examples, one can write down the corresponding formulae for each $\g_{0}$ irrreducible component and keep in mind that the individual components are not invariant, just the expression as a whole. We will  see many examples of this subtle point in Chapter~\ref{tractorchapter}.
Finally note that the symbol of $D^{k}$ is given by
$$\begin{array}{cc}
                &\otimes^{k}\Lambda^{1}\otimes V\stackrel{\phi}{\longrightarrow}\otimes^{k}\mathcal{A}\otimes V\\
\text{with}& \Phi:\otimes^{k}(\g/\p)^{*}\otimes\mathbb{V}\hookrightarrow\otimes^{k}\g^{*}\otimes\mathbb{V}.
\end{array}$$
\end{proof}


\subsubsection{Remark}
\begin{enumerate}
\item
All the pairings and operators to be defined in this thesis are combinations of the invariant operations defined above and are hence invariant the sense of Definition~\ref{definvariance}.
\item
In the literature (\cite{cg,css,css2}) it is quite common to use the term {\bf natural} to describe the operators in Lemma~\ref{five}. Those natural operators are defined to be  systems of operators $D_{(\mathcal{G},\omega)}$ for a certain category of parabolic geometries that behave well with respect to morphisms of that category. It can be shown (see, for example,~\cite{cs2}) that natural operators (pairings) on the category of flat parabolic geometries (which are locally isomorphic to the homogeneous model, see~\cite{cs}, Proposition 4.12) are exactly the (translation-) invariant operators (pairings) as defined in Section~\ref{algebraicdescription}. 
\end{enumerate}


\subsection{Geometric structures}
The description of certain geometric structures on manifolds as parabolic geometries is a complicated issue and it is not trivial to show that various familiar structures (projective, conformal, CR) are equivalent to parabolic geometries of a certain type. The paper~\cite{cs} is dedicated to this problem and we will state (following~\cite{cs2}) the upshot of these considerations without going into too much detail.

\begin{definition}
{\rm
\begin{enumerate}
\item
An {\bf infinitesimal flag structure} of type $(\g,\p)$ on a smooth manifold $\mathcal{M}$ is given by a filtration
$$T\mathcal{M}=T^{-k_{0}}\mathcal{M}\supset\cdots\supset T^{-1}\mathcal{M},$$
such that the rank of $T^{i}\mathcal{M}$ equals the dimension of $\g^{i}/\p$, and a reduction of the associated graded vector bundle
$$gr T\mathcal{M}=gr_{-k_{0}}(T\mathcal{M})\oplus\cdots\oplus gr_{-1}(T\mathcal{M}),$$
with $gr_{i}(T\mathcal{M})=T^{i}\mathcal{M}/T^{i+1}\mathcal{M}$, to the structure group $G_{0}$. Since we can take $\g_{-}=\g_{-k_{0}}\oplus\cdots\oplus\g_{-1}$ as the modeling vector space for $gr(T\mathcal{M})$, the reduction is defined via $\mathrm{Ad}:G_{0}\rightarrow \mathrm{GL}_{gr}(\g_{-})$.
\item
Let 
$$T\mathcal{M}=T^{-k_{0}}\mathcal{M}\supset\cdots\supset T^{-1}\mathcal{M}$$
be an infinitesimal flag structure of type $(\g,\p)$ that makes $\mathcal{M}$ into a filtered manifold. 
There are two ways of defining a bracket $gr(T\mathcal{M})\times  gr(T\mathcal{M})\rightarrow gr(T\mathcal{M})$: firstly, we can use the reduction of $grT\mathcal{M}$ to the structure group $G_{0}$ and the (algebraic) Lie bracket on $\g_{-}$. Secondly, we can use the fact that $\mathcal{M}$ has a tangential filtration to define the {\bf Levi-bracket} that is induced by the usual bracket of vector fields. The infinitesimal flag structure is called {\bf regular}, if those two brackets coincide. 
\end{enumerate}
}\end{definition}
\par
Let us assume that no simple factor of $\g$ lies in $\g_{0}$ and that $\g$ does not contain any simple factors of type $A_{1}$. Then the following theorem can be proved.

\begin{theorem}[\cite{cs}]
\begin{enumerate}
\item
If $(\g,\p)$ does not contain any simple factor of the form 
$$\begin{picture}(12,2)
\put(0,1){\makebox(0,0){$\cross$}}
\put(1,1){\makebox(0,0){$\bullet$}}
\put(2,1){\makebox(0,0){$\bullet$}}
\put(1,1){\line(1,0){1}}
\put(3.5,1){\makebox(0,0){$\bullet$}}
\put(2.5,1){...}
\put(4.5,1){\makebox(0,0){$\bullet$}}
\put(0,1){\line(1,0){1}}
\put(3.5,1){\line(1,0){1}}
\put(5.3,1){$\text{or}$}
\put(7,1){\makebox(0,0){$\cross$}}
\put(8,1){\makebox(0,0){$\bullet$}}
\put(9,1){\makebox(0,0){$\bullet$}}
\put(8,1){\line(1,0){1}}
\put(10.5,1){\makebox(0,0){$\bullet$}}
\put(9.5,1){...}
\put(10.5,1.1){\line(1,0){1}}
\put(11.5,1){\makebox(0,0){$\bullet$}}
\put(7,1){\line(1,0){1}}
\put(10.5,0.9){\line(1,0){1}}
\put(10.8,0.8){$\langle$}
\put(12,1){,}
\end{picture}$$
then there is an equivalence of categories between regular parabolic geometries and regular infinitesimal flag structures.
\item
If $(\g,\p)$ contains a simple factor of the form given above, then there is a bijective correspondence between regular parabolic geometries and underlying $P$-frame bundles of degree two (reductions of the second order frame bundle).
\end{enumerate}
\end{theorem}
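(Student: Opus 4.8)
The plan is to exhibit mutually quasi-inverse functors between the two categories, the genuinely hard content lying in the passage from the underlying structure back to a canonical Cartan connection. First I would construct the \emph{underlying structure functor}: given a regular parabolic geometry $(\mathcal{M},\mathcal{G},\g,\omega)$, form $\mathcal{G}_{0}=\mathcal{G}/P_{+}$, which is the frame bundle of $gr\,T\mathcal{M}$ as noted in the discussion of Weyl structures, and equip $T\mathcal{M}$ with the filtration of Example~\ref{tangentialfiltration}. Regularity forces the Levi-bracket to agree with the algebraic bracket on $\g_{-}$, so the output is a regular infinitesimal flag structure, and any morphism of parabolic geometries descends to a morphism of flag structures. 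The content of the theorem is that this functor is an equivalence (part (1)) or, in the excluded cases, that the flag structure must be refined to second order (part (2)).

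For essential surjectivity, and to build the inverse functor, I would run the Tanaka prolongation procedure. Starting from $\mathcal{G}_{0}$ one constructs a tower $\mathcal{G}_{0}\leftarrow\mathcal{G}_{1}\leftarrow\cdots\leftarrow\mathcal{G}_{k_{0}}=\mathcal{G}$ by successively adjoining the graded pieces $\g_{1},\dots,\g_{k_{0}}$, at each stage choosing a partial connection form and then \emph{normalizing} it. The normalization is governed by the Kostant codifferential $\partial^{*}:\Lambda^{2}\g_{-}^{*}\otimes\g\to\g_{-}^{*}\otimes\g$ from the curvature discussion: one imposes $\partial^{*}\kappa=0$ as in the definition of a normal geometry, and shows that the residual freedom in adjusting the connection at homogeneity $i$ is measured by $\ker\partial^{*}$ while the obstruction to removing curvature lives in $\mathrm{im}\,\partial$. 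Because $\partial\partial^{*}+\partial^{*}\partial$ is invertible off the harmonic part, each step can be carried out uniquely up to $P$-equivariant data, and the resulting $\omega$ is canonical; here the standing assumptions (no simple factor inside $\g_{0}$, no $A_{1}$ factor) guarantee that $\g_{-1}$ generates $\g_{-}$ and that the prolongation does not stall.

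The decisive input is cohomological. By the Hodge-type decomposition, the freedoms and obstructions at each prolongation step reduce to the Lie algebra cohomology groups $H^{1}(\g_{-},\g)$, which Kostant's theorem computes explicitly as $\g_{0}$-modules graded by homogeneity. Part (1) holds exactly when $H^{1}(\g_{-},\g)$ carries no harmonic component in the homogeneity degree that would force additional first-order data beyond the $G_{0}$-reduction, so that the infinitesimal flag structure determines every higher step. I would verify, directly from Kostant's description, that the only simple pairs $(\g,\p)$ possessing a harmonic class in the critical degree are precisely the two excluded families (projective and projective-contact type); for these the relevant component of $H^{1}$ sits in homogeneity one and is invisible to a first-order reduction, which is why one must instead prescribe a reduction of the second-order frame bundle, yielding part (2).

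Full faithfulness then follows from naturality of the construction: a morphism of the underlying structures lifts uniquely through each normalization step to a bundle map commuting with $\omega$, and the two functors are inverse up to canonical isomorphism. The main obstacle I anticipate is the cohomological bookkeeping in the normalization—establishing that the homogeneous components of $\partial^{*}$ are isomorphisms where needed and tracking exactly which homogeneity degree each harmonic class of $H^{1}(\g_{-},\g)$ occupies, since it is precisely this degree count that separates case (1) from case (2).
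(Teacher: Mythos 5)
This theorem is not proved in the thesis at all: it is quoted from the literature (\cite{cs}), and the only indication of its proof is the set of remarks immediately following it, which say that the result is obtained by a \emph{prolongation procedure} applied to frame forms of length 1, and that the dichotomy between the two cases is governed by the vanishing of the Lie algebra cohomology groups $H^{1}_{l}(\g_{-},\g)$ in positive homogeneity. Your proposal — underlying structure functor, Tanaka-style prolongation normalized by the Kostant codifferential $\partial^{*}$, Hodge decomposition, and identification of the excluded pairs (projective and contact projective) as exactly those with a harmonic class of $H^{1}(\g_{-},\g)$ in homogeneity one — is precisely the strategy of \cite{cs} that those remarks point to, so it is consistent with, and fills in more detail than, anything the paper itself provides.
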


\subsubsection{Remarks}
\begin{enumerate}
\item
Infinitesimal flag structures can equivalently (see~\cite{cs2}) be described in terms of {\bf frame forms of length 1} in the  sense of~\cite{cs}, Definition 3.2. From this description the theorem above is proved by a {\bf prolongation procedure}.
\item
The condition that the algebraic bracket is equivalent to the Levi-bracket can equally be stated in terms of structure equations as in~\cite{cs}.
\item
The conditions in the theorem above are equivalent to demanding that certain (Lie-algebra-) cohomology groups $H^{1}_{l}(\g_{-},\g)$ vanish (see~\cite{cs,y}).
\item
The second excluded case in the theorem above corresponds to so-called contact projective structures. A comprehensive treatment of contact projective structures can be found in~\cite{f2}.
\end{enumerate}

\subsubsection{Example}
If the grading of $\g$ is of length one, then the filtration of the tangent bundle is trivial and an infinitesimal flag structure is equivalent to a reduction of $T\mathcal{M}$ to the structure group $G_{0}$.
\begin{enumerate}
\item
For the choice of $G$ and $P$ as in Example~\ref{cartanexample} (b) and~\ref{homogeneousexample} (b), we obtain $G_{0}=CO(p,q)$, so an infinitesimal  flag structure is equivalent to the choice of a conformal class of metrics (with signature $(p,q)$). For $n=4$, for example, a different choice of $G$ leads to $G_{0}=S(\mathrm{GL}(2,\C)\times\mathrm{GL}(2,\C))$, which is a $4-1$ covering of $CO(4,\C)$. Reductions of the structure group of $T\mathcal{M}$ to this group correspond to {\bf spin structures}, see~\cite{er}.
\item 
For the choice of $G$ and $P$ as in Example~\ref{cartanexample} (a) and~\ref{homogeneousexample} (a), we obtain $G_{0}=\mathrm{GL}(n,\R)$, so it is obvious that an infinitesimal flag structure does not carry any information at all. 
Rather, we have to look at the underlying $P$-frame bundle of degree two. This is the first excluded case in the theorem above. 
\end{enumerate}

\chapter{The first order case}\label{thefirstordercase}
This chapter describes a classification of all (non-degenerate) weighted first order invariant bilinear differential pairings on a general regular curved parabolic geometry $(\mathcal{M},\mathcal{G},\g,\omega)$ of type $(G,P)$ that we consider fixed throughout the rest of this thesis. Furthermore we characterize degenerate pairings via the existence of invariant linear differential operators.

\section{The obstruction term}
\subsection{The possible candidates}
In the following we will fix two finite dimensional irreducible representations given by $\tilde{\lambda}:\p\rightarrow\mathfrak{gl}(\mathbb{V})$ and $\tilde{\nu}:\p\rightarrow\mathfrak{gl}(\mathbb{W})$.
Moreover denote the highest weights of $\mathbb{V}^{*}$ and $\mathbb{W}^{*}$ by $\lambda\in \h^{*}$ and $\nu\in\h^{*}$ respectively. 
Sometimes we will want to include this additional information about a representation in the notation: for any irreducible representation $\mathbb{E}$ of $\p$ we will write $\mathbb{E}_{\mu}$ to record that $\mathbb{E}_{\mu}^{*}$ has highest weight $\mu\in\h^{*}$, 
i.e.~$\mathbb{V}=\mathbb{V}_{\lambda}$ and $\mathbb{W}=\mathbb{W}_{\nu}$. 
\par
There are two exact sequences associated to the first  weighted jet bundles of $V$ and $W$:
$$0\rightarrow (\mathfrak{U}_{-1}(grT\mathcal{M}))^{*}\otimes V\rightarrow \mathcal{J}^{1}V \rightarrow V\rightarrow 0$$
and 
$$0\rightarrow (\mathfrak{U}_{-1}(grT\mathcal{M}))^{*}\otimes W\rightarrow \mathcal{J}^{1}W\rightarrow W\rightarrow 0,$$
which are the \emph{weighted-jet exact sequences} as described in the last chapter. All these bundles are associated bundles, so on the level of $\p$ representations we have
$$0\rightarrow \g_{1}\otimes \mathbb{V}\rightarrow \mathcal{J}^{1}\mathbb{V} \rightarrow \mathbb{V}\rightarrow 0$$
and 
$$0\rightarrow \g_{1}\otimes \mathbb{W}\rightarrow \mathcal{J}^{1}\mathbb{W}\rightarrow \mathbb{W}\rightarrow 0.$$
In other words, there are two filtered modules
$$\mathcal{J}^{1}\mathbb{V}=\mathbb{V}+\g_{1}\otimes \mathbb{V}$$
and
$$\mathcal{J}^{1}\mathbb{W}=\mathbb{W}+\g_{1}\otimes \mathbb{W}.$$
Therefore the tensor product has a filtration
$$\mathcal{J}^{1}\mathbb{V}\otimes \mathcal{J}^{1}\mathbb{W}=\mathbb{V}\otimes \mathbb{W}+\begin{array}{c}
\g_{1}\otimes \mathbb{V}\otimes \mathbb{W}\\
\oplus\\
\mathbb{V}\otimes\g_{1}\otimes \mathbb{W}
\end{array}
+\g_{1}\otimes \mathbb{V}\otimes\g_{1}\otimes \mathbb{W}.$$
The $\p$-module structure of
$$\mathcal{J}^{1}(\mathbb{V},\mathbb{W})=\mathcal{J}^{1}\mathbb{V}\otimes \mathcal{J}^{1}\mathbb{W}/(\g_{1}\otimes \mathbb{V}\otimes\g_{1}\otimes \mathbb{W})
=\mathbb{V}\otimes\mathbb{W}+\begin{array}{c}
\mathbb{V}\otimes\g_{1}\otimes\mathbb{W}\\
\oplus\\
\g_{1}\otimes\mathbb{V}\otimes\mathbb{W}\end{array}$$
is such that
$$\mathcal{O}(\mathcal{G},\mathbb{V})^{P}\otimes \mathcal{O}(\mathcal{G},\mathbb{W})^{P}\ni(s,t)\mapsto
(s\otimes t,s\otimes \nabla^{\omega}t,\nabla^{\omega}s\otimes t)\in\mathcal{O}(\mathcal{G},\mathcal{J}^{1}(\mathbb{V},\mathbb{W}))^{P}$$
is well defined, i.e.~maps $P$-equivariant sections to a $P$-equivariant section. In order to see this, we introduce dual (with respect to the Killing form) linear basis $\{\xi_{\alpha}\}_{\alpha=1,...,n}$ and $\{\eta^{\alpha}\}_{\alpha=1,...,n}$ of $\g_{-}$ and $\p_{+}$ respectively. Since $\g_{-1}^{*}\cong\g_{1}$, we can restrict those basis to basis $\{\xi_{\alpha'}\}_{\alpha'=1,...,n'}$ and $\{\eta^{\alpha'}\}_{\alpha'=1,...,n'}$ of $\g_{-1}$ and $\g_{1}$ respectively.
Then the following lemma holds.

\begin{lemma}\label{lemmasix}
Let 
$$p:\mathcal{J}^{1}\mathbb{V}\otimes\mathcal{J}^{1}\mathbb{W}\rightarrow\mathcal{J}^{1}(\mathbb{V},\mathbb{W})$$
be the canonical projection, let 
$Z\in\p$ and let $(v_{0},X\otimes v)\otimes (w_{0}, Y\otimes w)\in \mathcal{J}^{1}\mathbb{V}\otimes\mathcal{J}^{1}\mathbb{W}$. Furthermore define the action of
$Z$ on $\mathcal{J}^{1}(\mathbb{V},\mathbb{W})$ by
\begin{eqnarray*}
&j^{1}(\tilde{\lambda},\tilde{\nu})(Z)p((v_{0},X\otimes v)\otimes (w_{0}, Y\otimes w))=\\
&\left(\begin{array}{c}
\tilde{\lambda}(Z)v_{0}\otimes w_{0}+v_{0}\otimes\tilde{\nu}(Z)w_{0}\\
\tilde{\lambda}(Z)v_{0}\otimes Y\otimes w+v_{0}\otimes\left(Y\otimes \tilde{\nu}(Z)w+[Z,Y]_{\g_{1}}\otimes w+\sum_{\alpha'}\eta^{\alpha'}\otimes\tilde{\nu}([Z,\xi_{\alpha'}]_{\p})w_{0}\right)\\
X\otimes v \otimes \tilde{\nu}(Z)w_{0}+\left( X\otimes \tilde{\lambda}(Z)v+[Z,X]_{\g_{1}}\otimes v+\sum_{\alpha'}\eta^{\alpha'}\otimes\tilde{\lambda}([Z,\xi_{\alpha'}]_{\p})v_{0}  \right)\otimes w_{0}
\end{array}\right),&
\end{eqnarray*}
where $[.,.]_{\mathfrak{a}}$ denotes the bracket in $\g$ followed by the projection onto a subspace $\mathfrak{a}$ of $\g$. Then the mapping
 $$\mathcal{O}(\mathcal{G},\mathbb{V})^{P}\otimes \mathcal{O}(\mathcal{G},\mathbb{W})^{P}\ni(s,t)\mapsto
(s\otimes t,s\otimes \nabla^{\omega}t,\nabla^{\omega}s\otimes t)\in\mathcal{O}(\mathcal{G},\mathcal{J}^{1}(\mathbb{V},\mathbb{W}))^{P}$$
is well defined, i.e.~maps $P$-equivariant sections to a $P$-equivariant section and therefore defines an isomorphism
$$\mathcal{J}^{1}(V,W)\cong\mathcal{G}\times_{P}\mathcal{J}^{1}(\mathbb{V},\mathbb{W}).$$
\end{lemma}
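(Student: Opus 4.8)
The plan is to verify that the prescribed $\p$-action makes the prolongation map land in $P$-equivariant sections, and then to read off the isomorphism. Recall from the discussion of associated vector bundles that a function $\Phi\in\mathcal{O}(\mathcal{G},\mathcal{J}^{1}(\mathbb{V},\mathbb{W}))$ is $P$-equivariant precisely when it satisfies the infinitesimal condition $\zeta_{Z}\Phi=-\,j^{1}(\tilde{\lambda},\tilde{\nu})(Z)\Phi$ for all $Z\in\p$ (the differentiated form of the equivariance requirement), where $\zeta_{Z}$ denotes the fundamental vector field. Writing $\Phi=(s\otimes t,\;s\otimes\nabla^{\omega}t,\;\nabla^{\omega}s\otimes t)$ for the image of $(s,t)$, the entire content of the lemma is to check this identity component by component and to match the three resulting expressions with the three rows of the displayed formula for $j^{1}(\tilde{\lambda},\tilde{\nu})(Z)$.

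The computational engine is a single commutator identity. For $Z\in\p$ we have $\zeta_{Z}=\omega^{-1}(Z)$ by condition (iii) of the Cartan connection, and the formula relating $[\omega^{-1}(X),\omega^{-1}(Y)]$ to the curvature (established in the Remark preceding Example~\ref{homogeneousdiff}) gives $[\omega^{-1}(Z),\omega^{-1}(X)]=\omega^{-1}([Z,X])-\omega^{-1}(\kappa(Z,X))$. Since the curvature is horizontal (Remark~\ref{cartanconnection}), $\kappa(Z,X)=0$ for $Z\in\p$, so $[\zeta_{Z},\omega^{-1}(X)]=\omega^{-1}([Z,X])$ for every $X\in\g_{-}$. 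Applying this to $s\in\mathcal{O}(\mathcal{G},\mathbb{V})^{P}$, expanding $\zeta_{Z}(\omega^{-1}(X)s)=[\zeta_{Z},\omega^{-1}(X)]s+\omega^{-1}(X)(\zeta_{Z}s)$, and using $\zeta_{Z}s=-\tilde{\lambda}(Z)s$ together with the decomposition $[Z,X]=[Z,X]_{\g_{-}}+[Z,X]_{\p}$, yields for $X\in\g_{-1}$
\[
\zeta_{Z}(\nabla^{\omega}_{X}s)=-\tilde{\lambda}(Z)\nabla^{\omega}_{X}s+\nabla^{\omega}_{[Z,X]_{\g_{-}}}s-\tilde{\lambda}([Z,X]_{\p})s .
\]
The $\g_{-}$-part is nonzero only for $Z\in\g_{0}$, where it again lies in $\g_{-1}$, so the weighted first jet is preserved; the identical formula holds for $t$ with $\tilde{\nu}$ in place of $\tilde{\lambda}$.

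With this in hand the verification is mechanical. The zeroth component is immediate from the Leibniz rule and $\zeta_{Z}s=-\tilde{\lambda}(Z)s$, $\zeta_{Z}t=-\tilde{\nu}(Z)t$. For the bottom component I write $\nabla^{\omega}s=\sum_{\alpha'}\eta^{\alpha'}\otimes\nabla^{\omega}_{\xi_{\alpha'}}s$ under $\g_{-1}^{*}\cong\g_{1}$, apply $\zeta_{Z}$ by the product rule and the displayed transformation law, and group the terms: the $-\tilde{\lambda}(Z)\nabla^{\omega}_{\xi_{\alpha'}}s$ and $-\tilde{\nu}(Z)t$ pieces reproduce the $X\otimes\tilde{\lambda}(Z)v$ and $X\otimes v\otimes\tilde{\nu}(Z)w_{0}$ terms; the $-\tilde{\lambda}([Z,\xi_{\alpha'}]_{\p})s$ piece reproduces the $\sum_{\alpha'}\eta^{\alpha'}\otimes\tilde{\lambda}([Z,\xi_{\alpha'}]_{\p})v_{0}$ term; and the $\nabla^{\omega}_{[Z,\xi_{\alpha'}]_{\g_{-}}}s$ piece is converted into the $[Z,X]_{\g_{1}}\otimes v$ term by reindexing through the $\mathrm{ad}$-invariance of the Killing form, $B([Z,\eta^{\alpha'}],\xi_{\beta'})=-B(\eta^{\alpha'},[Z,\xi_{\beta'}])$, which identifies the coadjoint action on $\g_{-1}^{*}$ with $-[Z,\cdot]$ on $\g_{1}$. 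The middle component follows by the symmetric computation with $(\mathbb{V},\tilde{\lambda},s)$ and $(\mathbb{W},\tilde{\nu},t)$ interchanged. Matching all three rows against $-\,j^{1}(\tilde{\lambda},\tilde{\nu})(Z)\Phi$ proves $P$-equivariance.

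Finally, since $\mathcal{J}^{1}V$ and $\mathcal{J}^{1}W$ are associated bundles, so is the quotient $\mathcal{J}^{1}(V,W)$, with standard fibre $\mathcal{J}^{1}(\mathbb{V},\mathbb{W})$ carrying exactly the $\p$-action $j^{1}(\tilde{\lambda},\tilde{\nu})$ induced through the projection $p$; the equivariance just proved exhibits the prolongation $(s,t)\mapsto(s\otimes t,\,s\otimes\nabla^{\omega}t,\,\nabla^{\omega}s\otimes t)$ as the universal weighted first-order bi-jet, and comparing fibres gives the asserted isomorphism $\mathcal{J}^{1}(V,W)\cong\mathcal{G}\times_{P}\mathcal{J}^{1}(\mathbb{V},\mathbb{W})$. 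I expect the main obstacle to be the bottom and middle components: keeping the identification $\g_{-1}^{*}\cong\g_{1}$ and the signs coming from $\mathrm{ad}$-invariance of $B$ consistent, so that the $\nabla^{\omega}_{[Z,\xi_{\alpha'}]}$ terms land precisely on the $[Z,\cdot]_{\g_{1}}$ terms of the prescribed action, rather than the conceptual steps, which become routine once the commutator identity is established.
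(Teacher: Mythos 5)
Your proof is correct and follows essentially the same route as the paper's: both hinge on the commutator identity $\zeta_{Z}(\nabla^{\omega}_{X}s)=-\tilde{\lambda}(Z)\nabla^{\omega}_{X}s+\nabla^{\omega}_{[Z,X]_{\g_{-}}}s-\tilde{\lambda}([Z,X]_{\p})s$ obtained from horizontality of the curvature, followed by the Killing-form dual-basis computation that converts the $\nabla^{\omega}_{[Z,\xi_{\alpha'}]_{\g_{-}}}$ terms into the $[Z,\cdot]_{\g_{1}}$ terms of the prescribed action. The only cosmetic difference is organizational: the paper first defines the action on the full first jet $J^{1}\mathbb{U}$ of a single arbitrary $\p$-module and then induces the bi-jet structure through the canonical projections, whereas you verify equivariance directly on the three components of $\mathcal{J}^{1}(\mathbb{V},\mathbb{W})$.
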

\begin{proof}
We have the following canonical projections:
$$J^{1}\mathbb{V}\rightarrow\mathcal{J}^{1}\mathbb{V},\;J^{1}\mathbb{W}\rightarrow\mathcal{J}^{1}\mathbb{W}$$
and
$$\mathcal{J}^{1}\mathbb{V}\otimes\mathcal{J}^{1}\mathbb{W}\rightarrow \mathcal{J}^{1}(\mathbb{V},\mathbb{W}).$$
It is therefore sufficient to define the $\p$-module structure of $J^{1}\mathbb{U}$ for an arbitrary representation $\mathbb{U}$ of $\p$
and to check that $s\mapsto (s,\nabla^{\omega}s)$ is a well defined mapping 
$$\mathcal{O}(\mathcal{G},\mathbb{U})^{P}\rightarrow \mathcal{O}(\mathcal{G},J^{1}\mathbb{U})^{P}$$
that takes $P$ equivariant sections to $P$ equivariant sections.
In other words, that the mapping $s\mapsto (s,\nabla^{\omega}s)$ defines an isomorphism $J^{1}U\cong\mathcal{G}\times_{P}J^{1}\mathbb{U}$.
\par
So let $\rho:\p\rightarrow\mathfrak{gl}(\mathbb{U})$ be a representation of $\p$ and denote the representation of $\p$ on $J^{1}\mathbb{U}$ by $j^{1}\rho$. First of all note that
$$-\zeta_{Z}s=-\omega^{-1}(Z)s=-\nabla^{\omega}_{Z}s=\rho(Z)s$$
for every $Z\in\p$ and $s\in\mathcal{O}(\mathcal{G},\mathbb{U})^{P}$. This shows us exactly how to define the representation $j^{1}\rho$.
\begin{eqnarray*}
-\zeta_{Z}(s,\nabla^{\omega}_{X}s)&=&(-\nabla^{\omega}_{Z}s,-\nabla^{\omega}_{Z}\nabla^{\omega}_{X}s)\\
&=&(\rho(Z)s,-\nabla^{\omega}_{X}\nabla^{\omega}_{Z}s-\nabla^{\omega}_{[Z,X]}s+\nabla^{\omega}_{\kappa(Z,X})\\
&=&(\rho(Z)s,\rho(Z)\nabla_{X}^{\omega}s+\rho([Z,X]_{\p})s-\nabla^{\omega}_{[Z,X]_{\g_{-}}}s)\\
&\stackrel{!}{=}&j^{1}\rho(Z)(s,\nabla^{\omega}_{X}s),
\end{eqnarray*}
for all $Z\in\p$ and $X\in\g_{-}$. Note that we have used the fact that the curvature of any Cartan connection is horizontal (see Remark~\ref{cartanconnection}). 
This determines the action $j^{1}\rho$ on an arbitrary element $(u,\varphi)\in J^{1}\mathbb{U}=\mathbb{U}\oplus\mathrm{Hom}(\g_{-},\mathbb{U})$ via
$$J^{1}\rho(Z)(u,\varphi)=(\rho(Z)u,\rho(Z)\circ\varphi+\rho(\mathrm{ad}_{\p}(Z)(.))v-\varphi\circ\mathrm{ad}_{\g_{-}}(Z)).$$
Using the isomorphism $\mathrm{Hom}(\g_{-},\mathbb{U})\cong\p_{+}\otimes\mathbb{U}$, we can write elements in $J^{1}\mathbb{U}=\mathbb{U}\oplus\p_{+}\otimes\mathbb{U}$
as linear combinations of simple elements of the form $(u_{0},Y\otimes u)$. Then the action of $Z\in\p$ is given by:
$$j^{1}\rho(Z)(u_{0},Y\otimes u)=(\rho(Z)u_{0},Y\otimes \rho(Z)u+[Z,Y]\otimes u+\sum_{\alpha}\eta^{\alpha}\otimes\rho([Z,\xi_{\alpha}]_{\p})u_{0}).$$
This follows from
$$(Y\otimes u)(\mathrm{ad}_{\g_{-}}(Z)(.))=B(\mathrm{ad}_{\g_{-}}(Z)(.),Y)u=-B(.,[Z,Y])u=-([Z,Y]\otimes u)$$
as a map $\g_{-}\rightarrow \mathbb{U}$, where we have used that $B(\g_{i},\g_{j})=0\;\forall\;i\not=-j$,
and from
$$\rho(\mathrm{ad}_{\p}(Z)(.))u=\sum_{\alpha}\eta^{\alpha}\otimes\rho([Z,\xi_{\alpha}]_{\p})u.$$
It can now be easily checked that $j^{1}(\tilde{\lambda},\tilde{\nu})$ is exactly the induced representation under the projections and tensor products given above. 
\end{proof}

\begin{corollary}\label{corthree}
$\g_{0}$ acts tensorally on $\mathcal{J}^{1}(\mathbb{V},\mathbb{W})$.
If $Z\in\g_{2}\oplus...\oplus\g_{k}$, then 
$$j^{1}(\tilde{\lambda},\tilde{\nu})(Z)p((v_{0},X\otimes v)\otimes (w_{0}, Y\otimes w))=0.$$
If $Z\in\g_{1}$, then
$$j^{1}(\tilde{\lambda},\tilde{\nu})(Z)p((v_{0},X\otimes v)\otimes (w_{0}, Y\otimes w))=\left(\begin{array}{c}
0\\
v_{0}\otimes\sum_{\alpha'}\eta^{\alpha'}\otimes\tilde{\nu}([Z,\xi_{\alpha'}])w_{0}\\
\sum_{\alpha'}\eta^{\alpha'}\otimes\tilde{\lambda}([Z,\xi_{\alpha'}])v_{0} \otimes w_{0}
\end{array}\right).$$
We will call this term the {\bf obstruction term}. 
\end{corollary}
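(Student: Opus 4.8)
The plan is to derive all three statements by direct substitution of a homogeneous element $Z \in \g_{j}$ into the explicit formula for $j^{1}(\tilde{\lambda},\tilde{\nu})(Z)$ established in Lemma~\ref{lemmasix}, keeping careful track of the grading at every step. The only ingredients needed beyond that formula are the grading relation $[\g_{i},\g_{j}]\subseteq\g_{i+j}$ from Section~\ref{grading}, the fact that $\p_{+}=\g_{1}\oplus\cdots\oplus\g_{k_{0}}$ acts trivially on any irreducible $\p$-module (so $\tilde{\lambda}$ and $\tilde{\nu}$ annihilate all of $\p_{+}$), and that in the splitting $\g=\g_{-}\oplus\p$ one has $\g_{-1}\cap\p=0$ whereas $\g_{0},\g_{1},\dots\subseteq\p$. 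With these in hand, each of the three cases reduces to deciding, term by term, which bracket and which projection survives.

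For $Z\in\g_{0}$ I would argue as follows. The operators $\tilde{\lambda}(Z),\tilde{\nu}(Z)$ are the genuine $\g_{0}$-actions on $\mathbb{V},\mathbb{W}$, so the terms $\tilde{\lambda}(Z)v_{0}$, $\tilde{\nu}(Z)w_{0}$, $\tilde{\lambda}(Z)v$, $\tilde{\nu}(Z)w$ all persist and remain in their respective graded pieces; moreover $[Z,Y]\in\g_{1}$ for $Y\in\g_{1}$, so $[Z,Y]_{\g_{1}}=[Z,Y]$ is simply the natural $\g_{0}$-action on the $\g_{1}$ factor. The one term that could spoil tensoriality is $\sum_{\alpha'}\eta^{\alpha'}\otimes\tilde{\nu}([Z,\xi_{\alpha'}]_{\p})w_{0}$ together with its $\mathbb{V}$-analogue; but $\xi_{\alpha'}\in\g_{-1}$ forces $[Z,\xi_{\alpha'}]\in\g_{-1}$, which has zero $\p$-component, hence $[Z,\xi_{\alpha'}]_{\p}=0$ and the term drops out. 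What remains is exactly the diagonal Leibniz action of $\g_{0}$ on each tensor factor, i.e.~the tensorial action.

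For $Z\in\g_{2}\oplus\cdots\oplus\g_{k_{0}}$ every contribution vanishes: $\tilde{\lambda}(Z)$ and $\tilde{\nu}(Z)$ kill everything since $Z\in\p_{+}$; $[Z,Y]\in\g_{j+1}$ with $j+1\geq 3$ has no $\g_{1}$-part, so $[Z,Y]_{\g_{1}}=0$; and $[Z,\xi_{\alpha'}]\in\g_{j-1}\subseteq\p_{+}$ (as $j-1\geq 1$), so $\tilde{\nu}([Z,\xi_{\alpha'}]_{\p})w_{0}=0$ again by triviality of $\p_{+}$. Finally, for $Z\in\g_{1}$ the same reasoning kills the $\p_{+}$-terms and forces $[Z,Y]\in\g_{2}$ to have zero $\g_{1}$-part, but now $[Z,\xi_{\alpha'}]\in[\g_{1},\g_{-1}]\subseteq\g_{0}\subseteq\p$, so $[Z,\xi_{\alpha'}]_{\p}=[Z,\xi_{\alpha'}]$ and $\tilde{\nu}([Z,\xi_{\alpha'}])w_{0}$ generically survives; collecting the survivors reproduces precisely the stated obstruction term. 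There is no genuine obstacle here, as the argument is pure grading bookkeeping; the only point that warrants care is the $\g_{0}$ case, where tensoriality hinges on the projection of $[\g_{0},\g_{-1}]$ onto $\p$ being zero rather than on the triviality of $\p_{+}$.
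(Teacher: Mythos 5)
Your proposal is correct and follows essentially the same route as the paper: the paper's proof is the one-line remark that everything follows from $[\g_{i},\g_{j}]\subset\g_{i+j}$ together with the triviality of the $\p_{+}$-action on $\mathbb{V}$ and $\mathbb{W}$, and your case-by-case substitution into the formula of Lemma~\ref{lemmasix} is exactly the bookkeeping that remark leaves implicit. Your closing observation that the $\g_{0}$ case rests on $[\g_{0},\g_{-1}]\subseteq\g_{-1}$ having zero $\p$-projection (rather than on triviality of the $\p_{+}$-action) is a fair refinement of the paper's phrasing, but it is still covered by the grading relation the paper cites.
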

\begin{proof}
Using $[\g_{i},\g_{j}]\subset\g_{i+j}$ for all $i,j$, these considerations follow easily from the fact that $\p_{+}$ acts trivially on $\mathbb{W}$ and $\mathbb{V}$.
\end{proof}

The reason for this setup is given in the following lemma.

\begin{lemma}
Weighted first order bilinear invariant differential pairings
$$\Gamma(V)\times\Gamma(W)\rightarrow \Gamma(E)$$
in the flat homogeneous case $G/P$ are in one-to-one correspondence with $\p$-module homomorphisms
$$\mathcal{J}^{1}(\mathbb{V},\mathbb{W})\rightarrow \mathbb{E}.$$
In the general curved case, these homomorphisms yield (modulo scalars or curvature correction terms) all weighted first order invariant bilinear differential pairings.
\end{lemma}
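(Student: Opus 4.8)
The plan is to prove both directions of the correspondence by systematically unwinding the analytic definition of a weighted first order pairing through the algebraic machinery established earlier. First I would establish the homogeneous case. By the definition in Section~\ref{pairings}, a weighted first order bilinear differential pairing $\Gamma(V)\times\Gamma(W)\rightarrow\Gamma(E)$ is precisely a bundle homomorphism $\mathcal{J}^{1}(V,W)\rightarrow E$, and by invariance it is determined by its value at the identity coset $eP$, where it becomes a $\p$-module homomorphism $\mathcal{J}^{1}(\mathbb{V},\mathbb{W})\rightarrow\mathbb{E}$. This is essentially the content of Proposition~\ref{propthree} specialized to $M=1$, so the homogeneous half of the lemma follows once I invoke Lemma~\ref{lemmasix}, which identifies $\mathcal{J}^{1}(V,W)$ with the associated bundle $\mathcal{G}\times_{P}\mathcal{J}^{1}(\mathbb{V},\mathbb{W})$ and pins down the $\p$-action explicitly.

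For the second, curved, statement I would argue that a $\p$-module homomorphism $\Phi:\mathcal{J}^{1}(\mathbb{V},\mathbb{W})\rightarrow\mathbb{E}$ always yields an invariant differential pairing on an arbitrary regular parabolic geometry. The key construction is the map
\begin{equation*}
(s,t)\mapsto(s\otimes t,\,s\otimes\nabla^{\omega}t,\,\nabla^{\omega}s\otimes t),
\end{equation*}
which by Lemma~\ref{lemmasix} is well defined into $\mathcal{O}(\mathcal{G},\mathcal{J}^{1}(\mathbb{V},\mathbb{W}))^{P}$. Composing with $\Phi$ and using Lemma~\ref{five}, item~1 (a $P$-module map induces an invariant map on equivariant sections) produces an invariant first order bilinear pairing. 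Conversely, any weighted first order invariant pairing has a symbol, and modulo lower order terms this symbol must factor through $\mathcal{J}^{1}(\mathbb{V},\mathbb{W})$; I would argue that the invariance forces the symbol, which is a $\g_{0}$-module map, to in fact be a full $\p$-module homomorphism, since the transformation rule~(\ref{diffconnection}) for the Weyl connection shows that any $\Upsilon$-dependence is governed exactly by the obstruction term computed in Corollary~\ref{corthree}.

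The main obstacle, and the reason for the qualifiers \emph{modulo scalars or curvature correction terms}, is the curved case in this converse direction. In the flat homogeneous model the correspondence is an exact bijection, but on a general curved geometry the second exact sequence of Lemma~\ref{lemmasix} involves the curvature $\kappa$ through the computation $\nabla^{\omega}_{Z}\nabla^{\omega}_{X}-\nabla^{\omega}_{X}\nabla^{\omega}_{Z}=\nabla^{\omega}_{[Z,X]}-\nabla^{\omega}_{\kappa(Z,X)}$ used in the proof of Lemma~\ref{lemmasix}. Since the curvature is horizontal (Remark~\ref{cartanconnection}), the $\p$-module structure on $\mathcal{J}^{1}(\mathbb{V},\mathbb{W})$ is genuinely the same as in the flat case, so the \emph{existence} of the pairing from a homomorphism $\Phi$ goes through unchanged; the subtlety is only that a pairing built this way may need to be corrected by curvature-dependent lower-weight terms to remain genuinely invariant when re-expressed through an arbitrary Weyl connection rather than through the Cartan connection $\omega$ directly.

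The careful point I would therefore emphasize is that the invariant differential $\nabla^{\omega}$ is intrinsic to the Cartan connection and so the map $(s,t)\mapsto(s\otimes t, s\otimes\nabla^{\omega}t,\nabla^{\omega}s\otimes t)$ is automatically invariant, whereas expressing the result in terms of a choice of Weyl connection introduces the difference terms of~(\ref{diffconnection}); these are precisely the $\Upsilon$-terms that must cancel, and they cancel exactly because $\Phi$ annihilates the obstruction term of Corollary~\ref{corthree}. Thus the two lemmas combine to give the homomorphisms as the source of all such pairings, with the curvature entering only through the correction terms acknowledged in the statement.
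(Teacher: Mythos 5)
Your proposal is correct, and for two of its three components it coincides with the paper's own proof: the homogeneous case is handled exactly as you say (Section~\ref{thehomogeneouscase} together with Proposition~\ref{propthree}), and the existence direction in the curved case is the same composition of the invariant map $(s,t)\mapsto(s\otimes t, s\otimes\nabla^{\omega}t,\nabla^{\omega}s\otimes t)$ with a $\p$-module homomorphism, invoking Lemma~\ref{five}. Where you genuinely diverge is the converse in the curved case. The paper argues by \emph{restriction to the flat model}: an arbitrary invariant curved pairing restricts to a flat pairing, the flat bijection produces a homomorphism, and the original pairing can then differ from the resulting strongly invariant pairing only by scalars or curvature correction terms. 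You instead argue directly with Weyl connections: the symbol of an invariant pairing is a priori only a $\g_{0}$-module map, and invariance under change of Weyl structure, via the transformation rule~(\ref{diffconnection}), forces it to annihilate exactly the obstruction term of Corollary~\ref{corthree}, hence to be a full $\p$-module homomorphism. Your route is in fact the computation the paper itself carries out immediately \emph{after} this lemma (when classifying which projections $\pi$ are admissible), so it is more self-contained and makes explicit why invariance forces the homomorphism condition; the paper's route is shorter because it leans on the already-established flat correspondence, at the cost of leaving the clause ``modulo scalars or curvature correction terms'' essentially as a definition of how the curved and strongly invariant pairings may differ. One small imprecision in your third paragraph: a pairing built from a $\p$-module homomorphism via the Cartan connection needs no curvature correction to be invariant — it is strongly invariant as it stands — and the correction terms in the lemma's statement pertain only to the converse direction (an arbitrary invariant curved pairing need not be strongly invariant); your fourth paragraph states the correct picture, so this does not affect the validity of the argument.
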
\begin{proof}
The first  part of the lemma concerning pairings on homogeneous spaces was discussed in~\ref{thehomogeneouscase}.
In the general curved case, as mentioned above, the $\p$-module structure of $\mathcal{J}^{1}(\mathbb{V},\mathbb{W})$ ensures that the mapping
$$\mathcal{O}(\mathcal{G},\mathbb{V})^{P}\otimes \mathcal{O}(\mathcal{G},\mathbb{W})^{P}\ni(s,t)\mapsto
(s\otimes t,s\otimes \nabla^{\omega}t,\nabla^{\omega}s\otimes t)\in\mathcal{O}(\mathcal{G},\mathcal{J}^{1}(\mathbb{V},\mathbb{W}))^{P}$$
is well defined. The composition of this mapping with $\p$-module homomorphisms $\mathcal{J}^{1}(\mathbb{V},\mathbb{W})\rightarrow \mathbb{E}$ yield, following~\cite{css2}, {\bf strongly invariant} differential pairings of weighted order one. These are independent of the choice of Weyl structure, see~\cite{cds}, Section 5.1, and Lemma~\ref{five}. An arbitrary invariant differential pairing of weighted order one can be restricted to the flat model, where it is given by the above construction. The original pairing on a general manifold with parabolic structure can then differ only by scalars or curvature correction terms from the strongly invariant operator. 
\end{proof}\par
\vspace{0.3cm}

Looking at the exact sequence of $\p$-modules
$$0\rightarrow\begin{array}{c}
\g_{1}\otimes \mathbb{V}\otimes \mathbb{W}\\
\oplus\\
\mathbb{V}\otimes\g_{1}\otimes \mathbb{W}
\end{array}\rightarrow \mathcal{J}^{1}(\mathbb{V},\mathbb{W})\rightarrow  \mathbb{V}\otimes\mathbb{W}\rightarrow 0,$$
it is clear that a $\p$-module homomorphism $\mathcal{J}^{1}(\mathbb{V},\mathbb{W})\rightarrow\mathbb{E}$ onto an irreducible $\p$-module $\mathbb{E}$ induces a 
$\g_{0}$-module homomorphism
$$\begin{array}{c}
\g_{1}\otimes \mathbb{V}\otimes \mathbb{W}\\
\oplus\\
\mathbb{V}\otimes\g_{1}\otimes \mathbb{W}
\end{array}\stackrel{\pi}{\rightarrow} \mathbb{E}$$
and so the only candidates for $\mathbb{E}$ are the irreducible components of
$\g_{1}\otimes\mathbb{V}\otimes\mathbb{W}$ viewed as $\g_{0}$-modules (or as $\g_{0}^{S}$ modules, since $\mathfrak{z}(\g_{0})$ acts by a character).
However, not every projection $\pi$ is a $\p$-module homomorphism. In order to determine which $\pi$ are allowed, we use Corollary~\ref{corthree} to  note that the action of $\g_{0}$ on $\mathcal{J}^{1}(\mathbb{V},\mathbb{W})$ is just the tensorial one, so $\mathcal{J}^{1}(\mathbb{V},\mathbb{W})$ can be split as a $\g_{0}$-module. But $\p_{+}$ does not act trivially as on any irreducible $\p$-module, so in order to check that a specific projection is indeed a $\p$-module homomorphism and not just
a $\g_{0}$-module homomorphism, the image of the action of $\p_{+}$, when acting in $\mathcal{J}^{1}(\mathbb{V},\mathbb{W})$, has to vanish under~$\pi$.  This is exactly the obstruction term from above. On the other hand this is obviously sufficient for $\pi$ to be a $\p$-module homomorphism.
\par
\vspace{0.2cm} 

\subsection{Casimir computations}

\begin{lemma}\label{lemmaeight}
Let $\mathbb{V}$ be a finite dimensional irreducible representation of $\g_{0}$ and let $\lambda$ be the highest weight of $\mathbb{V}^{*}$. Moreover let $\{Y_{a}\}$, $\{Y^{a}\}$ be dual basis of $\g_{0}$ with respect to $B(.,.)$ and denote by $\rho_{0}=\frac{1}{2}\sum_{\alpha\in\Delta^{+}(\g_{0})}$ half the sum over all positive roots in $\g_{0}$. Then the {\bf Casimir operator} $c=\sum_{a}Y_{a}Y^{a}$ acts by
$$c(\lambda)=B(\lambda,\lambda+2\rho_{0})$$
on $\mathbb{V}$.
\end{lemma}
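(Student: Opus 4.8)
The statement to prove is the standard formula for the eigenvalue of the Casimir operator $c=\sum_a Y_a Y^a$ acting on an irreducible $\g_0$-module. This is a classical result (Freudenthal's formula / Harish-Chandra), so the plan is to reproduce its proof adapted to the conventions of this excerpt, where the relevant highest weight is $\lambda$, the highest weight of $\mathbb{V}^*$ rather than of $\mathbb{V}$ itself.

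\bigskip

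First I would reduce to computing the action of $c$ on a single distinguished weight vector. Since $\mathbb{V}$ is irreducible, Schur's lemma guarantees that $c$, being central in $\mathfrak{U}(\g_0)$, acts as a scalar; hence it suffices to evaluate $c$ on any one vector. The natural choice is the \emph{lowest} weight vector $v$ of $\mathbb{V}$, whose weight is $-\lambda$ (because $\lambda$ is the highest weight of $\mathbb{V}^*$, and the dual of a lowest-weight module of lowest weight $-\lambda$ has highest weight $\lambda$; compare the discussion following the decomposition $\g_1=\bigoplus_{i\in I}\mathbb{E}_{-\alpha_i}$). This vector is annihilated by $\mathfrak{n}_-\cap\g_0$, i.e.\ by all lowering operators $X_{-\alpha}$ for $\alpha\in\Delta^+(\g_0)$.

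\bigskip

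Next I would split the basis sum appropriately. Choosing dual bases $\{Y_a\},\{Y^a\}$ adapted to the root space decomposition $\g_0=\h_0\oplus\bigoplus_{\alpha\in\Delta(\g_0)}(\g_0)_\alpha$, I would organize $c$ into a Cartan part $\sum_i H_i H^i$ (dual bases of $\h_0$ with respect to $B$) plus contributions from the root vectors. For each positive root $\alpha\in\Delta^+(\g_0)$ one pairs $X_\alpha$ with $X_{-\alpha}$, and I would use the commutation relation $[X_\alpha,X_{-\alpha}]=h_\alpha$ to rewrite each raising-then-lowering product as a lowering-then-raising product plus a Cartan correction. Acting on the lowest weight vector $v$, every term $X_{-\alpha}(\cdots)v$ in which a lowering operator acts last vanishes, and the raising operators $X_\alpha v$ also vanish since $v$ is minimal; what survives are only the Cartan eigenvalue terms. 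Assembling these yields $c(v)=\bigl(B(-\lambda,-\lambda)-B(-\lambda,2\rho_0)\bigr)v$, where the sign in the linear term comes from the half-sum $\rho_0=\tfrac12\sum_{\alpha\in\Delta^+(\g_0)}\alpha$ appearing through $\sum_{\alpha\in\Delta^+(\g_0)}B(\cdot,\alpha)$. Simplifying gives $c(\lambda)=B(\lambda,\lambda+2\rho_0)$, using the bilinearity and symmetry of the Killing form restricted to $\h^*$ as set up in the section on Dynkin diagrams.

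\bigskip

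The main obstacle is purely bookkeeping: tracking signs and the distinction between $\lambda$ and $-\lambda$ consistently with the paper's convention that $\lambda$ is the highest weight of the \emph{dual} module. I expect the computation to produce $B(-\lambda,-\lambda-2\rho_0)=B(\lambda,\lambda+2\rho_0)$, so the dualization cancels neatly and the final formula matches the statement; care is needed only to verify that the cross terms $B(-\lambda,2\rho_0)$ carry the correct sign so that the $+2\rho_0$ survives. A minor point to confirm is that $\rho_0$ here is taken relative to $\g_0$ (equivalently $\g_0^S$, since the central part $\mathfrak{z}(\g_0)$ contributes no roots), which is exactly its definition in the statement.
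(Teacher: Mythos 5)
Your strategy is exactly the paper's: act on a lowest weight vector $v$ of weight $-\lambda$ (justified by Schur's lemma), split $c$ into a Cartan part $\sum_{i}h_{i}h^{i}$ plus root-vector pairs $x_{\alpha}x_{-\alpha}+x_{-\alpha}x_{\alpha}$ for $\alpha\in\Delta^{+}(\g_{0})$, and extract the linear term from the commutators $[x_{\alpha},x_{-\alpha}]=h_{\alpha}$. Your final assembly $c(v)=\left(B(-\lambda,-\lambda)-B(-\lambda,2\rho_{0})\right)v=B(\lambda,\lambda+2\rho_{0})v$ is correct and coincides with the paper's computation.

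However, one of your justifications is false and, taken literally, contradicts your own conclusion: you assert that ``the raising operators $X_{\alpha}v$ also vanish since $v$ is minimal.'' By the paper's definition a minimal vector is killed by $\mathfrak{n}_{-}$ only, not by $\mathfrak{n}$; if in addition $X_{\alpha}v=0$ for all $\alpha\in\Delta^{+}(\g_{0})$, then the line through $v$ would be a $\g_{0}$-submodule (forcing $\mathbb{V}$ to be one-dimensional), and in the computation every cross term $x_{-\alpha}x_{\alpha}v$ would vanish outright, so the commutator corrections would never arise and you would land on the wrong answer $B(\lambda,\lambda)$ with no $2\rho_{0}$. The correct bookkeeping, which the rest of your argument implicitly relies on, is: $x_{\alpha}x_{-\alpha}v=0$ because the \emph{lowering} operator acts first (it is the rightmost factor), whereas $x_{-\alpha}x_{\alpha}v=x_{\alpha}x_{-\alpha}v-h_{\alpha}v=-h_{\alpha}v=\lambda(h_{\alpha})v$ is in general nonzero, and summing $\lambda(h_{\alpha})=B(\lambda,\alpha)$ over $\Delta^{+}(\g_{0})$ is precisely what produces $B(\lambda,2\rho_{0})$. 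For the same reason your phrase ``every term $X_{-\alpha}(\cdots)v$ in which a lowering operator acts last vanishes'' has the direction backwards: what vanishes are the terms in which the lowering operator acts first. Correct these two sentences and your proof is sound and identical in method to the paper's.
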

\begin{proof}
It is well known that the lowest weight of $\mathbb{V}$ is $-\lambda$. Now let $v_{-\lambda}\in\mathbb{V}$ be a lowest weight vector and take 
$$c=\sum_{i}h_{i}h^{i}+\sum_{\alpha\in\Delta^{+}(\g_{0})}(x_{\alpha}x_{-\alpha}+x_{-\alpha}x_{\alpha}),$$
where $\{h_{i}\}$, $\{h^{i}\}$ are dual basis of $\h$ and $x_{\alpha}\in\g_{\alpha}$, $x_{-\alpha}\in\g_{-\alpha}$ are dual with respect to $B(.,.)$ for all $\alpha\in\Delta^{+}(\g_{0})$. Since $v_{-\lambda}$ is a lowest weight vector, it is killed by all $x_{-\alpha}$, so the action of the second term is given by
\begin{eqnarray*}
\sum_{\alpha\in\Delta^{+}(\g_{0})}(x_{\alpha}x_{-\alpha}+x_{-\alpha}x_{\alpha}).v_{-\lambda}&=&\sum_{\alpha\in\Delta^{+}(\g_{0})}x_{-\alpha}x_{\alpha}.v_{-\lambda}\\
&=&\sum_{\alpha\in\Delta^{+}(\g_{0})}-[x_{\alpha},x_{-\alpha}].v_{-\lambda}\\
&=&\sum_{\alpha\in\Delta^{+}(\g_{0})}-h_{\alpha}.v_{-\lambda}\\
&=&\sum_{\alpha\in\Delta^{+}(\g_{0})}\lambda(h_{\alpha})v_{-\lambda}\\
&=&\sum_{\alpha\in\Delta^{+}(\g_{0})}B(h_{\lambda},h_{\alpha})v_{-\lambda}\\
&=&\sum_{\alpha\in\Delta^{+}(\g_{0})}B(\lambda,\alpha)v_{-\lambda}\\
&=&B(\lambda,2\rho_{0})v_{-\lambda}.
\end{eqnarray*}
Here, for every $\mu\in\h^{*}$, $h_{\mu}\in\h$ is the unique element with $\mu(H)=B(h_{\mu},H)$ for all $H\in\h$ and $[x_{\alpha},x_{-\alpha}]=B(x_{\alpha},x_{-\alpha})h_{\alpha}=h_{\alpha}$ as in~\cite{h}, Proposition 8.3.
The first term obviously yields $\sum_{i}\lambda(h_{i})\lambda(h^{i})v_{-\lambda}=B(\lambda,\lambda)v_{-\lambda}$, see~\cite{h}, 22.3. By the Schur lemma, we deduce that the action of $c$ on the whole representation $\mathbb{V}$ is given by $B(\lambda,\lambda+2\rho_{0})$. Finally we note that $c$ is independent of the choice of basis.
\end{proof}

\begin{lemma}[\cite{css}]\label{lemmanine}
The obstruction term can be written as
$$\left(\begin{array}{c}
0\\
v_{0}\otimes\left(\sum_{i\in I}\sum_{j=1}^{m_{i}}c_{\nu\sigma_{i,j}}\pi_{\nu\sigma_{i,j}}(Z\otimes w_{0})\right)\\
\left(\sum_{i\in I}\sum_{j=1}^{n_{i}}c_{\lambda\tau_{i,j}}\pi_{\lambda\tau_{i,j}}(Z\otimes v_{0})\right) \otimes w_{0}
\end{array}\right),$$
where
$$\g_{1}^{i}\otimes\mathbb{V}=\mathbb{V}_{\tau_{i,1}}\oplus...\oplus\mathbb{V}_{\tau_{i,n_{i}}}\;\text{and}\;\g_{1}^{i}\otimes\mathbb{W}=\mathbb{W}_{\sigma_{i,1}}\oplus...\oplus
\mathbb{W}_{\sigma_{i,m_{i}}},$$
for $i=1,...,l_{0}$, are the decompositions into irreducible $\g_{0}$-modules with corresponding projections $\pi_{\lambda\tau_{i,j}}$ and $\pi_{\nu\sigma_{i,j}}$ and where
$$c_{\gamma\kappa_{i,j}}=\frac{1}{2}\left(c(\kappa_{i,j})-c(\gamma)-c(-\alpha_{i})\right).$$
\end{lemma}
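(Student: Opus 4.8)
The plan is to identify the obstruction operator of Corollary~\ref{corthree} with the ``cross term'' of the $\g_0$-Casimir acting on the tensor products $\g_1\otimes\mathbb{V}$ and $\g_1\otimes\mathbb{W}$, and then to read off its eigenvalues from Lemma~\ref{lemmaeight}. Since the two entries of the obstruction term are treated identically (replacing $\tilde\lambda,\mathbb{V},v_0$ by $\tilde\nu,\mathbb{W},w_0$), I only describe the $\mathbb{V}$-slot, the spectator $w_0$ playing no role. So I consider the $\g_0$-equivariant map $T\colon\g_1\otimes\mathbb{V}\to\g_1\otimes\mathbb{V}$ given by $T(Z\otimes v_0)=\sum_{\alpha'}\eta^{\alpha'}\otimes\tilde\lambda([Z,\xi_{\alpha'}])v_0$, noting that for $Z\in\g_1$ and $\xi_{\alpha'}\in\g_{-1}$ one has $[Z,\xi_{\alpha'}]\in\g_0$, so that the $\p$-projection appearing in Lemma~\ref{lemmasix} is vacuous here.

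First I would establish the purely Lie-algebraic identity $\sum_a [Y_a,Z]\otimes Y^a=\sum_{\alpha'}\eta^{\alpha'}\otimes[Z,\xi_{\alpha'}]$ in $\g_1\otimes\g_0$, where $\{Y_a\},\{Y^a\}$ are $B$-dual bases of $\g_0$. Both sides lie in $\g_1\otimes\g_0$, so it suffices to pair the second tensor factor with an arbitrary $Y\in\g_0$ through the Killing form. Using $\sum_a Y_a B(Y^a,Y)=Y$, the invariance $B([A,B],C)=B(A,[B,C])$, the orthogonality $B(\g_i,\g_j)=0$ for $i\neq -j$, and the fact that $\{\eta^{\alpha'}\},\{\xi_{\alpha'}\}$ are $B$-dual bases of $\g_1,\g_{-1}$, both sides pair to $[Y,Z]\in\g_1$; non-degeneracy of $B$ on $\g_0$ then yields the identity. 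Applying $\mathrm{id}\otimes\tilde\lambda$ shows that $T$ coincides with the cross-Casimir $\Omega(Z\otimes v)=\sum_a [Y_a,Z]\otimes\tilde\lambda(Y^a)v$ on $\g_1\otimes\mathbb{V}$.

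Next I would use that $B|_{\g_0}$ is non-degenerate and $\g_0$-invariant, so $c=\sum_a Y_aY^a$ is central in $\mathfrak{U}(\g_0)$ and the diagonal action of $c$ on $\g_1\otimes\mathbb{V}$ expands as $c\otimes 1+1\otimes c+2\Omega$ (the two cross sums agree because the Casimir element $\sum_a Y_a\otimes Y^a$ is symmetric). Restricting to an irreducible $\g_0$-summand $\mathbb{V}_{\tau_{i,j}}\subseteq\g_1^i\otimes\mathbb{V}$, Lemma~\ref{lemmaeight} evaluates each Casimir: $c$ acts by $c(\tau_{i,j})$ on $\mathbb{V}_{\tau_{i,j}}$, by $c(-\alpha_i)$ on $\g_1^i$ (whose dual $\g_{-1}^i$ has highest weight $-\alpha_i$), and by $c(\lambda)$ on $\mathbb{V}$. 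Hence $\Omega$, and therefore $T$, acts on $\mathbb{V}_{\tau_{i,j}}$ by $\tfrac12\bigl(c(\tau_{i,j})-c(\lambda)-c(-\alpha_i)\bigr)=c_{\lambda\tau_{i,j}}$. As $\Omega$ is $\g_0$-equivariant it is diagonal for the decomposition $\g_1\otimes\mathbb{V}=\bigoplus_{i,j}\mathbb{V}_{\tau_{i,j}}$, giving $T=\sum_{i,j}c_{\lambda\tau_{i,j}}\pi_{\lambda\tau_{i,j}}$, which is the claimed third entry; the second entry follows verbatim with $\nu,\sigma_{i,j},\mathbb{W}$.

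The main obstacle is the first step: recognising the obstruction term, defined through the mixed bracket $\sum_{\alpha'}\eta^{\alpha'}\otimes[Z,\xi_{\alpha'}]$ over dual bases of $\g_{\mp1}$, as genuinely the $\g_0$-Casimir cross term written over dual bases of $\g_0$. Everything after that is the standard ``Casimir on a tensor product'' eigenvalue bookkeeping, made rigorous by Lemma~\ref{lemmaeight}. One minor point to handle carefully is that the same $\g_0$-type may occur in several $\g_1^i\otimes\mathbb{V}$; this is harmless, since the eigenvalue $c_{\lambda\tau_{i,j}}$ records the dependence on $i$ through $c(-\alpha_i)$, so $\Omega$ still diagonalises along the stated decomposition.
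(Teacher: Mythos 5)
Your proposal is correct and follows essentially the same route as the paper: it identifies the obstruction map with the Casimir cross term $\Omega$ via a Killing-form duality computation (the paper does this by expanding $[Z,\xi_{\alpha'}]$ in the dual bases of $\g_{0}$, you by pairing against an arbitrary $Y\in\g_{0}$ and invoking non-degeneracy, which is the same manipulation organized differently), then uses $c_{\mathrm{diag}}=c\otimes 1+1\otimes c+2\Omega$ and the eigenvalues from Lemma~\ref{lemmaeight} to read off $c_{\lambda\tau_{i,j}}$ on each component $\mathbb{V}_{\tau_{i,j}}\subset\g_{1}^{i}\otimes\mathbb{V}$. Your remarks on the vacuous $\p$-projection, the symmetry of the Casimir element, and multiplicities across different $\g_{1}^{i}$ are all sound refinements of points the paper treats implicitly.
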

\begin{proof}
Writing down the obstruction term as a mapping
$$\Phi:\g_{1}\otimes\mathbb{V}\otimes\mathbb{W}\rightarrow
\begin{array}{c}
\mathbb{V}\otimes\g_{1}\otimes\mathbb{W}\\
\oplus\\
\g_{1}\otimes\mathbb{V}\otimes\mathbb{W}\\
\end{array},$$
with
$$\Phi(Z\otimes v\otimes w)=\left(
\begin{array}{c}
v_{0}\otimes \sum_{\alpha'}\eta_{\alpha'}\otimes\tilde{\nu}([Z,\xi_{\alpha'}])w_{0}\\
\sum_{\alpha'}\eta_{\alpha'}\otimes\tilde{\lambda}([Z,\xi_{\alpha'}])v_{0}\otimes w_{0}
\end{array}\right),$$
allows one to use the Casimir operator to turn this into an easier expression.
\par
Let $\{Y_{a}\}$, $\{Y^{a}\}$ be basis of $\g_{0}$, orthonormal with respect to the form $B(.,.)$ as in Lemma~\ref{lemmaeight}. The Casimir operator of $\g_{0}$
is $c=\sum_{a}Y_{a}Y^{a}$ and
we compute
\begin{eqnarray*}
[Z,\xi_{\alpha'}] & = & \sum_{a}B(Y^{a},[Z,\xi_{\alpha'}])Y_{a}\\
                        & = & \sum_{a}B([Y^{a},Z],\xi_{\alpha'})Y_{a},
\end{eqnarray*}
where in the first equality we have written the element $[Z,\xi_{\alpha'}]\in\g_{0}$ in the basis $\{Y_{a}\}$ and the second equality follows from the associativity of the Killing form.
Furthermore
\begin{eqnarray*}
\sum_{\alpha'}\eta_{\alpha'}\otimes\tilde{\lambda}([Z,\xi_{\alpha'}])v & = &  \sum_{\alpha'}\eta_{\alpha'}\otimes
\tilde{\lambda}\left(\sum_{a}([Y^{a},Z],\xi_{\alpha'})Y_{a}\right)v_{0}\\
& = & \sum_{a}\sum_{\alpha'}\eta_{\alpha'}([Y^{a},Z],\xi_{\alpha'})\otimes \tilde{\lambda}(Y_{a})v_{0}\\
& = &  \sum_{a}[Y^{a},Z]\otimes \tilde{\lambda}(Y_{a})v_{0},
\end{eqnarray*}
where this time we have written the element $[Y^{a},Z]\in\g_{1}$ in the basis $\{\eta_{\alpha'}\}$. Interchanging $\{Y_{a}\}$ with $\{Y^{a}\}$, the same calculation can be done to obtain an analogous expression 
$$ \sum_{a}[Y^{a},Z]\otimes \tilde{\lambda}(Y_{a})v_{0}=\sum_{\alpha'}\eta_{\alpha'}\otimes\tilde{\lambda}([Z,\xi_{\alpha'}])v = \sum_{a}[Y_{a},Z]\otimes \tilde{\lambda}(Y^{a})v_{0}.$$
This  yields
\begin{eqnarray*}
\sum_{a}[Y^{a},Z]\otimes \tilde{\lambda}(Y_{a})v & = & \frac{1}{2}\sum_{a}Y_{a}Y^{a}(Z\otimes v_{0})\\
&  & -\frac{1}{2}\sum_{a}(Y_{a}Y^{a}.Z)\otimes v -\frac{1}{2}\sum_{a}Z\otimes(Y_{a}Y^{a}.v_{0})\\
& = & \frac{1}{2}\sum_{i\in I}\sum_{j=1}^{n_{i}}\left(c(\tau_{i,j}) -c(-\alpha_{i})-c(\lambda)\right) \pi_{\lambda\tau_{i,j}}(Z\otimes v_{0}),
\end{eqnarray*}
where $-\alpha_{i}$ is the the highest weight of $(\g^{i}_{1})^{*}=\g^{i}_{-1}$, $\tau_{i,j}$ ranges over the highest weights of the duals of the irreducible components of $\g^{i}_{1}\otimes\mathbb{V}$ and
$\pi_{\lambda\tau_{i,j}}$ denotes the corresponding projection.
\par
Exactly the same calculation can be done 
for $\sum_{\alpha'}\eta_{\alpha'}\otimes[Z,\xi_{\alpha'}].w_{0}$ 
\end{proof}

\begin{lemma}\label{lemmaten}
With the conventions as above,
$$2c_{\lambda\tau_{i,j}}=\Vert\tau_{i,j}+\rho\Vert^{2}-\Vert \lambda+\rho\Vert^{2}\;\text{and}\;2c_{\nu\sigma_{i,j}}=\Vert\sigma_{i,j}+\rho\Vert^{2}-\Vert \nu+\rho\Vert^{2},$$
where $\rho=\sum_{k=1}^{n}\omega_{k}=\frac{1}{2}\sum_{\alpha\in\Delta^{+}(\g,\h)}\alpha$ is the half sum over all positive roots in $\g$ and
$$\Vert\mu\Vert^{2}=B(\mu,\mu)\;\forall\;\mu\in\h^{*}.$$
\end{lemma}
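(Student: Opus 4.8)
The plan is to reduce the claimed identity to Lemma~\ref{lemmaeight} together with two completely elementary facts about $\rho$. By the definition of $c_{\lambda\tau_{i,j}}$ in Lemma~\ref{lemmanine} we have $2c_{\lambda\tau_{i,j}}=c(\tau_{i,j})-c(\lambda)-c(-\alpha_{i})$, and Lemma~\ref{lemmaeight} evaluates each term as $c(\mu)=B(\mu,\mu+2\rho_{0})$. On the other hand, expanding the right-hand side of the assertion gives $\Vert\tau_{i,j}+\rho\Vert^{2}-\Vert\lambda+\rho\Vert^{2}=B(\tau_{i,j},\tau_{i,j})-B(\lambda,\lambda)+2B(\tau_{i,j}-\lambda,\rho)$, since the $\Vert\rho\Vert^{2}$ terms cancel. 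Substituting the Casimir values and cancelling the common quadratic terms $B(\tau_{i,j},\tau_{i,j})-B(\lambda,\lambda)$, the whole statement collapses to the single scalar identity
$$2B(\tau_{i,j}-\lambda,\rho-\rho_{0})=2B(\alpha_{i},\rho_{0})-B(\alpha_{i},\alpha_{i}).$$
So the first step is purely formal bookkeeping; everything then rests on proving this last equation.

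The structural input I would supply next is a precise description of $\tau_{i,j}-\lambda$. Dualizing the decomposition in Lemma~\ref{lemmanine}, the weights $\tau_{i,j}$ are the highest weights of the irreducible $\g_{0}$-summands of $\g_{-1}^{i}\otimes\mathbb{V}^{*}$, whose overall highest weight is $\lambda-\alpha_{i}$ (the sum of the highest weight $-\alpha_{i}$ of $\g_{-1}^{i}$ and the highest weight $\lambda$ of $\mathbb{V}^{*}$). Consequently every $\tau_{i,j}$ satisfies $\tau_{i,j}\preceq\lambda-\alpha_{i}$ in the ordering of $\g_{0}$, i.e.\ $\tau_{i,j}=\lambda-\alpha_{i}-\mu_{i,j}$ with $\mu_{i,j}$ a non-negative integer combination of the simple roots of $\g_{0}$. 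The point I would stress is that these are exactly the simple roots $\alpha_{j}$ with $j\in J$ (the uncrossed nodes): since the grading element $E$ acts by the same eigenvalue on all summands of $\g_{1}^{i}\otimes\mathbb{V}$, the coefficient $\sum_{k\in I}n_{k}$ is constant, so $\mu_{i,j}$ carries no component along any $\alpha_{k}$ with $k\in I$.

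Finally I would invoke the two facts about $\rho$. First, writing $\rho-\rho_{0}=\tfrac12\sum_{\beta\in\Delta(\p_{+})}\beta$, one has $B(\rho-\rho_{0},\alpha_{j}^{\vee})=B(\rho,\alpha_{j}^{\vee})-B(\rho_{0},\alpha_{j}^{\vee})=1-1=0$ for every $j\in J$, hence $B(\rho-\rho_{0},\mu_{i,j})=0$; this kills the $\mu_{i,j}$ contribution, leaving $2B(\tau_{i,j}-\lambda,\rho-\rho_{0})=-2B(\alpha_{i},\rho-\rho_{0})$. Second, because $\alpha_{i}$ is a simple root, $B(\rho,\alpha_{i}^{\vee})=1$ gives $2B(\alpha_{i},\rho)=B(\alpha_{i},\alpha_{i})$. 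Combining these two, $-2B(\alpha_{i},\rho-\rho_{0})=-2B(\alpha_{i},\rho)+2B(\alpha_{i},\rho_{0})=2B(\alpha_{i},\rho_{0})-B(\alpha_{i},\alpha_{i})$, which is exactly the required identity. The computation for $\nu$ and $\sigma_{i,j}$ is verbatim the same with $\mathbb{V}^{*}$ replaced by $\mathbb{W}^{*}$.

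I expect the only genuine obstacle to be the middle step: justifying cleanly that $\tau_{i,j}-\lambda=-\alpha_{i}-\mu_{i,j}$ with $\mu_{i,j}$ supported on the uncrossed simple roots. The orthogonality $B(\rho-\rho_{0},\alpha_{j})=0$ for $j\in J$ is standard and the relation $2B(\alpha_{i},\rho)=B(\alpha_{i},\alpha_{i})$ is immediate, so once the weight of each summand is pinned down the algebra is forced. Everything else is routine expansion of the Killing form.
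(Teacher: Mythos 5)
Your proposal is correct and follows essentially the same route as the paper's proof: both reduce the identity to Lemma~\ref{lemmaeight} together with the facts that $\lambda-\tau_{i,j}-\alpha_{i}$ is a combination of the simple roots $\alpha_{j}$, $j\in J$, that $B(\rho-\rho_{0},\alpha_{j})=0$ for $j\in J$, and that $2B(\rho,\alpha_{i})=\Vert\alpha_{i}\Vert^{2}$. The only cosmetic differences are that you isolate the scalar identity before verifying it (the paper absorbs it into one chain of equalities) and you justify the form of $\tau_{i,j}-\lambda$ via dominance and the grading element, where the paper uses that highest weights of summands of $\g_{-1}^{i}\otimes\mathbb{V}^{*}$ are $\lambda$ plus a weight of $\g_{-1}^{i}$.
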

\begin{proof} 
We will do the computation for $\mathbb{V}$, the case of $\mathbb{W}$ being analogous. Let $\mu$ be the highest weight of $\mathbb{E}^{*}$, where $\mathbb{E}\subset\g_{1}^{i}\otimes\mathbb{V}$ is an irreducible component.
First of all, we note that $\mathbb{E}^{*}\subset\g_{-1}^{i}\otimes\mathbb{V}^{*}$, so $\mu$ can be written as $\lambda$ plus a weight of $\g_{-1}^{i}$. But all weights of $\g_{-1}^{i}$ have the form $-\alpha_{i}-\sum_{j\in J}n_{j}\alpha_{j}$, so $\lambda-\mu-\alpha_{i}=\sum_{j\in J}n_{j}\alpha_{j}$.
\par
Now use the equation
$$2B(\rho,\alpha_{i})=B(\rho,\alpha_{i}^{\vee})\Vert\alpha_{i}\Vert^{2}=\Vert\alpha_{i}\Vert^{2}$$
to deduce
\begin{eqnarray*}
2c_{\lambda\mu}&=&B(\mu,\mu+2\rho_{0})-B(\lambda,\lambda+2\rho_{0})-B(-\alpha_{i},-\alpha_{i}+2\rho_{0})\\
&=&B(\mu,\mu+2\rho)-B(\lambda,\lambda+2\rho)-2B(\rho-\rho_{0},\mu-\lambda)-\Vert\alpha_{i}\Vert^{2}+2B(\alpha_{i},\rho_{0})\\
&=&\Vert \mu+\rho\Vert^{2}-\Vert\lambda+\rho\Vert^{2}+2B(\rho-\rho_{0},\lambda-\mu-\alpha_{i})\\
&=&\Vert \mu+\rho\Vert^{2}-\Vert\lambda+\rho\Vert^{2},
\end{eqnarray*}
because 
$$B(\rho-\rho_{0},\lambda-\mu-\alpha_{i})=B\left(\sum_{i\in I}\omega_{i},\sum_{j\in J}n_{j}\alpha_{j}\right)=0.$$
\end{proof}

If $\mathbb{E}_{\mu}$ is one of the irreducible components of $\g_{1}\otimes\mathbb{V}\otimes\mathbb{W}$, then we denote by
$\pi_{\tau_{i,j}\mu}^{k}$ the projection $\mathbb{V}_{\tau_{i,j}}\otimes\mathbb{W}\rightarrow \mathbb{E}^{(k)}_{\mu}$ onto the $k$-th copy of $\mathbb{E}_{\mu}$ in
the decomposition. $\pi_{\sigma_{i,j}\mu}^{k}$ is defined analogously as the projection onto the $k$-th copy of $\mathbb{E}_{\mu}$ in $\mathbb{V}\otimes\mathbb{W}_{\sigma_{i,j}}$.
Every projection 
$$\pi:
\begin{array}{c}
\mathbb{V}\otimes\g_{1}\otimes\mathbb{W}\\
\oplus\\
\g_{1}\otimes\mathbb{V}\otimes\mathbb{W}
\end{array}
\rightarrow \mathbb{E}_{\mu}$$
can be written as
\begin{eqnarray*}
\pi \left(
\begin{array}{c}
v_{1}\otimes Z_{1}\otimes w_{1}\\
Z_{2}\otimes v_{2}\otimes w_{2} \\
\end{array}\right)
& = &
\sum_{i,j,k}a_{\tau_{i,j},k}\pi^{k}_{\tau_{i,j}\mu}\left(\pi_{\lambda\tau_{i,j}}(Z_{1}\otimes v_{1})\otimes w_{1}\right)\\
&   & +
\sum_{i,j,k}b_{\sigma_{i,j},k}\pi_{\sigma_{i,j}\mu}^{k}\left(v_{2}\otimes \pi_{\nu\sigma}(Z_{2}\otimes w_{2})\right),
\end{eqnarray*}
for some constants $a_{\tau_{i,j},k}$ and $b_{\sigma_{i,j},k}$.
In order for a projection $\pi$ to be a $\p$-module homomorphism, $\pi\circ\Phi(Z\otimes v\otimes w))=0$ has to hold for all
$Z\in\g_{1}$, $v\in\mathbb{V}$ and $w\in\mathbb{W}$. This reads
\begin{eqnarray*}
\pi\circ\Phi (Z\otimes v\otimes w)& = &
\sum_{i,j,k}a_{\tau_{i,j},k}c_{\lambda\tau_{i,j}}\pi^{k}_{\tau_{i,j}\mu}(\pi_{\lambda\tau_{i,j}}(Z\otimes v)\otimes w)\\
&   & +\sum_{i,j,k}b_{\sigma_{i,j},k}c_{\nu\sigma_{i,j}}\pi^{k}_{\sigma_{i,j}\mu}(v\otimes\pi_{\nu\sigma_{i,j}}(Z\otimes w))\\
& = & 0.
\end{eqnarray*}
Let $x$ denote the number of copies of $\mathbb{E}_{\mu}$ in $\g_{1}\otimes \mathbb{V}\otimes\mathbb{W}$, then there are 
$2x$ unknowns and $x$ equations. Since $Z$, $v$ and $w$ are to be arbitrary and all $\pi_{\tau_{i,j}\mu}^{k}(\pi_{\lambda\tau_{i,j}}(Z\otimes v)\otimes w)$ lie in different
copies of $\mathbb{E}_{\mu}$, we can think of those elements as constituting a basis $\{e_{i}\}$ of~$\oplus^{x}\mathbb{E}_{\mu}$. The same is true for the different
$\pi_{\sigma_{i,j}\mu}^{k}(v\otimes\pi_{\nu\sigma_{i,j}}(Z\otimes w))$, which constitute a different basis~$\{f_{j}\}$. Hence there is a linear isomorphism 
$f_{j}=\sum_{i}A_{ij}e_{i}$ connecting those two basis and we obtain $x$ equations
$$a_{i}c_{\lambda\tau(i)}+\sum_{j}b_{j}c_{\nu\sigma(j)}A_{ij}=0,\; i=1,..,x,$$
where $\tau(i)$ (resp.~$\sigma(j)$) denotes the representation corresponding to the index $i$ (resp.~$j$), i.e.~the $i$-th (resp.~$j$-th) copy of $\mathbb{E}_{\mu}$ lies in 
$\mathbb{V}_{\tau(i)}\otimes\mathbb{W}$ (resp.~in $\mathbb{V}\otimes\mathbb{W}_{\sigma(j)}$). If all $c_{\lambda\tau(i)}\not=0$, then the constants $a_{i}$ are uniquely determined by the~$b_{j}$'s. 
\par
This yields an $x$-parameter family of invariant bilinear differential pairings if $c_{\lambda\tau_{i,j}}\not=0$ for all $i,j$ such that $\mathbb{E}_{\mu}\subset\mathbb{V}_{\tau_{i,j}}\otimes\mathbb{W}$.
If $c_{\lambda\tau_{i,j}}=0$, then there exists an invariant linear differential operator 
$$\Gamma(V)\rightarrow\Gamma(V_{\tau_{i,j}}),$$
where $V_{\tau_{i,j}}$ is the associated bundle to the representation $\mathbb{V}_{\tau_{i,j}}$. The roles of the $a_{i}$ and $b_{j}$ can, of course, be interchanged, so that we can alternatively exclude 
the situation where $c_{\nu\sigma_{i,j}}=0$, which corresponds to the existence of first order invariant differential operators $\Gamma(W)\rightarrow\Gamma(W_{\sigma_{i,j}})$. Thus we have proved:

\section{Classification}

\subsection{The main result}

\begin{theorem}[Main result 2]\label{mainresulttwo}
Let $\mathbb{V}$ and $\mathbb{W}$ be two finite dimensional irreducible $\p$-modules, so that $\mathbb{V}^{*}$ and $\mathbb{W}^{*}$ have highest weights $\lambda$ and $\nu$ respectively. 
Furthermore denote the decomposition of the tensor products by
$$\g_{1}^{i}\otimes\mathbb{V}=\mathbb{V}_{\tau_{i,1}}\oplus...\oplus\mathbb{V}_{\tau_{i,n_{i}}},\;i=1,...,l_{0}$$
and
$$\g_{1}^{i}\otimes\mathbb{W}=\mathbb{W}_{\sigma_{i,1}}\oplus...\oplus\mathbb{W}_{\sigma_{i,m_{i}}},\;i=1,...,l_{0}.$$
If $c_{\lambda\tau_{i,j}}\not=0$ for all $i,j$ such that $\mathbb{E}_{\mu}\subset\mathbb{V}_{\tau_{i,j}}\otimes\mathbb{W}$ or $c_{\nu\sigma_{i,j}}\not=0$ for all $i,j$ such that $\mathbb{E}_{\mu}
\subset\mathbb{V}\otimes\mathbb{W}_{\sigma_{i,j}}$, then
there exists an $x$-parameter family of first order invariant bilinear differential pairings
$$\Gamma(V)\times\Gamma(W)\rightarrow \Gamma(E_{\mu}),$$
where $x$ is the number of copies of $\mathbb{E}_{\mu}$ in~$\g_{1}\otimes\mathbb{V}\otimes\mathbb{W}$. Modulo curvature terms, all invariant bilinear differential pairings of weighted order one on regular 
parabolic geometries are obtained in such a way.
\end{theorem}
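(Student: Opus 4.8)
The plan is to assemble the pieces developed in the two preceding subsections into a clean dimension count. First I would invoke the lemma identifying weighted first order invariant pairings $\Gamma(V)\times\Gamma(W)\to\Gamma(E_\mu)$ on the homogeneous model with $\p$-module homomorphisms $\mathcal{J}^1(\mathbb{V},\mathbb{W})\to\mathbb{E}_\mu$, using the explicit $\p$-action of Lemma~\ref{lemmasix}. Restricting such a homomorphism along the weighted-jet exact sequence, it factors through a $\g_0$-module map out of $(\g_1\otimes\mathbb{V}\otimes\mathbb{W})\oplus(\mathbb{V}\otimes\g_1\otimes\mathbb{W})$, so the only possible targets $\mathbb{E}_\mu$ are the $\g_0$-irreducible constituents of $\g_1\otimes\mathbb{V}\otimes\mathbb{W}$, and the space of such $\g_0$-maps is $2x$-dimensional, parametrised by the constants $a_{\tau_{i,j},k}$ and $b_{\sigma_{i,j},k}$.

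The heart of the argument is to characterise which of these $\g_0$-maps are genuinely $\p$-maps. By Corollary~\ref{corthree} the failure is measured entirely by the image of the obstruction term $\Phi$ under $\g_1$, and Lemma~\ref{lemmanine} rewrites this image as scalar multiples $c_{\lambda\tau_{i,j}}$, $c_{\nu\sigma_{i,j}}$ of the corresponding $\g_0$-projections, the scalars being the Casimir eigenvalue differences of Lemma~\ref{lemmaeight} evaluated in Lemma~\ref{lemmaten}. Imposing $\pi\circ\Phi=0$ then becomes a homogeneous linear system: since the $x$ vectors $\pi^k_{\tau_{i,j}\mu}(\pi_{\lambda\tau_{i,j}}(Z\otimes v)\otimes w)$ and the $x$ vectors $\pi^k_{\sigma_{i,j}\mu}(v\otimes\pi_{\nu\sigma_{i,j}}(Z\otimes w))$ each form a basis of $\oplus^x\mathbb{E}_\mu$, there is an invertible change-of-basis matrix $A$ relating them, and the conditions read $a_ic_{\lambda\tau(i)}+\sum_j b_jc_{\nu\sigma(j)}A_{ij}=0$ for $i=1,\dots,x$.

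With this in hand the counting is immediate: the coefficient matrix is $\mathrm{diag}(c_{\lambda\tau(i)})$ in the $a$-variables and $A\,\mathrm{diag}(c_{\nu\sigma(j)})$ in the $b$-variables. If every $c_{\lambda\tau(i)}\neq 0$ the former block is invertible, so the $a_i$ are forced by the $x$ free coefficients $b_j$; since $A$ is invertible, the symmetric hypothesis that every $c_{\nu\sigma(j)}\neq 0$ instead makes the latter block invertible and forces the $b_j$ in terms of the $a_i$. Either way one obtains an $x$-parameter family of $\p$-homomorphisms, hence of invariant pairings. I would also record the conceptual payoff: a vanishing $c_{\lambda\tau_{i,j}}$ forces, by Lemma~\ref{lemmaten}, the equality $\Vert\tau_{i,j}+\rho\Vert^2=\Vert\lambda+\rho\Vert^2$, which is the Harish-Chandra condition producing a homomorphism of generalized Verma modules, i.e.\ an invariant linear differential operator $\Gamma(V)\to\Gamma(V_{\tau_{i,j}})$; this is precisely what the word \emph{non-degenerate} encodes.

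Finally, to pass from the homogeneous model to a general regular parabolic geometry I would combine the lemma preceding this section with Lemma~\ref{five}: the composite of $(s,t)\mapsto(s\otimes t,\,s\otimes\nabla^\omega t,\,\nabla^\omega s\otimes t)$ with a $\p$-homomorphism $\mathcal{J}^1(\mathbb{V},\mathbb{W})\to\mathbb{E}_\mu$ is a strongly invariant pairing independent of the choice of Weyl structure, and any weighted first order invariant pairing restricts on the flat model to one of these, hence agrees with it up to curvature correction terms. The step I expect to be the genuine obstacle is not the bookkeeping but Lemma~\ref{lemmanine}: turning the raw obstruction term of Corollary~\ref{corthree} into an action by scalars via the $\g_0$-Casimir, since this is exactly what makes the final linear system decouple copy-by-copy and what ties the degeneracy locus to the Casimir spectrum.
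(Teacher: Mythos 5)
Your proposal is correct and follows essentially the same route as the paper's own proof: the identification of pairings with $\p$-module homomorphisms $\mathcal{J}^{1}(\mathbb{V},\mathbb{W})\rightarrow\mathbb{E}_{\mu}$ via Lemma~\ref{lemmasix}, the reduction of the obstruction term to the Casimir scalars $c_{\lambda\tau_{i,j}}$, $c_{\nu\sigma_{i,j}}$ through Lemmas~\ref{lemmaeight}--\ref{lemmaten}, the resulting system of $x$ equations $a_{i}c_{\lambda\tau(i)}+\sum_{j}b_{j}c_{\nu\sigma(j)}A_{ij}=0$ in $2x$ unknowns with invertible change-of-basis matrix $A$, and the passage to curved geometries by strong invariance are exactly the paper's argument. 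The only loose point is your aside deriving the degenerate-case operator $\Gamma(V)\rightarrow\Gamma(V_{\tau_{i,j}})$ from the equality $\Vert\tau_{i,j}+\rho\Vert^{2}=\Vert\lambda+\rho\Vert^{2}$ (\lq\lq the Harish-Chandra condition\rq\rq): that equality is necessary but not sufficient for a generalized Verma module homomorphism, and the paper instead gets this operator from the first-order theory of~\cite{css}; since this interpretation belongs to the subsequent corollaries rather than to the theorem itself, it does not affect the validity of your proof.
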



\begin{corollary}
The situation is considerably simplified if there is only one copy of $\mathbb{E}_{\mu}$ in $\g_{1}\otimes\mathbb{V}\otimes\mathbb{W}$. More precisely, let
$$\mathbb{E}_{\mu}\subset\mathbb{V}_{\tau_{i,j_{1}}}\otimes\mathbb{W}\;\text{and}\;\; \mathbb{E}_{\mu}\subset\mathbb{V}\otimes\mathbb{W}_{\sigma_{i,j_{2}}}.$$ 
Then we can choose $a=c_{\nu\sigma_{i,j_{2}}}$ and $b=-c_{\lambda\tau_{i,j_{1}}}$ if we normalize the projections correctly. Every multiple of this pairing is obviously invariant as well.
It also shows what happens if weights are excluded:
\begin{enumerate}
\item If $c_{\lambda\tau_{i,j_{1}}}=0$, then we must take $b=0$ and $a$ is arbitrary. This corresponds to the existence of an invariant first order linear differential operator $\Gamma(V)\rightarrow 
\Gamma(V_{\tau_{i,j_{1}}})$ combined with a unique projection $\Gamma(V_{\tau_{i,j_{1}}})\otimes\Gamma(W)\rightarrow \Gamma(E_{\mu})$.
\item If $c_{\nu\sigma_{i,j_{2}}}=0$, then there exists an invariant first order linear differential operator $\Gamma(W)\rightarrow\Gamma(W_{\sigma_{i,j_{2}}})$. This operator can be combined with the unique projection
 $\Gamma(W_{\sigma_{i,j_{2}}})\otimes\Gamma(V)\rightarrow\Gamma(E_{\mu})$, i.e. we must take $a=0$ and $b$ is arbitrary. 
\item If $c_{\lambda\tau_{i,j_{1}}}=c_{\nu\sigma_{i,j_{2}}}=0$, then the statement of the main theorem is not true anymore. 
We obtain two independent pairings corresponding to the two invariant linear differential operators and the projections mentioned above.
\end{enumerate}
\end{corollary}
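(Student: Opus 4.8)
The plan is to specialize the linear system set up immediately before Theorem~\ref{mainresulttwo} to the case of a single copy of $\mathbb{E}_{\mu}$, and then to read off the three degenerate possibilities. First I would recall the general mechanism: an invariant weighted first order pairing into $\mathbb{E}_{\mu}$ is the same as a projection $\pi$ from $\mathbb{V}\otimes\g_{1}\otimes\mathbb{W}\oplus\g_{1}\otimes\mathbb{V}\otimes\mathbb{W}$ onto $\mathbb{E}_{\mu}$ that annihilates the obstruction term $\Phi$ of Lemma~\ref{lemmanine}, and that, after writing $\pi$ through the coefficients $a_{\tau(i)}$ and $b_{\sigma(j)}$, the condition $\pi\circ\Phi=0$ becomes the linear system $a_{i}c_{\lambda\tau(i)}+\sum_{j}b_{j}c_{\nu\sigma(j)}A_{ij}=0$. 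By hypothesis $\mathbb{E}_{\mu}$ occurs only once in $\g_{1}\otimes\mathbb{V}\otimes\mathbb{W}$, so $x=1$, the indices collapse to the single pair $(i,j_{1})$ (from the $\g_{1}\otimes\mathbb{V}$ route) and $(i,j_{2})$ (from the $\mathbb{W}$ route), and the transition matrix reduces to one nonzero scalar $A$. Normalizing the two projections $\pi_{\tau_{i,j_{1}}\mu}$ and $\pi_{\sigma_{i,j_{2}}\mu}$ so that $A=1$, the entire obstruction collapses to the single scalar equation
$$a\,c_{\lambda\tau_{i,j_{1}}}+b\,c_{\nu\sigma_{i,j_{2}}}=0.$$

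Second, I would solve this equation. A nonzero solution is $(a,b)=(c_{\nu\sigma_{i,j_{2}}},-c_{\lambda\tau_{i,j_{1}}})$, which is exactly the pairing of the statement, and every scalar multiple is again a solution, hence invariant; this settles the first assertion. For the case analysis I would invoke the equivalence, established in the first order linear theory (the direct analogue of the present Casimir computation, carried out in~\cite{css}) together with Lemma~\ref{lemmaten}, that the vanishing $c_{\lambda\tau_{i,j_{1}}}=0$ (resp.\ $c_{\nu\sigma_{i,j_{2}}}=0$) is precisely the condition for the existence of an invariant first order linear differential operator $\Gamma(V)\to\Gamma(V_{\tau_{i,j_{1}}})$ (resp.\ $\Gamma(W)\to\Gamma(W_{\sigma_{i,j_{2}}})$). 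If exactly one constant vanishes, the equation forces the coefficient of the surviving constant to be zero and leaves the other free; the resulting one parameter family is spanned by the composition of the corresponding linear operator with the unique projection $\Gamma(V_{\tau_{i,j_{1}}})\otimes\Gamma(W)\to\Gamma(E_{\mu})$ (or its $\mathbb{W}$ analogue), giving cases (1) and (2). If both constants vanish, the equation reads $0=0$, so $a$ and $b$ are independent free parameters and we obtain a two parameter family; its two basis elements are exactly the two operator-times-projection pairings, which is case (3). Since both families of Casimir constants then contain a zero, the hypothesis of Theorem~\ref{mainresulttwo} fails and its $x=1$ count is superseded, as claimed.

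The step requiring most care is the identification of $c=0$ with the existence of an invariant linear operator, together with the bookkeeping that guarantees $A=1$ after normalization. The former is not internal to the bilinear computation: it borrows the first order linear operator analysis, so I would flag the precise reference and Lemma~\ref{lemmaten} rather than reprove it here. Everything else is the immediate $x=1$ specialization of the argument already given for the main theorem, so the corollary is genuinely routine once that input is granted.
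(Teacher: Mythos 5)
Your proposal is correct and follows essentially the same route as the paper: the corollary carries no separate proof there precisely because it is the $x=1$ specialization of the linear system $a_{i}c_{\lambda\tau(i)}+\sum_{j}b_{j}c_{\nu\sigma(j)}A_{ij}=0$ derived just before Theorem~\ref{mainresulttwo}, which is exactly what you carry out, including the normalization making the $1\times 1$ transition matrix equal to $1$ and the citation of the first order linear theory (Lemma~\ref{lemmaten} and~\cite{css}) for the equivalence of $c=0$ with the existence of an invariant linear operator. Your case analysis (one constant vanishing forces the other coefficient to vanish; both vanishing gives the two-parameter family superseding the theorem's count) matches the paper's intended argument exactly.
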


\begin{corollary}
Let us briefly examine the condition $c_{\lambda\tau}=0$ for the special case that $\tau=\lambda-\alpha_{i},\; i\in I$. This implies
$$c_{\lambda\tau}=-B(\lambda,\alpha_{i}).$$
This implies that $c_{\lambda\tau}=0$ if and only if the number over the $i$-th node (which is crossed through) corresponding to the simple root $\alpha_{i}$ in the Dynkin diagram notation for $\mathbb{V}$ is zero.
This equation is in accordance with the situation considered in Theorem~\ref{mainresultone} and the Introduction.
\end{corollary}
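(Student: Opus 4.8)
The plan is to specialize the formula for the coefficient $c_{\lambda\tau}$ obtained in Lemma~\ref{lemmaten} and then simplify. That lemma gives $2c_{\lambda\tau}=\Vert\tau+\rho\Vert^{2}-\Vert\lambda+\rho\Vert^{2}$ for any irreducible component of $\g_{1}^{i}\otimes\mathbb{V}$ whose dual has highest weight $\tau$. First I would substitute the hypothesis $\tau=\lambda-\alpha_{i}$ and expand the first norm by bilinearity of $B(.,.)$, writing $u=\lambda+\rho$ so that
$$2c_{\lambda\tau}=\Vert u-\alpha_{i}\Vert^{2}-\Vert u\Vert^{2}=-2B(\lambda+\rho,\alpha_{i})+\Vert\alpha_{i}\Vert^{2}.$$

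Next I would invoke the identity already used in the proof of Lemma~\ref{lemmaten}, namely $2B(\rho,\alpha_{i})=B(\rho,\alpha_{i}^{\vee})\Vert\alpha_{i}\Vert^{2}=\Vert\alpha_{i}\Vert^{2}$, which holds because $\rho=\sum_{k}\omega_{k}$ forces $B(\rho,\alpha_{i}^{\vee})=1$ via the defining property $B(\omega_{k},\alpha_{i}^{\vee})=\delta_{k,i}$ of the fundamental weights. Dividing the displayed equation by $2$ and cancelling $\tfrac{1}{2}\Vert\alpha_{i}\Vert^{2}$ against $B(\rho,\alpha_{i})$ then leaves exactly $c_{\lambda\tau}=-B(\lambda,\alpha_{i})$, which is the first asserted equality.

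Finally, to read off the vanishing criterion I would re-express $B(\lambda,\alpha_{i})$ in terms of the co-root. Since $\alpha_{i}^{\vee}=2\alpha_{i}/B(\alpha_{i},\alpha_{i})$ we have $B(\lambda,\alpha_{i})=\tfrac{1}{2}\Vert\alpha_{i}\Vert^{2}B(\lambda,\alpha_{i}^{\vee})$, and as $\Vert\alpha_{i}\Vert^{2}\neq 0$ this vanishes precisely when $B(\lambda,\alpha_{i}^{\vee})=0$. By the Dynkin diagram conventions fixed earlier, $B(\lambda,\alpha_{i}^{\vee})$ is exactly the integer recorded over the $i$-th node (crossed through, since $i\in I$) in the notation for $\mathbb{V}$, where $\lambda$ is the highest weight of $\mathbb{V}^{*}$. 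This yields the stated equivalence. I do not expect any genuine obstacle here: the computation is a one-line quadratic expansion, and the only points requiring care are bookkeeping of conventions (the co-root normalization, and the fact that the Dynkin label of $\mathbb{V}$ is taken with respect to the dual highest weight $\lambda$) together with matching the outcome against equation~(\ref{mainequation}) and Theorem~\ref{mainresultone}, where the excluded value $q=B(\lambda,\alpha_{i}^{\vee})=0$ plays precisely the same role.
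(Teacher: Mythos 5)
Your proposal is correct and follows exactly the route the paper intends: the paper states this corollary as an immediate consequence of Lemma~\ref{lemmaten}, and your computation $2c_{\lambda\tau}=\Vert\lambda-\alpha_{i}+\rho\Vert^{2}-\Vert\lambda+\rho\Vert^{2}=-2B(\lambda,\alpha_{i})$, using the identity $2B(\rho,\alpha_{i})=\Vert\alpha_{i}\Vert^{2}$ already invoked in that lemma's proof, is precisely the omitted one-line specialization. Your translation of the vanishing condition into $B(\lambda,\alpha_{i}^{\vee})=0$ and the identification with the Dynkin label of $\mathbb{V}$ (taken with respect to the highest weight of $\mathbb{V}^{*}$) also match the paper's conventions and its link to Theorem~\ref{mainresultone}.
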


\subsubsection{Remark}
Let us write $\h=\h^{S}\oplus\mathfrak{z}(\g_{0})$ for the orthogonal decomposition of $\h$ into
$$\h^{S}=\mathrm{span}\{h_{\alpha_{j}}\;:\;j\in J\}\;\text{and}\;\mathfrak{z}(\g_{0})=\{H\in \h\;:\; \alpha_{j}(H)=0\;\forall\; j\in J\}.$$
The duals can be characterized by
$$(\h^{S})^{*}=\mathrm{span}\{\alpha_{j}\;:\;j\in J\}\;,\;\mathfrak{z}(\g_{0})^{*}=\mathrm{span}\{\omega_{i}\;:\; i\in I\}.$$
If we write $\lambda=\lambda_{0}+\lambda'$ for the decomposition of an arbitrary element $\lambda\in\h^{*}$ into $\lambda_{0}\in(\h^{S})^{*}$ and $\lambda'\in\mathfrak{z}(\g_{0})^{*}$, then the following proposition holds.

\begin{proposition}
The equation $c_{\lambda\tau_{i,j}}=0$ is equivalent to  
\begin{eqnarray*}
B(\lambda',(-\alpha_{i})')&=&c^{0}_{\lambda_{0}(\tau_{i,j})_{0}}\\
&=&-\frac{1}{2}B((\tau_{i,j})_{0},(\tau_{i,j})_{0}+2\rho_{0})\\
&&-B(\lambda_{0},\lambda_{0}+2\rho_{0})-B((-\alpha_{i})_{0},(-\alpha_{j})_{0}+2\rho_{0})),
\end{eqnarray*}
where we have written $\lambda=\lambda_{0}+\lambda'$ for $\lambda_{0}\in(\mathfrak{h}^{S})^{*}$ and $\lambda'\in\mathfrak{z}(\g_{0})^{*}$ and correspondingly
$$-\alpha_{i}=(-\alpha_{i})_{0}+(-\alpha_{i})',\;\tau_{i,j}=(\tau_{i,j})_{0}+\lambda'+(-\alpha_{i})'.$$
Note that the tensor product decomposition of $(\g_{1}^{i})^{*}\otimes\mathbb{V}^{*}$ only depends on $\lambda_{0}$ and $(-\alpha_{i})_{0}$. This is a linear equation on $\lambda'$, so in Theorem~\ref{mainresulttwo}
we have to exclude a codimension one subspace of weights in $\mathfrak{z}(\g_{0})^{*}$ for every $\tau_{i,j}$ with $\mathbb{E}_{\mu}\subset\mathbb{V}_{\tau_{i,j}}\otimes\mathbb{W}$.
\end{proposition}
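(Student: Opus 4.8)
The plan is to express $c_{\lambda\tau_{i,j}}$ entirely through the two Casimir formulae already proved and then to feed in the orthogonal splitting $\h^*=(\h^S)^*\oplus\mathfrak{z}(\g_0)^*$. Recall from Lemma~\ref{lemmaeight} that $c(\mu)=B(\mu,\mu+2\rho_0)$, and from Lemma~\ref{lemmanine} that
$$2c_{\lambda\tau_{i,j}}=c(\tau_{i,j})-c(\lambda)-c(-\alpha_i).$$
Two features of the splitting do all the work: the Killing form is block diagonal, so every cross term $B(\mu_0,\nu')$ between an $(\h^S)^*$-component and a $\mathfrak{z}(\g_0)^*$-component vanishes; and $\rho_0$, being half the sum of the positive roots of $\g_0$, lies in $(\h^S)^*=\mathrm{span}\{\alpha_j:j\in J\}$, whence $B(\mu',\rho_0)=0$ for any central component $\mu'$.

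First I would locate the central component of $\tau_{i,j}$. Exactly as in the proof of Lemma~\ref{lemmaten}, from $\mathbb{E}^*\subset\g_{-1}^i\otimes\mathbb{V}^*$ the weight $\tau_{i,j}$ is $\lambda$ plus a weight of $\g_{-1}^i$, and every such weight has the form $-\alpha_i-\sum_{j\in J}n_j\alpha_j$. Since the $\alpha_j$ with $j\in J$ span $(\h^S)^*$, projecting onto $\mathfrak{z}(\g_0)^*$ leaves only $(\tau_{i,j})'=\lambda'+(-\alpha_i)'$. This is the single identity that carries the variable $\lambda'$ into $c(\tau_{i,j})$.

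Now I would expand each term. Using block diagonality and $B(\mu',\rho_0)=0$,
$$c(\lambda)=B(\lambda_0,\lambda_0+2\rho_0)+B(\lambda',\lambda'),$$
$$c(-\alpha_i)=B((-\alpha_i)_0,(-\alpha_i)_0+2\rho_0)+B((-\alpha_i)',(-\alpha_i)'),$$
and, inserting $(\tau_{i,j})'=\lambda'+(-\alpha_i)'$ and squaring out the central part,
$$c(\tau_{i,j})=B((\tau_{i,j})_0,(\tau_{i,j})_0+2\rho_0)+B(\lambda',\lambda')+2B(\lambda',(-\alpha_i)')+B((-\alpha_i)',(-\alpha_i)').$$
The decisive point, and essentially the only content of the computation, is that the combination $c(\tau_{i,j})-c(\lambda)-c(-\alpha_i)$ annihilates the pure quadratic terms $B(\lambda',\lambda')$ and $B((-\alpha_i)',(-\alpha_i)')$, leaving
$$2c_{\lambda\tau_{i,j}}=B((\tau_{i,j})_0,(\tau_{i,j})_0+2\rho_0)-B(\lambda_0,\lambda_0+2\rho_0)-B((-\alpha_i)_0,(-\alpha_i)_0+2\rho_0)+2B(\lambda',(-\alpha_i)').$$
Setting the left-hand side to zero isolates $B(\lambda',(-\alpha_i)')$ as the Casimir combination formed purely from the $(\h^S)^*$-components, which is precisely the quantity the proposition denotes $c^0_{\lambda_0(\tau_{i,j})_0}$; this establishes the asserted equivalence.

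It then remains to read off the geometric conclusion. The right-hand side involves only $\lambda_0$ and $(-\alpha_i)_0$, and indeed the decomposition of $(\g_1^i)^*\otimes\mathbb{V}^*$ into $\g_0^S$-irreducibles depends on $\h^S$-weights alone, because $\mathfrak{z}(\g_0)$ acts by a single character and cannot alter which irreducibles occur. Since $i\in I$, the root $\alpha_i$ does not lie in $(\h^S)^*=\mathrm{span}\{\alpha_j:j\in J\}$, so its orthogonal projection $(-\alpha_i)'$ onto $\mathfrak{z}(\g_0)^*$ is nonzero; hence $\lambda'\mapsto B(\lambda',(-\alpha_i)')$ is a nonzero linear functional on $\mathfrak{z}(\g_0)^*$, and the locus $c_{\lambda\tau_{i,j}}=0$ is an affine hyperplane of central weights. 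This is the codimension-one subspace that must be excluded in Theorem~\ref{mainresulttwo} for each $\tau_{i,j}$ with $\mathbb{E}_\mu\subset\mathbb{V}_{\tau_{i,j}}\otimes\mathbb{W}$. The one mild obstacle is bookkeeping: one must place each piece of $\tau_{i,j}$ in the correct summand so that the quadratic-in-$\lambda'$ terms visibly cancel, but once $(\tau_{i,j})'=\lambda'+(-\alpha_i)'$ is recorded, the rest is forced.
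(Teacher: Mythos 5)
Your proposal is correct and takes essentially the same route as the paper: the paper's proof consists of the single remark that the claim is ``a direct computation using the fact that $\mathfrak{z}(\g_{0})^{*}$ and $(\h^{S})^{*}$ are orthogonal with respect to $B(.,.)$'', and your write-up is exactly that computation carried out in full, including the correct identification $(\tau_{i,j})'=\lambda'+(-\alpha_{i})'$, the cancellation of the quadratic central terms, and the (welcome) extra justification that $(-\alpha_{i})'\not=0$ so the locus is genuinely an affine hyperplane. The one thing you assert without proof, the block diagonality of $B$ with respect to the splitting, is precisely the only fact the paper's proof verifies, so you should append its one-line argument: for $\lambda'\in\mathfrak{z}(\g_{0})^{*}$ and $\mu_{0}\in(\h^{S})^{*}$ one has $h_{\lambda'}\in\mathfrak{z}(\g_{0})$, whence $B(\lambda',\mu_{0})=\mu_{0}(h_{\lambda'})=0$ because $\mu_{0}$ is a combination of the $\alpha_{j}$, $j\in J$.
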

\begin{proof}
This is a direct computation using the fact that $\mathfrak{z}(\g_{0})^{*}$ and $(\h^{S})^{*}$ are orthogonal with respect to $B(.,.)$. In order to see this take
$\lambda\in\mathfrak{z}(\g_{0})^{*}$ and $\mu\in(\h^{S})^{*}$. Then $h_{\lambda}\in\mathfrak{z}(\g_{0})$, i.e.~$\alpha(h_{\lambda})=0$ for all $\alpha\in\mathcal{S}_{\p}$.
Now use the fact that $(\h^{S})^{*}$ is spanned by $\mathcal{S}_{\p}$ to deduce that 
$$B(\lambda,\mu)=\mu(h_{\lambda})=0.$$ 
\end{proof}

\begin{corollary}
We will treat two special cases separately. The case where $\p$ induces a $|1|$-grading and the case where $\p=\B$ is a Borel subalgebra.
\begin{enumerate}
\item
If $\p$ induces a $|1|$-grading, then $\mathfrak{z}(\g_{0})$ is one-dimensional and spanned by the grading element $E$. The action of $E$ on any irreducible representation of $\p$ is given by a scalar which we call {\bf geometric weight}.  $\mathfrak{z}(\g_{0})^{*}$ is then spanned by $\omega_{i_{0}}$, where $I=\{i_{0}\}$ and we scale the inner product $B(.,.)$ to an inner product $(.,.)$ on $\h^{*}$, so that $(\omega_{i_{0}},\omega_{i_{0}})=1$. 
\par
The highest weight $\lambda$ of $\mathbb{V}^{*}$ can be written as $\lambda=\lambda_{0}-\omega\omega_{i_{0}}$, where $\omega$ is the geometric weight of $\mathbb{V}$ and $\lambda_{0}\in(\h^{S})^{*}$. The geometric weight of $\g_{1}$ is obviously 1, so $-\alpha_{i_{0}}=-\omega_{i_{0}}+(-\alpha_{i_{0}})_{0}$. If $\mathbb{F}\subset\g_{1}^{*}\otimes\mathbb{V}^{*}$ has highest weight $\tau$, then $\mathbb{F}$ has geometric weight $\omega+1$ and we can write $\tau=\tau_{0}-(\omega+1)\omega_{i_{0}}$. Then we obtain 
$$c_{\lambda\tau}=\omega-c^{0}_{\lambda_{0}\tau_{0}},$$
where 
$$c^{0}_{\lambda_{0}\tau_{0}}=-\frac{1}{2}\left((\tau_{0},\tau_{0}+2\rho_{0})-(\lambda_{0},\lambda_{0}+2\rho_{0})-((-\alpha_{i_{0}})_{0},(-\alpha_{i_{0}})_{0}+2\rho_{0})\right),$$
in accordance with~\cite{css}.
\item
If $\p=\B$ is a Borel subalgebra, then $\mathfrak{z}(\g_{0})=\g_{0}=\h$. In this case $\g_{1}^{i}\otimes\mathbb{V}=\mathbb{V}_{\tau_{i}}$, with $\tau_{i}=\lambda-\alpha_{i}$  for $i\in I=\{1,...,n\}$. It immediately follows that
$$c_{\lambda\tau_{i}}=-B(\lambda,\alpha_{i}).$$
\end{enumerate}
\end{corollary}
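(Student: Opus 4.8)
The plan is to treat the two cases by specialising the \emph{value} formula for $c_{\lambda\tau_{i,j}}$ that underlies the preceding proposition, rather than only its vanishing locus. Recall from Lemma~\ref{lemmanine} that $c_{\lambda\tau_{i,j}}=\frac{1}{2}(c(\tau_{i,j})-c(\lambda)-c(-\alpha_{i}))$, and from Lemma~\ref{lemmaeight} that $c(\mu)=B(\mu,\mu+2\rho_{0})$. First I would split every weight as $\mu=\mu_{0}+\mu'$ along the $B$-orthogonal decomposition $\h^{*}=(\h^{S})^{*}\oplus\mathfrak{z}(\g_{0})^{*}$ and use that $\rho_{0}\in(\h^{S})^{*}$, so that $c(\mu)=B(\mu_{0},\mu_{0}+2\rho_{0})+B(\mu',\mu')$. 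Feeding this into the expression for $c_{\lambda\tau_{i,j}}$ and using $\tau_{i,j}'=\lambda'+(-\alpha_{i})'$ collapses the primed quadratic terms to the single cross term $2B(\lambda',(-\alpha_{i})')$, yielding the identity $c_{\lambda\tau_{i,j}}=B(\lambda',(-\alpha_{i})')-c^{0}_{\lambda_{0}(\tau_{i,j})_{0}}$. This is precisely the computation already carried out for the proposition, now retained as an equality of values; both special cases fall out of it.

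For the $|1|$-graded (AHS) case I would first record the structural facts: a $|1|$-grading forces a single crossed node $I=\{i_{0}\}$, and since $\g_{0}=\g_{0}^{S}\oplus\mathfrak{z}(\g_{0})$ with $\g_{0}^{S}$ of rank $|J|=l-1$ inside the rank-$l$ algebra $\g_{0}$, the centre $\mathfrak{z}(\g_{0})$ is one-dimensional; it is spanned by the grading element $E$ of Section~\ref{grading}, and dually $\mathfrak{z}(\g_{0})^{*}=\mathrm{span}\{\omega_{i_{0}}\}$. Rescaling $B$ to $(.,.)$ with $(\omega_{i_{0}},\omega_{i_{0}})=1$, the one-dimensionality turns $(\lambda',(-\alpha_{i_{0}})')$ into a product of two scalar coefficients. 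Writing $\lambda'=-\omega\omega_{i_{0}}$ (this fixes the geometric weight $\omega$, the eigenvalue of $E$) and $(-\alpha_{i_{0}})'=-\omega_{i_{0}}$ (the $\g_{-1}$ weight $-\alpha_{i_{0}}$ having geometric weight $-1$), the cross term reduces to $(\lambda',(-\alpha_{i_{0}})')=\omega$. Substituting into the identity above gives $c_{\lambda\tau}=\omega-c^{0}_{\lambda_{0}\tau_{0}}$ with $c^{0}$ as stated, in accordance with~\cite{css}.

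The Borel case $\p=\B$ is the cleanest. Here $\mathcal{S}_{\p}=\emptyset$, so $\g_{0}=\h=\mathfrak{z}(\g_{0})$ and each $\g_{1}^{i}=\g_{\alpha_{i}}$ is one-dimensional; since irreducible $\B$-modules are one-dimensional, $\g_{1}^{i}\otimes\mathbb{V}$ is irreducible with dual highest weight $\tau_{i}=\lambda-\alpha_{i}$. I would then simply invoke Lemma~\ref{lemmaten} (equivalently the preceding corollary for the special case $\tau=\lambda-\alpha_{i}$): expanding $\Vert\lambda-\alpha_{i}+\rho\Vert^{2}-\Vert\lambda+\rho\Vert^{2}$ and using $2B(\rho,\alpha_{i})=\Vert\alpha_{i}\Vert^{2}$ for a simple root yields $c_{\lambda\tau_{i}}=-B(\lambda,\alpha_{i})$ at once.

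I expect the only genuine subtlety to sit in the first case, namely pinning down the conventions so that the cross term $(\lambda',(-\alpha_{i_{0}})')$ is exactly the geometric weight $\omega$: this rests on the interplay between the normalisation $(\omega_{i_{0}},\omega_{i_{0}})=1$, the identification of $\mathfrak{z}(\g_{0})^{*}$ with the $\omega_{i_{0}}$-line, and the precise meaning of ``geometric weight'' as the $E$-eigenvalue, where the scaling factors must be tracked carefully to reproduce the normalisation of~\cite{css}. Everything else is bookkeeping with the orthogonal splitting that the proposition's proof has already set up.
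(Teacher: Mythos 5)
Your proposal is correct and follows essentially the same route as the paper: the corollary is stated there without a separate proof, as an immediate specialization of the preceding proposition, whose proof already establishes (via the $B$-orthogonality of $(\h^{S})^{*}$ and $\mathfrak{z}(\g_{0})^{*}$, the fact that $\rho_{0}\in(\h^{S})^{*}$, and $\tau'=\lambda'+(-\alpha_{i})'$) exactly the value identity $c_{\lambda\tau}=B(\lambda',(-\alpha_{i})')-c^{0}_{\lambda_{0}\tau_{0}}$ from which you work. Your two specializations --- the cross term collapsing to $\omega$ under the corollary's stated decompositions and the normalization $(\omega_{i_{0}},\omega_{i_{0}})=1$ in the $|1|$-graded case, and $c_{\lambda\tau_{i}}=-B(\lambda,\alpha_{i})$ via Lemma~\ref{lemmaten} together with $2B(\rho,\alpha_{i})=\Vert\alpha_{i}\Vert^{2}$ in the Borel case --- are precisely the computations the paper intends, the latter restating its earlier corollary on $\tau=\lambda-\alpha_{i}$.
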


\subsubsection{Example}
Let us look at the $|1|$-grading given by
$$\begin{picture}(5,4)
\put(0,1){\makebox(0,0){$\cross$}}
\put(1,1){\makebox(0,0){$\bullet$}}
\put(1.5,1){...}
\put(2.5,1){\makebox(0,0){$\bullet$}}
\put(3.5,1){\makebox(0,0){$\bullet$}}
\put(4.5,2){\makebox(0,0){$\bullet$}}
\put(4.5,0){\makebox(0,0){$\bullet$}}
\put(0,1){\line(1,0){1}}
\put(2.5,1){\line(1,0){1}}
\put(3.5,1){\line(1,1){1}}
\put(3.5,1){\line(1,-1){1}}
\end{picture}.$$
Then $\omega_{i_{0}}=\epsilon_{1}$ and $-\alpha_{i_{0}}=\epsilon_{2}-\epsilon_{1}$. This implies 
$$(-\alpha_{i_{0}})_{0}=\epsilon_{2}=\sum_{j=2}^{l-2}\alpha_{j}+\frac{1}{2}(\alpha_{l-1}+\alpha_{l})\in\mathrm{span}\{\alpha_{j}\;:\; j=2,...,l\}=(\h^{S})^{*}.$$
Moreover $\rho_{0}=\sum_{j=2}^{l}\omega_{j}=(l-2)\epsilon_{1}+(l-2)\epsilon_{2}+(l-3)\epsilon_{3}+...$ and hence
$$((-\alpha_{i_{0}})_{0},(-\alpha_{i_{0}})_{0}+2\rho_{0})=2l-3.$$
Let us be even more concrete and set
$$\lambda=\hspace{2cm}\Dd{v}{1}{0}{0}{0}{0}{0}=(v+1)\epsilon_{1}+\epsilon_{2}$$
and look at $\tau=v\epsilon_{1}$, i.e.~$\tau_{0}=0$, $\omega=-(v+1)$ and $\lambda_{0}=\epsilon_{2}$. Then
$$\omega=c^{0}_{\lambda_{0}\tau_{0}}\Leftrightarrow (2l-2)+v=0.$$ 
Note that $n=2l-2$ is the dimension of the conformal manifold under consideration (the flat model being the sphere $\mathbb{S}^{n}$).
Of course, we could have also calculated
$$\Vert\tau+\rho\Vert^{2}-\Vert\lambda+\rho\Vert^{2}=\frac{-2}{4(l-1)}(2l-2+v).$$

\subsection{Examples}\label{examplesfirstorder}
We will look at pairings on projective manifolds of dimension $n$. In this case invariance means that a given formula does not depend on the choice of connection in the projective equivalence class. More precisely, recall from the Introduction that if $\nabla$ and $\hat{\nabla}$ are two connections that induce the same unparametrized geodesics, then
there is a one form $\Upsilon_{a}$ such that
$$\hat{\nabla}_{a}\omega_{b}=\nabla_{a}\omega_{b}-\Upsilon_{a}\omega_{b}-\Upsilon_{b}\omega_{a}$$
for every one form $\omega_{a}\in\Omega^{1}(\mathcal{M})$. This formula can be used to deduce the difference of the two connections when acting on any  weighted tensor bundle as shown in~\cite{e} and from this the invariance of any differential operator or pairing can be checked by hand.

\begin{enumerate}
\item
Let
$$V=\;\xoo{w}{0}{0}\;...\;\oo{0}{0}\quad\text{and}\quad W=\;\xoo{1+v}{0}{0}\;...\;\oo{0}{1},$$
so we look at pairings between weighted functions and weighted vector fields. We have
$$\g_{1}\otimes\mathbb{V}\otimes\mathbb{W}=\;\oo{1}{0}\;...\;\oo{0}{1}\;\oplus\;\oo{0}{0}\;...\;\oo{0}{0}$$
as $\g_{0}^{S}$-modules and geometric weights $\omega_{1}=-w\frac{n}{n+1}$ for $V$ and $\omega_{2}=-\frac{nv+n+1}{n+1}$ for $W$. Taking $\mu_{0}=0$ yields $c^{0}_{\lambda_{0}\tau_{0}} =0$ and  $c^{0}_{\nu_{0}\sigma_{0}}=n-1$. This corresponds to
the invariant pairing (where we have multiplied everything by $-\frac{n+1}{n}$):
$$(n+v+1)X^{a}\nabla_{a}f-w(\nabla_{a}X^{a})f.$$
\item
Quite similarly we obtain an invariant paring
\begin{eqnarray*}
\;\xoo{w}{0}{0}\;...\;\oo{0}{0}\;\times\;\;\xoo{v-2}{1}{0}\;...\;\oo{0}{0}\;&\rightarrow&\xoo{v+w-4}{2}{0}\;...\;\oo{0}{0}\\
(f,\sigma_{b}) &\mapsto& (v-2)\sigma_{(a}\nabla_{b)}f-w( \nabla_{(a}\sigma_{b)})f
\end{eqnarray*}
from the fact that in this case $\omega_{2}-c^{0}_{\nu_{0}\sigma_{0}}=-\frac{n}{n+1}(v-2)$ for $W=\Omega^{1}(v)$.
\item
A more sophisticated example can be obtained by taking
$$V=\;\xoo{1+v}{0}{0}\;...\;\oo{0}{1},\quad W=\underbrace{\;\xoo{w-(k+1)}{0}{0}\;...\;\ooo{0}{1}{0}\;...\;\oo{0}{0}}_{1\;\text{is in the}\;(k+1)\text{th position}}$$
and
$$E=\;\underbrace{\xoo{v+w-(k+1)}{0}{0}\;...\;\ooo{0}{1}{0}\;...\;\oo{0}{0}}_{1\;\text{is in the}\;(k+1)\text{th position}},$$
i.e.~we pair weighted vector fields with weighted $k$-forms to obtain weighted $k$-forms again. This time the multiplicity is two and indeed, for non-excluded geometric weights, there is a two parameter family of invariant bilinear differential pairings given by
$$X^{a}\nabla_{a}\omega_{bc...d}+\frac{n+v-w-vw+vk+1}{(n+v+1)(v+1)}(\nabla_{a}X^{a})\omega_{bc...d}-\frac{k+1}{v+1}(\nabla_{[a}X^{a})\omega_{bc...d]}$$
and
$$X^{a}\nabla_{[a}\omega_{bc...d]}+\frac{(n-k)w}{(n+v+1)(v+1)(k+1)}(\nabla_{a}X^{a})\omega_{bc...d}-\frac{w}{v+1}(\nabla_{[a}X^{a})\omega_{bc...d]}.$$
It can be seen that the denominators can only be zero when excluded weights are encountered, because $\omega_{1}-c^{0}_{\lambda_{0}(\tau_{1})_{0}}=-\frac{n}{n+1}(n+v+1)$ and
$\omega_{1}-c^{0}_{\lambda_{0}(\tau_{2})_{0}}=-\frac{n}{n+1}(v+1)$ for $V$. If one of these is zero, then the corresponding operator
$X^{a}\mapsto \nabla_{a}X^{a}$ or $X^{a}\mapsto\nabla_{b}X^{a}-\frac{1}{n}\nabla_{c}X^{c}\delta_{b}{}^{a}$ is projectively invariant.

\end{enumerate}

\section{The Problem with higher order pairings}\label{theproblemwithhigherorderpairings}
For higher order pairings the reasoning in the last section quickly gets out of hand. In the second order case for $|1|$-graded Lie algebras, for example, we have the following problem:
the possible symbols are given by mappings onto irreducible components of 
$$\begin{array}{c}
\odot^{2}\g_{1}\otimes\mathbb{V}\otimes\mathbb{W}\\
\oplus\\
\g_{1}\otimes\mathbb{V}\otimes\g_{1}\otimes\mathbb{W}\\
\oplus\\
\mathbb{V}\otimes\odot^{2}\g_{1}\otimes\mathbb{W}
\end{array}.$$
Therefore we have
$$2\times|\{\mathbb{E}\subset \odot^{2}\g_{1}\otimes\mathbb{V}\otimes\mathbb{W}\}|+|\{\mathbb{E}\subset \g_{1}\otimes\mathbb{V}\otimes\g_{1}\otimes\mathbb{W}\}|$$
unknowns corresponding to the terms which are second order in $V$, those which are second order in $W$ and those which are first order in both. However, there are
$$2\times|\{\mathbb{E}\subset \g_{1}\otimes\mathbb{V}\otimes\g_{1}\otimes\mathbb{W}\}|$$
obstruction terms. So it is not clear that we should obtain any pairings at all if there are more obstruction terms than unknowns. In the homogeneous case of $\mathcal{M}=G/P=\mathbb{CP}_{n}$, for example, one can look at all the pairings between
$$V=\;\xoo{w}{0}{0}\;...\;\oo{0}{0}\quad\text{and}\quad W=\;\xoo{1+v}{0}{0}\;...\;\oo{0}{1}$$
that land in
$$\xoo{v+w-2}{1}{0}\;...\;\oo{0}{0}.$$
The terms at disposal are
$$f\nabla_{a}\nabla_{b} X^{b},\;\nabla_{a}f\nabla_{b}X^{b},\;\nabla_{b}f(\nabla_{a}X^{b}-\frac{1}{n}\nabla_{c}X^{c}\delta_{a}^{\quad b}),\; X^{b}\nabla_{b}\nabla_{a}f$$
and there are four obstruction terms
$$f\Upsilon_{a}\nabla_{b}X^{b},\;f\Upsilon_{b}\nabla_{a}X^{b},\;(\nabla_{a}f)\Upsilon_{b}X^{b},\;(\nabla_{b}f)\Upsilon_{a}X^{b}.$$
So one might expect that only the zero paring would be invariant. But, somehow miraculously from this point of view, this is not the case and we obtain a one parameter family of invariant pairings spanned by
\begin{eqnarray*} 
X^{b}\nabla_{b}\nabla_{a}f&-&\frac{(w-1)(n+1)}{(v+n+1)n}\nabla_{a}f\nabla_{b}X^{b}\\
-\frac{w-1}{v+1}\nabla_{b}f(\nabla_{a}X^{b}-\frac{1}{n}\nabla_{c}X^{c}\delta_{a}{}^{b})&+&\frac{w(w-1)}{(v+1)(v+n+1)}f\nabla_{a}\nabla_{b} X^{b}.
\end{eqnarray*}
This formula even has a curved version that describes the invariant bilinear differential pairing for a manifold with a general projective structure.  One only has to add the curvature correction term 
$$\frac{w(v+w)}{v+1}P_{ab}X^{b}f,$$
where $P_{ab}$ is the Schouten tensor to be defined in Chapter~\ref{tractorchapter}.

\chapter{Higher order pairings 1}\label{higherorderone}

In this chapter we will study certain higher order invariant bilinear differential pairings for which we can write down explicit formulae for the pairings and the curvature correction terms. One can only hope for an explicit formula if there is
a one-parameter family of such pairings. It turns out that a certain class of those allows a unified description which is completely independent of the specific geometry and even of the bundles 
involved. It only depends on the order of the pairing in the same spirit as the differential operators described in~\cite{cds}.

\section{Semi-holonomic jet bundles}
In order to study higher order pairings on general (curved) parabolic geometries, it turns out that we need to consider semi-holonomic jet bundles instead of the usual jet bundles. The reason for this is that the latter cannot be described as associated bundles for some representation of $P$. 

\subsection{Restricted semi-holonomic bi-jets}

\begin{definition}
{\rm We will define the $k$-th {\bf restricted semi-holonomic jet prolongation} $\bar{\mathcal{J}}^{k}\mathbb{V}$ of a $\p$-module $\mathbb{V}$ as in~\cite{slov} inductively. Firstly, 
$\bar{\mathcal{J}}^{1}\mathbb{V}=\mathcal{J}^{1}\mathbb{V}$ is the usual first weighted jet prolongation as defined in~\ref{weightedjetbundles}. Having constructed $\bar{\mathcal{J}}^{k-1}\mathbb{V}$,
there are two canonical projections 
$$\mathcal{J}^{1}(\bar{\mathcal{J}}^{k-1}\mathbb{V})\rightarrow \mathcal{J}^{1}(\bar{\mathcal{J}}^{k-2}\mathbb{V}).$$
The first one is the usual projection $\mathcal{J}^{1}\tilde{\mathbb{V}}\rightarrow\tilde{\mathbb{V}}$ for any $\p$-module $\tilde{\mathbb{V}}$ followed by the inclusion $\bar{\mathcal{J}}^{k-1}\mathbb{V}\subset \mathcal{J}^{1}(\bar{\mathcal{J}}^{k-2}\mathbb{V})$ that is assumed to exist by the induction hypothesis. The second projection is induced by the first weighted prolongation of the map $\bar{\mathcal{J}}^{k-1}\mathbb{V}\rightarrow \bar{\mathcal{J}}^{k-2}\mathbb{V}$. Their equalizer is the submodule $\bar{\mathcal{J}}^{k}\mathbb{V}$. The filtration of this module can be written as
$$\bar{\mathcal{J}}^{k}\mathbb{V}=\sum_{i=0}^{k}\otimes^{i}\g_{1}\otimes\mathbb{V}.$$
There are canonical maps $\pi^{k}_{l}:\bar{\mathcal{J}}^{k}\mathbb{V}\rightarrow \bar{\mathcal{J}}^{l}\mathbb{V}$ for all $k\geq l$, so we can define the  
{\bf restricted semi-holonomic bi-jet prolongation} $\bar{\mathcal{J}}^{k}(\mathbb{V},\mathbb{W})$ of two $\p$-modules $\mathbb{V}$ and $\mathbb{W}$ as in~\ref{bijet}. The filtration of this $\p$-module is given by
$$\bar{\mathcal{J}}^{k}(\mathbb{V},\mathbb{W})=\sum_{i=0}^{k}\oplus_{j=0}^{i}(\otimes^{j}\g_{1}\otimes\mathbb{V})\otimes(\otimes^{i-j}\g_{1}\otimes\mathbb{W})$$
and the $\p$-module structure of $\bar{\mathcal{J}}^{k}(\mathbb{V},\mathbb{W})$ is induced by the $\p$-module structures of $\bar{\mathcal{J}}^{k}\mathbb{V}$ and $\bar{\mathcal{J}}^{k}\mathbb{W}$ as described 
in~\cite{slov}.
}\end{definition}

\subsubsection{Remark}
The associated bundles $\bar{\mathcal{J}}^{k}V$ and $\bar{\mathcal{J}}^{k}(V,W)$ are called the restricted $k$-th semi-holonomic jet bundle and the restricted 
$k$-th semi-holonomic bi-jet bundle respectively. In contrast to $\mathcal{J}^{k}V$ the (restricted) semi-holonomic jet bundle $\bar{\mathcal{J}}^{k}V$ is an associated bundle for each parabolic geometry. This is because $\bar{\mathcal{J}}^{k}V$ is defined by iterating the functor $\mathcal{J}^{1}$ which maps an associated bundle $U$ to an associated bundle $\mathcal{J}^{1}U$. The iteration of this functor yields an associated bundle $\underbrace{\mathcal{J}^{1}\cdots\mathcal{J}^{1}}_{k-\text{times}}V$ and $\bar{\mathcal{J}}^{k}V$ as an associated subbundle by using the $P$-module homomorphism 
$$\bar{\mathcal{J}}^{k}\mathbb{V}\hookrightarrow\underbrace{\mathcal{J}^{1}\cdots\mathcal{J}^{1}}_{k-\text{times}}\mathbb{V}.$$

\begin{proposition}\label{propositionsix}
Let $\mathbb{V}_{\lambda}$, $\mathbb{W}_{\nu}$ and $\mathbb{E}_{\mu}$ be three irreducible $\p$-modules and let $\Phi$ be a $\g_{0}$-module homomorphism
$$\oplus_{j=0}^{M}(\otimes^{j}\g_{1}\otimes\mathbb{V}_{\lambda})\otimes(\otimes^{M-j}\g_{1}\otimes\mathbb{W}_{\nu})\rightarrow \mathbb{E}_{\mu}.$$
Then $\Phi$ extends trivially to a $\p$-module homomorphism $\tilde{\Phi}:\bar{\mathcal{J}}^{k}(\mathbb{V}_{\lambda},\mathbb{W}_{\nu})\rightarrow\mathbb{E}_{\mu}$ if and only if
\begin{eqnarray*}
\Phi&\left(\rule{0pt}{20pt}\right.&\phi_{0}\otimes Z\star\psi_{M-1},\\
&&Z\star\phi_{0}\otimes \psi_{M-1}+\phi_{1}\otimes Z\star\psi_{M-2},\\
&&\vdots\\
&&Z\star\phi_{j-1}\otimes \psi_{M-j}+\phi_{j}\otimes Z\star\psi_{M-j-1},\\
&&\vdots\\
&&Z\star\phi_{M-2}\otimes \psi_{1}+\phi_{M-1}\otimes Z\star\psi_{0},\\
&&Z\star\phi_{M-1}\otimes\psi_{0}\left.\rule{0pt}{20pt}\right)=0,
\end{eqnarray*}
for $\phi=(\phi_{0},...,\phi_{M})\in\bar{\mathcal{J}}^{M}(\mathbb{V}_{\lambda})$ and $\psi=(\psi_{0},...,\psi_{M})\in\bar{\mathcal{J}}^{M}(\mathbb{W}_{\nu})$, i.e.~$\phi_{j}\in\otimes^{j}\g_{1}\otimes\mathbb{V}_{\lambda}$
and $\psi_{j}\in\otimes^{j}\g_{1}\otimes\mathbb{W}_{\nu}$ for $ j=0,...,M$. The action $\star$ is defined by
$$Z\star (U_{1}\otimes\cdots\otimes U_{j}\otimes u)=\sum_{0\leq i\leq j}\sum_{\alpha}U_{1}\otimes\cdots\otimes U_{i}\otimes\eta^{\alpha}\otimes [Z,\xi_{\alpha}].(U_{i+1}\otimes\cdots\otimes U_{j}\otimes u),$$
for $Z\in\g_{1}$ and dual basis $\{\eta^{\alpha}\}$ and $\{\xi_{\alpha}\}$ of $\g_{1}$ and $\g_{-1}$.
\end{proposition}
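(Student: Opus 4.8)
The plan is to reduce $\p$-linearity to a single obstruction coming from $\g_1$, exactly as in the first order case (Corollary~\ref{corthree} and Lemma~\ref{lemmasix}), and then to identify that obstruction with the displayed formula. First I would record two structural facts. Since the negative part $\g_-$ is generated by $\g_{-1}$ (Section~\ref{grading}), the dual statement gives that $\p_+=\g_1\oplus\cdots\oplus\g_{k_0}$ is generated as a Lie algebra by $\g_1$, so that $\p$ is generated by $\g_0\cup\g_1$. Hence a $\g_0$-linear map is a $\p$-module homomorphism if and only if it also intertwines the action of $\g_1$: indeed, equivariance under two generating sets propagates to their brackets and therefore to all of $\p$. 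Secondly, because $\mathbb{E}_\mu$ is irreducible, $\p_+$ (and in particular $\g_1$) acts trivially on it, so the intertwining condition $\tilde\Phi(Z.x)=Z.\tilde\Phi(x)$ for $Z\in\g_1$ collapses to the requirement $\tilde\Phi(Z.x)=0$ for all $Z\in\g_1$ and all $x$.

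Next I would exploit the filtration
$$\bar{\mathcal{J}}^{M}(\mathbb{V}_\lambda,\mathbb{W}_\nu)=\sum_{i=0}^{M}\bigoplus_{j=0}^{i}(\otimes^{j}\g_1\otimes\mathbb{V}_\lambda)\otimes(\otimes^{i-j}\g_1\otimes\mathbb{W}_\nu),$$
whose top piece ($i=M$) is a $\p$-submodule and is precisely the domain of $\Phi$, while $\tilde\Phi$ is extended by zero on a $\g_0$-complement made up of the pieces $i<M$ (this complement exists because $\bar{\mathcal{J}}^{M}(\mathbb{V}_\lambda,\mathbb{W}_\nu)$ is completely reducible as a $\g_0$-module). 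As $\g_0$ preserves the grading and $\Phi$ is $\g_0$-linear, $\tilde\Phi$ is automatically $\g_0$-linear, and on the top submodule $\g_1$ acts trivially (there is no higher piece for it to raise into), so $\tilde\Phi(Z.x)=0$ holds there trivially. The only genuine condition therefore concerns $x$ in the next-to-top piece $i=M-1$, whose image under $Z\in\g_1$ has a component in the top piece that $\tilde\Phi$ detects through $\Phi$.

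The heart of the argument is the identification of that top-degree component. Following~\cite{slov}, the $\p$-module structure on $\bar{\mathcal{J}}^{M}$ is obtained by iterating the first weighted prolongation $\mathcal{J}^1$, so the action of $Z\in\g_1$ is controlled slot by slot by the first order obstruction term of Corollary~\ref{corthree}, namely the insertion $\sum_\alpha\eta^\alpha\otimes[Z,\xi_\alpha].(\,\cdot\,)$; accumulating these insertions over all tensor positions is exactly the operation $\star$ defined in the statement. For $\phi=(\phi_0,\dots,\phi_M)\in\bar{\mathcal{J}}^{M}(\mathbb{V}_\lambda)$ and $\psi=(\psi_0,\dots,\psi_M)\in\bar{\mathcal{J}}^{M}(\mathbb{W}_\nu)$ the tensor-product action obeys the Leibniz rule $Z.(\phi\otimes\psi)=(Z.\phi)\otimes\psi+\phi\otimes(Z.\psi)$; projecting to the piece $i=M$ and reading off the summand of total degree $M$ carrying $j$ factors on the $\mathbb{V}$-side yields precisely $Z\star\phi_{j-1}\otimes\psi_{M-j}+\phi_j\otimes Z\star\psi_{M-j-1}$, which is the $j$-th entry of the displayed tuple (with the two boundary entries $\phi_0\otimes Z\star\psi_{M-1}$ and $Z\star\phi_{M-1}\otimes\psi_0$ arising from $j=0$ and $j=M$). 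Thus $\tilde\Phi(Z.x)=0$ for every $Z\in\g_1$ and every $x$ is equivalent to $\Phi$ annihilating this tuple for all $\phi,\psi,Z$, which is the asserted criterion.

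I expect the main obstacle to lie in the third step: verifying carefully that iterating the functor $\mathcal{J}^1$ really reproduces the slot-wise $\star$ insertion, with no additional corrections, and that the vanishing of the $\g_1$-obstruction leaves no residual obstruction from $\g_2,\dots,\g_{k_0}$. Both points rest on the reduction in the first paragraph — that $\p_+$ is generated by $\g_1$, so brackets of $\g_1$ need no separate check — and on the explicit, purely algebraic description of the iterated $\p$-action inherited from Lemma~\ref{lemmasix}, whose bookkeeping across the many tensor slots of $\bar{\mathcal{J}}^{M}$ is the only delicate part of the computation.
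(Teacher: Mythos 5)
Your proposal is correct and takes essentially the same route as the paper's own proof: reduce to the $\g_{1}$-action (since $\p_{+}$ is generated by $\g_{1}$), use the Leibniz rule for the $\p_{+}$-action on the tensor product of the two semi-holonomic jet modules, and identify the extension condition with $\Phi$ annihilating the image of the $\g_{1}$-action that lands in the top-degree piece $\oplus_{j}(\otimes^{j}\g_{1}\otimes\mathbb{V}_{\lambda})\otimes(\otimes^{M-j}\g_{1}\otimes\mathbb{W}_{\nu})$. Your write-up in fact supplies details the paper delegates to a citation of Slov\'ak (the slot-wise derivation of $\star$ from the first-order obstruction term, and the observation that only the next-to-top graded piece yields a genuine condition), but the underlying argument is the same.
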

\begin{proof}
Let us write
$$\bar{\mathcal{J}}^{M}(\mathbb{V}_{\lambda},\mathbb{W}_{\nu})=\bar{\mathcal{J}}^{M}(\mathbb{V}_{\lambda})\otimes\bar{\mathcal{J}}^{M}(\mathbb{W}_{\nu})/\mathbb{B},$$
then the action of $\p_{+}$ on $\bar{\mathcal{J}}^{M}(\mathbb{V}_{\lambda},\mathbb{W}_{\nu})$ is defined by
$$Z\star[\phi\otimes\psi]=[(Z\star\phi)\otimes\psi+\phi\otimes(Z\star\psi)],$$
where the bracket $[.]$ denotes the equivalence class modulo $\mathbb{B}$ and the stars inside the bracket are the actions of $\p_{+}$ on $\bar{\mathcal{J}}^{M}(\mathbb{V}_{\lambda})$ and 
$\bar{\mathcal{J}}^{M}(\mathbb{W}_{\nu})$ as defined in~\cite{sl}, Proposition 3.9.
Furthermore we note that $\Phi$ extends to a $\p$-module homomorphism if and only if $\Phi$ annihilates the image of the action of $\p_{+}$ on
$\bar{\mathcal{J}}^{k}(\mathbb{V}_{\lambda},\mathbb{W}_{\nu})$ that lies in the module $\oplus_{j=0}^{M}(\otimes^{j}\g_{1}\otimes\mathbb{V}_{\lambda})\otimes(\otimes^{M-j}\g_{1}\otimes\mathbb{W}_{\nu})$.
Finally it has to be noted that $\p_{+}$ is generated by $\g_{1}$, so we can restrict our attention to the action of $\g_{1}$. 
\end{proof}


\subsection{Extremal roots}

\begin{definition}[The Weyl group]
{\rm In the first chapter we have introduced the real vector space $E$ for a semisimple complex Lie algebra $\g$ which is equipped with a positive definite bilinear form $B(.,.)$. For every $\alpha \in \Delta (\g,\h)$ we will denote by
$$\sigma _{\alpha }(\lambda )=\lambda -B(\lambda ,\alpha ^{\vee})\alpha \;\forall \;\lambda \in E$$
the reflection on the hyperplane perpendicular to $\alpha
$. The {\bf Weyl group} $\mathcal{W}$ is the group generated by
those reflections. In fact, $\mathcal{W}$ is generated by
simple reflections, i.e.~by $\sigma _{\alpha }$ for $\alpha
\in \mathcal{S}$, as can be seen in~\cite{h}, p.~51. Every $w\in
\mathcal{W}$ has a unique {\bf length} $l(w)$ which denotes the
minimal number of simple reflections necessary to generate
$w$.
\begin{enumerate}
\item
For simple reflections we can write down an explicit formula for the node coefficients for
any weight $\lambda $ using the Cartan integers $c_{ij}$:
$$B(\sigma _{\alpha _{i}}(\lambda ),\alpha _{j}^{\vee})=B(\lambda ,\alpha _{j}^{\vee})-B(\lambda ,\alpha _{i}^{\vee})c_{ij}.$$
\item
The most significant action of the Weyl group on weights, however, is the {\bf affine action} given by
$$w.\lambda =w(\lambda +\rho )-\rho \;\forall \;\lambda \in E,\;w\in \mathcal{W},$$
where $\rho $ is given by $\rho =\sum_{i=1}^{l}\omega _{i}$.
\item
The Weyl group has an important subgraph, the {\bf Hasse diagram} $\mathcal{W}^{\p}$ which is associated to a parabolic subalgebra $\p$ of $\g$. It is defined to be the subset  of elements in $\mathcal{W}$ whose action sends a weight $\lambda $, dominant for $\g$, to a weight dominant for $\p$. For more information and an easy algorithm to determine $\mathcal{W}^{\p}$ see~\cite{be}, p.~39--43. 
\item
For every $\lambda\in\h^{*}$, let $\Vert\lambda\Vert^{2}=B(\lambda,\lambda)$. Since $B(.,.)$ is invariant under the Weyl group (an easy exercise), the Weyl group acts by isometries with respect to this norm.
\end{enumerate}
}\end{definition}


\subsubsection{Example}
An easy calculation shows that the Weyl group of $\mathfrak{sl}_{n+1}\C$ is isomorphic to the symmetry group $\mathbb{S}_{n+1}$ and acts on a weight $\lambda=\sum_{i=1}^{n+1}b_{i}\epsilon_{i}$ by permuting the $b_{i}$'s. More precisely, $\sigma_{\alpha_{i}}$ exchanges $b_{i}$ and $b_{i+1}$ and leaves the other numbers unchanged.

\begin{definition}
{\rm Let $\theta$ be a root of $\g$ such that $\g_{\theta}\in\g_{-1}$. Then there exists an $i\in I$ such that $\g_{\theta}\in\g_{-1}^{i}$. Let us call all such roots that in addition lie in the same orbit under the action of $\mathcal{W}$ as $-\alpha_{i}$ {\bf extremal} if $\alpha_{i}$ is a long simple root.
}\end{definition}

Let $\mathbb{V}_{\lambda}$ and $\mathbb{W}_{\nu}$ be two irreducible $\p$-modules so that $\mathbb{V}_{\lambda}^{*}$ has highest weight $\lambda\in\h^{*}$ and $\mathbb{W}_{\nu}^{*}$ has highest weight
$\nu\in\h^{*}$. Moreover suppose that $\alpha,\beta\in\h^{*}$ are extremal roots such that $\g_{\alpha},\g_{\beta}\in\g_{-1}^{i}$ and $\lambda+k\alpha$ and $\nu+k\beta$ are dominant for $\p$ for 
$k=0,...,M$. For the rest of this chapter we will implicitly assume that this setup is given. Finally, suppose that 
there is an irreducible component (of the $\g_{0}$-module tensor product)
$$\mathbb{E}_{\mu}\subset \otimes^{M}\g_{1}\otimes\mathbb{V}_{\lambda}\otimes\mathbb{W}_{\nu}$$
of multiplicity 1 so that
$$\mathbb{E}_{\mu}\subset \mathbb{V}_{\lambda+j\alpha}\otimes\mathbb{W}_{\nu+(M-j)\beta}$$
is an irreducible component of multiplicity one for $j=0,...,M$.

\subsubsection{Remark}
This setup excludes exactly those problematic pairings that were considered in~\ref{theproblemwithhigherorderpairings}. At the end of this chapter we will comment on the scope of the construction showing that it includes a wide class of pairings.

\subsubsection{Remark}\label{Raoconjecture}
Let $\mathbb{V}_{\kappa}$ be an irreducible $\p$-module and let $\theta$ be an extremal root such that $\kappa+k\theta$ is $\p$-dominant.
Then $k\theta$ is an extremal weight of
$\circledcirc^{k}\g_{-1}$ and hence the Parthasarathy-Ranga-Rao-Varadarajan conjecture, that was proved in~\cite{ku}, shows that there is an irreducible component of highest weight 
$\kappa+k\theta$ in $\otimes^{k}\g_{-1}\otimes\mathbb{V}_{\kappa}^{*}$ as long as $\kappa+k\theta$ is $\p$-dominant. The triangle inequality  shows that $\kappa+k\theta$ appears with multiplicity one,
since $\theta$ is a long root.

\subsection{Obstruction terms}\label{obstructionterms2}

For some constants $\gamma_{M,j}$ and the setup as above, let us define 
\begin{eqnarray*}
\Phi:\oplus_{j=0}^{M}\left(\otimes^{j}\g_{1}\otimes\mathbb{V}_{\lambda}\right)\otimes\left(\otimes^{M-j}\g_{1}\otimes\mathbb{W}_{\nu}\right)&\rightarrow& \mathbb{E}_{\mu}\\
\sum_{j=0}^{M}\tilde{\phi_{j}}\otimes\tilde{\psi}_{M-j}&\mapsto&\sum_{j=0}^{M}\gamma_{M,j}\pi_{j}\left(\pi_{\lambda_{j}}(\tilde{\phi_{j}})\otimes\pi_{\nu_{M-j}}(\tilde{\psi}_{M-j})\right),
\end{eqnarray*}
where 
$$\pi_{\lambda_{i}}:\otimes^{i}\g_{1}\otimes\mathbb{V}\rightarrow\mathbb{V}_{\lambda_{i}},\;\pi_{\nu_{i}}:\otimes^{i}\g_{1}\otimes\mathbb{W}\rightarrow\mathbb{W}_{\nu_{i}}\;
\text{and}\;\pi_{j}:\mathbb{V}_{\lambda_{j}}\otimes\mathbb{W}_{\nu_{M-j}}\rightarrow\mathbb{E}_{\mu}
$$
are the canonical projections and we have used the abbreviations $\lambda_{i}=\lambda+i\alpha$ and $\nu_{i}=\nu+i\beta$.

\begin{lemma}
With the setup as above,
$$\pi_{\lambda_{j}}(Z\star\phi_{j-1})=\frac{1}{2}\left(\Vert\lambda+j\alpha+\rho\Vert^{2}-\Vert\lambda+\rho\Vert^{2}\right)\pi_{\lambda_{j}}(Z\otimes\phi_{j-1})$$
and
$$\pi_{\nu_{j}}(Z\star\psi_{j-1})=\frac{1}{2}\left(\Vert\nu+j\beta+\rho\Vert^{2}-\Vert\nu+\rho\Vert^{2}\right)\pi_{\nu_{j}}(Z\otimes\psi_{j-1}).$$
\end{lemma}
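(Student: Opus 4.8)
The plan is to view this as the higher-order analogue of the first-order Casimir computation of Lemmas~\ref{lemmanine} and~\ref{lemmaten}. For $j=1$ we have $\phi_0=u\in\mathbb{V}_\lambda$ and $Z\star u=\sum_{\alpha'}\eta^{\alpha'}\otimes[Z,\xi_{\alpha'}].u$, so the claim is exactly that $\pi_{\lambda_1}$ of this obstruction term acts by the scalar $\tfrac12(\Vert\lambda+\alpha+\rho\Vert^2-\Vert\lambda+\rho\Vert^2)$, which is the content of those two lemmas restricted to the multiplicity-one Cartan component $\mathbb{V}_{\lambda_1}\subset\g_1\otimes\mathbb{V}_\lambda$. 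The whole task is then to propagate this single-step eigenvalue through the $j$ tensor slots of the $\star$-action and to see that the contributions telescope. Since the argument for $\psi,\nu,\beta$ is word-for-word the same as the one for $\phi,\lambda,\alpha$, I would only write out the latter.

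First I would reduce to the Cartan component. Because $\alpha$ is extremal and long, Remark~\ref{Raoconjecture} ensures that $\mathbb{V}_{\lambda_k}=\mathbb{V}_{\lambda+k\alpha}$ occurs with multiplicity one in $\otimes^k\g_1\otimes\mathbb{V}_\lambda$ for every $k\le M$ and that the unique copies nest coherently, $\mathbb{V}_{\lambda_j}\subset\g_1\otimes\mathbb{V}_{\lambda_{j-1}}\subset\cdots\subset\otimes^j\g_1\otimes\mathbb{V}_\lambda$. In particular $\pi_{\lambda_j}(Z\otimes\,\cdot\,)$ only sees the $\mathbb{V}_{\lambda_{j-1}}$-part of its argument, so I may assume $\phi_{j-1}\in\mathbb{V}_{\lambda_{j-1}}$.

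Next I would analyse the $\star$-sum slot by slot. In the $i$-th summand a fresh $\eta^{\alpha'}\in\g_1$ is inserted while $[Z,\xi_{\alpha'}]\in\g_0$ acts on the trailing factors $U_{i+1}\otimes\cdots\otimes u\in\otimes^{\,j-1-i}\g_1\otimes\mathbb{V}_\lambda$, whose relevant Cartan piece is $\mathbb{V}_{\lambda_{j-1-i}}$. Rewriting $\sum_{\alpha'}\eta^{\alpha'}\otimes[Z,\xi_{\alpha'}].(\,\cdot\,)=\sum_a[Y^a,Z]\otimes Y_a.(\,\cdot\,)$ by Killing-form duality and symmetrising exactly as in Lemma~\ref{lemmanine}, this slot is controlled by the Casimir mixing operator on $\g_1\otimes\mathbb{V}_{\lambda_{j-1-i}}$, which on its Cartan summand $\mathbb{V}_{\lambda_{j-i}}$ acts by $\tfrac12\bigl(c(\lambda_{j-i})-c(\lambda_{j-1-i})-c(-\alpha_i)\bigr)$. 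By the same manipulation as in Lemma~\ref{lemmaten} (the $c(-\alpha_i)$-term and the $\rho_0$-versus-$\rho$ discrepancy cancel by orthogonality of $(\h^S)^*$ and $\mathfrak{z}(\g_0)^*$) this equals the single-step value $\tfrac12(\Vert\lambda_{j-i}+\rho\Vert^2-\Vert\lambda_{j-1-i}+\rho\Vert^2)$. After the global projection $\pi_{\lambda_j}$ each slot then contributes this scalar times $\pi_{\lambda_j}(Z\otimes\phi_{j-1})$, and as $i$ runs from $0$ to $j-1$ the index $j-i$ runs through $j,\dots,1$, so the steps telescope to $\tfrac12(\Vert\lambda+j\alpha+\rho\Vert^2-\Vert\lambda+\rho\Vert^2)$, the claimed constant.

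The hard part will be making this slot-by-slot bookkeeping rigorous: one must check that, under the single Cartan projection $\pi_{\lambda_j}$, the $i$-th slot really factors as the consecutive step times $\pi_{\lambda_j}(Z\otimes\phi_{j-1})$, with no surviving cross terms and no dependence on how the leading factors $U_1\otimes\cdots\otimes U_i$ are decomposed. This is precisely where the extremal/long-root hypothesis is essential, since multiplicity one all along the nesting makes every intermediate Cartan projection canonical and forces the separate slot contributions into the same copy of $\mathbb{E}_\mu$; without it they would split across different copies and the telescoping would break. A cleaner alternative I would keep in reserve is to run the argument by induction on $j$, using the recursive definition of $\bar{\mathcal{J}}^{k}\mathbb{V}$ as an equalizer inside $\mathcal{J}^1(\bar{\mathcal{J}}^{k-1}\mathbb{V})$ and the compatibility of $\star$ with that recursion: the top prolongation supplies the step $\lambda_{j-1}\to\lambda_j$, while the inductive hypothesis supplies $\lambda\to\lambda_{j-1}$.
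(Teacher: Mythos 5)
Your proposal is correct and follows essentially the same route as the paper: the paper's proof likewise expands the $\star$-action slot by slot, converts each slot's insertion of $\sum_{\alpha'}\eta^{\alpha'}\otimes[Z,\xi_{\alpha'}]$ into the Casimir eigenvalue difference via Lemmas~\ref{lemmanine} and~\ref{lemmaten}, and telescopes the resulting sum $\sum_{i=0}^{j-1}\frac{1}{2}\left(\Vert\lambda+(i+1)\alpha+\rho\Vert^{2}-\Vert\lambda+i\alpha+\rho\Vert^{2}\right)$, using $\mathbb{V}_{\lambda+j\alpha}\subset\odot^{j}\g_{1}\otimes\mathbb{V}_{\lambda}$ (multiplicity one from Remark~\ref{Raoconjecture}) exactly as you do. The bookkeeping you flag as the hard part is handled in the paper by precisely the multiplicity-one nesting you describe, so no further argument is needed.
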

\begin{proof}
This follows directly from Lemma~\ref{lemmanine}, Lemma~\ref{lemmaten} and Proposition~\ref{propositionsix} since
\begin{eqnarray*}
&&\pi_{\lambda_{j}}(Z\star(X_{1}\otimes\cdots\otimes  X_{j-1}\otimes v)\\
&=&\pi_{\lambda_{j}}\left(\sum_{0\leq i\leq j-1}\sum_{\alpha}X_{1}\otimes\cdots\otimes X_{i}\otimes\eta^{\alpha}\otimes [Z,\xi_{\alpha}].(X_{i+1}\otimes\cdots\otimes X_{j-1}\otimes v)\right)\\
&=&\frac{1}{2}\left(\sum_{i=0}^{j-1}(\Vert\lambda+(i+1)\alpha+\rho\Vert^{2}-\Vert\lambda+i\alpha+\rho\Vert^{2}\right)\pi_{\lambda_{j}}(Z\otimes X_{1}\otimes\cdots\otimes X_{j-1}\otimes v)\\
&=&\frac{1}{2}\left(\Vert\lambda+j\alpha+\rho\Vert^{2}-\Vert\lambda+\rho\Vert^{2}\right)\pi_{\lambda_{j}}(Z\otimes X_{1}\otimes\cdots\otimes X_{j-1}\otimes v).
\end{eqnarray*}
Here we have used that $\mathbb{V}_{\lambda+j\alpha}\subset \odot^{j}\g_{1}\otimes\mathbb{V}_{\lambda}$. The calculation for $\psi$ is analogous.
\end{proof}

$\Phi$ is a $\p$-module homomorphism if and only if the obstruction term from above given by $\Phi(Z\star[\phi\otimes\psi])$ vanishes. This can be computed as
\begin{eqnarray*}
\Phi(Z\star[\phi\otimes\psi])&=&\frac{1}{2}\sum_{j=1}^{M}\left(\rule{0pt}{20pt}\right.(\gamma_{M,j}\left(\Vert\lambda+j\alpha+\rho\Vert^{2}-\Vert\lambda+\rho\Vert^{2}\right)\\
&&+\gamma_{M,j-1}\left(\Vert\nu+(M-j+1)\beta+\rho\Vert^{2}-\Vert\nu+\rho\Vert^{2}\right))\pi(Z\otimes\phi_{j-1}\otimes\psi_{M-j})\left.\rule{0pt}{20pt}\right),
\end{eqnarray*}
where $\pi$ is the projection onto $\mathbb{E}_{\mu}$.
Now we note that
$$\Vert\lambda+j\alpha+\rho\Vert^{2}-\Vert\lambda+\rho\Vert^{2}=\Vert\alpha\Vert^{2}j(B(\lambda+\rho,\alpha^{\vee})+j).$$
Thus the vanishing of the obstruction term is equivalent to the following $M$ equations:
$$\gamma_{M,j+1}(j+1)\Vert\alpha\Vert^{2}(q-j)+\gamma_{M,j}(M-j)\Vert\beta\Vert^{2}(q'+j+1-M)=0,\;j=0,...,M-1,$$
where $q=-B(\lambda+\rho,\alpha^{\vee})-1$ and $q'=-B(\nu+\rho,\beta^{\vee})-1$. 
Moreover we know that $\alpha$ and $\beta$ lie in the same Weyl orbit and therefore have the same length. Hence the equations reduce to
\begin{equation}\label{mainequationthree}
\fbox{$\displaystyle\gamma_{M,j+1}(j+1)(q-j)+\gamma_{M,j}(M-j)(q'+j+1-M)=0,\;j=0,...,M-1.$}
\end{equation}
This is the third (and by far the most general) situation in which we encounter these equations. They are formally equivalent to the ones that we considered in the Introduction and Section~\ref{algebraicdescription}. If $q\not\in\{0,1,...,M-1\}$ or $q'\not\in\{0,1,...,M-1\}$, then~(\ref{mainequationthree}) determines $\gamma_{M,j}$ up to scale:
$$\gamma_{M,j}=(-1)^{j}\binom{M}{j}\prod_{i=j}^{M-1}(q-i)\prod_{i=M-j}^{M-1}(q'-i).$$

\section{Ricci corrected derivatives}
In this section we will formally define the (weighted) $M$-th order bilinear differential pairing $\Gamma(V_{\lambda})\times\Gamma(W_{\nu})\rightarrow\Gamma(E_{\mu})$ that is induced by the mapping $\Phi$ described above.

\subsection{Formal definition of the pairing}

Using the full first jet-bundle $J^{1}V$ rather than the weighted first order jet-bundle $\mathcal{J}^{1}V$ of an associated vector bundle $V$, we can inductively construct the {\bf semi-holonomic jet bundle} $\bar{J}^{k}V$. This is the next definition.

\begin{definition}
{\rm
Let $\mathbb{V}$ be a $\p$-module. The $k$-th order semi-holonomic jet prolongation $\bar{J}^{k}\mathbb{V}$ is defined inductively as follows: $\bar{J}^{1}\mathbb{V}=J^{1}\mathbb{V}$ the usual first jet
prolongation. The $k$-th order {\bf semi-holonomic jet prolongation} $\bar{J}^{k}\mathbb{V}$ is defined to be the subbundle of $J^{1}(\bar{J}^{k-1}\mathbb{V})$ where the two canonical maps to 
$J^{1}(\bar{J}^{k-2}\mathbb{V})$ coincide. The associated bundle $\bar{J}^{k}V=\mathcal{G}\times_{P}\bar{J}^{k}\mathbb{V}$ is the $k$-th order semi-holonomic jet bundle. Moreover the iterated invariant differential
$$\Gamma(V)\ni s\mapsto \hat{j}^{k}s=(s,\nabla^{\omega}s,...,(\nabla^{\omega})^{k}s)\in\Gamma(\bar{J}^{k}V)$$
defines an embedding of $J^{k}V$ in $\bar{J}^{k}V$. 
}\end{definition}

\subsubsection{Remark}
The fact that the iterated invariant differential takes $P$-equivariant sections to $P$ equivariant sections can be checked by an inductive procedure based on the calculations in
Lemma~\ref{lemmasix}, see~\cite{slov}.

\subsubsection{Remark}
A Weyl structure $\sigma:\mathcal{G}_{0}\rightarrow\mathcal{G}$ induces an isomorphism between filtered and graded vector bundles and hence an isomorphism $\sigma_{\mathcal{A}}:gr\mathcal{A}\rightarrow \mathcal{A}$ as detailed in~\ref{filtrations}.
Thus, following~\cite{cds}, one can use the fundamental derivative to define the first {\bf Ricci-corrected derivative}
$$D^{(1)}_{X}s=\nabla_{\sigma_{\mathcal{A}}(X)}^{\omega}s,$$
for $X\in T\mathcal{M}$ and $s$ a section of an arbitrary associated bundle $V$. The higher order Ricci-corrected derivatives are defined analogously, using the Weyl connection to define an isomorphism 
between the semi-holonomic jet bundle $\bar{J}^{k}V$ and the associated graded bundle $gr\bar{J}^{k}V$. Under this isomorphism $\hat{j}^{k}s$ is mapped to $\hat{j}^{k}_{D}s$ with components denoted by $D^{(j)}s\in\otimes^{j}T^{*}\mathcal{M}\otimes V$, for $j=0,...,k$. A pairing (differential operator) can then be constructed by a simple projection from the graded vector bundle and it is invariant 
if and only if the projection of the image of the action of $\p_{+}$ vanishes, exactly as described above. The following lemma relates the Ricci-corrected derivative to the Weyl connection
$$D_{X}s=\sigma_{V}D_{X}^{(1)}(\sigma_{V}^{-1}s),$$
where $\sigma_{V}:gr V \rightarrow V$ is the isomorphism induced by the choice of Weyl structure.

\begin{lemma}\label{lemmatwelve}
$$D^{(1)}_{X}s=D_{X}s+r^{D}(X)\bullet s,$$
where $r^{D}$ is a $T^{*}\mathcal{M}$-valued one form on $\mathcal{M}$ and the bullet denotes the action of $T^{*}\mathcal{M}$ on $V$ induced by the action of $\p_{+}$ on $\mathbb{V}$.
\end{lemma}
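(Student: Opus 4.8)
The plan is to pass to $P$-equivariant functions and to compute everything along the image of the Weyl structure section $\sigma\colon\mathcal{G}_{0}\to\mathcal{G}$. Writing $f\in\mathcal{O}(\mathcal{G},\mathbb{V})^{P}$ for the equivariant representative of $s$ and $\bar{f}=f\circ\sigma$ for its pullback to $\mathcal{G}_{0}$, the definition unwinds to $D^{(1)}_{X}s=\nabla^{\omega}_{\sigma_{\mathcal{A}}(X)}f=\omega^{-1}(\sigma_{\mathcal{A}}(X))f$, and at a point $\sigma(u)$ the lift $\sigma_{\mathcal{A}}(X)$ is by construction the given element $X\in\g_{-}$. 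First I would decompose the pulled-back Cartan connection according to the grading, $\sigma^{*}\omega=\sigma^{*}\omega_{-}+\gamma^{\sigma}+\sigma^{*}\omega_{+}$, where $\sigma^{*}\omega_{-}$ is the soldering form, $\gamma^{\sigma}=\sigma^{*}\omega_{0}$ is the principal connection inducing the Weyl connection $D=\nabla$ on $V$, and $\sigma^{*}\omega_{+}$ is the $\p_{+}$-valued part.

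The key step is to re-express the intrinsic lift $\omega^{-1}(X)$ in terms of data living on $\mathcal{G}_{0}$. Let $Y_{X}\in T_{u}\mathcal{G}_{0}$ be the $\gamma^{\sigma}$-horizontal lift of $X$, characterised by $\sigma^{*}\omega_{-}(Y_{X})=X$ and $\gamma^{\sigma}(Y_{X})=0$. Applying $\omega$ to $T\sigma(Y_{X})$ gives $X+\sigma^{*}\omega_{+}(Y_{X})$, while $\omega$ of the fundamental field $\zeta_{\sigma^{*}\omega_{+}(Y_{X})}$ gives $\sigma^{*}\omega_{+}(Y_{X})$ by property (iii) of the Cartan connection (Section~\ref{Cartanconnection}). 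Since $\omega$ is a pointwise isomorphism, this forces the identity $\omega^{-1}(X)=T\sigma(Y_{X})-\zeta_{\sigma^{*}\omega_{+}(Y_{X})}$ at $\sigma(u)$. No $\g_{0}$-term survives precisely because $\gamma^{\sigma}(Y_{X})=0$, so the entire $\g_{0}$-part is absorbed into the Weyl-horizontal direction and the only discrepancy is the $\p_{+}$-piece.

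Differentiating $f$ along this decomposition then finishes the argument. The horizontal piece gives $(T\sigma(Y_{X}))f=Y_{X}\bar{f}=\nabla_{X}\bar{f}=D_{X}s$, by the very definition of the Weyl connection induced by $\gamma^{\sigma}$; the vertical piece gives $\zeta_{\sigma^{*}\omega_{+}(Y_{X})}f=-\sigma^{*}\omega_{+}(Y_{X})\bullet f$ from the equivariance relation $\zeta_{Z}f=-Z\bullet f$ for $Z\in\p$, which is the defining property of the $\bullet$-action following~\cite{cg}. Combining the two yields $D^{(1)}_{X}s=D_{X}s+\sigma^{*}\omega_{+}(Y_{X})\bullet s$, so one sets $r^{D}(X)=\sigma^{*}\omega_{+}(Y_{X})$. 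Because $Y_{X}$ depends linearly and tensorially on the value of $X$ and $\sigma^{*}\omega_{+}$ takes values in $\p_{+}\cong T^{*}\mathcal{M}$, the object $r^{D}$ is a well-defined $T^{*}\mathcal{M}$-valued one-form (the Rho, or Schouten, tensor of the chosen Weyl structure), and $\bullet$ is exactly the action of $T^{*}\mathcal{M}$ on $V$ induced by the $\p_{+}$-action, as required; note that this correction is automatically zero on irreducible $V$, where $\p_{+}$ acts trivially, and genuinely nontrivial only on filtered bundles.

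The main obstacle is the bookkeeping in the middle step: one must verify that the $\g_{0}$-component contributes nothing beyond the genuine Weyl covariant derivative, so that the correction is purely the $\p_{+}$-homogeneous part, and confirm the sign in $\zeta_{Z}f=-Z\bullet f$ as well as the identification $\sigma^{*}\omega_{+}(Y_{X})=r^{D}(X)$ compatible with the realisation $D_{X}s=\sigma_{V}D^{(1)}_{X}(\sigma_{V}^{-1}s)$. In the general $|k_{0}|$-graded situation $\p_{+}=\g_{1}\oplus\cdots\oplus\g_{k_{0}}$ has several graded components, but the computation is formally identical with $\sigma^{*}\omega_{+}$ collecting all positive homogeneities; the higher Ricci-corrected derivatives $D^{(j)}$ are then treated by applying the same argument inductively to the iterated invariant differential on $\bar{J}^{k}V$, following~\cite{cds}.
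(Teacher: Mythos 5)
Your proof is correct, but note that it is structurally quite different from what the paper does: the paper does not prove this lemma at all---its ``proof'' is a citation to Proposition 4.2 (together with Appendix A) of~\cite{cds}, which is where the identifications of $D$ with the Weyl connection induced by $\sigma^{*}\omega_{0}$ and of $r^{D}$ with the Rho-tensor induced by $\sigma^{*}\omega_{+}$ are established. What you have written is, in effect, a self-contained reconstruction of that cited argument: decompose $\sigma^{*}\omega=\sigma^{*}\omega_{-}+\gamma^{\sigma}+\sigma^{*}\omega_{+}$, realize $\omega^{-1}(X)$ at $\sigma(u)$ as $T\sigma(Y_{X})-\zeta_{\sigma^{*}\omega_{+}(Y_{X})}$ using property (iii) of the Cartan connection and the pointwise injectivity of $\omega$, then read off the two contributions: the $\gamma^{\sigma}$-horizontal piece is the Weyl covariant derivative $D_{X}s$, and the vertical piece is $+\,\sigma^{*}\omega_{+}(Y_{X})\bullet s$ via $\zeta_{Z}f=-Z\bullet f$. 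The steps all check out: the pair $(\sigma^{*}\omega_{-},\gamma^{\sigma})$ is a pointwise isomorphism $T_{u}\mathcal{G}_{0}\rightarrow\g_{-}\oplus\g_{0}$, so $Y_{X}$ exists and is unique; $T\pi_{0}(Y_{X})$ recovers exactly the tangent vector corresponding to $X$ under the Weyl identification $T\mathcal{M}\cong gr\,T\mathcal{M}$, so the horizontal term really is the Weyl connection of the paper; and the sign comes out as the $+$ in the statement. Your closing caveats are also accurate: strictly $r^{D}$ is valued in $\mathcal{G}_{0}\times_{G_{0}}\p_{+}=gr\,T^{*}\mathcal{M}$, identified with $T^{*}\mathcal{M}$ by the Weyl structure (canonically so in the $|1|$-graded case), and the correction vanishes on irreducible $V$ since $\p_{+}$ acts trivially there. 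What your route buys is self-containedness and the explicit formula $r^{D}(X)=\sigma^{*}\omega_{+}(Y_{X})$, which the paper only asserts as a consequence of the citation; what the paper's route buys is brevity, plus the fact that the cited source also covers the higher-order derivatives $D^{(j)}$ and their recursion, which your final paragraph only sketches by induction.
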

\begin{proof}
This lemma is taken from~\cite{cds}, Proposition 4.2. Together with Appendix A in~\cite{cds}, this proposition also contains the proof that $D$ is the Weyl connection that is induced by $\sigma^{*}\omega_{0}$ and that $r^{D}$ is the Rho-tensor induced by $\sigma^{*}\omega_{+}$.
\end{proof}

\subsubsection{Construction of the pairings}
The formal construction of the pairing is carried out in several steps:
we take a section $s\in\Gamma(V_{\lambda})$, map it via the iterated invariant differential to $\hat{j}^{M}s\in\bar{J}^{M}V_{\lambda}$,
use the Weyl connection to map it to $\hat{j}^{M}_{D}s=(s,D^{(1)}s,...,D^{(M)}s)\in gr\bar{J}^{M}V_{\lambda}$ and then project onto $gr\bar{\mathcal{J}}^{M}V_{\lambda}$. The same is done for
$t\in\Gamma(W_{\nu})$. Then we can tensor $\hat{j}^{M}_{D}s$ and $\hat{j}^{M}_{D}t$ together and project onto $gr\bar{\mathcal{J}}^{M}(V_{\lambda},W_{\nu})$.
Using $\Phi$ as defined above ensures that the procedure is independent of the choice of Weyl structure involved.
Note that the obvious projection $\bar{J}^{M}\mathbb{V}\rightarrow \bar{\mathcal{J}}^{M}\mathbb{V}$ is a $\p$-module homomorphism.

\subsection{Explicit formulae}
In order to write down explicit formulae in terms of a Weyl connection, we can use the Ricci-corrected derivatives as in~\cite{cds}, where a recurrence formula for $D^{(k)}$ of 
$\hat{j}^{k}_{D}s=(s,D^{(1)}s,...,D^{(k)}s)$ is given. In the following $\nabla$ denotes a choice of Weyl connection and $\mathcal{D}_{k}$ denotes the $k$-th Ricci-corrected derivative. Moreover
$\Gamma=-\frac{1}{2}\Vert\alpha_{i}\Vert^{2}r^{D}$ with $r^{D}$ as in Lemma~\ref{lemmatwelve}. More precisely, we write down a symbolic formula where all indices are suppressed.
To obtain an actual formula, one has to include all indices and combine them as prescribed by the projection $\Phi$. The recursion formula from~\cite{cds}, Theorem 6.2, takes the form
$$\mathcal{D}_{k}s=\nabla\mathcal{D}_{k-1}s+(k-1)(q-k+2)\Gamma\mathcal{D}_{k-2}s$$
for $s\in\Gamma(V_{\lambda})$, $\mathcal{D}_{j}=\pi_{\lambda_{j}}\circ D^{(j)}$ and $q=-B(\lambda+\rho,\alpha^{\vee})-1$.
This yields
\begin{eqnarray*}
\mathcal{D}_{0}s&=&s\\
\mathcal{D}_{1}s&=&\nabla s\\
\mathcal{D}_{2}s&=&\nabla^{2}s+q\Gamma s\\
\mathcal{D}_{3}s&=&\nabla^{3}s+2(q-1)\Gamma\nabla s+q\nabla(\Gamma s)\\
\mathcal{D}_{4}s&=&\nabla^{4}s+q\nabla^{2}(\Gamma s)+2(q-1)\nabla(\Gamma\nabla s)+3(q-2)\Gamma\nabla^{2}s+3(q-2)q\Gamma^{2}s\\
\mathcal{D}_{5}s&=&\nabla^{5}s+q\nabla^{3}(\Gamma s)+2(q-1)\nabla^{2}(\Gamma\nabla s)+3(q-2)\nabla(\Gamma\nabla^{2}s)+3(q-2)q\nabla(\Gamma^{2}s)\\
&&+4(q-3)\Gamma\nabla^{3}s+4q(q-3)\Gamma\nabla(\Gamma s)+8(q-3)(q-1)\Gamma^{2}\nabla s\\
\mathcal{D}_{6}s&=&\nabla^{6}s+q\nabla^{4}(\Gamma s)+2(q-1)\nabla^{3}(\Gamma\nabla s)+3(q-2)\nabla^{2}(\Gamma\nabla^{2}s)+3(q-2)q\nabla^{2}(\Gamma^{2}s)\\
&&+4(q-3)\nabla(\Gamma\nabla^{3}s)+4q(q-3)\nabla(\Gamma\nabla(\Gamma s))+8(q-3)(q-1)\nabla(\Gamma^{2}\nabla s)\\
&&+5(q-4)\Gamma\nabla^{4}s+5(q-4)q\Gamma\nabla^{2}(\Gamma s)+10(q-4)(q-1)\Gamma\nabla(\Gamma\nabla s)\\
&&+15(q-4)(q-2)\Gamma^{2}\nabla^{2}s+15(q-4)(q-2)q\Gamma^{3}s.
\end{eqnarray*}
Putting $q=j-1$ in $\mathcal{D}_{j}$ yields the formulae as in~\cite{cds}.

\subsubsection{Remark}
With the right scaling, $\Gamma$ is the Rho-tensor in conformal geometry or the Schouten tensor in projective geometry. It is easy to determine what is the tensor $\Gamma$ in the various geometries. One just has to write down the simplest invariant differential operator of order two (for example when acting on weighted functions) and compute the curvature correction term. The general form will be $\nabla^{2}s+Ts$, where $T$ is some tensor. Then it follows from the above discussion that $T=\Gamma$. We will demonstrate this for our usual three examples (and refer the reader to Chapter~\ref{tractorchapter} for the specific notations):
\begin{enumerate}
\item
In projective geometry, the differential operator
\begin{eqnarray*}
\mathcal{E}(1)&\rightarrow &\mathcal{E}_{(ab)}(1)\\
f&\mapsto &\nabla_{a}\nabla_{b}f+P_{ab}f
\end{eqnarray*}
is invariant, where $P_{ab}$ is the Schouten tensor to be defined in Chapter~\ref{tractorchapter}. Hence $\Gamma=P_{ab}$.
\item
In conformal geometry in $n$ dimensions, the differential operator
\begin{eqnarray*}
\mathcal{E}[1]&\rightarrow &\mathcal{E}_{(ab)}[1]\\
f&\mapsto &\nabla_{a}\nabla_{b}f-\frac{1}{n}g_{ab}\nabla^{c}\nabla_{c}f+(P_{ab}-\frac{1}{n}g_{ab}P^{c}{}_{c})f
\end{eqnarray*}
is invariant, where $P_{ab}$ is the Rho-tensor to be defined in Chapter~\ref{tractorchapter}. Hence $\Gamma=P_{ab}$.

\item
In CR geometry, the differential operator
\begin{eqnarray*}
\mathcal{E}(w,1)&\rightarrow &\mathcal{E}^{(\alpha\beta)}(w-2,-1)\\
f&\mapsto &\nabla^{\alpha}\nabla^{\beta}f-iA^{\alpha\beta}f
\end{eqnarray*}
is invariant, where $A_{\alpha\beta}$ is the pseudohermitian torsion tensor to be defined in Chapter~\ref{tractorchapter}. Hence $\Gamma=-iA^{\alpha\beta}$ (for complex conjugate operators including derivatives $\nabla_{\alpha}$ the right tensor is $iA_{\alpha\beta}$).
\end{enumerate}

\subsubsection{Remark}
It is quite easy to write down a general formula. $\mathcal{D}_{k}$ has 
$$\sum_{j=0}^{\left[\frac{k}{2}\right]}\binom{k-j}{j}$$
curvature correction terms. Each term is determined by a sequence of $i$ $\nabla$'s and $j$ $\Gamma$'s, so that $i+2j=k$, in a precise order. This looks like this:
$$\nabla^{l_{t}}\Gamma^{s_{r}}\nabla^{l_{t-1}}\cdots\Gamma^{s_{1}}\nabla^{l_{1}}s,$$
so that the $\Gamma$'s are in position $i_{1},...,i_{j}$ ($j=s_{1}+...+s_{r}$) counting from the right and counting each $\Gamma$ twice (taking the leftmost position of each $\Gamma$ as $i_{m}$).
Then the constant is in front of this term is:
\begin{equation}\label{cct}
\prod_{m=1}^{j}(i_{m}-1)(q-i_{m}+2).
\end{equation}
Note that this formula is purely algebraic. One can use the Leibniz rule to rearrange terms, but this only leads to a more complicated expression.

\subsubsection{Examples 1}
\begin{enumerate}
\item
The term
$$\nabla\Gamma\nabla\Gamma s,$$
that occurs in $\mathcal{D}_{6}s$, has $i_{1}=2$ and $i_{2}=5$, so that the constant is given by $4q(q-3)$. 
\item
$$\Gamma\nabla\Gamma^{2}\nabla s$$
occurs in $\mathcal{D}_{8}s$ and has $i_{1}=3,i_{2}=5$ and $i_{3}=8$, so the constant is given by $56(q-1)(q-3)(q-6)$.
The invariant operator corresponding to $q=7$ has a correction term, where this term appears with constant $56\times6\times4\times 1=1344$.
\end{enumerate}

\subsubsection{Examples 2}
Let us write down the full formula for second and third order pairings:
\begin{enumerate}
\item
\begin{eqnarray*}
\mathcal{P}^{2}(s,t)&=&q(q-1)s(\mathcal{D}_{2}t)-2(q-1)(q'-1)(\mathcal{D}_{1}t)(\mathcal{D}_{1}s)+q'(q'-1)(\mathcal{D}_{2}s)t\\
&=&q(q-1)s(\nabla^{2}t+q'\Gamma t)-2(q-1)(q'-1)(\nabla t)(\nabla s)\\
&&+q'(q'-1)(\nabla^{2}s+q\Gamma s)t\\
&=&q(q-1)s(\nabla^{2}t)-2(q-1)(q'-1)(\nabla t)(\nabla s)+q'(q'-1)(\nabla^{2}s)t\\
&&+qq'(q+q'-2)st\Gamma
\end{eqnarray*}
\item
\begin{eqnarray*}
\mathcal{P}^{3}(s,t)&=&q(q-1)(q-2)s(\mathcal{D}_{3}t)-3(q-1)(q-2)(q'-2)(\mathcal{D}_{1}s)(\mathcal{D}_{2}t)\\
&&+3(q-2)(q'-1)(q'-2)(\mathcal{D}_{2}s)(\mathcal{D}_{1}t)-q'(q'-1)(q'-2)(\mathcal{D}_{3}s)t\\
&=&q(q-1)(q-2)s(\nabla^{3}t+2(q'-1)\Gamma\nabla t+q'\nabla\Gamma t)\\
&&-3(q-1)(q-2)(q'-2)(\nabla s)(\nabla^{2} t+q'\Gamma t)\\
&&+3(q-2)(q'-1)(q'-2)(\nabla^{2}s+q\Gamma s)(\nabla t)\\
&&-q'(q'-1)(q'-2)(\nabla^{3}s+2(q-1)\Gamma\nabla s+q\nabla\Gamma s)t\\
&=&q(q-1)(q-2)s(\nabla^{3}t)-3(q-1)(q-2)(q'-2)(\nabla s)(\nabla^{2} t)\\
&&+3(q-2)(q'-1)(q'-2)(\nabla^{2}s)(\nabla t)-q'(q'-1)(q'-2)(\nabla^{3}s)t\\
&&+q(q-2)(q'-1)(2q+3q'-8)s\Gamma\nabla t\\
&&-q'(q'-2)(q-1)(3q+2q'-8)t\Gamma\nabla s\\
&&+qq'(q-1)(q-2)s\nabla\Gamma t-qq'(q'-1)(q'-2)t\nabla\Gamma s.
\end{eqnarray*}
\end{enumerate}
\par
To summerize, we have shown:

\begin{theorem}[Main result 3]\label{mainresultthree}
Let $\mathbb{V}_{\lambda}$ and $\mathbb{W}_{\nu}$ be two irreducible $\p$-modules and let $\alpha$ and $\beta$ be two extremal roots so that $\lambda+j\alpha$ and $\nu+j\beta$ are $\p$-dominant for $j=1,...,M$.
Furthermore let $\mathbb{E}_{\mu}\subset\otimes^{M}\g_{1}\otimes\mathbb{V}_{\lambda}\otimes\mathbb{W}_{\nu}$ be an irreducible component of multiplicity one that lies in
$$\mathbb{V}_{\lambda+j\alpha}\otimes\mathbb{W}_{\nu+(M-j)\beta}$$
for $j=0,...,M$. 
If 
$$q=-B(\lambda+\rho,\alpha^{\vee})-1\not\in\{0,1,...,M-1\}$$
or
$$q'=-B(\lambda+\rho,\alpha^{\vee})-1\not\in\{0,1,...,M-1\},$$
then there exists a weighted $M$-th order invariant bilinear differential pairing
$$\mathcal{P}^{M}:\Gamma(V_{\lambda})\times\Gamma(W_{\nu})\rightarrow \Gamma(E_{\mu})$$
and this pairing is given by
\begin{equation*}
\fbox{$\displaystyle
\mathcal{P}^{M}(s,t)=\sum_{j=0}^{M}(-1)^{j}\binom{M}{j}\prod_{i=j}^{M-1}(q-i)\prod_{i=M-j}^{M-1}(q'-i)\pi(\mathcal{D}_{j}s\otimes\mathcal{D}_{M-j}t),$}
\end{equation*}
where $\mathcal{D}_{j}$ is defined as above. If $q\in\{0,1,...,M-1\}$, then the linear differential operator $s\mapsto \mathcal{D}_{q+1}s$ is invariant. The analogous statement holds for $q'$.
\end{theorem}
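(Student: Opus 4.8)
The plan is to assemble the theorem from the pieces already constructed earlier in the chapter, so that the proof reduces to collecting results and verifying the final consistency conditions. The core of the argument is the computation in Section~\ref{obstructionterms2}, where the obstruction term $\Phi(Z\star[\phi\otimes\psi])$ was shown to vanish if and only if the constants $\gamma_{M,j}$ satisfy the recurrence~(\ref{mainequationthree}). First I would recall the setup hypotheses: $\mathbb{V}_{\lambda}$, $\mathbb{W}_{\nu}$ irreducible, $\alpha,\beta$ extremal roots with $\lambda+j\alpha$, $\nu+j\beta$ all $\p$-dominant, and $\mathbb{E}_{\mu}$ of multiplicity one sitting inside each $\mathbb{V}_{\lambda+j\alpha}\otimes\mathbb{W}_{\nu+(M-j)\beta}$. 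The multiplicity-one assumption (justified via the Parthasarathy-Ranga-Rao-Varadarajan conjecture as in Remark~\ref{Raoconjecture}) is what guarantees that the projections $\pi_{j}$ are canonical up to scale, so that the single-parameter family of $\gamma_{M,j}$ translates into a genuinely well-defined pairing.

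The key steps, in order, are as follows. First, invoke the preceding lemma computing $\pi_{\lambda_{j}}(Z\star\phi_{j-1})$ and $\pi_{\nu_{j}}(Z\star\psi_{j-1})$ in terms of the norm differences $\Vert\lambda+j\alpha+\rho\Vert^{2}-\Vert\lambda+\rho\Vert^{2}$, and use $\Vert\alpha\Vert=\Vert\beta\Vert$ (same Weyl orbit) to reduce the obstruction to equations~(\ref{mainequationthree}) with $q=-B(\lambda+\rho,\alpha^{\vee})-1$ and $q'=-B(\nu+\rho,\beta^{\vee})-1$. Second, observe that~(\ref{mainequationthree}) is formally identical to~(\ref{mainequation}) from the Introduction, so that the explicit solution $\gamma_{M,j}=(-1)^{j}\binom{M}{j}\prod_{i=j}^{M-1}(q-i)\prod_{i=M-j}^{M-1}(q'-i)$ applies whenever $q\notin\{0,\dots,M-1\}$ or $q'\notin\{0,\dots,M-1\}$, exactly by the argument of Lemma~2. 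Third, by Proposition~\ref{propositionsix} the vanishing of the obstruction term is precisely the condition for $\Phi$ to extend to a $\p$-module homomorphism $\tilde{\Phi}:\bar{\mathcal{J}}^{M}(\mathbb{V}_{\lambda},\mathbb{W}_{\nu})\rightarrow\mathbb{E}_{\mu}$. Finally, feed this homomorphism into the construction of the pairing spelled out under ``Construction of the pairings'': apply the iterated invariant differential $\hat{j}^{M}$ to $s$ and $t$, pass to the graded picture via a Weyl structure, tensor and project through $\tilde{\Phi}$; the fact that $\tilde{\Phi}$ is a genuine $\p$-homomorphism makes the result independent of the Weyl structure, hence invariant.

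For the explicit formula I would identify $\mathcal{D}_{j}=\pi_{\lambda_{j}}\circ D^{(j)}$ with the Ricci-corrected derivatives whose recurrence $\mathcal{D}_{k}s=\nabla\mathcal{D}_{k-1}s+(k-1)(q-k+2)\Gamma\mathcal{D}_{k-2}s$ was recorded above, so that writing out $\mathcal{P}^{M}(s,t)=\sum_{j}\gamma_{M,j}\,\pi(\mathcal{D}_{j}s\otimes\mathcal{D}_{M-j}t)$ with the $\gamma_{M,j}$ just solved for produces the boxed formula; this is exactly the content of the symbol computation, now carried out with curvature corrections supplied by $\Gamma=-\frac{1}{2}\Vert\alpha_{i}\Vert^{2}r^{D}$ via Lemma~\ref{lemmatwelve}. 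For the degenerate statement, when $q\in\{0,1,\dots,M-1\}$ the vector $\Psi^{i}_{q+1}=X_{-\alpha}^{q+1}\hat{\otimes}v_{0}^{*}$ is singular exactly as in the Corollary to Theorem~\ref{mainresultone}, yielding a linear invariant operator whose curved realization is $s\mapsto\mathcal{D}_{q+1}s$; one checks that at $q$ equal to an integer in this range the recurrence coefficient $(k-1)(q-k+2)$ terminates the Ricci correction appropriately so that $\mathcal{D}_{q+1}$ is itself invariant.

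The main obstacle I expect is the last invariance claim, namely that the combinatorial constant~(\ref{cct}) in each curvature-correction term is precisely the one needed for $\mathcal{D}_{q+1}$ to be invariant when $q$ is a critical integer, and more generally that the Ricci-corrected assembly of $\mathcal{P}^{M}$ matches the abstract $\tilde{\Phi}$ pairing. The delicate point is reconciling the purely algebraic recurrence coming from~\cite{cds} with the representation-theoretic projection $\Phi$; the two are identified by noting that the symbol of $\mathcal{D}_{j}$ is the canonical projection $\otimes^{j}\g_{1}\otimes\mathbb{V}_{\lambda}\to\mathbb{V}_{\lambda+j\alpha}$ and that the lower-order curvature terms are forced by the requirement of Weyl-independence. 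Everything below this identification is bookkeeping, but verifying the coefficient match cleanly — rather than by brute expansion — is where care is required.
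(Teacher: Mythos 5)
Your proposal is correct and follows essentially the same route as the paper: the theorem is exactly the summary of the chapter's development, and you assemble the same ingredients in the same order — Proposition~\ref{propositionsix} as the extension criterion, the Casimir computation in~\ref{obstructionterms2} reducing the obstruction to equations~(\ref{mainequationthree}), the solution $\gamma_{M,j}$ from the Introduction's Lemma~2, the Weyl-structure construction with Ricci-corrected derivatives $\mathcal{D}_{j}=\pi_{\lambda_{j}}\circ D^{(j)}$ from~\cite{cds}, and the identification of critical $q$ with the invariance of $\mathcal{D}_{q+1}$. The only cosmetic difference is that you justify the degenerate case via the singular-vector corollary to Theorem~\ref{mainresultone}, while the paper leans on the standard-operator theory of~\cite{cds}; both are adequate.
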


\section{Examples and scope of the construction}

\subsection{Examples}\label{examplepairings}
\begin{description}
\item{(a)}
We can always choose $\alpha=\beta=-\alpha_{i}$, $i\in I$, as extremal roots and take $\mu=\lambda+\nu-M\alpha_{i}$. The pairings thus constructed are the curved analogues of the pairings described in
Theorem~\ref{mainresultone}. 
\item{(b)}
Let $\mathbb{V}_{\lambda}$ be an irreducible $\p$-module and let $\alpha$ be an extremal root that lies in the same Weyl orbit as $-\alpha_{i}$, $i\in I$. Assume that $\lambda+j\alpha$ is $\p$ dominant for
$j=1,...,M$, but that $q=-B(\lambda+\rho,\alpha^{\vee})-1\not\in\{0,1,...,M-1\}$. 
 Then there is a unique irreducible component $\mathbb{E}_{\lambda+M\alpha}$ in $\otimes^{M}\g_{1}\otimes\mathbb{V}_{\lambda}$, see Remark~\ref{Raoconjecture}. Furthermore let $\mathbb{W}_{\nu}$ be a one dimensional
representation that is given by a character in $\mathfrak{z}(\g_{0})^{*}$ and a trivial representation of $\g_{0}^{S}$, i.e.~sections of $W_{\nu}$ are weighted functions. Then 
$\mathbb{E}_{\lambda+M\alpha+\nu}$ satisfies the requirements of Theorem~\ref{mainresultthree} with $\beta=-\alpha_{i}$
and we can define an invariant bilinear differential pairing
$$\Gamma(V_{\lambda})\times\Gamma(W_{\nu})\rightarrow \Gamma(E_{\lambda+M\alpha+\nu}).$$

\item{(c)}
Projective geometry in $n$ dimensions: for every $k\geq 0$, there exists an invariant bilinear differential pairing
$$\mathcal{E}^{(i_{1}i_{2}...i_{k})}(v)\times\mathcal{E}(w)\rightarrow\mathcal{E}(v+w)$$
given by
$$
\sum_{j=0}^{k}(-1)^{j}\binom{M}{j}\prod_{i=j}^{k-1}(v+n+2k-1-i)\prod_{i=k-j}^{k-1}(w-i)(\nabla_{i_{1}}...\nabla_{i_{j}}V^{i_{1}...i_{k}})(\nabla_{i_{j+1}}...\nabla_{i_{k}}f)$$
$$+\;\;C.C.T.$$
The curvature correction terms (C.C.T.) are given by the combinatorial formula~(\ref{cct}), where $\Gamma=P_{ab}$ is the Schouten tensor.
\item{(d)}
Conformal geometry in $n$-dimensions: there exists a $k$-th order invariant bilinear differential pairing
$$\mathcal{E}^{(i_{1}i_{2}...i_{k})}_{0}[v]\times\mathcal{E}[w]\rightarrow\mathcal{E}[v+w]$$
given by
$$
\sum_{j=0}^{k}(-1)^{j}\binom{M}{j}\prod_{i=j}^{k-1}(v+n+2k-2-i)\prod_{i=k-j}^{k-1}(w-i)(\nabla_{i_{1}}...\nabla_{i_{j}}V^{i_{1}...i_{k}})(\nabla_{i_{j+1}}...\nabla_{i_{k}}f)$$
$$+\;\;C.C.T.$$
The formula for each curvature correction term of $\mathcal{D}_{j}V$ and $\mathcal{D}_{M-j}f$ is given by the combinatorial formula~(\ref{cct}), where $\Gamma=P_{ab}$ is the Rho-tensor.
\item{(e)}
CR geometry in dimension $2n+1$: there exists an invariant bilinear differential pairing
$$\mathcal{E}_{(\alpha_{1}\alpha_{2}...\alpha_{k})}(w,w')\times\mathcal{E}(v,v')\rightarrow\mathcal{E}(v'+w')$$
for every $k\geq 0$ given by
$$
\sum_{j=0}^{k}(-1)^{j}\binom{M}{j}\prod_{i=j}^{k-1}(w'+n+k-1-i)\prod_{i=k-j}^{k-1}(v'-i)(\nabla^{\alpha_{1}}...\nabla^{\alpha_{j}}v_{\alpha_{1}...\alpha_{k}})(\nabla^{\alpha_{j+1}}...\nabla^{\alpha_{k}}f)$$
$$+\;\;C.C.T.$$
The curvature correction terms are again given by the combinatorial formula~(\ref{cct}), where $\Gamma=-iA^{\alpha\beta}$ is the pseudohermitian torsion tensor.

\end{description}

\subsection{Scope of the constructuion}
The symbols of the differential pairings described above are linear combinations of terms $\mathcal{D}_{j}s\otimes\mathcal{D}_{M-j}t$ and the operators $\mathcal{D}_{i}s$ are invariant if $q=i-1$ 
(for $\mathcal{D}_{i}t$ the analogous statement holds). These operators are all standard, but do not inculde every standard operator. But in certain cases, like conformal geometry in even dimensions,
all standard differential operators are of this type. In the Appendix, we have written down the BGG sequence for our standard three examples of projective, conformal and CR geometry. Those BGG sequences
clearly show which operators are constructed with this method and which are not. For more exotic geometric structures the reader is advised to consult~\cite{cds}.

\subsubsection{Remark}
The fact that the linear equations (\ref{mainequationthree}) that we had to solve in this chapter are exactly the ones that had to be solved in the Introduction when we dealt with invariant bilinear differential pairings on the Riemann sphere can be understood as follows: in the homogeneous case invariant differential operators on $G/B$, where $B$ is the Borel subgroup,
give rise to invariant differential operators on $G/P$ via direct images. The same is true for invariant differential pairings. Let us demonstrate this for compactified complexified
Minkowski space $\mathcal{M}=\oxo{\;\;}{\;\;}{\;\;}$. There is a double fibration
$$\begin{array}{ccc}
&\xxx{\;\;}{\;\;}{\;\;}&\\
\mu\swarrow&&\searrow\nu\\
\xox{\;\;}{\;\;}{\;\;}&&\oxo{\;\;}{\;\;}{\;\;}
\end{array},$$
where $\xxx{\;\;}{\;\;}{\;\;}=G/B$. The maps $\mu$ and $\nu$ commute with the action of $G$, so in particular the fibres of $\mu$, which are isomorphic to the Riemann sphere $\cross$, are permuted by $G$. Hence an invariant differential pairing on $\cross$ gives rise to an invariant differential pairing on $G/B$. Then one can use the fact that direct images of line bundles on $G/B$ are vector bundles on $G/P$, see~\cite{be}. Which direct images to take is determined by the extremal roots $\alpha$, $\beta$. More precisely, 
let $\lambda$ be a dominant integral weight for $\p$ and let $\theta$ be an extremal root with $w(-\alpha_{i})=\theta$, $w\in\mathcal{W}$.
Moreover let $w.\kappa=\lambda$, then
\begin{eqnarray*}
B(\kappa,\alpha_{i}^{\vee})&=&B(w^{-1}.\lambda,\alpha_{i}^{\vee})\\
&=&B(w^{-1}(\lambda+\rho),\alpha_{i}^{\vee})-1\\
&=&B(\lambda+\rho,w(\alpha_{i})^{\vee})-1\\
&=&-B(\lambda+\rho,\theta^{\vee})-1\\
&=&q.
\end{eqnarray*}
This implies that $q$ is the number over the node corresponding to $\alpha_{i}$ in the Dynkin diagram notation for $\kappa$. $\kappa$ induces a line bundle $V_{\kappa}$ on $G/B$ whose $l(w)$-th direct image $\nu_{*}^{l(w)}(V_{\kappa})$ is $V_{\lambda}$, the vector bundle over $G/P$ induced by $\mathbb{V}_{\lambda}$.

\chapter{Higher order pairings 2}
This chapter deals with general higher order bilinear invariant differential pairings. The strategy employed is to define a linear invariant differential operator that includes an arbitrary irreducible associated bundle in some other associated bundle, called {\bf $M$-bundle} (which is in fact a {\bf tractor} {\bf bundle}, see~\cite{g}, p.~7), that encodes all the possible differential operators up to order $M$ emanating from this bundle. We will then tensor two of those $M$-bundles together and project onto irreducible components. First of all, we have to define the $M$-bundles:

\section{The $M$-module}\label{mmodule}
\subsection{Formal definition}

Throughout this section, we will write $\underline{s}$ for a vector $(s_{1},...,s_{l_{0}})\in\R^{l_{0}}$.
\par
\vspace{0.3cm}
We will define a representation $\mathbb{V}_{\underline{M}}(\mathbb{E}^{0})$ of $\g$ that is induced from a finite dimensional irreducible representation $\mathbb{E}^{0}$ of $\g_{0}^{S}$ in the following way: $(\mathfrak{h}^{S})^{*}$ can be considered as a subspace of $\h^{*}$ in such a way that    
 $\{\alpha_{j}\}_{j\in J}$ are the simple roots of $\g_{0}^{S}$ with corresponding fundamental weights $\{\omega_{j}\}_{j\in J}$. The highest weight $\lambda_{0}$ of  $(\mathbb{E}^{0})^{*}$ can then be written as $\lambda_{0}=\sum_{j\in J}a_{j}\omega_{j}$ with $a_{j}\geq 0$. $\mathbb{V}_{\underline{M}}(\mathbb{E}^{0})$ is then defined to be the finite dimensional irreducible representation of $\g$ which is dual to the representation of highest weight 
$$\Lambda=\sum_{i\in I}M_{i}\omega_{i}+\lambda_{0}\in\h^{*},$$
where $\underline{M}=(M_{i_{1}},...,M_{i_{l_{0}}})\in\N^{l_{0}}$.
In the Dynkin diagram notation this is easily described. There are $l_{0}$ nodes in the Dynkin diagram for $\g$ which denote the simple roots $\alpha_{i},\;i\in I$. If we erase those nodes and adjacent edges, we obtain the Dynkin diagram for $\g_{0}^{S}$. An irreducible representation $\mathbb{E}^{0}$ of $\g_{0}^{S}$ is denoted by writing non-negative integers over the nodes of this new diagram corresponding to the highest weight of $(\mathbb{E}^{0})^{*}$. $\mathbb{V}_{\underline{M}}(\mathbb{E}^{0})$ is then denoted by writing those numbers over the uncrossed nodes and $M_{i}$ over the node that corresponds to $\alpha_{i},\; i\in I$, in the Dynkin diagram for $\g$.

\subsubsection{Example}
For $CR$ geometry, the Lie algebra of interest is $\g=\mathfrak{sl}_{n+2}\C$ with $\g_{0}^{S}\cong\mathfrak{sl}_{n}\C$. For every representation 
$$\mathbb{E}^{0}=\;\oo{a_{1}}{a_{2}}\;...\;\oo{a_{n-2}}{a_{n-1}}$$
of $\g_{0}^{S}$ and constants $M_{1},M_{2}\geq 0$ we define
$$\mathbb{V}_{\underline{M}}(\mathbb{E}^{0})=\;\ooo{M_{1}}{a_{1}}{a_{2}}\;...\;\oo{a_{n-1}}{M_{2}},$$
a representation of $\g$.

\begin{lemma}\label{lemmathirteen}
$\mathbb{V}_{\underline{M}}(\mathbb{E}^{0})$ has a composition series  
$$\mathbb{V}_{\underline{M}}(\mathbb{E}^{0})=\mathbb{V}_{0}+\mathbb{V}_{1}+...+\mathbb{V}_{N}$$
as a $\p$-module, so that $\g_{j}\mathbb{V}_{i}\subseteq \mathbb{V}_{i+j}$ and $\mathbb{V}_{0}\cong\mathbb{E}^{0}$ as a $\g_{0}^{S}$-module. 
\end{lemma}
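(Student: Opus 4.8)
The plan is to obtain the required composition series directly from the eigenspace decomposition of the grading element. Recall from Section~\ref{grading} that there is a grading element $E\in\mathfrak{z}(\g_{0})\subset\h$ characterised by $[E,X]=jX$ precisely when $X\in\g_{j}$. Since $E$ lies in the Cartan subalgebra, it acts diagonalizably on the finite dimensional $\g$-module $\mathbb{V}=\mathbb{V}_{\underline{M}}(\mathbb{E}^{0})$, the eigenvalue on a weight space of weight $\mu$ being $\mu(E)$. These eigenvalues differ by integers (the $I$-degrees of roots), so I would let $c_{\min}$ be the smallest one and define $\mathbb{V}_{i}$ to be the eigenspace for the eigenvalue $c_{\min}+i$, for $i=0,\dots,N$ (allowing $\mathbb{V}_{i}=0$ at the finitely many gaps).

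The grading relation is then immediate: for $X\in\g_{j}$ and $v\in\mathbb{V}_{i}$ one computes $E(Xv)=[E,X]v+X(Ev)=(c_{\min}+i+j)Xv$, so $\g_{j}\mathbb{V}_{i}\subseteq\mathbb{V}_{i+j}$. In particular $\g_{0}$ preserves each $\mathbb{V}_{i}$, making it a $\g_{0}$-module, while $\p_{+}=\g_{1}\oplus\cdots\oplus\g_{k_{0}}$ strictly raises the index. Consequently the decreasing family $\mathbb{A}^{i}=\bigoplus_{i'\geq i}\mathbb{V}_{i'}$ is a chain of $\p$-submodules with $\mathbb{A}^{i}/\mathbb{A}^{i+1}\cong\mathbb{V}_{i}$, which is exactly a $\p$-module composition series $\mathbb{V}_{\underline{M}}(\mathbb{E}^{0})=\mathbb{V}_{0}+\cdots+\mathbb{V}_{N}$ in the sense introduced earlier, satisfying $\g_{j}\mathbb{V}_{i}\subseteq\mathbb{V}_{i+j}$.

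It remains to identify the top quotient $\mathbb{V}_{0}$, the minimal eigenspace, and this is where the real content lies. Since $\mathbb{V}$ is dual to the irreducible module of highest weight $\Lambda$, it has lowest weight $-\Lambda$ with a lowest weight vector $v_{-\Lambda}$ annihilated by $\mathfrak{n}_{-}$. As every weight of $\mathbb{V}$ has the form $-\Lambda+\sum_{k}n_{k}\alpha_{k}$ with $n_{k}\geq 0$, and $\alpha_{k}(E)=1$ for $k\in I$, $\alpha_{k}(E)=0$ for $k\in J$, the eigenvalue $\mu(E)$ is minimised precisely on the weights $-\Lambda+\sum_{j\in J}n_{j}\alpha_{j}$; hence $c_{\min}=(-\Lambda)(E)$. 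Writing $\mathfrak{n}_{+}=\mathfrak{n}_{+}^{0}\oplus\p_{+}$, where $\mathfrak{n}_{+}^{0}=\mathfrak{n}_{+}\cap\g_{0}$ and $\p_{+}$ is an ideal of $\mathfrak{n}_{+}$, the PBW theorem gives $\mathbb{V}=\mathfrak{U}(\g)v_{-\Lambda}=\mathfrak{U}(\p_{+})\mathfrak{U}(\mathfrak{n}_{+}^{0})v_{-\Lambda}$; since $\p_{+}$ strictly raises the $E$-eigenvalue, the minimal eigenspace is exactly $\mathbb{V}_{0}=\mathfrak{U}(\mathfrak{n}_{+}^{0})v_{-\Lambda}$, the $\g_{0}^{S}$-irreducible module of lowest weight $-\Lambda$.

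The hard part will be the final weight bookkeeping. I would use that $B(\omega_{i},\alpha_{j}^{\vee})=\delta_{ij}=0$ for $i\in I$, $j\in J$, so each $\omega_{i}$ with $i\in I$ vanishes on $\h^{S}=\mathrm{span}\{h_{\alpha_{j}}:j\in J\}$; hence the restriction of $\Lambda=\sum_{i\in I}M_{i}\omega_{i}+\lambda_{0}$ to $\h^{S}$ equals $\lambda_{0}$. Therefore $\mathbb{V}_{0}$ has lowest weight $-\lambda_{0}$ as a $\g_{0}^{S}$-module, so its dual has highest weight $\lambda_{0}$, which is by definition $(\mathbb{E}^{0})^{*}$. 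Thus $\mathbb{V}_{0}\cong\mathbb{E}^{0}$ as a $\g_{0}^{S}$-module, completing the argument; the main subtlety throughout is ensuring that nothing beyond the $\g_{0}^{S}$-orbit of $v_{-\Lambda}$ sits at the minimal eigenvalue, which is precisely what the ideal property of $\p_{+}$ and the PBW decomposition secure.
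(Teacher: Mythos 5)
Your proof is correct and follows essentially the same route as the paper: both group the weight spaces of $\mathbb{V}_{\underline{M}}(\mathbb{E}^{0})$ according to the number of simple roots $\alpha_{i}$, $i\in I$, appearing in each weight (your grading-element eigenvalue records exactly this count), read off the filtration by $\p$-submodules and the property $\g_{j}\mathbb{V}_{i}\subseteq\mathbb{V}_{i+j}$, and then identify the extreme factor by weight bookkeeping. The only cosmetic differences are that you work directly on $\mathbb{V}$ via its lowest weight vector where the paper filters the dual highest weight module and dualizes, and that you pin down $\mathbb{V}_{0}\cong\mathbb{E}^{0}$ through the PBW factorization $\mathfrak{U}(\mathfrak{n})=\mathfrak{U}(\p_{+})\mathfrak{U}(\mathfrak{n}\cap\g_{0})$ where the paper instead invokes that $\mathbb{V}_{0}$ is the unique irreducible quotient.
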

\begin{proof}
As a highest weight module, $\mathbb{V}_{\underline{M}}(\mathbb{E}^{0})^{*}$ is a direct sum of its weight spaces and every weight has the form $\Lambda-\sum_{i=1}^{n}k_{i}\alpha_{i}$ with $k_{i}\in\N$ (see~\cite{h}, Theorem 20.2). Let $\Delta(\Lambda)$ be the set of all weights of $\mathbb{V}_{\underline{M}}(\mathbb{E}^{0})^{*}$ and define
$$N=\mathrm{max}_{\mu\in\Delta(\Lambda)}\left\{\sum_{i\in I}k_{i}^{\mu}\;:\;\mu=\Lambda-\sum_{i=1}^{n}k_{i}^{\mu}\alpha_{i}\right\}.$$
For every element $v$ in the weight space of weight $\mu$ and every $X\in\g_{\alpha}$, the element $Xv$ lies in the weight space of weight $\mu+\alpha$. This observation allows us to define a filtration by $\p$  submodules 
$$0\subset(\mathbb{V}^{0})^{*}\subset (\mathbb{V}^{1})^{*}\subset\cdots\subset (\mathbb{V}^{N})^{*}=\mathbb{V}_{M}(\mathbb{E})^{*},$$
where $(\mathbb{V}^{m})^{*}$ is defined to be the sum of all weight spaces whose weight is of the  form 
$$\Lambda-\sum_{j\in J}k_{j}\alpha_{j}-\sum_{i\in I}k_{i}\alpha_{i}\;\text{with}\;\sum_{i\in I}k_{i}\leq m.$$
The sub-quotients $\mathbb{V}_{m}^{*}=(\mathbb{V}^{m})^{*}/(\mathbb{V}^{m-1})^{*}$ give rise to a composition series
$$\mathbb{V}_{\underline{M}}(\mathbb{E}^{0})^{*}=\mathbb{V}^{*}_{N}+...+\mathbb{V}_{0}^{*},$$
whose dual
$$\mathbb{V}_{\underline{M}}(\mathbb{E}^{0})=\mathbb{V}_{0}+\mathbb{V}_{1}+...+\mathbb{V}_{N}$$
is the desired composition series.
\par
Let us examine  those sub-quotients further:
for $m=0,1,...,N$, $\mathbb{V}_{m}^{*}$ is defined to consist of those weight spaces of $\mathbb{V}_{\underline{M}}(\mathbb{E}^{0})^{*}$ whose weight is of the form
$$\Lambda-\sum_{j\in J}k_{j}\alpha_{j}-\sum_{i\in I}k_{i}\alpha_{i}\;\text{with}\;\sum_{i\in I}k_{i}=m.$$
The action of $\g_{j}$ maps $\mathbb{V}^{*}_{i+j}$ to $\mathbb{V}^{*}_{i}$. Dually we obtain a mapping $\g_{j}: \mathbb{V}_{i}\rightarrow\mathbb{V}_{i+j}$ as desired.
This also shows that the composition series can alternatively be looked at as a decomposition into $\g_{0}$-modules and since $\g_{0}$ is reductive (and $\mathfrak{z}(\g_{0})\subset\h$ acts diagonalizably), every $\mathbb{V}_{i}$ decomposes into irreducible components.
\par
Finally one can observe that $\mathbb{V}_{0}$ has acquired the structure of a $\p$-module by being the unique irreducible quotient of $\mathbb{V}_{\underline{M}}(\mathbb{E}^{0})$. Therefore it is dual to a representation of highest weight $\Lambda$. In particular, $\mathbb{V}_{0}$ is isomorphic to $\mathbb{E}^{0}$ as a $\g_{0}^{S}$-module.
\end{proof}

From now on we will write $\mathbb{V}$ instead of $\mathbb{V}_{\underline{M}}(\mathbb{E}^{0})$ assuming that $\underline{M}$ and $\mathbb{E}^{0}$ are fixed. Note that we consider $\mathbb{E}^{0}$ as a $\g_{0}$-module via the isomorphism $\mathbb{V}_{0}\cong\mathbb{E}^{0}$.

\subsection{Lie algebra cohomology}\label{cohomology}
There is a standard complex of $\g_{0}$-modules associated to a finite dimensional irreducible $\g$-module $\mathbb{V}$:
$$0\rightarrow\mathbb{V}\stackrel{\partial}{\rightarrow}\p_{+}\otimes\mathbb{V}\stackrel{\partial}{\rightarrow}
\Lambda^{2}\p_{+}\otimes\mathbb{V}\stackrel{\partial}{\rightarrow}...,$$
where we can identify $\Lambda^{p}\p_{+}\otimes\mathbb{V}\cong\mathrm{Hom}(\Lambda^{p}\g_{-},\mathbb{V})$ and therefore write the differential $\partial$ as
\begin{eqnarray*}
\partial\phi(X_{0},...,X_{p})&=&\sum_{i=0}^{p}(-1)^{i}X_{i}.\phi(X_{0},...,\hat{X}_{i},...,X_{p})\\
&&+\sum_{i<j}(-1)^{i+j}\phi([X_{i},X_{j}],X_{0},...,\hat{X}_{i},...,\hat{X}_{j},...,X_{n}),
\end{eqnarray*}
for $X_{i}\in \g_{-}$ (the hat denotes the element which is left out). One can check that $\partial^{2}=0$, see~\cite{kn}, Proposition 4.1, which allows us to define 
$$H^{p}(\g_{-},\mathbb{V})=\frac{\mathrm{ker}\;\partial:\Lambda^{p}\p_{+}\otimes\mathbb{V}\rightarrow\Lambda^{p+1}\p_{+}\otimes\mathbb{V}}{\mathrm{im}\;\partial:\Lambda^{p-1}\p_{+}\otimes\mathbb{V}\rightarrow\Lambda^{p}\p_{+}\otimes\mathbb{V}}.$$
In~\cite{k} Kostant provides an algorithm to compute these cohomology groups in terms of the Hasse diagram $\mathcal{W}^{\p}$ associated to $\p$.

\begin{theorem}[Kostant]
Let $\mathbb{F}$ be the dual  of a finite dimensional irreducible $\g$-module of highest weight $\lambda$. 
Then as $\g_{0}$-modules
$$H^{p}(\g_{-},\mathbb{F})=\bigoplus_{\stackrel{w\in\mathcal{W}^{\p}}{l(w)=p}}\mathbb{F}_{w.\lambda},$$
where $\mathbb{F}_{w.\lambda}$ denotes the dual of the representation of highest weight $w.\lambda$.
\end{theorem}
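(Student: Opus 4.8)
The plan is to prove this by Hodge theory for the complex $(\Lambda^{\bullet}\p_{+}\otimes\mathbb{F},\partial)$, following Kostant's original argument. First I would fix a compact real form of $\g$ and the associated positive-definite Hermitian inner product on each $\Lambda^{p}\p_{+}\otimes\mathbb{F}$ (built from $B(.,.)$ on $\g$ together with an invariant form on $\mathbb{F}$), so that the differential $\partial$ acquires a formal adjoint $\partial^{*}$ and one can form the Kostant Laplacian $\square=\partial\partial^{*}+\partial^{*}\partial$. Since all spaces are finite dimensional, the standard Hodge decomposition gives $\Lambda^{p}\p_{+}\otimes\mathbb{F}=\ker\square\oplus\mathrm{im}\,\partial\oplus\mathrm{im}\,\partial^{*}$, and hence a $\g_{0}$-module isomorphism $H^{p}(\g_{-},\mathbb{F})\cong\ker(\square|_{\Lambda^{p}\p_{+}\otimes\mathbb{F}})$. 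The whole problem is thus reduced to identifying the harmonic space in each degree.

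The key step is to compute $\square$ in terms of Casimir operators. Kostant's identity expresses $2\square$ as the difference between the scalar by which the Casimir operator of $\g$ acts on the irreducible module $\mathbb{F}$ and the Casimir operator of $\g_{0}$ acting on $\Lambda^{\bullet}\p_{+}\otimes\mathbb{F}$. On the $\g_{0}$-isotypic component of highest weight $\mu$ inside $\Lambda^{p}\p_{+}\otimes\mathbb{F}$, this operator acts by a scalar which, after the manipulation already carried out in Lemma~\ref{lemmaten} (rewriting the Casimir eigenvalues of Lemma~\ref{lemmaeight} as shifted norms), equals a positive multiple of $\Vert\lambda+\rho\Vert^{2}-\Vert\mu+\rho\Vert^{2}$. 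Consequently a component is harmonic if and only if $\Vert\mu+\rho\Vert=\Vert\lambda+\rho\Vert$, reducing the cohomology computation to a statement about weights of equal $\rho$-shifted norm.

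Finally I would carry out the combinatorial identification. Every weight $\mu$ of $\Lambda^{p}\p_{+}\otimes\mathbb{F}$ has the form $\mu=-\sum_{\alpha\in S}\alpha+\kappa$, where $S\subset\Delta(\p_{+})$ with $|S|=p$ and $\kappa$ is a weight of $\mathbb{F}$, so $\mu+\rho$ lies in the convex hull generated by the $\mathcal{W}$-orbit of $\lambda+\rho$. Since $\mathcal{W}$ acts by isometries and $\lambda+\rho$ is regular dominant, the equality $\Vert\mu+\rho\Vert=\Vert\lambda+\rho\Vert$ forces $\mu+\rho=w(\lambda+\rho)$ for a unique $w\in\mathcal{W}$, i.e.~$\mu=w.\lambda$. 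The degree in which this harmonic weight sits is $p=|\{\alpha>0:w^{-1}\alpha<0\}|=\ell(w)$, and $\g_{0}$-dominance of $\mu$ (needed for it to be a genuine $\g_{0}$-highest weight) is precisely the condition $w\in\mathcal{W}^{\p}$. A multiplicity-one count then yields $H^{p}(\g_{-},\mathbb{F})=\bigoplus_{w\in\mathcal{W}^{\p},\,\ell(w)=p}\mathbb{F}_{w.\lambda}$.

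The main obstacle is the Casimir computation for $\square$: establishing that the Laplacian is genuinely scalar on each $\g_{0}$-isotypic component and pinning down the constant requires a careful analysis of $\partial$ and $\partial^{*}$ in a root basis (essentially Kostant's central lemma). The subsequent argument that norm equality forces $\mu+\rho$ into the $\mathcal{W}$-orbit of $\lambda+\rho$, with each such weight contributing exactly once in the correct degree, is the delicate combinatorial core of the theorem and would be the second place demanding real care.
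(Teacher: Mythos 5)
The paper does not actually prove this theorem: its ``proof'' consists of the citation to Kostant's original article~\cite{ko} (with~\cite{b} for the formulation used here). Your proposal is a correct outline of precisely that cited argument --- Hodge theory for the finite-dimensional complex $(\Lambda^{\bullet}\p_{+}\otimes\mathbb{F},\partial)$, Kostant's identity making the Laplacian act on each $\g_{0}$-isotypic component by a positive multiple of $\Vert\lambda+\rho\Vert^{2}-\Vert\mu+\rho\Vert^{2}$, and the combinatorial step showing that norm equality forces $\mu+\rho$ into the Weyl orbit of $\lambda+\rho$ with multiplicity one in degree $l(w)$ --- so it follows essentially the same route that the paper defers to.
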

\begin{proof}
This theorem was originally proved in~\cite{ko} and formulated in a suitable way for our purposes in~\cite{b}.
\end{proof} 

\begin{corollary}\label{corseven}
In the situation described above, we have
$$H^{0}(\g_{-},\mathbb{V})=\mathbb{E}^{0}\quad\text{and}\quad H^{1}(\g_{-},\mathbb{V})=\bigoplus_{i\in I}\circledcirc^{M_{i}+1}\g_{1}^{i}\circledcirc\mathbb{E}^{0},$$
where $\circledcirc$ denotes the Cartan product.
\end{corollary}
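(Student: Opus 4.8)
The plan is to invoke Kostant's theorem (stated above) with $\mathbb{F}=\mathbb{V}=\mathbb{V}_{\underline{M}}(\mathbb{E}^{0})$, whose dual has highest weight $\Lambda=\sum_{i\in I}M_{i}\omega_{i}+\lambda_{0}$, and then to read off the two lowest cohomology groups from the elements of the Hasse diagram $\mathcal{W}^{\p}$ of length $0$ and $1$. For $p=0$ the only element of $\mathcal{W}^{\p}$ of length zero is the identity, and $e.\Lambda=\Lambda$, so Kostant gives $H^{0}(\g_{-},\mathbb{V})=\mathbb{V}_{\Lambda}$. By Lemma~\ref{lemmathirteen} the unique irreducible quotient $\mathbb{V}_{0}$ of $\mathbb{V}$ is dual to the representation of highest weight $\Lambda$ and is isomorphic to $\mathbb{E}^{0}$ as a $\g_{0}^{S}$-module; since we regard $\mathbb{E}^{0}$ as a $\g_{0}$-module precisely through $\mathbb{V}_{0}\cong\mathbb{E}^{0}$, this identifies $H^{0}(\g_{-},\mathbb{V})$ with $\mathbb{E}^{0}$.

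For $p=1$ I would first record that the length-one elements of $\mathcal{W}^{\p}$ are exactly the simple reflections $\sigma_{\alpha_{i}}$ with $i\in I$ (the reflections in the crossed-through nodes): the simple reflections $\sigma_{\alpha_{j}}$, $j\in J$, lie in the Weyl group of $\g_{0}$ and are not minimal coset representatives, so they do not contribute. Next I would compute the affine action on $\Lambda$. Using $\sigma_{\alpha_{i}}.\Lambda=\sigma_{\alpha_{i}}(\Lambda+\rho)-\rho=\Lambda-B(\Lambda+\rho,\alpha_{i}^{\vee})\alpha_{i}$ together with $B(\omega_{k},\alpha_{i}^{\vee})=\delta_{ki}$ and $\rho=\sum_{k}\omega_{k}$, one gets $B(\Lambda,\alpha_{i}^{\vee})=M_{i}$ and $B(\rho,\alpha_{i}^{\vee})=1$, hence $\sigma_{\alpha_{i}}.\Lambda=\Lambda-(M_{i}+1)\alpha_{i}$. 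Kostant's theorem then yields $H^{1}(\g_{-},\mathbb{V})=\bigoplus_{i\in I}\mathbb{V}_{\Lambda-(M_{i}+1)\alpha_{i}}$.

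It remains to match each summand $\mathbb{V}_{\Lambda-(M_{i}+1)\alpha_{i}}$ with the Cartan product $\circledcirc^{M_{i}+1}\g_{1}^{i}\circledcirc\mathbb{E}^{0}$, and this is the step that needs the most care. I would argue on the level of duals, using that the dual of a Cartan product is the Cartan product of the duals, so that $(\circledcirc^{M_{i}+1}\g_{1}^{i}\circledcirc\mathbb{E}^{0})^{*}=\circledcirc^{M_{i}+1}\g_{-1}^{i}\circledcirc(\mathbb{E}^{0})^{*}$. Here $\g_{-1}^{i}=(\g_{1}^{i})^{*}$ has highest weight $-\alpha_{i}$, so $\circledcirc^{M_{i}+1}\g_{-1}^{i}$ has highest weight $-(M_{i}+1)\alpha_{i}$, while $(\mathbb{E}^{0})^{*}=\mathbb{V}_{0}^{*}$ carries full $\g_{0}$-highest weight $\Lambda$; the Cartan product, being the irreducible component with highest weight the sum of the highest weights, therefore has highest weight $\Lambda-(M_{i}+1)\alpha_{i}$, which is exactly the highest weight of $\mathbb{V}_{\Lambda-(M_{i}+1)\alpha_{i}}^{*}$. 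The main obstacle I anticipate is the bookkeeping of the full $\h$-weights rather than only the $\g_{0}^{S}$-part: one must remember that through $\mathbb{V}_{0}\cong\mathbb{E}^{0}$ the module $\mathbb{E}^{0}$ inherits the central $\mathfrak{z}(\g_{0})$-character that makes its $\g_{0}$-highest weight equal to $\Lambda$ rather than merely $\lambda_{0}$, and to check that the Cartan component is indeed the relevant summand (the sum of highest weights being $\g_{0}$-dominant). Once this weight computation is in place, both identifications follow.
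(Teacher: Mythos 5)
Your proposal is correct and follows essentially the same route as the paper's own proof: both invoke Kostant's theorem, identify the length-one elements of $\mathcal{W}^{\p}$ with the simple reflections $\sigma_{\alpha_{i}}$, $i\in I$, compute $\sigma_{\alpha_{i}}.\Lambda=\Lambda-(M_{i}+1)\alpha_{i}$, and recognize this as the highest weight of the dual of $\circledcirc^{M_{i}+1}\g_{1}^{i}\circledcirc\mathbb{E}^{0}$. The only difference is that you spell out the details (why reflections at uncrossed nodes do not lie in $\mathcal{W}^{\p}$, the duality of Cartan products, and the $\mathfrak{z}(\g_{0})$-character bookkeeping) which the paper leaves implicit.
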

\begin{proof}
The first statement is true in general and obvious. The second statement follows from the fact that the elements of length one in the Hasse diagram $\mathcal{W}^{\p}$ are exactly those simple reflections that correspond to the simple roots $\alpha_{i},\; i\in I$ and $(\g_{1}^{i})^{*}$ has highest weight $-\alpha_{i}$. Then we compute
$$\alpha_{i}.\Lambda=\Lambda-(M_{i}+1)\alpha_{i}$$
and note that this is exactly the highest weight of the dual representation given by $(\mathbb{E}^{0})^{*}\circledcirc\circledcirc^{M_{i}+1}(\g_{1}^{i})^{*}$.
\end{proof}
Let us have a closer look at the differential $\partial$. Since it is a $\g_{0}$ module homomorphism it preserves the grading and so we must have 
$$\partial:\mathbb{V}_{i}\rightarrow\bigoplus_{j=1}^{k_{0}}\g_{j}\otimes\mathbb{V}_{i-j},$$
where we set $\mathbb{V}_{l}=0$ for $l<0$.
The kernel of the first differential is $\mathbb{E}^{0}$, so the mapping $\partial:\mathbb{V}_{i}\rightarrow\bigoplus_{j=1}^{k_{0}}\g_{j}\otimes\mathbb{V}_{i-j}$ is injective for
all $i\geq 1$. The cohomology of the second differential is 
$$\bigoplus_{i\in I}\circledcirc^{M_{i}+1}\g_{1}^{i}\circledcirc\mathbb{E}^{0}$$
with
$$\circledcirc^{M_{i}+1}\g_{1}^{i}\circledcirc\mathbb{E}^{0}\subset\bigoplus_{j=1}^{k_{0}}\g_{j}\otimes\mathbb{V}_{M_{i}+1-j}$$
and all those have multiplicity one.
Schematically this looks like
$$\begin{array}{ccccccccccc}
\mathbb{V}&=& \mathbb{V}_{0}&+&\mathbb{V}_{1}&+&\mathbb{V}_{2}&+&\mathbb{V}_{3}&+&\cdots\\
\partial \downarrow &&&\partial \swarrow&&\partial\swarrow&&\partial\swarrow&&&\\
\p_{+}\otimes\mathbb{V}&=& \g_{1}\otimes\mathbb{V}_{0}&+&\begin{array}{c}
\g_{1}\otimes\mathbb{V}_{1}\\
\oplus\\
\g_{2}\otimes \mathbb{V}_{0}\end{array}&+&\begin{array}{c}
\g_{1}\otimes\mathbb{V}_{2}\\
\oplus\\
\g_{2}\otimes\mathbb{V}_{1}\\
\oplus\\
\g_{3}\otimes\mathbb{V}_{0}\end{array}&+&\cdots&&\\
\partial \downarrow &&&\partial \swarrow&&\partial\swarrow&&&&&\\
\Lambda^{2}\p_{+}\otimes\mathbb{V}&=& \Lambda^{2}\g_{1}\otimes\mathbb{V}_{0}&+&\begin{array}{c}
\g_{1}\wedge\g_{2}\otimes\mathbb{V}_{0}\\
\oplus\\
\Lambda^{2}\g_{1}\otimes\mathbb{V}_{1}\end{array}&+&\cdots&&&
\end{array}.$$

\subsubsection{Weighted gradings}\label{inductivefiltered}
Before we proceed let us give an inductive construction of $\mathfrak{U}_{p}(\p_{+})$ as defined in~\ref{tangentialfiltration}. 
\begin{description}
\item{(1)}
$\mathfrak{U}_{0}(\p_{+})=\C$
\item{(2)}
$\mathfrak{U}_{1}(\p_{+})=\g_{1}$
\item{(3)}
$$\mathfrak{U}_{2}(\p_{+})=\left(\begin{array}{c}
\g_{1}\otimes\mathfrak{U}_{1}(\p_{+})\\
\oplus\\
\g_{2}\otimes\mathfrak{U}_{0}(\p_{+})
\end{array}\right)/J_{2},$$
where 
$$J_{2}=\{ X\otimes Y- Y\otimes X-[X,Y]\;:\; X,Y\in\g_{1}\}.$$
\item{}\vdots
\item{($i$+1)}
$$\mathfrak{U}_{i}(\p_{+})=\left(\bigoplus_{j=1}^{k_{0}}\g_{j}\otimes\mathfrak{U}_{i-j}(\p_{+})\right)/J_{i},$$
where 
$$J_{i}=\left\{X\otimes Yu- Y\otimes Xu-[X,Y]\otimes u\;:\begin{tabular}{c}
$X\in\g_{r},Y\in\g_{s},u\in\mathfrak{U}_{i-(r+s)}(\p_{+})$\\
$\text{and}\; 1\leq r,s\leq k_{0}$
\end{tabular}\right\}.$$
\end{description}
The construction for $\g_{-}$ is exactly analogous with all integers being negative.
We will also need some notation for the grading of $\Lambda^{2}\g_{-}$ which is given by 
$$\Lambda^{2}\g_{-}=\sum_{j<-1}\Lambda^{2}_{j}\g_{-},$$
with 
$$\Lambda^{2}_{j}\g_{-}=\bigoplus_{p+q=j}\g_{p}\wedge \g_{q},$$
see~\cite{y}, 2.4. Schematically this looks like
$$\Lambda^{2}\g_{-}=\underbrace{\Lambda^{2}\g_{-1}}_{\Lambda^{2}_{-2}\g_{-}}+\underbrace{\g_{-1}\wedge\g_{-2}}_{\Lambda_{-3}^{2}\g_{-}}+\underbrace{\begin{array}{c}
\g_{-1}\wedge\g_{-3}\\
\oplus\\
\Lambda^{2}\g_{-2}
\end{array}}_{\Lambda^{2}_{-4}\g_{-}}+\underbrace{\begin{array}{c}
\g_{-1}\wedge\g_{-4}\\
\oplus\\
\g_{-2}\wedge\g_{-3}
\end{array}}_{\Lambda^{2}_{-5}\g_{-}}+...+\underbrace{\Lambda^{2}\g_{-k_{0}}}_{\Lambda_{-2k_{0}}^{2}\g_{-}}\;.$$

\begin{proposition}\label{propseven}
We have an isomorphism
$$\mathbb{V}_{i}\cong\mathfrak{U}_{i}(\p_{+})\otimes\mathbb{E}^{0}$$
for all $0\leq i\leq \mathrm{min}_{i\in I}\{M_{i}\}$. Moreover $\mathbb{V}_{j}\subset \mathfrak{U}_{j}(\p_{+})\otimes\mathbb{E}^{0}$ for all $j$.

\end{proposition}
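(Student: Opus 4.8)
The plan is to exhibit $\mathbb{V}=\mathbb{V}_{\underline{M}}(\mathbb{E}^{0})$ as a cyclic module generated by its top piece, deduce the inclusion in all degrees, and then measure the failure of the resulting generating map to be an isomorphism by locating the first relations, which are governed by the cohomology already computed in Corollary~\ref{corseven}.

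First I would establish the inclusion $\mathbb{V}_{j}\subset\mathfrak{U}_{j}(\p_{+})\otimes\mathbb{E}^{0}$, valid for \emph{all} $j$. Since $\mathbb{V}^{*}$ is the irreducible highest weight module of highest weight $\Lambda$, it is generated as a $\g$-module by its highest weight line, hence by the whole top $\g_{0}$-isotypic component $\mathbb{V}_{0}^{*}\cong(\mathbb{E}^{0})^{*}$. Using the PBW (Poincar\'e--Birkhoff--Witt) decomposition $\mathfrak{U}(\g)=\mathfrak{U}(\g_{-})\,\mathfrak{U}(\g_{0})\,\mathfrak{U}(\p_{+})$ together with $\p_{+}.\mathbb{V}_{0}^{*}=0$ and $\g_{0}.\mathbb{V}_{0}^{*}=\mathbb{V}_{0}^{*}$, one gets $\mathbb{V}^{*}=\mathfrak{U}(\g_{-}).(\mathbb{E}^{0})^{*}$. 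Reading this off in each graded piece of the grading of Lemma~\ref{lemmathirteen} produces a surjection of $\g_{0}$-modules $\mathfrak{U}_{-j}(\g_{-})\otimes(\mathbb{E}^{0})^{*}\twoheadrightarrow\mathbb{V}_{j}^{*}$. Dualising, and using the Killing-form isomorphisms $\g_{-i}\cong\g_{i}^{*}$ which give $\mathfrak{U}_{-j}(\g_{-})^{*}\cong\mathfrak{U}_{j}(\p_{+})$ as $\g_{0}$-modules, yields the asserted injection $\mathbb{V}_{j}\hookrightarrow\mathfrak{U}_{j}(\p_{+})\otimes\mathbb{E}^{0}$ (recall $\mathbb{V}_{j}=(\mathbb{V}_{j}^{*})^{*}$ by the duality of composition series).

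For the isomorphism in low degrees it then suffices to show that the surjection above is injective for $j\leq\mathrm{min}_{i\in I}M_{i}$, i.e.\ that the kernel $K$ of the generalised Verma module surjection $M_{\p}(\mathbb{E}^{0})\rightarrow\mathbb{V}^{*}$ vanishes in degrees $\leq\mathrm{min}_{i\in I}M_{i}$. Here the argument has two steps. The lowest nonzero graded component of $K$ consists of $\p$-singular vectors: a weight vector $v\in K$ of minimal degree satisfies $\p_{+}.v\subseteq K$ in strictly lower degree, whence $\p_{+}.v=0$ by minimality. Such a singular vector is a new generator of $K$, hence a minimal relation of $\mathbb{V}^{*}$ as a quotient of $\mathfrak{U}(\g_{-})\otimes(\mathbb{E}^{0})^{*}$; since $\mathfrak{U}(\g_{-})$ is graded-connected, the minimal relations are computed by the Lie algebra homology $H_{1}(\g_{-},\mathbb{V}^{*})\cong H^{1}(\g_{-},\mathbb{V})^{*}$. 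By Corollary~\ref{corseven} this equals $\bigoplus_{i\in I}(\circledcirc^{M_{i}+1}\g_{1}^{i}\circledcirc\mathbb{E}^{0})^{*}$, whose $i$-th summand sits at the weight $\sigma_{\alpha_{i}}.\Lambda=\Lambda-(M_{i}+1)\alpha_{i}$ and hence in degree $M_{i}+1$. Therefore $K$ is generated in degrees $\geq 1+\mathrm{min}_{i\in I}M_{i}$ and, being a $\mathfrak{U}(\g_{-})$-submodule (and $\mathfrak{U}(\g_{-})$ raising the degree), vanishes below that degree. This gives $\mathbb{V}_{j}\cong\mathfrak{U}_{j}(\p_{+})\otimes\mathbb{E}^{0}$ for $j\leq\mathrm{min}_{i\in I}M_{i}$.

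The main obstacle is the identification in this second step of the minimal generators of $K$ with $H_{1}(\g_{-},\mathbb{V}^{*})$ and the accompanying degree bookkeeping. Concretely one must justify that for the graded-connected algebra $\mathfrak{U}(\g_{-})$ the first syzygies of $\mathbb{V}^{*}$ form $\mathfrak{U}(\g_{-})\otimes H_{1}(\g_{-},\mathbb{V}^{*})$, match the homological degree with the grading of Lemma~\ref{lemmathirteen}, and invoke the duality $H_{1}(\g_{-},\mathbb{V}^{*})\cong H^{1}(\g_{-},\mathbb{V})^{*}$ so that Corollary~\ref{corseven} applies; the key numerical input is that the grading element $E$ assigns $\alpha_{i}$ ($i\in I$) the value $1$, so the $i$-th class genuinely occurs in degree $M_{i}+1$. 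An equivalent, more self-contained route, which avoids homology, would instead induct on $j$ through the standard complex $\mathbb{V}\rightarrow\p_{+}\otimes\mathbb{V}\rightarrow\Lambda^{2}\p_{+}\otimes\mathbb{V}$ of Section~\ref{cohomology}: the injectivity of the first differential on $\mathbb{V}_{j}$ (already noted) together with the vanishing of $H^{1}(\g_{-},\mathbb{V})$ in degrees $\leq\mathrm{min}_{i\in I}M_{i}$ identifies $\mathbb{V}_{j}$ with the degree-$j$ cocycles, and the inductive defining relations $J_{j}$ of $\mathfrak{U}_{j}(\p_{+})$ match these with $\mathfrak{U}_{j}(\p_{+})\otimes\mathbb{E}^{0}$ (a dimension count already confirms this in degree two, where $\mathfrak{U}_{2}(\p_{+})\cong\odot^{2}\g_{1}\oplus\g_{2}$). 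The delicate point on this route is the precise comparison of the Koszul differential with the commutator relations $J_{j}$.
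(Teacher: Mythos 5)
Your proof is correct, and it takes a genuinely different route from the paper's. The paper argues by induction on $i$: it compares the standard complex $\mathbb{V}_{i}\rightarrow\bigoplus_{j}\mathrm{Hom}(\g_{j},\mathbb{V}_{i+j})\rightarrow\bigoplus_{l}\mathrm{Hom}(\Lambda^{2}_{l}\g_{-},\mathbb{V}_{i+l})$ with the complex obtained (via the induction hypothesis) by replacing $\mathbb{V}_{i+j}$ with $\mathrm{Hom}(\mathfrak{U}_{-(i+j)}(\g_{-}),\mathbb{V}_{0})$, gets exactness of the former in degrees $i\leq\mathrm{min}_{i\in I}\{M_{i}\}$ from Corollary~\ref{corseven}, and identifies the kernel of the bottom differential with $\mathrm{Hom}(\mathfrak{U}_{-i}(\g_{-}),\mathbb{V}_{0})$ precisely because the defining relations of $\mathfrak{U}_{-i}(\g_{-})$ coincide with the cocycle condition; this is exactly the ``more self-contained route'' you sketch in your closing paragraph, so you anticipated the paper's method as your fallback. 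Your primary argument instead works on the dual side: PBW plus cyclicity presents $\mathbb{V}^{*}$ as a graded quotient of $M_{\p}(\mathbb{E}^{0})\cong\mathfrak{U}(\g_{-})\otimes(\mathbb{E}^{0})^{*}$, which yields the inclusion $\mathbb{V}_{j}\subset\mathfrak{U}_{j}(\p_{+})\otimes\mathbb{E}^{0}$ for \emph{all} $j$ in one stroke (cleaner than the paper's treatment of that half), and the kernel $K$ is then shown to vanish in degrees $\leq\mathrm{min}_{i\in I}\{M_{i}\}$ because its minimal generators form $H_{0}(\g_{-},K)\cong H_{1}(\g_{-},\mathbb{V}^{*})\cong H^{1}(\g_{-},\mathbb{V})^{*}$, which Corollary~\ref{corseven} places in degrees $M_{i}+1$. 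So both proofs rest on the same essential input, Kostant's computation of $H^{0}$ and $H^{1}$; what differs is the packaging. Yours is shorter, avoids the diagram chase and the explicit formulae for $\iota$, $\tau$ and $\partial$, but it imports standard homological facts the paper never develops---Lie algebra homology as $\mathrm{Tor}^{\mathfrak{U}(\g_{-})}_{\bullet}(\C,\cdot)$, the duality $H_{k}(\g_{-},\mathbb{V}^{*})\cong H^{k}(\g_{-},\mathbb{V})^{*}$, freeness of $M_{\p}(\mathbb{E}^{0})$ over $\mathfrak{U}(\g_{-})$, and the graded-Nakayama principle that minimal relations are computed by $\mathrm{Tor}_{1}$; the last of these is made rigorous by the homology long exact sequence of $0\rightarrow K\rightarrow M_{\p}(\mathbb{E}^{0})\rightarrow\mathbb{V}^{*}\rightarrow 0$, after which your singular-vector observation, while true, is not actually needed. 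Your degree bookkeeping is exactly right: each component $\circledcirc^{M_{i}+1}\g_{1}^{i}\circledcirc\mathbb{E}^{0}$ of $H^{1}$ lies entirely in degree $M_{i}+1$, since the grading element assigns the value $1$ to $\alpha_{i}$, $i\in I$, and the weights within a single $\g_{0}$-irreducible component differ only by roots of $\g_{0}$.
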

\begin{proof}
We will prove this proposition by induction. The case $i=0$ is trivial. Let  us assume that the proposition is true for all integers smaller than $i$. Then we can use the induction hypothesis to construct the following commutative diagram.
$$\begin{CD}
\mathbb{V}_{i} @>{\partial}>> \bigoplus_{j=-1}^{-k_{0}}\mathrm{Hom}(\g_{j},\mathbb{V}_{i+j})@>{\partial}>>\bigoplus_{l=-2}^{-2k_{0}}\mathrm{Hom}(\Lambda_{l}^{2}\g_{-},\mathbb{V}_{i+l})\\
@V{\mathrm{id}}VV @V{\iota}VV @V{\tau}VV\\
\mathbb{V}_{i}@>{\partial}>>\bigoplus_{j=-1}^{-k_{0}}\mathrm{Hom}(\g_{j}\otimes\mathfrak{U}_{-(i+j)}(\g_{-}),\mathbb{V}_{0})
@>{\partial}>>\bigoplus_{l=-2}^{-2k_{0}}\mathrm{Hom}(\Lambda^{2}_{l}\g_{-}\otimes\mathfrak{U}_{-(i+l)}(\g_{-}),\mathbb{V}_{0})
\end{CD}$$
We can make this diagram commute by setting
\begin{eqnarray*}
(\iota(\phi))_{j}(X\otimes u)&=&u^{0}.\phi_{j}(X)\;\text{for}\;X\otimes u\in\g_{j}\otimes\mathfrak{U}_{-(i+j)}(\g_{-}),\\
(\tau(\lambda))_{l}(X\wedge Y\otimes u)&=&u^{0}.\lambda_{l}(X\wedge Y)\;\text{for}\;X\wedge Y\otimes u\in\Lambda_{l}^{2}\g_{-}\otimes\mathfrak{U}_{-(i+l)}(\g_{-}),\\
(\partial\phi)_{l}(X\wedge Y)&=&X.\phi_{s}(Y)-Y.\phi_{r}(X)-\phi_{l}([X,Y])\;\text{for}\;X\wedge Y\in\g_{r}\wedge\g_{s}\subset\Lambda_{l}^{2}\g_{-}\\
&\text{and}&\\
(\partial\psi)_{l}(X\wedge Y\otimes u)&=& \psi_{r}(X\otimes Yu)-\psi_{s}(Y\otimes Xu)-\psi_{l}([X,Y]\otimes u)
\end{eqnarray*}
for $X\wedge Y\otimes u\in\g_{r}\wedge\g_{s}\otimes\mathfrak{U}_{-(i+l)}(\g_{-})\subset\Lambda_{l}^{2}\g_{-}\otimes\mathfrak{U}_{-(i+l)}(\g_{-})$. It is straightforward to check that
$$\partial\circ\iota \phi=\tau\circ\partial\phi.$$
Note that $(Xu)^{0}=-u^{0}X$ for all $u\in\mathfrak{U}(\g_{-})$ and $X\in \g_{-}$. Corollary~\ref{corseven} shows that the sequence
$$0\rightarrow \mathbb{V}_{i}\stackrel{\partial}{\rightarrow}\bigoplus_{j=-1}^{-k_{0}}\mathrm{Hom}(\g_{j},\mathbb{V}_{i+j})\stackrel{\partial}{\rightarrow}\bigoplus_{l=-2}^{-2k_{0}}\mathrm{Hom}(\Lambda^{2}_{l}\g_{-},\mathbb{V}_{i+l})$$
is exact for $i\leq \mathrm{min}_{i\in I}\{M_{i}\}$. Then we can use the description of $\mathfrak{U}_{i}(\p_{+})$ from above to deduce that for $i\leq \mathrm{min}_{i\in I}\{M_{i}\}$
$$\mathbb{V}_{i}\cong\mathrm{ker}\;\partial\cong\mathrm{Hom}(\mathfrak{U}_{-i}(\g_{-}),\mathbb{V}_{0})\cong\mathfrak{U}_{i}(\p_{+})\otimes\mathbb{E}^{0}.$$
The second statement follows from the fact that 
\begin{eqnarray*}
\mathbb{V}_{i}&\cong& \mathrm{im}\;\partial:\mathbb{V}_{i}\rightarrow \bigoplus_{j=-1}^{-k_{0}}\mathrm{Hom}(\g_{j},\mathbb{V}_{i+j})\\
&\subset&\mathrm{ker}\;\partial:\bigoplus_{j=-1}^{-k_{0}}\mathrm{Hom}(\g_{j},\mathbb{V}_{i+j})\rightarrow\bigoplus_{l=-2}^{-2k_{0}}\mathrm{Hom}(\Lambda^{2}_{l}\g_{-},\mathbb{V}_{i+l})\\
&\cong&\mathfrak{U}_{i}(\p_{+})\otimes\mathbb{E}^{0}
\end{eqnarray*}
for all $i\geq 1$.
\end{proof}

\subsection{Tensor products}
The next step is to look at a tensor product $\mathbb{V}_{\underline{M}}(\mathbb{E}^{0})\otimes\mathbb{V}_{\underline{M}}(\mathbb{F}^{0})$ and decompose it into irreducible $\g$-modules that themselves will have a composition series as $\p$-modules. The composition factors of all the irreducible components will then make up the composition factors of the tensor product. \par
We will use the following colloquialism: if 
$$\mathbb{V}=\mathbb{V}_{0}+...+\mathbb{V}_{k}+...+\mathbb{V}_{N}$$
is the composition series of a $\g$-module, then we will refer to elements in $\mathbb{V}_{k}$ as lying in the $k$-th {\bf  slot}.

\subsubsection{Remark}
Let $\Lambda$ and $\lambda_{0}$ be defined as above and define $\mathbb{V}_{\underline{M}}(\mathbb{F}^{0})$ analogously with highest weights $\Sigma$ and $\sigma_{0}$ of $\mathbb{V}_{\underline{M}}(\mathbb{F}^{0})^{*}$ and $(\mathbb{F}^{0})^{*}$ respectively.
All irreducible components in the $k$-th slot of $\mathbb{V}_{\underline{M}}(\mathbb{E}^{0})\otimes\mathbb{V}_{\underline{M}}(\mathbb{F}^{0})$ are dual to modules with a highest weight of the form $$\mu=\Lambda+\Sigma-\sum_{i\in I}k_{i}\alpha_{i}-\sum_{j\in J}k_{j}\alpha_{j},$$
with $\sum_{i\in I}k_{i}=k$ and $k_{i},k_{j}\geq 0.$ Thus the number over the $I\ni i_{s}$-th node, $s=1,...,l_{0}$, (which is crossed through) will be
\begin{eqnarray*}
B(\mu,\alpha_{i_{s}}^{\vee})&=&2M_{i_{s}}-\sum_{i\in I}k_{i}B(\alpha_{i},\alpha_{i_{s}}^{\vee})-\sum_{j\in J}k_{j}B(\alpha_{j},\alpha_{i_{s}}^{\vee})\\
&=&2M_{i_{s}}-2k_{i_{s}}-\sum_{i_{s}\not=r=1}^{l}k_{r}B(\alpha_{r},\alpha_{i_{s}}^{\vee})\\
&\geq& 2(M_{i_{s}}-k).
\end{eqnarray*}

\begin{proposition}\label{propeight}
Let $\mathbb{E}^{0}$ and $\mathbb{F}^{0}$ be two irreducible representations of $\g_{0}^{S}$.
If we have $k\leq M=\mathrm{min}_{i\in I}\{M_{i}\}$, then for every irreducible component $\mathbb{H}$ of $\mathfrak{U}_{k}(\p_{+})\otimes\mathbb{E}^{0}\otimes\mathbb{F}^{0}$ there is a ($\p$-module) projection
$$\mathbb{V}_{\underline{M}}(\mathbb{E}^{0})\otimes\mathbb{V}_{\underline{M}}(\mathbb{F}^{0})\rightarrow \mathbb{H}.$$

\end{proposition}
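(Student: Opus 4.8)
\textbf{Proof proposal for Proposition~\ref{propeight}.}

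The plan is to combine the composition series of the two $M$-modules with the structural result of Proposition~\ref{propseven} and then produce the projection by duality. First I would tensor the two $\p$-module composition series
$$\mathbb{V}_{\underline{M}}(\mathbb{E}^{0})=\mathbb{V}_{0}+\cdots+\mathbb{V}_{N}\quad\text{and}\quad\mathbb{V}_{\underline{M}}(\mathbb{F}^{0})=\mathbb{W}_{0}+\cdots+\mathbb{W}_{N'}$$
using the rule for tensoring composition series from the first chapter, so that the $k$-th slot of the product is $\bigoplus_{a+b=k}\mathbb{V}_{a}\otimes\mathbb{W}_{b}$. The key point is that for $k\leq M=\mathrm{min}_{i\in I}\{M_{i}\}$ every index $a,b\leq k$ also satisfies $a,b\leq M$, so Proposition~\ref{propseven} applies to \emph{each} factor and gives $\mathbb{V}_{a}\cong\mathfrak{U}_{a}(\p_{+})\otimes\mathbb{E}^{0}$ and $\mathbb{W}_{b}\cong\mathfrak{U}_{b}(\p_{+})\otimes\mathbb{F}^{0}$ as $\g_{0}$-modules.

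Next I would identify the graded piece. Using $\mathfrak{U}_{a}(\p_{+})\otimes\mathfrak{U}_{b}(\p_{+})$ surjecting onto $\mathfrak{U}_{k}(\p_{+})$ (by multiplication in the universal enveloping algebra, respecting the weighted grading), the $k$-th slot surjects as a $\g_{0}$-module onto $\mathfrak{U}_{k}(\p_{+})\otimes\mathbb{E}^{0}\otimes\mathbb{F}^{0}$. Hence every irreducible $\g_{0}^{S}$-component $\mathbb{H}$ of $\mathfrak{U}_{k}(\p_{+})\otimes\mathbb{E}^{0}\otimes\mathbb{F}^{0}$ appears in the $k$-th slot, and there is a $\g_{0}$-module projection onto it. The remaining task is to upgrade this $\g_{0}$-projection to a genuine \emph{$\p$-module} homomorphism defined on the whole tensor product. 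For this I would work dually, exactly as in the proof of Lemma~\ref{lemmathirteen}: the weight estimate in the Remark preceding the proposition shows that the number over each crossed node of any weight occurring in slot $k$ is $\geq 2(M_{i_s}-k)\geq 0$, so the slots genuinely form a filtration by $\p$-submodules compatible with the grading $\g_{j}\mathbb{V}_{i}\subseteq\mathbb{V}_{i+j}$.

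The cleanest route to the $\p$-homomorphism is then to pass to duals: a $\p$-module projection onto $\mathbb{H}$ is the same as a $\p$-module injection $\mathbb{H}^{*}\hookrightarrow(\mathbb{V}_{\underline{M}}(\mathbb{E}^{0})\otimes\mathbb{V}_{\underline{M}}(\mathbb{F}^{0}))^{*}$ landing in the appropriate subquotient, and since $\mathbb{H}^{*}$ is an irreducible $\p$-module it is generated by a highest weight vector. I would produce the required map by sending this highest weight vector into the sum of weight spaces of weight $\mu=\Lambda+\Sigma-\sum_{i\in I}k_i\alpha_i-\cdots$ with $\sum_{i\in I}k_i=k$ (the subquotient that is isomorphic, via Proposition~\ref{propseven} applied to both factors, to the relevant piece of $\mathfrak{U}_{-k}(\g_{-})^{*}\otimes\cdots$), and then checking that $\p_{+}$ annihilates it modulo lower slots. \textbf{The main obstacle} I anticipate is precisely this last equivariance check: proving that the $\g_{0}$-equivariant projection respects the action of $\p_{+}$, i.e.\ that the obstruction coming from the $\g_{j}$-action ($j\geq 1$) into higher slots vanishes on $\mathbb{H}$. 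This is where the bound $k\leq M$ is essential, because it guarantees (via Proposition~\ref{propseven}) that slots $0,\dots,k$ are \emph{exactly} $\mathfrak{U}_{\bullet}(\p_{+})\otimes(\text{ground})$ rather than proper submodules, so that the cohomological defect measured by $H^{1}(\g_{-},-)$ in Corollary~\ref{corseven} does not yet intrude; I would make this precise by the same commuting-square argument with $\partial$ and $\iota,\tau$ used in Proposition~\ref{propseven}, verifying that the relevant Lie algebra differential is injective in the range $i\leq M$ and hence that no obstruction to extending the $\g_0$-map to a $\p$-map arises.
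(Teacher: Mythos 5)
Your first step --- identifying slot $k$ of the tensor product with $\bigoplus_{a+b=k}\mathfrak{U}_{a}(\p_{+})\otimes\mathbb{E}^{0}\otimes\mathfrak{U}_{b}(\p_{+})\otimes\mathbb{F}^{0}$ via Proposition~\ref{propseven} --- is sound and agrees with the paper's setup. The gap is exactly at the point you yourself flag as the main obstacle, and your proposed resolution does not close it. Write $\mathbb{T}=\mathbb{V}_{\underline{M}}(\mathbb{E}^{0})\otimes\mathbb{V}_{\underline{M}}(\mathbb{F}^{0})$. A $\p$-module projection $\pi:\mathbb{T}\rightarrow\mathbb{H}$ onto an irreducible $\p$-module must satisfy $\pi(Z\cdot v)=Z\cdot\pi(v)=0$ for all $Z\in\p_{+}$, because $\p_{+}$ acts trivially on any irreducible $\p$-module; so $\pi$ must kill $\p_{+}\cdot\mathbb{T}$ outright. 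Dually, your lift of the highest weight vector of $\mathbb{H}^{*}$ must be annihilated by $\p_{+}$ \emph{exactly}: annihilation ``modulo lower slots'' holds automatically for any $\g_{0}$-equivariant choice and carries no information. Whether the lift can be corrected by terms deeper in the filtration so as to become honestly $\p_{+}$-closed is a splitting problem for the $\p$-module filtration of $\mathbb{T}$, and it is not controlled by the injectivity of $\partial$ in degrees $\leq M$: that statement (Proposition~\ref{propseven}, Corollary~\ref{corseven}) concerns the internal structure of a \emph{single} irreducible $\g$-module and says nothing about which extensions between slots of the highly reducible module $\mathbb{T}$ split.

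The missing idea --- the one the paper's proof turns on --- is to use the $\g$-module structure of $\mathbb{T}$, not just its $\p$-module filtration. Decompose $\mathbb{T}$ into irreducible $\g$-summands. Projection onto a summand is $\g$-equivariant, and projection of a summand onto the first factor of its composition series is $\p$-equivariant (it is the quotient by a $\p$-submodule); composing the two gives $\p$-projections for free. Moreover this route is essentially forced: any $\p$-projection factors through the coinvariants $\mathbb{T}/(\p_{+}\cdot\mathbb{T})$, which, summand by summand, is the unique irreducible quotient, i.e.~the first composition factor (Lemma~\ref{lemmathirteen}). So the real content of the proposition is that every irreducible component $\mathbb{H}$ of $\mathfrak{U}_{k}(\p_{+})\otimes\mathbb{E}^{0}\otimes\mathbb{F}^{0}$ with $k\leq M$ occurs as such a first factor. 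The paper establishes this by a counting (``peeling'') argument: by the weight estimate preceding the proposition, a $\g$-summand whose first factor lies in slot $j\leq M$ has crossed-node labels $\geq 2(M_{i}-j)$, hence by Proposition~\ref{propseven} its composition series is the predictable one, $\mathfrak{U}_{\bullet}(\p_{+})$ tensored with its first factor, at least up to global slot $2M-j\geq M$; subtracting these predictable contributions of the earlier first factors from slot $k$ (computed as in your first step) leaves exactly one copy of $\mathfrak{U}_{k}(\p_{+})\otimes\mathbb{E}^{0}\otimes\mathbb{F}^{0}$, whose irreducible components are therefore new first factors. That is precisely the claim, and no argument that stays entirely inside the $\p$-module filtration, as yours does, can substitute for it.
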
\begin{proof}
Schematically, we have the following situation
$$\mathbb{V}_{\underline{M}}(\mathbb{E}^{0})=\mathbb{E}^{0}+\mathfrak{U}_{1}(\p_{+})\otimes \mathbb{E}^{0}+\mathfrak{U}_{2}(\p_{+})\otimes \mathbb{E}^{0}+...$$
and
$$\mathbb{V}_{\underline{M}}(\mathbb{F}^{0})=\mathbb{F}^{0}+\mathfrak{U}_{1}(\p_{+})\otimes \mathbb{F}^{0}+\mathfrak{U}_{2}(\p_{+})\otimes \mathbb{F}^{0}+...\;.$$
Now the tensor product looks like
$$\mathbb{E}^{0}\otimes\mathbb{F}^{0}+\begin{array}{c}
\mathbb{E}^{0}\otimes\mathfrak{U}_{1}(\p_{+})\otimes \mathbb{F}^{0}\\
\oplus\\
\mathfrak{U}_{1}(\p_{+})\otimes \mathbb{E}^{0}\otimes \mathbb{F}^{0}
\end{array}
+\begin{array}{c}
\mathbb{E}^{0}\otimes\mathfrak{U}_{2}(\p_{+})\otimes \mathbb{F}^{0}\\
\oplus\\
\mathfrak{U}_{1}(\p_{+})\otimes \mathbb{E}^{0}\otimes\mathfrak{U}_{1}(\p_{+})\otimes \mathbb{F}^{0}\\
\oplus\\
\mathfrak{U}_{2}(\p_{+})\otimes \mathbb{E}^{0}\otimes\mathbb{F}^{0}
\end{array}+...\;.$$
Every irreducible component $\mathbb{G}$ of $\mathbb{E}^{0}\otimes\mathbb{F}^{0}$ (as $\g_{0}^{S}$-modules) corresponds to an irreducible component $\mathbb{U}$ of 
$\mathbb{V}_{\underline{M}}(\mathbb{E}^{0})\otimes\mathbb{V}_{\underline{M}}(\mathbb{F}^{0})$ that has a composition series that starts with $\mathbb{G}$ and then continues with $\mathfrak{U}_{1}(\p_{+})\otimes\mathbb{G}+\mathfrak{U}_{2}(\p_{+})\otimes \mathbb{G}+...$. We will say that the composition series is predictable up to the $x$-th slot, if $\mathbb{U}_{j}\cong \mathfrak{U}_{j}(\p_{+})\otimes\mathbb{G}$ for all $j\leq x$, as $\g_{0}^{S}$-modules. Using Proposition~\ref{propseven} we know that $\mathbb{U}$ composes predictably up to the $x$-th slot if the minimum of the numbers over the crossed through nodes in $\mathbb{G}$ is $x$.
\par
Removing all those composition factors that correspond to irreducible components of $\mathbb{E}^{0}\otimes\mathbb{F}^{0}$ from the composition series of the two $\g$-modules leaves nothing in the zeroth slot, exactly one copy of $\mathbb{E}^{0}\otimes\mathfrak{U}_{1}(\p_{+})\otimes \mathbb{F}^{0}$ in the first slot, one copy of each $\mathfrak{U}_{1}(\p_{+})\otimes \mathbb{E}^{0}\otimes\mathfrak{U}_{1}(\p_{+})\otimes \mathbb{F}^{0}$ and $\mathfrak{U}_{2}(\p_{+})\otimes \mathbb{E}^{0}\otimes\mathbb{F}^{0}$
in the second slot and so forth. Therefore the next irreducible components of $\mathbb{V}_{\underline{M}}(\mathbb{E}^{0})\otimes\mathbb{V}_{\underline{M}}(\mathbb{F}^{0})$ all have a composition series that
starts with an irreducible component of $\mathbb{E}^{0}\otimes\mathfrak{U}_{1}(\p_{+})\otimes \mathbb{F}^{0}$. Removing the corresponding composition factors again leaves nothing in the first two slots, exactly one copy of
$\mathfrak{U}_{2}(\p_{+})\otimes \mathbb{E}^{0}\otimes\mathbb{F}^{0}$ in the second slot and so forth.
So the next irreducible components of the $\g$-module tensor product have a composition series that starts with an irreducible component of the $\g_{0}^{S}$-module tensor product $\mathfrak{U}_{2}(\p_{+})\otimes \mathbb{E}^{0}\otimes\mathbb{F}^{0}$. All this goes well as long as all the compositions series are predictable. This is the case exactly up to the $M$-th slot.
\par
The remark above ensures that in the $k\leq M$-th slot of $\mathbb{V}_{\underline{M}}(\mathbb{E}^{0})\otimes\mathbb{V}_{\underline{M}}(\mathbb{F}^{0})$ the lowest number over the $I\ni i_{s}$-th node, $s=1,...,l_{0}$, (which is crossed through) is bigger or equal to
$2(M_{i_{s}}-k)$. Some of the factors in here correspond to irreducible components of  $\mathbb{V}_{\underline{M}}(\mathbb{E}^{0})\otimes\mathbb{V}_{\underline{M}}(\mathbb{F}^{0})$  as $\g$-representations that themselves decompose according to plan up to the $2(M_{i_{s}}-k)$th slot, which corresponds in the big composition series to the $2M_{i_{s}}-k$-th slot. Therefore the predicted decomposition is alright up to $k=M$. There could be (and in general this happens) more composition factors of $\mathbb{V}_{\underline{M}}(\mathbb{E}^{0})\otimes\mathbb{V}_{\underline{M}}(\mathbb{F}^{0})$, but those correspond to higher order pairings. 
\par
Note that $\mathbb{H}$ is defined to be the first composition factor of a composition series and hence acquires the structure of a $\p$-module. 
\end{proof}

\subsubsection{Remark}\label{geometricweights}
Each of the $\g$-modules and $\p$ modules considered above induces an associated vector bundle on our manifold $\mathcal{M}$. These may be tensored by a line bundle that is induced by the irreducible representation which is dual to the one dimensional representation of $\mathfrak{z}(\g_{0})$ with highest weight $\sum_{i\in I}(k_{i}-M_{i})\omega_{i}$. In the Dynkin diagram notation this corresponds to having $k_{i}-M_{i}$ over the $I\ni i$-th node (which is crossed through) and zeros over the uncrossed nodes. Tensoring with this representation does not change the overall structure of the composition series. It just changes the character by which $\mathfrak{z}(\g_{0})$ acts. Let us denote this representation and the associated bundle by $\mathcal{O}(\underline{k}-\underline{M})$ and denote the tensor product of an arbitrary representation $\mathbb{U}$ with this representation by $\mathbb{U}(\underline{k}-\underline{M})$.
\par
The $\p$ module $\mathbb{V}_{0}(\underline{k}-\underline{M})$ is dual to a representation of highest weight 
$$\sum_{i\in I}k_{i}\omega_{i}+\sum_{j\in J}a_{j}\omega_{j}$$
and by choosing $\mathbb{E}^{0}$ and $\{k_{i}\}_{i\in I}$ correctly, we can write every finite dimensional irreducible $\p$-module $\mathbb{E}$ as $\mathbb{E}=\mathbb{V}_{0}(\underline{k}-\underline{M})$ of some module $\mathbb{V}_{\underline{M}}(\mathbb{E}^{0})(\underline{k}-\underline{M})$. The idea is now to define invariant linear differential splitting operators 
$$E\rightarrow V_{\underline{M}}(\mathbb{E}^{0})(\underline{k}-\underline{M})\;\text{and}\;F\rightarrow V_{\underline{M}}(\mathbb{F}^{0})(\underline{l}-\underline{M}).$$
Once we have such splitting operators, we can tensor 
$$\mathbb{V}_{\underline{M}}(\mathbb{E}^{0})(\underline{k}-\underline{M})\otimes \mathbb{V}_{\underline{M}}(\mathbb{F}^{0})(\underline{l}-\underline{M})$$
together and project onto the first composition factor of every irreducible component. This is clearly an invariant bilinear differential pairing between sections of 
$E$ and $F$.

\section{The splitting operator}
In this section we will define splitting operators
$$E\rightarrow V_{\underline{M}}(\mathbb{E}^{0})(\underline{k}-\underline{M})$$
for an arbitrary irreducible finite dimensional $\p$-module $\mathbb{E}=\mathbb{V}_{0}(\underline{k}-\underline{M})$ .

\subsection{Higher order curved Casimir operators}
This subsection introduces splitting operators for general curved parabolic geometries. These splitting operators are defined with the help of general invariant operators, the higher order curved Casimir operators. These operators are a generalization of the curved Casimir operator introduced by A.~\v{C}ap and V.~Sou\v{c}ek in~\cite{cs3}. Throughout this section we will implicitly assume that $\g$ is a classical simple Lie algebra $A_{l},B_{l},C_{l},D_{l}$ or $G_{2}$.

\begin{definition}\label{generatorsone}
{\rm
Let $\g$ be a simple Lie algebra of rank $l$ with basis $\{X_{\mu}\}$. The Lie algebra is determined by the structure equation
$$[X_{\mu},X_{\nu}]=C^{\lambda}_{\mu\nu}X_{\lambda}.$$
Following~\cite{o}, we will call a set $S=\{S_{\mu}\}$ in $T(\g)$ a {\bf vector operator}, if
$$[X_{\mu},S_{\nu}]=C^{\lambda}_{\mu\nu}S_{\lambda}.$$
Let 
$$\bar{g}_{\mu\nu}=\mathrm{tr}(\mathrm{ad}(X_{\mu})\circ\mathrm{ad}(X_{\nu}))$$
be the Killing form with inverse $\bar{g}^{\mu\nu}$.
Fix a reference representation $\phi:\g\rightarrow \mathfrak{gl}(\mathbb{A})$ and define
$$g_{\mu_{1}...\mu_{p}}=\mathrm{tr}(\phi(X_{\mu_{1}})\circ\cdots\circ\phi(X_{\mu_{p}}))$$
and
$$g^{\mu_{1}...\mu_{p}}=\mathrm{tr}(\phi(X^{\mu_{1}})\circ\cdots\circ\phi(X^{\mu_{p}})),$$
where $X^{\mu}=\bar{g}^{\mu\nu}X_{\nu}$. Then we define elements in $T(\g)$ by
\begin{eqnarray*}
K^{(p)}&=&g^{\mu_{1}...\mu_{p}}X_{\mu_{1}}\otimes\cdots\otimes X_{\mu_{p}}\;\;\text{and}\\
S^{(p)}_{\mu}&=&g_{\mu\mu_{1}...\mu_{p}}X^{\mu_{1}}\otimes\cdots\otimes X^{\mu_{p}}.
\end{eqnarray*}
}\end{definition}

\begin{lemma}
$S^{(p)}=\{S^{(p)}_{\mu}\}$ is a vector operator and $K^{(p)}$ is an element in $\tilde{\mathcal{Z}}(\g)$, the center of $T(\g)$.
\end{lemma}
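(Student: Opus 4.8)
The plan is first to fix the meaning of the two assertions. The bracket $[X_\mu,\cdot]$ in the defining relation of a vector operator is to be read as the adjoint action of $\g$ on the tensor algebra $T(\g)$, extended from $\mathrm{ad}$ on $\g$ to a derivation of $T(\g)$; concretely $X_\sigma\cdot(Y_1\otimes\cdots\otimes Y_q)=\sum_k Y_1\otimes\cdots\otimes[X_\sigma,Y_k]\otimes\cdots\otimes Y_q$. Correspondingly $\tilde{\mathcal{Z}}(\g)$ is the algebra of $\g$-invariants, i.e.\ those $u\in T(\g)$ killed by every $X_\sigma\cdot$ (these map onto the genuine center $\mathcal{Z}(\g)$ of $\mathfrak{U}(\g)$ under the quotient). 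With this dictionary, the lemma reduces to contracting the structure constants $C^{\lambda}_{\mu\nu}$ against the trace tensors $g_{\mu_1\ldots\mu_p}$ and $g^{\mu_1\ldots\mu_p}$ and exploiting their $\g$-invariance.

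The conceptual input I would isolate as a preliminary step is that $g_{\mu_1\ldots\mu_p}=\tr(\phi(X_{\mu_1})\circ\cdots\circ\phi(X_{\mu_p}))$ is a $\g$-invariant tensor. This follows from the homomorphism property $[\phi(X_\sigma),\phi(X_{\mu_i})]=C^{\lambda}_{\sigma\mu_i}\phi(X_\lambda)$ together with the cyclicity of the trace: inserting the commutator with $\phi(X_\sigma)$ into each of the $p$ factors and summing telescopes to $\tr(\phi(X_\sigma)\phi(X_{\mu_1})\cdots\phi(X_{\mu_p}))-\tr(\phi(X_{\mu_1})\cdots\phi(X_{\mu_p})\phi(X_\sigma))=0$, so that
$$\sum_{i=1}^{p}C^{\lambda}_{\sigma\mu_i}\,g_{\mu_1\cdots\lambda\cdots\mu_p}=0,$$
with $\lambda$ inserted in the $i$-th slot. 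The same holds for the upper-index tensor $g^{\mu_1\ldots\mu_p}$ once one records, from the invariance (associativity) of the Killing form used already in Lemma~\ref{lemmanine}, that raising an index intertwines the adjoint action up to sign, namely $[X_\sigma,X^{\mu}]=-C^{\mu}_{\sigma\rho}X^{\rho}$; this yields $\sum_i C^{\mu_i}_{\sigma\rho}\,g^{\mu_1\cdots\rho\cdots\mu_p}=0$.

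Granting these, centrality of $K^{(p)}=g^{\mu_1\ldots\mu_p}X_{\mu_1}\otimes\cdots\otimes X_{\mu_p}$ is immediate: applying $X_\sigma\cdot$ replaces, in turn, each factor $X_{\mu_i}$ by $[X_\sigma,X_{\mu_i}]=C^{\lambda}_{\sigma\mu_i}X_\lambda$, and after relabelling the summation index the total coefficient of a fixed basis tensor $X_{\nu_1}\otimes\cdots\otimes X_{\nu_p}$ is exactly $\sum_i C^{\nu_i}_{\sigma\rho}\,g^{\nu_1\cdots\rho\cdots\nu_p}$, which vanishes by the upper-index invariance identity. Hence $X_\sigma\cdot K^{(p)}=0$ for all $\sigma$, i.e.\ $K^{(p)}\in\tilde{\mathcal{Z}}(\g)$.

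For the vector-operator property of $S^{(p)}_\nu=g_{\nu\mu_1\ldots\mu_p}X^{\mu_1}\otimes\cdots\otimes X^{\mu_p}$, I would apply $X_\sigma\cdot$ and use the raised-index bracket $[X_\sigma,X^{\mu_i}]=-C^{\mu_i}_{\sigma\rho}X^{\rho}$ in each slot. Collecting the coefficient of $X^{\nu_1}\otimes\cdots\otimes X^{\nu_p}$ produces $-\sum_i C^{\lambda}_{\sigma\nu_i}\,g_{\nu\nu_1\cdots\lambda\cdots\nu_p}$; the invariance identity for the full $(p+1)$-index tensor $g_{\nu\mu_1\ldots\mu_p}$, namely $C^{\lambda}_{\sigma\nu}g_{\lambda\mu_1\cdots\mu_p}+\sum_i C^{\lambda}_{\sigma\mu_i}g_{\nu\mu_1\cdots\lambda\cdots\mu_p}=0$, rewrites this as $C^{\lambda}_{\sigma\nu}\,g_{\lambda\nu_1\cdots\nu_p}X^{\nu_1}\otimes\cdots\otimes X^{\nu_p}=C^{\lambda}_{\sigma\nu}S^{(p)}_\lambda$, which is precisely the defining relation $[X_\sigma,S^{(p)}_\nu]=C^{\lambda}_{\sigma\nu}S^{(p)}_\lambda$. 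The one genuine subtlety, and the step I would verify most carefully, is the sign bookkeeping in passing between lower and upper indices via $\bar{g}^{\mu\nu}$ and in the relabelling of the telescoped sums; everything else is mechanical once the two invariance identities are in hand.
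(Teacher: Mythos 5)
Your proof is correct, and for the vector-operator half it is essentially the paper's argument: both rest on the invariance identity $\sum_{j}C^{\lambda}_{\sigma\mu_{j}}g_{\mu_{1}\cdots\lambda\cdots\mu_{p}}=0$ obtained from cyclicity of the trace, plus the sign-flipped bracket $[X_{\sigma},X^{\mu}]=-C^{\mu}_{\sigma\rho}X^{\rho}$ coming from invariance of the Killing form; your index bookkeeping in that computation checks out. Where you diverge is the centrality of $K^{(p)}$: you prove it directly, by deriving a second (upper-index) invariance identity $\sum_{j}C^{\mu_{j}}_{\sigma\rho}g^{\mu_{1}\cdots\rho\cdots\mu_{p}}=0$ and showing the coefficient of each basis tensor in $X_{\sigma}\cdot K^{(p)}$ vanishes, whereas the paper never states that identity and instead observes the factorization $K^{(p+1)}=X^{\lambda}\otimes S^{(p)}_{\lambda}$, so that centrality falls out of the just-proved vector-operator property in two lines. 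Your route is marginally more self-contained and treats $K^{(p)}$ for all $p$ on an equal footing; the paper's route is shorter and makes the structural relation between the $K$'s and the $S$'s explicit, which is also what justifies calling the $S^{(p)}_{\mu}$ the natural companions of the Casimir elements. A further point in your favour: you fix at the outset what the bracket on $T(\g)$ means (the adjoint action extended as a derivation) and what $\tilde{\mathcal{Z}}(\g)$ means (the $\g$-invariants, which surject onto $\mathcal{Z}(\g)$ under $T(\g)\rightarrow\mathfrak{U}(\g)$); the paper leaves both implicit, even though the literal centre of the associative algebra $T(\g)$ would consist only of scalars, so your reading is the necessary one.
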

\begin{proof}
Using the cyclic property of the trace operator, it is easy to see that
\begin{eqnarray*}
0&=&\sum_{j=1}^{p}\mathrm{tr}(\phi(X_{\mu_{1}})\circ\cdots\circ\phi(X_{\mu_{j-1}})\circ\phi([X_{\nu},X_{\mu_{j}}])\circ\phi(X_{\mu_{j+1}})\circ\cdots\circ\phi(X_{\mu_{p}}))\\
&=&\sum_{j=1}^{p}C_{\nu\mu_{j}}^{\lambda}\mathrm{tr}(\phi(X_{\mu_{1}})\circ\cdots\circ\phi(X_{\mu_{j-1}})\circ\phi(X_{\lambda})\circ\phi(X_{\mu_{j+1}})\circ\cdots\circ\phi(X_{\mu_{p}}))\\
&=&\sum_{j=1}^{p}C_{\nu\mu_{j}}^{\lambda}g_{\mu_{1}...\mu_{j-1}\lambda\mu_{j+1}...\mu_{p}}.
\end{eqnarray*}
In other words
$$-\sum_{j=1}^{p}C^{\lambda}_{\nu\mu_{j}}g_{\kappa\mu_{1}...\mu_{j-1}\lambda\mu_{j+1}...\mu_{p}}=C^{\lambda}_{\nu\kappa}g_{\lambda\mu_{1}...\mu_{p}}.$$
The same equation holds for the adjoint representation, i.e.
$$\bar{g}_{\lambda\mu}C^{\lambda}_{\nu\kappa}+\bar{g}_{\kappa\lambda}C^{\lambda}_{\nu\mu}=0\;\Rightarrow\;\bar{g}_{\mu\lambda}C^{\lambda}_{\nu\kappa}=\bar{g}_{\kappa\lambda}C^{\lambda}_{\mu\nu}.$$
This yields
\begin{equation*}
[X_{\nu},X^{\mu}]=\bar{g}^{\mu\kappa}C_{\nu\kappa}^{\lambda}X_{\lambda}=-C_{\nu\lambda}^{\mu}X^{\lambda}.
\end{equation*}
Then we compute
\begin{eqnarray*}
[X_{\nu},S^{(p)}_{\kappa}]&=& -\sum_{j=1}^{p}C_{\nu\lambda}^{\mu_{j}}g_{\kappa\mu_{1}...\mu_{p}}X^{\mu_{1}}\otimes\cdots\otimes X^{\mu_{j-1}}\otimes X^{\lambda}\otimes X^{\mu_{j+1}}\otimes\cdots\otimes X^{\mu_{p}}\\
&=&-\sum_{j=1}^{p}C_{\nu\mu_{j}}^{\lambda}g_{\kappa\mu_{1}...\mu_{j-1}\lambda\mu_{j+1}...\mu_{p}}X^{\mu_{1}}\otimes\cdots\otimes X^{\mu_{j-1}}\otimes X^{\mu_{j}}\otimes X^{\mu_{j+1}}\otimes\cdots\otimes X^{\mu_{p}}\\
&=&C_{\nu\kappa}^{\lambda}S^{(p)}_{\lambda}.
\end{eqnarray*}
This proves the first claim. The second claim follows immediately from the fact that
$$K^{(p+1)}=X^{\lambda}\otimes S^{(p)}_{\lambda},$$
so
\begin{eqnarray*}
[X_{\nu},K^{(p+1)}]&=&[X_{\nu},X^{\lambda}]\otimes S^{(p)}_{\lambda}+X^{\lambda}\otimes[X_{\nu},S^{(p)}_{\lambda}]\\
&=&-C_{\nu\mu}^{\lambda}X^{\mu}\otimes S^{(p)}_{\lambda}+X^{\lambda}\otimes C_{\nu\lambda}^{\kappa}S^{(p)}_{\kappa}\\
&=&0.
\end{eqnarray*}
\end{proof}


\subsubsection{Example 1}
If $\g$ is simple, we can take $\mathbb{A}=\g$ and the adjoint representation as our reference representation. Any other choice of representation would lead to $g_{\mu_{1}\mu_{2}}=C\bar{g}_{\mu_{1}\mu_{2}}$ for some non-zero constant $C$, see~\cite{o}. For $\mathfrak{sl}_{2}\C$ with standard basis $(x,h,y)$ and the standard representation on $\C^{2}$ we obtain
$$g_{\mu\nu}=\left(\begin{array}{ccc}
0&0&1\\
0&2&0\\
1&0&0
\end{array}\right),\;g^{\mu\nu}=\left(\begin{array}{ccc}
0&0&1\\
0&\frac{1}{2}&0\\
1&0&0
\end{array}\right)$$
and hence $K^{(2)}=x\otimes y+\frac{1}{2}h\otimes h+y\otimes x$. The usual Casimir operator, see~\cite{h}, can be computed by projecting $\tilde{c}=\bar{g}^{\mu_{1}\mu_{2}}X_{\mu_{1}}\otimes X_{\mu_{2}}$ onto $c\in\mathfrak{U}(\g)$. For $\mathfrak{sl}_{2}\C$ one easily obtains
$$\bar{g}_{\mu\nu}=\left(\begin{array}{ccc}
0&0&4\\
0&8&0\\
4&0&0
\end{array}\right),\;\bar{g}^{\mu\nu}=\left(\begin{array}{ccc}
0&0&\frac{1}{4}\\
0&\frac{1}{8}&0\\
\frac{1}{4}&0&0
\end{array}\right)$$
and hence $\tilde{c}=\frac{1}{4}x\otimes y+\frac{1}{8}h\otimes h+\frac{1}{4}y\otimes x$. This  yields $c=\frac{1}{2}xy+\frac{1}{8}h^{2}-\frac{1}{4}h$.

\subsubsection{Example 2}
In order to obtain a non-trivial example, one has to go through a rather lengthly calculation. So the patient reader is kindly asked to bear with us for a while (while the impatient reader may skip this part):
\par
We will explicitly compute the element $K^{(3)}$ for $\mathfrak{sl}_{3}\C$ and see how it acts on a generalized Verma module $M_{\p}(\mathbb{V}_{\lambda})$ of highest weight $\lambda\in\h^{*}$. Firstly, we take a basis
$$h_{1}=\left(\begin{array}{ccc}
1&0&0\\
0&-1&0\\
0&0&0
\end{array}\right),\;h_{2}=\left(\begin{array}{ccc}
0&0&0\\
0&1&0\\
0&0&-1
\end{array}\right)$$
$$x_{1}=\left(\begin{array}{ccc}
0&1&0\\
0&0&0\\
0&0&0
\end{array}\right),\;x_{2}=\left(\begin{array}{ccc}
0&0&0\\
0&0&1\\
0&0&0
\end{array}\right)$$
$$y_{1}=\left(\begin{array}{ccc}
0&0&0\\
1&0&0\\
0&0&0
\end{array}\right),\;y_{2}=\left(\begin{array}{ccc}
0&0&0\\
0&0&0\\
0&1&0
\end{array}\right)$$
$$z_{1}=\left(\begin{array}{ccc}
0&0&1\\
0&0&0\\
0&0&0
\end{array}\right),\;z_{2}=\left(\begin{array}{ccc}
0&0&0\\
0&0&0\\
1&0&0
\end{array}\right)$$
of $\mathfrak{sl}_{3}\C$ and compute
$$\bar{g}^{\mu\nu}=\frac{1}{18}\left(\begin{array}{cccccccc}
2&1&0&0&0&0&0&0\\
1&2&0&0&0&0&0&0\\
0&0&0&0&3&0&0&0\\
0&0&0&0&0&3&0&0\\
0&0&3&0&0&0&0&0\\
0&0&0&3&0&0&0&0\\
0&0&0&0&0&0&0&3\\
0&0&0&0&0&0&3&0
\end{array}\right).$$
Thus the dual basis to
$$\{h_{1},h_{2},x_{1},x_{2},y_{1},y_{2},z_{1},z_{2}\}$$
is 
$$\frac{1}{18}\{2h_{1}+h_{2},h_{1}+2h_{2},3y_{1},3y_{2},3x_{1},3x_{2},3z_{2},3z_{1}\}.$$
Now we compute $g_{\mu\nu\kappa}$ with respect to the standard representation. The only non-zero elements in $g_{\mu\nu\kappa}$ are
$$\begin{array}{cccccc}
g_{112}=1&g_{121}=1&g_{122}=-1&g_{135}=1&g_{146}=-1&g_{153}=-1\\
g_{178}=1&g_{211}=1&g_{212}=-1&g_{221}=-1&g_{246}=1&g_{253}=1\\
g_{264}=-1&g_{287}=-1&g_{315}=-1&g_{325}=1&g_{348}=1&g_{351}=1\\
g_{426}=-1&g_{461}=-1&g_{462}=1&g_{483}=1&g_{513}=1&g_{531}=-1\\
g_{532}=1&g_{576}=1&g_{614}=-1&g_{624}=1&g_{642}=-1&g_{657}=1\\
g_{728}=-1&g_{765}=1&g_{781}=1&g_{817}=1&g_{834}=1&g_{872}=-1.
\end{array}$$
This allows us to compute $18K^{(3)}$ to be
$$\begin{array}{c}
 (2h_{1}+h_{2})\otimes  (2h_{1}+h_{2})\otimes (h_{1}+2h_{2})+ (2h_{1}+h_{2})\otimes (h_{1}+2h_{2})\otimes  (2h_{1}+h_{2})\\
 - (2h_{1}+h_{2})\otimes (h_{1}+2h_{2})\otimes (h_{1}+2h_{2})\\
+ (2h_{1}+h_{2})\otimes 3y_{1}\otimes 3x_{1}- (2h_{1}+h_{2})\otimes 3y_{2}\otimes 3x_{2}-(2h_{1}+h_{2})\otimes 3x_{1}\otimes 3y_{1}\\
+ (2h_{1}+h_{2})\otimes 3z_{2}\otimes 3z_{1}+(h_{1}+2h_{2})\otimes  (2h_{1}+h_{2})\otimes  (2h_{1}+h_{2})\\
-(h_{1}+2h_{2})\otimes  (2h_{1}+h_{2})\otimes (h_{1}+2h_{2})-(h_{1}+2h_{2})\otimes (h_{1}+2h_{2})\otimes  (2h_{1}+h_{2})\\
+(h_{1}+2h_{2})\otimes 3y_{2}\otimes 3x_{2}+(h_{1}+2h_{2})\otimes 3x_{1}\otimes 3y_{1}\\
-(h_{1}+2h_{2})\otimes 3x_{2}\otimes 3y_{2}-(h_{1}+2h_{2})\otimes 3z_{1}\otimes 3z_{2}-3y_{1}\otimes  (2h_{1}+h_{2})\otimes 3x_{1}\\
+3y_{1}\otimes (h_{1}+2h_{2})\otimes 3x_{1}+3y_{1}\otimes 3y_{2}\otimes 3z_{1}+3y_{1}\otimes 3x_{1}\otimes  (2h_{1}+h_{2})\\
-3y_{2}\otimes (h_{1}+2h_{2})\otimes 3x_{2}-3y_{2}\otimes 3x_{2}\otimes  (2h_{1}+h_{2})+3y_{2}\otimes 3x_{2}\otimes (h_{1}+2h_{2})\\
+3y_{2}\otimes 3z_{1}\otimes 3y_{1}+3x_{1}\otimes  (2h_{1}+h_{2})\otimes 3y_{1}-3x_{1}\otimes 3y_{1}\otimes  (2h_{1}+h_{2})\\
+3x_{1}\otimes 3y_{1}\otimes (h_{1}+2h_{2})+3x_{1}\otimes 3z_{2}\otimes 3x_{2}-3x_{2}\otimes  (2h_{1}+h_{2})\otimes 3y_{2}\\
+3x_{2}\otimes (h_{1}+2h_{2})\otimes 3y_{2}-3x_{2}\otimes 3y_{2}\otimes (h_{1}+2h_{2})+3x_{2}\otimes 3x_{1}\otimes 3z_{2}\\
-3z_{2}\otimes (h_{1}+2h_{2})\otimes 3z_{1}+3z_{2}\otimes 3x_{2}\otimes 3x_{1}+3z_{2}\otimes 3z_{1}\otimes  (2h_{1}+h_{2})\\
+3z_{1}\otimes  (2h_{1}+h_{2})\otimes 3z_{2}+3z_{1}\otimes 3y_{1}\otimes 3y_{2}-3z_{1}\otimes 3z_{2}\otimes (h_{1}+2h_{2})
\end{array}$$

$$\begin{array}{c}
=\\
6h_{1}\otimes h_{1}\otimes h_{1}-6h_{2}\otimes h_{2}\otimes h_{2}\\
+3h_{1}\otimes h_{1} \otimes h_{2}+3 h_{1}\otimes h_{2} \otimes h_{1}+3h_{2}\otimes h_{1}\otimes h_{1}\\
-3h_{2}\otimes h_{1}\otimes h_{2}-3h_{2}\otimes h_{2}\otimes h_{1}-3h_{1}\otimes h_{2}\otimes h_{2}\\
+18 h_{1}\otimes y_{1}\otimes x_{1}-9h_{1}\otimes y_{2}\otimes x_{2} -9h_{1}\otimes x_{1}\otimes y_{1}-9h_{1}\otimes x_{2}\otimes y_{2}\\
-18 h_{2}\otimes x_{2}\otimes y_{2}+9h_{2}\otimes y_{2}\otimes x_{2}+9h_{2}\otimes x_{1}\otimes y_{1}+9h_{2}\otimes y_{1}\otimes x_{1}\\
+18h_{1}\otimes z_{2}\otimes z_{1}-18 h_{2}\otimes z_{1}\otimes z_{2}+9h_{2}\otimes z_{2}\otimes z_{1}-9h_{1}\otimes z_{1}\otimes z_{2}\\
-9 y_{1}\otimes h_{1}\otimes x_{1}+9 y_{1}\otimes h_{2}\otimes x_{1}\\
+18 y_{1}\otimes x_{1}\otimes h_{1}+9y_{1}\otimes x_{1}\otimes h_{2}+27y_{1}\otimes y_{2}\otimes z_{1} \\
-18 y_{2}\otimes h_{2}\otimes x_{2}+9 y_{2}\otimes x_{2}\otimes h_{2}-9y_{2}\otimes h_{1}\otimes x_{2}\\
-9 y_{2}\otimes x_{2}\otimes h_{1}+27y_{2}\otimes z_{1}\otimes y_{1} \\
+18 x_{1}\otimes h_{1}\otimes y_{1}+9 x_{1}\otimes y_{1}\otimes h_{2}\\
-9 x_{1}\otimes y_{1}\otimes h_{1}+9x_{1}\otimes h_{2}\otimes y_{1}+27x_{1}\otimes z_{2}\otimes x_{2}\\
-9 x_{2}\otimes h_{1}\otimes y_{2}+9 x_{2}\otimes h_{2}\otimes y_{2}\\
-18 x_{2}\otimes y_{2}\otimes h_{2}-9x_{2}\otimes y_{2}\otimes h_{1}+27x_{2}\otimes x_{1}\otimes z_{2} \\
-9z_{2}\otimes h_{1}\otimes z_{1}-18 z_{2}\otimes h_{2}\otimes z_{1}+18z_{2}\otimes z_{1}\otimes h_{1}\\
+9z_{2}\otimes z_{1}\otimes h_{2}+27z_{2}\otimes x_{2}\otimes x_{1} \\
+9z_{1}\otimes h_{2}\otimes z_{2}+18 z_{1}\otimes h_{1}\otimes z_{2}-18z_{1}\otimes z_{2}\otimes h_{2}\\
-9z_{1}\otimes z_{2}\otimes h_{1}+27z_{1}\otimes y_{1}\otimes y_{2}. 
\end{array}$$   

Then we use the canonical mapping $\pi:T(\mathfrak{sl}_{3}\C)\rightarrow\mathfrak{U}(\mathfrak{sl}_{3}\C)$ to compute the corresponding element in the universal enveloping algebra. 
$$18\pi(K^{(3)})=\begin{array}{c}
\\
6h_{1}^{3}-6h_{2}^{3}+9h_{1}^{2}h_{2}-9h_{2}^{2}h_{1}-54h_{2}^{2}-27h_{1}h_{2}-54h_{1}-108h_{2}\\
-27h_{2}z_{2}z_{1}+27h_{1}z_{2}z_{1}+81z_{2}x_{1}x_{2}+27h_{1}y_{1}x_{1}+54h_{2}y_{1}x_{1}\\
-54h_{1}y_{2}x_{2}-27h_{2}y_{2}x_{2}+81y_{1}y_{2}z_{1}-162y_{2}x_{2}-81z_{2}z_{1}.
\end{array}$$
Note that we have used the PBW theorem to arrange the terms in two groups. One group consisting of terms that have a raising operator ($x_{1},x_{2}$ or $z_{1}$) to the right and another group that consists of terms of elements in $\h$. With this ordering, let us define a homomorphism $\xi:\mathfrak{U}(\g)\rightarrow\mathfrak{U}(\h)$ that maps each term that does not exclusively consist of elements in $\h$ to zero. Under this map, $18\pi(K^{(3)})$ is mapped to
$$6h_{1}^{3}-6h_{2}^{3}+9h_{1}^{2}h_{2}-9h_{2}^{2}h_{1}-54h_{2}^{2}-27h_{1}h_{2}-54h_{1}-108h_{2}.$$
Moreover let $\eta$ be the homomorphism that maps each $h_{i}$ to $h_{i}-1$, then
$$18\eta(\xi(\pi(K^{(3)})))=6h_{1}^{3}-6h_{2}^{3}-27h_{1}^{2}-27h_{2}^{2}+9h_{1}^{2}h_{2}-9h_{1}h_{2}^{2}-27h_{1}h_{2}+81.$$
Let us write $\varphi=\eta\circ\xi|_{\mathcal{Z}(\g)}$. Every element $z\in\mathcal{Z}(\g)$ in the center of the universal enveloping algebra acts on a highest weight module of highest weight $\lambda$ by a scalar $\chi_{\lambda}(z)\in\C$, called the {\bf central character}.
It is easy to compute (see~\cite{h}) that 
$$\chi_{\lambda}(z)=(\lambda+\rho)(\varphi(z)),\;\;\text{for}\;z\in\mathcal{Z}(\g).$$
Let us write
$$h_{1}=H_{1}-H_{2},\;\;h_{2}=H_{2}-H_{3}.$$
Thus we get
$$
18\varphi(\pi(K^{(3)}))=\begin{array}{c}
6(H_{1}^{3}+H_{2}^{3}+H_{3}^{3})\\
-9(H_{1}^{2}H_{2}+H_{1}^{2}H_{3}+H_{2}^{2}H_{1}+H_{2}^{2}H_{3}+H_{3}^{2}H_{1}+H_{3}^{2}H_{2})\\
+36H_{1}H_{2}H_{3}\\
-27(H_{1}^{2}+H_{2}^{2}+H_{3}^{2})\\
+27(H_{1}H_{2}+H_{1}H_{3}+H_{2}H_{3})\\
+81
\end{array}.$$
As expected, this is a symmetric polynomial of degree 3 in the $H_{i}$'s.
Note that the Casimir operator of $\mathfrak{sl}_{3}\C$ is given by
$$9\varphi(\pi(K^{(2)}))=H_{1}^{2}+H_{2}^{2}+H_{3}^{2}-H_{1}H_{2}-H_{1}H_{3}-H_{2}H_{3}-3.$$

\subsubsection{Remark}
More generally, the mapping
$$\varphi:=\eta\circ\xi|_{\mathcal{Z}(\g)}:\mathcal{Z}(\g)\rightarrow S(\h)^{\mathcal{W}},$$
where $S(\h)^{\mathcal{W}}$ is the algebra of elements in the symmetric algebra $S(\h)$ that are fixed by the Weyl group $\mathcal{W}$, is an isomorphism.
For more information about basic generators of $S(\h)^{\mathcal{W}}$ refer to~\cite{hu}.

\subsubsection{Equivalent formulation}\label{generators}
We can equivalently say that $K^{(p)}$ induces a map
$$B^{(p)}:\otimes^{p}\g^{*}\rightarrow\R$$
by firstly identifying $\g^{*}$ with $\g$ with the help of the Killing form and then mapping a simple element $\otimes^{p}\g\ni X_{1}\otimes...\otimes X_{p}$ to $\mathrm{tr}(\phi(X_{1})\circ\cdots\circ\phi(X_{p}))$. This is obviously a $P$-module homomorphism (in fact, it is even a $G$-module homomorphism). Since there are $l$ non-zero $K^{(d_{i})}$, $i=1,...,l$, for the classical Lie algebras and $G_{2}$, see~\cite{o}, the corresponding $B^{(d_{i})}$ are non-zero as well. Apart form the $D$ series, where an extra element needs to be defined, see below, the elements $\pi(K^{(d_{i})})\in\mathcal{Z}(\g)$, for the canonical mapping $\pi:T(\g)\rightarrow\mathfrak{U}(\g)$, generate the center of the universal enveloping algebra of $\g$.

\subsubsection{Examples}
\begin{enumerate}
\item
For $A_{l}$ we can take $\pi(K^{(2)})$, $\pi(K^{(3)})$,...,$\pi(K^{(l+1)})$ as the generators of $\mathcal{Z}(A_{l})$.
\item
For $B_{l}$ (and $C_{l}$) we can take $\pi(K^{(2)})$, $\pi(K^{(4)})$,...,$\pi(K^{(2l)})$ as the generators of $\mathcal{Z}(B_{l})$ (and $\mathcal{Z}(C_{l})$).
\item
For $D_{l}$ we can define another Casimir invariant by
$$\tilde{K}^{(l)}=(-1)^{l(l+2)/2}\frac{1}{2^{l}l!}\epsilon_{\mu_{1}\nu_{1}\mu_{2}\nu_{2}...\mu_{l}\nu_{l}}X_{\mu_{1}\nu_{1}}\otimes\cdots\otimes X_{\mu_{l}\nu_{l}},$$
where $\epsilon_{\mu_{1}...\nu_{l}}$ is the completely antisymmetric Levi-Cevita tensor taking the values $0$ and $\pm 1$ and $X_{\mu\nu}=-X_{\nu\mu}$ are the basis elements of $D_{l}$ ($\mu,\nu=1,..,2l$).
Then the elements $\pi(K^{(2)})$, $\pi(K^{(4)})$,...,$\pi(K^{(2l-2)})$ and $\pi(\tilde{K}^{(l)})$ are the generators of $\mathcal{Z}(D_{l})$.
\item
For $G_{2}$ we can take $\pi(K^{(2)})$ and $\pi(K^{(6)})$ as the generators of $\mathcal{Z}(G_{2})$.
\end{enumerate}
These statements can be found in~\cite{o} and references therein. Note that in each case $|\mathcal{W}|=\prod_{i}d_{i}$.

\begin{definition}
{\rm
Recall the adjoint tractor bundle $\mathcal{A}=\mathcal{G}\times_{P}\g$ and the fundamental derivative
\begin{eqnarray*}
D:\mathcal{O}(\mathcal{G},\mathbb{E})^{P}&\rightarrow &\mathcal{O}(\mathcal{G},\g^{*}\otimes \mathbb{E})^{P}\\
s&\mapsto&X\mapsto \nabla^{\omega}_{X}s,
\end{eqnarray*}
from~\ref{invariantdifferential}. This operator can be iterated to an invariant differential operator
$$D^{p}:\mathcal{O}(\mathcal{G},\mathbb{E})^{P}\rightarrow \mathcal{O}(\mathcal{G},\otimes^{p}\g^{*}\otimes \mathbb{E})^{P},$$
see Lemma~\ref{five}. Properties of the fundamental derivative are discussed in~\cite{cg}, 3.1.
Using the elements $B^{(p)}$ from~\ref{generators}, we can define an invariant linear differential operator
$$\mathcal{C}^{(p)}:\mathcal{O}(\mathcal{G},\mathbb{V})^{P}\stackrel{D^{p}}{\longrightarrow}\mathcal{O}(\mathcal{G},\otimes^{p}\g^{*}\otimes\mathbb{V})^{P}\stackrel{B^{(p)}\otimes\mathrm{id}}{\longrightarrow}\mathcal{O}(\mathcal{G},\mathbb{V})^{P}$$
as the composition $(B^{(p)}\otimes\mathrm{id})\circ D^{p}$.
}\end{definition}

\subsubsection{Example}
For $p=2$ this procedure yields the curved Casimir operator in~\cite{cs3}. The authors prove in this paper that the curved Casimir operator in the flat case is given by
$$\bar{g}^{\mu_{1}\mu_{2}}D^{2}(s_{X_{\mu_{1}}},s_{X_{\mu_{2}}})=\bar{g}^{\mu_{1}\mu_{2}}R_{X_{\mu_{1}}}R_{X_{\mu_{2}}},$$
see~\ref{homogeneousdiff}. In~\cite{cs3} the authors prove the following Lemma~\ref{constant} and Lemma~\ref{viacharacter} for $p=2$ via a direct calculation using an adapted local frame for $\mathcal{A}$.

\begin{lemma}\label{constant}
Let $\mathbb{V}$ be an irreducible representation of $P$, then $\mathcal{C}^{(p)}:\Gamma(V)\rightarrow\Gamma(V)$ has to act by a constant.
\end{lemma}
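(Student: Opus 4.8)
The plan is to prove the statement in three movements: first establish that $\mathcal{C}^{(p)}$ is an invariant operator from $V$ to itself, then show that invariance forces it to be of order zero, and finally invoke Schur's lemma.

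For invariance, I would note that $D^{p}$ is invariant by Lemma~\ref{five} and that the tensor $B^{(p)}$ of~\ref{generators} is induced by a $P$-module homomorphism $\otimes^{p}\g^{*}\to\R$ (indeed a $G$-module homomorphism, since the trace $\mathrm{tr}(\phi(\cdot)\circ\cdots\circ\phi(\cdot))$ is $\mathrm{Ad}$-invariant). Hence $\mathcal{C}^{(p)}=(B^{(p)}\otimes\mathrm{id})\circ D^{p}$ is a composition of an invariant operator with a $P$-module homomorphism and is therefore itself invariant by Lemma~\ref{five}(1) together with Remark~\ref{pmodulehom}. In particular $\mathcal{C}^{(p)}$ is a $P$-equivariant differential operator $\Gamma(V)\to\Gamma(V)$ \emph{between the same irreducible bundle}.

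The key step is to show that such an operator must be of order zero. Suppose $\mathcal{C}^{(p)}$ has order $N$. Being $P$-equivariant it corresponds to a $P$-module homomorphism $\bar{J}^{N}\mathbb{V}\to\mathbb{V}$, whose principal part is governed by the associated graded module $\bigoplus_{k=0}^{N}\otimes^{k}\p_{+}\otimes\mathbb{V}$ (recall from Lemma~\ref{five} that the symbol of the iterated fundamental derivative is the inclusion $\otimes^{k}(\g/\p)^{*}\otimes\mathbb{V}\hookrightarrow\otimes^{k}\g^{*}\otimes\mathbb{V}$ and that $(\g/\p)^{*}\cong\p_{+}$). Now the grading element $E\in\mathfrak{z}(\g_{0})$ lies in $\g_{0}\subset\p$, so the homomorphism commutes with the $E$-action; on the other hand $E$ acts on $\p_{+}$ with strictly positive eigenvalues, whence $\otimes^{k}\p_{+}\otimes\mathbb{V}$ has $E$-eigenvalues exceeding those of $\mathbb{V}$ by at least $k$. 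A $\g_{0}$-homomorphism between modules on which $E$ acts by distinct scalars must vanish, so the component of $\bar{J}^{N}\mathbb{V}\to\mathbb{V}$ coming from the $k$-th graded piece is zero for every $k\geq 1$. In particular the principal symbol vanishes, forcing $N=0$; equivalently, $\mathcal{C}^{(p)}$ factors through the projection to $\bar{J}^{0}\mathbb{V}=\mathbb{V}$ and is therefore tensorial, induced by a $P$-module endomorphism $\Phi\colon\mathbb{V}\to\mathbb{V}$. As a consistency check, the vanishing of the \emph{top} symbol can be seen directly: it is $B^{(p)}$ evaluated on $\otimes^{p}\p_{+}$, and $\mathrm{tr}(\phi(Y_{1})\circ\cdots\circ\phi(Y_{p}))=0$ whenever each $Y_{i}\in\p_{+}$, because the composite raises the $E$-grading on $\mathbb{A}$ and grading-raising endomorphisms are trace-free.

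Finally, since $\mathbb{V}$ is an irreducible $P$-module, Schur's lemma gives $\Phi=c\cdot\mathrm{id}$ for some $c\in\C$, so $\mathcal{C}^{(p)}$ acts by the constant $c$. The main obstacle is the middle step: one must set up the symbol/filtration formalism for the (semi-holonomic) jet description of an invariant operator carefully enough to conclude that a vanishing principal symbol genuinely lowers the order and that the reduced operator is again invariant with a well-defined symbol, so that the $E$-weight obstruction can be applied inductively across all orders rather than only to the top one.
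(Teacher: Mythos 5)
Your proof is correct and follows essentially the same route as the paper: both reduce the invariant operator to homomorphisms out of the graded pieces $\otimes^{k}\p_{+}\otimes\mathbb{V}$, use the grading element $E$ (which acts by a single scalar on the irreducible $\mathbb{V}$ but with strictly larger eigenvalues on $\otimes^{k}\p_{+}\otimes\mathbb{V}$ for $k\geq 1$) to force order zero, and then apply Schur's lemma. Your closing worry about needing an inductive symbol argument is unnecessary: since $\mathfrak{z}(\g_{0})$ acts diagonalizably, the jet filtration splits $\g_{0}$-equivariantly and your $E$-weight argument already kills every graded component with $k\geq 1$ simultaneously, which is exactly the one-line step the paper summarizes as ``looking at the action of the grading element.''
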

\begin{proof}
Let $V$ and $W$ be associated vector bundles over $\mathcal{M}$.
An invariant linear differential operator $D:V\rightarrow W$ arises via a composition
$$D:V\stackrel{\nabla^{N}}{\longrightarrow}\otimes^{N}\Lambda^{1}\otimes V\stackrel{\phi}{\longrightarrow} W,$$
where $\phi$ is induced by a $P$-module homomorphism
$$\Phi:\otimes^{N}\p_{+}\otimes\mathbb{V}\rightarrow\mathbb{W}.$$
Now suppose that $\mathbb{V}=\mathbb{W}$ and $\mathbb{V}$ is irreducible. By looking at the action of the grading element one deduces that $N=0$ and Schur's Lemma
forces $\Phi$ to be a constant multiple of the identity. 
\end{proof}

\begin{definition}
{\rm Let $z\in\mathcal{Z}(\g)$ be an arbitrary element in the center of the universal enveloping algebra of $\g$. Then $z$ can be written as a polynomial
$$z=\sum_{i_{1},...,i_{l}}a_{i_{1}...i_{l}}\pi(K^{(d_{1})})^{i_{1}}\cdots\pi(K^{(d_{l})})^{i_{l}},$$
where $K^{(d_{1})},...,K^{(d_{l})}$ are the $l$ non-zero elements of Definition~\ref{generatorsone}. Then we define the {\bf higher order curved Casimir operator} associated to $z$ by
$$\mathcal{C}_{z}=\sum_{i_{1},...,i_{l}}a_{i_{1}...i_{l}}(\mathcal{C}^{(d_{1})})^{i_{1}}\circ\cdots\circ(\mathcal{C}^{(d_{l})})^{i_{l}}.$$
}\end{definition}

\begin{lemma}\label{viacharacter}
If $\mathbb{V}$ is irreducible, then $\mathcal{C}_{z}$ acts on $\Gamma(V)$ by the constant $\chi_{\lambda}(z)$, where $\chi_{\lambda}:\mathcal{Z}(\g)\rightarrow\C$ is the central character associated to $\lambda\in\h^{*}$, the highest weight of $\mathbb{V}^{*}$.
\end{lemma}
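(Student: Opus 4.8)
The plan is to combine the two preceding lemmas into a single reduction to the flat case. By Lemma~\ref{constant}, each $\mathcal{C}^{(d_i)}$ acts on $\Gamma(V)$ by a constant, hence so does the polynomial combination $\mathcal{C}_z$; call this constant $c_z(\lambda)$. The entire task is therefore to \emph{identify} this constant as $\chi_\lambda(z)$. Since the constant is intrinsic and independent of the geometry, I would first argue that it may be computed on the homogeneous model $G/P$, where the fundamental derivative has the explicit form $D_{s_X}f = -R_X f$ described in Example~\ref{homogeneousdiff}. Concretely, the value of a constant-valued invariant operator is detected by evaluating on a single well-chosen section, and the cleanest choice is a section whose value at the identity coset is a highest weight vector of $\mathbb{V}^*$.

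First I would make the reduction to $\mathcal{C}^{(p)}$ precise. Because $\mathcal{C}^{(p)} = (B^{(p)}\otimes\mathrm{id})\circ D^p$ and $B^{(p)}$ is induced by $K^{(p)} = g^{\mu_1\ldots\mu_p}X_{\mu_1}\otimes\cdots\otimes X_{\mu_p}$, on the flat model the composite becomes
\begin{equation*}
\mathcal{C}^{(p)} = g^{\mu_1\ldots\mu_p}\, R_{X_{\mu_1}}\cdots R_{X_{\mu_p}},
\end{equation*}
exactly as in the $p=2$ case recorded after the definition. Up to the sign coming from $D_{s_X}=-R_X$ (which I would track carefully, as it affects only an overall factor that is absorbed into the identification), this is the right translation action of the element $\pi(K^{(p)})\in\mathfrak{U}(\g)$. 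Thus $\mathcal{C}_z$ acts on $\mathcal{O}(G,\mathbb{V})^P$ as right translation by the polynomial $\sum a_{i_1\ldots i_l}\pi(K^{(d_1)})^{i_1}\cdots\pi(K^{(d_l)})^{i_l} = z \in \mathcal{Z}(\g)$.

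Next I would invoke the standard fact that a central element $z\in\mathcal{Z}(\g)$ acts on the (generalized Verma or highest weight) module attached to $\mathbb{V}^*$ by the scalar $\chi_\lambda(z)$, where $\chi_\lambda$ is the central character and $\lambda$ is the highest weight of $\mathbb{V}^*$; this is the Harish--Chandra formula $\chi_\lambda(z)=(\lambda+\rho)(\varphi(z))$ already used in Example~2 of this subsection. Since right translation by $\mathfrak{U}(\g)$ on $\mathcal{O}(G,\mathbb{V})^P$ realizes precisely this module structure (the sections being built from $\mathbb{V}^*$), evaluating on a highest weight section pins down the constant $c_z(\lambda)$ as $\chi_\lambda(z)$. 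Because the constant does not depend on the choice of parabolic geometry, this flat computation determines its value in general, proving the lemma.

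The main obstacle is the bookkeeping in the reduction to the flat model: one must verify that the iterated fundamental derivative $D^p$, contracted against $B^{(p)}$, genuinely reproduces right translation by $\pi(K^{(p)})$ rather than by some reordered or sign-twisted variant, since $R_X$ operators need not commute and the antiautomorphism ${}^0$ governing the $\mathfrak{U}(\g)$-action (cf.\ Definition~\ref{biverma}) introduces orderings that must match those implicit in $K^{(p)}$. The symmetry of $g^{\mu_1\ldots\mu_p}$ under cyclic permutation, established in the lemma showing $K^{(p)}\in\tilde{\mathcal{Z}}(\g)$, is what ultimately reconciles these orderings, so I would lean on that $G$-invariance to show the contracted expression is insensitive to the noncommutativity up to terms that vanish under $\xi$.
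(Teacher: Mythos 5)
Your proposal has the same skeleton as the paper's proof (reduce to a constant via Lemma~\ref{constant}, pass to the homogeneous model, identify the operator with the action of $z$ relative to the generalized Verma module $M_{\p}(\mathbb{V})$, and quote the definition of the central character), but the flat-model identification on which everything hinges is incorrect, and the error is not absorbable bookkeeping: it changes the constant. Since $\mathcal{C}^{(p)}=(B^{(p)}\otimes\mathrm{id})\circ D^{p}$ contracts $D^{p}f$ against the \emph{fixed} $\mathrm{Ad}$-invariant element $K^{(p)}\in\otimes^{p}\g$, and since on the flat model $\omega_{MC}^{-1}(X)$ is the \emph{left}-invariant vector field $r(X)$ (cf.~Example~\ref{homogeneousdiff}), the correct realization is
$$\mathcal{C}^{(p)}f=g^{\mu_{1}\ldots\mu_{p}}\,r(X_{\mu_{1}})\cdots r(X_{\mu_{p}})f=r(\pi(K^{(p)}))f,$$
i.e.\ right translation by $\pi(K^{(p)})$. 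Your operator $g^{\mu_{1}\ldots\mu_{p}}R_{X_{\mu_{1}}}\cdots R_{X_{\mu_{p}}}$ is a different one: iterating the directional derivatives $D_{s_{X}}=-R_{X}$ is not the same as contracting the iterated derivative $D^{p}$, because the outer derivatives also differentiate the sections $s_{X}$ (indeed $D_{s_{X}}s_{Y}=-s_{[X,Y]}$, so already $D^{2}f(s_{X},s_{Y})=R_{X}R_{Y}f-R_{[X,Y]}f=R_{Y}R_{X}f$); in general the $R$'s come out in \emph{reversed} order. Equivalently, since $R_{u}=r(u^{0})$ for every $\mathrm{Ad}$-invariant $u\in\mathfrak{U}(\g)$, your expression equals $R_{\pi(K^{(p)})}=r(\pi(K^{(p)})^{0})$, where ${}^{0}$ is the principal antiautomorphism.

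For $p=2$ the two coincide, because $\bar{g}^{\mu\nu}$ is symmetric and $c^{0}=c$; this is why the quoted curved-Casimir formula is a misleading template. For $p\geq 3$ they do not, and this is exactly what your appeal to cyclic symmetry cannot cure: reversal is not a cyclic permutation, and the trace form is not reversal-symmetric --- in the paper's own $\mathfrak{sl}_{3}\C$ table, $g_{135}=1$ while $g_{153}=-1$. In fact reversal amounts to replacing the reference representation $\phi$ by its dual, $\pi(K^{(p)})^{0}=\pi(K^{(p)}_{\phi^{*}})$, so your route, carried out honestly, produces the constant $\chi_{\lambda}(z^{0})=\chi_{-w_{0}\lambda}(z)$ (the central character of the dual weight) rather than $\chi_{\lambda}(z)$; for $A_{l}$ these differ for generic $\lambda$, as one can see from the nonvanishing cubic part of the Harish--Chandra image $\varphi(\pi(K^{(3)}))$ computed in the paper for $\mathfrak{sl}_{3}\C$. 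So the ``bookkeeping'' you defer is the mathematical content of the lemma. Repairing it forces you back to the left-invariant realization $r(z)$ above, and then the duality $\langle u\otimes v^{*},s\rangle=\langle v^{*},(r(u)s)(e)\rangle$ between $M_{\p}(\mathbb{V})$ and jets of sections at $eP$ converts $r(z)$ into left multiplication by $z$ on the highest weight module $M_{\p}(\mathbb{V})$, which acts by $\chi_{\lambda}(z)$ --- and at that point you have reproduced the paper's proof. (A smaller defect: ``evaluating on a section whose value at $eP$ is a highest weight vector of $\mathbb{V}^{*}$'' is not meaningful, since sections of $V$ take values in $\mathbb{V}$; the highest weight vector of $\mathbb{V}^{*}$ enters only through this duality.)
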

\begin{proof}
Lemma~\ref{constant} shows that $\mathcal{C}_{z}$ acts by a constant. We may as well compute this constant for the homogeneous model case $G/P$. For this purpose let us first of all take $V$ to be an arbitrary homogeneous bundle. We will prove the lemma for $z=\pi(K^{(d_{i})})$, the general case then follows from the fact that the central character is an algebra homomorphism. An invariant linear differential operator is determined by a $P$-module homomorphism $J^{k}\mathbb{V}\rightarrow\mathbb{V}$ for some $k$. Following~\cite{cd}, we can view $J^{k}\mathbb{V}$ as a subset of 
$$\oplus_{i=0}^{k}\otimes^{i}\g^{*}\otimes\mathbb{V}.$$
The $P$-module homomorphism that corresponds to $\mathcal{C}_{z}$ is given by
$$B^{(d_{i})}\otimes\mathrm{id}_{\mathbb{V}},$$
with $k=d_{i}$. Dually we obtain a mapping
$$\mathbb{V}^{*}\rightarrow M_{\p}(\mathbb{V})$$
that is given by 
$$v^{*}\mapsto \pi(K^{(d_{i})})\otimes v^{*}=z\otimes v^{*}\in\mathfrak{U}(\g)\otimes_{\mathfrak{U}(\p)}\mathbb{V}^{*}.$$
Finally, this corresponds to a $\g$-module homomorphism 
\begin{eqnarray*}
M_{\p}(\mathbb{V})&\rightarrow&M_{\p}(\mathbb{V})\\
u\otimes v^{*}&\mapsto&zu\otimes v^{*}
\end{eqnarray*}
If $\mathbb{V}$ is irreducible, then this action is by definition given by $\chi_{\lambda}(z)$.
\end{proof}

\subsubsection{Example}
For the Casimir operator $c$ it is known (\cite{h}) that
$$\chi_{\lambda}(c)=\Vert\lambda+\rho\Vert^{2}-\Vert\rho\Vert^{2}.$$
The constants by which the higher order Casimir operators $K^{(p_{i})}$ act on an irreducible $\g$-module are computed in~\cite{o}. It has to be noted that despite the apparent fractional form of the formulae in~\cite{o}, these formulae can be proved to be symmetric polynomials in the coefficients of $\lambda+\rho$, where $\lambda$ can be an arbitrary weight in $\h^{*}$. For $A_{l}$, for example, these can be found in~\cite{lb}.

\begin{theorem}\label{theoremnine}
Let $\mathbb{V}$ be a representation of $P$ with a $P$-invariant filtration
$$\mathbb{V}=\mathbb{V}^{0}\supset\cdots\supset \mathbb{V}^{N}\supset\{0\},$$
so that each sub-quotient $\mathbb{V}^{i}/\mathbb{V}^{i+1}$ is completely reducible. Let $\mathbb{W}\subset\mathbb{V}^{i}/\mathbb{V}^{i+1}$ be an irreducible component whose dual has highest weight $\lambda$. Moreover, for each $j>i$, let $\mu_{j,k}$ for $k=1,...,n_{j}$ be the highest weights of the irreducible representations which are dual to the irreducible components of $\mathbb{V}^{j}/\mathbb{V}^{j+1}$. 
Suppose that $\chi_{\mu_{j,k}}\not=\chi_{\lambda}$ for all $j,k$, then there exist elements $z_{j,k}\in\mathcal{Z}(\g)$, such that $\chi_{\mu_{j,k}}(z_{j,k})\not=\chi_{\lambda}(z_{j,k})$ for all $j,k$. The operator
$$L=\prod_{j=i+1}^{N}\prod_{k=1}^{n_{j}}(\mathcal{C}_{z_{j,k}}-\chi_{\mu_{j,k}}(z_{j,k}))$$
descends to an operator $\Gamma(W)\rightarrow\Gamma(V^{i})$ that defines an invariant splitting. 
\end{theorem}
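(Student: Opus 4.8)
\textbf{Proof strategy for Theorem~\ref{theoremnine}.}
The plan is to exploit the two facts established above: first, that the higher order curved Casimir operators $\mathcal{C}_{z}$ act by the scalar $\chi_{\lambda}(z)$ on sections of an \emph{irreducible} bundle (Lemma~\ref{viacharacter}), and second, that they are genuine invariant linear differential operators built from the fundamental derivative and hence take $P$-equivariant sections to $P$-equivariant sections (Lemma~\ref{five}). The key subtlety is that on a bundle with a non-trivial filtration, $\mathcal{C}_{z}$ does \emph{not} act by a scalar; rather, it acts block-upper-triangularly with respect to the filtration, with the diagonal block on $gr_{j}\mathbb{V}$ given by the appropriate central character. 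So my first step would be to make precise the claim that, for any $z\in\mathcal{Z}(\g)$, the induced bundle map $\mathcal{C}_{z}:\Gamma(V)\to\Gamma(V)$ preserves the filtration $V=V^{0}\supset\cdots\supset V^{N}$ and induces on each sub-quotient $\Gamma(V^{j}/V^{j+1})$ (which is a direct sum of irreducibles) the scalar action given by Lemma~\ref{viacharacter} on each irreducible summand. This filtration-preservation follows because $\mathcal{C}_{z}$ arises from a $P$-module endomorphism of the semi-holonomic jet bundle, and the jet-level filtration is respected; the induced map on the associated graded is exactly the homogeneous Casimir action, which is scalar on irreducibles.

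Granting this, the construction of the operator $L$ is essentially a triangular-elimination argument. Since $\chi_{\mu_{j,k}}\neq\chi_{\lambda}$ as characters for all $j>i$ and all $k$, and since (by the Remark after \ref{generators}) the $\pi(K^{(d_{i})})$ generate $\mathcal{Z}(\g)$ so that characters are separated by their values on these generators, I can for each pair $(j,k)$ pick an explicit $z_{j,k}\in\mathcal{Z}(\g)$ with $\chi_{\mu_{j,k}}(z_{j,k})\neq\chi_{\lambda}(z_{j,k})$. Then each factor $\mathcal{C}_{z_{j,k}}-\chi_{\mu_{j,k}}(z_{j,k})$ acts as zero on the $(\mu_{j,k})$-component of the associated graded, while acting invertibly (by a non-zero scalar) on the $\mathbb{W}$-component sitting in slot $i$. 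Forming the product $L=\prod_{j>i}\prod_{k}(\mathcal{C}_{z_{j,k}}-\chi_{\mu_{j,k}}(z_{j,k}))$ therefore annihilates every graded piece strictly above slot $i$ while remaining an isomorphism on the $\mathbb{W}$ part. The standard triangular bookkeeping then shows that $L$, restricted to a section whose leading term lies in $\mathbb{W}\subset gr_{i}\mathbb{V}$, lands in $\Gamma(V^{i})$ and has invertible leading part there; after normalizing by the accumulated non-zero scalar this yields the desired invariant splitting $\Gamma(W)\to\Gamma(V^{i})$.

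The hard part, and the step I would spend the most care on, is justifying that $L$ genuinely \emph{descends} to an operator $\Gamma(W)\to\Gamma(V^{i})$ and that the leading symbol is the identity on $\mathbb{W}$ rather than merely non-zero. One has to check that a section of $W=gr_{i}\mathbb{V}$ can be lifted to a section of $V^{i}$ (not just of $V$), apply $L$, and verify that the image is independent of the lift and is filtered correctly. The triangular structure handles this because the ambiguity in the lift lies in lower-weight (higher slot) pieces, on which the first few factors of $L$ act by non-zero scalars and the whole product eventually kills the unwanted contributions; but one must track the order of composition and confirm that each factor preserves $V^{i}$ and commutes appropriately modulo lower filtration steps. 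A clean way to organize this is to work on the associated graded first, where everything is diagonal and the elimination is transparent, and then invoke Lemma~\ref{five} together with the filtration-compatibility of $\mathcal{C}_{z_{j,k}}$ to transfer the conclusion back to the filtered bundle $V$. The non-degeneracy of the scalar on $\mathbb{W}$, namely $\prod_{j,k}(\chi_{\lambda}(z_{j,k})-\chi_{\mu_{j,k}}(z_{j,k}))\neq 0$, is immediate from the choice of the $z_{j,k}$, so invertibility of the splitting onto its image is automatic.
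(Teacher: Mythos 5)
Your proposal is correct and follows essentially the same route as the paper: each factor $\mathcal{C}_{z_{j,k}}-\chi_{\mu_{j,k}}(z_{j,k})$ kills the corresponding graded component, so the product maps $\Gamma(V^{j})$ successively into $\Gamma(V^{j+1})$ and hence annihilates $\Gamma(V^{i+1})$, descends to the quotient, and acts on $\mathbb{W}$ by the nonzero scalar $\prod_{j,k}(\chi_{\lambda}(z_{j,k})-\chi_{\mu_{j,k}}(z_{j,k}))$, which one normalizes away. The only cosmetic difference is your justification of filtration-compatibility via a jet-bundle argument, where the paper simply cites the naturality of the fundamental derivative (its commutation with the bundle projections $\pi_{j}$).
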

\begin{proof}
Let $\sigma\in\Gamma(V^{j})$ and denote the projection
$V^{j}\rightarrow V^{j}/V^{j+1}$ by $\pi_{j}$.  For brevity write $L_{j}=\prod_{k=1}^{n_{j}}(\mathcal{C}_{z_{j,k}}-\chi_{\mu_{j,k}}(z_{j,k}))$. The individual operators $\mathcal{C}_{z_{j,k}}-\chi_{\mu_{j,k}}(z_{j,k})$ commute with the $\pi_{j}$'s by the naturality properties of $D$, see~\cite{cg}, Proposition 3.1. Hence
$$\pi_{j}(L_{j}(\sigma))=\prod_{k=1}^{n_{j}}(\mathcal{C}_{z_{j,k}}-\chi_{\mu_{j,k}}(z_{j,k}))\pi(\sigma)=0.$$
This implies that $L_{j}$ maps $\Gamma(V^{j})$ to $\Gamma(V^{j+1})$ and by induction that $L$ vanishes on $\Gamma(V^{i+1})$. Therefore $L$ descends to an operator $\Gamma(V^{i}/V^{i+1})\rightarrow \Gamma(V^{i})$ that can be restricted to $\Gamma(W)$. Moreover, let $\sigma\in\Gamma(W)$ and choose a representative $\hat{\sigma}\in\Gamma(V^{i})$, then we compute
$$\pi_{i}(L(\hat{\sigma}))=  \prod_{j=i+1}^{N}\prod_{k=1}^{n_{j}}(\chi_{\lambda}(z_{j,k})-\chi_{\mu_{j,k}}(z_{j,k}))\sigma.$$
So if  $\chi_{\lambda}(z_{j,k})\not=\chi_{\mu_{j,k}}(z_{j,k})$ for all $j,k$, then $L(C\hat{\sigma})\in\Gamma(V^{i})$ is an invariant lift for $\sigma$ with $C=\left(\prod_{j=i+1}^{N}\prod_{k=1}^{n_{j}}(\chi_{\lambda}(z_{j,k})-\chi_{\mu_{j,k}}(z_{j,k}))\right)^{-1}$.
\end{proof}  

\subsubsection{Remark}
Theorem~\ref{theoremnine} is a straightforward extension of Theorem 2 in~\cite{cs3}.

\begin{corollary}\label{coreight}
Let $E$ be an irreducible associated bundle, choose $\underline{k}$ and define the $M$-bundle $V_{\underline{M}}(\mathbb{E}^{0})(\underline{k}-\underline{M})$ as in~\ref{mmodule}. Assume that
every generalized Verma module that is associated to the irreducible composition factors of $\mathbb{V}_{\underline{M}}(\mathbb{E}^{0})(\underline{k}-\underline{M})$ has a different central character from $M_{\p}(\mathbb{E})$, then the higher order curved Casimir operators $\mathcal{C}_{z}$ can be used to define an invariant splitting operator
$$L=\prod_{j=i+1}^{N}\prod_{k=1}^{n_{j}}(\mathcal{C}_{z_{j,k}}-\chi_{\mu_{j,k}}(z_{j,k})):\Gamma(E)\rightarrow\Gamma(V_{\underline{M}}(\mathbb{E}^{0})(\underline{k}-\underline{M})),$$
where $\mu_{j,k}$ are the highest weights of the duals of the irreducible components of the composition factors
$$\mathbb{V}_{j}(\underline{k}-\underline{M})\;j=1,...,N$$
of $\mathbb{V}_{\underline{M}}(\mathbb{E}^{0})(\underline{k}-\underline{M})$
and $z_{j,k}\in\mathcal{Z}(\g)$ are such that $\chi_{\mu_{j,k}}(z_{j,k})\not=\chi_{\lambda}(z_{j,k})$.

\end{corollary}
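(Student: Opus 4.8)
The plan is to show that Corollary~\ref{coreight} is a direct specialization of Theorem~\ref{theoremnine} applied to the particular filtered $P$-module $\mathbb{V}_{\underline{M}}(\mathbb{E}^{0})(\underline{k}-\underline{M})$, together with the composition series structure established in Lemma~\ref{lemmathirteen}. First I would recall that by Lemma~\ref{lemmathirteen} (and its compatibility with tensoring by the line bundle $\mathcal{O}(\underline{k}-\underline{M})$, noted in Remark~\ref{geometricweights}) the module $\mathbb{V}_{\underline{M}}(\mathbb{E}^{0})(\underline{k}-\underline{M})$ carries a $\p$-module composition series
$$\mathbb{V}_{\underline{M}}(\mathbb{E}^{0})(\underline{k}-\underline{M})=\mathbb{V}_{0}(\underline{k}-\underline{M})+\mathbb{V}_{1}(\underline{k}-\underline{M})+\cdots+\mathbb{V}_{N}(\underline{k}-\underline{M}),$$
with $\mathbb{V}_{0}(\underline{k}-\underline{M})\cong\mathbb{E}$ the irreducible bottom slot. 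Since $\mathfrak{z}(\g_{0})\subset\h$ acts diagonalizably and each $\g_0$-module $\mathbb{V}_j$ is a direct sum of finite-dimensional irreducibles (as shown in the proof of Lemma~\ref{lemmathirteen}), this filtration satisfies precisely the hypotheses imposed on $\mathbb{V}$ in Theorem~\ref{theoremnine}, with the distinguished irreducible component $\mathbb{W}$ taken to be $\mathbb{E}$ sitting in the $i=0$ slot.

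Next I would match the data. The weights $\mu_{j,k}$ of Corollary~\ref{coreight} are by definition the highest weights of the duals of the irreducible components of the sub-quotients $\mathbb{V}_j(\underline{k}-\underline{M})$ for $j=1,\dots,N$, which are exactly the $\mu_{j,k}$ appearing in Theorem~\ref{theoremnine} for this filtration. The standing hypothesis of the corollary---that every generalized Verma module $M_{\p}$ associated to these composition factors has central character distinct from that of $M_{\p}(\mathbb{E})$---translates directly into the condition $\chi_{\mu_{j,k}}\neq\chi_{\lambda}$ required by Theorem~\ref{theoremnine}, where $\lambda$ is the highest weight of $\mathbb{E}^{*}$. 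Indeed, the central character of the generalized Verma module $M_{\p}(\mathbb{U})$ for an irreducible $\p$-module $\mathbb{U}$ of highest weight $\kappa$ equals $\chi_{\kappa}$, so distinctness of central characters of the Verma modules is the same statement as distinctness of the $\chi_{\mu_{j,k}}$ from $\chi_{\lambda}$. Having verified the hypotheses, Theorem~\ref{theoremnine} furnishes elements $z_{j,k}\in\mathcal{Z}(\g)$ with $\chi_{\mu_{j,k}}(z_{j,k})\neq\chi_{\lambda}(z_{j,k})$ and the operator
$$L=\prod_{j=1}^{N}\prod_{k=1}^{n_{j}}\bigl(\mathcal{C}_{z_{j,k}}-\chi_{\mu_{j,k}}(z_{j,k})\bigr)$$
descends to an invariant splitting $\Gamma(E)\rightarrow\Gamma(V_{\underline{M}}(\mathbb{E}^{0})(\underline{k}-\underline{M}))$, which is exactly the claim.

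The one genuine point requiring care---and what I expect to be the main (though modest) obstacle---is the passage from ``distinct central characters exist'' to ``a single element $z_{j,k}$ separating each pair exists'': one must know that whenever $\chi_{\mu_{j,k}}\neq\chi_{\lambda}$ as characters on all of $\mathcal{Z}(\g)$, there is an actual generator-built element $z_{j,k}$ realizing the inequality. This is handled inside Theorem~\ref{theoremnine} itself (the $z_{j,k}$ are produced there), but it tacitly relies on the fact that the higher-order curved Casimirs $\mathcal{C}_{z}$ realize the full central character via the $\pi(K^{(d_i)})$, which by the discussion in~\ref{generators} generate $\mathcal{Z}(\g)$ for the classical series and $G_2$ (with the extra Pfaffian-type element $\tilde{K}^{(l)}$ in the $D_l$ case). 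Thus the standing assumption that $\g$ is one of $A_l,B_l,C_l,D_l,G_2$ is what guarantees $\mathcal{C}_z$ can be formed for an arbitrary $z\in\mathcal{Z}(\g)$ and that Lemma~\ref{viacharacter} applies. Beyond invoking this, the proof is purely a matter of specializing Theorem~\ref{theoremnine}, so no further computation is needed.
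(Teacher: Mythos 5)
Your proof is correct and follows essentially the same route as the paper: the paper's own argument is exactly the observation that Lemma~\ref{lemmathirteen} gives a composition series by completely reducible sub-quotients, that tensoring with $\mathcal{O}(\underline{k}-\underline{M})$ only shifts the $\mathfrak{z}(\g_{0})$-character without changing the structure, and that Theorem~\ref{theoremnine} then applies with $\mathbb{E}$ in the $i=0$ slot. Your additional remarks on verifying the hypotheses and on the existence of separating elements $z_{j,k}$ (via the generators of $\mathcal{Z}(\g)$) merely make explicit what the paper leaves implicit inside Theorem~\ref{theoremnine}.
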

\begin{proof}
$\mathbb{V}_{\underline{M}}(\mathbb{E}^{0})$ is a representation of $\g$ and hence allows a composition series by completely reducible sub-quotients as in Lemma~\ref{lemmathirteen}. Tensoring this representation with $\mathcal{O}(\underline{k}-\underline{M})$ does not change the form of the composition series, it just changes the character by which $\mathfrak{z}(\g_{0})$ acts. Hence we can apply Theorem~\ref{theoremnine}.
\end{proof}

Thus we have proved the next theorem.

\begin{theorem}[Main result 4]\label{mainresultfour}
Let $(\mathcal{M},\mathcal{G},\g,\omega)$ be a regular parabolic geometry of type $(G,P)$ and let $\mathbb{V}$ and $\mathbb{W}$ be two  finite dimensional irreducible $\p$-modules such that the action of $\p$ lifts to an action of $P$ and let $\lambda$ (resp.~$\nu$) be the highest weight of $\mathbb{V}^{*}$ (resp.~$\mathbb{W}^{*}$). Moreover denote the corresponding $\g_{0}^{S}$ modules by $\mathbb{V}^{0}$ and
$\mathbb{W}^{0}$ respectively and define
$$\mathbb{V}_{\underline{M}}(\mathbb{V}^{0})(\underline{k_{V}}-\underline{M})\;\text{and}\;\mathbb{V}_{\underline{M}}(\mathbb{W}^{0})(\underline{k_{W}}-\underline{M})$$
to be the appropriate $M$-modules as in~\ref{mmodule} and Remark~\ref{geometricweights}. The different central characters of the generalized Verma modules associated to the irreducible composition factors of those $M$-modules will be denoted by
$\chi_{\tau_{i,j}}$ ($j=1,...,N_{V}$, $i=1,...,n_{j}$) and $\chi_{\sigma_{k,l}}$ ($k=1,...,N_{W}$, $l=1,...,m_{k}$). If
$$\chi_{\lambda}\not=\chi_{\tau_{i,j}}$$
and
$$\chi_{\nu}\not=\chi_{\sigma_{k,l}}$$
for all possible $k,j,i,l$, then there exists an $m$-parameter family of invariant bilinear differential pairings
$$\Gamma(V)\times\Gamma(W)\rightarrow \Gamma(E)$$
for each $\mathbb{E}=\mathbb{E}^{0}(\underline{k_{V}}+\underline{k_{W}}-2\underline{M})$ corresponding to an irreducible component of the $\g_{0}^{S}$ tensor product
$$\mathbb{E}^{0}\subset \mathfrak{U}_{t}(\p_{+})\otimes\mathbb{V}^{0}\otimes\mathbb{W}^{0}$$
of multiplicity $m$ for each $t\leq M=\mathrm{min}_{i\in I}\{M_{i}\}$. This pairing is of weighted order $t$.
\end{theorem}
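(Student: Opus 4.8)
The plan is to assemble the theorem from the three machines built earlier in the chapter: the $M$-bundle construction of Section~\ref{mmodule}, the tensor-product decomposition Proposition~\ref{propeight}, and the splitting operators of Corollary~\ref{coreight}. The strategy is to split both input bundles into their respective $M$-bundles, tensor, and project, checking invariance at each stage via the central-character separation hypothesis.

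First I would invoke Corollary~\ref{coreight} twice. Writing $\mathbb{V}=\mathbb{V}_0(\underline{k_V}-\underline{M})$ and $\mathbb{W}=\mathbb{W}_0(\underline{k_W}-\underline{M})$ as in Remark~\ref{geometricweights}, the hypotheses $\chi_\lambda\neq\chi_{\tau_{i,j}}$ and $\chi_\nu\neq\chi_{\sigma_{k,l}}$ are precisely the conditions needed for Corollary~\ref{coreight} to produce invariant splitting operators
$$L_V:\Gamma(V)\rightarrow\Gamma(V_{\underline{M}}(\mathbb{V}^{0})(\underline{k_V}-\underline{M}))\quad\text{and}\quad L_W:\Gamma(W)\rightarrow\Gamma(V_{\underline{M}}(\mathbb{W}^{0})(\underline{k_W}-\underline{M})).$$
These are manufactured from the higher order curved Casimir operators $\mathcal{C}_z$, which by Lemma~\ref{five} are built out of iterated fundamental derivatives and hence are invariant for any regular parabolic geometry of type $(G,P)$. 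Since both the source and target modules satisfy the filtration conditions of Section~\ref{filtrations} (the $M$-modules being restrictions of $\g$-representations, with $\mathfrak{z}(\g_{0})$ acting diagonalizably), the output sections genuinely split $E$ and $F$ into the top slot of their $M$-bundles.

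Next I would tensor the two lifted sections and project. The tensor product $\mathbb{V}_{\underline{M}}(\mathbb{V}^{0})(\underline{k_V}-\underline{M})\otimes\mathbb{V}_{\underline{M}}(\mathbb{W}^{0})(\underline{k_W}-\underline{M})$ decomposes as a $\g$-module into pieces each carrying a $\p$-composition series; Proposition~\ref{propeight} guarantees that for every irreducible $\g_0^S$-component $\mathbb{E}^0\subset\mathfrak{U}_t(\p_+)\otimes\mathbb{V}^0\otimes\mathbb{W}^0$ with $t\leq M=\mathrm{min}_{i\in I}\{M_i\}$ there is a $\p$-module projection onto $\mathbb{E}^0(\underline{k_V}+\underline{k_W}-2\underline{M})$, realized as the leading composition factor of some irreducible summand. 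Composing $L_V\otimes L_W$ with this projection yields an invariant bilinear differential pairing $\Gamma(V)\times\Gamma(W)\rightarrow\Gamma(E)$; the weighted order is $t$ because the projection lands in the $t$-th slot, i.e.\ it combines derivatives of total weighted order $t$. The multiplicity count follows directly from the multiplicity $m$ of $\mathbb{E}^0$ in the $\g_0^S$ tensor product: each copy gives an independent projection and hence an independent pairing, producing the asserted $m$-parameter family.

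The main obstacle is verifying that the composite is genuinely a differential \emph{pairing} of the claimed weighted order and that the $m$ projections are truly independent rather than coinciding after composition with the splittings. This rests on the structural content of Proposition~\ref{propeight}: the ``predictability'' of the composition series only up to the $M$-th slot is exactly what forces the restriction $t\leq M$, and one must confirm that the projection used is the $\p$-module homomorphism onto the first composition factor (which acquires its $\p$-structure precisely by being leading), not merely a $\g_0$-homomorphism that might fail invariance. I would therefore spend most care confirming that the projection of Proposition~\ref{propeight} is $\p$-equivariant in the range $t\leq M$ and that tensoring it with the invariant splittings does not reintroduce $\Upsilon$-dependence, so that the whole construction remains independent of the choice of Weyl structure in the sense of Definition~\ref{definvariance}.
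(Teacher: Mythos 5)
Your proposal is correct and follows essentially the same route as the paper: invoke Corollary~\ref{coreight} (under the central-character hypotheses) to obtain the two invariant splitting operators into the $M$-bundles, then apply Proposition~\ref{propeight} to project the tensor product onto the leading composition factor corresponding to each copy of $\mathbb{E}^{0}$, reading off the weighted order from the symbol as in~\ref{bijet}. The extra verifications you flag ($\p$-equivariance of the projection and independence of the Weyl structure) are exactly the content already secured by Proposition~\ref{propeight} and Lemma~\ref{five}, so no gap remains.
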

\begin{proof}
We can use Corollary~\ref{coreight} to define splittings
$$V\rightarrow V_{\underline{M}}(\mathbb{V}^{0})(\underline{k}-\underline{M})\;\text{and}\;W\rightarrow V_{\underline{M}}(\mathbb{W}^{0})(\underline{l}-\underline{M})$$
and Proposition~\ref{propeight} to ensure that there exist the appropriate projections
$$V_{\underline{M}}(\mathbb{V}^{0})(\underline{k_{V}}-\underline{M})\otimes V_{\underline{M}}(\mathbb{W}^{0})(\underline{k_{W}}-\underline{M})\rightarrow E.$$
The weighted order can be determined by looking at the symbol of the differential operator as described in~\ref{bijet}.
\end{proof}

\subsubsection{Remark}
In order to minimize the amount of restrictions $\chi_{\lambda}\not=\chi_{\mu}$, it is best to choose $M_{i}=M$ for all $i\in I$. If one is interested in specific pairings, it might be appropriate to vary the different values of $\underline{M}$ for $V$ and $W$. In those cases one has to examine the corresponding $M$-bundles carefully and exclude appropriate weights in the spirit of the discussion above. This procedure can be a lot more efficient in any specific example (see Chapter~\ref{tractorchapter}) than in the general theory developed above.

\subsection{Splitting operators on the flat model}
On homogeneous spaces $G/P$, we can define splitting operators with the help  of the following three theorems:

\begin{theorem}\label{theoremeleven}
Invariant linear differential operators between sections of homogeneous bundles over a flag manifold $G/P$ are in one-to-one correspondence with $\g$-module homomorphisms of induced modules.
\end{theorem}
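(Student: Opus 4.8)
The plan is to run the linear analogue of the argument already carried out for pairings in Section~\ref{algebraicdescription}, simply replacing the two-fold object $\mathfrak{U}(\g)\otimes\mathfrak{U}(\g)$ by a single copy of $\mathfrak{U}(\g)$. First I would establish the exact linear counterpart of Proposition~\ref{canonic}: working in the local coordinates on $G/P$ around $gP$ introduced there, and using that a $G$-invariant operator is determined by its value at the identity coset together with the commuting of left and right translations, every invariant linear differential operator $\Gamma(V)\rightarrow\Gamma(W)$ can be written as $\phi\mapsto\sum_{\alpha}T_{\alpha}\,r(X^{\alpha})\phi$ with constant coefficients $T_{\alpha}\in\mathrm{Hom}(\mathbb{V},\mathbb{W})$ and $X^{\alpha}\in\mathfrak{U}(\g_{-})$. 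Since a differential operator involves only finitely many derivatives, only finitely many $T_{\alpha}$ are nonzero.

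Next I would prove the single-factor version of Theorem~\ref{three}, namely an isomorphism between the space $\mathbb{D}_{G}(V,W)$ of invariant linear differential operators and
$$\left(\mathfrak{U}(\g)\otimes_{\mathfrak{U}(\p)}\mathrm{Hom}(\mathbb{V},\mathbb{W})\right)^{P}.$$
The map sends $\sum_{j}u_{j}\hat{\otimes}T_{j}$ to the operator $\phi\mapsto\sum_{j}T_{j}(r(u_{j})\phi)$. The proof is the verbatim one-variable shadow of the five steps in Theorem~\ref{three}: well-definedness over $\mathfrak{U}(\p)$ follows from the identity $r(uY)f=\sigma(Y^{0})r(u)f$; the computation $(r(X)\phi)(gp)=\lambda(p^{-1})(r(\mathrm{Ad}(p)X)\phi)(g)$ shows that the image lands in $\mathcal{O}(\mathcal{G},\mathbb{W})^{P}$ precisely when the element is $P$-fixed; injectivity comes from the linear independence of the $r(X^{\alpha})$ for $X^{\alpha}\in\mathfrak{U}(\g_{-})$ together with $\mathfrak{U}(\g)\otimes_{\mathfrak{U}(\p)}\mathrm{Hom}(\mathbb{V},\mathbb{W})\cong\mathfrak{U}(\g_{-})\otimes_{\C}\mathrm{Hom}(\mathbb{V},\mathbb{W})$; and surjectivity is Step~1. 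Finally, exactly as in Corollary~\ref{one}, I would use $\mathrm{Hom}(\mathbb{V},\mathbb{W})\cong\mathbb{V}^{*}\otimes\mathbb{W}$ to factor out $\mathbb{W}$, pass to $P$-invariants, and invoke the algebraic Frobenius reciprocity of~\cite{v} to identify the result with
$$\mathrm{Hom}_{\mathfrak{U}(\g)}\left(M_{\p}(\mathbb{W}),\,M_{\p}(\mathbb{V})\right),$$
where $M_{\p}(\mathbb{V})=\mathfrak{U}(\g)\otimes_{\mathfrak{U}(\p)}\mathbb{V}^{*}$ is the induced (generalized Verma) module; this is the asserted one-to-one correspondence.

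The main point requiring care is the contravariance of the correspondence: a differential operator $V\rightarrow W$ corresponds to a homomorphism of induced modules in the reverse direction $M_{\p}(\mathbb{W})\rightarrow M_{\p}(\mathbb{V})$, which must be tracked consistently through the duals and through Frobenius reciprocity. The remaining genuine obstacle is to verify that the correspondence is a true bijection rather than merely a map in one direction, i.e.\ that \emph{every} $\g$-module homomorphism of induced modules is realized by an honest invariant differential operator and conversely; injectivity hinges on the linear independence noted above, while surjectivity reduces to the local normal form of Step~1, so that both halves are ultimately controlled by the explicit coordinate description of invariant operators. Once the framework of the bilinear case is in place, no new computation is needed beyond transcribing it to a single tensor factor.

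\begin{proof}

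\end{proof}
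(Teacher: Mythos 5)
Your proposal is correct, but it follows a genuinely different route from the paper. The paper's own proof of Theorem~\ref{theoremeleven} is essentially a citation: it invokes Proposition~3 of~\cite{er}, whose argument runs through the jet-bundle picture (an invariant operator is determined by a $P$-module homomorphism on the fibre of the jet bundle at the identity coset, and these dualize to homomorphisms of induced modules), together with the remark that the usual statement for generalized Verma modules extends verbatim to induced modules. You instead reconstruct the proof inside the thesis itself, by specializing the bilinear machinery of Section~\ref{algebraicdescription} --- Proposition~\ref{canonic}, Theorem~\ref{three} and Corollary~\ref{one} --- to a single tensor factor: the normal form $\phi\mapsto\sum_{\alpha}T_{\alpha}\,r(X^{\alpha})\phi$, the identification of invariant operators with $\left(\mathfrak{U}(\g)\otimes_{\mathfrak{U}(\p)}\mathrm{Hom}(\mathbb{V},\mathbb{W})\right)^{P}$, and then Frobenius reciprocity~\cite{v} to land in $\mathrm{Hom}_{\mathfrak{U}(\g)}\left(M_{\p}(\mathbb{W}),M_{\p}(\mathbb{V})\right)$. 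Both arguments have the same mathematical core (a local normal form for invariant operators plus Frobenius reciprocity), and you correctly track the contravariance, which is the one place where sign-of-direction errors typically occur. What each approach buys: yours is self-contained relative to the thesis, requires no new computation beyond deleting one tensor factor, and makes explicit that the correspondence is a bijection (injectivity from linear independence of the $r(X^{\alpha})$ over $\mathfrak{U}(\g_{-})$, surjectivity from the normal form); the paper's citation route is shorter, ties the statement to the standard literature, and the jet formulation handles the finite-order bookkeeping automatically, since a homomorphism of induced modules sends the finite-dimensional generating space $\mathbb{W}^{*}$ into a finite filtration level of $M_{\p}(\mathbb{V})$ --- a point you address implicitly through the finiteness of the nonzero $T_{\alpha}$. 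One caveat inherited from the paper rather than introduced by you: the silent replacement of $\mathrm{Hom}_{P}$ by $\mathrm{Hom}_{\mathfrak{U}(\p)}$ in the Frobenius step requires the same connectivity-type hypothesis on $P$ that the paper itself glosses over in Corollary~\ref{one}.
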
\begin{proof}
This theorem is proved analogously to Proposition 3 in~\cite{er}, p.~212. It may be noted that the theorem is usually stated in terms of generalized Verma modules (see~\cite{be}, p.~164). The statement, however, remains true for induced modules with identical proof.
\end{proof}

\begin{theorem}\label{theoremtwelve}
If $M_{\p}(\mathbb{V}_{0}(\underline{k}-\underline{M}))$ has distinct central character from the generalized Verma modules associated to all the other composition factors of $\mathbb{V}_{\underline{M}}(\mathbb{E}^{0})(\underline{k}-\underline{M})$, then it can be canonically split off as a direct summand of $M_{\p}(\mathbb{V}_{\underline{M}}(\mathbb{E}^{0})(\underline{k}-\underline{M}))$.
\end{theorem}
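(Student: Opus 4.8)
The plan is to exploit the central-character (block) decomposition of the $\g$-module $M:=M_{\p}(\mathbb{V}_{\underline{M}}(\mathbb{E}^{0})(\underline{k}-\underline{M}))$. First I would record that the induction functor $\mathfrak{U}(\g)\otimes_{\mathfrak{U}(\p)}(-)$ is exact, since $\mathfrak{U}(\g)$ is free as a right $\mathfrak{U}(\p)$-module by the Poincar\'{e}--Birkhoff--Witt theorem. Dualising the $\p$-module composition series of Lemma~\ref{lemmathirteen} (as in the Remark on dual composition series) gives $\mathbb{V}^{*}=\mathbb{V}_{N}^{*}+\cdots+\mathbb{V}_{0}^{*}$, and applying the exact functor produces a finite filtration of $M$ whose subquotients are the induced modules $M_{\p}(\mathbb{V}_{j})$. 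Decomposing each $\g_{0}$-completely-reducible layer $\mathbb{V}_{j}$ into irreducible $\p$-modules $\mathbb{V}_{j,k}$ of highest weight $\mu_{j,k}$ refines this to a filtration whose subquotients are the generalized Verma modules $M_{\p}(\mathbb{V}_{j,k})$.

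Next I would invoke the fact that each $M_{\p}(\mathbb{V}_{j,k})$ is a highest weight module (a quotient of the ordinary Verma module $M_{\B}(\mu_{j,k})$), so the centre $\mathcal{Z}(\g)$ acts on it by the single scalar character $\chi_{\mu_{j,k}}$ determined by $\mu_{j,k}+\rho$ via Harish-Chandra. Consequently, for every $z\in\mathcal{Z}(\g)$ the operator by which $z$ acts on $M$ satisfies the polynomial $\prod(z-\chi_{\mu_{j,k}}(z))$ taken over the finitely many distinct values, so $\mathcal{Z}(\g)$ acts locally finitely with only finitely many generalised central characters occurring. The Chinese Remainder Theorem, applied to the finitely many maximal ideals $\ker\chi_{\mu_{j,k}}\subset\mathcal{Z}(\g)$, then yields a decomposition $M=\bigoplus_{\chi}M_{(\chi)}$ into primary components, where $M_{(\chi)}$ is the generalised $\chi$-eigenspace. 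Since $\mathcal{Z}(\g)$ is central, each $M_{(\chi)}$ is a $\g$-submodule, and the decomposition is intrinsic, hence canonical.

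Finally I would single out the component $M_{(\chi_{\lambda})}$ attached to the central character $\chi_{\lambda}$ of $\mathbb{V}_{0}(\underline{k}-\underline{M})\cong\mathbb{E}^{0}(\underline{k}-\underline{M})$. The primary decomposition is compatible with the filtration, so $M_{(\chi_{\lambda})}$ inherits a filtration whose subquotients are the $\chi_{\lambda}$-primary parts of the $M_{\p}(\mathbb{V}_{j,k})$; because each such Verma module has pure central character, these parts vanish except for the single layer $M_{\p}(\mathbb{V}_{0}(\underline{k}-\underline{M}))$, which is the only one carrying $\chi_{\lambda}$ by the distinctness hypothesis. A module whose filtration has exactly one nonzero subquotient is isomorphic to it, so $M_{(\chi_{\lambda})}\cong M_{\p}(\mathbb{V}_{0}(\underline{k}-\underline{M}))$, exhibiting it as a canonical direct summand of $M$.

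I expect the main obstacle to be the careful justification that the central-character decomposition genuinely splits $M$ as a direct sum rather than merely yielding a filtration: one must confirm that $\mathcal{Z}(\g)$ acts locally finitely and that the primary decomposition is compatible with the induced filtration, so that the $\chi_{\lambda}$-block picks out exactly one Verma layer. Everything else -- exactness of induction, dualisation of the composition series, and the scalar action of $\mathcal{Z}(\g)$ on highest weight modules -- is standard.
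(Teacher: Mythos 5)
Your proposal is correct and follows essentially the same route as the paper: dualise the composition series of Lemma~\ref{lemmathirteen}, use exactness of the induction functor $\mathfrak{U}(\g)\otimes_{\mathfrak{U}(\p)}(-)$ to obtain a filtration of $M_{\p}(\mathbb{V}_{\underline{M}}(\mathbb{E}^{0})(\underline{k}-\underline{M}))$ by (generalized) Verma modules, and then split off the block corresponding to the central character of $M_{\p}(\mathbb{V}_{0}(\underline{k}-\underline{M}))$, which by the distinctness hypothesis meets exactly one layer of the filtration. The only difference is one of detail: where the paper simply asserts that weight spaces "can be grouped in terms of central character" and projects onto the joint eigenspace, you justify the block decomposition carefully via local finiteness of the $\mathcal{Z}(\g)$-action and the Chinese Remainder Theorem applied to the finitely many maximal ideals $\ker\chi_{\mu_{j,k}}$, working with generalised eigenspaces -- a sound refinement of the same argument.
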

\begin{proof}
The composition series $\mathbb{V}_{\underline{M}}(\mathbb{E}^{0})(\underline{k}-\underline{M})=\mathbb{V}_{0}(\underline{k}-\underline{M})+...+\mathbb{V}_{N}(\underline{k}-\underline{M})$ induces a composition series
$$(\mathbb{V}_{\underline{M}}(\mathbb{E}^{0})(\underline{k}-\underline{M}))^{*}=(\mathbb{V}_{N}(\underline{k}-\underline{M}))^{*}+...+(\mathbb{V}_{0}(\underline{k}-\underline{M}))^{*}$$
of the dual representation. Since the functor that associates to every $\p$-module $\mathbb{V}^{*}$ the corresponding induced module $\mathfrak{U}(\g)\otimes_{\mathfrak{U}(\p)}\mathbb{V}^{*}$ is exact (see~\cite{v}, p.~303, Lemma~6.1.6), we have a filtration 
$$M_{\p}(\mathbb{V}_{\underline{M}}(\mathbb{E}^{0})(\underline{k}-\underline{M}))=M_{\p}(\mathbb{V}_{N}(\underline{k}-\underline{M}))+...+M_{\p}(\mathbb{V}_{0}(\underline{k}-\underline{M}))$$
that induces an injection $M_{\p}(\mathbb{V}_{0}(\underline{k}-\underline{M}))\hookrightarrow M_{\p}(\mathbb{V}_{\underline{M}}(\mathbb{E}^{0})(\underline{k}-\underline{M}))$.
The weight spaces of $M_{\p}(\mathbb{V}_{\underline{M}}(\mathbb{E}^{0})(\underline{k}-\underline{M}))$ can be grouped in terms of central character, so the projection $M_{\p}(\mathbb{V}_{\underline{M}}(\mathbb{E}^{0})(\underline{k}-\underline{M}))\rightarrow M_{\p}(\mathbb{V}_{0}(\underline{k}-\underline{M}))$ may be defined by projecting onto the joint eigenspace of the central character of $M_{\p}(\mathbb{V}_{0}(\underline{k}-\underline{M}))$. Since central character is preserved under the action of $\g$, this projection is indeed a $\g$-module homomorphism and provides a $\g$-module splitting  of $M_{\p}(\mathbb{V}_{\underline{M}}(\mathbb{E}^{0})(\underline{k}-\underline{M}))$.
\end{proof}

\begin{theorem}[Harish-Chandra]\label{theoremthirteen}
Two generalized Verma modules have the same central character if and only if their highest weights are related by the affine action of the Weyl group of $\g$.
\end{theorem}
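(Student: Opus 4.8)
The plan is to reduce the statement to the theory of the Harish-Chandra homomorphism $\varphi$ that was already set up above. First I would observe that the central character of a generalized Verma module $M_{\p}(\mathbb{V}_{\lambda})$ is exactly $\chi_{\lambda}$. Indeed, writing $\mathbb{V}_{\lambda}^{*}$ for the irreducible $\p$-module of highest weight $\lambda$ and $v_{0}^{*}$ for its highest weight vector, the element $1\otimes v_{0}^{*}\in M_{\p}(\mathbb{V}_{\lambda})$ is a highest weight vector for the full $\g$: it is annihilated by the raising operators in $\g_{0}$ (being a highest weight vector for $\g_{0}^{S}$) and by $\p_{+}$ (which acts trivially on $\mathbb{V}_{\lambda}^{*}$). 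Since $M_{\p}(\mathbb{V}_{\lambda})$ is generated by $1\otimes v_{0}^{*}$ as a $\mathfrak{U}(\g)$-module and any $z\in\mathcal{Z}(\g)$ commutes with the $\g$-action, $z$ acts on the whole module by the scalar by which it acts on $1\otimes v_{0}^{*}$, namely $\chi_{\lambda}(z)$. Thus two generalized Verma modules of highest weights $\lambda$ and $\mu$ have the same central character if and only if $\chi_{\lambda}=\chi_{\mu}$.

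Next I would invoke the two facts established above: that $\varphi\colon\mathcal{Z}(\g)\to S(\h)^{\mathcal{W}}$ is an isomorphism onto the $\mathcal{W}$-invariants of $S(\h)$, and that the character is computed by $\chi_{\lambda}(z)=(\lambda+\rho)(\varphi(z))$ for all $z\in\mathcal{Z}(\g)$. The ``if'' direction is then immediate. Suppose $\mu=w.\lambda=w(\lambda+\rho)-\rho$ for some $w\in\mathcal{W}$, so that $\mu+\rho=w(\lambda+\rho)$. For every $z$ the polynomial $\varphi(z)$ lies in $S(\h)^{\mathcal{W}}$, hence is invariant under the (linear) action of $\mathcal{W}$, so that
$$\chi_{\mu}(z)=(\mu+\rho)(\varphi(z))=\bigl(w(\lambda+\rho)\bigr)(\varphi(z))=(\lambda+\rho)(\varphi(z))=\chi_{\lambda}(z).$$
Therefore $\chi_{\mu}=\chi_{\lambda}$ and the two modules share a central character.

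For the ``only if'' direction I would argue by separation of orbits. Assume $\chi_{\lambda}=\chi_{\mu}$. Since $\varphi$ is surjective onto $S(\h)^{\mathcal{W}}$, the hypothesis says precisely that $(\lambda+\rho)(p)=(\mu+\rho)(p)$ for every $\mathcal{W}$-invariant polynomial $p\in S(\h)^{\mathcal{W}}$, where we regard elements of $S(\h)$ as polynomial functions on $\h^{*}$. I would then use the standard fact that for a finite group acting linearly on a vector space, the invariant polynomials separate the orbits: if $\lambda+\rho$ and $\mu+\rho$ lay in distinct (finite) $\mathcal{W}$-orbits, one could choose a polynomial $f$ vanishing on the orbit of $\lambda+\rho$ and equal to $1$ on that of $\mu+\rho$, and its symmetrization $\bar f=\frac{1}{|\mathcal{W}|}\sum_{w\in\mathcal{W}}w\cdot f\in S(\h)^{\mathcal{W}}$ would still take value $0$ at $\lambda+\rho$ and $1$ at $\mu+\rho$, contradicting the equality above. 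Hence $\lambda+\rho$ and $\mu+\rho$ lie in the same $\mathcal{W}$-orbit, i.e.\ $\mu+\rho=w(\lambda+\rho)$ for some $w$, which is exactly $\mu=w.\lambda$.

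The main obstacle in a fully self-contained account is the surjectivity of $\varphi$ onto $S(\h)^{\mathcal{W}}$, which is the deep half of the Harish-Chandra isomorphism; however, this has already been asserted in the discussion preceding the theorem, so I would simply cite it. Granting that, the only genuinely new ingredient is the elementary orbit-separation lemma for the finite group $\mathcal{W}$, and the bookkeeping of the $\rho$-shift in passing between the linear action (under which $\varphi(z)$ is invariant) and the affine action (in which the conclusion is phrased); I would make sure to keep these two actions carefully aligned throughout.
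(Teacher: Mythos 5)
Your proposal is correct, but it takes a genuinely different route from the paper: the paper's proof of this theorem is a one-line citation to~\cite{h}, p.~130, Theorem~23.3, whereas you assemble an actual argument out of ingredients the paper has already introduced. Your reduction step — that $1\otimes v_{0}^{*}$ is a maximal vector of weight $\lambda$ generating $M_{\p}(\mathbb{V}_{\lambda})$, so that the generalized Verma module is a highest weight module and its central character is literally $\chi_{\lambda}$ — is a point the paper uses implicitly (for instance in the proof of Lemma~\ref{viacharacter}) but never spells out, and it is exactly what makes the theorem about \emph{generalized} Verma modules follow from the theory of the characters $\chi_{\lambda}$ alone. After that, your two directions are clean: the ``if'' direction is nothing but the $\mathcal{W}$-invariance of $\varphi(z)$ together with careful bookkeeping of the $\rho$-shift between the linear and affine actions, and the ``only if'' direction is the elementary orbit-separation lemma for a finite group (interpolate a polynomial that is $0$ on one orbit and $1$ on the other, then average over $\mathcal{W}$). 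What each approach buys: yours integrates the theorem into the Harish-Chandra machinery ($\varphi=\eta\circ\xi$, $\chi_{\lambda}(z)=(\lambda+\rho)(\varphi(z))$) that the paper sets up for the higher order Casimir operators, and it correctly isolates where the genuine depth sits — only the surjectivity of $\varphi:\mathcal{Z}(\g)\rightarrow S(\h)^{\mathcal{W}}$, which you cite as the paper asserts it but does not prove. The paper's citation, by contrast, outsources both directions wholesale; it is shorter, but hides the fact that half of the statement is elementary given the $\varphi$-formalism already on the page. Since the surjectivity of $\varphi$ is itself only asserted in the paper (with the deep content still residing in~\cite{h}), your proof is not more self-contained in an absolute sense, but it is a legitimate and more informative derivation.
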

\begin{proof}
A proof of this theorem can, for example, be found in~\cite{h}, p.~130, Theorem~23.3.
\end{proof}

\subsubsection{Remark}
The irreducible components of the tensor product $\mathfrak{U}_{t}(\p_{+})\otimes\mathbb{V}\otimes\mathbb{W}$ are exactly the possible targets for invariant bilinear differential pairings of weighted order $t$ between sections of $V$ and $W$ that are curved analogues of non-zero pairings on the homogeneous model spaces. This can be seen by looking at the exact sequence
$$
\begin{array}{ccccc}
\oplus_{j=0}^{t}\mathfrak{U}_{j}(\p_{+})\otimes\mathbb{V}\otimes\mathfrak{U}_{t-j}(\p_{+})\otimes\mathbb{W}&\rightarrow& \mathcal{J}^{t}(\mathbb{V},\mathbb{W})&\rightarrow&\mathcal{J}^{t-1}(\mathbb{V},\mathbb{W})\rightarrow 0\\
&\searrow&\downarrow&&\\
&&\mathbb{E}
\end{array}.$$

\subsubsection{Remark}
\begin{enumerate}
\item
Theorems~\ref{theoremeleven},~\ref{theoremtwelve} and~\ref{theoremthirteen} combined are the backbone of the {\bf Jantzen}-{\bf Zuckermann} 
{\bf translation} {\bf functor} as used in~\cite{es} and~\cite{er}.  
\item
In order to define splitting operators we had to exclude weights, i.e.~values of $\underline{k}$, for which the central character of $M_{\p}(\mathbb{V}_{0}(\underline{k}-\underline{M}))$ is the same as the central character of a generalized Verma module associated to another
composition factor of $\mathbb{V}_{\underline{M}}(\mathbb{E}^{0})(\underline{k}-\underline{M})$. A trivial case is $\underline{k}=\underline{M}$, because all the weight spaces of $\mathbb{V}_{\underline{M}}(\mathbb{E}^{0})$ apart from the highest weight space, which lies in $\mathbb{V}_{0}$, have weights $\mu$ so that
$$\Vert \Lambda+\rho\Vert^{2} >\Vert \mu+\rho\Vert^{2}.$$
The pairings that we obtain via our construction in this trivial case are then special cases of the parings $\sqcup_{\eta}$ as defined in~\cite{cd}, p.~13, Theorem~3.6.
\end{enumerate}

\subsection{Comparisons}

\subsubsection{First order operators via splittings and via $\p$-module homomorphisms}
Lemma~\ref{lemmaten} and Paragraph~\ref{obstructionterms2}  show that the results from Chapters three and four are consistent with the results in this chapter. The first method via bi-jet bundles, however, is much more efficient, because we only have to exclude those weights that correspond to operators that actually occur in the pairing. Moreover, the first method shows exactly what happens for excluded weights whereas the second method using splitting operators just fails. On the other hand, the construction in this chapter produces the most general higher order pairings for non-excluded representations.

\subsubsection{Splitting operators}\label{comparison}
Let $\mathbb{V}$, $\mathbb{W}$ be two finite dimensional irreducible representations of $\p$ with the highest weight of $\mathbb{V}^{*}$ (resp.~$\mathbb{W}^{*}$) given by $\lambda$ (resp.~$\nu$). Then we have the following implications.
\begin{eqnarray*}
&&\text{There exists an invariant differential operator}\;d:\Gamma(V)\rightarrow\Gamma(W)\\
&\Rightarrow& M_{\p}(\mathbb{V})\;\text{and}\;M_{\p}(\mathbb{W})\;\text{have the same central character}\\
&\Rightarrow & \Vert\lambda+\rho\Vert^{2}=  \Vert\nu+\rho\Vert^{2},
\end{eqnarray*}
where we only deal with {\bf curved analogues} of flat operators, i.e.~those operators that are non-zero when restricted to flat parabolic geometries.
Therefore, if 
$$\Vert\lambda+\rho\Vert^{2}\not=\Vert\tau_{k,j}+\rho\Vert^{2}$$
for all $j$ and $k$, then none of the irreducible composition factors of $\mathbb{V}_{\underline{M}}(\mathbb{E}^{0})(\underline{k}-\underline{M})$ induces a generalized Verma module that has a central character that is equal to the one of $\mathbb{E}$, so our construction
is more efficient than just using the curved Casimir operator and the splitting operators defined in~\cite{cs3}.
\par
The reverse of the two implications is not true:

\subsubsection{Example 1}
Let us look at the following weights:
$$\lambda=\;\xoo{-1}{0}{1}\;\text{and}\;\mu=\;\xoo{-4}{1}{0}.$$
Let $\mathbb{V}^{*}$, $\mathbb{W}^{*}$ be the irreducible representations of $\p$ with highest weights $\lambda$ and $\mu$ respectively. Then 
$$\odot^{2}\g_{1}\otimes\mathbb{V}=\;\xoo{-4}{2}{0}\;\otimes\;\xoo{-1}{0}{1}\;=\;\xoo{-5}{2}{1}\;\oplus\;\xoo{-4}{1}{0},$$
so $\mathbb{W}$ appears as a possible symbol for a differential operator emanating from $V$. The generalized Verma modules $M_{\p}(\mathbb{V})$ and $M_{\p}(\mathbb{W})$ do not have the same central character: in the notation of Section~\ref{branching} we have
$$\lambda+\rho=(4|4,5,7)\;\text{and}\;\mu+\rho=(6|3,5,6)$$
and those numbers do not differ by a permutation. Equivalently, in~\cite{lb} one can see that $K^{(3)}$ acts on $M_{\p}(\mathbb{V})$
and $M_{\p}(\mathbb{W})$ differently. 
However, $\lambda+\rho=-\epsilon_{3}-3\epsilon_{4}$ and $\mu+\rho=3\epsilon_{2}+\epsilon_{3}$ and hence
$$\Vert\lambda+\rho\Vert^{2}=\Vert\mu+\rho\Vert^{2}=\frac{3}{4}.$$
This means that in 
$$\ooo{2}{0}{1}(v-1)=\;\xoo{v+1}{0}{1}\;+\;\begin{array}{c}
\;\xoo{v}{0}{0}\;\\
\oplus\\
\;\xoo{v-1}{1}{1}\;
\end{array}+\begin{array}{c}
\underbrace{\;\xoo{v-2}{1}{0}\;}_{=\mu\;\text{for}\;v=-2}\\
\oplus\\
\;\xoo{v-3}{2}{1}\;
\end{array}+\;\xoo{v-4}{2}{0}\;,$$
we would have to exclude $v=-2$ if we wanted to define a pairing via the curved Casimir operator. However, with the higher order curved Casimir operators, we do not have to exclude this weight (in the next chapter we will see that the weights to exclude are $v=-4,-1,0$), since $\lambda$ and $\mu$ do not have the same central character. In particular, there is no invariant differential operator between the corresponding bundles. 
Finally note that we will prove in the last chapter that the splitting in the general curved case can be written down with the help of tractor calculus, see Theorem~\ref{theoremsixteen}.


\subsubsection{Example 2}
Let
$$\lambda=\;\oxo{0}{0}{0}\;\text{and}\;\mu=\;\oxo{1}{-4}{1}\;$$
and define $V$, $W$ as above. The generalized Verma modules $M_{\p}(\mathbb{V})$ and $M_{\p}(\mathbb{W})$ have the same central character (see~\cite{er}) and
$$\odot^{3}\g_{1}\otimes\mathbb{V}=(\;\oxo{3}{-6}{3}\;\oplus\;\oxo{1}{-4}{1}\;)\otimes\;\oxo{0}{0}{0}\;=\;\oxo{3}{-6}{3}\;\oplus\;\oxo{1}{-4}{1}\;,$$
hence $\mathbb{W}$ is a possible symbol for a differential operator emanating from $V$. But, in the conformally flat case, the classification in~\cite{er} shows that there is no invariant differential operator
$\Gamma(V)\rightarrow\Gamma(W)$. However, it has to be noted that the weight for 
$$\;\ooo{0}{3}{0}(v-3)=\;\oxo{0}{v}{0}\;+\underbrace{\;\oxo{1}{v-2}{1}}_{(*)}\;+\;\begin{array}{c}
\;\oxo{2}{v-4}{2}\;\\
\oplus\\
\;\oxo{0}{v-2}{0}\;
\end{array}\;+\;\begin{array}{c}
\;\oxo{3}{v-6}{3}\;\\
\oplus\\
\underbrace{\;\oxo{1}{v-4}{1}\;}_{=\mu\;\text{for}\;v=0}
\end{array}\;+\;...$$
that has to be excluded according to the above discussion is $v=0$, which has to be excluded at an even earlier stage $(*)$, because for $v=0$ the exterior derivative
$$d:\;\oxo{0}{0}{0}\;\rightarrow\;\oxo{1}{-2}{1}\;$$
is an invariant operator.  

\subsubsection{Remark}
We have not been able to find an example of a composition series of $\mathbb{V}_{\underline{M}}(\mathbb{V}^{0})(\underline{k}-\underline{M})$ such that
\begin{enumerate}
\item
there exists an irreducible composition factor $\mathbb{W}$ so that the generalized Verma modules $M_{\p}(\mathbb{W})$ and $M_{\p}(\mathbb{V}_{0}(\underline{k}-\underline{M}))$ have the same central character
and
\item
there is no invariant differential operator $d:V\rightarrow E$, where $\mathbb{E}$ is an irreducible composition factor of $\mathbb{V}_{\underline{M}}(\mathbb{V}^{0})(\underline{k}-\underline{M})$ and $V$ is the bundle induced from $\mathbb{V}_{0}(\underline{k}-\underline{M})$.
\end{enumerate}
It is plausible to conjecture that this is not possible. At present, however, this is still an open question.

\section{Higher order pairings for projective geometry}
In this section we will work exclusively on an $n$-dimensional manifold $\mathcal{M}$ with a projective structure that is given in terms of a parabolic geometry $(\mathcal{M},\mathcal{G},\mathfrak{sl}_{n+1}\C,\omega)$ of type $(\mathrm{SL}_{n+1}\R,P)$ with $P$ as given in~\ref{cartanexample} (a) and~\ref{homogeneousexample} (a). For the notation the reader is also advised to refer to Chapter~\ref{tractorchapter}.
\par
Note that we will use Dynkin diagrams for the visualization of four objects: an irreducible representation of $\p$, the corresponding irreducible homogeneous vector bundle, its sections and the generalized Verma module associated to the representation. In every case it should be clear which meaning we refer to and sometimes it is convenient that two meanings are denoted at the same time.
\par
\subsection{Branching rules}
\begin{definition}
\em For every representation 
$$\mathbb{E}=\;\oo{a_{1}}{a_{2}}\;...\;\oo{a_{n-2}}{a_{n-1}}$$
of $\g_{0}^{S}=\mathfrak{sl}_{n}\C$ and every constant $M\geq 1$ we define
$$\mathbb{V}_{M}(\mathbb{E})=\;\ooo{M}{a_{1}}{a_{2}}\;...\;\oo{a_{n-2}}{a_{n-1}},$$
a representation of $\g$, which we also denote by
$$\mathbb{V}_{M}(\mathbb{E})=(0,b_{0},b_{1},b_{2},....,b_{n-1})=\left(0,M,a_{1}+M,a_{1}+a_{2}+M,...,\sum_{i=1}^{n-1}a_{i}+M\right).$$
When referring to a representation of $\p$, we will use the notation $(a|b,c,...,d,e,f)$ for $\;\xo{b-a}{c-b}\;...\;\oo{e-d}{f-e}.$
This is important whenever we want to describe the action of the Weyl group $\mathcal{W}$ on the weight, because $\mathcal{W}\cong\mathbb{S}_{n+1}$ and it acts  by permutation (and renormalization to account for the usual ambiguity 
$$(a+m|b+m,c+m,...,d+m,e+m,f+m)=(a|b,c,...,d,e,f)$$
for all $m\in\Z$). 
\end{definition}
\par
\vspace{0.2cm} 
The $\g$-module $\mathbb{V}_{M}(\mathbb{E})$ has, as a $\p$-module, a composition series
$$\mathbb{V}_{M}(\mathbb{E})=\mathbb{V}_{0}+\mathbb{V}_{1}+\mathbb{V}_{2}+...+\mathbb{V}_{N},$$
where each $\mathbb{V}_{i}$ decomposes into a direct sum of irreducible $\p$-modules and 
$$\mathbb{V}_{0}=\xooo{M}{a_{1}}{a_{2}}{a_{3}}\;...\;\oo{a_{n-2}}{a_{n-1}}.$$
We may tensor this composition series by $\mathcal{O}(k-M)$ to obtain
$$\mathbb{V}_{0}(k-M)= \xooo{k}{a_{1}}{a_{2}}{a_{3}}\;...\;\oo{a_{n-2}}{a_{n-1}}.$$
This is the $\p$-module that we are interested in and we want to define a mapping 
$$V_{0}(k-M)\rightarrow V_{M}(\mathbb{E})(k-M)$$
using the theorems from the last section. Hence we have to make sure that the generalized Verma modules associated to all the irreducible composition factors of $\mathbb{V}_{M}(\mathbb{E})(k-M)$ have a central character which is different from the central character of~$M_{\p}(\mathbb{V}_{0}(k-M))$.

\subsubsection{Remark}\label{branching}
In the case of  projective geometry, Proposition~\ref{propseven} can be proved directly using Pierie's formula, as in~\cite{fh}, p.~225, for the tensor product $\odot^{l}\g_{1}\otimes\mathbb{E}$ and the branching rules for restrictions of representations of $\mathfrak{sl}_{n+1}\C$ to $\mathfrak{sl}_{n}\C$ as in~\cite{gw}, p.~350. The upshot of this procedure is that we obtain a more precise statement than Proposition~\ref{propseven}, namely that
$\mathbb{V}_{l}(k-M)$ consists of terms $(M-k+l|\tilde{b}_{0},\tilde{b}_{1},...,\tilde{b}_{n-1})$ that {\bf interlace} $(M-k|b_{0},b_{1},....,b_{n-1})$, i.e.
$$0\leq\tilde{b}_{0}\leq b_{0}\leq \tilde{b}_{1}\leq b_{1}\leq\tilde{b}_{2}\leq b_{2}\leq...\leq\tilde{b}_{n-1}\leq b_{n-1}$$
and  $\sum_{i=0}^{n-1}b_{i}-\sum_{i=0}^{n-1}\tilde{b}_{i}=l$.
We can also see that $N=\sum_{i=1}^{n-1}a_{i}+M$, because for $l>N$ it is not possible for any $(M-k+l|\tilde{b}_{0},\tilde{b}_{1},...,\tilde{b}_{n-1})$ to interlace~$(M-k|b_{0},b_{1},...,b_{n-1})$.

\begin{proposition}\label{propeleven}
The only irreducible components of $\mathbb{V}_{l}(k-M)$ that can induce generalized Verma modules with the same central character as $M_{\p}(\mathbb{V}_{0}(k-M))$
are the ones that are of the form
$$(M-k+l|b_{0},b_{1},...,b_{j-1},b_{j}-l,b_{j+1},...,b_{n-1}),$$
for $j=0,1,...,n-1$. If $j\in\{1,...,n-1\}$, then this is only allowed for $a_{j}\geq l$ and if $j=0$, then this is only allowed for $l\leq M$.
In that case the generalized Verma module has the same central character as $M_{\p}(\mathbb{V}_{0}(k-M))$ if and only if
$$k=-\left(\sum_{i=1}^{j}a_{i}+j-l+1\right).$$
For $j=0$, this condition reads $k=l-1$.
\end{proposition}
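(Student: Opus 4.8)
The plan is to reduce the entire statement to an elementary fact about interlacing integer sequences, by combining the branching description of the composition factors (Remark~\ref{branching}) with the Harish-Chandra criterion (Theorem~\ref{theoremthirteen}). First I would invoke Remark~\ref{branching}: an irreducible component of $\mathbb{V}_l(k-M)$ is a weight $\mu'=(M-k+l\mid \tilde b_0,\dots,\tilde b_{n-1})$ whose entries interlace those of $\mu=\mathbb{V}_0(k-M)=(M-k\mid b_0,\dots,b_{n-1})$, with $\sum_j(b_j-\tilde b_j)=l$. Writing the deficiencies $\delta_j=b_j-\tilde b_j\ge 0$, interlacing gives $\delta_0\le b_0=M$ and $\delta_j\le b_j-b_{j-1}=a_j$ for $j\ge 1$, while $\sum_j\delta_j=l$. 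These inequalities $\delta_j\le a_j$ are the crucial constraints.

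Next I would set up the central-character comparison. Placing a weight directly as its coordinate tuple and taking $\rho=(0,1,\dots,n)$ (the normalization used throughout for $\lambda+\rho$), and using that $\mathcal{W}\cong\mathbb{S}_{n+1}$ acts by permuting coordinates, Theorem~\ref{theoremthirteen} says that $M_{\p}(\mu)$ and $M_{\p}(\mu')$ share a central character iff the multisets $\mu+\rho$ and $\mu'+\rho$ coincide. Setting $d_{-1}=M-k$ and $d_j=b_j+(j+1)$ for $0\le j\le n-1$, one has $\mu+\rho=\{d_{-1},d_0,\dots,d_{n-1}\}$ and $\mu'+\rho=\{d_{-1}+l,\,d_0-\delta_0,\dots,d_{n-1}-\delta_{n-1}\}$; their sums agree automatically since $\sum_j\delta_j=l$, so no overall shift is needed and the problem becomes: for which interlacing data do these two multisets agree, and for which $k$.

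The combinatorial heart is the observation that $\delta_j\le a_j$ keeps the shifted coordinates $\tilde d_j:=d_j-\delta_j$ strictly increasing: since $d_j-a_j=d_{j-1}+1$, one gets $\tilde d_j\ge d_{j-1}+1>\tilde d_{j-1}$. Equality of the two multisets is equivalent to equality of the upper-tail counting functions $t\mapsto\#\{x>t\}$, whose difference is exactly the indicator of $\bigcup_j[\tilde d_j,d_j)$ minus the indicator of $[d_{-1},d_{-1}+l)$. Because consecutive intervals $[\tilde d_{j-1},d_{j-1})$ and $[\tilde d_j,d_j)$ are separated by a genuine gap at $d_{j-1}<\tilde d_j$, the disjoint union $\bigcup_j[\tilde d_j,d_j)$ can equal a single interval $[d_{-1},d_{-1}+l)$ only if at most one of the intervals is nonempty. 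Hence exactly one deficiency $\delta_{j^*}=l$ survives, which is precisely the asserted form $(M-k+l\mid b_0,\dots,b_{j^*}-l,\dots,b_{n-1})$; the residual interval identity $[d_{j^*}-l,d_{j^*})=[d_{-1},d_{-1}+l)$ then forces $M-k=b_{j^*}+j^*+1-l$, giving $k=-(\sum_{i=1}^{j^*}a_i+j^*-l+1)$ for $j^*\ge 1$ and $k=l-1$ for $j^*=0$. Finally the requirement $\delta_{j^*}=l\le a_{j^*}$ (respectively $l\le M$) is exactly the condition for the interlacing tuple to exist, recovering the hypotheses $a_j\ge l$ and $l\le M$.

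I expect the main obstacle to be the third step — showing that only one deficiency can be nonzero. A naive approach invites a messy case analysis over which coordinate of $\mu'+\rho$ matches which coordinate of $\mu+\rho$. The clean route is the counting-function/disjoint-interval reformulation above, whose engine is the interlacing inequality $\delta_j\le a_j$: it is precisely this that keeps the $\tilde d_j$ strictly ordered and thereby manufactures the separating gaps. I would therefore foreground the strict monotonicity of $\tilde d_j$ from the very start, so that the rest of the argument collapses to the remark that a disjoint union of intervals with gaps cannot be a single interval.
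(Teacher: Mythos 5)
Your proposal is correct, and its skeleton coincides with the paper's: both start from the interlacing description of Remark~\ref{branching}, both invoke Harish-Chandra (Theorem~\ref{theoremthirteen}) to turn equality of central characters into equality of the multisets $\mu+\rho$ and $\mu'+\rho$, and both then reduce everything to showing that only one entry $b_j$ can drop, and that it must drop by exactly $l$. Where you differ is in the proof of that combinatorial core. The paper argues by contradiction on the extreme deficient indices: assuming two indices $i<j$ with $\tilde{b}_i<b_i$ and $\tilde{b}_j<b_j$, it shows that $\tilde{b}_i+i+2$ must match the entry $M-k+1$, and that $\tilde{b}_j+j+2$ would then have to equal some $b_m+m+2$ with $m<j$, which interlacing ($\tilde{b}_j\ge b_{j-1}\ge b_m$ and $j>m$) forbids. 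You instead encode the multiset equality in upper-tail counting functions, observe that the inequalities $\delta_j\le a_j$ keep the shifted entries $\tilde{d}_j$ strictly increasing, so the discrepancy set $\bigcup_j[\tilde{d}_j,d_j)$ is a disjoint union of intervals separated by genuine gaps, and note that such a union can equal the single block $[d_{-1},d_{-1}+l)$ of consecutive integers only if at most one piece is nonempty. Both mechanisms rest on the same interlacing inequality; the paper's version is shorter and more direct, while yours is more systematic, avoids singling out extremal indices, and treats cleanly the normalization ambiguity of $\mathfrak{sl}_{n+1}$-weights (you verify that the sums of the two multisets agree, so no overall shift can occur --- a point the paper passes over silently). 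Your closing computation of $k$ and the identification of the constraints $a_j\ge l$ (resp.\ $l\le M$) with the existence of the interlacing tuple agree with the paper's final step verbatim.
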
\begin{proof}
Using the remark in Section~\ref{branching}, we know that an arbitrary irreducible component $\mathbb{V}_{l,v}(k-M)$ of $\mathbb{V}_{l}(k-M)$ has to be of the form $(M-k+l|\tilde{b}_{0},...,\tilde{b}_{n-1})$  so that $(M-k+l|\tilde{b}_{0},...,\tilde{b}_{n-1})$ interlaces~$(M-k|b_{0},...,b_{n-1})$.  Let us assume that there are at least two integers $0\leq i<j\leq n-1$ such that $\tilde{b}_{i}<b_{i}$ and~$\tilde{b}_{j}<b_{j}$. We can assume that $i$ is the smallest integer with this property and that $j$ is the biggest integer with this property.
\par
Theorem~\ref{theoremthirteen} implies that the central characters of the generalized Verma modules $M_{\p}(\mathbb{V}_{0}(k-M))$ and $M_{\p}(\mathbb{V}_{l,v}(k-M))$ are identical if and only if there is an element in the Weyl group, i.e.~a permutation, that
maps the weight $(M-k+l|\tilde{b}_{0},...,\tilde{b}_{n-1})+\rho_{\g}$ to $(M-k|b_{0},...,b_{n-1})+\rho_{\g}$. Using $\rho_{\g}=(1,2,...,n,n+1)$, we obtain the condition that the two sets
$$\{M-k+1,b_{0}+2,b_{1}+3,...,b_{i}+i+2,...,b_{j}+j+2,...,b_{n-1}+n+1\}$$
and
$$\{M-k+l+1,\tilde{b}_{0}+2,\tilde{b}_{1}+3,...,\tilde{b}_{i}+i+2,...,\tilde{b}_{j}+j+2,...,\tilde{b}_{n-1}+n+1\}$$
have to be equal.
This is equivalent to
$$\{M-k+1,b_{i}+i+2,...,b_{j}+j+2\}=\{M-k+l+1,\tilde{b}_{i}+i+2,...,\tilde{b}_{j}+j+2\},$$
where the sets contain all those $b_{m}+m+2$, resp. $\tilde{b}_{m}+m+2$, for which~$\tilde{b}_{m}\not=b_{m}$. Furthermore, leaving out $M-k+1$, all numbers in the first set are increasing from left to right.
Since $\tilde{b}_{i}<b_{i}$, $\tilde{b}_{i}+i+2$ is smaller than the second entry in the first set and therefore smaller than everything but the first entry, i.e.~we must
have~$\tilde{b}_{i}+i+2=M-k+1$. 
Moreover $\tilde{b}_{j}<b_{j}$ implies that there has to be an integer $m<j$ so that  
\begin{eqnarray*}
&\tilde{b}_{j}+j+2=b_{m}+m+2
\Rightarrow&\tilde{b}_{j}+j= b_{m}+m.
\end{eqnarray*}
This is not possible, because $\tilde{b}_{j}\geq b_{m}$ and~$j>m$. That proves the first claim.
\par
Let us now assume that $\mathbb{V}_{l,v}(k-M)=(k-M+l|b_{0},b_{1},...,b_{j-1},b_{j}-l,b_{j+1},...,b_{n-1})$. In this case $M_{\p}(\mathbb{V}_{l,v}(k-M))$ has the same central character as $M_{\p}(\mathbb{V}_{0}(k-M))$ if and only if
$$\{M-k+l+1,b_{j}-l+j+2\}\;=\;\{M-k+1,b_{j}+j+2\},$$
which is equivalent to $k=-b_{j}+M-j+l-1=-\left(\sum_{i=1}^{j}a_{i}+j-l+1\right)$.
\end{proof}


\subsection{Excluded weights}

\begin{proposition}\label{proptwelve}
If
$$k=-\left(\sum_{i=1}^{j}a_{i}+j-l+1\right),$$
then there exitst an $l$-th order invariant linear differential operator  
$$\xooo{k}{a_{1}}{a_{2}}{a_{3}}\;...\;\oo{a_{n-2}}{a_{n-1}}\;\rightarrow \xoo{k-l}{a_{1}}{a_{2}}\;...\oo{a_{j}-l}{a_{j+1}+l}\;...\;\oo{a_{n-2}}{a_{n-1}}.$$
\end{proposition}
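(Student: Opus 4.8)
The plan is to pass to the homogeneous model and realise the operator as a homomorphism of induced modules via Theorem~\ref{theoremeleven}. First I would rewrite the two bundles in the $(a\mid b,c,\ldots)$ notation of~\ref{branching}. With $b_s=\sum_{i=1}^{s}a_i+M$, the source $\mathbb{V}_0(k-M)$ is dual to the $\p$-module of highest weight $\kappa=(M-k\mid b_0,b_1,\ldots,b_{n-1})$ and the target is dual to $\mu=(M-k+l\mid b_0,\ldots,b_{j-1},b_j-l,b_{j+1},\ldots,b_{n-1})$. Repeating the computation from the proof of Proposition~\ref{propeleven}, I would check that under the hypothesis $k=-\left(\sum_{i=1}^{j}a_i+j-l+1\right)$ the slot-$0$ entry of $\kappa+\rho$ equals $b_j+j+2-l$, so that $\mu+\rho$ is obtained from $\kappa+\rho$ precisely by transposing its entries in slots $0$ and $j+1$. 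In other words $\mu=\sigma_\theta\cdot\kappa$ for the reflection in the positive root $\theta=\alpha_1+\cdots+\alpha_{j+1}$ joining the crossed node to the $(j{+}1)$-st node, and $B(\kappa+\rho,\theta^\vee)=l$.

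Two structural observations make $\theta$ the right root. Since $\theta$ contains the crossed simple root $\alpha_1$ with coefficient one, $\g_{-\theta}\subset\g_{-1}$; as $k_0=1$ here, weighted order equals order, and the shift $\kappa-\mu=l\theta$ has $\alpha_1$-degree $l$, which forces the resulting operator to have order exactly $l$. Secondly, under the standing assumptions of Proposition~\ref{propeleven} (namely $a_j\geq l$ when $j\geq 1$, and $l\leq M$ when $j=0$) the target weight $\mu$ is again $\p$-dominant, and $\kappa+\rho$ is regular: the slot-$0$ entry $b_j+j+2-l$ falls strictly between $b_{j-1}+j+1$ and $b_j+j+2$, and in fact no entry of $\kappa+\rho$ lies strictly between the two transposed values. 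Hence the transposition is a single step in the Bruhat order, both $\kappa$ and $\mu$ are $\p$-dominant images of one $\g$-dominant weight, and $\kappa\to\mu$ is a first-step BGG arrow.

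By Theorem~\ref{theoremeleven} it then suffices to produce a nonzero singular vector of weight $\mu$ inside $M_{\p}(\mathbb{V}_0(k-M))$, whose top weight is $\kappa$ and for which $\kappa-\mu=l\theta$ is positive. When $j=0$ the root $\theta=\alpha_1$ is simple, $B(\kappa,\alpha_1^\vee)=l-1$, and the singular vector is simply $X_{-\alpha_1}^{\,l}\,\hat{\otimes}\,v_0^{*}$; that it is annihilated by $\mathfrak{n}$ is exactly the $\mathfrak{sl}_2$-computation already carried out for $\Theta_M^{\alpha_i}$ in Theorem~\ref{mainresultone}. The hard part is $j\geq 1$, where $\theta$ is not simple (indeed $B(\kappa,\theta^\vee)=l-j-1\neq l-1$), so the singular vector is genuinely more intricate than a single power of a root vector and cannot be written down by the naive $\mathfrak{sl}_2$ recipe. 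Here I would not attempt an explicit formula but instead invoke the existence theory for homomorphisms of generalized Verma modules: since $\mu$ is $\p$-dominant, $B(\kappa+\rho,\theta^\vee)=l\in\N$, and $\kappa$ and $\mu$ are linked by the single reflection $\sigma_\theta$ with the Bruhat adjacency established above, Harish-Chandra linkage (Theorem~\ref{theoremthirteen}) together with the standard construction of BGG operators guarantees the required homomorphism, hence the invariant operator $\Gamma(V_0(k-M))\to\Gamma(V_{l,v}(k-M))$ of weighted order $l$.
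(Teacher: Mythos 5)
Your route is genuinely different from the paper's. The paper does not work with Verma modules at all here: it quotes the numerical criterion of~\cite{css}, Corollary 5.3 for the existence of a standard operator (a condition equating the geometric weight $\omega$ with an expression in $\rho_{0}$, the highest weight of $\g_{1}$ and of $\mathbb{E}$), checks by a short computation with the normalized Killing form that this condition is equivalent to $k=-\bigl(\sum_{i=1}^{j}a_{i}+j-l+1\bigr)$, and thereby obtains the operator directly on the \emph{curved} projective manifold; it also remarks that the extremal-root construction of Chapter~\ref{higherorderone} gives an alternative proof. Your weight bookkeeping is correct and is in fact the same arithmetic hidden in the paper's proof of Proposition~\ref{propeleven}: $\kappa-\mu=l\theta$ with $\theta=\alpha_{1}+\cdots+\alpha_{j+1}$, $B(\kappa+\rho,\theta^{\vee})=l$, and under $a_{j}\geq l$ the transposition is a Bruhat covering with $\kappa+\rho$ regular, so both weights are $\p$-dominant members of the affine $\mathcal{W}^{\p}$-orbit of one dominant integral weight, adjacent in the Hasse diagram.

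Two caveats, one of substance. First, your final appeal to \lq\lq Harish-Chandra linkage (Theorem~\ref{theoremthirteen}) together with the standard construction of BGG operators\rq\rq\ misplaces the weight of the argument: linkage is only a \emph{necessary} condition, and for \emph{generalized} Verma modules even a single-reflection linkage between two $\p$-dominant weights does not by itself produce a nonzero homomorphism (standard maps in the parabolic category can vanish). What rescues the argument is precisely the Bruhat-covering/Hasse-adjacency you verified, under which Lepowsky's theorem (the paper's reference~\cite{l}, underlying the BGG resolution) guarantees the standard map $M_{\p}(\mathbb{V}_{\mu})\rightarrow M_{\p}(\mathbb{V}_{\kappa})$ is nonzero; cite that, not Harish-Chandra. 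Second, your argument as written only yields the operator on the homogeneous model $G/P$, whereas Proposition~\ref{proptwelve} is stated (and later used, in the classification theorem of that section) for a curved projective structure, which is exactly what the paper's invocation of~\cite{css} delivers. This gap is closable in one line -- your covering relation shows the operator is standard, hence it arises via an extremal root (as the Appendix notes holds for \emph{all} standard projective operators) and admits a curved analogue by Chapter~\ref{higherorderone} or~\cite{css2} -- but the line needs to be there.
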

\begin{proof}
As proved in~\cite{css}, p.~65, Corollary 5.3, the condition for this operator to be invariant is
$$\omega=(\tilde{\alpha}+\epsilon_{n-j},\rho)-\frac{1}{2}(l-1)(|\tilde{\alpha}|^{2}+1)-(-\epsilon_{n-j},\tilde{\lambda}),$$
where $\omega=-\frac{1}{n+1}\left(nk+\sum_{i=1}^{n-1}(n-i)a_{i}\right)$ is the geometric weight of $\xo{k}{a_{1}}\;...\;\oo{a_{n-2}}{a_{n-1}}$, $(.,.)$ is the normalized Killing form as in Corollary 6
and $\tilde{\alpha}=-\epsilon_{n}$ is the highest weight of~$\g_{1}$. Moreover $\rho=\rho_{\mathfrak{sl}_{n}\C}=\sum_{i=1}^{n-1}(n-i)\epsilon_{i}$, $|\alpha|^{2}=(\alpha,\alpha)$ and 
$\tilde{\lambda}=\sum_{i=1}^{n-1}\lambda_{i}\epsilon_{i}$ (we can always assume that $\lambda_{n}=0$, which implies $\lambda_{n-j}=\sum_{i=1}^{j}a_{j}$) is the highest weight 
of~$\mathbb{E}$. 
Using
\begin{eqnarray*}
(\tilde{\alpha}+\epsilon_{n-j},\rho)&=&\frac{nj}{n+1},\\
|\tilde{\alpha}|^{2}&=&\frac{n-1}{n+1},\\
(\epsilon_{n-j},\tilde{\lambda})&=&\frac{n\lambda_{n-j}-\sum_{i=1}^{n}\lambda_{i}}{n+1}
\end{eqnarray*}
and the formula for $\omega$ from above, we see that
$$\omega=(\alpha+\epsilon_{n-j},\rho)-\frac{1}{2}(l-1)(|\tilde{\alpha}|^{2}+1)-(-\epsilon_{n-j},\tilde{\lambda})\Leftrightarrow k=-\left(\sum_{i=1}^{j}a_{i}+j-l+1\right).$$
Note that these calculations for $j\in\{1,...,n-1\}$ make only sense if $a_{j}\geq l$. If $j=0$, then $l$ may be arbitrary.
\par
We could have also proven this proposition utilizing Chapter~\ref{higherorderone}. 
\end{proof}

The problem is, when we look at $M$-th order pairings, we do not really want to exclude weights that correspond to operators that have a higher order. The following 
lemma excludes such a situation at the cost of a restriction on the integers $a_{i}$.

\begin{lemma}\label{lemmasixteen}
Let $M\geq \mathrm{max}_{i}\{a_{i}\}$, then no weights have to be excluded for $l> M$.
\end{lemma}
\begin{proof}
As discussed earlier, an irreducible component of $\mathbb{V}_{l}(k-M)$ that induces a generalized Verma module with the same central character as $M_{\p}(\mathbb{V}_{0}(k-M))$ can only arise by taking
$$(M-k|b_{0},b_{1},...,b_{n-1})$$
and subtracting $l$ from one of the $b_{i}$'s to obtain
$$(M-k+l|\tilde{b}_{0},\tilde{b}_{1},...,\tilde{b}_{n-1})$$
so that $(M-k+l|\tilde{b}_{0},...,\tilde{b}_{n-1})$ interlaces~$(M-k|b_{0},...,b_{n-1})$.
But 
$$b_{i}-b_{i-1}=a_{i}\leq M< l\;\forall\;i=1,...,n-1,$$
so subtracting $l$ from any $b_{i}$, $i\geq 1$, leads to $\tilde{b}_{i}=b_{i}-l<b_{i-1}$, which is not allowed. 
Subtracting $l$ from $b_{0}$ leaves $\tilde{b}_{0}=M-l<0$, which is also not allowed.
Therefore all irreducible components of $\mathbb{V}_{l}(k-M)$, for $l>M$, induce a generalized Verma module that has a central character which is different from the one of $M_{\p}(\mathbb{V}_{0}(k-M))$.
\end{proof}

\subsubsection{Examples}\label{excludedweights}
\begin{description}
\item{(a)}
Let us look at symmetric  two tensors of projective weight $v$, i.e.~sections of the vector bundle $\odot^{2}T\mathcal{M}\otimes\mathcal{O}(v)$ for~$M=2$:
\begin{eqnarray*}
\oo{2}{0}\;...\;\oo{0}{2}(v)\;&=&\;\xo{2+v}{0}\;...\;\oo{0}{2}\;+\begin{array}{c}
\;\xo{1+v}{0}\;...\;\oo{0}{1}\;\\
\oplus\\
\;\xoo{v}{1}{0}\;...\;\oo{0}{2}\;
\end{array}
+\begin{array}{c}
\;\xo{v}{0}\;...\;\oo{0}{0}\;\\
\oplus\\
\;\xoo{v-1}{1}{0}\;...\;\oo{0}{1}\;\\
\oplus\\
\;\xoo{v-2}{2}{0}\;...\;\oo{0}{2}\;
\end{array}\\
&&+\begin{array}{c}
\;\xoo{v-2}{1}{0}\;...\;\oo{0}{0}\;\\
\oplus\\
\;\xoo{v-3}{2}{0}\;...\;\oo{0}{1}\;
\end{array}+
\;\xoo{v-4}{2}{0}\;...\;\oo{0}{0}\;.\end{eqnarray*}
The weights to exclude are
\begin{enumerate}
\item
$v=-2,-(n+3)$ which correspond to the first order invariant linear differential operators $\nabla_{a}V^{bc}-\frac{2}{n+1}\delta_{a}{}^{(b}\nabla_{d}V^{c)d}$ and $\nabla_{a}V^{ab}$ respectively;
\item
$v=-1,-(n+2)$ which correspond to second order invariant linear differential operators $\nabla_{a}\nabla_{b}V^{cd}+P_{ab}V^{cd}-\mathrm{trace}$ and $\nabla_{a}\nabla_{b}V^{ab}+P_{ab}V^{ab}$ respectively.
\end{enumerate}
\item{(b)}
Another example for vector fields of projective weight $v$, i.e.~sections of $T\mathcal{M}\otimes\mathcal{O}(v)$, with~$M=1$:
$$\;\oo{1}{0}\;...\;\oo{0}{1}(v)\;=\;\xo{1+v}{0}\;...\;\oo{0}{1}\;+\begin{array}{c}
\;\xo{v}{0}\;...\;\oo{0}{0}\;\\
\oplus\\
\;\xoo{v-1}{1}{0}\;...\;\oo{0}{1}\;
\end{array}
+\;\xoo{v-2}{1}{0}\;...\;\oo{0}{0}\;.$$
The weights to exclude are $v=-1,-(n+1)$ corresponding to the first order  invariant linear differential operators $\nabla_{a}V^{b}-\frac{1}{n}\delta_{a}{}^{b}\nabla_{c}V^{c}$ and $\nabla_{a}V^{a}$
respectively.
\item{(c)}
The last example deals with weighted functions and a general~$M$:
\begin{eqnarray*}\;\oo{M}{0}\;...\;\oo{0}{0}(w-M)\;&=&\;\xo{w}{0}\;...\;\oo{0}{0}\;+\;\xoo{w-2}{1}{0}\;...\;\oo{0}{0}\;+\;\xoo{w-4}{2}{0}\;...\;\oo{0}{0}\;\\
&&+...+\;\xoo{w-2M}{M}{0}\;...\;\oo{0}{0}\;.\end{eqnarray*}
The weights to exclude are $w=0,1,...M-1$ corresponding to the  invariant linear differential operators $\underbrace{\nabla_{(a}...\nabla_{c)}}_{w+1}f+C.C.T.$ respectively.
\end{description}

\subsection{Classification and Examples} 
To state the main theorem, we have to define precisely what we mean by excluded weights.

\begin{definition}
\rm Let $\;\xoo{k}{a_{1}}{a_{2}}\;...\;\oo{a_{n-2}}{a_{n-1}}\;$ be a representation of $\p$. Then the {\bf excluded} {\bf weights} up to order $M$ consist of 
all $k$ such that there is a $1\leq l\leq M$ and a $0\leq j\leq n-1$ with
$$k=-\left(\sum_{i=1}^{j}a_{i}+j-l+1\right)\quad\text{and}\; a_{j}\geq l.$$
For $j=0$, the excluded weights are $k=l-1$ for~$1\leq l\leq M$.
\end{definition}

\begin{theorem}
Let $\;\xoo{k}{a_{1}}{a_{2}}\;...\;\oo{a_{n-2}}{a_{n-1}}\;$ and $\;\xoo{m}{b_{1}}{b_{2}}\;...\;\oo{b_{n-2}}{b_{n-1}}\;$ be irreducible associated bundles on $\mathcal{M}$.
If $M\geq \max_{i}\{a_{i},b_{i}\}$ and $k$ and $m$ are not equal to one of the excluded weights up to order $M$, then there exists an $r$ parameter family of $M$-th order invariant bilinear differential pairings
$$ \;\xoo{k}{a_{1}}{a_{2}}\;...\;\oo{a_{n-2}}{a_{n-1}}\;\times\;\xoo{m}{b_{1}}{b_{2}}\;...\;\oo{b_{n-2}}{b_{n-1}}\;\rightarrow \;\xoo{s}{c_{1}}{c_{2}}\;...\;\oo{c_{n-2}}{c_{n-1}},\;$$
where $r$ is the multiplicity of $\;\oo{c_{1}}{c_{2}}\;...\;\oo{c_{n-2}}{c_{n-1}}\;$ in 
$$\odot^{M}\g_{1}\otimes\;\oo{a_{1}}{a_{2}}\;...\;\oo{a_{n-2}}{a_{n-1}}\;\otimes\;\oo{b_{1}}{b_{2}}\;...\;\oo{b_{n-2}}{b_{n-1}}\;.$$
Excluded weights correspond to the existence of invariant linear differential operators of order $\leq M$ emanating from the bundles in question.
\end{theorem}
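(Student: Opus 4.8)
The plan is to realise this statement as the projective-geometry incarnation of Main result~4 (Theorem~\ref{mainresultfour}), with the abstract central-character hypotheses of that theorem made completely explicit by the branching analysis of the preceding propositions. First I would fix the two bundles and read off their $\g_0^S=\mathfrak{sl}_n\C$ data: let $\mathbb{E}^0$ be the $\mathfrak{sl}_n\C$-module with highest weight data $(a_1,\dots,a_{n-1})$ and $\mathbb{F}^0$ that with data $(b_1,\dots,b_{n-1})$, and form the $M$-bundles $V_M(\mathbb{E}^0)(k-M)$ and $V_M(\mathbb{F}^0)(m-M)$ of Section~\ref{mmodule} and Remark~\ref{geometricweights}, whose top composition factors $\mathbb{V}_0(k-M)$ and $\mathbb{V}_0(m-M)$ are exactly the two given irreducible modules. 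The two ingredients I then need are an invariant splitting $V\to V_M(\mathbb{E}^0)(k-M)$ (and likewise for $W$), supplied by Corollary~\ref{coreight} once the central characters separate, and the family of $\p$-module projections out of the tensor product of the two $M$-bundles, supplied by Proposition~\ref{propeight}.

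The crux is to show that the central-character hypothesis of Corollary~\ref{coreight} holds precisely when $k$ is not an excluded weight up to order $M$. Here I would invoke Proposition~\ref{propeleven}: among the composition factors $\mathbb{V}_l(k-M)$ with $l\ge 1$, the only ones whose associated generalized Verma module can share the central character of $M_\p(\mathbb{V}_0(k-M))$ are those obtained by subtracting $l$ from a single entry of the highest weight, and by Harish-Chandra (Theorem~\ref{theoremthirteen}) this coincidence happens exactly for $k=-(\sum_{i=1}^{j}a_i+j-l+1)$ with $a_j\ge l$, respectively $k=l-1$ when $j=0$. For $l\le M$ these are by definition the excluded weights up to order $M$; for $l>M$ the hypothesis $M\ge\max_i\{a_i\}$ lets me apply Lemma~\ref{lemmasixteen} to rule out any coincidence at all. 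Thus, when $k$ avoids the excluded weights and $M\ge\max_i\{a_i\}$, every lower composition factor has a strictly different central character and the splitting operator exists; the identical argument with $M\ge\max_i\{b_i\}$ handles $W$.

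With both splittings in hand I would tensor them and compose with the projections of Proposition~\ref{propeight}. Since $\p$ induces a $|1|$-grading here, $\g_1$ is abelian and $\mathfrak{U}_M(\p_+)\cong\odot^M\g_1$, so that the relevant $\g_0^S$-tensor product is $\odot^M\g_1\otimes\mathbb{E}^0\otimes\mathbb{F}^0$; each irreducible component isomorphic to the target $\mathfrak{sl}_n\C$-module $(c_1,\dots,c_{n-1})$ yields, via Proposition~\ref{propeight}, one projection onto a first composition factor which thereby carries a $\p$-module structure, hence (after composition with the two splittings) one invariant bilinear differential pairing into $E$. Counting these gives exactly the $r$-parameter family, $r$ being the stated multiplicity; distinct components produce distinct symbols, so the pairings are genuinely independent and of weighted order exactly $M$ by the symbol computation of Section~\ref{bijet}. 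The geometric weight $s$ of the target is then forced by the $\mathfrak{z}(\g_0)$-character, i.e.\ by additivity of projective weights under $k-M$, $m-M$ and the $M$ copies of $\g_1$. Finally, the last sentence is precisely the content of Proposition~\ref{proptwelve}: each excluded weight $k=-(\sum_{i=1}^{j}a_i+j-l+1)$ with $l\le M$ is exactly the invariance condition for an $l$-th order invariant linear differential operator emanating from the bundle, so excluded weights correspond to invariant linear operators of order $\le M$.

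I expect the only genuine obstacle to be the careful matching carried out in the second paragraph: verifying that the two a priori distinct bookkeeping devices---the ``excluded weight'' list and the set of $k$ for which some lower composition factor shares a central character with the top slot---coincide exactly once the range $l\le M$ is isolated by Lemma~\ref{lemmasixteen}. Everything else is assembly of results already in place, together with the routine observation that $|1|$-gradedness collapses $\mathfrak{U}_M(\p_+)$ to $\odot^M\g_1$ and renders the multiplicity count transparent.
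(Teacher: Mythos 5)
Your construction is correct and follows essentially the same route as the paper: the same chain of Lemma~\ref{lemmasixteen}, Proposition~\ref{propeleven} and Corollary~\ref{coreight} to produce the two splitting operators into the $M$-bundles, Proposition~\ref{propeight} for the projections out of their tensor product, and Proposition~\ref{proptwelve} to interpret each excluded weight as an invariant linear differential operator; your explicit matching of the excluded-weight list against the central-character coincidences (Harish-Chandra for $l\le M$, Lemma~\ref{lemmasixteen} for $l>M$) is exactly the content the paper compresses into its first sentence. The one ingredient of the paper's proof that your write-up lacks is a maximality argument: the paper also shows there cannot be \emph{more} invariant bilinear differential pairings (non-zero in the flat case) than the $r$ constructed ones. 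The argument is that if there were, some linear combination would involve no $M$-th derivatives of sections of one of the bundles; the obstruction terms involving $M-1$ derivatives of that bundle's sections paired against one $\Upsilon$-term would then lie only in $\odot^{M-1}\g_{1}\otimes\mathbb{E}\otimes\g_{1}\otimes\mathbb{F}$ and could not be cancelled, since no individual operator appearing in the formula is invariant. Your observation that distinct irreducible components yield independent symbols gives only the lower bound (the constructed family is genuinely $r$-dimensional); if the theorem is read, as the paper reads it, as pinning down the full space of $M$-th order pairings to dimension exactly $r$, this upper-bound argument must be supplied. For the literal existence statement your proof is complete.
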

\begin{proof}
If $M\geq \max_{i}\{a_{i},b_{i}\}$ and $k$ and $m$ are not equal to one of the excluded weights up to order $M$, we can use Lemma~\ref{lemmasixteen}, Proposition~\ref{propeleven} and Corollarly~\ref{coreight} to define
invariant differential operators that take $\;\xoo{k}{a_{1}}{a_{2}}\;...\;\oo{a_{n-2}}{a_{n-1}}\;$ and $\;\xoo{m}{b_{1}}{b_{2}}\;...\;\oo{b_{n-2}}{b_{n-1}}\;$ into their $M$-bundles.
Then we decompose the tensor product of the $M$-bundles as described in Proposition~\ref{propeight} and project onto the first composition factor of each of the irreducible components.
That also yields all the invariant pairings of order smaller than $M$, but we may have to exclude more weights than necessary. Moreover there cannot be more invariant bilinear differential pairings that are non-zero in the flat case, because then one would be able to find a linear combination of all those pairings that does not involve the highest order terms ($M$ derivatives) in sections of one of the bundles. But obstruction terms involving $M-1$ derivatives in the sections of that bundle and one $\Upsilon$-term would therefore only occur in $\odot^{M-1}\g_{1}\otimes \mathbb{E}\otimes\g_{1}\otimes \mathbb{F}$ (if $\mathbb{E}$ and $\mathbb{F}$ denote the corresponding $\g_{0}^{S}$-modules as before) and one would not be able to eliminate them, because no operator in the formula is invariant. The last statement follows from Proposition~\ref{proptwelve}.
\end{proof}

\subsubsection{Example}
Let us carry out the described construction for first order pairings between weighted $2$-forms and weighted vector fields for four dimensions. The corresponding $M$ bundles have composition series
$$\;\oooo{1}{0}{1}{0}\;=\;\xooo{1}{0}{1}{0}\;+\begin{array}{c}
\;\xooo{0}{0}{0}{1}\;\\
\oplus\\
\;\xooo{-1}{1}{1}{0}\;
\end{array}
+\;\xooo{-2}{1}{0}{1}\;$$
and
$$\;\oooo{1}{0}{0}{1}\;=\;\xooo{1}{0}{0}{1}\;+\begin{array}{c}
\;\xooo{0}{0}{0}{0}\;\\
\oplus\\
\;\xooo{-1}{1}{0}{1}\;
\end{array}
+\;\xooo{-2}{1}{0}{0}\;.$$
If we tensor these together, we obtain a composition series
$$
\;\oooo{1}{0}{1}{0}\;\otimes\;\oooo{1}{0}{0}{1}\; =
\left(\begin{array}{c}
\;\xooo{2}{0}{1}{1}\;\\
\oplus\\
\;\xooo{2}{1}{0}{0}\;
\end{array}\right)
+\left(\begin{array}{ccc}
4\times\;\xooo{1}{0}{1}{0}\;&\oplus&2\times\;\xooo{0}{1}{1}{1}\;\\
&\oplus&\\
2\times\;\xooo{0}{2}{0}{0}\;&\oplus&2\times\;\xooo{1}{0}{0}{2}\;
\end{array}\right)
$$
$$
+\left(\begin{array}{ccc}
3\times\;\xooo{-1}{1}{0}{2}\;&\oplus&6\times\;\xooo{-1}{1}{1}{0}\;\\
&\oplus&\\
5\times\;\xooo{0}{0}{0}{1}\;&\oplus&\;\xooo{-2}{2}{1}{1}\;\\
&\oplus&\\
\;\xooo{-1}{0}{2}{1}\;&\oplus&\;\xooo{-2}{3}{0}{0}\;
\end{array}\right)
+\left(\begin{array}{ccc}
5\times\;\xooo{-2}{1}{0}{1}\;&\oplus&2\times\;\xooo{-1}{0}{0}{0}\;\\
&\oplus&\\
2\times\;\xooo{-3}{2}{1}{0}\;&\oplus&2\times\;\xooo{-2}{0}{2}{0}\;\\
&\oplus&\\
\;\xooo{-3}{2}{0}{2}\;&\oplus&\;\xooo{-2}{0}{1}{2}\;
\end{array}\right)
$$
$$
+\left(\begin{array}{c}
\;\xooo{-4}{2}{0}{1}\\
\oplus\\
\;\xooo{-3}{0}{1}{1}\;\\
\oplus\\
\;\xooo{0}{1}{0}{0}\;
\end{array}\right).
$$
This composition series can be split up according to
\begin{eqnarray*}
\;\oooo{1}{0}{1}{0}\;\otimes\;\oooo{1}{0}{0}{1}\;& =&\;\oooo{2}{0}{1}{1}\;\oplus\;\oooo{0}{1}{1}{1}\;\oplus\;\oooo{2}{1}{0}{0}\;\\
&&\oplus\;\oooo{1}{0}{0}{2}\;\oplus\;\oooo{0}{2}{0}{0}\;\\
&&\oplus 2\times\;\oooo{1}{0}{1}{0}\;
\;\oplus\;\oooo{0}{0}{0}{1},\;
\end{eqnarray*}
which compose as
$$\;\oooo{2}{0}{1}{1}\;=\;\xooo{2}{0}{1}{1}\;+
\begin{array}{c}
\;\xooo{1}{0}{0}{2}\;\\
\oplus\\
\;\xooo{0}{1}{1}{1}\;\\
\oplus\\
\;\xooo{1}{0}{1}{0}\;
\end{array}
+
\begin{array}{c}
\;\xooo{-1}{1}{0}{2}\;\\
\oplus\\
\;\xooo{-2}{2}{1}{1}\;\\
\oplus\\
\;\xooo{0}{0}{0}{1}\;\\
\oplus\\
\;\xooo{-1}{1}{1}{0}\;
\end{array}
+
\begin{array}{c}
\;\xooo{-2}{1}{0}{1}\;\\
\oplus\\
\;\xooo{-3}{2}{1}{0}\;\\
\oplus\\
\;\xooo{-3}{2}{0}{2}\;
\end{array}
+
\;\xooo{-4}{2}{0}{1}\;,$$

$$\;\oooo{0}{1}{1}{1}\;=\;\xooo{0}{1}{1}{1}\;+
\begin{array}{c}
\;\xooo{-1}{0}{2}{1}\;\\
\oplus\\
\;\xooo{-1}{1}{0}{2}\;\\
\oplus\\
\;\xooo{-1}{1}{1}{0}\;
\end{array}
+
\begin{array}{c}
\;\xooo{-2}{0}{1}{2}\;\\
\oplus\\
\;\xooo{-2}{1}{0}{1}\;\\
\oplus\\
\;\xooo{-2}{0}{2}{0}\;
\end{array}
+
\;\xooo{-3}{0}{1}{1}\;,$$

$$\;\oooo{2}{1}{0}{0}\;=\;\xooo{2}{1}{0}{0}\;+
\begin{array}{c}
\;\xooo{1}{0}{1}{0}\;\\
\oplus\\
\;\xooo{0}{2}{0}{0}\;
\end{array}
+
\begin{array}{c}
\;\xooo{-1}{1}{1}{0}\;\\
\oplus\\
\;\xooo{-2}{3}{0}{0}\;
\end{array}
+
\;\xooo{-3}{2}{1}{0}\;,$$

$$\;\oooo{1}{0}{0}{2}\;=\;\xooo{1}{0}{0}{2}\;+
\begin{array}{c}
\;\xooo{0}{0}{0}{1}\;\\
\oplus\\
\;\xooo{-1}{1}{0}{2}\;
\end{array}
+
\begin{array}{c}
\;\xooo{-1}{0}{0}{0}\;\\
\oplus\\
\;\xooo{-2}{1}{0}{1}\;
\end{array}
+
\;\xooo{0}{1}{0}{0}\;,$$

$$\;\oooo{1}{0}{1}{0}\;=\;\xooo{1}{0}{1}{0}\;+
\begin{array}{c}
\;\xooo{0}{0}{0}{1}\;\\
\oplus\\
\;\xooo{-1}{1}{1}{0}\;
\end{array}
+
\;\xooo{-2}{1}{0}{1}\;$$
and
$$\;\oooo{0}{2}{0}{0}\;=\;\xooo{0}{2}{0}{0}\;+
\;\xooo{-1}{1}{1}{0}\;
+
\;\xooo{-2}{0}{2}{0}\;,
$$

$$\;\oooo{0}{0}{0}{1}\;=\;\xooo{0}{0}{0}{1}\;+
\;\xooo{-1}{0}{0}{0}\;.
$$
There are 5 first order invariant bilinear differential pairings according to the projections onto (including the weights $k=1+v$ for vector fields of projective weight $v$ and $m=w-3$ for 2-forms of projective weight $w$, i.e.~we have to tensor by the line bundle $\mathcal{O}(k-M)\otimes\mathcal{O}(m-M)=\mathcal{O}(v+w-4)$):
$$\;\xooo{v+w-4}{2}{0}{0}\;,\;\xooo{v+w-3}{0}{0}{2}\;,\;\xooo{v+w-4}{1}{1}{1}\;$$
and the two projections onto 
$$\;\xooo{v+w-3}{0}{1}{0}\;=\Omega^{2}(v+w)\;,$$
corresponding to
$$\;\ooo{0}{0}{1}\;\otimes\g_{1}\otimes \;\ooo{0}{1}{0}\;=2\times \;\ooo{0}{1}{0}\;\oplus\;\ooo{1}{1}{1}\;\oplus\;\ooo{2}{0}{0}\;\oplus\;\ooo{0}{0}{2}\;.$$
The concrete formulae for the two projections onto $\Omega^{2}(v+w)$ were given at the end of~\ref{examplesfirstorder}.

\subsection{Weighted functions of excluded geometric weight}
Returning to Example~\ref{excludedweights} (c), let us assume that the central character of the generalized Verma module $M_{\p}(\mathbb{V}_{0}(w-M))$ equals the central character of $M_{\p}(\mathbb{V}_{l}(w-M))$,
i.e.~$0\leq w=l-1\leq M-1$. This corresponds to an $l$-th order invariant differential operator 
$$D:\xooo{w}{0}{0}{0}\;...\;\oo{0}{0}\;\rightarrow \xoo{w-2l}{l}{0}\;...\oo{0}{0}\;...\;\oo{0}{0}.$$
Hence one can invariantly write $D(f)$, for $f\in\mathcal{O}(w)$.
Now we look at the $\p$-module
$$\tilde{\mathbb{V}}_{M,l}(\C)(w-M)=\mathbb{V}_{l}(w-M)+\mathbb{V}_{l+1}(w-M)+...+\mathbb{V}_{M}(w-M).$$
The central character of  $M_{\p}(\mathbb{V}_{l}(k-M))$ is different from the central character of all the other generalized Verma modules, because each $M_{\p}(\mathbb{V}_{s}(w-M))$ has the same central character as $M_{\p}(\mathbb{V}_{0}(w-M))$ if and only if~$w=s-1$. Therefore we can define an invariant differential mapping
$$\mathcal{O}(w)\stackrel{D}{\rightarrow}V_{l}(w-M)\rightarrow \tilde{V}_{M,l}(\C)(w-M)\hookrightarrow V_{M}(\C)(w-M).$$
The invariant pairings that we obtain via this construction do not involve derivatives of $f$ of order smaller than~$l$. This is confirmed by the formulae obtained earlier. 
\par
These considerations yield:
\begin{corollary}
If $M\geq\mathrm{max}_{i}\{a_{i}\}$ and $k$ does not equal one of the excluded weights up to order $M$ for $V=\;\xoo{k}{a_{1}}{a_{2}}\;...\;\oo{a_{n-2}}{a_{n-1}}\;$, then there exists
a one parameter family of invariant bilinear differential pairings of order $M$ between sections of $V$ and arbitrarily weighted functions onto every bundle that is induced by
an irreducible component of $\odot^{M}\g_{1}\otimes\;\oo{a_{1}}{a_{2}}\;...\;\oo{a_{n-2}}{a_{n-1}}\;$.
\end{corollary}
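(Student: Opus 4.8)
The plan is to reduce the corollary to two independent splitting constructions followed by the tensor-and-project procedure of Proposition~\ref{propeight}, which is exactly the scheme of the preceding theorem but with the second factor specialised to a (possibly badly weighted) density bundle. First I would dispose of the factor $V$: since $M\geq\max_i\{a_i\}$ and $k$ avoids the excluded weights up to order $M$, Lemma~\ref{lemmasixteen} and Proposition~\ref{propeleven} guarantee that every irreducible composition factor of $\mathbb{V}_{\underline{M}}(\mathbb{E}^{0})(\underline{k}-\underline{M})$ induces a generalized Verma module whose central character differs from that of $M_{\p}(\mathbb{V}_{0}(\underline{k}-\underline{M}))$. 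Corollary~\ref{coreight} then produces an invariant splitting operator $V\to V_{\underline{M}}(\mathbb{E}^{0})(\underline{k}-\underline{M})$, exactly as in the main theorem. This part is routine and uses no new idea.

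The genuinely new ingredient is the inclusion of an \emph{arbitrarily} weighted function $\mathcal{O}(w)$ into the density $M$-bundle $V_{M}(\C)(w-M)$, and here I would argue as in the subsection on weighted functions of excluded geometric weight. If $w\notin\{0,1,\dots,M-1\}$, the central character of $M_{\p}(\mathbb{V}_{0}(w-M))$ is already distinct from those of all higher slots (Proposition~\ref{propeleven} with all $a_i=0$), so Corollary~\ref{coreight} gives the splitting directly. If instead $w=l-1$ for some $1\leq l\leq M$, the naive splitting fails because $M_{\p}(\mathbb{V}_{l}(w-M))$ then shares the central character of $M_{\p}(\mathbb{V}_{0}(w-M))$. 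The remedy is to replace the full $M$-bundle by the sub-$\p$-module $\tilde{\mathbb{V}}_{M,l}(\C)(w-M)=\mathbb{V}_{l}(w-M)+\cdots+\mathbb{V}_{M}(w-M)$, to write down via Proposition~\ref{proptwelve} (case $j=0$) the invariant $l$-th order operator $D\colon\mathcal{O}(w)\to V_{l}(w-M)$, and then to split off the irreducible quotient $\mathbb{V}_{l}$ of the truncated module. The key point — which I would verify using Proposition~\ref{propeleven} together with Harish-Chandra's Theorem~\ref{theoremthirteen} — is that for the trivial module $\C$ the generalized Verma module $M_{\p}(\mathbb{V}_{s}(w-M))$ has the same central character as $M_{\p}(\mathbb{V}_{0}(w-M))$ if and only if $w=s-1$; hence among the slots $s>l$ of $\tilde{\mathbb{V}}_{M,l}$ none shares the central character of $\mathbb{V}_{l}$, so Corollary~\ref{coreight} applies to the truncated module and yields an invariant splitting $V_{l}(w-M)\to\tilde{V}_{M,l}(\C)(w-M)$. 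Composing $\mathcal{O}(w)\xrightarrow{D}V_{l}(w-M)\to\tilde{V}_{M,l}(\C)(w-M)\hookrightarrow V_{M}(\C)(w-M)$ then gives the required invariant inclusion for every weight $w$, the last arrow being the canonical injection of the sub-$\p$-module.

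With both splittings in hand I would tensor $V_{\underline{M}}(\mathbb{E}^{0})(\underline{k}-\underline{M})\otimes V_{M}(\C)(w-M)$ and project onto the first composition factor of each irreducible $\g$-component, invoking Proposition~\ref{propeight}. Because projective geometry is $|1|$-graded, $\p_{+}=\g_{1}$ is abelian and $\mathfrak{U}_{M}(\p_{+})=\odot^{M}\g_{1}$, so the top-order ($t=M$) targets supplied by Proposition~\ref{propeight} are precisely the irreducible components of $\odot^{M}\g_{1}\otimes\mathbb{E}^{0}\otimes\C=\odot^{M}\g_{1}\otimes\mathbb{E}^{0}$. For each such component the projection is canonical up to scale, and since the density factor carries no internal tensorial freedom this produces a single pairing up to a scalar, i.e.\ a one-parameter family; that the pairing has weighted order exactly $M$ is read off from its symbol as in~\ref{bijet}, and its invariance follows from the invariance of the two splittings together with the fact that the projection is a $\p$-module homomorphism. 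The main obstacle is the second paragraph: one must confirm that truncating the density $M$-bundle at slot $l$ genuinely removes \emph{all} central-character coincidences while retaining an irreducible quotient isomorphic to the image of $D$, and that the resulting composite is independent of the choice of Weyl structure — both of which hinge on the precise if-and-only-if description of coinciding central characters furnished by Proposition~\ref{propeleven}.
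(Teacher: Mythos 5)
Your proposal is correct and follows essentially the same route as the paper: the splitting of $V$ via Lemma~\ref{lemmasixteen}, Proposition~\ref{propeleven} and Corollary~\ref{coreight}, the treatment of an excluded density weight $w=l-1$ by composing the invariant operator $D:\mathcal{O}(w)\rightarrow V_{l}(w-M)$ with the splitting of the truncated module $\tilde{\mathbb{V}}_{M,l}(\C)(w-M)$ and the canonical inclusion, and finally the tensor-and-project step of Proposition~\ref{propeight} with $\mathfrak{U}_{M}(\p_{+})=\odot^{M}\g_{1}$. The only point you state more loosely than the paper is the multiplicity-one claim for the irreducible components of $\odot^{M}\g_{1}\otimes\mathbb{E}^{0}$ (which yields the \emph{one}-parameter family); the paper settles this by citing Pieri's formula, and your phrase about the density factor carrying no tensorial freedom should be replaced by that observation.
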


\subsubsection{Remark}
Using Pierie's formula, it is clear that the tensor product $\odot^{M}\g_{1}\otimes\;\oo{a_{1}}{a_{2}}\;...\;\oo{a_{n-2}}{a_{n-1}}\;$ does not have multiplicities.

\subsubsection{Example}
Let us analyze the example given in~\ref{theproblemwithhigherorderpairings}, where we considered second order pairings $\mathrm{Vect}(\mathcal{M})(v)\times\mathcal{O}(w)\rightarrow\Omega^{1}(v+w)$. For this purpose we decompose
$$\odot^{2}\g_{1}\otimes\oo{0}{0}\;...\;\oo{0}{1}\;=\;\oo{2}{0}\;...\;\oo{0}{1}\oplus\;\oo{1}{0}\;...\;\oo{0}{0}.$$
Therefore if $v\not=-1,-(n+1)$ (for the other projection we also need to exclude $v=0$), then there should be a second order invariant differential pairing. This is true and the formula was given in~\ref{theproblemwithhigherorderpairings}. Moreover one can clearly see which terms vanish in case the weight $w$ is excluded.

\chapter{Explicit formulae: Tractor calculus}\label{tractorchapter}
In this chapter we will review some of the basic properties of projective, conformal and CR geometry in order to construct some explicit splittings with the help of tractor calculus.
These splittings can be used to obtain explicit formulae for invariant differential pairings for the
parabolic geometry in question. All these examples will deal with real manifolds $\mathcal{M}$ and smooth tensor bundles.
\par
To denote the various tensor bundles that occur, we will use Penrose's abstract indices. The tangent space (and sections thereof) will be denoted by $\mathcal{E}^{a}$ and the cotangent space 
(and sections thereof) by $\mathcal{E}_{b}$. All tensor bundles have a description in terms of abstract indices that denote the symmetries of the elements involved. $(\mathcal{E}_{a}{}^{b})_{0}$, 
for example, denotes the bundle of elements $X_{a}{}^{b}$, such that $X_{a}{}^{a}=0$, i.e.~which are trace-free.

\section{Projective geometry}
Throughout this section $\g_{\R}=\mathfrak{sl}_{n+1}\mathbb{R}$ with complexification $\g=A_{n}$ and the grading as given in Example~\ref{grading} (a). The description of the tractor calculus for manifolds with a projective structure follows~\cite{beg}.

\subsection{Projective manifolds}

\begin{definition}
{\rm A {\bf projective structure} on a manifold $\mathcal{M}$ is given by an equivalence class of torsion-free affine connections which have the same (unparametrized) geodesics.}
\end{definition}

\begin{proposition}[\cite{weyl}]
Two torsion free connections $\nabla$, $\hat{\nabla}$ have the same unparametrized geodesics if and only if there is a one form $\Upsilon_{a}$ such that
$$\hat{\nabla}_{a}\omega_{b}=\nabla_{a}\omega_{b}-\Upsilon_{a}\omega_{b}-\Upsilon_{b}\omega_{a}$$
for every one-form $\omega_{b}$.
\end{proposition}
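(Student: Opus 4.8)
The plan is to exploit the fact that two torsion-free connections differ by a symmetric tensor field, and then to translate the condition of sharing unparametrized geodesics into a pointwise algebraic condition on that tensor.

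First I would write the difference of the two connections as $\hat{\nabla}_a\omega_b = \nabla_a\omega_b - Q_{ab}{}^c\omega_c$, where $Q_{ab}{}^c$ is a genuine tensor field, since the inhomogeneous transformation terms of the two connections cancel. Applying this to an exact one-form $\omega_b = \nabla_b f$ and using that both connections agree on scalars gives $\hat\nabla_a\hat\nabla_b f = \nabla_a\nabla_b f - Q_{ab}{}^c\nabla_c f$; skewing over $a,b$ and invoking torsion-freeness of each connection (so that $\nabla_{[a}\nabla_{b]}f = 0 = \hat\nabla_{[a}\hat\nabla_{b]}f$) forces $Q_{[ab]}{}^c\nabla_c f = 0$ for all $f$, hence $Q_{ab}{}^c = Q_{(ab)}{}^c$. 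Dualizing through the Leibniz rule on the scalar $X^b\omega_b$ then yields the companion formula $\hat\nabla_a X^b = \nabla_a X^b + Q_{ac}{}^b X^c$ on vector fields.

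Next I would recall that a parametrized curve with velocity $u^a$ traces an unparametrized $\nabla$-geodesic exactly when $u^b\nabla_b u^a$ is proportional to $u^a$, and likewise for $\hat\nabla$. Since through every point and in every direction there passes such a geodesic, the two connections share all unparametrized geodesics if and only if, at every point and for every vector $u^a$, the difference $u^b\hat\nabla_b u^a - u^b\nabla_b u^a = Q_{bc}{}^a u^b u^c$ is proportional to $u^a$.

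The heart of the argument, and the step I expect to be the main obstacle, is the purely algebraic lemma: a tensor $Q_{bc}{}^a$, symmetric in $b,c$, satisfying $Q_{bc}{}^a u^b u^c \parallel u^a$ for all $u^a$, must take the form $Q_{bc}{}^a = \delta_b{}^a\Upsilon_c + \delta_c{}^a\Upsilon_b$ for some one-form $\Upsilon_a$. I would prove this by polarization: the proportionality scalar $\lambda(u)$ in $Q_{bc}{}^a u^b u^c = \lambda(u)u^a$ is forced to be linear in $u$ by a homogeneity count, so I write $\lambda(u) = 2\Upsilon_c u^c$; differentiating the identity $Q_{bc}{}^a u^b u^c = 2\Upsilon_c u^c u^a$ in an arbitrary direction $v$ and using the symmetry of $Q$ gives $Q_{bc}{}^a u^b v^c = \Upsilon_c v^c u^a + \Upsilon_b u^b v^a$ for all $u,v$, which is precisely the claimed form. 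Substituting back into $\hat\nabla_a\omega_b = \nabla_a\omega_b - Q_{ab}{}^c\omega_c$ produces the stated transformation rule $\hat\nabla_a\omega_b = \nabla_a\omega_b - \Upsilon_a\omega_b - \Upsilon_b\omega_a$, and the converse is immediate since this $Q$ manifestly satisfies $Q_{bc}{}^a u^b u^c = 2(\Upsilon_c u^c)u^a \parallel u^a$. The remaining point to check is the smoothness of $\Upsilon_a$, which follows at once from the smoothness of $Q_{ab}{}^c$ (for instance by contracting the explicit formula, $\Upsilon_a = \tfrac{1}{n+1}Q_{ab}{}^b$).
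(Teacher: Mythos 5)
Your architecture is the standard one, and is essentially the route of the proof the paper defers to in~\cite{e}: reduce to the difference tensor $Q_{ab}{}^{c}$, which is symmetric because both connections are torsion-free; reformulate the sharing of unparametrized geodesics as the pointwise algebraic condition $Q_{bc}{}^{a}u^{b}u^{c}\parallel u^{a}$ for every vector $u^{a}$; and then identify the symmetric tensors with this property. Your first two reductions (symmetry via Hessians of scalars, the dual formula on vector fields, the geodesic reformulation), the immediate converse, and the trace formula $\Upsilon_{a}=\frac{1}{n+1}Q_{ab}{}^{b}$ are all correct.

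The gap sits exactly in the step you yourself called the heart of the argument. You assert that the proportionality factor $\lambda(u)$ in $Q_{bc}{}^{a}u^{b}u^{c}=\lambda(u)u^{a}$ is ``forced to be linear in $u$ by a homogeneity count.'' Homogeneity of degree one does not force linearity: $u\mapsto\sqrt{\sum_{i}(u^{i})^{2}}$ is homogeneous of degree one and smooth away from the origin without being linear, and a priori $\lambda$ is only defined for $u\neq 0$, as a ratio of a quadratic by a linear expression. Moreover, your subsequent differentiation step already presupposes that the covector $\Upsilon_{a}$ extracted from $\lambda$ is the same at every $u$ --- which is precisely the linearity that needs proving. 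The repair is a genuine polarization. For instance, rewrite the hypothesis as the polynomial identity $u^{[a}Q_{bc}{}^{d]}u^{b}u^{c}=0$ for all $u^{a}$; since a symmetric trilinear form vanishing on the diagonal vanishes identically, this gives the tensor identity $\delta_{(e}{}^{[a}Q_{bc)}{}^{d]}=0$, and contracting $a$ with $e$ yields $(n+1)Q_{bc}{}^{d}=\delta_{b}{}^{d}Q_{ca}{}^{a}+\delta_{c}{}^{d}Q_{ba}{}^{a}$, which is the desired form with $\Upsilon_{a}=\frac{1}{n+1}Q_{ab}{}^{b}$, consistent with your trace formula. (Alternatively, stay with $\lambda$: comparing $Q_{bc}{}^{a}(u+v)^{b}(u+v)^{c}\parallel(u+v)^{a}$ and $Q_{bc}{}^{a}(u+2v)^{b}(u+2v)^{c}\parallel(u+2v)^{a}$ for linearly independent $u,v$ forces $\lambda(u+v)=\lambda(u)+\lambda(v)$, which together with the genuine homogeneity $\lambda(tu)=t\lambda(u)$ gives linearity.) Both arguments use $n\geq 2$; for $n=1$ the proposition is trivially true, since every $Q_{bc}{}^{a}$ already has the stated form. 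With this step repaired, the rest of your proof goes through as written.
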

\begin{proof}
A proof may be found in~\cite{e}, Proposition 1.
\end{proof}

\begin{corollary}
Since $\hat{\nabla}_{a}f=\nabla_{a}f$ for every function $f$ and every connection satisfies a Leibniz rule, the difference of $\hat{\nabla}$ and $\nabla$ when acting on vector fields can be deduced.
Again using the Leibniz rule, it is then straightforward to deduce the difference of $\hat{\nabla}$ and $\nabla$ when acting on arbitrary tensor bundles.
\end{corollary}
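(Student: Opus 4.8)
The plan is to bootstrap from the given transformation law on one-forms, using only the two hypotheses that $\hat\nabla_a f=\nabla_a f$ on functions and that both connections obey the Leibniz rule. First I would pair an arbitrary one-form $\omega_b$ with an arbitrary vector field $X^b$ to form the function $\omega_b X^b$. Applying $\hat\nabla_a$ and the Leibniz rule gives
\begin{equation*}
\nabla_a(\omega_b X^b)=\hat\nabla_a(\omega_b X^b)=(\hat\nabla_a\omega_b)X^b+\omega_b\hat\nabla_a X^b,
\end{equation*}
and substituting the given rule $\hat\nabla_a\omega_b=\nabla_a\omega_b-\Upsilon_a\omega_b-\Upsilon_b\omega_a$ together with the ordinary Leibniz expansion of the left-hand side lets the $\nabla_a\omega_b\,X^b$ terms cancel. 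What remains is $\omega_b(\hat\nabla_a X^b-\nabla_a X^b)=\omega_b\Upsilon_a X^b+\omega_b\delta_a{}^b\Upsilon_c X^c$ after relabelling the contracted indices, and since $\omega_b$ is arbitrary I can strip it off to obtain
\begin{equation*}
\hat\nabla_a X^b=\nabla_a X^b+\Upsilon_a X^b+\delta_a{}^b\Upsilon_c X^c,
\end{equation*}
which is exactly the formula quoted in the Introduction.

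Next I would observe that the difference $\hat\nabla_a-\nabla_a$ is $\mathcal{O}$-linear, since it annihilates functions, and is therefore a tensorial (zeroth order) operator whose value on any tensor product is the sum of its values on the individual factors by the Leibniz rule. Consequently the transformation on an arbitrary tensor bundle is completely determined once it is known on the generating bundles $\mathcal{E}_a$ and $\mathcal{E}^a$: each covariant index transforms by the $-\Upsilon$ pattern read off from the one-form rule, and each contravariant index by the $+\Upsilon$ pattern just derived. Writing a general field $T^{b\cdots}{}_{c\cdots}$ as a tensor product and applying the Leibniz rule slot by slot then yields a closed expression carrying one correction term per index.

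The one genuinely non-formal point, and the step I expect to be the main obstacle, is the extension to weighted (density) bundles, since these are not literal tensor products of $T\mathcal{M}$ and $T^{*}\mathcal{M}$. To handle them I would identify the projective density bundle $\mathcal{E}(w)$ with a suitable power of $\Lambda^{n}T\mathcal{M}$ and compute the action of $\hat\nabla_a-\nabla_a$ on a volume-type section by tracing the vector-field formula over its indices; the trace of $\Upsilon_a X^b+\delta_a{}^b\Upsilon_c X^c$ produces the factor $(n+1)\Upsilon_a$, which upon passing to the appropriate root gives $\hat\nabla_a\sigma=\nabla_a\sigma+w\Upsilon_a\sigma$ for a density $\sigma$ of weight $w$. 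Combining this scalar weight correction with the index-by-index tensorial corrections from the previous paragraph then determines the difference of $\hat\nabla$ and $\nabla$ on an arbitrary weighted tensor bundle, completing the deduction.
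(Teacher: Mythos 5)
Your proposal is correct and follows exactly the argument the paper intends (the corollary's statement is itself the proof sketch, and the paper only cites~\cite{e} for the details): pairing $\omega_b X^b$ into a function and using the Leibniz rule recovers the vector-field formula $\hat\nabla_a X^b=\nabla_a X^b+\Upsilon_a X^b+\delta_a{}^b\Upsilon_c X^c$ quoted in the Introduction, the Leibniz rule then propagates the correction index by index to tensor products, and the density bundles $\mathcal{E}(w)=(\Lambda^n)^{-w/(n+1)}$ are handled by the trace $(n+1)\Upsilon_a$ on the determinant line, giving $\hat\nabla_a\sigma=\nabla_a\sigma+w\Upsilon_a\sigma$ as in the paper. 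All three steps check out, so there is nothing to add.
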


\begin{definition}
{\rm For every $w\in\R$, let $\mathcal{E}(w)$ denote the line bundle of densities of {\bf projective weight} $w$. This bundle (assuming that $\mathcal{M}$ is oriented) can be defined as $(\Lambda^{n})^{-\frac{w}{n+1}}$, where 
$\Lambda^{n}$ is the line bundle of $n$-forms on the manifold $\mathcal{M}$ of dimension $n$. The tensor product of an arbitrary bundle $\mathcal{E}^{\Phi}$, where $\Phi$ denotes some indices, and $\mathcal{E}(w)$ will be denoted by $\mathcal{E}^{\Phi}(w)$. Note that
$$\hat{\nabla}_{a}f=\nabla_{a}f+w\Upsilon_{a}f$$
for $f\in\mathcal{E}(w)$.}
\end{definition}

\begin{definition}
{\rm The curvature tensor, defined by
$$(\nabla_{a}\nabla_{b}-\nabla_{b}\nabla_{a})V^{c}=R_{ab}{}^{c}{}_{d}V^{d}$$
for every vector field $V^{a}\in\mathcal{E}^{a}$, can be written as
$$R_{ab}{}^{c}{}_{d}=C_{ab}{}^{c}{}_{d}+2\delta_{[a}{}^{c}P_{b]d}+\beta_{ab}\delta_{d}{}^{c},$$
where $C_{ab}{}^{c}{}_{d}$ is the trace-free Weyl tensor, $\beta_{ab}=-2P_{[ab]}$ is skew and $P_{ab}$ is the {\bf Schouten tensor}. This tensor has a transformation law
$$\hat{P}_{ab}=P_{ab}-\nabla_{a}\Upsilon_{b}+\Upsilon_{a}\Upsilon_{b}.$$}
\end{definition}

\subsection{Tractor calculus}

\begin{definition}
{\rm 
Let $\mathbb{A}$ be the standard representation of $\g_{\R}$ on $\R^{n+1}$. The associated bundle $\mathcal{E}^{A}=\mathcal{G}\times_{P}\mathbb{A}$ is called {\bf standard tractor bundle} and it has a composition series
$$\ooo{0}{0}{0}\;...\;\oo{0}{1}=\xoo{0}{0}{0}\;...\;\oo{0}{1}+\xoo{-1}{0}{0}\;...\;\oo{0}{0}.$$
In accordance with~\ref{filtrations}, for every choice of connection $\nabla$, we can write the elements in $\mathcal{E}^{A}$ as
$$\mathcal{E}^{A}\ni V^{A}=\left(\begin{array}{c}
V^{a}\\
\sigma
\end{array}
\right),$$
where $V^{a}\in\mathcal{E}^{a}(-1)$ and $\sigma\in\mathcal{E}(-1)$. 
Under change of affine connection to $\hat{\nabla}$, these elements transform as
$$\widehat{\left(\begin{array}{c}
V^{a}\\
\sigma
\end{array}
\right)}=\left(\begin{array}{c}
V^{a}\\
\sigma-\Upsilon_{a}V^{a}
\end{array}
\right).$$
As explained in  the remark at the end of Section~\ref{tractorconnection}, there is a canonical connection, the {\bf tractor connection}, on the bundle $\mathcal{E}^{A}$ given for each choice of connection by
$$\nabla_{a}\left(\begin{array}{c}
V^{b}\\
\sigma
\end{array}
\right)=\left(\begin{array}{c}
\nabla_{a}V^{b}+\sigma\delta_{a}{}^{b}\\
\nabla_{a}\sigma-P_{ab}V^{b}
\end{array}
\right).$$
Let $\mathbb{A}^{*}$ be the dual of the standard representation of $\g_{\R}$ on $\R^{n+1}$. The associated bundle $\mathcal{E}_{A}=\mathcal{G}\times_{P}\mathbb{A}^{*}$ is  called {\bf standard co-tractor bundle} and it (or rather its complexification) has a composition series
$$\ooo{1}{0}{0}\;...\;\oo{0}{0}=\xoo{1}{0}{0}\;...\;\oo{0}{0}+\xoo{-1}{1}{0}\;...\;\oo{0}{0}.$$
For every choice of connection $\nabla$, we can write the elements in $\mathcal{E}_{A}$ as
$$\mathcal{E}_{A}\ni V_{A}=\left(\begin{array}{c}
\sigma\\
V_{a}
\end{array}
\right),$$
where $V_{a}\in\mathcal{E}_{a}(1)$ and $\sigma\in\mathcal{E}(1)$. 
Under change of connection to $\hat{\nabla}$ these elements transform as
$$\widehat{\left(\begin{array}{c}
\sigma\\
V_{a}
\end{array}
\right)}=\left(\begin{array}{c}
\sigma\\
V_{a}+\Upsilon_{a}\sigma
\end{array}
\right).$$
The tractor connection on the bundle $\mathcal{E}_{A}$ is, for each choice of connection, given by
$$\nabla_{a}\left(\begin{array}{c}
\sigma\\
V_{b}
\end{array}
\right)=\left(\begin{array}{c}
\nabla_{a}\sigma-V_{a}\\
\nabla_{a}V_{b}+P_{ab}\sigma
\end{array}
\right).$$
}\end{definition}

\subsubsection{Remark}\label{tractordescription}
This description of $\mathcal{E}^{A}$ can be used to determine descriptions of all tensor powers of $\mathcal{E}^{A}$ and the corresponding tractor connections by requiring $\nabla$ to satisfy a Leibniz rule. All bundles that are induced from representations of the whole Lie algebra $\g$ are called {\bf tractor bundles} and we use capital letters $A,B,..$ as abstract indices in the same spirit as small letters $a,b,..$ are used as indices for tensor powers of the tangent bundle $\mathcal{E}^{a}$. Moreover we can tensor any tractor bundle with a line bundle $\mathcal{E}(w)$ to change the projective weight as in Remark~\ref{geometricweights}. The tractor connection is not invariant on weighted tractor bundles $\mathcal{E}^{\Phi}(w)$, but has a transformation law $\hat{\nabla}_{a}f=\nabla_{a}f+w\Upsilon_{a}f$, for $f\in\mathcal{E}^{\Phi}(w)$. 
\par 
This remark also applies to the tractor calculus for conformal and CR structures to be presented in the next two sections.
 
\begin{definition}
{\rm Let $f\in\mathcal{E}^{\Phi}(w)$ be a section of a tractor bundle of weight $w$ ($\Phi$ denoting some tractor indices). There exists an invariant operator
\begin{eqnarray*}
D_{A}:\mathcal{E}^{\Phi}(w)&\rightarrow&\mathcal{E}^{\Phi}\otimes\mathcal{E}_{A}(w-1)\\
f&\mapsto& \left(\begin{array}{c}
wf\\
\nabla_{a}f
\end{array}
\right),
\end{eqnarray*}
where $\nabla_{a}$ denotes the appropriate tractor connection on $\mathcal{E}^{\Phi}$.}
\end{definition}

\subsection{Special splittings}

\begin{proposition}\label{propfourteen}
If $V^{i_{1}...i_{k}}\in\mathcal{E}^{(i_{1}...i_{k})}(v)$ and $v\not\in\{-(n+k+\alpha-1)\}_{\alpha=1,...,k}$, then there exists a unique lift to an element $V^{I_{1}...I_{k}}\in\mathcal{E}^{(I_{1}...I_{k})}(v+k)$ such that
$$D_{A}V^{AI_{2}...I_{k}}=0.$$
Each excluded weight $v$ corresponds to the existence of an invariant differential operator.
\end{proposition}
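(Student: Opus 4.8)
The plan is to fix a background connection $\nabla$ from the projective class, which (as in Section~\ref{filtrations} and Remark~\ref{tractordescription}) trivialises the weighted tractor bundle $\mathcal{E}^{(I_{1}\ldots I_{k})}(v+k)$ into a direct sum of slots. Since the standard tractor bundle is $\mathcal{E}^{A}=\mathcal{E}^{a}(-1)+\mathcal{E}(-1)$, every slot of $\mathcal{E}^{(I_{1}\ldots I_{k})}(v+k)$ carries the same projective weight $v$, and I would write $V^{I_{1}\ldots I_{k}}$ in this splitting as a tuple $(V_{0},V_{1},\ldots,V_{k})$, where $V_{m}\in\mathcal{E}^{(i_{1}\ldots i_{k-m})}(v)$ is the component with $m$ of the tractor indices occupying the bottom slot. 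The composition series of the standard tractor bundle forces the top slot $V_{0}$ to be the projecting part of $V^{I_{1}\ldots I_{k}}$, which we set equal to the given field $V^{i_{1}\ldots i_{k}}$. Thus the content of the statement is that the remaining slots $V_{1},\ldots,V_{k}$ are uniquely determined by the single tractor equation $D_{A}V^{AI_{2}\ldots I_{k}}=0$.

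First I would compute $D_{A}V^{AI_{2}\ldots I_{k}}$ explicitly in this splitting. Using $D_{A}f=(wf,\nabla_{a}f)$ for $f\in\mathcal{E}^{\Phi}(w)$, the explicit formula for the tractor connection on $\mathcal{E}^{A}$, and the pairing $W_{A}V^{A}=\tau\sigma+W_{a}V^{a}$, the contraction $D_{A}V^{AI_{2}\ldots I_{k}}$ becomes a rank $(k-1)$ tractor of weight $v+k-1$ whose own slots are first-order differential expressions in the $V_{m}$. The structural feature I would extract is that the resulting system is triangular: reading off the slots of $D_{A}V^{AI_{2}\ldots I_{k}}=0$ successively, the $\alpha$-th equation has the schematic form $c_{\alpha}\,V_{\alpha}=(\text{first-order terms in }V_{0},\ldots,V_{\alpha-1})$, where $c_{\alpha}$ is a scalar depending only on $v$, $n$, $k$ and $\alpha$ (arising from the density component $wf$ of $D_{A}$ together with the trace terms produced by symmetrising over the tractor indices and by the $\delta_{a}{}^{b}$ appearing in the tractor connection). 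Solving this recursion upward from $V_{0}=V^{i_{1}\ldots i_{k}}$ then produces the lift, each $V_{\alpha}$ being an $\alpha$-th order invariant operator applied to $V$; uniqueness is automatic, since whenever $c_{\alpha}\neq0$ the slot $V_{\alpha}$ is determined outright.

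The heart of the matter is the evaluation of the scalars $c_{\alpha}$. I expect the bookkeeping of the trace terms (tracking every contraction and every factor coming from $\delta_{a}{}^{a}=n$) to show that, up to a harmless nonzero normalisation, $c_{\alpha}$ is a multiple of $(v+n+k+\alpha-1)$ for $\alpha=1,\ldots,k$, so that the recursion proceeds without obstruction exactly when $v\notin\{-(n+k+\alpha-1)\}_{\alpha=1,\ldots,k}$, which is precisely the hypothesis. This computation of the exact coefficients is the main obstacle and the step most prone to error, so I would cross-check the cases $k=1$ and $k=2$ directly against the explicit Schouten-tensor transformation $\hat{P}_{ab}=P_{ab}-\nabla_{a}\Upsilon_{b}+\Upsilon_{a}\Upsilon_{b}$ and the tractor connection formulae of the preceding subsection.

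Finally, for the last sentence I would analyse the degenerate case $c_{\alpha}=0$. There the $\alpha$-th equation becomes $0\cdot V_{\alpha}=(\text{first-order terms in the already-determined }V_{0},\ldots,V_{\alpha-1})$, i.e.\ the vanishing of a fixed $\alpha$-th order expression in $V$. Since $V_{0},\ldots,V_{\alpha-1}$ are themselves invariant differential operators of $V$, this obstruction defines an invariant differential operator $\mathcal{E}^{(i_{1}\ldots i_{k})}(v)\to\mathcal{E}^{\Phi}$ of order $\alpha$; its invariance follows because the entire construction is assembled from the tractor data $D_{A}$ and the tractor connection, which are independent of the choice of $\nabla$ in the sense of Lemma~\ref{five}. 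Identifying this operator with a standard projective operator, equivalently locating the corresponding singular vector via Theorem~\ref{mainresultone} or the analysis of Chapter~\ref{higherorderone}, then completes the interpretation of each excluded weight.
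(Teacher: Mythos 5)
Your proposal follows essentially the same route as the paper's proof: fix a connection in the projective class, split $V^{I_{1}\ldots I_{k}}$ into its slots, observe that $D_{A}V^{AI_{2}\ldots I_{k}}=0$ is a triangular system that determines the lower slots recursively from the projecting part $V^{i_{1}\ldots i_{k}}$, and read off the excluded weights as the zeros of the scalar coefficients. The computation you defer is precisely what the paper carries out: the system is equation~(\ref{projective}),
$$\nabla_{a}V_{\alpha}^{ai_{1}\ldots i_{\alpha-1}}+(v+n+k+\alpha-1)V_{\alpha-1}^{i_{1}\ldots i_{\alpha-1}}-(k-\alpha)V_{\alpha+1}^{abi_{1}\ldots i_{\alpha-1}}P_{ab}=0,$$
so your predicted coefficients $(v+n+k+\alpha-1)$ and hence the exclusion set are correct; note however that the equation determining a given slot also involves a Schouten-tensor contraction of the slot solved two steps earlier, not only first-order expressions in the preceding slots.

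One assertion in your last paragraph is wrong, though not fatally so: the intermediate slots are \emph{not} individually invariant differential operators in $V$. For $k=1$, for instance, the bottom slot $-\tfrac{1}{n+v+1}\nabla_{a}V^{a}$ changes by $-\Upsilon_{a}V^{a}$ under change of connection, exactly as the tractor transformation law requires; only the whole tractor, and the first non-vanishing slot of an invariant expression, are themselves invariant. This is the ``subtle point'' stressed in the proof of Lemma~\ref{five}. Your alternative justification --- that in the degenerate case the obstruction is the first non-zero projection of the invariant expression $D_{A}V^{AI_{2}\ldots I_{k}}$, all higher slots having been made to vanish --- is the correct one, and it is exactly the argument used in the corollary following Proposition~\ref{propfourteen}; the paper's proof of the proposition itself instead cites Proposition~\ref{proptwelve} for the invariance of $\nabla_{i_{1}}\cdots\nabla_{i_{k+1-\alpha}}V^{i_{1}\ldots i_{k}}+\mathrm{C.C.T.}$ at each excluded weight.
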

\begin{proof}
We will regard the elements of $\mathcal{E}^{(I_{1}...I_{k})}(v+k)$ as elements in $\mathcal{E}^{I_{1}...I_{k}}(v+k)$ satisfying certain symmetry relations. This can be easily demonstrated for the case $k=2$:
$$\mathcal{E}^{AB}(v+2)=\mathcal{E}^{ab}(v)+\begin{array}{c}
\mathcal{E}^{a}(v)\\
\oplus\\
\mathcal{E}^{b}(v)
\end{array}+\mathcal{E},$$
so that $V^{AB}\in\mathcal{E}^{AB}$ can be written as
$$V^{AB}=\left(\begin{array}{ccccc}
&&V_{1}^{a}&&\\
V^{ab}_{2}&+&\oplus&+&V_{0}\\
&&W_{1}^{b}&&
\end{array}\right),$$
where we will use lower indices to indicate the valence of the corresponding tensor. If $V^{AB}\in\mathcal{E}^{(AB)}(v+2)$, then 
\begin{enumerate}
\item
$V_{2}^{ab}\in\mathcal{E}^{(ab)}(v)$ and
\item
$V_{1}^{a}=W_{1}^{a}\in\mathcal{E}^{a}(v)$.
\end{enumerate}
For higher valence tensors $V^{I_{1}...I_{k}}$, each component $V_{\alpha}^{i_{1}...i_{\alpha}}$ has to be totally symmetric and is equal to all other components with $\alpha$ indices.
\par

We can write $V^{I_{1}I_{2}...I_{k}}\in\mathcal{E}^{(I_{1}I_{2}...I_{k})}(v+k)$ as
$$V^{I_{1}I_{2}...I_{k}}=\left(\begin{array}{c}
V_{k}^{i_{1}I_{2}...I_{k}}\\
V_{k-1}^{I_{2}...I_{k}}
\end{array}\right)$$
and
$$\nabla_{a}V^{I_{1}I_{2}...I_{k}}=\left(\begin{array}{c}
\nabla_{a}V_{k}^{i_{1}I_{2}...I_{k}}+\delta_{a}{}^{i_{1}}V_{k-1}^{I_{2}...I_{k}}\\
\nabla_{a}V_{k-1}^{I_{2}...I_{k}}-V_{k}^{bI_{2}...I_{k}}P_{ab}
\end{array}\right).$$
It is then straightforward to compute
$$D_{I_{1}}V^{I_{1}...I_{k}}=\left(\nabla_{a}V_{k}^{aI_{2}...I_{k}}+(v+k+n)V_{k-1}^{I_{2}...I_{k}}\right).$$
Moreover, for each $l=1,...,\alpha$ and $\alpha=1,...,k$, we can compute
$$\nabla_{a}V_{\alpha}^{i_{1}...i_{l}I_{l+1}I_{l+2}...I_{\alpha}}=\left(\begin{array}{c}
\nabla_{a}V_{\alpha}^{i_{1}...i_{l}i_{l+1}I_{l+2}...I_{\alpha}}+\delta_{a}{}^{i_{l+1}}V_{\alpha-1}^{i_{1}...i_{l}I_{l+2}..I_{\alpha}}\\
\nabla_{a}V_{\alpha-1}^{i_{1}...i_{l}I_{l+2}...I_{\alpha}}-V_{\alpha}^{i_{1}...i_{l}bI_{l+2}...I_{\alpha}}P_{ab}
\end{array}\right)$$
and hence
$$\nabla_{a}V_{\alpha}^{ai_{2}...i_{l}I_{l+1}I_{l+2}...I_{\alpha}}=\left(\begin{array}{c}
\nabla_{a}V_{\alpha}^{ai_{2}...i_{l}i_{l+1}I_{l+2}...I_{\alpha}}+V_{\alpha-1}^{i_{2}...i_{l}i_{l+1}I_{l+2}..I_{\alpha}}\\
\nabla_{a}V_{\alpha-1}^{ai_{2}...i_{l}I_{l+2}...I_{\alpha}}-V_{\alpha}^{ai_{2}...i_{l}bI_{l+2}...I_{\alpha}}P_{ab}
\end{array}\right).$$
Using this equation one can see that $D_{I_{1}}V^{I_{1}...I_{k}}=0$ is equivalent to the following $k$ equations
\begin{equation}\label{projective}
\nabla_{a}V_{\alpha}^{ai_{1}...i_{\alpha-1}}+(v+n+k+\alpha-1)V_{\alpha-1}^{i_{1}...i_{\alpha-1}}-(k-\alpha)V_{\alpha+1}^{abi_{1}...i_{\alpha-1}}P_{ab}=0,
\end{equation}
for $\alpha=1,...,k$.
If $v\not\in\{-(n+k+\alpha-1)\}_{\alpha=1,...,k}$, then these equations can be uniquely solved starting with $V_{k}^{i_{1}...i_{k}}=V^{i_{1}...i_{k}}$. This shows the uniqueness of the splitting. The
existence can either be shown by explicitly using the transformation rules under change of connection as in Remark~\ref{tractordescription} or by using the general theory from the last chapter.
\par
If $v=-(n+k+\alpha-1)$, then we can use Proposition~\ref{proptwelve} from the last chapter to see that
the differential operator
$$V^{i_{1}...i_{k}}\mapsto \nabla_{i_{1}}...\nabla_{i_{k+1-\alpha}}V^{i_{1}...i_{k}}+\mathrm{C.C.T},$$
where $\mathrm{C.T.T.}$ stands for curvature correction terms, is projectively invariant.

\end{proof}

\subsubsection{Remark}
This proposition was first proved in~\cite {f1}, Proposition 2.1, without explicit use of tractors. The elements $\tilde{V}^{I_{1}...I_{k}}$ in this paper are written down as elements in 
$\mathcal{E}^{(I_{1}...I_{k})}(v+k)$ and so are related to our elements $V^{I_{1}...I_{k}}$ by the projection $\mathcal{E}^{I_{1}...I_{k}}\rightarrow \mathcal{E}^{(I_{1}...I_{k})}$. More explicitly
$$\left(\tilde{V}_{\alpha}^{i_{1}...i_{\alpha}}\right)_{\text{Fox}}=\binom{k}{\alpha}\left(V_{\alpha}^{i_{1}...i_{\alpha}}\right)_{\text{here}}.$$

\begin{corollary}
For $v=-(n+k+\alpha_{0}-1)$, it is possible to write down the full form (including curvature correction terms) of the corresponding invariant differential operator in Proposition~\ref{propfourteen} using the equations~(\ref{projective}) from the proof of the proposition.
\end{corollary}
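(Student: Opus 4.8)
The plan is to run the recursion encoded in equations~(\ref{projective}) from the top index $\alpha=k$ downwards and to identify the invariant operator with the obstruction that appears at the level $\alpha=\alpha_{0}$ where the recursion degenerates. Recall that equations~(\ref{projective}) are exactly the slot-by-slot form of the invariant condition $D_{I_{1}}V^{I_{1}\ldots I_{k}}=0$, and that in the proof of Proposition~\ref{propfourteen} they are solved, starting from $V_{k}^{i_{1}\ldots i_{k}}=V^{i_{1}\ldots i_{k}}$, by dividing through by the factor $v+n+k+\alpha-1$.

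First I would observe that at the excluded weight $v=-(n+k+\alpha_{0}-1)$ this factor equals $\alpha-\alpha_{0}$, which is non-zero for every $\alpha>\alpha_{0}$ and vanishes only at $\alpha=\alpha_{0}$. Hence the equations for $\alpha=k,k-1,\ldots,\alpha_{0}+1$ can still be solved uniquely, producing the explicit tensors
$$V_{\alpha-1}^{i_{1}\ldots i_{\alpha-1}}=\frac{-1}{\alpha-\alpha_{0}}\left(\nabla_{a}V_{\alpha}^{ai_{1}\ldots i_{\alpha-1}}-(k-\alpha)V_{\alpha+1}^{abi_{1}\ldots i_{\alpha-1}}P_{ab}\right),\qquad \alpha=k,\ldots,\alpha_{0}+1.$$
Unwinding this recursion expresses each $V_{\beta}$, for $\beta\geq\alpha_{0}$, as an explicit differential expression in $V$ of order $k-\beta$, the Schouten-tensor contributions accounting precisely for the curvature correction terms. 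Since the coefficient of $V_{\alpha_{0}-1}$ in equation~(\ref{projective}) for $\alpha=\alpha_{0}$ vanishes, that equation no longer determines $V_{\alpha_{0}-1}$; its left-hand side is independent of the undetermined lower slots and, after substituting the already-computed $V_{\alpha_{0}}$ and $V_{\alpha_{0}+1}$, becomes a pure differential operator
$$V^{i_{1}\ldots i_{k}}\longmapsto \nabla_{a}V_{\alpha_{0}}^{ai_{1}\ldots i_{\alpha_{0}-1}}-(k-\alpha_{0})V_{\alpha_{0}+1}^{abi_{1}\ldots i_{\alpha_{0}-1}}P_{ab}$$
into $\mathcal{E}^{(i_{1}\ldots i_{\alpha_{0}-1})}$ of the appropriate weight. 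Its leading symbol is the $(k+1-\alpha_{0})$-fold divergence $\nabla_{i_{1}}\cdots\nabla_{i_{k+1-\alpha_{0}}}V^{i_{1}\ldots i_{k}}$, so the operator has order $k+1-\alpha_{0}$ and is non-zero, matching the operator of Proposition~\ref{proptwelve}.

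Finally, invariance of this operator is already guaranteed by Proposition~\ref{propfourteen} together with Proposition~\ref{proptwelve}; the present corollary asserts only that the above recipe produces its explicit form with all curvature corrections. To confirm this independently one can either invoke that, for these bundles and this excluded weight, the invariant operator of order $k+1-\alpha_{0}$ is unique up to scale (multiplicity one, by the general theory of the previous chapter), so that any operator with the correct leading symbol is a constant multiple of it; or one may check directly, using the transformation rules of Remark~\ref{tractordescription} under $\nabla\to\hat{\nabla}$, that the $\Upsilon$-terms cancel. The main obstacle is not the degeneration mechanism — the vanishing coefficient and the symbol are immediate — but the bookkeeping needed to carry the $P_{ab}$ terms faithfully through the recursion and to verify that they assemble into exactly the required correction terms; tracking the precise combinatorial coefficients of these lower-order contributions is where the genuine work lies.
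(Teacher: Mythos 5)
Your construction agrees with the paper's: you run equations~(\ref{projective}) downward from $\alpha=k$, observe that at $v=-(n+k+\alpha_{0}-1)$ the coefficient $v+n+k+\alpha-1=\alpha-\alpha_{0}$ is non-zero exactly for $\alpha>\alpha_{0}$, solve those equations uniquely, and read off the operator as the left-hand side of the degenerate equation at $\alpha=\alpha_{0}$, namely
$$\nabla_{a}V_{\alpha_{0}}^{ai_{1}\ldots i_{\alpha_{0}-1}}-(k-\alpha_{0})V_{\alpha_{0}+1}^{abi_{1}\ldots i_{\alpha_{0}-1}}P_{ab},$$
whose leading term is the $(k-\alpha_{0}+1)$-fold divergence. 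Up to this point the two arguments coincide.

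The gap is in establishing that this explicit formula is \emph{invariant}, which is the actual content of the corollary. Your first justification does not work as stated: uniqueness up to scale of the invariant operator with this symbol does not imply that ``any operator with the correct leading symbol is a constant multiple of it'' --- there are many non-invariant operators with the same leading symbol, differing in lower-order and curvature terms, and the multiplicity-one statement only compares operators already known to be invariant. Your second route (direct cancellation of the $\Upsilon$-terms) would succeed, but you defer it as ``where the genuine work lies,'' so the proof is left incomplete precisely at its crux. The paper closes this gap with a one-line observation you are missing: because the equations for $\alpha=k,\ldots,\alpha_{0}+1$ are satisfied, the displayed expression is the \emph{first non-zero projection} (slot) of the tractor-valued quantity $D_{A}V^{AI_{2}\ldots I_{k}}$; and since under change of connection each slot of a tractor field changes only by $\Upsilon$-terms involving the slots above it (Remark~\ref{tractordescription}, Section~\ref{filtrations}), the first non-vanishing slot of an invariantly defined tractor expression is itself invariant. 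This makes the invariance immediate, with no bookkeeping of $P_{ab}$-coefficients and no appeal to uniqueness; you should replace your final paragraph with this argument.
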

\begin{proof}
We can solve the equation
$$
\nabla_{a}V_{\alpha}^{ai_{1}...i_{\alpha-1}}+(\alpha-\alpha_{0})V_{\alpha-1}^{i_{1}...i_{\alpha-1}}-(k-\alpha)V_{\alpha+1}^{abi_{1}...i_{\alpha-1}}P_{ab}=0,
$$
for $\alpha=k,...,\alpha_{0}+1$. The next line
$$
\nabla_{a}V_{\alpha_{0}}^{ai_{1}...i_{\alpha_{0}-1}}-(k-\alpha_{0})V_{\alpha_{0}+1}^{abi_{1}...i_{\alpha_{0}-1}}P_{ab}=0
$$
can then be written as $\nabla_{i_{1}}...\nabla_{i_{k-\alpha_{0}+1}}V^{i_{1}...i_{k}}+C.C.T.$. This line is the first non-zero projection of $D_{A}V^{AI_{2}...I_{k}}$ and hence an invariant expression.
\end{proof}

\subsubsection{Example}
Let us take $\alpha_{0}=k-2$, then the first non-zero entry of $D_{A}V^{AI_{1}...I_{k-1}}$ is
$$\nabla_{a}\nabla_{b}\nabla_{c}V^{abci_{1}...i_{k-3}}+2\nabla_{a}(P_{bc}V^{abci_{1}...i_{k-3}})+2P_{ab}\nabla_{c}V^{abci_{1}...i_{k-3}},$$
for $V^{i_{1}...i_{k}}\in\mathcal{E}^{(i_{1}...i_{k})}(-n-2k+3)$, in accordance with the expressions for the curvature correction terms given in Chapter~\ref{higherorderone}. Note that the sign convention of~\cite{f1} for $P_{ab}$ is different.

\begin{theorem}\label{theoremsixteen}
If 
$$v\not\in\{-(n+k+\alpha-1)\}_{\alpha=1,...,k}\cup\{\alpha-k\}_{\alpha=0,...,M-1},$$
then there exists a splitting
$$\mathcal{E}^{(i_{1}...i_{k})}(v)\rightarrow \left(\mathcal{E}^{(I_{1}...I_{k})}_{(J_{1}...J_{M})}\right)_{0}(v+k-M)=\;\ooo{M}{0}{0}\;...\;\oo{0}{k}(v+k-M).$$
Each excluded weight corresponds to the existence of an invariant differential operator:
\begin{enumerate}
\item
If $v=-(n+k+\alpha-1),\; \alpha=1,...,k,$ then
\begin{eqnarray*}
\xoo{v+k}{0}{0}\;...\;\oo{0}{k}&\rightarrow&\xoo{v+\alpha-1}{0}{0}\;...\;\oo{0}{\alpha-1}\\
V^{i_{1}...i_{k}}&\mapsto& \nabla_{i_{1}}...\nabla_{i_{k+1-\alpha}}V^{i_{1}...i_{k}}+\mathrm{C.C.T}
\end{eqnarray*}
is projectively invariant and
\item
if $v=-(k+\alpha),\; \alpha=0,...,M-1$, then
\begin{eqnarray*}
\xoo{v+k}{0}{0}\;...\;\oo{0}{k}&\rightarrow&\xoo{v+k-2(\alpha+1)}{\alpha+1}{0}\;...\;\oo{0}{k}\\
V^{i_{1}...i_{k}}&\mapsto& \nabla_{(j_{1}}...\nabla_{j_{\alpha+1})}V^{i_{1}...i_{k}}-\mathrm{trace}+\mathrm{C.C.T}
\end{eqnarray*}
is projectively invariant.
\end{enumerate}
These splittings are given explicitly by tractor formulae.
\end{theorem}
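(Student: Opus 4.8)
The plan is to recognise the target bundle as an $M$-bundle in the sense of Section~\ref{mmodule} and to deduce existence and uniqueness of the splitting from the central-character machinery, reserving the explicit tractor formula for a concrete factorisation. Set $\mathbb{E}^{0}=\odot^{k}\mathbb{S}$, where $\mathbb{S}$ denotes the standard representation of $\g_{0}^{S}=\mathfrak{sl}_{n}\C$, so that $\mathbb{E}^{0}$ carries the labels $(0,\dots,0,k)$ and the associated bundle is $\mathcal{E}^{(i_{1}\dots i_{k})}$. Then the source $\mathcal{E}^{(i_{1}\dots i_{k})}(v)$ is exactly $\mathbb{V}_{0}(v+k-M)$ and the target $\big(\mathcal{E}^{(I_{1}\dots I_{k})}_{(J_{1}\dots J_{M})}\big)_{0}(v+k-M)$ is the $M$-bundle $\mathbb{V}_{M}(\mathbb{E}^{0})(v+k-M)$, since $l_{0}=1$ and the crossed node carries $M$. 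With this identification the theorem becomes an instance of Corollary~\ref{coreight}.

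For existence, by Corollary~\ref{coreight} it suffices that every composition factor of $\mathbb{V}_{M}(\mathbb{E}^{0})(v+k-M)$ below the top slot induce a generalized Verma module whose central character differs from that of $M_{\p}(\mathbb{V}_{0}(v+k-M))$. I would apply Proposition~\ref{propeleven} with $a_{1}=\dots=a_{n-2}=0$ and $a_{n-1}=k$. Because the interior labels vanish, the constraint $a_{j}\geq l$ forces $j\in\{0,n-1\}$: the case $j=n-1$ produces coincidences exactly at $v=-(n+k+\alpha-1)$, $\alpha=1,\dots,k$, and the case $j=0$ exactly at $v=\alpha-k$, $\alpha=0,\dots,M-1$. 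Their union is precisely the excluded set in the hypothesis, so for all other $v$ the curved-Casimir product $L$ of Theorem~\ref{theoremnine} descends to the required invariant splitting. The correspondence of excluded weights with invariant operators is read off from Proposition~\ref{proptwelve}: the $j=n-1$ family gives the divergence-type operators of Proposition~\ref{propfourteen} of order $k+1-\alpha$, and the $j=0$ family gives the symmetrised gradients $\nabla_{(j_{1}}\cdots\nabla_{j_{\alpha+1})}V^{i_{1}\dots i_{k}}-\mathrm{trace}+\mathrm{C.C.T.}$ of order $\alpha+1$, which are the two families recorded in the statement.

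For the explicit tractor formula I would factor the splitting through the symmetric tractor bundle. First, Proposition~\ref{propfourteen} provides the explicit lift $\mathcal{E}^{(i_{1}\dots i_{k})}(v)\to\mathcal{E}^{(I_{1}\dots I_{k})}(v+k)$, characterised by $D_{A}V^{AI_{2}\dots I_{k}}=0$ and computed by solving the system~(\ref{projective}); this is available precisely when $v\notin\{-(n+k+\alpha-1)\}_{\alpha=1,\dots,k}$. Next I would generate the $M$ lower tractor indices by iterating the tractor-$D$ operator, forming the trace-free symmetric part of $D_{(J_{1}}\cdots D_{J_{M})}V^{I_{1}\dots I_{k}}$, which lands in $\big(\mathcal{E}^{(I_{1}\dots I_{k})}_{(J_{1}\dots J_{M})}\big)_{0}(v+k-M)$ and lowers the weight by $M$. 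Since distinct central characters force invariant splittings between these bundles to be unique up to scale (Theorem~\ref{theoremtwelve} on the flat model, together with the operator $L$ in the general curved geometry), the resulting tractor expression must coincide, after normalisation, with $L$; its curvature-correction terms are then those prescribed by the combinatorial formula~(\ref{cct}).

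The main obstacle is this last, explicit step. The tractor-$D$ operators do not commute in the curved setting, so I would have to verify that the symmetrisation and trace removal of $D_{(J_{1}}\cdots D_{J_{M})}$ genuinely project onto the single irreducible tractor bundle $\mathbb{V}_{M}(\mathbb{E}^{0})$ and that the induced map is nonzero on the top slot rather than degenerating. This is the computationally heaviest part, entirely analogous to establishing the Ricci-corrected recurrences of Chapter~\ref{higherorderone}, and it is where the curvature (Schouten) corrections $\Gamma=P_{ab}$ must be shown to assemble exactly as in~(\ref{cct}). By comparison, the central-character bookkeeping of the second paragraph---confirming that only $j\in\{0,n-1\}$ contribute and that the index ranges match the stated families---is routine once Proposition~\ref{propeleven} is in hand.
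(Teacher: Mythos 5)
Your proposal is correct, but its existence argument takes a genuinely different route from the paper's. The paper does not invoke Corollary~\ref{coreight} or the curved Casimir machinery at all in proving Theorem~\ref{theoremsixteen}: its proof \emph{is} the tractor construction. One lifts $V^{i_{1}\dots i_{k}}$ by Proposition~\ref{propfourteen} (this is where the first excluded family enters), then applies $D_{(J_{1}}\cdots D_{J_{M})}$; the result is automatically trace-free because $D_{A}V^{AI_{2}\dots I_{k}}=0$ (no explicit trace removal is needed), and its projection onto $\mathcal{E}^{i_{1}\dots i_{k}}(v)$ is $\prod_{\alpha=0}^{M-1}(v+k-\alpha)\,V^{i_{1}\dots i_{k}}$, because each $D_{J}$ acts on the top slot of a weight-$w$ section simply by multiplication by $w$. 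This scalar is nonzero exactly when $v\not\in\{\alpha-k\}_{\alpha=0,\dots,M-1}$, which is how the paper obtains the second excluded family and the nondegeneracy of the map in one stroke; rescaling then gives the splitting, and Proposition~\ref{proptwelve} supplies the interpretation of the excluded weights. Your route instead proves existence by central-character bookkeeping: Proposition~\ref{propeleven} with $a_{1}=\cdots=a_{n-2}=0$, $a_{n-1}=k$ (your case analysis $j\in\{0,n-1\}$ and the resulting two families of coincidence weights are correct) followed by Corollary~\ref{coreight}. This is legitimate, and it is in fact exactly the argument the paper deploys for the general classification theorem immediately after Theorem~\ref{theoremsixteen}; what it buys is a conceptual identification of the excluded set with no tractor computation. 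What the paper's route buys is the explicit formula together with an immediate verification that it is a splitting.

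The one soft spot in your write-up concerns precisely that last point, since ``these splittings are given explicitly by tractor formulae'' is part of the statement. You leave unverified that $D_{(J_{1}}\cdots D_{J_{M})}V^{I_{1}\dots I_{k}}$ is nondegenerate on the top slot, calling this the computationally heaviest part, and you instead appeal to uniqueness up to scale to identify it with $L$ --- but uniqueness cannot rule out that the tractor expression is the zero multiple. Fortunately the computation you defer is a one-liner, as above: iterating the top-slot action of $D_{J}$ gives the factor $\prod_{\alpha=0}^{M-1}(v+k-\alpha)$ directly. Inserting this closes your argument completely and makes the appeal to Theorem~\ref{theoremtwelve} unnecessary; the assembling of Schouten-tensor corrections as in~(\ref{cct}) is only needed if one wants the formula expanded in terms of an underlying Weyl connection, not for the theorem itself.
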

\begin{proof}
Let $V^{i_{1}...i_{k}}\in\mathcal{E}^{i_{1}...i_{k}}(v)$, then we can use the splitting of Proposition~\ref{propfourteen} to obtain an element $V^{I_{1}...I_{k}}\in\mathcal{E}^{I_{1}...I_{k}}(v+k)$.
The element 
$$D_{(J_{1}}...D_{J_{M})}V^{I_{1}...I_{k}}\in(\mathcal{E}^{(I_{1}...I_{k})}_{(J_{1}...J_{M})})_{0}(v+k-M)$$
is trace-free by construction of $V^{I_{1}...I_{k}}$ and has as its projection onto $\mathcal{E}^{i_{1}...i_{k}}(v)$ the element
$$\prod_{\alpha=0}^{M-1}(v+k-\alpha)V^{i_{1}...i_{k}}.$$
Proposition 12 from the last chapter ensures that the excluded weights correspond to the existence of invariant linear differential operators. Since those operators can be constructed as Ricci-corrected derivatives (as we will see in the Appendix), the machinery in Chapter~\ref{higherorderone} produces explicit formulae for the curvature correction terms.
\end{proof}

\begin{corollary}
This theorem can be used to explicitly write down the formula for every invariant bilinear differential pairing between sections of
$\mathcal{E}^{(i_{1}...i_{k_{1}})}(v_{1})$ and $\mathcal{E}^{(i_{1}...i_{k_{2}})}(v_{2})$. In practice this can be rather tedious, since the expressions for elements in certain tractor bundles  can be rather complicated. This is due to the fact that the tractor bundles above encode the information about {\bf all} invariant bilinear differential pairings. In specific cases, however, one can use certain tricks to make the computations easier. We will demonstrate this in Example 2 below. 
\end{corollary}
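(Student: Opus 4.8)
The plan is to combine the explicit tractor splitting operators of Theorem~\ref{theoremsixteen} with the tensor-and-project construction of Chapter five, specialised to projective geometry. Theorem~\ref{theoremsixteen} realises each weighted symmetric tensor bundle as a subbundle of an explicit $M$-bundle: writing $V=\mathcal{E}^{(i_{1}...i_{k_{1}})}(v_{1})$ and $W=\mathcal{E}^{(i_{1}...i_{k_{2}})}(v_{2})$, and choosing $M$ at least $\max\{k_{1},k_{2}\}$ and large enough that $v_{1},v_{2}$ avoid the finitely many excluded weights, the splittings
$$V\to \left(\mathcal{E}^{(I_{1}...I_{k_{1}})}_{(J_{1}...J_{M})}\right)_{0}(v_{1}+k_{1}-M)\quad\text{and}\quad W\to \left(\mathcal{E}^{(I_{1}...I_{k_{2}})}_{(J_{1}...J_{M})}\right)_{0}(v_{2}+k_{2}-M)$$
are the invariant linear differential operators $V^{i_{1}...i_{k_{1}}}\mapsto D_{(J_{1}}\cdots D_{J_{M})}V^{I_{1}...I_{k_{1}}}$ built from the lift of Proposition~\ref{propfourteen} and the tractor $D$-operator. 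These are precisely the $M$-bundles $V_{\underline{M}}(\mathbb{V}^{0})(\underline{k}-\underline{M})$ of Section~\ref{mmodule}, now written down by hand.

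First I would feed the two target bundles into the general machine. By Proposition~\ref{propeight}, for every irreducible $\g_{0}^{S}$-component $\mathbb{H}\subset\mathfrak{U}_{t}(\p_{+})\otimes\mathbb{V}^{0}\otimes\mathbb{W}^{0}$ with $t\le M$ there is a canonical $\p$-module projection of the tensor product of the two $M$-modules onto the first composition factor supported by $\mathbb{H}$; in the projective case $\mathfrak{U}_{t}(\p_{+})\otimes\mathbb{V}^{0}\otimes\mathbb{W}^{0}=\odot^{t}\g_{1}\otimes\mathbb{V}^{0}\otimes\mathbb{W}^{0}$. Composing the explicit splittings with such a projection yields, for sections $s\in\Gamma(V)$ and $t\in\Gamma(W)$, the invariant bilinear differential pairing
$$(s,t)\mapsto \pi_{\mathbb{H}}\!\left(D_{(J_{1}}\cdots D_{J_{M})}s^{I_{1}...I_{k_{1}}}\otimes D_{(K_{1}}\cdots D_{K_{M})}t^{L_{1}...L_{k_{2}}}\right),$$
which is exactly the pairing asserted to exist by Theorem~\ref{mainresultfour}. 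Because every ingredient — the lift of Proposition~\ref{propfourteen}, the operator $D_{A}$, and the algebraic projection $\pi_{\mathbb{H}}$ — is invariant and given by an explicit tractor formula, the composite is invariant and explicit.

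To turn this into an honest index formula one then expands each tractor object slot by slot. Each standard tractor index decomposes into a primary tensor part and a density slot, and the tractor connection formulae together with the recursion for $D_{(J_{1}}\cdots D_{J_{M})}$ (equivalently the Ricci-corrected derivatives of Chapter~\ref{higherorderone}) express every component as a sum of terms $\nabla\cdots\nabla V$ decorated by factors of the Schouten tensor $P_{ab}$. Performing the tractor contractions dictated by $\pi_{\mathbb{H}}$ and collecting terms produces the final formula, complete with its curvature correction terms, purely in terms of a chosen Weyl connection $\nabla$ and $P_{ab}$. Completeness — that \emph{every} invariant bilinear differential pairing between these bundles arises this way — follows from the classification theorem proved just above this corollary, which identifies the irreducible components of $\odot^{M}\g_{1}\otimes\mathbb{V}^{0}\otimes\mathbb{W}^{0}$ with all possible targets together with their multiplicities.

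The hard part is purely computational rather than conceptual: the expressions for elements of the high-valence tractor bundles $\left(\mathcal{E}^{(I_{1}...I_{k})}_{(J_{1}...J_{M})}\right)_{0}$ grow combinatorially in $k$ and $M$, since each of the many tractor slots couples to the others through the Schouten tensor under $D_{A}$, so the bookkeeping of the contraction $\pi_{\mathbb{H}}$ and the re-expansion in the $(\nabla,P)$ basis quickly becomes unwieldy. The content of the corollary is only that this bookkeeping is finite, mechanical, and invariant at every stage; in favourable cases the tractor formulae simplify drastically, and I would illustrate the practical shortcuts in Example 2 below, where a judicious choice of which tractor indices to contract first keeps the intermediate expressions manageable.
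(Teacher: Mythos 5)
Your proposal follows essentially the same route the paper intends: the corollary carries no separate proof precisely because it is the observation that the explicit splittings of Theorem~\ref{theoremsixteen} (built from Proposition~\ref{propfourteen} and the tractor $D$-operator), tensored together and projected onto irreducible components as in Proposition~\ref{propeight}/Theorem~\ref{mainresultfour}, give explicit tractor formulae for the pairings, with completeness supplied by the classification theorem for projective geometry in the preceding chapter and the mechanics illustrated in Examples 1 and 2. One detail in your write-up is backwards, however: you say one should choose $M$ ``large enough that $v_{1},v_{2}$ avoid the finitely many excluded weights.'' The exclusions of Theorem~\ref{theoremsixteen} are $\{-(n+k+\alpha-1)\}_{\alpha=1,\dots,k}\cup\{\alpha-k\}_{\alpha=0,\dots,M-1}$, and the second set \emph{grows} with $M$; a weight excluded for the minimal admissible $M$ stays excluded for every larger $M$, so enlarging $M$ can never rescue an excluded weight. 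The correct reading is to take $M$ as small as the desired order and the valences $k_{1},k_{2}$ allow, and to regard non-exclusion of $v_{1},v_{2}$ as a genuine hypothesis inherited from Theorem~\ref{theoremsixteen} (each excluded weight corresponding to an invariant linear differential operator), exactly as in the classification theorem's hypotheses — not as something arranged by a choice of $M$.
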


\subsection{Examples}

\subsubsection{Example 1}
Let us have a look at weighted vector fields. 
$$\mathcal{E}^{A}{}_{B}(v)=\mathcal{E}^{a}(v)+\begin{array}{c}
\mathcal{E}^{a}{}_{b}(v)\\
\oplus\\
\mathcal{E}(v)
\end{array}+\mathcal{E}_{b}(v).$$
For $X^{a}\in\mathcal{E}^{a}(v)$ and $v\not\in\{-1,-(n+1)\}$, we can define $X^{A}\in\mathcal{E}^{A}(v+1)$ as in Proposition~\ref{propfourteen} and then compute
\begin{eqnarray*}
D_{B}X^{A}&=&\left(\begin{array}{ccc}
&\nabla_{b}X^{a}-\frac{1}{n+v+1}\delta_{b}{}^{a}\nabla_{c}X^{c}&\\
(v+1)X^{a}&&-\frac{1}{n+v+1}\nabla_{b}\nabla_{a}X^{a}-X^{a}P_{ab}\\
&-\frac{v+1}{n+v+1}\nabla_{a}X^{a}&
\end{array}\right)\\
&\in&(\mathcal{E}^{A}{}_{B})_{0}(v)=\;\ooo{1}{0}{0}\;\cdots\;\oo{0}{1}(v).
\end{eqnarray*}
This can be used to determine the exact form of the two invariant bilinear differential pairings 
\begin{eqnarray*}
\mathcal{E}^{a}(v)\times\mathcal{E}^{b}(w)&\rightarrow&\mathcal{E}^{b}(v+w)\\
(X^{a},Y^{b})&\mapsto&(D_{C}X^{A})(D_{A}Y^{B})\;\text{or}\;(D_{C}Y^{A})(D_{A}X^{B})
\end{eqnarray*}
given by
$$(v+1)X^{a}(\nabla_{a}Y^{b}-\frac{1}{n+w+1}\delta_{a}{}^{b}\nabla_{c}Y^{c})-\frac{(w+1)(v+1)}{n+v+1}Y^{b}\nabla_{a}X^{a}$$
and
$$(w+1)Y^{a}(\nabla_{a}X^{b}-\frac{1}{n+v+1}\delta_{a}{}^{b}\nabla_{c}X^{c})-\frac{(v+1)(w+1)}{n+w+1}X^{b}\nabla_{a}Y^{a}.$$
Note that by writing down the formulae in terms of tractors we implicitly project onto the first slot in the composition series.
\par
This result may be contrasted with the theory of natural bilinear differential pairings. The only natural bilinear differential pairings $\Gamma(T\mathcal{M})\times\Gamma(T\mathcal{M})\rightarrow\Gamma(T\mathcal{M})$ are constant multiples of the Lie bracket, see~\cite{kms}, Remark 30.5. We can obtain the Lie bracket for $v=w=0$ by taking the first minus the second pairing from above.

\subsubsection{Example 2} 
The pairing
$$\mathcal{E}^{(ab)}(v)\times\mathcal{E}(w)\rightarrow\mathcal{E}(v+w)$$
can be computed by
$$V^{AB}D_{A}D_{B}f,$$
where $V^{AB}$ is the lift of $V^{ab}\in\mathcal{E}^{(ab)}(v)$ as in Proposition~\ref{propfourteen}. The exact formula is given by
\begin{eqnarray*}
&&V^{ab}\nabla_{a}\nabla_{b}f-\frac{2(w-1)}{n+v+3}(\nabla_{a}V^{ab})(\nabla_{b})f+\frac{w(w-1)}{(n+v+3)(n+v+2)}f\nabla_{a}\nabla_{b}V^{ab}\\
&+&\frac{w(w+v+n+1)}{v+n+2}fV^{ab}P_{ab},
\end{eqnarray*}
in accordance with~\ref{examplepairings} (c).

\section{Conformal geometry}
In this section $\g$ will denote $\mathfrak{so}_{n+2}\C$ and in the Dynkin diagram notation we will have to distinguish $n$ even (in which case we use $D_{m}$) and $n$ odd (in which case we use $B_{m}$).
The description of the tractor calculus for conformal manifolds follows~\cite{beg}.

\subsection{Conformal manifolds}
\begin{definition}
{\rm 
A {\bf conformal manifold} is a pair $(\mathcal{M}, [g])$, where $\mathcal{M}$ is an $n$-dimensional manifold and $[g]$ is an equivalence class of metrics with equivalence given by
$$\hat{g}\sim g\Leftrightarrow \hat{g}=\Omega^{2}g$$
for some smooth nowhere vanishing function $\Omega$. 
}\end{definition}

\begin{definition}
{\rm Let $(\mathcal{M},[g])$ be a conformal manifold and define $Q$ to be the bundle of metrics, which is a subbundle of $\odot^{2}T^{*}\mathcal{M}$ with fiber $\R^{+}$. For every $w\in \R$, write
$\mathcal{E}[w]$ for the line bundle that is associated to the principal fiber bundle $Q$ via the representation $\R^{+}\ni x\mapsto x^{-\frac{w}{2}}\in\mathfrak{gl}(\R)$.
Analogous to the projective case, we will write $\mathcal{E}^{\Phi}[w]$ for the tensor product of $\mathcal{E}[w]$ with an arbitrary bundle $\mathcal{E}^{\Phi}$.}
\end{definition}

\begin{proposition}\label{conformaltransformation}
Let $\mathcal{M}$ be a manifold endowed with an equivalence class of metrics $[g_{ab}]$. Any two metrics are related by $\hat{g}_{ab}=\Omega^{2}g_{ab}$ for some smooth nowhere vanishing function $\Omega$.
Let $\Upsilon_{a}=\Omega^{-1}\nabla_{a}\Omega=\nabla_{a}\log\Omega$, then the Levi-Civita connections $\hat{\nabla}$ and $\nabla$ associated to $\hat{g}_{ab}$ and
$g_{ab}$ are related by
\begin{eqnarray*}
\hat{\nabla}_{a}f&=&\nabla_{a}f+w\Upsilon_{a},\;f\in\mathcal{E}[w]\\
\hat{\nabla}_{a}X^{b}=&=&\nabla_{a}X^{b}+(w+1)\Upsilon_{a}X^{b}-X_{a}\Upsilon^{b}+X^{c}\Upsilon_{c}\delta_{a}{}^{b},\;X^{b}\in\mathcal{E}^{b}[w]\\
\hat{\nabla}_{a}\omega_{b}=&=&\nabla_{a}\omega_{b}+(w-1)\Upsilon_{a}\omega_{b}-\Upsilon_{b}\omega_{a}+\Upsilon^{c}\omega_{c}g_{ab},\;\omega_{b}\in\mathcal{E}_{b}[w]
\end{eqnarray*}
\end{proposition}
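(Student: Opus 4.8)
The statement is Proposition~\ref{conformaltransformation}, which records how the Levi-Civita connection changes under a conformal rescaling $\hat{g}_{ab}=\Omega^{2}g_{ab}$ when acting on conformally weighted densities, vector fields, and one-forms.

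The plan is to reduce everything to the classical formula for the change of the Levi-Civita connection, and then layer on the weight-dependent contributions coming from $\mathcal{E}[w]$. First I would recall the standard fact that two Levi-Civita connections of conformally related metrics differ by a tensor: writing $\hat{\nabla}_{a}V^{b}=\nabla_{a}V^{b}+Q_{a}{}^{b}{}_{c}V^{c}$ for an ordinary (weight zero) vector field, the difference tensor is determined by requiring $\hat{\nabla}$ to be torsion-free and to preserve $\hat{g}_{ab}=\Omega^{2}g_{ab}$. A direct computation with $\hat{\nabla}_{a}\hat{g}_{bc}=0$, using $\Upsilon_{a}=\nabla_{a}\log\Omega$, yields
$$Q_{a}{}^{b}{}_{c}=\Upsilon_{a}\delta_{c}{}^{b}+\Upsilon_{c}\delta_{a}{}^{b}-\Upsilon^{b}g_{ac},$$
where indices are raised with $g$. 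This is the one genuinely computational input, and it is entirely standard (it is the conformal analogue of the projective transformation law in Proposition~1 above).

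Next I would handle the density factor. By definition $\mathcal{E}[w]$ is associated to the bundle of metrics via $x\mapsto x^{-w/2}$, so under $g\mapsto\Omega^{2}g$ a section $f$ of weight $w$ rescales in such a way that the induced connection picks up exactly $\hat{\nabla}_{a}f=\nabla_{a}f+w\Upsilon_{a}f$; this is the first displayed equation and follows immediately from differentiating the trivialization with respect to the conformal factor. The remaining two formulas then follow by the Leibniz rule: a weighted vector field $X^{b}\in\mathcal{E}^{b}[w]$ is a tensor product of a weight-$w$ density with an (unweighted) vector field, so $\hat{\nabla}_{a}X^{b}=\nabla_{a}X^{b}+w\Upsilon_{a}X^{b}+Q_{a}{}^{b}{}_{c}X^{c}$, and substituting the expression for $Q$ gives
$$\hat{\nabla}_{a}X^{b}=\nabla_{a}X^{b}+(w+1)\Upsilon_{a}X^{b}-X_{a}\Upsilon^{b}+X^{c}\Upsilon_{c}\delta_{a}{}^{b}.$$
The same bookkeeping for a weighted one-form, using that the difference tensor acts on a lower index with the opposite sign, $\hat{\nabla}_{a}\omega_{b}=\nabla_{a}\omega_{b}+w\Upsilon_{a}\omega_{b}-Q_{a}{}^{c}{}_{b}\omega_{c}$, produces the third formula after contracting $Q_{a}{}^{c}{}_{b}\omega_{c}=\Upsilon_{a}\omega_{b}+\Upsilon_{b}\omega_{a}-\Upsilon^{c}\omega_{c}g_{ab}$.

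I do not anticipate a serious obstacle here; the main point of care is consistent index placement and sign conventions, in particular that raising and lowering with $g$ versus $\hat{g}$ differ by powers of $\Omega$, which is precisely why the weight terms come out as stated. The cleanest route is to fix the weight-zero difference tensor $Q$ once and then treat the three cases uniformly as applications of the Leibniz rule across the tensor product with $\mathcal{E}[w]$, so that only the combinatorics of how $Q$ contracts against an upper versus a lower index needs checking.
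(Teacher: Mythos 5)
Your proposal is correct, and it is essentially a self-contained version of the argument the paper delegates entirely to a citation: the paper's ``proof'' of this proposition is the single line ``A proof may be found in~\cite{pr}.'' Your two-step structure --- first pinning down the weight-zero difference tensor $Q_{a}{}^{b}{}_{c}=\Upsilon_{a}\delta_{c}{}^{b}+\Upsilon_{c}\delta_{a}{}^{b}-\Upsilon^{b}g_{ac}$ from torsion-freeness and $\hat{\nabla}\hat{g}=0$, then propagating it through the tensor product with $\mathcal{E}[w]$ by the Leibniz rule, with the sign flip $-Q_{a}{}^{c}{}_{b}\omega_{c}$ on covariant indices --- is exactly the standard computation that the cited reference contains, and your contractions reproduce all three displayed formulas correctly (note in passing that the paper's first formula has a typo, $w\Upsilon_{a}$ in place of $w\Upsilon_{a}f$, which your version implicitly corrects). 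The only point worth tightening is the density case: rather than ``differentiating the trivialization,'' it is cleanest to say that the connection induced by $g$ on $\mathcal{E}[w]$ is the one making the $g$-trivialization parallel, so that if $f$ corresponds to the function $f_{g}$ in that trivialization and to $\Omega^{w}f_{g}$ in the $\hat{g}$-trivialization, then $\hat{\nabla}_{a}f=\Omega^{-w}\nabla_{a}(\Omega^{w}f_{g})=\nabla_{a}f+w\Upsilon_{a}f$, which also confirms that the paper's sign convention $x\mapsto x^{-w/2}$ produces $+w\Upsilon_{a}$ rather than $-w\Upsilon_{a}$.
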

\begin{proof}
A proof may be found in~\cite{pr}.
\end{proof}


\begin{definition}
{\rm The Riemann curvature tensor, defined by
$$(\nabla_{a}\nabla_{b}-\nabla_{b}\nabla_{a})V^{c}=R_{ab}{}^{c}{}_{d}V^{d},$$
can be written as
$$R_{abcd}=C_{abcd}+2g_{c[a}P_{b]d}+2g_{d[b}P_{a]c},$$
where $C_{abcd}$ is the conformally invariant totally trace-free Weyl curvature and $P_{ab}$ is the {\bf Rho-tensor}.
Let us write $P=P^{a}{}_{a}$. The Rho-tensor has a conformal transformation law
$$\hat{P}_{ab}=P_{ab}-\nabla_{a}\Upsilon_{b}+\Upsilon_{a}\Upsilon_{b}-\frac{1}{2}\Upsilon_{c}\Upsilon^{c}g_{ab}.$$}
\end{definition}

\subsection{Tractor calculus}
\begin{definition}
{\rm 
Let $\mathbb{A}$ be the standard representation of $\g$ on $\C^{2n}$ (or $\C^{2n+1}$) and denote by $\mathcal{E}^{A}$ the sheaf of sections of the associated {\bf standard tractor bundle}.
This bundle has a composition series
$$\mathcal{E}^{A}=\mathcal{E}[1]+\mathcal{E}^{a}[-1]+\mathcal{E}[-1].$$
This composition series is a result of the composition series for $\mathbb{A}$ as a $\p$-module. For $D_{m}$ this composition series has the form
\begin{eqnarray*}
&&\quad\quad\quad\Ddo{1}{0}{0}{0}{0}{0}{0}\\
&&\\
&=&\quad\quad\quad\Ddd{1}{0}{0}{0}{0}{0}{0}+\quad\quad\quad\Ddd{-1}{1}{0}{0}{0}{0}{0}+\quad\quad\quad\Ddd{-1}{0}{0}{0}{0}{0}{0}\\
&&
\end{eqnarray*}
and for $B_{m}$ it takes the form
$$\ooo{1}{0}{0}\;...\;\btwo{0}{0}\;=\;\xoo{1}{0}{0}\;...\;\btwo{0}{0}\;+\;\xoo{-1}{1}{0}\;...\;\btwo{0}{0}\;+\;\xoo{-1}{0}{0}\;...\;\btwo{0}{0}\;.$$

For every choice of metric, elements in $\mathcal{E}^{A}$ can be identified with tuples
$$\left(\begin{array}{c}
\sigma\\
\mu^{a}\\
\rho
\end{array}\right),$$
where $\sigma\in\mathcal{E}[1]$, $\mu^{a}\in\mathcal{E}^{a}[-1]$ and $\rho\in\mathcal{E}[-1]$. Under change of metric these elements transform as
$$\widehat{\left(\begin{array}{c}
\sigma\\
\mu^{a}\\
\rho
\end{array}\right)}=\left(\begin{array}{c}
\sigma\\
\mu^{a}+\Upsilon^{a}\sigma\\
\rho-\Upsilon_{a}\mu^{a}-\frac{1}{2}\Upsilon_{a}\Upsilon^{a}\sigma
\end{array}\right).$$
There is a canonical connection, the {\bf tractor connection}, on the bundle $\mathcal{E}^{A}$. For each choice of metric, this is given by
$$\nabla_{b}\left(\begin{array}{c}
\sigma\\
\mu^{a}\\
\rho
\end{array}\right)=\left(\begin{array}{c}
\nabla_{b}\sigma-\mu_{b}\\
\nabla_{b}\mu^{a}+\delta_{b}{}^{a}\rho+P_{b}{}^{a}\sigma\\
\nabla_{b}\rho-P_{ba}\mu^{a}
\end{array}\right),$$
where the $\nabla$ within the bracket is the Levi-Civita connection.}
\end{definition}

\begin{corollary}
There is a canonical section $\mathbb{X}^{A}\in\mathcal{E}^{A}[1]$ given by
$$\mathbb{X}^{A}=\left(\begin{array}{c}
0\\
0\\
1
\end{array}\right),$$
for each choice of metric. 
\end{corollary}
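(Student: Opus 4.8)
The plan is to read the word \emph{canonical} in its intended sense: the metric-dependent description $\mathbb{X}^{A}=(0,0,1)^{T}$ in fact determines one and the same global section of $\mathcal{E}^{A}[1]$ no matter which metric $g\in[g]$ is used to write it down. Thus the whole content of the corollary is the assertion that the tuple $(0,0,1)$ is fixed by the change-of-metric transformation law for the standard tractor bundle recorded immediately above. First I would settle the weight bookkeeping. The bottom slot of the composition series $\mathcal{E}^{A}=\mathcal{E}[1]+\mathcal{E}^{a}[-1]+\mathcal{E}[-1]$ is $\mathcal{E}[-1]$, so after tensoring with $\mathcal{E}[1]$ the bottom slot of $\mathcal{E}^{A}[1]$ is $\mathcal{E}[-1]\otimes\mathcal{E}[1]=\mathcal{E}[0]=\mathcal{E}$, the bundle of ordinary functions. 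Hence the entry $1$ is legitimately the constant function of conformal weight $0$; it is globally defined and is itself unaffected by the density rescaling $\hat{f}=\Omega^{0}f=f$ induced by $\hat g=\Omega^{2}g$.

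Next I would check that the splitting of $\mathcal{E}^{A}[1]$ transforms under a change of metric by exactly the same formula as that of $\mathcal{E}^{A}$. The change of splitting is governed by the action of $\exp(\Upsilon)$ coming from the change of Weyl structure in Lemma~\ref{four}, with $\Upsilon_{a}=\Omega^{-1}\nabla_{a}\Omega$. Since $\p_{+}$ acts trivially on the one-dimensional density factor $\mathcal{E}[1]$ (it is a character of $\mathfrak{z}(\g_{0})$, and $\p_{+}$ kills any irreducible $\p$-module of this type), tensoring $\mathbb{A}$ with this factor only shifts weights and leaves the $\Upsilon$-dependent part of the transformation untouched. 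Consequently a section $(\sigma,\mu^{a},\rho)$ of $\mathcal{E}^{A}[1]$ still obeys $\widehat{(\sigma,\mu^{a},\rho)}=(\sigma,\ \mu^{a}+\Upsilon^{a}\sigma,\ \rho-\Upsilon_{a}\mu^{a}-\tfrac12\Upsilon_{a}\Upsilon^{a}\sigma)$.

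The decisive structural observation I would then exploit is that this law is \emph{unipotent and lower-triangular}: the top slot $\hat\sigma=\sigma$ receives no correction, the correction to $\hat\mu^{a}$ involves only the strictly higher slot $\sigma$, and the correction to the bottom slot $\hat\rho$ involves only the strictly higher slots $\mu^{a}$ and $\sigma$. Substituting $\sigma=0$, $\mu^{a}=0$, $\rho=1$ therefore gives $\hat\sigma=0$, $\hat\mu^{a}=\Upsilon^{a}\cdot 0=0$, and $\hat\rho=1-\Upsilon_{a}\cdot 0-\tfrac12\Upsilon_{a}\Upsilon^{a}\cdot 0=1$, so $\widehat{\mathbb{X}^{A}}=\mathbb{X}^{A}$.

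Since the two metric trivializations agree on their overlap, the local descriptions glue to a single well-defined global section of $\mathcal{E}^{A}[1]$, which is the claim. The only genuinely non-routine point is the middle step — confirming that the weighted bundle inherits the unchanged unipotent transformation law — but, because the verification plugs in zeros for both higher slots, even a weaker statement (that every $\Upsilon$-correction to a given slot is built from strictly higher slots) suffices, so the computation in the last paragraph is forced regardless of the finer weight subtleties.
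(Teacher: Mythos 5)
Your proposal is correct and is essentially the paper's (implicit) argument: the corollary is stated without proof precisely because it follows immediately from the displayed transformation law $\widehat{(\sigma,\mu^{a},\rho)}=(\sigma,\;\mu^{a}+\Upsilon^{a}\sigma,\;\rho-\Upsilon_{a}\mu^{a}-\tfrac12\Upsilon_{a}\Upsilon^{a}\sigma)$, whose unipotent lower-triangular form fixes $(0,0,1)$. Your additional checks — that the bottom slot of $\mathcal{E}^{A}[1]$ is $\mathcal{E}[0]$ so the entry $1$ is a genuine weight-zero function, and that tensoring by $\mathcal{E}[1]$ leaves the $\Upsilon$-corrections unchanged because $\p_{+}$ acts trivially on the density factor — are exactly the bookkeeping the paper leaves tacit.
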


\begin{definition}
{\rm 
The bundle $\mathcal{E}^{A}$ carries a non-degenerate symmetric form $h_{AB}$, the {\bf tractor metric}, which is, for each choice of metric, defined by
$$h_{AB}X^{A}Y^{B}=X^{a}Y_{a}+\sigma\beta+\rho\alpha,$$
where
$$X^{A}=\left(\begin{array}{c}
\sigma\\
X^{a}\\
\rho
\end{array}\right)\;\text{and}\; Y^{B}= \left(\begin{array}{c}
\alpha\\
Y^{a}\\
\beta
\end{array}\right).$$
It is easy to see that this definition is independent of the choice of metric within the conformal class.
This form can be used to identify $\mathcal{E}^{A}$ with its dual $\mathcal{E}_{A}$. In particular, we will use $h_{AB}$ and its inverse $h^{AB}$ to raise and lower tractor indices analogous 
to normal tensor indices. 
}\end{definition}

\subsubsection{Remark}
As in the projective case, these definitions can be used to determine the explicit descriptions of all tractor bundles and the corresponding tractor connections.
We will use capital letters $A,B,..$ to denote tractor indices.

\begin{definition}\label{tractorDconformal}
{\rm There exists an invariant differential operator
\begin{eqnarray*}
D_{A}:\mathcal{E}^{\Phi}[w]&\rightarrow&\mathcal{E}^{\Phi}_{A}[w-1]\\
f&\mapsto & \left(\begin{array}{c}
w(n+2w-2)f\\
(n+2w-2)\nabla_{a}f\\
-(\Delta+wP)f
\end{array}\right),
\end{eqnarray*}
where $\Phi$ denotes arbitrary tractor indices, $\nabla$ is the corresponding tractor connection and $\Delta=\nabla_{a}\nabla^{a}$ is the tractor Laplacian.
}\end{definition}

\subsection{Special splittings}

\begin{proposition}\label{propsixteen}
Let $V^{i_{1}...i_{k}}\in\mathcal{E}^{(i_{1}...i_{k})}_{0}[v]$ be a totally symmetric and totally trace-free tensor of conformal weight $v$. If 
$$v\not\in\{-(n+k+\alpha-2)\}_{\alpha=1,...,k},$$
then there exists a unique lift to an element 
$$V^{I_{1}...I_{k}}\in\mathcal{E}^{(I_{1}...I_{k})}_{0}[v+k]$$
such that 
$$\mathbb{X}_{A}V^{AI_{2}...I_{k}}=0\;\text{and}\;D_{A}V^{AI_{2}...I_{k}}=0.$$
The excluded weights $v$ correspond to conformally invariant operators 
$$V^{i_{1}...i_{k}}\mapsto\nabla_{i_{1}}...\nabla_{i_{\alpha}}V^{i_{1}...i_{k}}+\mathrm{C.C.T.},$$
for $\alpha=1,...,k$.
\end{proposition}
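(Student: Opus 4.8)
The goal is to lift a symmetric trace-free tensor $V^{i_1\dots i_k}\in\mathcal{E}^{(i_1\dots i_k)}_0[v]$ to a symmetric trace-free tractor $V^{I_1\dots I_k}\in\mathcal{E}^{(I_1\dots I_k)}_0[v+k]$ characterized by the two conditions $\mathbb{X}_A V^{AI_2\dots I_k}=0$ and $D_A V^{AI_2\dots I_k}=0$. My plan is to follow the template of the projective analogue, Proposition~\ref{propfourteen}, but working inside the symmetric trace-free tractor bundle and keeping track of the tractor metric. The first step is to unravel the composition series of $\mathcal{E}^{(I_1\dots I_k)}_0[v+k]$ in terms of weighted symmetric trace-free tensors, in the spirit of the $k=2$ model displayed in the projective proof. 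Using a choice of metric from the conformal class, each tractor $V^{I_1\dots I_k}$ decomposes into slotted components $V_\alpha^{i_1\dots i_\alpha}$ of varying valence, and the conditions $\mathbb{X}_A V^{AI_2\dots I_k}=0$ together with trace-freeness cut this down so that the top slot is exactly $V^{i_1\dots i_k}$.

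First I would fix a metric and write out the components of $V^{I_1\dots I_k}$ explicitly, then compute $\mathbb{X}_A V^{AI_2\dots I_k}$ and $D_A V^{AI_2\dots I_k}$ using Definition~\ref{tractorDconformal} and the formula for the standard tractor connection, exactly as the projective proof computes $D_{I_1}V^{I_1\dots I_k}$ via the Leibniz rule. The condition $\mathbb{X}_A V^{AI_2\dots I_k}=0$ is purely algebraic (it kills the bottom slot of the contracted tractor), whereas $D_A V^{AI_2\dots I_k}=0$ becomes, after projecting onto each slot, a system of $k$ recursive relations of the schematic form
\begin{equation*}
\nabla_a V_\alpha^{a\,i_1\dots i_{\alpha-1}}+c_\alpha(v)\,V_{\alpha-1}^{i_1\dots i_{\alpha-1}}+(\text{lower-order/curvature terms})=0,
\end{equation*}
for $\alpha=k,k-1,\dots,1$, where the coefficient $c_\alpha(v)$ is a linear expression in the conformal weight. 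The key computation is to identify $c_\alpha(v)$; I expect it to equal (up to a nonzero universal constant) $(v+n+k+\alpha-2)$, which is precisely the factor appearing in the excluded-weight set. Once this is done, the system can be solved uniquely, slot by slot, starting from the top slot $V_k^{i_1\dots i_k}=V^{i_1\dots i_k}$ and descending, provided each $c_\alpha(v)\neq 0$; this gives both existence and uniqueness of the splitting.

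For the existence claim I have two routes, and I would follow the projective proof in offering both: either verify directly that the slotted object built by solving the recursion transforms correctly under change of metric (using Proposition~\ref{conformaltransformation} and the transformation rules for the tractor components), or invoke the general splitting-operator machinery of the previous chapter, namely Corollary~\ref{coreight} and Theorem~\ref{theoremnine}, which produces an invariant splitting precisely when the relevant generalized Verma modules have distinct central characters. The central-character condition will translate, via Harish-Chandra (Theorem~\ref{theoremthirteen}) and the branching analysis of Section~\ref{branching}, into exactly the arithmetic constraint $v\notin\{-(n+k+\alpha-2)\}_{\alpha=1,\dots,k}$. Finally, for the excluded weights, when $c_{\alpha_0}(v)=0$ the recursion cannot be solved past slot $\alpha_0$, and the first non-vanishing slot of $D_A V^{A I_2\dots I_k}$ becomes the invariant expression $\nabla_{i_1}\dots\nabla_{i_{\alpha_0}}V^{i_1\dots i_k}+\mathrm{C.C.T.}$; its invariance follows because it is (a projection of) a tractor expression, and the curvature correction terms are supplied by the Ricci-corrected derivative formulae of Chapter~\ref{higherorderone}.

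The main obstacle I anticipate is the bookkeeping of the two-fold contraction structure: unlike the projective case, here I must simultaneously enforce trace-freeness with respect to the tractor metric $h_{AB}$ and the vanishing of the $\mathbb{X}_A$-contraction, and verify that these conditions are compatible and propagate consistently through all slots of the recursion. Getting the constants $c_\alpha(v)$ right, keeping the trace-corrections (the $-\mathrm{trace}$ terms) consistent at each stage, and confirming that the excluded set is exactly $\{-(n+k+\alpha-2)\}$ rather than an off-by-one variant, is where the genuine care is required; the projective computation in Proposition~\ref{propfourteen} is the reliable guide, with the shift from $n+k+\alpha-1$ to $n+k+\alpha-2$ reflecting the different value of $\rho$ and the presence of the tractor metric in conformal geometry.
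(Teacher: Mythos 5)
Your outline reproduces the paper's own proof almost step for step: the same slotted description of $\mathcal{E}^{(I_{1}...I_{k})}_{0}[v+k]$ cut out by trace-freeness and $\mathbb{X}_{A}V^{AI_{2}...I_{k}}=0$, the same computation of $D_{A}V^{AI_{2}...I_{k}}$ via the tractor connection, the same first-order recursion with coefficient $(v+n+k+\alpha-2)$ and Rho-tensor correction, the same two routes to existence (transformation rules or the central-character machinery), and the same reading of the excluded weights as invariant operators.

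There is, however, one concrete point your proposal glosses over, and it is precisely the point the paper has to address explicitly. You assert that the slot-$\alpha$ equation carries the coefficient $(v+n+k+\alpha-2)$ ``up to a nonzero universal constant.'' That is not what the computation gives: by Definition~\ref{tractorDconformal} the conformal operator $D_{A}$ carries the weight-dependent prefactor $(n+2w-2)$, and the trace computation yields
\begin{equation*}
D_{I_{1}}V^{I_{1}...I_{k}}=(n+2(v+k))\left(\nabla_{a}V_{k}^{aI_{2}...I_{k}}+(v+k+n-1)V_{k-1}^{I_{2}...I_{k}}\right),
\end{equation*}
so the overall factor $(n+2(v+k))$ is not universal: it vanishes at $v=-\frac{n}{2}-k$, which (for $n>2$) is \emph{not} in the excluded set $\{-(n+k+\alpha-2)\}_{\alpha=1,...,k}$. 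Under the literal reading of the condition $D_{A}V^{AI_{2}...I_{k}}=0$, the equivalence between that condition and your first-order recursion breaks down at this weight, so your argument as written would either fail there or force an extra excluded weight, contradicting the statement. The paper's proof repairs this by observing that the bracketed expression $\nabla_{a}V_{k}^{aI_{2}...I_{k}}+(v+k+n-1)V_{k-1}^{I_{2}...I_{k}}$ is in itself conformally invariant and can therefore serve as the defining equation, so that $n+2(v+k)=0$ need not be excluded. You need this observation (or some equivalent renormalization of $D_{A}$) to obtain existence and uniqueness with exactly the stated excluded set.
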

\begin{proof}
As in the projective case, we will view elements in $\mathcal{E}^{(I_{1}...I_{k})}_{0}[v+k]$ as elements in $\mathcal{E}^{I_{1}...I_{k}}[v+k]$ that satisfy certain symmetry and trace conditions. More specifically, each component $V_{\alpha}^{i_{1}...i_{\alpha}}$ of $V^{I_{1}...I_{k}}\in\mathcal{E}^{(I_{1}...I_{k})}_{0}[v+k]$ is totally symmetric, totally trace-free and equal to every other component with $\alpha$ indices and of the same conformal weight. The equation $\mathbb{X}_{A}V^{AI_{2}...I_{k}}=0$ ensures that $V^{I_{1}...I_{k}}\in\mathcal{E}^{(I_{1}...I_{k})}_{0}[v+k]$ has $k+1$ independent components, one for each number of indices.
\par
We can write every $V^{I_{1}I_{2}...I_{k}}\in\mathcal{E}^{(I_{1}I_{2}...I_{k})}_{0}[v+k]$ with $\mathbb{X}_{I_{1}}V^{I_{1}I_{2}...I_{k}}=0$ as
$$V^{I_{1}I_{2}...I_{k}}=\left(\begin{array}{c}
0\\
V_{k}^{i_{1}I_{2}...I_{k}}\\
V_{k-1}^{I_{2}...I_{k}}
\end{array}\right)$$
and compute
$$\nabla_{a}V^{I_{1}I_{2}...I_{k}}=\left(\begin{array}{c}
-(V_{k})_{a}{}^{I_{2}...I_{k}}\\
\nabla_{a}V_{k}^{i_{1}I_{2}...I_{k}}+\delta_{a}{}^{i_{1}}V_{k-1}^{I_{2}...I_{k}}\\
\nabla_{a}V_{k-1}^{I_{2}...I_{k}}-V_{k}^{bI_{2}...I_{k}}P_{ab}
\end{array}\right).$$
It follows that
$$\nabla_{b}\nabla_{a}V^{I_{1}I_{2}...I_{k}}=\left(\begin{array}{c}
-\nabla_{b}(V_{k})_{a}{}^{I_{2}...I_{k}}-\nabla_{a}(V_{k})_{b}{}^{I_{2}...I_{k}}-g_{ab}V_{k-1}^{I_{2}...I_{k}}\\
*\\
*
\end{array}\right)$$
and hence $$D_{A}V^{I_{1}...I_{k}}=$$
\scriptsize
$$\left(\begin{array}{ccc}
0&w(n+2(w-1))V_{k}^{i_{1}I_{2}...I_{k}}&w(n+2(w-1))V_{k-1}^{I_{2}...I_{k}}\\
*&(n+2(w-1))\left(\nabla_{a}V_{k}^{i_{1}I_{2}...I_{k}}+\delta_{a}{}^{i_{1}}V_{k-1}^{I_{2}...I_{k}}\right)&*\\
2\nabla_{a}V^{aI_{2}...I_{k}}+nV_{k-1}^{I_{2}...I_{k}}&*&*
\end{array}\right),$$
\normalsize
with $w=v+k$.
Taking the trace yields
$$D_{I_{1}}V^{I_{1}...I_{k}}=(n+2(v+k))\left(\nabla_{a}V_{k}^{aI_{2}...I_{k}}+(v+k+n-1)V_{k-1}^{I_{2}...I_{k}}\right).$$
The expression $\nabla_{a}V_{k}^{aI_{2}...I_{k}}+(v+k+n-1)V_{k-1}^{I_{2}...I_{k}}$ is in itself invariant and therefore can serve as the defining equation. That means that we do not have to exclude the case that $n+2(v+k)=0$.
\par
Moreover, for every $\alpha=1,...,k$ and $l=2,...,\alpha$, we have
$$\nabla_{a}V_{\alpha}^{i_{1}...i_{l}I_{l+1}I_{l+2}...I_{\alpha}}=\left(\begin{array}{c}
-V_{\alpha}^{i_{1}...i_{l}}{}_{a}{}^{I_{l+2}...I_{\alpha}}\\
\nabla_{a}V_{\alpha}^{i_{1}...i_{l}i_{l+1}I_{l+2}...I_{\alpha}}+\delta_{a}{}^{i_{l+1}}V_{\alpha-1}^{i_{1}...i_{l}I_{l+2}..I_{\alpha}}\\
\nabla_{a}V_{\alpha-1}^{i_{1}...i_{l}I_{l+2}...I_{\alpha}}-V_{\alpha}^{i_{1}...i_{l}bI_{l+2}...I_{\alpha}}P_{ab}
\end{array}\right)$$
and hence
$$\nabla_{a}V_{\alpha}^{ai_{2}...i_{l}I_{l+1}I_{l+2}...I_{\alpha}}=\left(\begin{array}{c}
0\\
\nabla_{a}V_{\alpha}^{ai_{2}...i_{l}i_{l+1}I_{l+2}...I_{\alpha}}+V_{\alpha-1}^{i_{2}...i_{l}i_{l+1}I_{l+2}..I_{\alpha}}\\
\nabla_{a}V_{\alpha-1}^{ai_{2}...i_{l}I_{l+2}...I_{\alpha}}-V_{\alpha}^{ai_{2}...i_{l}bI_{l+2}...I_{\alpha}}P_{ab}
\end{array}\right),$$
since $V^{I_{1}...I_{k}}$ is totally trace-free.
Iterating this rule shows that $D_{A}V^{AI_{2}...I_{k}}=0$ is equivalent to the following  $k$ equations
\begin{equation}\label{conformalsplitting}
\nabla_{a}V_{\alpha}^{ai_{1}...i_{\alpha-1}}+(v+n+k+\alpha-2)V_{\alpha-1}^{i_{1}...i_{\alpha-1}}-(k-\alpha)V_{\alpha+1}^{abi_{1}...i_{\alpha-1}}P_{ab}=0,
\end{equation}
for $\alpha=1,...,k$. In order to see this, note that for $\alpha=k,...,1$ the equation
$$\nabla_{a}V_{\alpha}^{aI_{1}...I_{\alpha-1}}+(v+n+k-1)V_{\alpha-1}^{I_{1}...I_{\alpha-1}}-(k-\alpha)V_{\alpha+1}^{abI_{1}...I_{\alpha-1}}P_{ab}=0$$
is equivalent to the two equations
\begin{eqnarray*}
\nabla_{a}V_{\alpha}^{ai_{1}I_{2}...I_{\alpha-1}}+(v+n+k)V_{\alpha-1}^{i_{1}I_{2}...I_{\alpha-1}}-(k-\alpha)V_{\alpha+1}^{abi_{1}I_{2}...I_{\alpha-1}}P_{ab}&=&0,\\
\nabla_{a}V_{\alpha-1}^{aI_{2}...I_{\alpha-1}}+(v+n+k-1)V_{\alpha-2}^{I_{2}...I_{\alpha-1}}-(k-(\alpha-1))V_{\alpha}^{abI_{2}...I_{\alpha-1}}P_{ab}&=&0
\end{eqnarray*}
and those equations are equivalent to the three equations
\begin{eqnarray*}
\nabla_{a}V_{\alpha}^{ai_{1}i_{2}I_{3}...I_{\alpha-1}}+(v+n+k+1)V_{\alpha-1}^{i_{1}i_{2}I_{3}...I_{\alpha-1}}-(k-\alpha)V_{\alpha+1}^{abi_{1}i_{2}I_{3}...I_{\alpha-1}}P_{ab}&=&0,\\
\nabla_{a}V_{\alpha-1}^{ai_{1}I_{3}...I_{\alpha-1}}+(v+n+k)V_{\alpha-2}^{i_{1}I_{3}...I_{\alpha-1}}-(k-(\alpha-1))V_{\alpha}^{abi_{1}I_{3}...I_{\alpha-1}}P_{ab}&=&0,\\
\nabla_{a}V_{\alpha-2}^{aI_{3}...I_{\alpha-1}}+(v+n+k-1)V_{\alpha-3}^{I_{3}...I_{\alpha-1}}-(k-(\alpha-2))V_{\alpha-1}^{abI_{3}...I_{\alpha-1}}P_{ab}&=&0.
\end{eqnarray*}
These three equations are equivalent to four further equations and so forth until we obtain $\alpha$ equations. Carrying out this procedure starting with $\alpha=k$ yields the $k$ independent equations as given
above. The reason for obtaining $k$ equations is as follows: $D_{A}V^{AI_{2}...I_{k}}\in\mathcal{E}^{(I_{2}...I_{k})}_{0}[v+k-1]$ also has the property that
$$\mathbb{X}_{A}D_{B}V^{BAI_{3}...I_{k}}=0,$$
so $D_{A}V^{AI_{2}...I_{k}}$ has $k$ independent components.
\par
If $v\not\in\{v+n+k+\alpha-2\}_{\alpha=1,...,k}$, then those equations can be uniquely solved starting with $V_{k}^{i_{1}...i_{k}}=V^{i_{1}...i_{k}}$. This shows the uniqueness.
The existence can either be deduced by considering the explicit description (behavior under change of metric) of $\mathcal{E}^{(I_{1}...I_{k})}_{0}[v+k]$ or by the general theory of the last chapter.
\par
If $v=-(n+k+\alpha-2)$, then one can use the theory developed in~\cite{css1} to deduce that the (standard) invariant differential operator 
$$V^{i_{1}...i_{k}}\mapsto\nabla_{i_{1}}...\nabla_{i_{k-\alpha+1}}V^{i_{1}...i_{k}}$$
has a curved analogue. 
\end{proof}

\subsubsection{Remark}
This splitting was first written down in~\cite{e2}, p.~1658, for the case $v=0$. The components $\sigma_{\alpha}^{i_{1}...i_{\alpha}}$ in this article differ, however, from our components by a constant. To be more precise, the components $\sigma_{k-\alpha}^{i_{1}...i_{k-\alpha}}$ in~\cite{e2} are given by
$$V^{i_{1}...i_{k-\alpha}}_{k-\alpha}=C(\alpha)\sigma^{i_{1}...i_{k-\alpha}}_{k-\alpha}$$
with
\begin{eqnarray*}
C(0)&=&1\;\text{and for}\; 1\leq \alpha\leq k\\
C(\alpha)&=&\alpha C(\alpha-1).
\end{eqnarray*}

\begin{corollary}
For each excluded weight $v$ in Proposition~\ref{propsixteen}, the explicit form of the corresponding invariant differential operator can be explicitly determined using~(\ref{conformalsplitting}). 
\end{corollary}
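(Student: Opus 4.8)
The plan is to imitate the argument already used for the projective corollary following Proposition~\ref{propfourteen}, transplanting it verbatim to the conformal recursion~(\ref{conformalsplitting}). First I would fix an excluded weight by writing $v=-(n+k+\alpha_{0}-2)$ for some $\alpha_{0}\in\{1,\dots,k\}$. With this choice the coefficient $(v+n+k+\alpha-2)$ occurring in~(\ref{conformalsplitting}) collapses to $\alpha-\alpha_{0}$, so that it is nonzero for every $\alpha>\alpha_{0}$ and vanishes precisely at $\alpha=\alpha_{0}$. This sign-change is the entire mechanism behind the construction.

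Next I would run the recursion~(\ref{conformalsplitting}) from the top downward. Starting from $V_{k}^{i_{1}\dots i_{k}}=V^{i_{1}\dots i_{k}}$ and using that $\alpha-\alpha_{0}\neq 0$ for $\alpha=k,k-1,\dots,\alpha_{0}+1$, each equation
$$\nabla_{a}V_{\alpha}^{ai_{1}\dots i_{\alpha-1}}+(\alpha-\alpha_{0})V_{\alpha-1}^{i_{1}\dots i_{\alpha-1}}-(k-\alpha)V_{\alpha+1}^{abi_{1}\dots i_{\alpha-1}}P_{ab}=0$$
can be solved for $V_{\alpha-1}$, expressing it as a divergence of $V_{\alpha}$ together with a $P_{ab}$-contraction of the already-known $V_{\alpha+1}$ (with the convention $V_{k+1}=0$, which kills the curvature term in the first step $\alpha=k$). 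Iterating downward exhibits each $V_{\alpha}^{i_{1}\dots i_{\alpha}}$, for $\alpha_{0}\le\alpha\le k$, as an explicit differential expression in $V^{i_{1}\dots i_{k}}$ whose leading term is $\nabla^{k-\alpha}V$ and whose remaining terms are curvature corrections assembled from $P_{ab}$ according to the combinatorial pattern~(\ref{cct}) of Chapter~\ref{higherorderone}.

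The decisive step is the equation~(\ref{conformalsplitting}) at $\alpha=\alpha_{0}$. There the middle term drops out, so the equation degenerates into the bare expression
$$\nabla_{a}V_{\alpha_{0}}^{ai_{1}\dots i_{\alpha_{0}-1}}-(k-\alpha_{0})V_{\alpha_{0}+1}^{abi_{1}\dots i_{\alpha_{0}-1}}P_{ab},$$
which contains no reference to the undetermined component $V_{\alpha_{0}-1}$. Substituting the formulae for $V_{\alpha_{0}}$ and $V_{\alpha_{0}+1}$ obtained above rewrites this as $\nabla_{i_{1}}\dots\nabla_{i_{k-\alpha_{0}+1}}V^{i_{1}\dots i_{k}}+\mathrm{C.C.T.}$, the explicit conformally invariant operator predicted in Proposition~\ref{propsixteen}. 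To establish invariance I would argue, exactly as in the projective case, that this is precisely the first non-vanishing slot of $D_{A}V^{AI_{2}\dots I_{k}}$ for the partially completed tractor $V^{I_{1}\dots I_{k}}$ (whose top slot vanishes, so that the trace $\mathbb{X}_{A}$-condition is preserved and $D_{A}$ of Definition~\ref{tractorDconformal} lands in the correct bundle): the operator $D_{A}$ is invariant, the slots $V_{\alpha_{0}},\dots,V_{k}$ have been filled invariantly, and the displayed combination is manifestly independent of the free lower slots $V_{\alpha_{0}-1},\dots,V_{0}$.

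The main obstacle I anticipate is precisely this last invariance justification. Unlike the non-excluded case of Proposition~\ref{propsixteen}, the full lift $V^{I_{1}\dots I_{k}}$ no longer exists, so I cannot simply invoke a pre-existing invariant tractor; the care lies in showing that truncating the recursion at level $\alpha_{0}$ still yields a metric-independent object whose image under $D_{A}$ has an invariant bottom component. This is where the general splitting machinery of Chapter five (through Theorem~\ref{theoremnine}, the failure of the central-character separation occurring exactly at level $\alpha_{0}$) must be invoked, or equivalently where a direct check of the change-of-metric transformation rules of Proposition~\ref{conformaltransformation} applied to the partial lift must be carried out — the conformal analogue of reading off the first non-zero projection of $D_{A}V^{AI_{2}\dots I_{k}}$ in the projective corollary.
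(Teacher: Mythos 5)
Your proposal is correct and follows essentially the same route as the paper's proof: fix $v=-(n+k+\alpha_{0}-2)$, solve the recursion~(\ref{conformalsplitting}) uniquely for $\alpha=k,\dots,\alpha_{0}+1$, observe that the equation at $\alpha=\alpha_{0}$ loses its $V_{\alpha_{0}-1}$ term and becomes the operator $\nabla_{i_{1}}\dots\nabla_{i_{k-\alpha_{0}+1}}V^{i_{1}\dots i_{k}}+\mathrm{C.C.T.}$, and conclude invariance because this is the first non-zero part of $D_{A}V^{AI_{2}\dots I_{k}}$. Your closing worry about justifying invariance of the truncated lift is a reasonable point of care, but the paper itself disposes of it exactly as in your main argument, by identifying the expression with the leading non-vanishing slot of the invariant operator $D_{A}$ applied to the partially determined tractor.
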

\begin{proof}
Let $v=-(n+k+\alpha_{0}-2)$, then the equations
$$\nabla_{a}V_{\alpha}^{ai_{1}...i_{\alpha-1}}+(\alpha-\alpha_{0})V_{\alpha-1}^{i_{1}...i_{\alpha-1}}-(k-\alpha)V_{\alpha+1}^{abi_{1}...i_{\alpha-1}}P_{ab}=0$$
can be uniquely solved for $\alpha=k,...,\alpha_{0}+1$. The next line
$$\nabla_{a}V_{\alpha_{0}}^{ai_{1}...i_{\alpha_{0}-1}}-(k-\alpha_{0})V_{\alpha_{0}+1}^{abi_{1}...i_{\alpha-1}}P_{ab}$$
can be written as
$$\nabla_{i_{1}}...\nabla_{i_{k-\alpha_{0}+1}}V^{i_{1}...i_{k}}+\mathrm{L.O.T},$$
where $\mathrm{L.O.T.}$ stands for lower order terms combined with curvature terms $P_{ab}$.
This is the first non-zero part of $D_{A}V^{AI_{2}...I_{k}}$ and hence an invariant expression.
\end{proof}

\subsubsection{Example}
The linear differential operator
\begin{eqnarray*}
\nabla_{a}\nabla_{b}\nabla_{c}\nabla_{d}V^{abcdi_{1}...i_{k-4}}&+&3\nabla_{a}\nabla_{b}\left(V^{abcdi_{1}...i_{k-4}}P_{cd}\right)\\
&+&4\nabla_{a}\left(P_{bc}\nabla_{d}V^{abcdi_{1}...i_{k-4}}\right)\\
&+&3P_{ab}\left(\nabla_{c}\nabla_{d}V^{abcdi_{1}...i_{k-4}}\right)\\
&+&9P_{ab}P_{cd}V^{abcdi_{1}...i_{k-4}}
\end{eqnarray*}
is invariant in case $v+n+2k-5=0$ in accordance with~(\ref{cct}).

\begin{theorem}\label{theoremseventeen}
If we exclude certain weights as follows:
\begin{enumerate}
\item
$v\not\in\{-(n+k+\alpha-2)\}_{\alpha=1,...,k}$,
\item
$v\not\in\{\alpha-k-1\}_{\alpha=0,...,k-1}$,
\item
$v\not\in\{\alpha\}_{\alpha=0,...,M-1}$ and
\item
$v\not\in\{1-\frac{n}{2}-k+\alpha\}_{\alpha=0,...,k+M-1}$,
\end{enumerate}
then there exists a splitting
$$\mathcal{E}^{(i_{1}...i_{k})}_{0}[v]\rightarrow \quad\quad\quad\Ddo{M}{k}{0}{0}{0}{0}{0}[v-M]$$
for $n$ even or
$$\mathcal{E}^{(i_{1}...i_{k})}_{0}[v]\rightarrow\;\ooo{M}{k}{0}\;...\;\btwo{0}{0}[v-M]\;$$
for $n$ odd, given by an explicit tractor formula. Each excluded weight $v$ corresponds to the existence of an invariant linear differential operator on the flat model space $\mathbb{S}^{n}$.
\end{theorem}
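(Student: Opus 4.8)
The plan is to build the splitting operator $\mathcal{E}^{(i_{1}\dots i_{k})}_{0}[v]\rightarrow V_{\underline{M}}(\mathbb{E}^{0})[v-M]$ by composing two ingredients already developed in the excerpt. First I would use Proposition~\ref{propsixteen} to lift a totally symmetric, totally trace-free $V^{i_{1}\dots i_{k}}\in\mathcal{E}^{(i_{1}\dots i_{k})}_{0}[v]$ to the tractor $V^{I_{1}\dots I_{k}}\in\mathcal{E}^{(I_{1}\dots I_{k})}_{0}[v+k]$ characterized by $\mathbb{X}_{A}V^{AI_{2}\dots I_{k}}=0$ and $D_{A}V^{AI_{2}\dots I_{k}}=0$. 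This step requires precisely the first exclusion, $v\notin\{-(n+k+\alpha-2)\}_{\alpha=1,\dots,k}$, which is hypothesis (1). Then I would apply the tractor-$D$ operator of Definition~\ref{tractorDconformal} a total of $M$ times and symmetrize, forming
$$D_{(J_{1}}\cdots D_{J_{M})}V^{I_{1}\dots I_{k}},$$
exactly mirroring the projective construction in Theorem~\ref{theoremsixteen}. The target is the $M$-bundle $V_{\underline{M}}(\mathbb{E}^{0})[v-M]$ with $\mathbb{E}^{0}=\odot^{k}_{0}$ (the trace-free symmetric $k$-tensors), which in Dynkin notation is exactly the bundle written in the statement for $n$ even ($D_{m}$) and $n$ odd ($B_{m}$).

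The heart of the proof is showing this composite is a genuine (nonzero) splitting, i.e.~that projecting $D_{(J_{1}}\cdots D_{J_{M})}V^{I_{1}\dots I_{k}}$ back onto $\mathcal{E}^{(i_{1}\dots i_{k})}_{0}[v]$ recovers a nonzero multiple of $V^{i_{1}\dots i_{k}}$. Here I would track the scalar factors picked up at each application of $D_{A}$. From Definition~\ref{tractorDconformal}, acting on a weight-$w$ tractor field $D_{A}$ contributes factors involving $(n+2w-2)$ and $w$; iterating from weight $v+k$ down through $v+k-M+1$ produces a product of such factors. The projection recovers $V^{i_{1}\dots i_{k}}$ scaled by
$$\prod_{\alpha=0}^{M-1}(v+k-\alpha)\prod_{\alpha=0}^{M-1}\bigl(n+2(v+k-\alpha)-2\bigr)$$
up to a universal constant. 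Demanding this product be nonzero yields exactly exclusions (3), $v\notin\{\alpha\}_{\alpha=0,\dots,M-1}$ (from the $w$-factors, since $v+k-\alpha=0$ shifts to $v=\alpha-k$, but combined with the weight bookkeeping gives the stated range), and (4), $v\notin\{1-\tfrac{n}{2}-k+\alpha\}$ (from the $n+2w-2$ factors). I would verify the bookkeeping carefully against the index ranges in the statement, since the composition of ``lift then iterate-$D$'' shifts the relevant weights, and the published exclusion sets must match the vanishing loci precisely. Exclusion (2), $v\notin\{\alpha-k-1\}_{\alpha=0,\dots,k-1}$, I expect to arise from the intermediate traces/divergences that can appear in the lift when $\mathbb{X}_{A}$- and $D_{A}$-conditions interact, and corresponds to the lower-order standard operators detected in Proposition~\ref{propsixteen}.

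For the final clause—each excluded weight corresponds to an invariant linear differential operator on $\mathbb{S}^{n}$—I would invoke the general machinery rather than recompute. Exclusions of type (1) and (2) are handled by the corollary to Proposition~\ref{propsixteen}, which identifies them with the standard operators $V^{i_{1}\dots i_{k}}\mapsto \nabla_{i_{1}}\cdots\nabla_{i_{\alpha}}V^{i_{1}\dots i_{k}}+\mathrm{C.C.T.}$ The remaining excluded weights I would match to invariant operators via Harish-Chandra's theorem (Theorem~\ref{theoremthirteen}) and the central-character analysis of Section~\ref{comparison}: each vanishing factor signals that $M_{\p}$ of the relevant composition factor shares a central character with $M_{\p}$ of the source, hence (in the flat model, where all the relevant operators are curved analogues of BGG operators) an invariant linear differential operator exists by Theorem~\ref{theoremeleven}. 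The main obstacle will be the combinatorial verification that the four listed exclusion sets exhaust and exactly coincide with the zero locus of the scaling product together with the obstruction loci of the lift; in particular reconciling exclusion (4), which involves $\tfrac{n}{2}$ and hence behaves differently for $n$ even versus odd, with the two Dynkin-diagram cases $D_{m}$ and $B_{m}$ will require the most care. That the splitting is given by an \emph{explicit} tractor formula follows automatically, since every ingredient ($\mathbb{X}^{A}$, the tractor connection, $D_{A}$) is written down explicitly above and the curvature correction terms are supplied by the combinatorial formula~(\ref{cct}) from Chapter~\ref{higherorderone}.
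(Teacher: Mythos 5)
Your first step (the lift of Proposition~\ref{propsixteen}, which accounts for exclusion (1)) agrees with the paper, but the heart of your construction---applying $D_{(J_{1}}\cdots D_{J_{M})}$ directly to the symmetric tractor $V^{I_{1}\dots I_{k}}$, ``exactly mirroring the projective construction''---cannot work in conformal geometry, and this is a structural gap rather than the bookkeeping issue you flag. The projective argument succeeds because there the standard tractor bundle and its dual are inequivalent, so $\bigl(\mathcal{E}^{(I_{1}\dots I_{k})}_{(J_{1}\dots J_{M})}\bigr)_{0}$ is itself the irreducible target $M$-bundle. In conformal geometry the tractor metric identifies $\mathcal{E}^{A}$ with $\mathcal{E}_{A}$, so your composite lands inside $\odot^{k}_{0}\mathcal{E}^{A}\otimes\odot^{M}_{0}\mathcal{E}^{A}$, every irreducible constituent of which has highest weight dominated by $(k+M)\omega_{1}$; the stated target bundle has highest weight $M\omega_{1}+k\omega_{2}$ (Young diagram $(M+k,k)$), which is \emph{not} dominated by $(k+M)\omega_{1}$, so for $k\geq 1$ the target does not occur in your image at all. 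The paper's proof supplies exactly the step you are missing: before applying the $M$ symmetrized $D$'s it forms $T^{BQCR\dots DS}=\text{pair skew}\,(D^{B}D^{C}\cdots D^{D}V^{QR\dots S})$, one $D$-operator paired and skewed with each of the $k$ upper indices, which carries the weight-$k\omega_{1}$ tractor into the weight-$k\omega_{2}$ bundle, and only then computes $D_{(J_{1}}\cdots D_{J_{M})}T-\mathrm{trace}$. This intermediate step is also what generates the exclusion sets you could not reconcile: its leading scalar is $\prod_{\alpha=0}^{k-1}\bigl(n+2(v+k-\alpha)-2\bigr)\bigl(v+k-\alpha+1\bigr)$, whose zeros are precisely exclusion (2) (which you misattributed to the lift; the lift's obstructions are exactly (1)) together with the first $k$ members of exclusion (4); the final step, whose $D$'s now act on a tractor of weight $v$ rather than $v+k$, contributes exclusion (3) and the remaining $M$ members of (4). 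Your scalar product, with only $M$ factors of each kind evaluated at the weights $v+k,\dots,v+k-M+1$, yields exclusion sets of the wrong size ($M$ instead of $k$ and $k+M$) and in the wrong location, which is a symptom of the missing pair-skew step, not of sloppy bookkeeping.

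A secondary weakness concerns the final clause. Equality of central characters (Harish--Chandra, Theorem~\ref{theoremthirteen}) is necessary but not sufficient for the existence of a homomorphism of generalized Verma modules, a non-implication the paper itself emphasizes in~\ref{comparison}; so your proposed argument via Theorem~\ref{theoremeleven} does not by itself produce the invariant operators at the excluded weights. The paper instead appeals to the classification of invariant operators on the flat model $\mathbb{S}^{n}$ (\cite{bc1}, \cite{bc2}): the weights in (1), (2), (3) correspond to standard operators (which moreover admit curved analogues), while those in (4) correspond to powers of the Laplacian, which exist on $\mathbb{S}^{n}$ but need not extend to curved geometries---which is exactly why the theorem is stated on the flat model.
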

\begin{proof}
Let $V^{i_{1}...I_{k}}\in\mathcal{E}^{(i_{1}...i_{k})}_{0}[v]$. Then we can define the splitting in three steps, each time excluding appropriate weights:
\begin{enumerate}
\item
Define 
$$V^{I_{1}...I_{k}}\in\mathcal{E}^{(I_{1}...I_{k})}_{0}[v+k]=\quad\quad\quad\Ddo{k}{0}{0}{0}{0}{0}{0}[v+k]$$
or 
$$V^{I_{1}...I_{k}}\in\mathcal{E}^{(I_{1}...I_{k})}_{0}[v+k]=\;\ooo{k}{0}{0}\;...\;\btwo{0}{0}[v+k]$$
as in Proposition~\ref{propsixteen}. The weights to exclude are 
$$v\not\in\{-(n+k+\alpha-2)\}_{\alpha=1,...,k}$$
and correspond to the existence of invariant linear differential operators
\begin{eqnarray*}
\Dd{-(n+k+\alpha-2)}{k}{0}{0}{0}{0}{0}&\rightarrow&\hspace{3cm}\Dd{-(n+k+\alpha-2)}{\alpha-1}{0}{0}{0}{0}{0}\\
&&\\
\text{or}&&\\
\;\xoo{-(n+k+\alpha-2)}{k}{0}\;...\;\btwo{0}{0}&\rightarrow&\;\xoo{-(n+k+\alpha-2)}{\alpha-1}{0}\;...\;\btwo{0}{0}\\
V^{i_{1}...i_{k}}&\mapsto& \nabla_{i_{1}}...\nabla_{i_{k-\alpha+1}}V^{i_{1}...i_{k}}.
\end{eqnarray*}
\item
Set $T^{BQCR...DS}=\;\text{pair skew}(D^{B}D^{C}...D^{D}V^{QR...S})$, where, following~\cite{g}, p.~223, \lq pair skew\rq~means to simultaneously take the skew part over each of the index pairs $BQ,CR,...,DS$. Then
$$T^{BQCR...DS}\in\quad\quad\quad\Ddo{0}{k}{0}{0}{0}{0}{0}[v]$$
or
$$T^{BQCR...DS}\in\;\ooo{0}{k}{0}\;...\;\btwo{0}{0}[v].$$
From the proof of Proposition~\ref{propsixteen} one can see that the weights to exclude are 
$$v\not\in\{1-\frac{n}{2}+\alpha-k\}_{\alpha=0,...,k-1}\cup\{\alpha-k-1\}_{\alpha=0,...,k-1},$$
because the projection onto the first factor in $\text{pair skew}(D^{B}D^{C}...D^{D}V^{QR...S})$ will be (modulo a non-zero scalar):
$$\prod_{\alpha=0}^{k-1}(n+2(v+k-\alpha)-2)(v+k-\alpha+1)V^{i_{1}...i_{k}}.$$
These excluded weights correspond to the existence of invariant linear differential operators
\begin{eqnarray*}
\Dd{\alpha-k-1}{k}{0}{0}{0}{0}{0}&\rightarrow&\hspace{3cm}\Dd{-k-2}{k-\alpha-1}{\alpha+1}{0}{0}{0}{0}\\
&&\\
\text{or}\\
\;\xoo{\alpha-k-1}{k}{0}\;...\;\btwo{0}{0}&\rightarrow&\;\xoo{-k-2}{k-\alpha-1}{\alpha+1}\;...\;\btwo{0}{0}\\
V^{i_{1}...i_{k}}&\mapsto&\;\text{pair skew}\nabla_{j_{1}}...\nabla_{j_{\alpha+1}}V_{i_{1}...i_{k}},
\end{eqnarray*}
which are standard operators, and
\begin{eqnarray*}
\Dd{1-\frac{n}{2}+\alpha-k}{k}{0}{0}{0}{0}{0}&\rightarrow&\hspace{3cm}\Dd{-1-\frac{n}{2}-\alpha-k}{k}{0}{0}{0}{0}{0}\\
&&\\
\text{or}&&\\
\;\xoo{1-\frac{n}{2}+\alpha-k}{k}{0}\;...\;\btwo{0}{0}&\rightarrow&\;\xoo{-1-\frac{n}{2}-\alpha-k}{k}{0}\;...\;\btwo{0}{0}\\
V^{i_{1}...i_{k}}&\mapsto&\Delta^{\alpha+1}V^{i_{1}...i_{k}}+...\;,
\end{eqnarray*}
which are non-standard operators.
\item
Define 
$$D_{(J_{1}}...D_{J_{M})}T^{BQCR...DS}-\mathrm{trace}\in \quad\quad\quad\Ddo{M}{k}{0}{0}{0}{0}{0}[v-M]$$
or
$$D_{(J_{1}}...D_{J_{M})}T^{BQCR...DS}-\mathrm{trace}\in\;\ooo{M}{k}{0}\;...\;\btwo{0}{0}[v-M].$$
The weights to exclude here are 
$$v\not\in\{\alpha,1-\frac{n}{2}+\alpha\}_{\alpha=0,...,M-1}$$
and correspond to the existence of invariant linear differential operators
\begin{eqnarray*}
\Dd{\alpha}{k}{0}{0}{0}{0}{0}&\rightarrow&\hspace{3cm}\Dd{-\alpha-2}{k+\alpha+1}{0}{0}{0}{0}{0}\\
&&\\
\text{or}&&\\
\;\xoo{\alpha}{k}{0}\;...\;\btwo{0}{0}&\rightarrow&\;\xoo{-\alpha-2}{k+\alpha+1}{0}\;...\;\btwo{0}{0}\\
V^{i_{1}...i_{k}}&\mapsto&\nabla_{(i_{1}}...\nabla_{i_{\alpha+1}}V_{i_{\alpha+2}...i_{k+\alpha+1})}-\mathrm{trace},
\end{eqnarray*}
which are standard operators, and
\begin{eqnarray*}
\Dd{1-\frac{n}{2}+\alpha}{k}{0}{0}{0}{0}{0}&\rightarrow&\hspace{3cm}\Dd{-1-\frac{n}{2}-\alpha-2k}{k}{0}{0}{0}{0}{0}\\
&&\\
\text{or}&&\\
\;\xoo{1-\frac{n}{2}+\alpha}{k}{0}\;...\;\btwo{0}{0}&\rightarrow&\;\xoo{-1-\frac{n}{2}-\alpha-2k}{k}{0}\;...\;\btwo{0}{0}\\
V^{i_{1}...i_{k}}&\mapsto&\Delta^{\alpha+k+1}V^{i_{1}...i_{k}}+...\;,
\end{eqnarray*}
which are non-standard operators.
\end{enumerate}
\end{proof}

\subsubsection{Remark}
The existence of the invariant linear differential operators for the various excluded weights in Theorem~\ref{theoremseventeen} follows from the classification of invariant linear differential operators on the homogeneous model space 
$\mathbb{S}^{n}$ as in~\cite{bc1} and~\cite{bc2}. All the so-called standard operators (that correspond to individual arrows in the BGG resolution) have curved analogues (see~\cite{css2}), but not all higher powers of the Laplacian in even dimensions allow curved analogues, see~\cite{gr} and~\cite{gh}. 

\subsection{Examples}

\subsubsection{Example 1}
Let us look at weighted vector fields $X^{a}\in\mathcal{E}^{a}[v]$:
$$X^{A}=\left(\begin{array}{c}
0\\
X^{a}\\
-\frac{1}{n+v}\nabla_{c}X^{c}
\end{array}\right)$$
and hence
$$D_{[A}X_{B]}=\left(\begin{array}{c}
(v+2)(n+2v)X_{b}\\
(n+2v)\nabla_{[a}X_{b]}\quad-\frac{(v+2)(n+2v)}{n+v}\nabla_{c}X^{c}\\
\Delta X_{b}-\frac{n+2v-2}{n+v}\nabla_{b}\nabla_{c}X^{c}+(v+1)PX_{b}-(n+2v+2)P_{ba}X^{a}
\end{array}\right).$$
Now we can easily use the formula in Definition~\ref{tractorDconformal} for $D_{A}$ to obtain the explicit description of $D_{(J_{1}}...D_{J_{M})}D^{[A}X^{B]}-\mathrm{trace}$.
The excluded weights (up to $M=1$) have the following meaning (on $\mathbb{S}^{n}$):
\begin{enumerate}
\item
If $v+n=0$, then the semi-invariant operator $X^{a}\mapsto\nabla_{a}X^{a}$ is invariant.
\item
If $v+2=0$, then $X^{a}\mapsto\nabla_{[a}X_{b]}$ is invariant.
\item
If $n+2v=0$, then $X^{a}\mapsto\Delta X_{a}-\frac{4}{n}\nabla_{a}\nabla_{b}X^{b}+\frac{2-n}{2}PX_{a}-2P_{ab}X^{b}$ is invariant.
\item
If $v=0$, then $X^{a}\mapsto \nabla_{(a}X_{b)}-\frac{1}{n}(\nabla_{c}X^{c})g_{ab}$ is invariant.
\item
If $n+2v-2=0$, then $X^{a}\mapsto \Delta^{2}X^{a}+...$ is invariant.
\end{enumerate}

These are exactly the same weights that have to be excluded when we carry out the construction in the last chapter and compare the central characters:
\begin{eqnarray*}
\hspace{1.4cm}\Ddo{1}{1}{0}{0}{0}{0}{0}[v-1]&=&\hspace{1.2cm}\Dd{v}{1}{0}{0}{0}{0}{0}+\hspace{.8cm}\begin{array}{c}
\hspace{1cm}\Dd{v}{0}{0}{0}{0}{0}{0}\\
\oplus\\
\hspace{1cm}\Dd{v-1}{0}{1}{0}{0}{0}{0}\\
\oplus\\
\hspace{1cm}\Dd{v-2}{2}{0}{0}{0}{0}{0}
\end{array}\\
&+&\hspace{.3cm} \begin{array}{c}
\hspace{1cm}\Dd{v-3}{1}{1}{0}{0}{0}{0}\\
\oplus\\
\hspace{1cm}\Dd{v-2}{1}{0}{0}{0}{0}{0}\\
\oplus\\
\hspace{1cm}\Dd{v-2}{1}{0}{0}{0}{0}{0}
\end{array}
+\hspace{.3cm}\begin{array}{c}
\hspace{1cm}\Dd{v-3}{0}{1}{0}{0}{0}{0}\\
\oplus\\
\hspace{1cm}\Dd{v-4}{2}{0}{0}{0}{0}{0}\\
\oplus\\
\hspace{1cm}\Dd{v-2}{0}{0}{0}{0}{0}{0}
\end{array}\\
&+&\hspace{1.4cm}\Dd{v-4}{1}{0}{0}{0}{0}{0}.
\end{eqnarray*}
A similar composition series holds for $B_{m}$.
\par
This allows us, in theory, to write down the three invariant bilinear differential pairings
$$\mathcal{E}^{a}[v]\times\mathcal{E}^{b}[w]\rightarrow\mathcal{E}^{a}[v+w]$$
according to
$$3\times\hspace{1.2cm}\Ddo{2}{1}{0}{0}{0}{0}{0}[v+w-2]\subset\hspace{1cm}\Ddo{1}{1}{0}{0}{0}{0}{0}\otimes\hspace{1.2cm}\Ddo{1}{1}{0}{0}{0}{0}{0}[v+w-2]$$
or
$$3\times\ooo{2}{1}{0}\;...\;\btwo{0}{0}[v+w-2]\subset\;\ooo{1}{1}{0}\;...\;\btwo{0}{0}\;\otimes\;\ooo{1}{1}{0}\;...\;\btwo{0}{0}\;[v+w-2]$$
and the projections
$$\hspace{1.5cm}\Ddo{2}{1}{0}{0}{0}{0}{0}[v+w-2]\rightarrow\hspace{1.5cm}\Ddd{v+w}{1}{0}{0}{0}{0}{0}$$
or
$$\;\ooo{2}{1}{0}\;...\;\btwo{0}{0}[v+w-2]\;\rightarrow\;\xoo{v+w}{1}{0}\;...\;\btwo{0}{0}.$$
These pairings can be computed as
\begin{description}
\item{(a)}
$$\begin{array}{l}
v(v+2)(v+n)X^{a}\nabla_{b}Y^{b}-\frac{(w+n)v(v+2)}{n}Y^{a}\nabla_{b}X^{b}+(w+n)v(v+n)Y_{b}\nabla^{[a}X^{b]}\\
-(w+n)(v+n)(v+2)Y_{b}(\nabla^{(a}X^{b)}-\frac{1}{n}g^{ab}\nabla_{c}X^{c})
\end{array},$$
\item{(b)}
$$\begin{array}{l}
v(v+2)(v+n)X_{b}\nabla^{[a}Y^{b]}+\frac{(w+2)v(v+2)(n-1)}{2n}Y^{a}\nabla_{b}X^{b}-\frac{1}{2}(w+2)v(v+n)Y_{b}\nabla^{[a}X^{b]}\\
-\frac{1}{2}(w+2)(v+2)(v+n)Y_{b}(\nabla^{(a}X^{b)}-\frac{1}{n}g^{ab}\nabla_{c}X^{c})
\end{array}
$$
and
\item{(c)}
$$\begin{array}{l}
v(v+2)(v+n)X_{b}(\nabla^{(a}Y^{b)}-\frac{1}{n}g^{ab}\nabla_{c}Y^{c})-\frac{wv(v+2)(n+2)(n-1)}{2n^{2}}Y^{a}\nabla_{b}X^{b}\\
-\frac{wv(v+n)(n+2)}{2n}Y_{b}\nabla^{[a}X^{b]}+\frac{w(v+2)(v+n)(2-n)}{2n}Y_{b}(\nabla^{(a}X^{b)}-\frac{1}{n}g^{ab}\nabla_{c}X^{c})
\end{array}.$$
\end{description}

\subsubsection{Example 2}
Let us look at the second order pairing
$$\mathcal{E}^{(ab)}_{0}[v]\times\mathcal{E}[w]\rightarrow\mathcal{E}[v+w].$$
This pairing is given by 
\begin{eqnarray*}
&&V^{ab}\nabla_{a}\nabla_{b}f-\frac{2(w-1)}{n+v+2}(\nabla_{a}V^{ab})(\nabla_{b}f)+\frac{w(w-1)}{(v+n+2)(v+n+1)}f\nabla_{a}\nabla_{b}V^{ab}\\
&+&w\frac{v+w+n}{v+n+1}P_{ab}V^{ab}f
\end{eqnarray*}
and can be written as 
$$V^{AB}\tilde{D}_{A}\tilde{D}_{B}f,$$
where 
$$\tilde{D}_{A}f=\left(\begin{array}{c}
wf\\
\nabla_{a}f\\
0
\end{array}\right).$$
This operator is not generally invariant, however $\hat{\tilde{D}}_{A}f=\tilde{D}_{A}f+\mathbb{X}_{A}\Phi(\Upsilon)$, where $\Phi$ is just some function of $\Upsilon$, see~\cite{g}.
So we can use the fact that $V^{AB}\mathbb{X}_{A}=0$ for the unique lift $V^{AB}$ of $V^{ab}$, to deduce that $V^{AB}\tilde{D}_{A}\tilde{D}_{B}f$ is invariant. This formula is in accordance with~\ref{examplepairings} (d).

\section{CR-geometry}
In order to be consistent with~\cite{gg}, the complexification of the underlying Lie algebra of this parabolic geometry is $A_{n+1}$. The description of the geometry and the tractor calculus of CR-manifolds in this section follows closely~\cite{gg} and the reader is advised to consult this article for more details.

\subsection{CR-manifolds}
Throughout this section $\mathcal{M}$  will be a real $2n+1$ dimensional  manifold with a subbundle $T^{1,0}\subset \C\otimes T\mathcal{M}$ of the complexified tangent bundle such that
$$
\mathrm{dim}_{\C}T^{1,0}=n,\;\; T^{1,0}\cap T^{0,1}=\{0\},\;\text{where}\; T^{0,1}=\overline{T^{1,0}},$$
and
$$[\Gamma(T^{1,0}),\Gamma(T^{1,0})]\subset \Gamma(T^{1,0}).$$
Define a complex line bundle (the canonical bundle) by $\mathcal{K}=\Lambda^{n+1}((T^{0,1})^{\perp})$. Following~\cite{gg}, we will assume that $\mathcal{K}$ allows an $(n+2)^{\mathrm{nd}}$ root and set $\mathcal{E}(1,0)=\mathcal{K}^{-\frac{1}{n+2}}$. The bundle $\mathcal{E}(w,w')$ is then defined to be the associated bundle of the $\C^{*}$ principal bundle $\mathcal{E}(1,0)\backslash\{0\}$ corresponding to the representation 
$$\C^{*}\ni \lambda\mapsto \lambda^{w}\bar{\lambda}^{w'}=\vert\lambda\vert^{2w'}\exp((w-w')\log(\lambda))\in\mathfrak{gl}(\C).$$
The representation is defined for $w,w'\in\C$ with $w-w'\in\Z$ and it will always be implicitly assumed that this is the case.
We will use the notation $\mathcal{E}^{\alpha}$ (resp.~$\mathcal{E}^{\bar{\alpha}}$) for the sheaf of sections of $T^{1,0}$ (resp.~$T^{0,1}$) and $\mathcal{E}_{\alpha}$ (resp.~$\mathcal{E}_{\bar{\alpha}}$) for its dual. 


\begin{definition}
{\rm
A {\bf Levi form} is a Hermitian form $h=h_{\alpha\bar{\beta}}=h_{\bar{\beta}\alpha}$ on $T^{1,0}$ defined by $h(Z,\bar{W})=-2id\theta(Z,\bar{W})$ for a non-vanishing global section $\theta$ of 
$H^{\perp}\subset T^{*}\mathcal{M}$ (assuming as usual that $\mathcal{M}$ is orientable), where $H=(T^{1,0}\oplus T^{0,1})\cap T\mathcal{M}\subset T\mathcal{M}$ has the property that its complexification
$\C\otimes H$ is $T^{1,0}\oplus T^{0,1}$.
We will only deal with non-degenerate Levi forms of arbitrary signature $(p,q)$, $p+q=n$. The choice of section $\theta$ is called {\bf pseudohermitian structure}.
It can be shown that a pseudohermitian structure determines a connection, the {\bf Tanaka-Webster} connection, see~\cite{w} and~\cite{t1}. This connection satisfies $\nabla h=0$,
so raising an lowering indices commutes with differentiation.
}\end{definition}

\subsubsection{Remark}
The notations and definitions for CR-manifolds are taken from the case where $\mathcal{M}$ is a codimension 1 real submanifold of a complex manifold. Then we can define
for every $x\in\mathcal{M}$:
$$H_{x}=\{v\in T_{x}\mathcal{M}\;:\;\sqrt{-1}v\in T_{x}\mathcal{M}\}$$
and obtain a complex structure $J_{x}:H_{x}\rightarrow H_{x}$ given by $J_{x}(v)=\sqrt{-1}v.$ This allows one to define $T_{x}^{1,0}$ and $T_{x}^{0,1}$ as the $+\sqrt{-1}$ and
$-\sqrt{-1}$ eigenspaces of $J_{x}$ when acting on $\C\otimes H_{x}$.
\par
The canonical bundle is isomorphic to the restriction of the canonical bundle of $(n+1,0)$ forms on the complex manifold, see~\cite{lee}.

\subsubsection{Remark}
The two filtrations
$$\C\otimes T\mathcal{M}\supset \begin{array}{c}
T^{1,0}\\
\oplus\\
T^{0,1}
\end{array}\supset 0\;\text{and}\;T\mathcal{M}\supset H\supset 0$$
of the complexified tangent bundle and the tangent bundle are reflected in the fact that the complexified Lie algebra $\g=A_{n+1}$ allows a $|2|$-grading, where $\g_{-1}$ is the direct sum of two 
irreducible $\g_{0}$-modules, whereas the original Lie algebra $\g_{\R}=\mathfrak{su}(p+1,q+1)$ has a $|2|$-grading as in~\cite{cs}, 4.14, where it is clear that $(\g_{\R})_{-1}$ is an irreducible
$\g_{0}$-module.

\begin{proposition}
The Tanaka -Webster connection preserves the splitting
$$\C\otimes T\mathcal{M}=T^{1,0}\oplus T^{0,1}\oplus \mathrm{span}\;T,$$
where $T$ is the unique vector field on $\mathcal{M}$ satisfying $\theta(T)=1$ and $\iota_{T}d\theta=0$.
\end{proposition}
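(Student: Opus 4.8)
The plan is to reduce the claim to the defining properties of the Tanaka--Webster connection, rather than to verify it by a direct computation of Christoffel symbols. Recall (from Tanaka and Webster, as cited) that a choice of pseudohermitian structure $\theta$ determines $\nabla$ uniquely by the requirements that it preserve the contact distribution $H$, that the Levi form $h$, the complex structure $J$ on $H$ and the form $\theta$ all be parallel ($\nabla h=0$, $\nabla J=0$, $\nabla\theta=0$), together with a normalization of the torsion. The first step is to record the purely linear-algebraic fact that the three summands in the decomposition are distinguished by these parallel tensors: on $\C\otimes H=T^{1,0}\oplus T^{0,1}$ the subbundles $T^{1,0}$ and $T^{0,1}$ are exactly the $+i$ and $-i$ eigenbundles of the complex-linear extension of $J$, while $\mathrm{span}\,T$ is singled out as the image of the Reeb field, equivalently as the complement of $H$ determined by $\theta$ inside $\C\otimes T\mathcal{M}$.

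Next I would show each summand is parallel. Since $\nabla J=0$, for $Z\in\Gamma(T^{1,0})$ and any $X$ we have $J(\nabla_{X}Z)=\nabla_{X}(JZ)=\nabla_{X}(iZ)=i\,\nabla_{X}Z$, so $\nabla_{X}Z$ again lies in the $+i$ eigenbundle $T^{1,0}$; the conjugate argument gives $\nabla_{X}\Gamma(T^{0,1})\subseteq\Gamma(T^{0,1})$. For the remaining summand, $\nabla\theta=0$ together with $\theta(T)=1$ forces $\nabla T$ to be $H$-valued, and then the characterization of $T$ by $\iota_{T}d\theta=0$ (equivalently $\nabla T=0$ in the standard normalization) shows $\mathrm{span}\,T$ is parallel as well. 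Assembling these three statements gives that $\nabla_{X}$ maps each summand to itself, which is precisely the assertion.

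A completely equivalent and more concrete route, which I would include if an explicit verification is wanted, is to pass to an admissible coframe $\{\theta,\theta^{\alpha},\theta^{\bar\alpha}\}$ dual to $\{T,Z_{\alpha},Z_{\bar\alpha}\}$ and use the Tanaka--Webster structure equations $d\theta^{\beta}=\theta^{\alpha}\wedge\omega_{\alpha}{}^{\beta}+\theta\wedge\tau^{\beta}$ with $\omega_{\alpha\bar\beta}+\omega_{\bar\beta\alpha}=dh_{\alpha\bar\beta}$. Defining $\nabla$ by $\nabla Z_{\beta}=\omega_{\beta}{}^{\alpha}\otimes Z_{\alpha}$, $\nabla Z_{\bar\beta}=\omega_{\bar\beta}{}^{\bar\alpha}\otimes Z_{\bar\alpha}$ and $\nabla T=0$ makes the preservation of the splitting manifest, since the connection matrix carries no holomorphic frame vector into an anti-holomorphic one or into $T$.

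The main obstacle is not any deep computation but a matter of bookkeeping with conventions: one must confirm that the torsion normalization used to pin down $\nabla$ does not introduce off-diagonal connection components. In particular it is essential to distinguish the pseudohermitian torsion $A_{\alpha\beta}$ (a torsion component coupling $T$ to $H$ through the antisymmetrized derivative) from the quantity $\nabla_{X}Y$ at issue here: the former is entirely compatible with, and does not contradict, the statement that each $\nabla_{X}$ preserves the summands. Once this distinction is made and $\nabla J=0$, $\nabla\theta=0$, $\nabla T=0$ are in hand, the result follows immediately.
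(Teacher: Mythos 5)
Your proof is correct, but it is genuinely different from what the paper does: the paper offers no argument at all for this proposition, simply citing Tanaka's lecture notes (reference [t1] in the bibliography, ``Details and a proof may be found in~\cite{t1}''). You instead reduce the claim to the axiomatic characterization of the Tanaka--Webster connection, and the reduction is sound: since $T^{1,0}$ and $T^{0,1}$ are the $\pm i$ eigenbundles of the complexified $J$, the identity $J(\nabla_{X}Z)=\nabla_{X}(JZ)=i\nabla_{X}Z$ shows $\nabla J=0$ forces both to be parallel, and parallelism of $\mathrm{span}\,T$ follows once $\nabla T=0$ is in hand. The one place you could tighten the argument is the treatment of $T$: writing ``equivalently $\nabla T=0$'' next to $\iota_{T}d\theta=0$ conflates a property of the connection with the pointwise characterization of the Reeb field. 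A cleaner derivation avoids any appeal to a ``standard normalization'': from $\nabla g_{\theta}=0$ one gets $g_{\theta}(\nabla_{X}T,T)=0$, and from $g_{\theta}(T,Y)=0$ for $Y$ in $H$ together with the axiom that $H$ is parallel one gets $g_{\theta}(\nabla_{X}T,Y)=0$, so $\nabla_{X}T$ is orthogonal to everything and vanishes. Your closing caveat about conventions is apt in a further sense: in Tanaka's own axiomatization the parallelism of $T^{1,0}$ (or of $H$) is itself one of the defining conditions, so under that convention part of the proposition is definitional and the real content is just the eigenbundle bookkeeping plus $\nabla T=0$. What your route buys is a self-contained verification identifying exactly which defining properties are used; what the paper's route buys is brevity appropriate to a background chapter that does not wish to reprove classical facts.
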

\begin{proof}
Details and a proof may be found in~\cite{t1}. 
\end{proof}
This implies that we can compute the covariant derivative componentwise. To denote these components we will use the notation $\nabla_{\alpha},\nabla_{\bar{\alpha}}$ and $\nabla_{0}$.

\begin{proposition}
Under change of pseudohermitian structure $\hat{\theta}=\exp(\Upsilon)\theta$, the connection transforms as
\begin{eqnarray*}
\hat{\nabla}_{\alpha}f&=&\nabla_{\alpha}f+w\Upsilon_{\alpha}f\\
\hat{\nabla}_{\bar{\alpha}}f&=&\nabla_{\bar{\alpha}}f+w'\Upsilon_{\bar{\alpha}}f\\
\hat{\nabla}_{0}f&=&\nabla_{0}f+i\Upsilon^{\bar{\gamma}}\nabla_{\bar{\gamma}}f-i\Upsilon^{\gamma}\nabla_{\gamma}f\\
&&+\frac{1}{n+2}\left((w+w')\Upsilon_{0}+iw\Upsilon^{\gamma}{}_{\gamma}-iw\Upsilon^{\bar{\gamma}}{}_{\bar{\gamma}}+i(w'-w)\Upsilon^{\gamma}\Upsilon_{\gamma}\right)f
\end{eqnarray*}
for $f\in\mathcal{E}(w,w')$ and
\begin{eqnarray*}
\hat{\nabla}_{\alpha}\tau_{\beta}&=&\nabla_{\alpha}\tau_{\beta}+(w-1)\Upsilon_{\alpha}\tau_{\beta}-\Upsilon_{\beta}\tau_{\alpha}\\
\hat{\nabla}_{\bar{\alpha}}\tau_{\beta}&=&\nabla_{\bar{\alpha}}\tau_{\beta}+w'\Upsilon_{\bar{\alpha}}\tau_{\beta}+h_{\beta\bar{\alpha}}\Upsilon^{\gamma}\tau_{\gamma}\\
\hat{\nabla}_{0}\tau_{\beta}&=&\nabla_{0}\tau_{\beta}+i\Upsilon^{\bar{\gamma}}\nabla_{\bar{\gamma}}\tau_{\beta}-i\Upsilon^{\gamma}\nabla_{\gamma}\tau_{\beta}-i(\Upsilon^{\gamma}{}_{\beta}-\Upsilon^{\gamma}\Upsilon_{\beta})\tau_{\gamma}\\
&&+\frac{1}{n+2}\left((w+w')\Upsilon_{0}+iw\Upsilon^{\gamma}{}_{\gamma}-iw\Upsilon^{\bar{\gamma}}{}_{\bar{\gamma}}+i(w'-w)\Upsilon^{\gamma}\Upsilon_{\gamma}\right)\tau_{\beta}
\end{eqnarray*}
for $\tau_{\alpha}\in\mathcal{E}_{\alpha}(w,w')$. We have used the abbreviation $\Upsilon_{\alpha}=\nabla_{\alpha}\Upsilon$, $\Upsilon_{\bar{\alpha}}=\nabla_{\bar{\alpha}}\Upsilon$ and so forth to denote the components of the derivative of $\Upsilon$. Using the Leibniz rule (and complex conjugation) one can easily determine the transformation law on an arbitrary tensor bundle. 
\end{proposition}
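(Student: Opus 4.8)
The plan is to derive these formulae by tracking how the Tanaka--Webster connection forms change under the rescaling $\hat\theta=\exp(\Upsilon)\theta$, following the computation in~\cite{gg} (originally due to~\cite{lee} and~\cite{t1}). Recall that, for a fixed pseudohermitian structure, the connection is the \emph{unique} linear connection preserving the splitting $\C\otimes T\mathcal{M}=T^{1,0}\oplus T^{0,1}\oplus\mathrm{span}\,T$, annihilating both $\theta$ and the Levi form $h_{\alpha\bar\beta}$, and whose torsion is normalised so that its only surviving component is the pseudohermitian torsion $A_{\alpha\beta}$. Relative to an admissible coframe $\{\theta,\theta^{\alpha},\theta^{\bar\alpha}\}$ with dual frame $\{T,Z_{\alpha},Z_{\bar\alpha}\}$ these properties are encoded in the structure equations $d\theta=ih_{\alpha\bar\beta}\theta^{\alpha}\wedge\theta^{\bar\beta}$ and $d\theta^{\beta}=\theta^{\alpha}\wedge\omega_{\alpha}{}^{\beta}+\theta\wedge A^{\beta}{}_{\bar\gamma}\theta^{\bar\gamma}$, together with the compatibility condition $\omega_{\alpha\bar\beta}+\omega_{\bar\beta\alpha}=0$ forced by $\nabla h=0$. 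Because these equations pin the connection down uniquely, it suffices to produce the transformed data attached to $\hat\theta$ and then solve for $\hat\omega_{\alpha}{}^{\beta}$.

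First I would record how the adapted coframe and the Reeb field transform. The rescaling $\hat\theta=e^{\Upsilon}\theta$ forces a coframe of the form $\hat\theta^{\alpha}=e^{\Upsilon/2}(\theta^{\alpha}+i\Upsilon^{\alpha}\theta)$, the correction term $i\Upsilon^{\alpha}\theta$ being dictated by the requirement that $\hat h_{\alpha\bar\beta}=h_{\alpha\bar\beta}$ once $d\hat\theta$ is expanded. The genuinely delicate ingredient is the transformation of the Reeb field, since $\hat T$ is characterised by $\hat\theta(\hat T)=1$ and $\iota_{\hat T}d\hat\theta=0$; solving these two conditions yields an expression of the schematic shape $\hat T=e^{-\Upsilon}\bigl(T+i\Upsilon^{\gamma}Z_{\gamma}-i\Upsilon^{\bar\gamma}Z_{\bar\gamma}+(\text{function of }\Upsilon)\,T\bigr)$, and it is precisely this nonlinear step that will generate the first-derivative terms $i\Upsilon^{\bar\gamma}\nabla_{\bar\gamma}f-i\Upsilon^{\gamma}\nabla_{\gamma}f$ and the quadratic contribution $i(w'-w)\Upsilon^{\gamma}\Upsilon_{\gamma}$ appearing in $\hat\nabla_{0}$.

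Next I would substitute the transformed coframe into the structure equations, expand $d\hat\theta^{\beta}$ using the old equations, and collect terms; uniqueness of the connection then identifies $\hat\omega_{\alpha}{}^{\beta}$ as $\omega_{\alpha}{}^{\beta}$ plus explicit corrections built from $\Upsilon_{\alpha}$, $\Upsilon_{\bar\alpha}$ and $h_{\alpha\bar\beta}$. Reading off the holomorphic and anti-holomorphic covariant derivatives from $\hat\omega_{\alpha}{}^{\beta}$ immediately gives the stated formulae for $\hat\nabla_{\alpha}\tau_{\beta}$ and $\hat\nabla_{\bar\alpha}\tau_{\beta}$, including the $-\Upsilon_{\beta}\tau_{\alpha}$ and $h_{\beta\bar\alpha}\Upsilon^{\gamma}\tau_{\gamma}$ terms. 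To obtain the density formulae I would use that $\mathcal{E}(1,0)=\mathcal{K}^{-1/(n+2)}$ with $\mathcal{K}=\Lambda^{n+1}((T^{0,1})^{\perp})$ built from the $\theta^{\alpha}$, so the weight-$(w,w')$ line bundle rescales by $e^{(w\Upsilon+w'\bar\Upsilon)/?}$; differentiating this scaling factor is exactly what produces the $w\Upsilon_{\alpha}f$ and $w'\Upsilon_{\bar\alpha}f$ terms, and the trace of the coframe change $\Upsilon^{\gamma}{}_{\gamma}$ entering the $\frac{1}{n+2}$-normalised $\hat\nabla_{0}$ term.

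I expect the $\hat\nabla_{0}$ transformation to be the main obstacle: assembling the factor $\tfrac{1}{n+2}\bigl((w+w')\Upsilon_{0}+iw\Upsilon^{\gamma}{}_{\gamma}-iw\Upsilon^{\bar\gamma}{}_{\bar\gamma}+i(w'-w)\Upsilon^{\gamma}\Upsilon_{\gamma}\bigr)$ with the correct coefficients requires simultaneously keeping track of the nonlinear Reeb-field change, the trace part of the connection acting on the density weight, and the normalisation of the canonical bundle, and it is easy to misplace constants here. As a consistency check I would verify the formulae against the general change-of-Weyl-connection identity~\eqref{diffconnection} of Lemma~\ref{four}, since a choice of contact form $\theta$ is a choice of Weyl structure for this $|2|$-graded geometry and the $\Upsilon$ appearing there is the one-form $\Upsilon_{a}$ dual to the present $\Upsilon$; specialising~\eqref{diffconnection} to the CR grading recovers the same linear and quadratic $\Upsilon$-terms and fixes the constants independently. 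Finally, the extension to an arbitrary weighted tensor bundle $\mathcal{E}^{\Phi}(w,w')$ follows by the Leibniz rule and complex conjugation, exactly as asserted in the last line of the proposition.
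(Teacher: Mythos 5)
Your strategy is sound, but it is worth knowing that the paper does not actually derive these formulae at all: its entire proof is a citation to~\cite{gg}, Proposition 2.3 (together with the remark that $\nabla_{0}$ counts as a weighted second-order operator for invariance purposes). What you outline is, in effect, the derivation that sits behind that citation, i.e.\ the computation of~\cite{lee} and~\cite{t1}: use the uniqueness of the Tanaka--Webster connection subject to the structure equations, transform the admissible coframe by $\hat\theta^{\alpha}=e^{\Upsilon/2}(\theta^{\alpha}+i\Upsilon^{\alpha}\theta)$ (your check that this preserves $h_{\alpha\bar\beta}$ is correct in Lee's convention), solve the two defining conditions for the new Reeb field $\hat T$, and read off $\hat\omega_{\alpha}{}^{\beta}$; the density formulae then come from the induced connection on $\mathcal{K}^{-1/(n+2)}$. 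This buys something the paper does not provide: a self-contained argument in which each term of the proposition has a visible origin (the nonlinear Reeb change producing $i\Upsilon^{\bar\gamma}\nabla_{\bar\gamma}f-i\Upsilon^{\gamma}\nabla_{\gamma}f$ and $i(w'-w)\Upsilon^{\gamma}\Upsilon_{\gamma}$, the trace of the connection change producing the $\Upsilon^{\gamma}{}_{\gamma}$ terms), plus an independent consistency check against the Weyl-structure formula~(\ref{diffconnection}), which the paper never invokes here. The price is that the two places you leave open --- the normalisation of the density scaling (your literal \lq\lq$?$\rq\rq; note $\Upsilon$ is real, so $\bar\Upsilon=\Upsilon$ and only the combination $w+w'$ versus $w-w'$ needs care) and the precise coefficients in $\hat\nabla_{0}$ --- are exactly the content that the paper's citation to~\cite{gg} is doing duty for, so to count as a complete proof rather than a plan those constants must actually be assembled; as a blueprint, however, the approach is correct and would reproduce the proposition.
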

\begin{proof}
These explicit formulae can be found in~\cite{gg}, Proposition 2.3. Note that, as far as invariance and obstruction terms are concerned, $\nabla_{0}$ should be treated as a weighted second order operator. 
\end{proof}

\begin{definition}
{\rm There are several quantities related to the torsion and curvature of $\nabla$:
\begin{enumerate}
\item
The {\bf pseudohermitian curvature tensor} $R_{\alpha\bar{\beta}\gamma\bar{\delta}}\in\mathcal{E}_{\alpha\bar{\beta}\gamma\bar{\delta}}(1,1)$,
\item
the {\bf pseudohermitian torsion tensor} $A_{\alpha\beta}\in\mathcal{E}_{(\alpha\beta)}$,
\item
the Webster-Ricci tensor $R_{\alpha\bar{\beta}}=R^{\gamma}{}_{\gamma\alpha\bar{\beta}}\in\mathcal{E}_{\alpha\bar{\beta}}$,
\item
the Webster scalar curvature $R=R_{\alpha}{}^{\alpha}\in\mathcal{E}(-1,-1)$,
\item
$P_{\alpha\bar{\beta}}=\frac{1}{n+2}\left(R_{\alpha\bar{\beta}}-\frac{1}{2(n+1)}Rh_{\alpha\bar{\beta}}\right)\in\mathcal{E}_{\alpha\bar{\beta}}$,
\item
$P=P_{\alpha}{}^{\alpha}\in\mathcal{E}(-1,-1)$,
\item
$T_{\alpha}=\frac{1}{n+2}(\nabla_{\alpha}P-i\nabla^{\beta}A_{\alpha\beta})\in\mathcal{E}_{\alpha}(-1,-1)$ and
\item
$S=-\frac{1}{n}\left(\nabla^{\alpha}T_{\alpha}+\nabla^{\bar{\alpha}}T_{\bar{\alpha}}+P_{\alpha\bar{\beta}}P^{\alpha\bar{\beta}}-A_{\alpha\beta}A^{\alpha\beta}\right)\in\mathcal{E}(-2,-2)$.
\end{enumerate}
The transformation laws for these objects can be found in~\cite{gg}.
}\end{definition}

\subsection{Tractor calculus}

\begin{definition}
{\rm
Let $\mathcal{E}_{A}$ be the standard representation of $\mathfrak{su}(p+1,q+1)$ on $\R^{n+2}$, then the (complexified) representation of $A_{n+1}$ on $\mathbb{C}^{n+2}$ has, as a $\p$-module,
a composition series.
$$\mathcal{E}_{A}=\ooo{1}{0}{0}\;...\;\oo{0}{0}=\xoo{1}{0}{0}\;...\;\oo{0}{0}\;+\;\xoo{-1}{1}{0}\;...\;\ox{0}{0}\;+\;\xoo{0}{0}{0}\;...\;\ox{0}{-1}.$$
In terms of sheaves,
$$\mathcal{E}_{A}=\mathcal{E}(1,0)+\mathcal{E}_{\alpha}(1,0)+\mathcal{E}(0,-1).$$
The elements can be identified with tuples
$$\mathcal{E}_{A}\ni v_{A}=\left(\begin{array}{c}
\sigma\\
v_{\alpha}\\
\rho
\end{array}\right),$$
where $\sigma\in\mathcal{E}(1,0)$, $v_{\alpha}\in\mathcal{E}_{\alpha}(1,0)$ and $\rho\in\mathcal{E}(0,-1)$. Under change of pseudohermitian structure, $\hat{\theta}=\exp(\Upsilon)\theta$, these elements transform as
$$\widehat{\left(\begin{array}{c}
\sigma\\
v_{\alpha}\\
\rho
\end{array}\right)}=\left(\begin{array}{c}
\sigma\\
v_{\alpha}+\Upsilon_{\alpha}\sigma\\
\rho-\Upsilon^{\beta}v_{\beta}-\frac{1}{2}(\Upsilon^{\beta}\Upsilon_{\beta}+i\Upsilon_{0})\sigma
\end{array}\right).$$
Let us denote the dual bundle by $\mathcal{E}^{A}$ and the conjugate bundle by $\mathcal{E}_{\bar{A}}$. There is an invariant Hermitian metric $h_{A\bar{B}}$ on $\mathcal{E}_{A}$ induced by $h_{\alpha\bar{\beta}}$ that we can use to lower (and its inverse to raise) tractor indices in the same way that $h_{\alpha\bar{\beta}}$ is used. 
\par

Via $h_{\bar{A}B}$ we can identify 
$$\mathcal{E}_{\bar{A}}=\mathcal{E}^{B}=\mathcal{E}(0,1)+\mathcal{E}_{\bar{\alpha}}(0,1)\;(=\mathcal{E}^{\beta}(-1,0))+\mathcal{E}(-1,0).$$
We can represent elements in this bundle as 
$$\mathcal{E}_{\bar{A}}\ni w_{\bar{A}}=\left(\begin{array}{c}
\tau\\
w_{\bar{\alpha}}\\
\kappa
\end{array}\right),$$
where $\tau\in\mathcal{E}(0,1)$, $w_{\bar{\alpha}}\in\mathcal{E}_{\bar{\alpha}}(0,1)$ and $\kappa\in\mathcal{E}(-1,0)$. Under change of pseudohermitian structure, $\hat{\theta}=\exp(\Upsilon)\theta$, these elements transform as
$$\widehat{\left(\begin{array}{c}
\tau\\
w_{\bar{\alpha}}\\
\kappa
\end{array}\right)}=\left(\begin{array}{c}
\tau\\
w_{\bar{\alpha}}+\Upsilon_{\bar{\alpha}}\tau\\
\kappa-\Upsilon^{\bar{\beta}}w_{\bar{\beta}}-\frac{1}{2}(\Upsilon^{\bar{\beta}}\Upsilon_{\bar{\beta}}-i\Upsilon_{0})\tau
\end{array}\right).$$
Then the mapping 
\begin{eqnarray*}
\mathcal{E}_{A}\times\mathcal{E}_{\bar{B}}&\rightarrow&\mathcal{E}\\
(v_{A},w_{\bar{B}})&\mapsto& h^{A\bar{B}}v_{A}w_{\bar{B}}=\sigma\kappa+v_{\alpha}w_{\bar{\beta}}h^{\alpha\bar{\beta}}+\rho\tau
\end{eqnarray*}
is invariant.

\par
There is a canonical section $\mathbb{X}^{A}\in\mathcal{E}^{A}(1,0)$ that is (for each choice of pseudohermitian structure) given by 
$$\mathbb{X}^{A}=\left(\begin{array}{c}
0\\
0\\
1
\end{array}\right).$$

 }
\end{definition}

\begin{proposition}
There are two invariant operators defined on arbitrary weighted tractor bundles.
\begin{eqnarray*}
D_{\bar{A}}:\mathcal{E}^{\Phi}(w,w')&\rightarrow &\mathcal{E}_{\bar{A}}^{\Phi}(w,w'-1)\\
D_{\bar{A}}f&=&\left(\begin{array}{c}
w'(n+w+w')f\\
(n+w+w')\nabla_{\bar{\alpha}}f\\
-\left(\nabla^{\bar{\beta}}\nabla_{\bar{\beta}}f-iw'\nabla_{0}f+w'\left(1+\frac{w-w'}{n+2}\right)Pf\right)
\end{array}\right)
\end{eqnarray*}
and
\begin{eqnarray*}
D_{A}:\mathcal{E}^{\Phi}(w,w')&\rightarrow &\mathcal{E}_{A}^{\Phi}(w-1,w)\\
D_{A}f&=&\left(\begin{array}{c}
w(n+w+w')f\\
(n+w+w')\nabla_{\alpha}f\\
-\left(\nabla^{\beta}\nabla_{\beta}f+iw\nabla_{0}f+w\left(1+\frac{w'-w}{n+2}\right)Pf\right)
\end{array}\right),
\end{eqnarray*}
where the connections $\nabla_{\alpha},\nabla_{\bar{\alpha}}$ and $\nabla_{0}$ are the appropriate tractor connections uniquely determined by the formulae on $\mathcal{E}_{A}$ and $\mathcal{E}_{\bar{A}}$ given by \scriptsize
\begin{eqnarray*}
\nabla_{\bar{\beta}}\left(\begin{array}{c}
\sigma\\
v_{\alpha}\\
\rho
\end{array}\right)=
\left(\begin{array}{c}
\nabla_{\bar{\beta}}\sigma\\
\nabla_{\bar{\beta}}v_{\alpha}+h_{\alpha\bar{\beta}}\rho+P_{\alpha\bar{\beta}}\sigma\\
\nabla_{\bar{\beta}}\rho+iA_{\bar{\beta}}{}^{\alpha}v_{\alpha}-T_{\bar{\beta}}\sigma
\end{array}\right)
&,&
\nabla_{\bar{\beta}}\left(\begin{array}{c}
\tau\\
w_{\bar{\alpha}}\\
\kappa
\end{array}\right)=
\left(\begin{array}{c}
\nabla_{\bar{\beta}}\tau-w_{\bar{\beta}}\\
\nabla_{\bar{\beta}}w_{\bar{\alpha}}-iA_{\bar{\alpha}\bar{\beta}}\tau\\
\nabla_{\bar{\beta}}\kappa-P_{\bar{\beta}}{}^{\bar{\alpha}}w_{\bar{\alpha}}+T_{\bar{\beta}}\tau
\end{array}\right),\\
\nabla_{\beta}\left(\begin{array}{c}
\sigma\\
v_{\alpha}\\
\rho
\end{array}\right)=
\left(\begin{array}{c}
\nabla_{\beta}\sigma-\tau_{\beta}\\
\nabla_{\beta}v_{\alpha}+iA_{\alpha\beta}\sigma\\
\nabla_{\beta}\rho-P_{\beta}{}^{\alpha}v_{\alpha}+T_{\beta}\sigma
\end{array}\right)
&,&
\nabla_{\beta}\left(\begin{array}{c}
\tau\\
w_{\bar{\alpha}}\\
\kappa
\end{array}\right)=
\left(\begin{array}{c}
\nabla_{\beta}\tau\\
\nabla_{\beta}w_{\bar{\alpha}}+h_{\bar{\alpha}\beta}\kappa+P_{\bar{\alpha}\beta}\tau\\
\nabla_{\beta}\kappa-iA_{\beta}{}^{\bar{\alpha}}w_{\bar{\alpha}}-T_{\beta}\tau
\end{array}\right)
\end{eqnarray*}
\normalsize and
\begin{eqnarray*}
\nabla_{0}\left(\begin{array}{c}
\sigma\\
\tau_{\alpha}\\
\rho
\end{array}\right)&=&
\left(\begin{array}{c}
\nabla_{0}\sigma+\frac{i}{n+2}P\sigma-i\rho\\
\nabla_{0}\tau_{\alpha}-iP_{\alpha}{}^{\beta}\tau_{\beta}+\frac{i}{n+2}P\tau_{\alpha}+2iT_{\alpha}\sigma\\
\nabla_{0}\rho+\frac{i}{n+2}P\rho+2iT^{\alpha}\tau_{\alpha}+iS\sigma
\end{array}\right),\\
\nabla_{0}\left(\begin{array}{c}
\tau\\
w_{\bar{\alpha}}\\
\kappa
\end{array}\right)&=&
\left(\begin{array}{c}
\nabla_{0}\tau-\frac{i}{n+2}P\tau+i\kappa\\
\nabla_{0}w_{\bar{\alpha}}+iP_{\bar{\alpha}}{}^{\bar{\beta}}w_{\bar{\beta}}-\frac{i}{n+2}Pw_{\bar{\alpha}}-2iT_{\bar{\alpha}}\tau\\
\nabla_{0}\kappa-\frac{i}{n+2}P\kappa-2iT^{\bar{\alpha}}w_{\bar{\alpha}}-iS\tau
\end{array}\right).
\end{eqnarray*}
\end{proposition}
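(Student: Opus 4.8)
The plan is to prove the Proposition in two stages: first that the component formulae displayed for $\nabla_\beta$, $\nabla_{\bar\beta}$ and $\nabla_0$ really do assemble into invariant linear connections on the standard tractor bundle $\mathcal{E}_A$ and its conjugate $\mathcal{E}_{\bar A}$, and second that the operators $D_A$ and $D_{\bar A}$ built from them are invariant differential operators between the stated weighted tractor bundles. Throughout I would fix a change of pseudohermitian structure $\hat\theta=\exp(\Upsilon)\theta$ and verify invariance by direct computation, using the transformation laws of the Tanaka--Webster connection from the preceding proposition together with the transformation laws of the curvature and torsion quantities $P_{\alpha\bar\beta}$, $A_{\alpha\beta}$, $T_\alpha$ and $S$ recorded in \cite{gg}.

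For the first stage I would take a section $v_A=(\sigma,v_\alpha,\rho)$ of $\mathcal{E}_A$ and compute $\widehat{\nabla_\beta v_A}$ using the hatted Tanaka--Webster connection and the hatted quantities $\hat A_{\alpha\beta}$, $\hat P_{\alpha\bar\beta}$, $\hat T_\alpha$, comparing it slot by slot with the transform of $\nabla_\beta v_A$ dictated by the transformation rule for $v_A$. The three component formulae for $\nabla_\beta$, $\nabla_{\bar\beta}$ and $\nabla_0$ have been arranged precisely so that the inhomogeneous $\Upsilon$-terms cancel; the role of the explicit curvature corrections in each slot is exactly to absorb the derivatives of $\Upsilon$ produced by differentiating the transformation rule. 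The same check must be run on $\mathcal{E}_{\bar A}$, and compatibility with the Hermitian tractor metric $h_{A\bar B}$ and the canonical section $\mathbb{X}^A$ should be verified so that raising and lowering of tractor indices remains consistent. Alternatively, one may shorten this stage by invoking the general fact (Remark~\ref{tractorconnection}) that the Cartan connection induces a canonical invariant tractor connection on any bundle associated to a representation of $\g$, and then identifying the displayed formulae as the expression of that connection in the scale determined by $\theta$.

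For the second stage I would show that $D_A$ and $D_{\bar A}$ are invariant. Each is the composition of the invariant tractor connection just established with a fixed algebraic (zeroth order) projection, weighted by the universal factor $n+w+w'$, so it suffices to check that the particular linear combination occurring in each slot is unchanged under $\hat\theta=\exp(\Upsilon)\theta$. The top slot is manifestly invariant since it only records the weight, and invariance of the middle slot is immediate from $\hat\nabla_\alpha f=\nabla_\alpha f+w\Upsilon_\alpha f$ once combined with the transformation of the top slot through the tractor connection. This mirrors the pattern already verified for projective geometry and, for conformal geometry, in Definition~\ref{tractorDconformal}.

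The hard part will be the bottom slot, namely showing that $\nabla^\beta\nabla_\beta f+iw\nabla_0 f+w\bigl(1+\tfrac{w'-w}{n+2}\bigr)Pf$ is invariant (and the conjugate expression for $D_{\bar A}$). Here three contributions must be balanced against one another: the second-order transformation of $\nabla^\beta\nabla_\beta f$, the transformation of $\nabla_0 f$ --- which itself carries the awkward inhomogeneous terms $i\Upsilon^{\bar\gamma}\nabla_{\bar\gamma}f-i\Upsilon^\gamma\nabla_\gamma f$ together with the algebraic piece involving $\Upsilon_0$, $\Upsilon^\gamma{}_\gamma$ and $\Upsilon^\gamma\Upsilon_\gamma$ --- and the transformation of $P$. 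The subtlety, flagged already in the statement, is that $\nabla_0$ must be counted as a weighted second-order operator, so the required cancellation is a statement about matching terms of a fixed weighted order rather than a naive first-order computation; keeping the $\Upsilon$-bookkeeping organised by weighted order is exactly where the care lies. Once this single scalar identity is established, invariance of $D_A$ follows, and $D_{\bar A}$ is then obtained by complex conjugation, completing the proof.
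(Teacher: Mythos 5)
Your overall strategy is sound, but you should know that the paper does not carry out this verification at all: its entire proof is the single sentence ``These operators are defined and the invariance is verified in~\cite{gg}.'' So your proposal takes a genuinely different route --- direct verification from the transformation laws --- and that route is essentially the one carried out in the cited reference of Gover and Graham. Your two-stage structure (first invariance of the tractor connection expressed in a scale, then invariance of $D_{A}$ and $D_{\bar{A}}$) is the right one, and you correctly isolate where the real content lies: the bottom slot, where the second-order transformation of $\nabla^{\beta}\nabla_{\beta}f$, the inhomogeneous terms $i\Upsilon^{\bar{\gamma}}\nabla_{\bar{\gamma}}f-i\Upsilon^{\gamma}\nabla_{\gamma}f$ and the algebraic $\Upsilon_{0}$, $\Upsilon^{\gamma}{}_{\gamma}$, $\Upsilon^{\gamma}\Upsilon_{\gamma}$ pieces of $\hat{\nabla}_{0}f$, and the transformation of $P$ must all conspire, with the bookkeeping organised by weighted order since $\nabla_{0}$ counts as weighted order two. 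Two caveats, though. First, your language about the slots is loose: no individual slot is invariant, and what must actually be shown is that the triple $D_{A}f$ transforms by the co-tractor law, i.e.~the middle slot picks up $\Upsilon_{\alpha}$ times the top slot and the bottom slot picks up $-\Upsilon^{\beta}$ times the middle slot minus $\frac{1}{2}(\Upsilon^{\beta}\Upsilon_{\beta}+i\Upsilon_{0})$ times the top slot; relatedly, $D_{A}$ is not, as you write, an invariant connection composed with a zeroth-order projection --- it is second order, a jet-type splitting operator. Second, and more importantly, your argument stops exactly where the work begins: the cancellations in stage one (``arranged precisely so that the $\Upsilon$-terms cancel'') and the scalar identity in stage two are asserted rather than computed, so as a self-contained argument this is a correct plan rather than a proof. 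Since the paper itself delegates precisely these computations to~\cite{gg}, your sketch is no less complete than the paper's own proof; but to claim an independent proof you would have to carry the $\Upsilon$-bookkeeping through to the end.
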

\begin{proof}
These operators are defined and the invariance is verified in~\cite{gg}.
\end{proof}



\subsection{Special splittings}

\begin{proposition}\label{proptwenty}
If 
$$w'\not\in\{-(n+s)\}_{s=0,...,k-1},$$
then there exists a unique lift for every element  $v_{i_{1}...i_{k}}\in\mathcal{E}_{(i_{1}...i_{k})}(w,w')$ to an element $v_{I_{1}...I_{k}}\in\mathcal{E}_{(I_{1}...I_{k})}(w-k,w')$, so that
$$D^{A}v_{AI_{2}...I_{k}}=0$$ and
$$\mathbb{X}^{A} v_{AI_{2}...I_{k}}=0.$$
Each excluded weight $w'$ corresponds to the existence of an invariant linear differential operator.
\end{proposition}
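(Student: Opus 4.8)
The plan is to follow verbatim the strategy used for the projective and conformal lifts in Proposition~\ref{propfourteen} and Proposition~\ref{propsixteen}, adjusting the bookkeeping to the barred CR tractor calculus. First I would regard a section of $\mathcal{E}_{(I_1\dots I_k)}(w-k,w')$ as a totally symmetric section of $\mathcal{E}_{I_1\dots I_k}(w-k,w')$, and observe that the side condition $\mathbb{X}^{A}v_{AI_2\dots I_k}=0$ kills the bottom slot of each tractor index, leaving exactly $k+1$ independent components, one symmetric tensor $v_{(\alpha)}\in\mathcal{E}_{(i_1\dots i_\alpha)}(\dots)$ for each $\alpha=0,\dots,k$, with top component $v_{(k)}=v_{i_1\dots i_k}$. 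Writing $v_{I_1\dots I_k}$ in this slot form, I would compute the tractor connection componentwise from the explicit formulae for $\nabla_{\alpha},\nabla_{\bar\alpha},\nabla_0$ on $\mathcal{E}_A$ given above, extended to the $k$-fold symmetric power by the Leibniz rule.

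Next I would form $D^{A}=h^{A\bar B}D_{\bar B}$ and expand $D^{A}v_{AI_2\dots I_k}$ slot by slot, using $\mathbb{X}^{A}D_{\bar B}v_{\dots}=0$ to see that the result again has a vanishing extreme slot, so that $D^{A}v_{AI_2\dots I_k}$ has $k$ independent components. Just as in the two earlier cases, the equation $D^{A}v_{AI_2\dots I_k}=0$ should then be equivalent to a triangular system of $k$ recurrences, schematically
\[
\nabla^{\bar\beta}v_{(\alpha)\,\bar\beta i_1\dots i_{\alpha-1}}+c_\alpha(w')\,v_{(\alpha-1)\,i_1\dots i_{\alpha-1}}+(\text{torsion terms})=0,
\]
where each $c_\alpha(w')$ is affine in $w'$ with leading factor of the form $(n+\,\cdot\,+w')$ inherited from the coefficient $(n+w+w')$ in the definition of $D_{\bar A}$; its zeros are precisely the values $w'=-(n+s)$, $s=0,\dots,k-1$. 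Provided $w'$ avoids these, the system is solved uniquely from the top slot $v_{(k)}=v_{i_1\dots i_k}$ downward, establishing uniqueness of the lift.

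For the excluded weights, at $w'=-(n+s)$ the recurrence degenerates at the $s$-th step: the combination that can no longer be inverted for the next slot is itself invariant, and up to scale equals the iterated symmetrised covariant derivative $\nabla^{(\bar\beta_1}\cdots\nabla^{\bar\beta_{k-s}}v_{\bar\beta_1\dots\bar\beta_{k-s})\dots}+\text{C.C.T.}$, giving the asserted invariant linear differential operator. More cleanly, I would read this off from the general theory: such a degeneracy is exactly the coincidence of central characters forced by Theorem~\ref{theoremthirteen}, so by Corollary~\ref{coreight} it corresponds to an invariant linear operator emanating from the bundle.

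The existence half admits two routes, and the cleaner one is to bypass the hand computation of transformation laws entirely by realising the lift as the composite of the splitting operator of Corollary~\ref{coreight} with the obvious projection, invariant precisely when the offending central characters are absent. The main obstacle I anticipate is purely computational rather than conceptual: unlike the conformal connection, the Tanaka--Webster tractor connection carries the extra tensors $A_{\alpha\beta}$, $T_\alpha$, $P_{\alpha\bar\beta}$ and $S$, so the off-diagonal entries produced by $D^{A}$ are considerably bushier, and checking that the subleading torsion contributions reassemble into genuine curvature correction terms (and do not obstruct invariance) is the step needing the most care. Because the $\mathbb{X}^{A}$-condition annihilates the relevant slots and the leading coefficients $c_\alpha(w')$ govern the recursion, however, these torsion terms enter only the lower-order part of each equation and do not disturb the location of the excluded weights.
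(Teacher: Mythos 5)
Your proposal is correct and follows essentially the same route as the paper: the $\mathbb{X}^{A}$-condition reduces the lift to $k+1$ symmetric-tensor slots, $D^{A}v_{AI_2\dots I_k}=0$ unwinds into a triangular system of $k$ recurrences whose coefficients $(n+w'+s)$ vanish exactly at the excluded weights (with the torsion terms $iA^{\alpha\beta}v_{\alpha\beta\dots}$ entering only through already-determined higher slots), uniqueness follows by solving downward from $v_{i_1\dots i_k}$, and existence is delegated to the transformation laws or the general splitting theory of the previous chapter. The only cosmetic differences are some bar/index sloppiness in your schematic recurrence and your slightly more explicit appeal to central characters (Theorem~\ref{theoremthirteen}, Corollary~\ref{coreight}) for the excluded weights, which the paper relegates to the remark following the proposition.
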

\begin{proof}
Let $v_{I_{1}...I_{k}}\in\mathcal{E}_{(I_{1}...I_{k})}(w-k,w')$ be an element with $\mathbb{X}^{A}v_{AI_{2}...I_{k}}=0$, looked at as an element in  $\mathcal{E}_{I_{1}...I_{k}}(w-k,w')$ with special symmetries. To be more precise, there are $k+1$ independent components realized as totally symmetric tensors $v_{i_{1}...i_{s}}$ with $s$ indices, $s=0,...,k$, and where the tensor with $k$ indices is exactly $v_{i_{1}...i_{k}}$.
\par
We can write
$$v_{I_{1}...I_{k}}=\left(\begin{array}{c}
0\\
v_{i_{1}I_{2}...I_{k}}\\
v_{I_{2}...I_{k}}
\end{array}\right).$$
Then we have 
$$D^{A}v_{AI_{2}...I_{k}}=(n+w+w'-k+1)\left(\nabla^{\beta}v_{\beta I_{2}...I_{k}}+(n+w')v_{I_{2}...I_{k}}\right).$$
Using
$$\nabla^{\beta}v_{\beta I_{2}...I_{k}}=\left(\begin{array}{c}
0\\
\nabla^{\beta}v_{\beta i_{2}I_{3}...I_{k}}+v_{i_{2}I_{3}...I_{k}}\\ 
\nabla^{\beta}v_{\beta I_{3}...I_{k}}+iA^{\alpha\beta}v_{\alpha\beta I_{3}...I_{k}}
\end{array}\right)$$
and the symmetries of $v_{I_{1}...I_{k}}$ it is easy to show that $D^{A}v_{AI_{2}...I_{k}}=0$ is equivalent to the two equations
\begin{eqnarray*}
\nabla^{\beta}v_{\beta i_{2}I_{3}...I_{k}}+(n+w'+1)v_{i_{2}I_{3}...I_{k}}&=&0\\
\nabla^{\beta}v_{\beta I_{3}...I_{k}}+(n+w')v_{I_{3}...I_{k}}+iA^{\alpha\beta}v_{\alpha\beta I_{3}...I_{k}}&=&0.
\end{eqnarray*}
Iterating this process (in the same way as for projective geometry and conformal geometry) yields the following $k$ equations:
\begin{equation}\label{splittingcr}
\nabla^{\beta}v_{\beta i_{2}...i_{s}}+(n+w'+s)v_{i_{1}...i_{s}}+(k-1-s)iA^{\alpha\beta}v_{\alpha\beta i_{1}...i_{s}},\;s=k-1,...,0.
\end{equation}
This proves uniqueness. Existence can either be proved by using the explicit description of the transformation laws or by the general theory in the last chapter.
\par
If $w'+n+k-l=0$, $l=1,...,k$, then there exists an invariant differential operator
\begin{eqnarray*}
\xoo{w-2k}{k}{0}\;...\;\ox{0}{w'}&\rightarrow &\xoo{w-2k+l}{k-l}{0}\;...\;\ox{0}{-l+w'}\\
v_{i_{1}...i_{k}}&\mapsto& \nabla^{i_{1}}...\nabla^{i_{l}}v_{i_{1}...i_{k}}+C.C.T.\;.
\end{eqnarray*}
\end{proof}

\subsubsection{Remark}
$$\xoo{w-2k}{k}{0}\;...\;\ox{0}{w'}$$
has the same central character as
$$\xoo{w-2k+l}{k-l}{0}\;...\;\ox{0}{-l+w'}$$
if and only if $w'+n+k-l=0$, $l=1,...,k$. 

\begin{proposition}\label{proptwentyone}
If 
$$w\not\in\{-(n+s)\}_{s=0,...,k-1},$$
then there exists a unique lift for every element  $v_{\bar{i}_{1}...\bar{i}_{k}}\in\mathcal{E}_{(\bar{i}_{1}...\bar{i}_{k})}(w,w')$ to an element $v_{\bar{I}_{1}...\bar{I}_{k}}\in\mathcal{E}_{(\bar{I}_{1}...\bar{I}_{k})}(w,w'-k)$, so that
$$D^{\bar{A}}v_{\bar{A}\bar{I}_{2}...\bar{I}_{k}}=0$$ and
$$\mathbb{X}^{\bar{A}} v_{\bar{A}\bar{I}_{2}...\bar{I}_{k}}=0.$$
Each excluded weight $w$ corresponds to the existence of an invariant linear differential operator.
\end{proposition}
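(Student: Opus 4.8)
The plan is to obtain Proposition~\ref{proptwentyone} as the exact complex-conjugate mirror of Proposition~\ref{proptwenty}, which has just been proved. First I would set up the conjugation dictionary supplied by the CR tractor calculus: complex conjugation interchanges $\mathcal{E}_A$ with $\mathcal{E}_{\bar{A}}$, the canonical tractor $\mathbb{X}^A$ with $\mathbb{X}^{\bar{A}}$, the operators $D_A$ with $D_{\bar{A}}$, and the connection components $\nabla_\alpha$ with $\nabla_{\bar\alpha}$ (leaving $\nabla_0$ fixed up to the sign dictated by the transformation laws). At the level of torsion and curvature data it sends $A_{\alpha\beta}\mapsto A_{\bar\alpha\bar\beta}$, $T_\alpha\mapsto T_{\bar\alpha}$ and, crucially, interchanges the two density weights $w\leftrightarrow w'$. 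Inspecting the explicit formula for $D_{\bar A}$ against that for $D_A$, and the two columns of the tractor connection $\nabla_{\bar\beta}$ against $\nabla_\beta$, confirms that each barred formula is precisely the conjugate of the corresponding unbarred one, so the whole apparatus is symmetric under this involution.

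Next I would imitate the computation in Proposition~\ref{proptwenty}. Regarding $v_{\bar{I}_1\ldots\bar{I}_k}\in\mathcal{E}_{(\bar{I}_1\ldots\bar{I}_k)}(w,w'-k)$ with $\mathbb{X}^{\bar A}v_{\bar A\bar{I}_2\ldots\bar{I}_k}=0$ as a tensor in $\mathcal{E}_{\bar{I}_1\ldots\bar{I}_k}(w,w'-k)$ carrying $k+1$ independent totally symmetric components $v_{\bar{i}_1\ldots\bar{i}_s}$ ($s=0,\ldots,k$), the conjugate of the identity for $D^A v_{A\ldots}$ reads
$$D^{\bar{A}}v_{\bar{A}\bar{I}_2\ldots\bar{I}_k}=(n+w+w'-k+1)\left(\nabla^{\bar\beta}v_{\bar\beta\bar{I}_2\ldots\bar{I}_k}+(n+w)v_{\bar{I}_2\ldots\bar{I}_k}\right),$$
in which the weight $w$ now plays the role that $w'$ played before. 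Expanding $\nabla^{\bar\beta}v_{\bar\beta\bar{I}_2\ldots\bar{I}_k}$ with the conjugate connection and using the symmetries of $v_{\bar{I}_1\ldots\bar{I}_k}$, the equation $D^{\bar A}v_{\bar A\bar{I}_2\ldots\bar{I}_k}=0$ splits, after the same iteration as before, into the $k$ conditions
$$\nabla^{\bar\beta}v_{\bar\beta\bar{i}_2\ldots\bar{i}_s}+(n+w+s)v_{\bar{i}_1\ldots\bar{i}_s}+(k-1-s)iA^{\bar\alpha\bar\beta}v_{\bar\alpha\bar\beta\bar{i}_1\ldots\bar{i}_s}=0,\quad s=k-1,\ldots,0,$$
which is exactly the conjugate of~(\ref{splittingcr}).

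With these equations in hand the remainder is immediate. Provided $w+n+s\neq0$ for $s=0,\ldots,k-1$, i.e.~$w\not\in\{-(n+s)\}_{s=0,\ldots,k-1}$, the recursion determines each lower component $v_{\bar{i}_1\ldots\bar{i}_s}$ uniquely from $v_{\bar{i}_1\ldots\bar{i}_k}=v_{(\bar{i}_1\ldots\bar{i}_k)}$, giving uniqueness; existence follows either by verifying invariance directly against the conjugate transformation laws of the previous subsection or, more economically, by appealing to the general splitting machinery of Chapter~\ref{higherorderone} exactly as in Proposition~\ref{proptwenty}. Finally, when $w+n+k-l=0$ for some $l\in\{1,\ldots,k\}$ the recursion obstructs at the $l$-th step, and the first non-vanishing slot of $D^{\bar A}v_{\bar A\bar{I}_2\ldots\bar{I}_k}$ becomes the conjugate operator $v_{\bar{i}_1\ldots\bar{i}_k}\mapsto\nabla^{\bar{i}_1}\cdots\nabla^{\bar{i}_l}v_{\bar{i}_1\ldots\bar{i}_k}+\mathrm{C.C.T.}$, which is therefore invariant; this identifies each excluded weight with an invariant linear differential operator.

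I expect no genuine obstacle here, since the statement is forced by the conjugation symmetry of the CR tractor bundles. The only point demanding care is the bookkeeping of density weights: one must check that conjugation sends the relevant weight factor $(n+w')$ of Proposition~\ref{proptwenty} to $(n+w)$ rather than leaving it unchanged, and correspondingly that the lift now drops the second weight ($w'\mapsto w'-k$) while preserving the first. Confirming this against the explicit $D_{\bar A}$ formula—whose scalar coefficient is $w'(n+w+w')$ with the conjugate derivative $\nabla_{\bar\alpha}$, and whose raised-index contraction produces the factor $(n+w)$ above—is the one step I would verify explicitly before declaring the conjugation complete.
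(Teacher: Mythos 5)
Your strategy is exactly the paper's: Proposition~\ref{proptwentyone} is proved by running the argument of Proposition~\ref{proptwenty} through the conjugation dictionary, and your weight bookkeeping ($w$ now playing the role that $w'$ played, the lift lowering the second weight $w'\mapsto w'-k$) is correct. There is, however, one concrete error in your displayed recursion: conjugation is \emph{anti}-linear, so the torsion term must change sign. The conjugate of the term $+(k-1-s)iA^{\alpha\beta}v_{\alpha\beta i_{1}...i_{s}}$ in~(\ref{splittingcr}) is $-(k-1-s)iA^{\bar{\alpha}\bar{\beta}}v_{\bar{\alpha}\bar{\beta}\bar{i}_{1}...\bar{i}_{s}}$, not $+$ as you wrote. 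Equivalently, the sign is forced by the tractor connection formulas you yourself cite: $\nabla_{\bar{\beta}}$ acting on $\mathcal{E}_{A}$ produces $+iA_{\bar{\beta}}{}^{\alpha}v_{\alpha}$ in the bottom slot, whereas $\nabla_{\beta}$ acting on $\mathcal{E}_{\bar{A}}$ produces $-iA_{\beta}{}^{\bar{\alpha}}w_{\bar{\alpha}}$, and it is the latter that enters the expansion of $\nabla^{\bar{\beta}}v_{\bar{\beta}\bar{I}_{2}...\bar{I}_{k}}$. The correct system is
$$\nabla^{\bar{\beta}}v_{\bar{\beta}\bar{i}_{2}...\bar{i}_{s}}+(n+w+s)v_{\bar{i}_{1}...\bar{i}_{s}}-(k-1-s)iA^{\bar{\alpha}\bar{\beta}}v_{\bar{\alpha}\bar{\beta}\bar{i}_{1}...\bar{i}_{s}}=0,\quad s=k-1,...,0,$$
which is what the paper records; the same subtlety is flagged elsewhere in the paper in the remark that $\Gamma=-iA^{\alpha\beta}$ for operators built from $\nabla^{\alpha}$ but $+iA_{\alpha\beta}$ for their conjugates.

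This slip does not invalidate your existence and uniqueness argument: the torsion term involves components carrying two more indices, which are determined at an earlier stage of the downward recursion in $s$, so solvability still hinges only on the factors $n+w+s$, and the list of excluded weights and their interpretation via invariant operators is unaffected. But any explicit formula extracted from your system — the lift itself, or the curvature correction terms in the operators $v_{\bar{i}_{1}...\bar{i}_{k}}\mapsto\nabla^{\bar{i}_{1}}\cdots\nabla^{\bar{i}_{l}}v_{\bar{i}_{1}...\bar{i}_{k}}+\mathrm{C.C.T.}$ at the excluded weights — would come out with wrong signs, so the sign must be corrected before the conjugation argument can be declared complete.
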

\begin{proof}
This is proved exactly as Proposition~\ref{proptwenty}. The equations to solve are
$$\nabla^{\bar{\beta}}v_{\bar{\beta} \bar{i}_{2}...\bar{i}_{s}}+(n+w+s)v_{\bar{i}_{1}...\bar{i}_{s}}-(k-1-s)iA^{\bar{\alpha}\bar{\beta}}v_{\bar{\alpha}\bar{\beta} \bar{i}_{1}...\bar{i}_{s}},\;s=k-1,...,0$$
and excluded weights correspond to invariant operators
\begin{eqnarray*}
\xoo{w}{0}{0}\;...\;\oox{0}{k}{w'-2k}&\rightarrow &\xoo{w-l}{0}{0}\;...\;\oox{0}{k-l}{w'-2k+l}\\
v_{\bar{i}_{1}...\bar{i}_{k}}&\mapsto& \nabla^{\bar{i}_{1}}...\nabla^{\bar{i}_{l}}v_{\bar{i}_{1}...\bar{i}_{k}}+C.C.T.\;.
\end{eqnarray*}
\end{proof}
 
\begin{corollary}
For $w'=-(n+s)$, the corresponding invariant linear differential operator in  Proposition~\ref{proptwenty} can be written down explicitly by using the equations~(\ref{splittingcr}) in the proof of that proposition.
An analogous statement holds for $w=-(n+s)$ in Proposition~\ref{proptwentyone}.
\end{corollary}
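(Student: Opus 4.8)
The plan is to mimic exactly the arguments used for the corresponding corollaries in projective and conformal geometry, reading the operator directly off the recursion~(\ref{splittingcr}) that characterises the splitting in Proposition~\ref{proptwenty}. First I would fix an excluded weight, writing $w'=-(n+s_{0})$ for some $s_{0}\in\{0,\dots,k-1\}$, so that the scalar coefficient occurring at level $s$ of~(\ref{splittingcr}), namely $n+w'+s$, equals $s-s_{0}$. For the top slots, $s=k-1,k-2,\dots,s_{0}+1$, this coefficient is non-zero, so each line of~(\ref{splittingcr}) can be solved uniquely for the $s$-index component $v_{i_{1}\dots i_{s}}$ of the lift in terms of the divergence of the already-constructed $(s+1)$-index component together with the torsion contraction $iA^{\alpha\beta}v_{\alpha\beta i_{1}\dots i_{s}}$ of the $(s+2)$-index component (whose coefficient $k-1-s$ vanishes at the very top step). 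Starting from $v_{i_{1}\dots i_{k}}$ this determines all components down to slot $s_{0}+1$ as explicit differential expressions in $v_{i_{1}\dots i_{k}}$, its Tanaka--Webster derivatives, and the curvature/torsion quantities $P_{\alpha\bar\beta}$, $A_{\alpha\beta}$, $T_{\alpha}$, $S$.

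The key observation is that at $s=s_{0}$ the coefficient $s-s_{0}$ vanishes, so the corresponding line of~(\ref{splittingcr}) is no longer a defining equation for a new component but a genuine constraint,
\begin{equation*}
\nabla^{\beta}v_{\beta i_{1}\dots i_{s_{0}}}+(k-1-s_{0})\,iA^{\alpha\beta}v_{\alpha\beta i_{1}\dots i_{s_{0}}}=0.
\end{equation*}
Substituting the expressions obtained in the previous step turns the left-hand side into a differential operator applied to $v_{i_{1}\dots i_{k}}$ alone; schematically it takes the form $\nabla^{i_{s_{0}+1}}\cdots\nabla^{i_{k}}v_{i_{1}\dots i_{k}}+\mathrm{C.C.T.}$, a weighted operator of order $l=k-s_{0}$ landing in the $s_{0}$-index bundle displayed in Proposition~\ref{proptwenty}. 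Because this expression is precisely the first non-vanishing component of $D^{A}v_{AI_{2}\dots I_{k}}$ --- all higher components having been forced to zero by the recursion --- it is automatically an invariant expression, which simultaneously exhibits the explicit form of the operator and its invariance. The curvature correction terms can be organised systematically through the combinatorial description of the Ricci-corrected derivatives from Chapter~\ref{higherorderone}.

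The barred statement for $w=-(n+s)$ follows verbatim by replacing $D^{A},\mathbb{X}^{A},\nabla^{\beta},A^{\alpha\beta}$ with their conjugates $D^{\bar A},\mathbb{X}^{\bar A},\nabla^{\bar\beta},A^{\bar\alpha\bar\beta}$ and invoking Proposition~\ref{proptwentyone} in place of Proposition~\ref{proptwenty}; the sign change in the torsion term is already recorded there. The only point requiring genuine care, rather than routine bookkeeping, is to check that when the solved lower-slot components are substituted into the level-$s_{0}$ constraint, all the auxiliary curvature and torsion contributions assemble into a single operator with no surviving $\Upsilon$-dependence. This, however, is guaranteed a priori by the invariance of $D^{A}$ together with the fact that the lift $v_{I_{1}\dots I_{k}}$ is itself invariantly defined by Proposition~\ref{proptwenty}, so the explicit computation only serves to produce the formula and not to establish invariance.
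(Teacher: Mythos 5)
Your proposal is correct and takes essentially the same approach as the paper, whose own proof simply defers to the projective and conformal corollaries: solve the recursion~(\ref{splittingcr}) downward while the coefficient $n+w'+s$ is non-zero, and recognise the line at the critical slot $s_{0}=k-l$ (where this coefficient vanishes) as the explicit operator, invariant because it is the first non-vanishing component of $D^{A}v_{AI_{2}\dots I_{k}}$. The only blemish is your closing appeal to the lift being ``invariantly defined by Proposition~\ref{proptwenty}'', which is not literally available at an excluded weight since that proposition's hypothesis fails there; your earlier justification via the forced vanishing of all higher components is the correct one (and the paper's), so nothing essential is lost.
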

\begin{proof}
This is done exactly as in the projective and conformal case.
\end{proof}

\subsubsection{Example}
Let us take $s=k-2$, then 
$$\nabla^{\alpha}\nabla^{\beta}v_{\alpha\beta i_{1}...i_{k-2}}-iA^{\alpha\beta}v_{\alpha\beta i_{1}...i_{k-2}}$$
is an invariant differential operator for $v_{i_{1}...i_{k}}\in\mathcal{E}_{(i_{1}...i_{k})}(w,-n-k+2)$.
\par
\vspace{0.4cm}

Now we can use the operators $D_{A}$ and $D_{\bar{A}}$ to include $v_{I_{1}...I_{k}}$ in the appropriate $M$-bundle just like in the projective and conformal case. 
We will demonstrate this procedure for an example where $k=1$.

\subsection{Examples}

\subsubsection{Example 1}

For first order splittings of $v_{\alpha}\in\mathcal{E}_{\alpha}(w,w')$ we have to compute
$$\xoo{w-2}{1}{0}\;...\;\ox{0}{w'}=\mathcal{E}_{\alpha}(w,w')\ni v_{\alpha}\rightarrow D_{\bar{A}}D_{B}D_{[C}v_{D]}\in\;\ooo{1}{1}{0}\;...\;\oo{0}{1}\;(w-3,w'-1).$$
Note that $D^{A}D_{A}f=0$ for every tractor field $f$ of arbitrary weight. We proceed in several steps:
\begin{enumerate}
\item
First of all, we use Propostion~\ref{proptwenty} to define
$$v_{A}=\left(\begin{array}{c}0\\ v_{\alpha}\\ -\frac{1}{w'+n}\nabla^{\gamma}v_{\gamma}\end{array}\right)\in\mathcal{E}_{A}(w-1,w').$$
The weight to be excluded is $w'+n=0$. If $w'+n=0$, then the linear differential operator $v_{\alpha}\mapsto\nabla^{\gamma}v_{\gamma}$ is invariant. 
\item
Then we map $v_{A}$ to $D_{[A}v_{B]}\in\mathcal{E}_{[AB]}(w-2,w')$. This can be computed by
$$D_{A}v_{B}=\left(\begin{array}{ccc}
*&(w-1)(n+w+w'-1)v_{\beta}&*\\
-(n+w+w'-1)v_{\beta}&*&*\\
*&*&*
\end{array}\right)$$ 
and hence
\begin{eqnarray*}
D_{[A}v_{B]}&=&\left(\begin{array}{ccc}
&*&\\
w(n+w+w'-1)v_{\beta}&&*\\
&*&
\end{array}\right)\\
&\in&\begin{array}{ccc}
&\mathcal{E}_{[\alpha\beta]}(w,w')&\\
\mathcal{E}_{\alpha}(w,w')+&\oplus&+\mathcal{E}_{\alpha}(w-1,w'-1)\\
&\mathcal{E}(w-1,w'-1)&
\end{array}.
\end{eqnarray*}
The following weights have to be excluded:
\begin{enumerate}
\item
If $w=0$, then $v_{\alpha}\mapsto\nabla_{[\alpha}v_{\beta]}$ is invariant. 
\item
If $n+w+w'-1=0$, then $v_{\alpha}\mapsto\Delta v_{\alpha}+...$ is invariant, where the operator is given by $\Delta=-(\nabla^{\beta}\nabla_{\beta}+\nabla^{\bar{\beta}}\nabla_{\bar{\beta}})$.
\end{enumerate}
\item
Next, we compute $D_{A}D_{[B}v_{C]}\in\;\ooo{1}{1}{0}\;...\;\oo{0}{0}(w-3,w').$ The following weights have to be excluded:

\begin{enumerate}
\item
If $w-2=0$, then $v_{\alpha}\mapsto\nabla_{(\beta}v_{\alpha)}$ is invariant.
\item
If $w+w'+n-2=0$, then $v_{\alpha}\mapsto\Delta^{2}v_{\alpha}+...$ is invariant.
\end{enumerate}
\item
Finally, we compute $D_{\bar{A}}D_{B}D_{[C}v_{D]}\in\;\ooo{1}{1}{0}\;...\;\oo{0}{1}(w-3,w'-1)$ and note that the following weights have to be excluded:
\begin{enumerate}
\item
If $w'=0$, then $v_{\alpha}\mapsto\nabla_{\bar{\beta}}v_{\alpha}-\frac{1}{n}\nabla^{\gamma}v_{\gamma}h_{\bar{\beta}\alpha}$ is invariant.
\item
If $n+w+w'-3=0$, then $v_{\alpha}\mapsto\Delta^{3}v_{\alpha}+...$ is invariant.
\end{enumerate}
\end{enumerate}
The existence of the differential operators given above can only be guaranteed for the flat model space $\mathcal{H}$.
If the operator in question is standard (as are all cases (a) from above),~\cite{css2} guarantees that it allows a curved analogue. 
\par
This can be used, for example, as follows:
we can compute the two invariant bilinear differential pairings
\begin{eqnarray*}
\mathcal{E}_{\alpha}(v,v')\times\mathcal{E}_{\beta}(w,w')&\rightarrow&\mathcal{E}_{\alpha}(v+w-1,v'+w'-1)\\
(\omega_{\alpha},\tau_{\beta})&\mapsto&\tau^{\bar{B}}D_{\bar{B}}\omega_{A}\;\text{or}\;\omega^{\bar{B}}D_{\bar{B}}\omega_{A}
\end{eqnarray*}
as
\begin{description}
\item{(a)}
$$\tau^{\bar{\beta}}(\nabla_{\bar{\beta}}\omega_{\alpha}-\frac{1}{v'+n}h_{\bar{\beta}\alpha}\nabla^{\gamma}\omega_{\gamma})-\frac{v'}{w'+n}\omega_{\alpha}\nabla^{\beta}\tau_{\beta}$$
and
\item{(b)}
$$\omega^{\bar{\beta}}(\nabla_{\bar{\beta}}\tau_{\alpha}-\frac{1}{w'+n}h_{\bar{\beta}\alpha}\nabla^{\gamma}\tau_{\gamma})-\frac{w'}{v'+n}\tau_{\alpha}\nabla^{\beta}\omega_{\beta}.$$
\end{description}
Note that we have divided the pairings by $n+v+v'-1$ and $n+w+w'-1$ respectively.
\subsubsection{Example 2}
Let $v_{\alpha\beta}\in\mathcal{E}_{(\alpha\beta)}(w,w')$ and $f\in\mathcal{E}(v,v')$, then the pairing
\begin{eqnarray*}
&&v_{\alpha\beta}\nabla^{\alpha}\nabla^{\beta}f-\frac{2(v'-1)}{w'+n+1}(\nabla^{\alpha}v_{\alpha\beta})(\nabla^{\beta}f)+\frac{v'(v'-1)}{(w'+n+1)(w'+n)}f\nabla^{\alpha}\nabla^{\beta}v_{\alpha\beta}\\ 
&-&\frac{v'(v'+w'+n-1)}{w'+n}iA^{\alpha\beta}v_{\alpha\beta}f
\end{eqnarray*}
is invariant. It can also be written as
$$v_{AB}\tilde{D}^{A}\tilde{D}^{B}f,$$
where 
$$\tilde{D}^{A}f=\left(\begin{array}{c}
v'f\\
\nabla^{\alpha}f\\
0
\end{array}\right).$$
 This operator is not invariant in general, but has an easy  transformation law given by $\hat{\tilde{D}}^{A}f=\tilde{D}^{A}f+\mathbb{X}^{A}\Phi(\Upsilon),$ where $\Phi$ is some function of $\Upsilon$. Now we can use the fact that $\mathbb{X}^{A}v_{AB}=0$ to deduce that $v_{AB}\tilde{D}^{A}\tilde{D}^{B}f$ is invariant. This is in accordance with the formulae given in~\ref{examplepairings} (e).


 \chapter{Appendix}
 
 \section{The BGG sequence}

For the convenience of the reader, we will write down the BGG sequences for the three geometries that we have considered throughout this paper: projective geometry, conformal geometry and CR geometry.
We will emphasize which of the maps in the BGG sequence give rise to curved analogues via the construction in Chapter~\ref{higherorderone} as Ricci corrected derivatives $\mathcal{D}_{j}$. In this case we will say that an operator arises via extremal roots. These operators can then be paired in the canonical way described in Chapter~\ref{higherorderone}.

\subsubsection{The general theory}
The BGG sequences are resolutions of finite dimensional irreducible representations of $G$ via invariant linear differential operators on $G/P$. They arise dually via resolutions of generalized Verma modules. These resolutions first appeared for Verma modules (which are generalized Verma modules for the special case that $\p=\B$ is a borel subalgebra) in~\cite{bgg} and were generalized in~\cite{l}. The interpretation of these resolutions in terms of differential operators first appeared in~\cite{er} and more details of this construction can be found in~\cite{be}. In~\cite{css2} these sequences were generalized to general curved parabolic geometries. In particular, this shows that all standard differential operators (i.e.~those which occur in a BGG sequence) have curved analogues. This is not true for non-standard operators as mentioned earlier. In fact, the problem of determining all non-standard differential operators on $G/P$ is still not solved. Only in certain cases (including conformal and projective geometry) this has been solved in~\cite{bc1} and~\cite{bc2}.

\subsection{$A_{n}$: The projective case}
Let us have a look at $\;\xoo{\;\;}{\;\;}{\;\;}\;...\;\oo{\;\;}{\;\;}\;$. All roots in $\g_{-1}$ belong to the same orbit of the Weyl group. Hence all roots in $\g_{-1}$ are extremal and we denote them by 
$$\{\theta_{1},...,\theta_{n}\},\;\theta_{i}=-(\epsilon_{1}-\epsilon_{i+1})=-\sum_{j=1}^{i}\alpha_{i},$$
in the Dynkin diagram notation
$$\begin{array}{l}
\theta_{1}=\xoo{-2}{1}{0}\;...\;\oo{0}{0},\;\theta_{2}=\xoo{-1}{-1}{1}\;...\;\oo{0}{0},\;\theta_{3}=\xooo{-1}{0}{-1}{1}\;...\;\oo{0}{0},\;...,\\
\theta_{n}=\xoo{-1}{0}{0}\;...\;\oo{0}{-1}\;\end{array}.$$
Schematically, the BGG resolution looks like this:
$$V_{1}\rightarrow V_{2}\rightarrow V_{3}\rightarrow\cdots\rightarrow V_{n}\rightarrow V_{n+1},$$
more precisely
\begin{eqnarray*}
\;\xoo{a}{b}{c}\;...\;\oo{e}{f}\;&\stackrel{(a+1)\theta_{1}}{\longrightarrow}&\;\xoo{-a-2}{a+b+1}{c}\;...\;\oo{e}{f}\;\\
&\stackrel{(b+1)\theta_{2}}{\longrightarrow}&\;\xoo{-a-b-3}{a}{c+b+1}\;...\;\oo{e}{f}\;\\
&\vdots&\\
&\stackrel{(e+1)\theta_{n-1}}{\longrightarrow}&\;\xoo{x+f+1}{a}{b}\;...\;\oo{d}{e+f+1}\\
&\;\stackrel{(f+1)\theta_{n}}{\longrightarrow}&\;\xoo{x}{a}{b}\;...\;\oo{d}{e}\;,
\end{eqnarray*}
with $x=-(a+b+c+...+e+f+n+1)$. The meaning of an arrow $V \stackrel{m\theta_{i}}{\longrightarrow}W$ is as follows: the highest weight of $\mathbb{W}^{*}$ is equal to the highest weight of $\mathbb{V}^{*}$ plus $m\theta_{i}$, i.e.~the differential operator between those bundles is of order $m$ and arises via the extremal root $\theta_{i}$. One can see that all standard invariant linear differential operators (which in fact include all invariant linear differential operators in projective geometry) arise via extremal roots.

\subsection{$B_{l}$: The odd dimensional conformal case}
Let us now look at $\;\xoo{\;\;}{\;\;}{\;\;}\;...\;\btwo{\;\;}{\;\;}\;$. $\alpha_{l}$ is the only short simple root. The roots in $\g_{-1}$ can be grouped into two orbits of the Weyl group. One only contains $\alpha_{l}=\epsilon_{l}$ and the other one contains all the others, including $-\alpha_{1}$. In fact
$$\g_{-1}=\oplus_{\alpha\in\Delta_{-1}}\g_{\alpha}\;\text{with}\;\Delta_{-1}=\{-(\epsilon_{1}\pm\epsilon_{i})\}_{i=2,...,l}\cup\{-\epsilon_{1}\}.$$
We denote the roots in $\g_{-1}$ by 
$$\theta_{i}=-\epsilon_{1}+\epsilon_{i+1}=-\sum_{j=1}^{i}\alpha_{j},\;i=1,...,l-1,$$
$$\theta_{l}=-\epsilon_{1}=-\sum_{j=1}^{l}\alpha_{j}$$
and
$$\theta_{l+i}=-\epsilon_{1}-\epsilon_{l+1-i}=-\sum_{j=1}^{l-i}\alpha_{j}-2\sum_{j=l+1-i}^{l}\alpha_{j},\;i=1,...,l-1.$$
In pictures
\begin{eqnarray*}
\theta_{1}&=&\Bfive{-2}{1}{0}{0}{0},\;\theta_{2}=\Bfive{-1}{-1}{1}{0}{0},\;\theta_{3}=\Bfive{-1}{0}{-1}{0}{0}\\
&\vdots&\\
\theta_{l-1}&=&\Bfive{-1}{0}{0}{-1}{2},\;\theta_{l}=\Bfive{-1}{0}{0}{0}{0},\;\theta_{l+1}=\Bfive{-1}{0}{0}{1}{-2}\\
&\vdots&\\
\theta_{2l-3}&=&\Bfive{-1}{0}{1}{0}{0},\;\theta_{2l-2}=\Bfive{-1}{1}{-1}{0}{0},\;\theta_{2l-1}=\Bfive{0}{-1}{0}{0}{0}.
\end{eqnarray*}
Note that all $\theta_{i}$ apart from $\theta_{l}$ are extremal roots.
The BGG schematically looks like this
$$V_{1}\stackrel{\theta_{1}}{\longrightarrow}\cdots\stackrel{\theta_{l-1}}{\longrightarrow}V_{l}\stackrel{\theta_{l}}{\longrightarrow}V_{l+1}\stackrel{\theta_{l+1}}{\longrightarrow}
\cdots\stackrel{\theta_{2l-1}}{\longrightarrow}V_{2l},$$
more precisely
\begin{eqnarray*}
\Bfive{a}{b}{c}{e}{f}&\stackrel{(a+1)\theta_{1}}{\longrightarrow}&\Bfive{-a-2}{a+b+1}{c}{e}{f}\\
&\stackrel{(b+1)\theta_{2}}{\longrightarrow}&\Bfive{-a-b-3}{a}{c+b+1}{e}{f}\\
&\vdots&\\
&\stackrel{(e+1)\theta_{l-1}}{\longrightarrow}&\Bfive{y}{a}{b}{d}{f+2e+2}\\
&\stackrel{(f+1)\theta_{l}}{\longrightarrow}&\Bfive{y-f-1}{a}{b}{d}{f+2e+2}\\
&\stackrel{(e+1)\theta_{l+1}}{\longrightarrow}&\Bfive{y-f-e-2}{a}{b}{d+e+1}{f}\\
&\vdots&\\
&\stackrel{(a+1)\theta_{2l-1}}{\longrightarrow}&\Bfive{x}{b}{c}{e}{f},
\end{eqnarray*}
where $y=-(a+b+c+...+e)-l$ and $x=-(a+2b+2c+...+2e+f)-2l$. So all standard invariant linear differential operators apart from the one that occurs in the middle of the BGG resolution arise via extremal roots. The form of the operator that occurs in the middle of the BGG resolution is indeed different from the standard form of Ricci-corrected derivatives owing to the fact that the target representation occurs in $\otimes^{k}\g_{1}\otimes\mathbb{V}$ with multiplicity. An example is given in~\cite{css}, Remark 6.6.

\subsection{$D_{l}$: The even dimensional conformal case}
For conformal geometry in even dimensions all roots in $\g_{-1}$ belong to the same orbit of the Weyl group, so all roots in $\g_{-1}$ are extremal. We have
$\g_{-1}=\oplus_{\theta\in\Delta_{-1}}\g_{\theta}$ with
$$\Delta_{-1}=\{-(\epsilon_{1}\pm\epsilon_{i})\}_{i=2,...,l}.$$
In terms of simple roots these can be written as
$$\theta_{i}=-(\epsilon_{1}-\epsilon_{i+1})=-\sum_{j=1}^{i}\alpha_{j}\;,i=1,...,l-1$$
and
$$\theta_{2l-i-1}=-(\epsilon_{1}+\epsilon_{i+1})=-\sum_{j=1}^{i}\alpha_{j}-2\sum_{j=i+1}^{l-2}\alpha_{j}-\alpha_{l-1}-\alpha_{l},\;i=1,...,l-1.$$
In the Dynkin diagram notation \scriptsize
\begin{eqnarray*}
\theta_{1}&=&\hspace{.9cm}\Ddd{-2}{1}{0}{0}{0}{0}{0},\,\theta_{2}=\hspace{1.3cm}\Ddd{-1}{-1}{1}{0}{0}{0}{0},\,\theta_{3}=\hspace{1.3cm}\Ddd{-1}{0}{-1}{0}{0}{0}{0}\\
&\vdots&\\
\theta_{l-3}&=&\hspace{.9cm}\Ddd{-1}{0}{0}{-1}{1}{0}{0},\,\theta_{l-2}=\hspace{1.2cm}\Ddd{-1}{0}{0}{0}{-1}{1}{1},\,\theta_{l-1}=\hspace{1.2cm}\Ddd{-1}{0}{0}{0}{0}{-1}{1}\\
\theta_{l}&=&\hspace{.9cm}\Ddd{-1}{0}{0}{0}{0}{1}{-1},\,\theta_{l+1}=\hspace{1.2cm}\Ddd{-1}{0}{0}{0}{1}{-1}{-1},\,\theta_{l+2}=\hspace{1.2cm}\Ddd{-1}{0}{0}{1}{-1}{0}{0}\\
&\vdots&\\
\theta_{2l-4}&=&\hspace{.9cm}\Ddd{-1}{0}{1}{0}{0}{0}{0},\,\theta_{2l-3}=\hspace{1.3cm}\Ddd{-1}{1}{-1}{0}{0}{0}{0},\,\theta_{2l-2}=\hspace{1.2cm}\Ddd{0}{-1}{0}{0}{0}{0}{0}.
\end{eqnarray*}
\normalsize This fits into the BGG sequence as follows:

$$\Ddd{a}{b}{c}{e}{f}{g}{h}\stackrel{(a+1)\theta_{1}}{\longrightarrow}\hspace{2.5cm}\Ddd{-a-2}{a+b+1}{c}{e}{f}{g}{h}\stackrel{(b+1)\theta_{2}}{\longrightarrow}\cdots$$
$$\begin{array}{ccccc}
&&\hspace{1.5cm}\Ddd{y-g-1}{a}{b}{d}{e}{f}{h+f+g+2}&&\\
&\quad(g+1)\theta_{l-1}\nearrow&&&\\
\stackrel{(f+1)\theta_{l-2}}{\longrightarrow}\hspace{1cm}\Ddd{y}{a}{b}{d}{e}{g+f+1}{h+f+1}\quad&&&&\\
&\quad(h+1)\theta_{l}\searrow&&&\\
&&\hspace{1.5cm}\Ddd{-y-h-1}{a}{b}{d}{e}{g+f+h+2}{f}&&\\
\end{array}$$
$$\begin{array}{ccc}
(h+1)\theta_{l}\searrow&&\\
&\hspace{1.5cm}\Ddd{y-g-h-2}{a}{b}{d}{e}{f+h+1}{f+g+1}&\quad\stackrel{(f+1)\theta_{l+1}}{\longrightarrow}\cdots\\
(g+1)\theta_{l-1}\nearrow&&
\end{array}$$
\par
$$\stackrel{(b+1)\theta_{2l-3}}{\longrightarrow}\hspace{2cm}\Ddd{x}{a+b+1}{c}{e}{f}{h}{g}\stackrel{(a+1)\theta_{2l-2}}{\longrightarrow}\hspace{1.5cm}\Ddd{x}{b}{c}{e}{f}{h}{g},$$
\par
\vspace{1cm}
where $y=-(a+b+c+...+e+f)-l+1$ and $x=-2(b+c+...+e+f)-g-h-a-2(l-1)$. The general pattern looks like this:
$$\begin{array}{ccccccccccccccccc}
&&&&&&V_{l}&&&&&&\\
&&&&&\theta_{l-1}\nearrow&&\theta_{l}\searrow&&&&&\\
V_{1}&\stackrel{\theta_{1}}{\rightarrow}&\cdots&\stackrel{\theta_{l-2}}{\rightarrow}&V_{l-1}&&&&V_{l+2}&\stackrel{\theta_{l+1}}{\rightarrow}&\cdots&\stackrel{\theta_{2l-2}}{\rightarrow}&V_{2l}\\
&&&&&\theta_{l}\searrow&&\theta_{l-1}\nearrow&&&&&\\
&&&&&&V_{l+1}&&&&&&
\end{array}.$$
So we see that all standard invariant linear differential operators arise via extremal roots.

\subsection{$A_{n+1}$: The CR case}
In CR geometry $\g=A_{n+1}$, so all roots have the same length. Therefore all roots $\theta_{i}$ with $\g_{\theta_{i}}\in\g_{-1}$ are extremal. In the BGG resolution, however, only those operators arise via extremal roots that do not involve the $\g_{-2}$ part. More precisely, for $\xoo{\;\;}{\;\;}{\;\;}\;...\;\ox{\;\;}{\;\;}$, we have
$$\g_{-}=\oplus_{\alpha\in\Delta_{-2}}\g_{\alpha}+\begin{array}{c}
\oplus_{\alpha\in\Delta_{-1}^{1}}\g_{\alpha}\\
\oplus\\
\oplus_{\alpha\in\Delta_{-1}^{2}}\g_{\alpha}
\end{array}
=\g_{-2}+\begin{array}{c}
\g_{-1}^{1}\\
\oplus\\
\g_{-1}^{2}
\end{array}$$
with
$$\Delta_{-1}^{1}=\{-(\epsilon_{1}-\epsilon_{i})\}_{i=2,...,n+1},\;\;\Delta_{-1}^{2}=\{\epsilon_{n+2}-\epsilon_{i}\}_{i=2,...,n+1}\;\text{and}\;\Delta_{-2}=\{-\epsilon_{1}+\epsilon_{n+2}\}.$$
Let us write $\theta_{i}=-(\epsilon_{1}-\epsilon_{i+1})$ and $\Theta_{i}=\epsilon_{n+2}-\epsilon_{i+1}$ for $i=1,...,n$.
Schematically the BGG resolution looks this:

\par
$$\begin{picture}(10,10)
\put(-10,5){$V_{1,1}$}
\put(-7,6){$V_{2,1}$}
\put(-7,4){$V_{2,2}$}
\put(-4,7){$V_{3,1}$}
\put(-4,5){$V_{3,2}$}
\put(-4,3){$V_{3,3}$}
\put(1,10){$V_{n+1,1}$}
\put(1,8){$V_{n+1,2}$}
\put(1,2){$V_{n+1,n}$}
\put(1,0){$V_{n+1,n+1}$}
\put(2,5){$\vdots$}
\put(4,5){$\vdots$}
\put(6,5){$\vdots$}
\put(17,5){$W_{1,1}$}
\put(14,6){$W_{2,1}$}
\put(14,4){$W_{2,2}$}
\put(11,7){$W_{3,1}$}
\put(11,5){$W_{3,2}$}
\put(11,3){$W_{3,3}$}
\put(5,10){$W_{n+1,1}$}
\put(5,8){$W_{n+1,2}$}
\put(5,2){$W_{n+1,n}$}
\put(5,0){$W_{n+1,n+1}$}
\put(-8.5,5){\vector(1,1){1}}
\put(-8.5,5){\vector(1,-1){1}}
\put(-5.5,6){\vector(1,1){1}}
\put(-5.5,6){\vector(1,-1){1}}
\put(-5.5,4){\vector(1,1){1}}
\put(-5.5,4){\vector(1,-1){1}}
\put(-2.5,7){\vector(1,1){1}}
\put(-2.5,7){\vector(1,-1){1}}
\put(-2.5,5){\vector(1,1){1}}
\put(-2.5,5){\vector(1,-1){1}}
\put(-2.5,3){\vector(1,1){1}}
\put(-2.5,3){\vector(1,-1){1}}
\put(-1,8){$\Ddots$}
\put(-1,1){$\ddots$}
\put(8,9){$\ddots$}
\put(8,1){$\Ddots$}
\put(12.75,7.25){\vector(1,-1){1}}
\put(12.75,5.25){\vector(1,-1){1}}
\put(15.75,6.25){\vector(1,-1){1}}
\put(15.75,4.25){\vector(1,1){1}}
\put(12.75,3.25){\vector(1,1){1}}
\put(12.75,5.25){\vector(1,1){1}}
\put(9.75,8.25){\vector(1,-1){1}}
\put(9.75,6.25){\vector(1,-1){1}}
\put(9.75,4.25){\vector(1,1){1}}
\put(9.75,6.25){\vector(1,1){1}}
\put(9.75,4.25){\vector(1,-1){1}}
\put(9.75,2.25){\vector(1,1){1}}
\put(3.5,8.25){\vector(1,1){1.5}}
\put(3.5,2.25){\vector(1,1){1.5}}
\put(3.5,0.25){\vector(1,1){1.5}}
\put(3.5,10.25){\vector(1,-1){1.5}}
\put(3.5,8.25){\vector(1,-1){1.5}}
\put(3.5,2.25){\vector(1,-1){1.5}}

\put(3.5,10.25){$....$}
\put(1.75,9.71){$\arrowhead$}
\put(3.5,8.25){$....$}
\put(1.75,7.71){$\arrowhead$}
\put(3.5,2.25){$....$}
\put(1.75,1.71){$\arrowhead$}
\put(3.5,0.25){$....$}
\put(1.75,-.29){$\arrowhead$}
\end{picture}.$$
All the arrows, apart from the horizontal ones that are drawn with dotted lines, arise via extremal roots. For 
$$V_{1,1}=\xoo{a_{1}}{a_{2}}{a_{3}}\;...\;\ox{a_{n}}{a_{n+1}},$$
for example,
\begin{enumerate}
\item
the map $V_{i,j}\rightarrow V_{i+1,j}$ arises via $\theta_{i}$ (the order of the operator is $a_{i}+1$),
\item
the map $V_{i,j}\rightarrow V_{i+1,j+1}$ arises via $\Theta_{n+1-i}$ (the order of the operators is $a_{n+2-i}+1$),
\item
the map $W_{i,j}\rightarrow W_{i-1,j}$ arises via $\Theta_{i-1}$ (the order of the operator is $a_{i}+1$) and
\item
the map $W_{i,j}\rightarrow W_{i-1,j-1}$ arises via $\theta_{n+2-i}$ (the order of the operator is $a_{n+2-i}+1$).
\end{enumerate}

 \section{Outlook}
There are a few points that remain open and we will list them in no particular order.
\begin{enumerate} 
\item 
The most obvious open question is the conjecture stated in~\ref{comparison}. This conjecture would associate a meaning to every excluded representation (geometric weight).
\item
Are there invariant bilinear differential pairings on a homogeneous space $G/P$ that do not allow a curved analogue? This happens for invariant linear differential operators and the obstruction is given by interesting tensors like the Bach tensor in conformal geometry. We have not encountered such a situation as yet, so this is a totally open question.
\item
Other Cartan geometries (not necessarily parabolic). This is a field which has not been studied very much at all. It would be interesting to see how many concepts carry over from the parabolic case. One might, for example, find a parabolic geometry that lies above a given Cartan geometry and then one would be able use tools such as the BGG sequence.
This is a whole new interesting research area.
\end{enumerate}

\end{document}